\title{Almost global existence of solutions for  
capillarity-gravity water waves equations \\ with periodic spatial boundary conditions} 
\author{Massimiliano Berti\\SISSA, Via Bonomea 265\\ 34136, Trieste, Italy\thanks{partially supported by   PRIN 2012 ``Variational and perturbative aspects of nonlinear differential problems''.}\\\&\\
Jean-Marc Delort\thanks{partially supported by the ANR project 13-BS01-0010-02 ``Analyse asymptotique des équations aux dérivées
  partielles d'évolution''.}\\ 
Universit{\'e} Paris 13,\\ 
Sorbonne Paris Cit{\'e}, LAGA, CNRS (UMR 7539),\\ 
99, Avenue J.-B. Cl{\'e}ment,\\ 
F-93430 Villetaneuse}
\date{} 
\begin{document} 
\frontmatter

\maketitle

\chapter*{Abstract}

The goal\blfootnote{\makebox[-.6cm]{} Keywords: Capillarity-gravity water waves equations, Long-time existence, Paradifferential calculus, Normal
  forms. MSC 76B15, 76B45,  35Q35, 35S50, 37J40, 70K45.
  } 
  of this monograph is to prove that 
  any solution of the Cauchy problem for the capillarity-gravity water
waves equations, in one space dimension, with  periodic, even in space,  initial data of small size $ \epsilon $, 
is almost globally defined in time on Sobolev spaces, i.e.\ it exists on a time interval of length of magnitude $\epsilon^{-N}$ for any $N$, as soon as
the initial data are smooth enough, and the gravity-capillarity parameters are taken outside an exceptional subset of zero
measure. In contrast to the many results known for these equations 
on the real line, with decaying Cauchy data, one cannot
make use of dispersive properties of the linear flow. 
Instead, our method is based on a normal forms procedure, in order to
eliminate those contributions to the Sobolev energy that are of lower degree of homogeneity in the solution. 

Since the water waves equations are a quasi-linear system, usual normal forms approaches
would face the well  known problem of  losses of derivatives in the unbounded transformations.
In this monograph, to overcome such a difficulty, after a paralinearization of the capillarity-gravity water waves equations, necessary to obtain energy estimates, and thus local existence
of the solutions,  
we first perform 
several paradifferential reductions of the equations to obtain a 
diagonal system  with constant coefficients symbols, up to smoothing remainders. 
Then we may start with a normal form procedure where the 
small divisors are compensated by  the previous paradifferential regularization.
 The reversible structure of the water waves equations,  and the fact that we look for solutions
even in $  x $, guarantees a
key cancellation which prevents the growth of the Sobolev norms of the solutions.

\tableofcontents
\mainmatter
\setcounter{chapter}{-1}
\chapter{Introduction}\label{cha:intro}

\section{Main Theorem}\label{sec:01}

The capillarity-gravity water waves equations describe the motion of the interface between an incompressible irrotational fluid in a gravity field
and air, in the presence of surface tension. In the case of the one dimensional problem with finite depth, corresponding to a two-dimensional fluid, the velocity of the fluid is given by the gradient of an harmonic potential
$\Phi $,  called  velocity potential, defined on the 
time dependent  domain 
$$
\Omega_t = \big\{(x,y)\in \R^2; - h <y<\eta(t,x) \big\} \, .
$$ 
As soon as the profile $ \eta (t, x) $ is known, $\Phi$ is determined by
the knowledge of its restriction to $y = \eta(t,x)$, say $\psi(t,x) = \Phi(t,x,\eta(t,x))$, and by the Neumann boundary condition at the bottom $\partial_y\Phi\vert_{y=-h}=0$. The resulting system on $(\eta,\psi)$ is the Craig-Sulem-Zakharov formulation of
the capillarity-gravity water waves equations 
\begin{equation}
  \label{eq:1}
  \begin{split}
    \partial_t \eta &= G(\eta)\psi\\
\partial_t\psi &= -g\eta + \kappa H(\eta) -\frac{1}{2}(\partial_x \psi)^2 + \frac{1}{2}\frac{(\eta'\partial_x\psi + G(\eta)\psi)^2}{1+\eta'^2}
  \end{split}
\end{equation}
where $g>0$ is the acceleration of gravity, $\kappa>0$ the surface 
tension, $ \eta' = \partial_x \eta $, 
$$
H(\eta) = \partial_x[(\partial_x \eta)(1+(\partial_x \eta)^2)^{-1/2}]
$$ 
is the mean curvature of the  wave profile, and $G(\eta)$ is the 
Dirichlet-Neumann operator, defined in terms of the potential $\Phi$
by 
$$ 
G(\eta)\psi  = (\partial_y\Phi -\partial_x\eta\partial_x\Phi)(t,x,\eta(t,x))
$$
(see \cite{CS} and \cite{Zakh} for the derivation of \eqref{1} when the surface tension vanishes, and the book of
Lannes~\cite{L} for a presentation of different models of water waves). 
Local  and global existence of solutions for these equations has
been the object of intensive studies during the last years. 

 In the case $\kappa=0$, the problem of local existence of solutions with smooth and \emph{small} Cauchy data with Sobolev
 regularity, defined on $\R$, has been solved by
Nalimov~\cite{Na}, for the infinite depth problem, and by Yosihara~\cite{Yo}, for finite depth (see also Craig~\cite{Craig}). For large Cauchy data, local
existence in infinite depth has been proved by S. Wu~\cite{Wu1} (see \cite{Wu2} for the case of 
two space   dimensions, i.e. a 3D fluid). The
similar question in finite depth (for a variable bottom in any dimension) has been solved by Lannes~\cite{La1}. The case of local existence for the free surface
incompressible Euler equation has been settled by Lindblad~\cite{Lin}.

Concerning the case of positive $\kappa$, local
existence of solutions with data in Sobolev spaces is due to Beyer and Gunther~\cite{BG} and to Coutand and
Shkroller~\cite{CoS} in the case of solutions of the incompressible free boundary Euler equation. Ifrim and Tataru~\cite{IT3} studied recently local existence when the fluid has
constant vorticity.  The case of finite depth has been settled by Ming and Zhang~\cite{MZ} and
arbitrary bottoms have been considered by Alazard, Burq and Zuily~\cite{ABZ1} for  rough initial data.
Finally, the problem of local existence with
Cauchy data that are periodic in space, instead of lying in a Sobolev space on $\R^d$, has been established by
Ambrose~\cite{Amb} and Ambrose-Masmoudi~\cite{AMS} for $\kappa\geq 0$ in the case of infinite depth, and by
Schweizer~\cite{Schw} for finite depth (even with a non zero vorticity). The case of non-localized Cauchy data lying in uniformly local spaces has been treated by Alazard, Burq and Zuily, in
the case of arbitrary rough bottoms~\cite{ABZ3}. 

Regarding to long time existence of solutions of the water waves equations, most results have been obtained when Cauchy data are small, smooth and decaying at
infinity, which  allows to exploit the dispersive properties of the flow of the linear part of the equations. The first contribution
has been the one of Sijue Wu~\cite{Wu3}, who showed that in one space dimension,
 i.e.\ for a two dimensional fluid of infinite
depth, solutions of the water waves equations with $\kappa=0$ 
exist over a time interval of exponential length $e^{c/\epsilon}$ when the size $\epsilon$ of the initial data goes to zero. In
two space dimensions, i.e.\ for three dimensional fluids, global existence with small decaying data has been obtained
independently by Germain, Masmoudi and Shatah~\cite{GMS1} and by Wu~\cite{Wu4}. Global existence for small data in one space
dimension has been proved independently by Ionescu and Pusateri~\cite{IP1}, Alazard and Delort~\cite{AD1} and by Ifrim and
Tataru~\cite{IT1}, for infinite depth fluids.

For the capillarity-gravity water waves equations, i.e. when $\kappa>0$, global existence is known in two space dimensions (three dimensional
fluids)  in infinite depth by Deng, Ionescu, Pausader and Pusateri~\cite{DIPP}. When the
surface tension is positive, but the gravity $g$ vanishes, global solutions in infinite depth fluids have been proved to
exist by Germain, Masmoudi and Shatah~\cite{GMS2} in dimension 2 and by Ionescu and Pusateri~\cite{IP2} in dimension
1. 

Finally, long time existence results have been obtained independently by  Ifrim and Tataru~\cite{IT2} and
Ionescu and Pusateri~\cite{IP2} 
for small data, that do not necessarily decay at
infinity.  These authors proved   that, in the case  $\kappa>0$, $g=0$ and 
for infinite depth fluids,  data of size $\epsilon$ in some Sobolev space (periodic or on the line), give rise to solutions defined on interval times of length at least $c/\epsilon^2$ (instead of the 
usual lifetime in $c/\epsilon$ that holds true in general for a nonlinear equation with quadratic non-linearity). In
\cite{IT3}, Ifrim and Tataru obtain a similar result when $\kappa = 0$, $g>0$ and constant
vorticity. The case of a zero vorticity  had
been previously treated by Hunter, Ifrim and Tataru in \cite{HIT}. 
It is implicitly contained in the energy estimates of Wu~\cite{Wu1}.
Very recently, Harrop-Griffiths, Ifrim and Tataru~\cite{IT4} obtained a $ c \epsilon^{-2}$ lower bound for the time of existence in the
case $\kappa = 0$, $g>0$, for irrotational incompressible fluids with finite depth.

\medskip

The goal of this monograph is to get, for one dimensional space periodic Cauchy data of size $\epsilon$, solutions of 
the water waves equations \eqref{1} with $g>0, \kappa>0$ defined over a time
 interval of length $c_N\epsilon^{-N}$, for arbitrary $N$. 
We shall be able to achieve this almost global existence result 
if the initial datum $ (\eta_0, \psi_0 ) $ is smooth enough, 
even in $  x $ and $ \eta_0 $ has zero average.

Solutions $ (\eta(t, x), \psi (t, x))  $ of \eqref{1} that are even in $ x $ are called ``standing waves''. 
This property is  preserved during the evolution of \eqref{1}. 
In this case also 
the velocity potential $ \Phi(t, x,y) $ is even and $ 2 \pi $-periodic in $ x $ and so the 
$ x$-component  of the velocity field $ (\Phi_x, \Phi_y) $ vanishes  at $ x = k \pi $, $ \forall k \in \Z $. 
Hence   there is no flux of fluid through the lines $ x = k \pi $, $ k \in \Z $, and 
a solution of the system \eqref{1} physically describes 
the motion of a liquid confined  between two walls. 

The assumption that
$\eta_0$ has zero average  is preserved during the evolutions because the  ``mass'' $ \int_\Tu \eta (x) d x $ is a prime integral
of \eqref{1}. The component $\eta(t,\cdot)$  of the solution 
 will thus lie in a Sobolev space $H_0^{s+\frac{1}{4}}(\Tu)$ of periodic functions with zero mean. 
 
Concerning  $ \psi $, notice that  the right hand side of \eqref{1} is well defined
when $\psi$ is in a space of functions modulo constants (which is natural, as only the  gradient of 
the velocity potential $\Phi $ has a physical
meaning). Thus, projecting the second equation on functions modulo constants, we see that we may look for $\psi$ in an
homogeneous Sobolev space $\Hds{s-\frac{1}{4}}(\Tu)$ of periodic functions modulo constants. 

The main result proved in  this monograph is the  following:
\begin{theoremi}
 \label{1} {\bf (Almost global existence of periodic capillarity-gravity waves)}
 There is a zero measure subset $\Ncal$ in $]0,+\infty[^2$ such that, for any 
 $(g,\kappa)$ in $]0,+\infty[^2-\Ncal$, for any
$N$ in $\N$, there is $s_0>0$ and, for any $s\geq s_0$, there are
$\epsilon_0>0, c>0, C>0$ such that, for any $\epsilon\in
]0,\epsilon_0[$, any  even function 
$(\eta_0,\psi_0)$ in $H_0^{s+\frac{1}{4}}(\Tu,\R)\times \Hds{s-\frac{1}{4}}(\Tu,\R)$ with 
$$
\norm{\eta_0}_{H^{s+\frac{1}{4}}_0}+ \norm{\psi_0}_{\Hds{s-\frac{1}{4}}}<\epsilon \, , 
$$
system \eqref{1} has a unique classical solution $(\eta,\psi)$ defined on $]-T_\epsilon,T_\epsilon[\times \Tu$ with
$T_\epsilon\geq c\epsilon^{-N}$, belonging to the space
\[C^0\bigl(]-T_\epsilon,T_\epsilon[,H_0^{s+\frac{1}{4}}(\Tu,\R)\times \Hds{s-\frac{1}{4}}(\Tu,\R)\bigr)\]
satisfying the initial condition $\eta\vert_{t=0} = \eta_0, \psi\vert_{t=0} = \psi_0$. Moreover, this solution is even in space
 and it stays at any time in the ball of center 0 and radius $C\epsilon$ of $H_0^{s+\frac{1}{4}}(\Tu,\R)\times \Hds{s-\frac{1}{4}}(\Tu,\R)$.
\end{theoremi}
The above theorem establishes almost global existence of solutions to the capillarity-gravity water waves equations  on a compact manifold,
the circle, when the parameters $(g,\kappa)$ avoid a zero measure subset. It is  of different nature than the results we
mentioned above, for decaying Cauchy data on the line, as there is
no  longer 
any dispersive property which makes 
decay the solutions, 
providing long time existence. 

Notice also that the assumption that the parameters
$(g,\kappa)$ avoid a subset of zero measure is essential for the normal forms reduction that will play a seminal role in
the proof. Without any restriction on the parameters, it is known that resonances may occur at the level of the quadratic part
of the non-linearity (the so called ``Wilton ripples'') or for higher order terms in the Taylor expansion of the
non-linearity. We refer  for example to Craig and 
Sulem~\cite{CS1} that discuss the possible dynamical instabilities that might be
generated by this phenomenon.

We do not know if the almost global solutions of the
Cauchy problem proved in Theorem \ref{1} 
are global in time or not. 
Let us mention nevertheless that one may construct classes of global solutions, namely time  periodic and also quasi-periodic ones.

In the case of zero surface tension, existence of small amplitude time periodic 
standing wave solutions  has been proved first by Plotnikov-Toland  in \cite{PlTo} for a fluid in finite depth, 
and  by Iooss, Plotnikov, Toland in \cite{IPT},  see also \cite{IP-SW2}, \cite{IP-SW1}, 
in the case of infinite depth. For the problem with surface tension 
existence of time periodic   standing wave solutions 
has been  proved by Alazard-Baldi \cite{AB}. 
Recently Berti-Montalto \cite{BM}  have  extended this result proving 
the existence  of also time quasi-periodic  capillarity-gravity 
standing wave solutions,  
as well as  their linear stability. 
All the above results prove the existence of solutions when the parameters $ (g, \kappa, h) $
satisfy suitable non-resonance conditions. 
Actually  the problem of the existence of periodic/quasi-periodic solutions 
is faced 
with small divisors difficulties
which are particularly hard due to the quasi-linear nature  of the water waves system \eqref{1}. 

It is well known that the existence of quasi-periodic motions is possible just for 
systems with some algebraic structure
which excludes ``secular motions'' (using the language of Celestial mechanics) 
and growth of the Sobolev norms. 
The most common ones are the Hamiltonian and the reversible structure.
The water-waves system  \eqref{1} exhibits  both of them.

In the proof of Theorem \ref{1}, it 
is  the reversible structure of \eqref{1}, and the fact that we restrict to solutions even in $ x $, 
that ensure
that  terms that could generate a growth of the Sobolev norms of  the standing waves 
actually vanish. Let us describe the meaning of that notion of reversibility. We say that 
the capillarity-gravity water waves system \eqref{1} is reversible with respect to the involution $ S = \sm{1}{0}{0}{-1} $
since 
it  may be written as a dynamical system 
$\vect{\dot{\eta}}{\dot{\psi}} = F\vect{\eta}{\psi}$, where the vector field 
$ F $ satisfies 
\begin{equation}\label{eq:1a}
S F\vect{\eta}{\psi} = -
F\bigl(S\vect{\eta}{\psi}\bigr) \, . 
\end{equation} 
Equivalently, denoting by $ \Phi^t $ the flow of \eqref{1},  the reversibility  property \eqref{1a} is equivalent to
$$
S \circ \Phi^{-t} = \Phi^t \circ S \, .
$$
We shall discuss in the next sections some intuitive reasons 
why the presence of this symmetry may produce stability results. 

\smallskip

To conclude  we also mention some existence results of 
other global solutions, as the traveling wave periodic solutions (standing waves are not traveling because they are even in space).  In dimension  $ 2 $ small amplitude traveling gravity water waves were proved by Levi-Civita \cite{LC}. 
In dimension $ 3 $ the existence of traveling capillarity-gravity periodic water waves 
has been proved by Craig-Nicholls 
\cite{CN} (it is not a small divisor problem) 
and by Iooss-Plotinikov \cite{IP-Mem-2009}-\cite{IoP2} in the case of zero surface tension
 (in such a case it is a small divisor problem).

\smallskip

In the following sections, we shall describe the main steps in the proof of  Theorem \ref{1} performed  in
Chapter~\ref{cha:3} and Chapter~\ref{cha:4}. The other chapters 
are devoted to the construction of the tools needed in those
two chapters, and to the reformulation of the  water waves system \eqref{1} as a paradifferential equation. For simplicity we
take the depth $ h = 1 $ and the gravity $ g = 1 $. 

\section{Introduction to the proof}\label{sec:01a}

In this section we shall  present on a very simplified  model 
some ideas of the normal forms 
method for semilinear PDEs,  and 
we shall point out the deep modifications that this procedure requires for
proving  the long time existence Theorem \ref{1} for 
the {\it quasi-linear} water waves system \eqref{1}. 
In the following sections of this introduction, we shall explain in detail 
our normal form approach, including several technical aspects of the proof.

\smallskip

Consider a semilinear equation of the form 
\begin{equation}\label{eq:2pa}
\begin{split}(D_t - m_\kappa(D_x))u &= P(u,\bar{u})\\
u\vert_{t=0} &= \epsilon u_0,\end{split}
\end{equation}
where $D_t = \frac{1}{i}\frac{\partial}{\partial t}$ and 
$ m_\kappa(D_x) = \Fcal^{-1}\circ \mk(\xi)\circ \Fcal$ 
is an even Fourier multiplier with real valued symbol that depends 
on some parameter $\kappa\in ]0,+\infty[$, 
acting on complex valued functions $ u (x) $ of $L^2(\Tu)$. In the present case 
\be\label{eq:mxid}
\mk(\xi) = (\xi\tanh\xi)^{\frac{1}{2}}(1+\kappa\xi^2)^{1/2}
\ee
comes from the linearization of the water waves equation at the flat surface and zero velocity potential. 
 Notice that since $ \mk(\xi) $ is even in $ \xi $, 
 the operator $ m_\kappa(D_x) $ leaves invariant the subspace of functions even in $  x $. 
Assume that the nonlinearity $ P $ is a polynomial,  homogeneous of degree $ p \geq 2 $, 
depending on $ (u, \bar u) $.  
The first serious difficulty of \eqref{1}, compared to  \eqref{2pa},
 is, of course, the quasi-linear nature of the nonlinear vector field  in \eqref{1}. 
 We shall discuss later how to take it into account. 

The initial datum  $ u_0 $ in \eqref{2pa} is in the Sobolev space $H^s(\Tu,\C)$, for some large $ s $. 
Writing the Sobolev energy inequality associated to \eqref{2pa}, one gets an a priori estimate 
for the solutions like 
\begin{equation}\label{eq:2paa}
\norm{u(t,\cdot)}_{H^s}^2 \leq \norm{u(0,\cdot)}_{H^s}^2 + C\Abs{\int_0^t
\norm{u(\tau,\cdot)}^{p+1}_{H^s}\,d\tau} \, .
\end{equation}
Since the initial datum $ \norm{u(0,\cdot)}_{H^s} \sim \epsilon $, 
one deduces by a bootstrap argument the a priori bound  $ \norm{u(t,\cdot)}_{H^s} = O(\epsilon)$ as
long as $ |t| $ is smaller than $c\epsilon^{-p+1}$, for some $ c $ small.
As a consequence, using that  \eqref{2pa} is locally well posed in 
Sobolev spaces,  the solution exists up to times of magnitude $ c\epsilon^{-p+1} $. 
Nevertheless, one may get a longer existence time combining such energy bounds to Shatah's 
style normal form methods, 
in the framework of periodic functions of space (instead of functions defined on $\R$). 
In the Hamiltonian setting this procedure is often called Birkhoff normal form, that 
for semilinear equations  is by now classical, see for example \cite{BaG}, \cite{DSz}, \cite{BDGS}.
The strategy is to look for some polynomial
correction $Q(u,\bar{u})$ to $u$, homogeneous of degree $p$, 
such that
\begin{multline}\label{eq:Shatah}
(D_t-\mk(D_x))[u+Q(u,\bar{u})] = (\textrm{terms of order $q$}>p) \\
+ (\textrm{terms of order $p$ that do not contribute to the
  energy}).
\end{multline}
Then
energy inequalities for this new equation imply that  
solutions with initial datum  of size $\epsilon$
exist up to times of magnitude $\epsilon^{-q}\gg\epsilon^{-p}$. If one may repeat the process as many times as desired, one
will get finally an existence time of order $\epsilon^{-N}$ for any $N$. 
The fact that the terms of order $ p $ that are left in \eqref{Shatah} 
do not contribute to the   energy inequality 
follows from a suitable algebraic properties of the 
nonlinearity $ P $. In many instances, the
 Hamiltonian nature of the vector field $ P $ provides that property. Here, we shall instead exploit another
 classical structure, namely 
\emph{reversibility}, that for semilinear PDEs  has been used in \cite{FHZ, FG}.
For the model equation \eqref{2pa}, considering  
the involution $ S : u \mapsto \bar u $ acting on the space of functions even in $  x $, 
this amounts to require that the nonlinearity satisfies $ P (u, \bar u ) = \overline{P ( \bar u, u )} $. 
To construct $Q$ from $P$, let us limit ourselves to the case when for instance 
$ P(u,\bar{u}) = a_{p, \ell}  u^\ell\bar{u}^{p-\ell} $ for a scalar coefficient $ a_{p, \ell} $. 
Notice that  it 
satisfies the above reversibility condition if and only if 
$ a_{p, \ell}  $ is {\it real}. We
look for $Q(u,\bar{u})$ as
\[
Q(u,\bar{u}) = M(\underbrace{u,\dots,u}_{\ell},\underbrace{\bar{u},\dots,\bar{u}}_{p-\ell})
\]
for some $p$-linear map $M$ to be determined in such a way that
\[
(D_t-\mk(D_x))M(u,\dots,u,\bar{u},\dots,\bar{u}) = - a_{p, \ell} u^\ell\bar{u}^{p-\ell} + \textrm{ higher order terms} \, .
\]
Distributing the time derivative on each argument of $M$ and replacing $D_tu, D_t\bar{u}$ from 
its expression coming from
\eqref{2pa}, we obtain
\begin{multline}\label{eq:2b}
    (D_t-\mk(D_x))M(u,\dots,u,\bar{u},\dots,\bar{u}) \\= 
    \sum_{j=1}^\ell M( u,\dots,\mk(D_x) u,\dots,u,\bar{u},\dots,\bar{u}) \\-
    \sum_{j=\ell+1}^p M(u,\dots,u,\bar{u},\dots,\mk(D_x) \bar{u},\dots,\bar{u})  \\- \mk(D_x) M(u,\dots,\bar{u}) + \textrm{
      higher order terms}
  \end{multline}
where  in the sums  $ \mk(D_x) $ acts on the $ j$-th component and where we used that $\mk(\xi)$ is even.
  Denote by $\Pin{}$,  $n$ in $\N $,  the spectral projector associated to the $n$th mode of $-\Delta$ on the circle. Then
  as $\mk(\xi)$ is even, $\mk(D_x)\Pin{} = \mk(n)\Pin{}$, 
so that if we replace in the right hand side of \eqref{2b} 
\[
\begin{split}
&M(u,\dots,\mk(D_x)
u,\dots,u,\bar{u},\dots,\bar{u})\\
&M(u,\dots,u,\bar{u},\dots,\mk(D_x) \bar{u},\dots,\bar{u})\\
&\mk(D_x) M(u,\dots,\bar{u})\end{split}
\]
 by respectively
\[
\begin{split}
&\Pin{p+1}M(\Pin{1}u_1,\dots,\mk(n_j) \Pin{j}u_j,\dots,\Pin{\ell}u_\ell,\Pin{\ell+1}u_{\ell+1},\dots,\Pin{p}u_p)\\
&\Pin{p+1}M(\Pin{1}u_1,\dots,\Pin{\ell}u_\ell,\Pin{\ell+1}u_{\ell+1},\dots,\mk(n_j)
\Pin{j}u_j,\dots,\Pin{p}u_p)\\
&\mk(n_{p+1})\Pin{p+1}M(\Pin{1}u_1,\dots,\Pin{\ell}u_\ell,\Pin{\ell+1}u_{\ell+1},\dots,\Pin{p}u_p)\end{split}
\]
we get for the terms homogeneous of degree $p$ in the right hand side of \eqref{2b} the expression
\begin{equation}
  \label{eq:2c}
  \Dcal_{\kappa,\ell}(n_1,\dots,n_{p+1}) \Pin{p+1}M(\Pin{1}u_1,\dots,\Pin{\ell}u_\ell,\Pin{\ell+1}u_{\ell+1},\dots,\Pin{p}u_p),
\end{equation}
with the ``small divisors''
\[
\Dcal_{\kappa,\ell}(n_1,\dots,n_{p+1}) =  
\sum_{j=1}^\ell\mk(n_j) - \sum_{j=\ell+1}^{p+1}\mk(n_j) \, .
\]
In order to obtain \eqref{Shatah}, one would have thus to choose $M$ so that
\begin{equation}\label{eq:2d}
\Dcal_{\kappa,\ell}(n_1,\dots,n_{p+1})\Pin{p+1}M(\Pin{1}u_1,\dots,\Pin{p}u_p) =
-\Pin{p+1}\bigl( a_{p, \ell}\prod_{j=1}^p\Pin{j}u_j\bigr)
\end{equation}
for any $n_1,\dots,n_{p+1}$ such that $\sum_{j=1}^{p+1} \epsilon_j n_j = 0$ for some choice of the signs 
$\epsilon_j \in \{-1, + 1 \} $. 
Suppose that 
the following non-resonance condition holds:
there is $ N_0$ such that 
\be\label{eq:SmDi}
\abs{\Dcal_{\kappa,\ell}(n_1,\dots,n_{p+1})}\geq c(\textrm{third largest among }  n_1,\dots,n_{p+1})^{-N_0}
\ee
for any $ n_1,\dots,n_{p+1} $ except in the  case when 
\begin{equation}\label{eq:cas-resonant}
p \textrm{ is odd}, \ \ell = \frac{p+1}{2}, \textrm{ and }
\{n_1,\dots,n_{\ell}\} = \{n_{\ell+1},\dots,n_{p+1}\},
\end{equation}
for
which $\Dcal_{\kappa,\ell}(n_1,\dots,n_{p+1})$ vanishes for any $\kappa$. 
In several instances,
 it turns out that,  when the symbol $ \xi \mapsto  \mk(\xi) $ is analytic/sub-analytic 
and satisfies suitable ``non degeneracy'' conditions, then 
\eqref{SmDi}  can be verified for any $ \kappa $ taken outside a subset of zero measure.
This is the case, for example,
for the symbol $ m_\kappa (\xi) $ given in \eqref{mxid}, see Proposition~\ref{6} below. 
Notice also that, for $ m_\kappa (\xi) $ in \eqref{mxid}, 
 the divisor $\Dcal_{\kappa,\ell}(n_1,\dots,n_{p+1})$
vanishes as well when all frequencies $ n_k $ are zero, but we ignore this point in this introduction, 
as we shall dispose of it by the fact that in the water waves system \eqref{1} the function $ \eta $
has zero average and $ \psi $ is defined modulo constants. 

Except in case \eqref{cas-resonant}, one may define $ M $ by division of \eqref{2d}. 
The small divisor condition \eqref{SmDi} and the fact that $ P $ is semilinear 
imply that $ M $ is a multilinear map that is {\it bounded} on $H^s$ for any large enough $ s $,  and 
thus the associated transformation
$ u \mapsto u + Q(u, \bar u)$ is   bounded  and invertible 
in a neighborhood of the origin of $ H^s $. 
 In the ``resonant''  case \eqref{cas-resonant}, the corresponding term homogeneous of degree 
$ p $, that is of the form 
\[
-\Pin{\ell}\Bigl[a_{2\ell-1,\ell}\prod_{j=1}^{\ell-1}\abs{\Pin{j}u}^2 \Pin{\ell}u \Bigr],
\] 
cannot be eliminated in the right hand side of
\eqref{2b}, but it does not contribute to the energy because the coefficient  $ a_{2\ell-1, \ell} $ is real.  
In dynamical systems language the  ``actions''
$ \| \Pi_n u \|_{L^2}^2 $ are prime integrals of the resulting ``resonant'' system.
The above procedure can be iterated at higher orders. 
The fact that the terms of order $ p $ that are left at each step
do not contribute to the growth of the Sobolev norms 
is a consequence of the preservation of the 
reversible structure and the fact that we restrict to a subspace of functions even in $ x $.

In trying to implement the above procedure for the complete water waves system \eqref{1} 
many  considerable problems arise. 

\smallskip
\begin{enumerate}
\item 
The first difficulty, that is already present in the local theory of water waves type equations, is the fact that 
\eqref{1} is a quasi-linear system for which a direct application of energy inequalities makes apparently lose derivatives. 
This problem is now well understood (see references in section~\ref{sec:02} below)
and, following the now classical approach of
Alazard-Métivier \cite{AM}, we settle it 
writing \eqref{1} as a paradifferential system in terms of the so called ``good unknown''. 
However we cannot directly use the results in  \cite{AM,ABZ1, AD2} 
since we have to ensure that the new paradifferential system admits a polynomial
expansion as the solution tends to zero. 
We explain this procedure in detail in section \ref{sec:02}. 
An important fact is that the  ``good unknown'' change of variable preserves the reversible structure. 

 \item 
Before performing a normal form analysis as described above, we  reduce the 
terms of the water waves para-differential system 
that are of positive order to constant coefficients.
This step is essential, otherwise the 
transformations  performed above to decrease the size of the nonlinearity, like 
$ u + Q(u, \bar u ) $, would be unbounded.
We perform such a reduction along the lines of 
the recent works of Alazard-Baldi \cite{AB} and
Berti-Montalto \cite{BM}
concerning respectively  periodic and quasi-periodic solutions of \eqref{1},
ending up with a system as
\[
\Bigl(D_t-m_\kappa(D_x)(1+\underline{\zeta}(U;t))\sm{1}{0}{0}{-1} - H(U;t,D_x)\Bigr)V = R(U; t)V 
\]
where $H(U;t,\xi)$ is a diagonal matrix of Fourier multipliers of order one 
(independent of $x$), with imaginary part 
of order $ 0 $, 
$\underline{\zeta}(U;t)$ is a real valued function of $t$, independent of $x$, and $R(U; t)$ is a smoothing operator
(we neglect here another smoothing remainder $ R_2(U; t)U $, see \eqref{9}).
This reduction to constant coefficients of the unbounded terms 
is possible because the dispersion relation $ \xi \mapsto m_\kappa (\xi ) $ is superlinear. 
We explain this procedure in detail in section \ref{sec:03}.

\item 
An energy inequality for the above system  implies an 
estimate like \eqref{2paa} (with $p=2$) proving existence of the solution up to times $ c \epsilon^{-1} $
(this time corresponds essentially to local existence theory). 
In order to prove almost global existence 
we have thus to
eliminate first those contributions to $ \Im H $ and $R$ that have a low degree of homogeneity in $U$, by a normal form method
similar to the one described above.
As in the model case \eqref{2pa}, not all such terms may be eliminated, and one has to check that
those which remain do not make grow the energy. This is a consequence 
of reversibility of the system, and of
the fact that our initial data are even real valued function.
We explain this in more detail in section~\ref{sec:04} of this introduction.
\end{enumerate}

\section[Paradifferential formulation]{Paradifferential formulation and good unknown}\label{sec:02}

In order to start the proof, it will be useful
to rewrite the water waves equations \eqref{1} as a paradifferential system. The classes of paradifferential operators we
shall need will be introduced and studied in detail in Chapters~\ref{cha:2} and \ref{cha:5}. 
In this introduction, let us just consider  symbols given by
functions $(x,\xi)\to
a(x,\xi)$,  with limited smoothness in $ x $,  
satisfying for some real $m$ estimates 
\begin{equation}\label{eq:2}
\abs{\partial_\xi^\beta a(x,\xi)}\leq C_\beta \absj{\xi}^{m-\beta},\quad \forall \beta \in \N \, , 
\end{equation}
where $\absj{\xi} = \sqrt{1+\xi^2}$
(the further information we shall need is to track the dependence of these symbols 
with respect to the unknown dynamical variables). 
One defines for $u$ a test function or a tempered distribution, the action of the Bony-Weyl quantization of $a$ on $u$ as
\[\opbw(a)u = \frac{1}{2\pi}\int e^{i(x-y)\xi}a_\chi\Bigl(\frac{x+y}{2},\xi\Bigr)u(y)\,dyd\xi,\]
where $a_\chi$ is the cut-off symbol whose $x$-Fourier transform $\hat{a}_{\chi}(\eta,\xi)$ is given by
$\hat{a}_{\chi}(\eta,\xi) = \chi(\eta/\absj{\xi})\hat{a}(\eta,\xi)$, $\chi$ being a $C^\infty_0(\R)$ function equal to one
close to zero, with small enough support. If moreover the symbol $a$ is $2\pi$ periodic in $x$, then $\opbw(a)$ acts 
from $ {\dot H}^s(\Tu)$ to $ {\dot H}^{s-m}(\Tu)$ for any $s$, and if \eqref{2} is satisfied also by all $x$-derivatives of $a$, then the 
difference $\opbw(a) - a(x,D)$  sends $H^{s}$ to $H^{s'}$ for any    $s, s'$. Using
the paralinearization formula of Bony, that asserts that
\[R(u,v) = uv -\opbw(u)v -\opbw(v)u\]
has a degree of smoothness equal to the sum of the degrees of smoothness of $u$ and $v$ (minus some universal constant), one
may write,  following essentially Alazard and Métivier \cite{AM} the capillarity-gravity wave equations as a paradifferential system
\[\Bigl(D_t - \opbw(A(\eta,\psi;t,x,\xi))\Bigr)\vect{\eta}{\psi} = R(\eta,\psi)\vect{\eta}{\psi}\]
where $ D_t = \frac{1}{i} \partial_t $, $A$ is a matrix of symbols (that depend on $t, x$ through the functions $\eta, \psi$) and $R$ is a smoothing
operator. One cannot obtain immediately energy estimates for the above paradifferential system because the eigenvalues of the
matrix $A$ have unbounded imaginary part when the frequency $\xi$ goes to infinity. This apparent loss of derivatives is not
the indication of a genuine instability of the system, but comes from the fact that one did not write the problem using the
right unknowns. This difficulty has been overcome in several different ways: Sijue Wu~\cite{Wu1} uses a Lagrangian formulation
of the water waves system; Alazard-Métivier~\cite{AM}, Alazard-Burq-Zuily~\cite{ABZ1} and Lannes~\cite{La1} use the ``good
unknown'' of Alinhac~\cite{Al1}, together with paradifferential calculus for the first two groups of authors and Nash-Moser
methods for the last one; Hunter, Ifrim and Tataru~\cite{HIT}, as well as the last two 
authors in their subsequent works \cite{IT2, IT1}, rely on a blending of the preceding ideas, 
 reformulating  the problem in convenient complex coordinates. 
 
 We adopt here the point of view of the good unknown of
Alinhac, rewriting in Chapter~\ref{cha:6} the water waves equations \eqref{1} as a paradifferential system in $(\eta,\omega)$ where
$\omega$, the good unknown, is defined by 
\begin{equation}\label{eq:2aa}
\omega = \psi-\opbw(B)\eta \, , 
\end{equation}
the function $B$ being given by
\begin{equation}\label{eq:2a}
B = B(\eta,\psi) = \frac{G(\eta)\psi+\eta'\partial_x\psi}{1+\eta'{}^2} = \partial_z\tilde{\Phi}\vert_{z=0},
\end{equation}
where $z$ is a new vertical coordinate in which the free surface of the fluid is given by $z=0$, and $\tilde{\Phi}(x,z)$ is 
the harmonic velocity potential $\Phi(x,y)$ expressed in the new coordinate system $(x,z)$. An important feature of the new system
satisfied by the new unknown is that its right hand side still satisfies the reversibility condition \eqref{1a} as \eqref{1}, 
while the Hamiltonian  structure is lost. The fact
that the reversibility property is preserved through the different reductions made in the proof of Theorem 
\ref{1} will play an essential role
in section~\ref{sec:04} (and section \ref{sec:43}), when dealing with normal forms. We shall discuss this issue more in detail at the end of that section.

The paradifferential water waves system satisfied by the new unknown $(\eta,\omega)$ may be rewritten conveniently 
in  a new complex variable (see section \ref{sec:63}), as presented in detail in section~\ref{sec:31}. Introduce the Fourier multiplier operator of order $-\frac{1}{4}$ given by
\[\lk = \Bigl(\frac{D\tanh D}{1+\kappa D^2}\Bigr)^{1/4}.
\]
If $\eta$
is in $H^{s+\frac{1}{4}}_0(\Tu,\R)$ and $\psi$ is in $\Hds{s-\frac{1}{4}}(\Tu,\R)$, then $\omega = \psi-\opbw(B)\eta$ belongs
 to the same space $\Hds{s-\frac{1}{4}}(\Tu,\R)$ as well, and we introduce
\begin{equation}\label{eq:3}
u = \lk\omega+i\lk^{-1}\eta.
\end{equation}
 We obtain an element of $\{u\in H^s(\Tu;\C); \int_\Tu\Im u\,dx = 0\}/\R$ that we may identify 
  to
 the space $\Hds{s}(\Tu,\C)$ of complex valued $H^s$ functions modulo complex constants. Actually, we are interested only on
 the subspace of even functions, that is endowed with the norm  $\sum_1^{+\infty}\norm{\Pin{}u}_{L^2}^2$, where
 $\Pin{}$ is the spectral projector associated to the $n$-th mode, acting on even periodic functions, i.e.\  
$$
\Pin{}u= \absj{u,\varphi_n} \varphi_n \, , \quad  \absj{u,\varphi_n}  = \frac{1}{\sqrt{\pi}}\int_\Tu u(x)\cos(nx)\,dx \, , \quad
  \varphi_n = \frac{\cos(nx)}{\sqrt{\pi}} \, .
  $$ 
  The reduction of 
  the water waves equations \eqref{1} to a complex system in the good unknown 
 will lead in section \ref{sec:63}  to the following proposition (see Proposition \ref{631}).
 \begin{propositioni}
   \label{2}  {\bf (Water waves equations in complex coordinates)}
Let $s\gg K \gg\rho\gg 1$ be integers. Let 
$(\eta,\psi)$ be a continuous function of time $t$, defined on some interval
$ I \subset \R $, symmetric with respect to $ t = 0 $, with values in the space 
$H_0^{s+\frac{1}{4}}(\Tu,\R)\times \Hds{s-\frac{1}{4}}(\Tu,\R)$, even in $x$, that solves system \eqref{1}. 
Define $\omega$ from $(\eta,\psi)$ by \eqref{2aa}. 
Assume moreover that for $k\leq K$, the $ \partial_t^k $
derivative of $(\eta,\omega)$ belongs to $H_0^{s+\frac{1}{4}-\frac{3}{2}k}(\Tu,\R)\times
\Hds{s-\frac{1}{4}-\frac{3}{2}k}(\Tu,\R)$, and is small enough in that space. Then the function $U= \vect{u}{\bar{u}}$, with
$u$ given by \eqref{3}, solves a paradifferential system of the form
\begin{equation}\label{eq:4}
D_tU = \opbw(A(U;t,x,\xi))U + R(U; t)U
\end{equation}
where 
 the $2\times 2$ matrix of symbols $A(U;t,x,\xi)$ has the form
\begin{multline*}
  A(U;t,x,\xi) = \Bigl(\mk(\xi)(1+\zeta(U;t,x))+ \lambda_{1/2}(U;t,x,\xi)\Bigr)\sm{1}{0}{0}{-1}\\
+ \Bigl(\mk(\xi)\zeta(U;t,x)+ \lambda_{-1/2}(U;t,x,\xi)\Bigr)\sm{0}{-1}{1}{0}\\
+ \lambda_{1}(U;t,x,\xi)\sm{1}{0}{0}{1} + \lambda_{0}(U;t,x,\xi)\sm{0}{1}{1}{0}
\end{multline*}
and
\begin{equation}\label{eq:dispersion}
\mk(\xi) = (\xi\tanh \xi)^{1/2}(1+\kappa\xi^2)^{1/2}
\end{equation}
is a constant coefficients symbol of order $3/2$, 
$$
\zeta(U;t,x) = [(1+\eta'{}^2)^{-3/2}-1] /2  
$$
is a real valued function of $(t,x)$, 
$ \lambda_j(U;t,x,\xi)$ is a symbol of order $j$, for $j = 1, 1/2, 0, -1/2$, with $\Im\lambda_j$ of order $j-1$ when $j = 1$
or $1/2$, and 
$R(U; t)$ is a $2\times 2$ matrix of smoothing operators that gain $\rho$ derivatives.
 \end{propositioni}
\textbf{Remarks}: 
$\bullet$ The symbol $ \mk $ in \eqref{dispersion}
describes the dispersion relation of the   linearized system \eqref{1} at $ \eta = 0 $, $ \psi = 0 $. 
Notice that $ \xi \mapsto \mk (\xi) $ is even  but,  in the subspace of functions even in $ x $, 
the linear frequencies
of oscillations $ \mk (n) $, $ n \in \N $,  are simple.  

\noindent$\bullet$ One may check that the eigenvalues of the matrix $A(U;t,x,\xi)$ are symbols whose imaginary part 
is of order zero (actually of order $ - 1 /2 $). 
 If $A$ where a diagonal matrix, this would imply that 
  $  \opbw(A(U;t,x,\xi)) $
is self-adjoint, modulo a bounded operator,  allowing to derive 
energy estimates for the solutions of \eqref{4} for small
times. Actually in Proposition~\ref{3} below we shall diagonalize the principal symbol of the system \eqref{4}
obtaining in this way energy estimates. 
The property that the imaginary parts of the eigenvalues have order zero is a consequence of the fact that we passed to the good unknown. Had we not done this preliminary
reduction, we would have found eigenvalues  with imaginary part of order $1/2$, 
provoking the instability we mentioned above.

\noindent$\bullet$ The matrix $A$ satisfies three   algebraic properties that will turn out to be essential for the proof of the theorem, namely:
\begin{itemize}
  \item[-] The {\it Reality} condition $\overline{A(U;t,x,-\xi)} = -SA(U;t,x,\xi)S$, 
  where $S$ is the matrix $S = -\sm{0}{1}{1}{0}$,
    which is the translation, in the present complex formulation, of the  involution map $S$ 
    introduced in section~\ref{sec:01}  in the  definition \eqref{1a} of reversibility. This property of $A$ is equivalent, at the operator level,  to
\[\overline{\opbw(A(U;t,x,\xi))V} = -S\opbw(A(U;t,x,\xi))S\overline{V}\]
and reflects the fact that in  system \eqref{4}, the second equation is obtained from the first one 
by   complex conjugation (i.e. that \eqref{1} is a real system).
\item[-] The {\it Parity preserving} condition $A(U;t,-x,-\xi) = A(U;t,x,\xi)$, that implies that the operator $\opbw(A)$
  preserves the space of even functions of $ x $.
\item[-] 
The {\it Reversibility} condition $A(U;-t,x,\xi) = -SA(U_S; t,x,\xi)S$, where $U_S(t) = SU(-t)$.  At the level of operators,
this condition reads
  $$
  \opbw(A(U;-t, \cdot)) = -S\opbw(A(U_S; t, \cdot))S \, . 
  $$ 
  We shall see below, in   Lemma~\ref{hom-nonhom},  that, for the 
  homogeneous components $ A_p $ of the 
  symbol $ A $ introduced in \eqref{4a}, the above condition  amounts to the autonomous reversibility property
  $$
  \opbw(A_p (SU;\cdot)) = -S\opbw(A_p (U;\cdot))S,
  $$ 
  so that the non-linearity $F(U) = \opbw(A_p (U;\cdot))U$ satisfies the reversibility condition $ S F(U) = -F(SU)$.  
  \end{itemize}

The smoothing operator $R(U; t)$ in the right hand side of \eqref{4} satisfies similar properties.

From a dynamical point of view we  can heuristically understand  
why these algebraic properties play  a key role. 
The action of the involution $ S $ on a vector $ U $ of  complex functions even in $ x  $, of the form 
$$ 
U = \bigl[\begin{smallmatrix}u\\\bar{u}\end{smallmatrix}\bigr] \, , \quad u(x) = \sum_{n \geq 1} u_n \cos ( n x ) \, , 
$$
reads  $ u_n \mapsto  - \bar u_n $, for any $ n \geq 1 $.  Introducing  action-angle variables by  the relation 
$  u_n := i \sqrt{ I_n  }  e^{i \theta_n } $, this involution reads   $(\theta_n , I_n ) \mapsto  (- \theta_n, I_n) $. 
An autonomous vector field written in action-angle variables 
$$
\dot \theta = g(\theta, I ) \, , \quad  \dot I = f(\theta, I ) 
$$
is reversible if $ f(\theta, I)$ is odd in $ \theta $ and  $ g (\theta, I) $ is  even in $ \theta $. 
Now, since the angles $ \theta $ are expected to rotate faster than the actions $ I $, 
at the first order 
we could expect the evolution of the actions to be approximated by the 
$\theta$-averaged equation $ \dot I (t) = 0 $, in accordance with the naive  ``averaging principle''.
 Thus reversibility appears as a natural algebraic  property, 
independent of the Hamiltonian nature of a system, 
which may prevent a systematic drift of the action variables, i.e. growth of Sobolev norms in the PDE language.  
The concept of reversibility was introduced 
in KAM theory by  Moser \cite{Mos1}, see also Arnold \cite{Ar} and \cite{BHS} for further developments,  
and then  
it has also been used to prove normal form  stability 
results, see for example the exponential estimates in \cite{Gio}, \cite{GP}  
near an elliptic equilibrium. 
Concerning PDEs we refer, for KAM results, to Zhang, Gao, Yuan \cite{GYZ} 
 for reversible derivative Schr\"odinger equations
and Berti, Biasco, Procesi \cite{BBP}
 for  reversible derivative wave equations, 
and to Faou-Gr{\'e}bert \cite{FG} and Fang-Han-Zhang \cite{FHZ} for  normal form results 
for semi-linear reversible PDEs.   

\smallskip

In view of the normal form procedure that will  conclude the proof of Theorem \ref{1}, it is not sufficient to define our symbols of
paradifferential operators using just estimates \eqref{2}. 
It will be  important to know 
that these symbols are polynomial in $ U $ up to the order $ N -1 $, plus
a non-homogeneous symbol which vanishes as $ O(\|U\|^N) $ as $ U \to 0 $. 
More precisely, the classes of symbols we shall use are given by finite sums
\begin{equation}\label{eq:4a}
A(U;t,x,\xi)= \sum_{p=0}^{N-1} A_p(U,\dots,U;x,\xi) + A_N(U;t,x,\xi)
\end{equation}
where $A_0$ is a constant coefficients symbol of order $m$,  independent of $U$, and 
 $ A_p (U,\dots,U;x,\xi) $, $ p=1,\dots,N - 1 $, resp. $ A_N (U; t,x,\xi) $, 
 are symbols of order $ m $  that depend on $ U $ as  monomials of degree $ p $, resp. 
 in a non-homogeneous way vanishing at order $ O( \| U \|^N ) $ as $ U \to 0 $.
 More    precisely 
$ A_p(U_1,\dots,U_p; x,\xi)$ are  symmetric $p$-linear functions of $(U_1,\dots,U_p)$ satisfying
bounds of the form
\[
\abs{\partial_x^\alpha\partial_\xi^\beta A_p(\Pin{1}U_1,\dots,\Pin{p}U_p; x,\xi)}\leq
C\abs{n}^{\mu+\alpha}\absj{\xi}^{m-\beta}\prod_1^p\norm{\Pin{j}U_j}_{L^2}
\]
where as above $\Pin{j}$ is the spectral projector associated to the $n_j$-th mode, and $\mu$ is a fixed integer. The
meaning of this inequality is that each time we make act one $x$-derivative on the symbol, we lose one power of $n$, i.e.\
one derivative acting on  $U_1,\dots,U_p$. We allow a fixed extra loss of $\mu$ derivatives. 
Assuming that 
$ U_1,\dots,U_p $ are in some Sobolev space $\Hds{\sigma}$,  we see that estimates of the form \eqref{2} 
are satisfied by about $\partial_x^{\sigma- \mu}$
space derivatives   of the symbol $ A_p (U, \ldots, U; x, \xi )$. 
Finally 
the non-homogeneous symbol $A_N (U; t,x,\xi) $ 
satisfies  similar estimates with a constant in the right hand side 
which vanishes at order $N$ when $U$ goes to zero,
see Definition \ref{212}. 
 In the whole monograph, for the
homogeneous symbols 
the dependence on time $ t $ will enter  only through the function 
$ U = U( t ) $, while  the non-homogeneous symbols
may depend explicitly on time $ t $. 

Similar decompositions, in multilinear contributions plus a remainder vanishing
at order $N$ when $U$ goes to zero, have to be assumed as well on the smoothing operators.  Consequently, one has to establish
a symbolic calculus for symbols of paradifferential operators that admit such a decomposition, with remainders of the same
type. This is what is done in Chapter~\ref{cha:2}. Moreover, one has to check that the paradifferential symbols in \eqref{4}
do belong to such classes. This obliges us to revisit the paralinearization of the water wave equations made in
\cite{AM,
  ABZ1, AD2}, in order to verify such a property. This is the object of Chapters~\ref{cha:5} and \ref{cha:6}. We show that
the paralinearization of the Dirichlet-Neumann operator gives rise to a symbol having the wanted 
decomposition in multilinear contributions plus a symbol vanishing at large order when $U$ goes to zero. We need also to get
similar information on the smoothing remainders. Because of that, we do not make use of a variational method to study the 
Dirichlet-Neumann boundary value problem as in \cite{AM, ABZ1, ABZ2}, but we construct a paradifferential parametrix à la Boutet de
Monvel~\cite{BdM1, BdM2, BdM3}, introducing classes of para-Poisson operators whose symbols have a decomposition in
multilinear terms. Next, we apply these results to the construction of the good unknown and the paralinearization of the water waves 
system.  In particular, in Proposition \ref{615} and section \ref{sec:72}  we provide the 
paralinearization formula of the  Dirichlet-Neumann operator, and in Proposition \ref{621} 
the paralinearization of the equation for $ \pa_t \omega $. 

Once the water waves system has been written under the paradifferential form \eqref{4}, one may start the reasoning that will ultimately prove the existence of
its solutions over a time interval of length of order $\epsilon^{-N}$ if the initial data are of size $\epsilon$ 
and smooth enough. 

 As already mentioned in section \ref{sec:01a}, our approach consists in two main steps. The first 
is a  reduction of   \eqref{4} to a  nonlinear system with 
paradifferential operators  with constant coefficients symbols (in $ x $), up to smoothing remainders.
This paradifferential  reduction is presented in section \ref{sec:03}.
The second step 
 is  a  normal form procedure that decreases the size in $ U $ of the non-linear terms, 
and it is presented in section \ref{sec:04}.
  We  underline that 
all the transformations used to reduce to constant coefficients the water waves system \eqref{4} up to smoothing 
remainders  are defined by paradifferential operators, and they are bounded  maps acting on Sobolev spaces $ H^s $. 
Also all the transformations in the normal form procedure are bounded maps on 
Sobolev spaces $ H^s $, for $ s $ large enough.
Actually the small divisors  will be compensated either by the paradifferential regularization of the symbols defining such 
transformations, 
either  by  the smoothing character of the 
remainders, as we shall explain in detail in section \ref{sec:04}.

Notice that it is essential 
for us to reduce the water waves system \eqref{4} to constant coefficients \emph{before} starting a normal forms 
method which reduces the size in $ U$ of the nonlinear terms, as otherwise the quasi-linear
character of the equations would generate losses of derivatives at each transformation. 
Such unbounded changes of variables  would just provide a \emph{formal} normal form, similarly to  the works 
of Craig-Worfolk~\cite{CW} and Dyachenko and
Zakharov~\cite{DZ},  for  the pure gravity equations. We mention that, in presence of capillarity,  
Craig-Sulem~\cite{CS1} have recently proved the boundedness of the third order
Birkhoff normal form transformation. Such a construction of a bounded normal form at order three 
is also related to the result in 
Hunter, Ifrim and Tataru which proves 
an $\epsilon^{-2}$ lifespan for the solution of the pure gravity water waves equations with non
localized data of size $\epsilon$ in \cite{HIT} (see also the recent result of the last two authors in the case of a constant
non zero vorticity \cite{IT3}, and \cite{IT2} for capillarity water waves equations). 

In the present work, we overcome the issue of the boundedness of the normal form using that, after 
reducing the water waves system to constant coefficients in $ x $, up to sufficiently regularizing operators, one falls into a normal form 
framework similar to the one applicable for semi-linear PDEs. 

\section{Reduction to constant coefficients}\label{sec:03}

In order to prove Theorem \ref{1} we are going to constuct
for any integer $ N $, a modified energy 
$E_s(U(t,\cdot))\sim \norm{U(t,\cdot)}_{\Hds{s}}^2 $, equivalent to 
the  square of the $ H^s$-Sobolev norm, 
that satisfies, along 
any small amplitude solutions of \eqref{4},  the bound
\begin{equation}\label{eq:5}
\frac{d}{dt}E_s(U(t,\cdot)) = O(\norm{U(t,\cdot)}_{\Hds{s}}^{N+2}),\ \ {\rm as} \ \  U\to 0 \, . 
\end{equation}
This implies the energy inequality
\[
E_s(U(t,\cdot)) \leq E_s(U(0,\cdot)) + C \Big| \int_0^t \norm{U(\tau,\cdot)}_{\Hds{s}}^{N+2}\,d\tau \Big|
\]
so that, if $E_s(U(0,\cdot))\sim \epsilon$, one may prove by a 
bootstrap argument that, if $\epsilon$ is small enough, and $t$ satisfies
$\abs{t}\leq c\epsilon^{-N}$ for some small enough $c$, then $E_s(U(t,\cdot)) = O(\epsilon^2)$, 
thus $ \norm{U(t,\cdot)}_{\Hds{s}} = O(\epsilon )$. This a priori estimate,
combined with the fact that local existence with smooth Cauchy data holds true according to~\cite{Schw}, implies that the
solution may be extended up to times of magnitude $c \epsilon^{-N}$ 
 (actually local existence 
could be deduced by system \eqref{9} below, but, to avoid further technicalities, 
we directly rely on the results in \cite{Schw}).

The construction of a modified energy $ E_s $ which satisfies  \eqref{5} will rely, as already said, 
on two main conceptually different 
procedures. 
First 
we shall perform a
series of  non-linear para-differential changes of variables, 
similar to the transformations used in Alazard-Baldi \cite{AB} and Berti-Montalto
\cite{BM} to reduce the linearized equations (which arise in a Nash-Moser iteration to prove the 
existence of periodic and quasi-periodic solutions)  to constant coefficients.
Then we shall develop a  normal form method parallel 
to those used by Bambusi, Delort, Grébert and
Szeftel~\cite{BDGS}, \cite{BaG}, \cite{DSz}, and Faou-Gr{\'e}bert \cite{FG} 
for semi-linear PDEs. 
The modified energy $ E_s $ is explicitly constructed in \eqref{4415} (with $ q = N - 1 $).  

We describe below the reduction steps, and will explain the normal form method in the next section.
\medskip

\textbf{Step 1: Diagonalization of the system}
\medskip

We prove in section~\ref{sec:32}, using symbolic calculus, that one may replace the matrix of symbols $A$ in the right hand
side of \eqref{4} by a \emph{diagonal} matrix, up to a modification of the smoothing operator $R(U; t)$. More precisely, 
we get (Proposition \ref{322}):
\begin{propositioni}  {\bf (Diagonalization of the matrix symbol $ A $)} 
  \label{3} 
  There exist $2\times 2$ matrices of symbols 
  of order 0, $P(U; t, \cdot)$, $Q(U; t, \cdot)$, 
  such that $\opbw(P )\circ\opbw(Q) - \mathrm{Id}$ is a 
  $\rho$-smoothing operator (for a large given $\rho$) so that, 
  if $W = \opbw(Q)U$, then $W$ solves the system
  \begin{equation}
    \label{eq:6}
    \bigl(D_t -\opbw(A^{(1)}(U; t, \cdot))\bigr)W = R'(U; t)W + R''(U; t)U
  \end{equation}
where $R'(U; t), R''(U; t )$ are $\rho$-smoothing operators and $A^{(1)}$ is a \emph{diagonal} matrix of symbols
\begin{multline}
  \label{eq:7}
  A^{(1)}(U;t,x,\xi) =  \Bigl(\mk(\xi)(1+\zeta^{(1)}(U;t,x)) + \lambda_{1/2}^{(1)}(U;t,x,\xi)\Bigr)\sm{1}{0}{0}{-1} \\
+ \lambda_{1}^{(1)}(U;t,x,\xi) \sm{1}{0}{0}{1}
\end{multline}
where $\zeta^{(1)}$ is a real valued function of $(t,x)$, $\lambda_j^{(1)}$ is a symbol of order $j$, whose imaginary part is
of order $j-1$. Moreover, $A^{(1)}$ has a homogeneous development of the form \eqref{4a}, and it satisfies the reality,
parity preserving and reversibility conditions.
\end{propositioni}

The proof of the above proposition is made through conjugation by paradifferential operators, in order to decrease
successively the order of the non diagonal terms in the matrix of symbols in \eqref{4}. We may easily explain the idea of that
diagonalization at principal order on the form taken by equation \eqref{1} on the good unknown. In Chapter~\ref{cha:6}, we
shall introduce some real valued function $V$, depending on $\eta, \psi$ such that, if we set $c(t,x) =
(1+\eta'{}^2)^{-3/2}$, the couple $(\eta,\omega)$ satisfies an equation of the form 
$\Pcal \vect{\eta}{\omega} = R$,  where
  \begin{equation}
    \label{eq:7a}
   \Pcal =  \partial_t + \begin{bmatrix}V\partial_x&-D \tanh D \\ 1+\kappa c(t,x)D ^2&V\partial_x\end{bmatrix}
  \end{equation}
and $R$ is  given by the action of operators of non positive order on $\vect{\eta}{\omega}$, and is at least quadratic in
$(\eta,\omega)$. Actually, the first equation in \eqref{7a} comes from the expression \eqref{6143} of $G(\eta)\psi$ and the
second one follows from \eqref{624}. If we conjugate the operator $\Pcal$ by $\sm{\lk}{0}{0}{-\lk}$, we obtain 
the operator
\[ 
\partial_t + \begin{bmatrix}V\partial_x&-\mk(D)\\ c(t,x)\mk(D)&V\partial_x\end{bmatrix}
\]
up to remainders whose action on the unknown gives terms of the same form as $R$. 
Conjugating again this system by
the matrix valued multiplication operator $\sm{1}{0}{0}{c(t,x)^{1/2}}$, we obtain
\[\partial_t + \begin{bmatrix}V\partial_x&-c(t,x)^{1/2}\mk(D)\\  c(t,x)^{1/2}\mk(D)&V\partial_x\end{bmatrix}\]
acting on some new unknown $\vect{\tilde{\eta}}{\tilde{\omega}}$. If we set $u = \tilde{\omega}+i\tilde{\eta}$, we get
the complex equation \[\partial_tu = -ic(t,x)^{1/2}\mk(D)u + V\partial_x u\]
modulo again remainders as above. This is, at principal order, the diagonalized equation we are seeking for.
Actually  in order to obtain, after having performed the above conjugation, 
another paradifferential equation we have to use above
the paraproduct instead of the multiplication operator.

\medskip

\textbf{Step 2: Reduction to constant coefficients at principal order}
\medskip

The goal of the next steps is to reduce to constant coefficients 
the matrix symbol $A^{(1)}$ in \eqref{7}. We shall exploit in an essential way  
that the dispersion relation $\mk(\xi)$ given by \eqref{dispersion} is \emph{superlinear}. We start making this reduction
for the principal part of the  matrix symbol  \eqref{7}  given according to   \eqref{dispersion} by
the product of $\sm{1}{0}{0}{-1}$ and of $(1+\zeta^{(1)}(U;t,x))\sqrt{\kappa}\abs{\xi}^{3/2}$. We would like to eliminate the
$x$-dependence in that symbol. If we make a time dependent change of variables 
$ y = \Phi_U (t, x) $, i.e. 
$ x = \Phi^{-1}_U(t, y)$, the above principal symbol becomes 
$$
(1+\zeta^{(1)}(U;t, \Phi_U(t, x)))\sqrt{\kappa} 
\big| \big( \pa_y \Phi_U^{-1}(t,y) \big)_{| y = \Phi_U(t, x)} \big|^{3/2} \abs{\xi}^{3/2}.
$$
To get a constant coefficients symbol, we cannot just choose the diffeomorphism  
$ \Phi_U $ so that $(1+\zeta^{(1)}(U;t,y))\abs{\pa_y \Phi_U^{-1}(t,y)}^{3/2}
= 1$, namely to take  $ \pa_y \Phi_U^{-1}(t,y)  = (1+\zeta^{(1)}(U;t,y))^{-2/3}$, since $ \Phi_U^{-1} $, and thus 
$ \Phi_U $,  must be  a diffeomorphisms  of $ \Tu $, i.e. we need 
$\Phi_U^{-1} (t,y) - y $ to be periodic. Instead, we choose 
 $\Phi_U^{-1} (t, y)  = y + \gamma (t,y) $ where $ \gamma (t,y) $ is 
the unique periodic function of $ y $, with zero mean, solving the equation
\[
\pa_y \gamma (y) = (1+\zu(U;t))^{2/3}(1+\zeta^{(1)}(U;t,y))^{-2/3} -1
\]
where $\zu(U;t)$ is defined in order to make zero the space average on $\Tu$ of the right handside. The difficulty that one
encounters is due to the fact that this change of variables $\Phi_U$ depends on $U$, so has only limited smoothness. Because
of that, instead of defining a new unknown
function $V$ by the composition $V = W\circ \Phi_U$, we use a 
paracomposition operator in the sense of
Alinhac~\cite{Al}. We set $V= \Phi_U^\star W$, where the paracomposition operator 
$\Phi_U^\star $ is defined  and studied in
section~\ref{sec:Para} (we provide an alternative definition using flows). In that way, when we compute the equation satisfied by the unknown $V$, we still get a paradifferential
equation. More precisely, we prove (Proposition \ref{411}):
\begin{propositioni}
  \label{4} {\bf (Reduction of the highest order)}
If we define $\Phi_U$ as above and set $V = \Phi_U^\star W$, then $V$ solves the system 
\begin{multline}
  \label{eq:8}
  \Bigl(D_t -\opbw\Bigl(\bigl[(1+\zu(U;t))\mk(\xi) + \tilde{\lambda}(U;t,x,\xi)\bigr]\sm{1}{0}{0}{-1} +
  \mu(U;t,x,\xi) {\mathcal I}_2\Bigl)\Bigr)V\\
= R'(U; t)V + R''(U; t)U
\end{multline}
where $\tilde{\lambda}$ is of order $1/2$ with $ \Im\tilde{\lambda} $ of order $-1/2$, $\mu$ is of order one, with
$\Im\mu$ of order zero, $R'$, $R''$ are smoothing operators. Moreover, the reality, parity preserving and reversibility
conditions are still satisfied by the matrix symbol in the left hand side of \eqref{8} and the smoothing operators in its right hand side.
\end{propositioni}
The next step of the proof will be to eliminate the non constant coefficients parts
of $\tilde{\lambda}$ and $\mu$ up to remainders of very negative order.
\medskip

\textbf{Step 3: Reduction to constant coefficients at arbitrary order}
\medskip

Denote by $F(U)$ a diagonal $2\times 2$ matrix of symbols of order $1/2$ which is self-adjoint, up to contributions of order
zero. We define in section~\ref{sec:42}, for any $\theta$ between $-1$ and 1 the operator $\of{\theta} =
\exp[i\theta\opbw(F(U))]$, which is the flow generated by the linear system \eqref{423}. 
Symbolic calculus shows that the diagonal matrix  of symbols  of order $3/2$ given by
\[D_0(U;\cdot) = (1+\zu(U;t))\mk(\xi)\sm{1}{0}{0}{-1}\]
transforms as 
\begin{multline*}
  \of{-1}\opbw\bigl(D_0(U;\cdot)\bigr)\of{1} = \opbw\bigl(D_0(U;\cdot)\bigr)\\
+ (1+\zu(U;t))\opbw\bigl(\absp{F(U,\cdot),\mk(\xi)}\bigr)\sm{1}{0}{0}{-1} + \textrm{ lower order terms}.
\end{multline*}
If one conjugates system \eqref{8} by $\of{1}$, one  obtains a similar system 
with a new symbol given by
\begin{multline*}
  \bigl[(1+\zu(U;t))\mk(\xi) + \tilde{\lambda}(U;t,x,\xi) + (1+\zu(U;t))\absp{F,\mk}\bigr]\sm{1}{0}{0}{-1}\\ +
  \mu(U;t,x,\xi) {\mathcal I}_2 + \textrm{ lower order terms}.
\end{multline*}
We decompose $\tilde{\lambda} = \tilde{\lambda}^{\mathrm{D}} + \tilde{\lambda}^{\mathrm{ND}}$, where
$\tilde{\lambda}^{\mathrm{D}}$ is the $x$-average of  $\tilde{\lambda}$ and $\tilde{\lambda}^{\mathrm{ND}}$ has zero
average. We write in the same way $\tilde{\mu} = \tilde{\mu}^{\mathrm{D}} + \tilde{\mu}^{\mathrm{ND}}$. Solving 
the  equation 
\begin{equation}\label{eq:8a}
\Bigl((1+\zu(U;t))\absp{F,\mk} + \tilde{\lambda}^{\mathrm{ND}}\Bigr)\sm{1}{0}{0}{-1} + \tilde{\mu}^{\mathrm{ND}}
{\mathcal I}_2 = 0
\end{equation}
at principal order, we reduce ourselves to the case when the contributions of order 1 and 1/2 in \eqref{8} have constant coefficients. Repeating the reasoning up to some very negative order, we get (Proposition \ref{421}):
\begin{propositioni}
  \label{5} {\bf (Reduction  to  constant coefficients of $\eqref{8}$)}
There is a diagonal matrix $F(U)$ of symbols of order $1/2$  such that if we set $\tilde{V} = \of{-1}V$, then $\tilde{V}$
solves the system
\begin{equation}
  \label{eq:9}
\begin{split}
   \Bigl(D_t -\opbw\bigl((1+\zu(U;t))\mk(\xi)\sm{1}{0}{0}{-1} - H(U;t,\xi)\bigr)\Bigr)\tilde{V}\\ = R_1(U;t)\tilde{V} + R_2(U;t)U
\end{split}\end{equation}
where $R_1, R_2$ are $\rho$-smoothing operators and $H$ is a diagonal matrix of \emph{constant coefficients} 
symbols  (in $ x $) of order
one, such that $\Im H$ is of order zero. Moreover, the reality, parity preserving and reversibility conditions are satisfied.
\end{propositioni}

Actually the constant coefficient symbols of  $ H $ are of order $ 1 / 2 $, see
the  remark  after Proposition \ref{421}. 
This information is not necessary for the subsequent normal form arguments but it is in agreement 
with the asymptotic expansion of the Floquet exponents of the periodic and quasi-periodic solutions found in  
\cite{AB}, \cite{BM}.

In  equation \eqref{8a} it is essential that the 
symbol $\mk$ defined in \eqref{dispersion} is of order strictly larger than 1, i.e.\ that 
the  capillarity-gravity linear dispersion
relation is superlinear. This is what allows to
construct a symbol $F(U)$ that is of order strictly smaller than the one of $\tilde{\mu}$ in the right hand side, and to start the
induction that  eliminates all variable coefficient symbols up to some order as negative as we want. 
This is also what implies that the contributions coming from the
conjugation of $D_t$ by $\of{1}$ enter into the remainders, as they are of order strictly smaller than 
the main part. If 
the linear part of the operator were just of order one, 
like for instance for Klein-Gordon equations, it would be no longer possible to
reduce in this way the system to constant coefficients before 
performing the normal forms introduced in next section. We refer to \cite{D1, D2} and
references therein for long time existence results for quasi-linear Hamiltonian Klein-Gordon equations.
\medskip

In  \eqref{9}, the operators 
$ \opbw(1+\zu(U;t))\mk(\xi)\sm{1}{0}{0}{-1}$ and $\opbw(\Re H)$
are self adjoint. Consequently, the $L^2$ energy inequality  
associated to that system reads, for example in the case $ t \geq 0 $,  
\begin{multline*}
\norm{\tilde{V}(t,\cdot)}_{L^2}^2 \leq \norm{\tilde{V}(0,\cdot)}_{L^2}^2 + 
\int_0^t\norm{\opbw(\Im
  H(\tau,\xi))\tilde{V}(\tau,\cdot)}_{L^2} \norm{\tilde{V}(\tau,\cdot)}_{L^2} \,d\tau   \\
  + \int_0^t\norm{R_1(U;t)\tilde{V}(\tau,\cdot) + R_2(U;t)U(\tau,\cdot)}_{L^2} \norm{\tilde{V}(\tau,\cdot)}_{L^2}\,d\tau  \, .
\end{multline*} 
Moreover, since 
the coefficients in left hand side of \eqref{9} are constant in $ x $, we may commute as
 many space derivatives as we 
wants with the equation, and deduce
from  the above $ L^2 $-estimate a similar inequality for Sobolev norms.
As a consequence,  if the symbol $ \Im H $, which is of order zero,  were vanishing  
as  $ O(\| U \|^N) $ when $U$ goes to zero, as well as the
smoothing terms $ R_1, R_2 $, we would deduce from that the estimate
\[
\norm{\tilde{V}(t,\cdot)}_{\Hds{s}}^2 \leq 
\norm{\tilde{V}(0,\cdot)}_{\Hds{s}}^2 + 
C \int_0^t\norm{U(\tau,\cdot)}_{\Hds{\sigma}}^N(\norm{\tilde{V}(\tau,\cdot)}_{\Hds{s}}^2
+  \norm{U(\tau,\cdot)}_{\Hds{s}}^2)\,d\tau  .
\]
Using that at $t=0$, $U$ is of size $\epsilon$ in $\Hds{s}$, and that, as long as $U$ remains small, $U$ and $\tilde{V}$ are
of the same magnitude, one would deduce from that, by  bootstrap, an a priori estimate $\norm{U(t,\cdot)}_{\Hds{s}} =
O(\epsilon)$ over a time interval of length $c\epsilon^{-N}$ for some small $c$. Together with local existence theory, this
would imply that the solution may be extended up to such an interval of time. In conclusion,
to prove our main theorem, we
have to show that we may modify equation \eqref{9} in order to ensure that  $\Im H(U;\cdot)$ 
and $R_1(U;t)$, $R_2(U;t)$ will 
vanish as $ O( U^N )$ as $ U $ goes to zero. We shall achieve this goal through a normal forms procedure.

\section{Normal forms}\label{sec:04}

Let $B(U;t,\xi)$ be a diagonal matrix of constant coefficients symbols of order zero to be determined. 
We conjugate system \eqref{9} 
by the operator $\exp\bigl(\opbw(B(U;t,\xi))\bigr)$. Setting $\tilde{V}^1 = \exp\bigl(\opbw(B(U;t,\xi))\bigr) \tilde{V}$, we get
\begin{multline}
  \label{eq:10}
   \Bigl(D_t -\opbw\bigl[D_tB(U;t,\xi)+ (1+\zu(U;t))\mk(\xi) \sm{1}{0}{0}{-1}  - H(U;t,\xi)\bigr]\Bigr)\tilde{V}^1\\ = \textrm{ smoothing terms}.
\end{multline}
We want to choose $B$ in order to eliminate the contributions to $\Im H$ which are homogeneous of degree $p <N$. Thus we decompose 
\[
H(U;t,\xi) = \sum_{p=1}^{N-1}H_p(U,\dots,U;\xi) + H_N(U;t,\xi)
\]
with $H_p(U_1,\dots,U_p; \xi)$ a symmetric $p$-linear map in $(U_1,\dots,U_p)$ with values in diagonal matrices of constant
coefficient symbols, and $H_N$ a symbol vanishing at least at order $N$ when $U$ goes to zero. 
We look for $B$ as  
\[
B = \sum_{p=1}^{N-1}B_p(U,\dots,U; \xi)
\]
with $B_p$ a $p$-linear map. Rewriting equation \eqref{4} as \[D_t U = \mk(D)\Kcal + \textrm{ terms vanishing at least at
  order 1 in } U,
  \]
 (such terms are unbounded operators of order 3/2)
where $\Kcal = \sm{1}{0}{0}{-1}$, we may write
\begin{multline*}
  D_t(B_p(U,\dots,U)) = \sum_{j=1}^p B_p(U,\dots,U,D_tU,U,\dots,U)\\
= \sum_{j=1}^p B_p(U,\dots,U,\mk(D)\Kcal U,U,\dots,U) + \textrm{ terms of higher order in } U.
\end{multline*}
To eliminate the lower order terms contributions in $\Im H$, we are thus reduced to find $B_p$ so that
\begin{equation}
\label{eq:11}
\sum_{j=1}^p B_p(U,\dots,U,\mk(D)\Kcal U,U,\dots,U) = i\Im \tilde{H}_p(U,\dots,U)
\end{equation}
where $\tilde{H}_p$ is computed  recursively from $H_p$ and from $B_{p'}$, $p'<p$. Replacing $U = \vect{u}{\bar{u}}$ by $\Pi_{n_j}^+U =
\vect{\Pin{j}u}{0}$ in  the first $\ell$ components and by $\Pi_{n_j}^-U =
\vect{0}{\Pin{j}\bar{u}}$ in the last $p-\ell$ ones, and using that 
$$
\mk(D)\Kcal\Pin{j}^\pm = \pm \mk(n_j)\Pin{j}^\pm \, , 
$$ 
we
see that it is enough to determine $B_p$ so that
\begin{multline}
\label{eq:12}
\Bigl(\sum_{j=1}^\ell \mk(n_j) - \sum_{j=\ell+1}^p
\mk(n_j)\Bigr)\\\times B_p(\Pin{1}^+U_1,\dots,\Pin{\ell}^+U_\ell,\Pin{\ell+1}^-U_{\ell+1},\dots,\Pin{p}^-U_p; \xi)\\ = 
i\Im \tilde{H}_p(\Pin{1}^+U_1,\dots,\Pin{\ell}^+U_\ell,\Pin{\ell+1}^-U_{\ell+1},\dots,\Pin{p}^-U_p; \xi).
\end{multline}
Thus we just have to be able to divide 
the right hand side of \eqref{12} 
by the ``small divisor'' 
\[
\Dcal_\ell(n_1,\dots,n_p) = \sum_{j=1}^\ell \mk(n_j) - \sum_{j=\ell+1}^p
\mk(n_j) 
\]
when the parameter $\kappa$ is taken outside a convenient subset of zero measure.
Clearly, this will not be possible if $p$ is even, $\ell = p/2$ and one has a two by two cancellation between terms of the
first and second sum so that $ \Dcal_\ell(n_1,\dots,n_p) = 0 $. The possibility to solve nevertheless equation \eqref{12} follows from the following proposition:
\begin{propositioni}
  \label{6}
(i) There is a zero measure subset $\Ncal$ of $]0,+\infty[$ such that, if $\kappa$ is taken outside $\Ncal$, there are
$N_0\in \N$, and $c>0$ such that for any $n_1,\dots,n_p$ in $\N^*$,
\begin{equation}
  \label{eq:13}
  \abs{\Dcal_\ell(n_1,\dots,n_p)}\geq c(n_1+\cdots+n_p)^{-N_0}
\end{equation}
except if $p$ is even, $\ell = p/2$ and $\{n_1,\dots,n_\ell\} = \{n_{\ell+1},\dots,n_{p}\}$.

(ii) In the latter case 
\begin{equation}
\label{eq:14}
\Im\tilde{H}_p(\Pin{1}^+U_1,\dots,\Pin{\ell}^+U_\ell,\Pin{\ell+1}^-U_{\ell+1},\dots,\Pin{p}^-U_p;\xi)\equiv 0.
\end{equation}
\end{propositioni}
Proposition~\ref{6} allows consequently to solve equation \eqref{12} in any case, and thus to eliminate the contributions to $\Im H$ of lower
degree of homogeneity. By the estimate \eqref{13} the small divisor 
that appears dividing the right hand side of \eqref{12} by $\Dcal_\ell$ 
produces a loss of  $ O(N_0 )$ derivatives on the smoothness of $B_p$ as a function of $x$, but the associated 
paradifferential operator $ \opbw(B_p (U;t,\xi)) $
 remains bounded on any $H^{s}$ if $U$ is in $ H^\sigma $  with a large $ \sigma $ (depending on $ N_0$). Indeed
 the $H^{s}$-boundedness of the paradifferential operator $ \opbw(B_p (U;t,\xi)) $ 
depends only on a finite number (\emph{independent of $s$}) of
derivatives of  $U$. 

One may proceed in a similar way to eliminate the smoothing contributions, which are
homogeneous of low order in $ U $, in the right hand side of
\eqref{10}. More precisely,   in section \ref{sec:44} we shall construct  iteratively,
 by an analogous  normal 
form method, quasi-invariant modified energy Sobolev norms 
for the system \eqref{10}, see \eqref{4415}, 
constructing ultimately a modified energy $ E_s $ satisfying
\eqref{5}. 
We require at each iterative step a small divisor estimate 
as
\be\label{eq:SmDi1}
\Abs{ 
\sum_{j=0}^\ell\mk(n_j) - \sum_{j=\ell+1}^{p+1}\mk(n_j)}\geq c\max(n_0,\dots,n_{p+1})^{-N_0}
\ee
for any integer $ n_0,\dots,n_{p+1} $ except in the  case when 
\begin{equation*}
p \textrm{ is even}, \  \ \ell = \frac{p}{2}, \  \textrm{ and } \ 
\{n_0,\dots,n_{\ell}\} = \{n_{\ell+1},\dots,n_{p+1}\},
\end{equation*}
and we verify in Lemma \ref{442}-($ii$) a cancellation similar to \eqref{14}. 
Notice that the small divisors estimate \eqref{SmDi1} is very weak because the right hand side contains the maximum of all 
integers $ n_0, \ldots, n_{p+1} $ and not just the third largest $ \max_3 (n_0, \ldots, n_{p+1}) $, as in \eqref{SmDi}. 
The latter condition is essential 
 for the normal form approach developed in the papers \cite{BaG},  \cite{BDGS}, \cite{FG}, concerning semilinear PDEs.
It  could be imposed as well in the present context. The use of the weak non resonance condition \eqref{SmDi1}
produces losses of $ O(N_0) $ derivatives at each normal form transformation which, however, 
in the present approach, are compensated 
by the smoothing character of the remainders, when the Sobolev regularity of $ H^s $ is large enough. 

In conclusion the preceding Proposition \ref{6}, which is proved in sections \ref{sec:43} and \ref{sec:71}, is the last remaining step in the proof of the main Theorem \ref{1}.

\medskip

\begin{proof1}{Idea of the proof of  Proposition \ref{6}}
  The small divisor estimate \eqref{13} of Part (i) 
  follows from more general results of
  Delort-Szeftel~\cite{DSz} concerning measure estimates of sublevels of subanalytic functions. 
Part (ii) is a consequence of the fact that the matrices of symbols $\tilde{H}_p$ obtained by
  the successive reductions described above still satisfy the reality, parity preserving and reversibility
  properties. Combined together, these three properties may be shown to imply the vanishing of 
  the left hand side of \eqref{14}, see Lemma \ref{432}. 
\end{proof1}

As explained above, it is essential 
that the cancellation property \eqref{14} holds in the case when the
left hand side of \eqref{12} vanishes identically. 
This   cancellation follows ultimately from the reversible nature of the capillarity-gravity water waves 
equations  \eqref{1}, and it is
essential for us that this property, 
that holds for  system  \eqref{1}, be preserved by all the
different reductions we make. In particular, a key point is that the definition  $\omega =
\psi-\opbw(B)\eta$ of the new unknown shows, according to \eqref{2a}, that $\omega$ is an 
odd function (actually a linear
function) of $\psi$. Because of that, the water waves system satisfied by $(\eta, \omega)$ will still satisfy the
reversibility property \eqref{1a}. 
Actually we prove in Corollary \ref{622} that the new water waves  system in the $ (\eta, \omega) $ variables 
satisfies a time dependent reversibility property. 

 As already mentioned elsewhere, reversibility is not the only algebraic information that might be used to ensure the solvability of an equation
of the form \eqref{12}, and  so proving that 
the terms that could generate a growth in modified Sobolev norms,
actually vanish. An alternative property,  
that has been used extensively in other works, is  
the Hamiltonian character of the system. In this case, one controls a modified Sobolev energy of the solution, defined through composition of the usual
Sobolev energy by canonical transformations. In that way, one may pursue all the reductions staying in the Hamiltonian
framework. Equation \eqref{12} is then replaced by a similar equation at the level of  Hamiltonians, 
often called ``homological equation'', that may be solved in all cases,
except the one corresponding to (ii) of the preceding proposition. But the contributions to the Hamiltonian generated by the
indices satisfying   $\{n_1,\dots,n_\ell\} = \{n_{\ell+1},\dots,n_{p}\}$ when $p$ is even and $\ell= p/2$
depend actually  only on action coordinates, and thus cannot generate growth of the energy. We refer to the works of
Bambusi-Grébert~\cite{BaG} and Bambusi, Delort, Grébert and Szeftel~\cite{BDGS} were such ideas are developed for
semi-linear equations, and to \cite{D1, D2} for the application of a similar method to quasi-linear Klein-Gordon type
equations. 

We have not followed this Hamiltonian approach for the capillarity-gravity water wave equations
because the passage from the initial unknown to the good one
does not seem to preserve the Hamiltonian character of the system. 
We exploit instead the reversible structure, whose preservation through the change of the good unknown is trivial.

Let us mention that an index of notation is provided at the end of the volume.

\bigskip

{Acknowledgement: we thank Walter Craig and Fabio Pusateri for useful comments which led to an improvement 
of the manuscript.}

\chapter{Main result}\label{cha:1}
\section{The periodic capillarity-gravity equations}\label{sec:11}

  Let $h>0$ be a constant, $\Tu = \R \slash (2 \pi \Z) $ the circle,
  and consider an incompressible and irrotational perfect fluid, occupying at time
  $t$ a two dimensional domain, periodic in the horizontal variable, given by 
  \begin{equation}
    \label{eq:111}
    \Omega_t = \big\{ (x,y)\in \Tu\times\R \, ;  \ -h<y<\eta(t,x) \big\},
  \end{equation}
where $\eta:\R\times\Tu\to \R$ is a smooth enough function such that $\norml{\eta}{\infty}<h$. The velocity field in
$\Omega_t$ is then the gradient of some harmonic function $\Phi$, called the velocity potential. The evolution of the motion of the fluid is determined by
the knowledge of the two functions
\[
(t,x)\to \eta(t,x), \quad (t,x;y)\to \Phi(t,x,y)
\]
defined for $t$ in some interval, $x$ in $\Tu$, $(x,y)$ in $\Omega_t$, subject to the following boundary conditions (see for
instance the book of Lannes~\cite{L})
\begin{equation}
  \label{eq:112}
  \begin{split}
    \partial_t \eta &= \partial_y\Phi - (\partial_x\eta)(\partial_x\Phi) \textrm{ on } y=\eta(t,x)\\
\partial_t\Phi &= -g\eta + \kappa H(\eta) -\frac{1}{2}\bigl[(\partial_x\Phi)^2+(\partial_y\Phi)^2\bigr] \textrm{ on }
y=\eta(t,x)\\
\partial_y \Phi &= 0 \textrm{ on } y=-h,
  \end{split}
\end{equation}
where $g>0$ is the acceleration of gravity, $\kappa>0$ the surface tension, and 
$$
H(\eta) = \partial_x[(\partial_x
\eta)(1+(\partial_x \eta)^2)^{-1/2}] 
$$ 
is the mean curvature. 
From now on, we shall write $\eta'$ for $\partial_x\eta$. We define
\[\psi(t,x) = \Phi(t,x,\eta(t,x))\]
the restriction of the velocity potential $\Phi$ to the free interface and, following Zakharov~\cite{Zakh} and Craig-Sulem~\cite{CS}, we express
\eqref{112} as a system in the variables $(\eta,\psi)$.

Define the Dirichlet-Neumann operator \index{G@$G(\eta)$ (Dirichlet-Neumann operator)} $G(\eta)$ of the upper boundary of the set $\Omega_t$ as
\begin{equation}
  \label{eq:112a}
  G(\eta)\psi = \sqrt{1+\eta'{}^2}(\partial_n\Phi)\vert_{y=\eta(t,x)} = (\partial_y\Phi -\eta'\partial_x\Phi)(t,x,\eta(t,x))
\end{equation}
where $\partial_n$ is the outward unit normal at the free interface $y= \eta(t,x)$. According to Craig-Sulem~\cite{CS},
$(\eta,\psi)$ satisfies the capillarity-gravity water waves system
\begin{equation}
  \label{eq:113}
  \begin{split}
    \partial_t \eta &= G(\eta)\psi\\
\partial_t\psi &= -g\eta + \kappa H(\eta) -\frac{1}{2}(\partial_x\psi)^2 + \frac{1}{2}\frac{(\eta'\partial_x\psi + G(\eta)\psi)^2}{1+\eta'^2}.
  \end{split}
\end{equation}
Our goal is to prove that, for fixed $h$, and almost all values of the  parameters $(g,\kappa)\in ]0,+\infty[^2$, for any fixed
integer $N$, Cauchy data that are smooth enough, of small enough size $ \epsilon > 0 $, even in $ x $, 
generate a solution defined on a time interval of length at least $c\epsilon^{-N}$.

\smallskip

Notice that the right hand side of \eqref{113} remains invariant if we replace $(\eta(x),\psi(x))$ by
$(\eta(-x),\psi(-x))$, so that  initial data which are even in $  x $ give rise to a solution that remains even in $  x $ at any future  time $ t $.

\section{Statement of the main theorem}\label{sec:12}

Let us introduce some notation. For $n$ in $\N^*$, we denote by $\Pin{}$ the orthogonal projector from $L^2(\Tu,\C)$ or
$L^2(\Tu,\C^2)$ to the subspace spanned by $\{e^{inx},e^{-inx}\}$.  For any $s$ in $\R$, we define 
\begin{equation}
  \label{eq:121}
  \begin{split}
    \Hsz{s}(\Tu,\R) &  \stackrel{\textrm{def}}{=}   \Big\{ u\in H^s(\Tu,\R) \, ; \, \int_\Tu u(x)\,dx =0 \Big\}\\
\Hds{s}(\Tu,\R) &  \stackrel{\textrm{def}}{=} H^s (\Tu,\R)/\R \, .
  \end{split}
\end{equation}
Both $ \Hsz{s} $ and the homogeneous Sobolev space $ \Hds{s} $
will be endowed with the same norm
\begin{equation}
  \label{eq:122}
  \norm{u}_{\Hsz{s}} = \norm{u}_{\Hds{s}} = \Bigl(\sum_{n=1}^{+\infty}\norml{\Pin{}u}{2}^2\Bigr)^{1/2}.
\end{equation}
Furthermore, we denote by $\Hsze{s} = \Hsze{s}(\Tu,\R)$,  $ \Hdse{s} = \Hdse{s} (\Tu,\R) $, 
the subspaces of $ \Hsz{s}(\Tu,\R), \Hds{s}(\Tu,\R)$ formed by the functions even in $ x $. Our main result is the following:
\begin{theorem}
  \label{121} {\bf (Almost global existence of periodic capillarity-gravity waves)}
There is a zero measure subset $\Ncal$ in $]0,+\infty[^2$ such that, for any $(g,\kappa)$ in $]0,+\infty[^2-\Ncal$, for any
$N$ in $\N$, there is $s_0>0$ and for any $s\geq s_0$, there are $\epsilon_0>0, c>0, C>0$ such that, for any $\epsilon\in
]0,\epsilon_0[$, any function  
$ (\eta_0, \psi_0) \in \Hsze{s+\frac{1}{4}}(\Tu,\R) \times \Hdse{s-\frac{1}{4}}(\Tu,\R) $ satisfying 
$$ 
\norm{\eta_0}_{H^{s+\frac{1}{4}}_0} + \norm{\psi_0}_{ {\dot H}^{s - \frac{1}{4}}} <\epsilon \, , 
$$
system \eqref{113} has a unique classical solution $(\eta,\psi)$ defined on $]-T_\epsilon,T_\epsilon[\times \Tu$ with
$T_\epsilon\geq c\epsilon^{-N}$, belonging to the space
\[C^0(]-T_\epsilon,T_\epsilon[,\Hsze{s+\frac{1}{4}}(\Tu,\R)\times \Hdse{s-\frac{1}{4}}(\Tu,\R))\]
satisfying the initial condition $\eta\vert_{t=0} = \eta_0, \psi\vert_{t=0} = \psi_0 $. 
Moreover this solution 
stays  at any time in the ball of center 0 and radius $C\epsilon$ of $\Hsze{s+\frac{1}{4}}\times \Hdse{s-\frac{1}{4}}$.
\end{theorem}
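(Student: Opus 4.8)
The plan is to reduce Theorem~\ref{121} to the sequence of propositions already stated in the introduction, and then close the argument by a bootstrap. First I would recall that local existence with smooth, even, periodic Cauchy data is known (Schweizer~\cite{Schw}), so the only issue is to prevent the $H^s$-norm of the solution from leaving a ball of radius $C\epsilon$ on a time interval of length $c\epsilon^{-N}$; by a standard continuation argument it suffices to prove the a priori estimate $\norm{(\eta,\psi)(t)}_{\Hsze{s+1/4}\times\Hdse{s-1/4}}=O(\epsilon)$ for $|t|\leq c\epsilon^{-N}$, as long as the solution exists and stays small. I would also record at the outset that the conserved mass $\int_\Tu\eta\,dx$ lets us place $\eta$ in $H_0^{s+1/4}$, and that the right-hand side of \eqref{113} only depends on $\psi$ modulo constants, so $\psi\in\Hds{s-1/4}$; these two facts are what make the small-divisor denominators in Proposition~\ref{6} nonzero (all modes $n_j\geq 1$).

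Next I would carry out the change of unknowns. Starting from the even solution $(\eta,\psi)$ of \eqref{113}, define the good unknown $\omega=\psi-\opbw(B)\eta$ as in \eqref{2aa}--\eqref{2a}, then the complex variable $u=\lk\omega+i\lk^{-1}\eta$ as in \eqref{3}, and $U=\vect{u}{\bar u}$. By Proposition~\ref{2}, $U$ solves the paradifferential system \eqref{4} with a matrix symbol $A(U;t,x,\xi)$ satisfying the reality, parity preserving and reversibility conditions. Applying Proposition~\ref{3} (diagonalization), then Proposition~\ref{4} (reduction of the order-$3/2$ term to constant coefficients via the paracomposition operator $\Phi_U^\star$), then Proposition~\ref{5} (reduction to constant coefficients of the order $1$ and $1/2$ terms via the flow $\of{\theta}$), one arrives at the system \eqref{9} for $\tilde V$, whose principal operator $\opbw\bigl((1+\zu(U;t))\mk(\xi)\sm{1}{0}{0}{-1}\bigr)$ is self-adjoint, with $\Im H$ of order zero and $R_1,R_2$ genuinely $\rho$-smoothing. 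All the transformations involved are bounded and boundedly invertible on $\Hds{s}$ for $s$ large, so $\norm{\tilde V(t)}_{\Hds{s}}$ and $\norm{U(t)}_{\Hds{s}}$ are comparable as long as $U$ stays small, and likewise comparable to the original norm $\norm{(\eta,\psi)(t)}_{\Hsze{s+1/4}\times\Hdse{s-1/4}}$.

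Then comes the normal form step. Conjugating \eqref{9} by $\exp(\opbw(B(U;t,\xi)))$ with $B=\sum_{p<N}B_p$ chosen to solve the homological equations \eqref{11}--\eqref{12}, one eliminates the homogeneous components of degree $<N$ of $\Im H$; Proposition~\ref{6}(i) provides the small-divisor bound \eqref{13} off a zero-measure set of $\kappa$ (hence, with the analogous estimate for the $g$-dependence absorbed, off a zero-measure set $\Ncal\subset\,]0,\infty[^2$), and Proposition~\ref{6}(ii) guarantees the cancellation \eqref{14} in the sole resonant case, so \eqref{12} is always solvable. The loss of $O(N_0)$ $x$-derivatives from dividing by $\Dcal_\ell$ is harmless because $\opbw(B_p)$ is bounded on $H^s$ using only finitely many derivatives of $U$ (independent of $s$). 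Iterating the same scheme on the smoothing remainders as in section~\ref{sec:44}, and using Lemma~\ref{442}(ii), one builds a modified energy $E_s(U)\sim\norm{U}_{\Hds{s}}^2$ whose time derivative satisfies \eqref{5}, i.e.\ $\tfrac{d}{dt}E_s(U(t))=O(\norm{U(t)}_{\Hds{s}}^{N+2})$.

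Finally I would close the bootstrap: from \eqref{5} and $E_s(U(0))\sim\epsilon^2$, a Gronwall/continuity argument gives $E_s(U(t))=O(\epsilon^2)$, hence $\norm{U(t)}_{\Hds{s}}=O(\epsilon)$, for $|t|\leq c\epsilon^{-N}$; translating back through the (bounded, invertible) transformations yields $\norm{(\eta,\psi)(t)}_{\Hsze{s+1/4}\times\Hdse{s-1/4}}\leq C\epsilon$ on that interval. Combined with local existence and uniqueness from~\cite{Schw}, and with the observation that evenness and the zero-average of $\eta$ are preserved by the flow, this gives the claimed solution on $]-T_\epsilon,T_\epsilon[$ with $T_\epsilon\geq c\epsilon^{-N}$, proving Theorem~\ref{121}. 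The main obstacle in this program is twofold and is exactly what occupies the bulk of the monograph: first, verifying that the paralinearization of the Dirichlet--Neumann operator and of the $\partial_t\omega$ equation produces symbols and remainders with the required homogeneous-plus-smoothing structure \eqref{4a} (so that the symbolic calculus and the normal form division make sense), and second, proving Proposition~\ref{6} — the measure estimate \eqref{13} for the sublevels of the subanalytic function $\Dcal_\ell$, and the cancellation \eqref{14}, which rests on carefully tracking that the reality, parity preserving and reversibility structure survives every one of the reductions above.
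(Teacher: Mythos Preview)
Your proposal is correct and follows essentially the same route as the paper: reduce to $g=1$ by scaling, pass to the complex unknown $U$ via the good unknown and Proposition~\ref{631}, then apply the chain of reductions (Propositions~\ref{322}, \ref{411}, \ref{421}, \ref{431}) and the normal-form/modified-energy construction of section~\ref{sec:44} to obtain \eqref{5}, and close by the bootstrap \eqref{4413}--\eqref{4414} combined with local existence from~\cite{Schw}. The only small imprecision is the handling of the two parameters: the paper does not prove a separate measure estimate in $g$ but instead rescales to $g=1$ at the outset, so that $\Ncal\subset]0,\infty[^2$ is the preimage under $(g,\kappa)\mapsto\kappa/g$ of the zero-measure set of bad $\kappa$'s from Proposition~\ref{711}.
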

\textbf{Remarks}: $\bullet$ The above theorem provides an ``almost global'' solution of system \eqref{113} with small smooth
periodic  
even initial data, when the parameters $(g, \kappa )$ stay outside a subset of zero measure. Notice that we
assume that $g>0$ and $\kappa>0$, which will be essential for our proof.

$\bullet$ We assume that at $t=0$, $\eta$ has zero average. It is well known that the quantity $\int_{\Tu}\eta(t,x)\;dx$ is
conserved during the evolution, so that $\eta(t,\cdot)$ has zero average at any time.

$\bullet$ The function $\psi$ is taken in a Sobolev space defined modulo constants, i.e. the homogeneous Sobolev spaces in \eqref{121}. This is related to the fact that only
$\partial_x\psi$ has a physical meaning.

$\bullet$ An essential property 
 is that the water waves system~\eqref{113} is reversible in the following sense. Let \index{S@$S$
  (Reversibility operator in real form)}
$ S $ be the linear involution 
i.e. $ S^2 = {\rm Id} $, 
\begin{equation}\label{eq:121a}
S : \R^2 \to \R^2 \, , \quad {\rm with \ matrix} \quad \sm{1}{0}{0}{-1} \, .
\end{equation}
Then, 
denoting by $ F = F(\eta, \psi) = 
 \vect{F_1(\eta, \psi)}{F_2(\eta, \psi)} $ the right hand side   in \eqref{113}, we have 
\be\label{eq:Frev}
F\circ S = -S\circ F \, , 
\ee
namely that $ F_1 (\eta, - \psi) = - F_1 (\eta, \psi )$ is odd in $ \psi $ and $ F_2 (\eta, - \psi) =  F_2 (\eta, \psi )$ is even in 
$ \psi $. 

Denoting by $ \Phi^t $ the flow of \eqref{113},  the reversibility  property \eqref{Frev} is equivalent to
$$
S \circ \Phi^{-t} = \Phi^t \circ S \, .
$$
In other words, if  
$ \vect{\eta (t; \eta_0, \psi_0)}{\psi (t; \eta_0, \psi_0)} $
is the solution of \eqref{113} with initial condition 
$ \eta (0; \eta_0, \psi_0) = \eta_0 $, $  \psi (0; \eta_0, \psi_0) = \psi_0  $, then
$$
S \vect{\eta (-t; \eta_0, \psi_0)}{\psi (-t; \eta_0, \psi_0)} =  \vect{\eta (-t; \eta_0, \psi_0)}{-\psi (-t; \eta_0, \psi_0)} 
$$
is solution as well of \eqref{113}, and, at $ t = 0 $, it takes the value 
$ \vect{\eta_0}{-\psi_0} $. Therefore, by uniqueness of the solution,  
$$
\eta (-t; \eta_0, \psi_0)  =  \eta (t; \eta_0, - \psi_0) \, , \quad 
-\psi (-t; \eta_0, \psi_0) =  \psi (t; \eta_0, -\psi_0) \, , 
$$
which may be written as
\begin{equation}
  \label{eq:125}
  \vect{\eta}{\psi}\Bigl(-t;\vect{\eta_0}{\psi_0}\Bigr) = S \vect{\eta}{\psi}\Bigl(t;S\vect{\eta_0}{\psi_0}\Bigr).
\end{equation}
The reversibility of the water waves system 
plays an essential role in the proof of our theorem. Together with evenness,
this condition will ensure that some quantities, that could generate a growth in modified 
Sobolev norms, actually vanish, see Lemmas \ref{432} and \ref{442}.

\medskip

The first step in the proof of Theorem \ref{121} will be to rewrite the water waves system 
\eqref{113} in terms of the ``good unknown'' of Alinhac. This will be essential in order to 
obtain energy estimates. To do
so, we  introduce tools of paradifferential calculus in the next chapter.

\smallskip

Let us fix some notation that will be used in the rest of the monograph. Since the quotient map induces an isometry from
$\Hsz{s}$ to $\Hds{s}$, we shall consider $(\eta,\psi)$ as an element of $\Hdse{s+\frac{1}{4}}\times \Hdse{s-\frac{1}{4}}$,
identifying thus $\eta$ to its inverse image through this isometry.

To simplify notation, we shall assume that the depth $h$ is equal to 1. Moreover, we may reduce to the case $g=1$: actually,
considering instead of $(\eta,\psi)$ the couple of functions $(\eta(\sqrt{g}t,x),\sqrt{g}\psi(\sqrt{g}t,x))$, we reduce
\eqref{113} to the same system, with $(g,\kappa)$ replaced by $(1,\kappa/g)$.

Let us mention that in the whole monograph, we shall denote indifferently $\norml{\cdot}{2}$ or $\norm{\cdot}_{\Hds{0}}$. It
should be understood that in any case the zero frequency of functions at hand is discarded.

\chapter{Paradifferential calculus}\label{cha:2}

\section{Classes of symbols}\label{sec:21}

We define in this chapter several classes of symbols of paradifferential operators 
that we shall use in the 
whole monograph. Our classes are essentially
standard ones, except that our symbols 
are depending on some functions  (that in the application will be the solution $ U $ of the equation itself), 
and we control the semi-norms of the symbols from
these functions. 

To fix ideas, let us mention that in the whole monograph we shall deal with parameters satisfying
\begin{equation}
  \label{eq:211}
  s \gg \sigma\gg K \gg \rho \gg N
\end{equation}
where $ N $ is the exponent in the lower bound of the existence time of Theorem~\ref{121} 
and $\rho$ will be the smoothing
degree at which we shall stop the symbolic calculus of paradifferential operators. 
As explained in the introduction $ N $ coincides with the number  of normal form steps 
performed in sections
\ref{sec:43} and \ref{sec:44}.  
In order to implement these Birkhoff normal form 
procedures we have to choose the parameter $ \kappa $ such that the
small divisor estimate \eqref{713} of Proposition \ref{711} holds. The effect of the small divisors 
is to produce a loss of derivatives proportional to $ N_0 $ and $ N $, that may 
be compensated by taking  the regularizing index $ \rho \gg N $ large enough. This requires also 
to develop the paradifferential 
calculus  
in Sobolev spaces $ \Hds{s} $ with a regularity $ s $ large enough,  $ s \geq \sigma $
with  $ \sigma \gg \rho $.  Moreover we look for classical solutions 
 $ U(t) $ of the capillarity-gravity water waves equations which are $ K $-times differentiable in time, 
 with derivatives  $ \pa_t^k U $ in $\Hds{\sigma-\frac{3}{2}k}$, $ k = 0, \ldots, K $, 
 and thus we need  $ \sigma \gg K $. 
  Finally, the reason why $ K \gg \rho $ is that, in order to obtain system \eqref{426}, which has constant coefficients
  up to $ \rho $-smoothing remainders, see also \eqref{10}, 
  we shall  perform a  large number of 
  conjugations, large with respect to $ \rho $, of the water waves system \eqref{113}, 
  to obtain para-differential   operators which are enough regularizing.
Each conjugation consumes one time derivative, so that we shall need $ K \gg \rho $.

We also notice that our solutions will remain in a
small ball of the space of $C^k$ functions of time with values in $\Hds{\sigma-\frac{3}{2}k}$, and  their time derivative of
order $k$ will be continuous in $t$ with values in $\Hds{s-\frac{3}{2}k}$, for $k\leq K$.

\smallskip

We define below several classes of symbols and operators. We shall stick to the following convention: for $p$  in $\N$,
we shall denote by $\tilde{A}_p$ (with $A$ replaced by several letters, affected eventually with some indices) classes of
symbols, i.e. functions of $(x,\xi)$,  (or operators), 
which are symmetric $p$-linear maps of some argument $\Ucal = (U_1,\dots,U_p)$
belonging to a functional space, where each $ U_j = U_j (t) $ are functions of time.  
The notation $A_N[r]$ with $N$ in $\N^*$, $r>0$ will be used for 
symbols (or operators) depending in a non-homogeneous way
 on a function $U$ staying in the ball of center zero, 
radius $r$ in a convenient space, and vanishing at order $N$ when $U$ goes
to zero in that space. Finally, $\Sigma A_p[r,N]$ will denote sums of functions homogeneous in $ U $ 
of degree $q$, $p\leq q\leq N-1$,
defined as restrictions at $U_1=\cdots=U_q = U$ of a $ q $-linear map  of $\tilde{A}_q$, and of an element of $A_N[r]$. In the whole 
monograph, for the
homogeneous symbols (or operators) the dependence on time $ t $ will enter  only through the function 
$ U = U( t ) $, while  the non-homogeneous symbols
(or operators)  in $A_N[r]$ may depend explicitly on time $ t $. 

\medskip

Before giving the first instance where these conventions will be used, we introduce some notation. 
If $K$ is in $\N$, $I$ is an interval of $\R$, symmetric with respect to the origin $ t = 0 $, 
 $\sigma$ is in $\R$, 
we denote by \index{Ca@$\CKH{\sigma}{\C^2}$ (Space of functions)} $\CKH{\sigma}{\C^2}$
the space of continuous functions $U $ of $t\in I$ with
values in $\Hds{\sigma}(\Tu,\C^2)$, which are 
differentiable in $t$ with values in $\Hds{\sigma-\frac{3}{2}k} 
(\Tu,\C^2) $, for any $ 0 \leq k\leq K$, 
with continuous derivatives 
$\partial_t^k U \in \Hds{\sigma-\frac{3}{2}k} (\Tu,\C^2) $. 
The space  $\CKH{\sigma}{\C^2}$ is endowed with the norm
\begin{equation}
  \label{eq:212}
  \sup_{t\in I}\nnorm{U(t,\cdot)}_{K,\sigma} \quad \textrm{ where } 
  \quad
 \nnorm{U(t,\cdot)}_{K,\sigma} = \sum_{k=0}^K\norm{\partial^k_t U(t,\cdot)}_{\Hds{\sigma-\frac{3}{2}k}}.
\end{equation}
We denote by \index{Cb@$\CKHR{\sigma}{\C^2}$ (Space of functions)} $\CKHR{\sigma}{\C^2}$ 
the subspace of $\CKH{\sigma}{\C^2}$ made of the functions of $ t $ 
with values in $\Bigl\{ U\in
\Hds{\sigma}(\Tu;\C^2); U = \bigl[\begin{smallmatrix}u\\\bar{u}\end{smallmatrix}\bigr]\Bigr\}$.

If  $ \Ucal = (U_1,\dots,U_p)$ with $U_j$ in $\Hds{\infty}(\Tu,\C^2)$,
respectively if $U$ is in $\CKH{\sigma}{\C^2}$, we set
\begin{equation}
  \label{eq:213}
\begin{split}
     \index{Gla@$\Gcal^\sigma_{0,p}(\Ucal)$ (Product of norms)} \Gcal^\sigma_{0,p}(\Ucal) =
 \prod_{j=1}^p\norm{U_j}_{\Hds{\sigma}} \, , \\
\index{Gl@$\Gcal^\sigma_{K,p}(U)$ (Norm at power $p$)} \Gcal^\sigma_{K,p}(U,t) =
 \nnorm{U(t,\cdot)}^p_{K,\sigma}\, .
\end{split}
\end{equation}
When $p=0$, we set $\Gcal^\sigma_{K,0} = 1 $ by convention. If $\Ucal = (U_1,\dots,U_p)$ is a $p$-tuple of functions, $n =
(n_1,\dots,n_p)$ an element of $(\N^*)^p$, we set 
\begin{equation}
  \label{eq:213a}
  \Pin{}\Ucal = (\Pin{1}U_1,\dots,\Pin{p}U_p)
\end{equation}
where $\Pin{j}$ is the spectral projector defined before \eqref{121}. Finally, let us mention that in chapters~5 and~6 we
shall use similar notations where $U_j$ will be in $\Hds{\sigma}(\Tu,\R)$ or $\Hds{\sigma}(\Tu,\R^2)$.
\begin{definition}{\bf ($p$-homogeneous symbol)}  
  \label{211}
Let $m$ in $\R$, $p$ in $\N^*$. We denote by \index{Ga@$\Gt{m}{p}$ (Space of homogeneous symbols)} $\Gt{m}{p}$ the space of
symmetric $p$-linear maps from $\Hds{\infty}(\Tu,\C^2)^p$ to the space of $C^\infty$ functions 
in  $(x,\xi)\in\Tu\times\R$
\[\Ucal \to ((x,\xi)\to a(\Ucal; x,\xi ))\]
satisfying the following: There is $\mu\geq0$ and for  any $\alpha, \beta\in \N$, there is $C>0$ and for
any $\Ucal$ in $\Hds{\infty}(\Tu,\C^2)^p$, any 
$n = (n_1,\dots,n_p)$ in $(\N^*)^p $,
\begin{equation}
  \label{eq:214}
  \abs{\partial^\alpha_x\partial^\beta_\xi a(\Pin{}\Ucal;x,\xi)} \leq
  C\abs{n}^{\mu+\alpha}\absj{\xi}^{m-\beta}\Gcalsm{0}{0,p}{\Pin{}\Ucal} \, . 
\end{equation}
Moreover, we assume that if, for some $(n_0,\dots,n_p)$ in $ 
\N\times(\N^*)^p $, 
\begin{equation}
  \label{eq:215}
  \Pin{0}a(\Pin{1}U_1,\dots,\Pin{p}U_p;\cdot)\not\equiv 0,
\end{equation}
then there is a choice of signs $\epsilon_0, \ldots, \epsilon_p \in \{-1,1\}$ such that $\sum_0^p\epsilon_jn_j = 0$.

When $p=0$, we denote by $\Gt{m}{0}$ the space of constant coefficient symbols $\xi\to a(\xi)$, that satisfy inequality
\eqref{214} for $\alpha =0$, with in the right hand side the $\abs{n}$ factor replaced by one, and with the convention
$\Gcalsm{0}{0,0}{\Pin{}\Ucal} =1$. 
\end{definition}
\textbf{Remarks}: 
$\bullet$
     In the sequel, we shall consider functions $\Ucal = (U_1,\dots,U_p)$ where $U_j$ depends also on
time $t$, so that the corresponding symbols in the above definition are functions of $(t,x,\xi)$ that we  denote also by 
$a(\Ucal;t,x,\xi)$.

$\bullet$
 If the dependence of $a$ on $(U_1,\dots,U_p)$ is polynomial with constant coefficients in $x$, then, as $\Pin{j}U_j$ is a linear
combination of $e^{in_j x}$,  $e^{-in_j x}$, condition \eqref{215} holds automatically. 

$\bullet$
 If $a \in {\tilde \Gamma}^m_p$ and $b \in {\tilde \Gamma}^{m'}_q $ then 
$a b \in {\tilde \Gamma}^{m+m'}_{p+q} $.  
If $a \in {\tilde \Gamma}^m_p $ then  $ \partial_x a  \in {\tilde \Gamma}^{m}_{p} $ and 
$ \partial_\xi a  \in {\tilde \Gamma}^{m-1}_{p} $.

$\bullet$
Let $ K \in \N $ and $ \sigma_{0} \geq \frac32 K + \mu + \frac12 $. 
For $ \sigma \geq \sigma_0 $ let consider a function 
$$ 
U \in C_*^K  (I,\Hds{\sigma}(\Tu,\C^2)) \, .
$$ 
Take $ a $ in $ {\tilde \Gamma}^m_p $, $ p \in \N^* $.  
We claim that, for all $ 0 \leq k \leq K $, $ \alpha \leq \sigma - 
\sigma_0 $, $ \beta \in \N $, the function
\begin{equation}\label{eq:hom-nonhom}
a( \underbrace{U; \ldots, U}_{p}; t,x,\xi) = a( U (t, \cdot) , \ldots, U (t, \cdot) ; 
x,\xi) 
\end{equation}
satisfies the estimate  
\begin{equation}
  \label{eq:217}
  \abs{\partial^k_t\partial^\alpha_x\partial^\beta_\xi a(U,\dots,U;t,x,\xi)} 
  \leq C\absj{\xi}^{m-\beta}\nnorm{U(t,\cdot)}_{k,\sigma_0+\alpha} 
  \nnorm{U(t,\cdot)}_{k,\sigma_0}^{p-1}.
\end{equation}
Indeed, 
if we make act $k$ time derivatives on $ a $, we have, 
by the  $ p $-linearity of $ a $ in each argument $ U $,  
\begin{multline*}
\partial_{t}^{k} \partial_{x}^{\alpha} \partial_{\xi}^{\beta} 
a(U(t, \cdot ), \ldots, U(t, \cdot ); x,\xi)  = \\
\sum_{n_1, \ldots, n_p \in \N^*}\sum_{k_{1}+ \ldots +
k_{p} = k } C_{k_1, \ldots, k_p } 
\partial_{x}^{\alpha} \partial_{\xi}^{\beta} a( \partial_t^{k_1} 
\Pi_{n_1} U (t), \ldots, 
\partial_t^{k_p} \Pi_{n_p} U (t ) ; x,\xi)
\end{multline*}
for suitable binomial coefficients $ C_{k_1, \ldots, k_p }$. By 
\eqref{214} and recalling the definition of the norm \eqref{212}, 
each term in the right hand side of the above sum, with
for instance $n_1\geq n_2,\dots,n_p$, 
is bounded by 
\begin{multline*}
    C \langle \xi \rangle^{m - \beta } \abs{n}^{\mu+\alpha}\prod_1^p\norm{\partial_t^{k_j}
   \Pin{j}U(t,\cdot)}_{L^2} \leq  \\
    C\langle \xi \rangle^{m - \beta } n_1^{\mu - \sigma_{0}  + \frac32 k_1 } 
    c^1_{n_1} \| \partial_t^{k_1} U(t) \|_{ {\dot H}^{\sigma_0+\alpha 
    - \frac32 k_1 }} 
    \prod_{2}^p n_j^{-\sigma_0 + \frac32 k_j } c^j_{n_j} 
\| \partial_t^{k_j} U(t) \|_{ {\dot H}^{\sigma_0 - \frac32 k_j}}  \\
   \leq 
    C\langle \xi \rangle^{m - \beta } 
    n_1^{\mu-\sigma_0+\frac{3}{2}k}\prod_2^p n_j^{-\sigma_0} 
    c^1_{n_1}\dots
c^p_{n_p} \nnorm{U(t,\cdot)}_{k,\sigma_0+\alpha} \nnorm{U(t,\cdot)}_{k,\sigma_0}^{p-1}
\end{multline*}
where $(c^j_{n_j})_{n_j}$ 
stands for some $\ell^2$ sequence. Since  $\sigma_0-\frac{3}{2}K>\mu+\frac{1}{2}$, 
we obtain \eqref{217}.  In conclusion, 
the function $ a(U, \ldots, U; t, x, \xi)$ is a symbol of a 
pseudo-differential operator with limited smoothness in $x$, with a 
control of its semi-norms in terms of $ U $,   vanishing in $ U $ with degree $ p $. 
Note that the bound in 
\eqref{217} is tame, namely it is
linear in the highest norm  $ \nnorm{U(t,\cdot)}_{k,\sigma_0 +\alpha} $.

\medskip

We define next symbols $ a(U;t,x,\xi) $  for which the $U$-dependence is not homogeneous, using estimates 
of type \eqref{217}, where moreover
we allow also $K'$ extra time derivatives in the right hand side.
For $\sigma\in \R$, $k\in \N$, $r>0$, $I$ an interval of $ \R $, we 
define
\begin{equation}
  \label{eq:216}
  \index{Ba@$B^k_\sigma(I,r)$ (Ball in the space of functions of $(t,x)$)}  
B^k_\sigma(I,r) = \Big\{ U\in C^k_*(I,\Hds{\sigma}(\Tu,\C^2)); \ \sup_{t\in I}\nnorm{U(t,\cdot)}_{k,\sigma}<r \Big\}.
\end{equation}

\begin{definition}{\bf (Non-homogeneous symbol)}
  \label{212}
Let $m$ be in $\R$, $p$ in $\N$, $K'\leq K$ in $\N$, $r>0$. One denotes by $\Gr{m}{K,K',p}$ \index{Gb@$\Gr{m}{K,K',p}$ (Space of non-homogeneous symbols)}
the space of functions $(U,t,x,\xi) \to a(U;t,x,\xi)$, defined for $U$ in $B_{\sigma_0}^K(I,r)$, for some large enough
$\sigma_0$, $t$ in $I$, $x$ in $\Tu$, $\xi$ in $\R$, with complex values, such that for any $0\leq k\leq K-K'$, any $\sigma\geq
\sigma_0$, there are $C>0$, $r(\sigma)\in ]0,r[$ and for any $U$ in  $B_{\sigma_0}^K(I,r(\sigma))\cap\CkH{\sigma}{\C^2}$, any
$\alpha, \beta$ in $\N$, with $\alpha\leq \sigma-\sigma_0$
\begin{equation}
  \label{eq:218}
  \abs{\partial_t^k\partial^\alpha_x\partial^\beta_\xi a(U;t,x,\xi)} \leq C\absj{\xi}^{m-\beta}
  \Gcals{\sigma_0}{k+K',p-1}{U}\Gcals{\sigma}{k+K',1}{U},
\end{equation}
(where, if $p=0$, the right hand side has to be replaced by 
$C\absj{\xi}^{m-\beta}  $).
\end{definition}
Notice that  the above non-homogeneous symbols  $ a(U;t,x,\xi) $ 
depend explicitly on the time variable $ t $ (not only through the function
$ U(t)$ as for the homogeneous symbols). 
We introduce also a definition for the subclass of non-homogeneous symbols that depend
on time only through $U$.

\begin{definition}{\bf (Autonomous non-homogeneous symbol)}
  \label{212bis}
We denote by $\Gra{m}{K,0,p}$ \index{Gba@$\Gra{m}{K,0,p}$ (Space of autonomous
  symbols)} the subspace of  $\Gr{m}{K,0,p}$ made of the non-homogeneous symbols $(U,x,\xi)\to a(U;x,\xi)$ 
 that satisfy estimates \eqref{218} with $K'=0$,
the time dependence being only through $U = U( t) $.
\end{definition}

\smallskip

\noindent
\textbf{Remarks}: $\bullet$ By \eqref{217}, a symbol $ a ({\cal U}; \cdot ) $ of $\Gt{m}{p}$ 
defines,  by restriction to the diagonal, the symbol $ a(U, \ldots, U; \cdot ) $ of $\Gra{m}{K,0,p}$ for
any $r>0$. 

$\bullet$ If $a$ is in $\Gr{m}{K,K',p}$ with $K'\leq K-1$, then $\partial_ta$ is in $\Gr{m}{K,K'+1,p}$.

$\bullet$ If $ a  \in \Gamma_{K,K',p}^m [r]  $ then  
$ \partial_\xi a   \in \Gamma_{K,K',p}^{m-1} [r]  $ and 
$ \partial_x a  \in \Gamma_{K,K',p}^m [r]  $, by the fact that we may increase the value of $  \sigma_0 $ 
in Definition \ref{212}.

$\bullet$ If $ a \in \Gamma_{K,K',p}^m [r] $ and $ b \in \Gamma_{K,K',q}^{m'} [r] $ then 
$ a b \in \Gamma_{K,K',q+p}^{m+m'} [r]  $. 
Let us prove this remark. 
Notice that if $\alpha = \alpha_1+\alpha_2$, we
have 
\begin{equation}
  \label{eq:218a}
  \nnorm{U}_{k_1,\alpha_1+\sigma_0}\nnorm{V}_{k_2,\alpha_2+\sigma_0} \leq
  C\bigl(\nnorm{U}_{k_1,\alpha+\sigma_0}\nnorm{V}_{k_2,\sigma_0} + \nnorm{U}_{k_1,\sigma_0}\nnorm{V}_{k_2,\alpha+\sigma_0}\bigr).
\end{equation}
Actually, recalling the definition \eqref{212} 
of the norm $\nnorm{\cdot}_{K,\sigma}$, we have
\begin{multline*}
  \nnorm{U}_{k_1,\alpha_1+\sigma_0}\nnorm{V}_{k_2,\alpha_2+\sigma_0}  = \
  \sum_{k_1'=0}^{k_1}\sum_{k'_2=0}^{k_2}
  \norm{\partial_t^{k'_1}U}_{{\dot H}^{\alpha_1+\sigma_0 -\frac{3}{2}k'_1}}
  \norm{\partial_t^{k'_2}V}_{ {\dot H}^{ \alpha_2+\sigma_0 
  -\frac{3}{2}k'_2} } \\
  \leq \sum_{k_1'=0}^{k_1}\sum_{k'_2=0}^{k_2} 
  \norm{\partial_t^{k'_1}U}_{ {\dot H}^{\alpha+\sigma_0 -\frac{3}{2}k'_1} }
\norm{\partial_t^{k'_2}V}_{ {\dot H}^{ \sigma_0 -\frac{3}{2}k'_2 } } +\\
\norm{\partial_t^{k'_1}U}_{ {\dot H}^{ \sigma_0 -\frac{3}{2}k'_1 } }
\norm{\partial_t^{k'_2}V}_{ {\dot H}^{\alpha+\sigma_0 -\frac{3}{2}k'_2 } }
\end{multline*}
by interpolation.  
By the last inequality, 
\eqref{218a} follows.
Taking now $a\in \Gr{m}{K,K',p}$ and $b\in \Gr{m'}{K,K',q}$, write for $0\leq k\leq K-K'$, $\alpha, \beta$ in $\N$, with
$\alpha\leq \sigma-\sigma_0$
\begin{multline*}
  \partial_t^k\partial_x^\alpha\partial_\xi^\beta\bigl(a(U;t,x,\xi)b(U;t,x,\xi)\bigr) \\
= \sum_{\substack{k_1+k_2=k,\alpha_1+\alpha_2=\alpha,\\\beta_1+\beta_2=\beta}}C_{k_1,\alpha_1,\beta_1}^{k_2,\alpha_2,\beta_2}
(\partial_t^{k_1}\partial_x^{\alpha_1}\partial_\xi^{\beta_1}a(U;t,x,\xi))(\partial_t^{k_2}\partial_x^{\alpha_2}\partial_\xi^{\beta_2}a(U;t,x,\xi))
\end{multline*}
for suitable binomial coefficients $C_{k_1,\alpha_1,\beta_1}^{k_2,\alpha_2,\beta_2}$. Using \eqref{218}, we get
\begin{multline*}
  \Abs{\partial_t^k\partial_x^\alpha\partial_\xi^\beta\bigl(a(U;t,x,\xi)b(U;t,x,\xi)\bigr)} \\
\leq C\sum_{\substack{k_1+k_2=k,\alpha_1+\alpha_2=\alpha,\\\beta_1+\beta_2=\beta}}\absj{\xi}^{m-\beta_1}\Gcals{\sigma_0}{k_1+K',p-1}{U}\Gcals{\alpha_1+\sigma_0}{k_1+K',1}{U}\\
\times\absj{\xi}^{m'-\beta_2}\Gcals{\sigma_0}{k_2+K',q-1}{U}\Gcals{\alpha_2+\sigma_0}{k_2+K',1}{U}\\
\leq
C\sum_{\alpha_1+\alpha_2=\alpha}\absj{\xi}^{m+m'-\beta}\nnorm{U}_{k+K',\sigma_0}^{p+q-2}\nnorm{U}_{k+K',\alpha_1+\sigma_0}\nnorm{U}_{k+K',\alpha_2+\sigma_0}\\
\leq C\absj{\xi}^{m+m'-\beta}\nnorm{U}_{k+K',\sigma_0}^{p+q-1}\nnorm{U}_{k+K',\alpha+\sigma_0}\\
\leq C\absj{\xi}^{m+m'-\beta}\Gcals{\sigma_0}{k+K',p+q-1}{U}\Gcals{\sigma}{k+K',1}{U}
\end{multline*}
where we used \eqref{218a} and took $\sigma\geq\alpha+\sigma_0$. Thus $ab$ satisfies the estimate \eqref{218} of 
elements of
$\Gr{m+m'}{K,K',p+q}$.
\medskip

Finally, we define symbols that 
may be expressed as the sum of homogeneous elements of
$\Gt{m}{q}$, $ q = p-1, \ldots, N - 1$,  plus a non-homogeneous symbol vanishing in $ U $ at order $ N $. 
\begin{definition}{\bf (Symbols)}
  \label{213}
Let $m$ be in $\R$, $p$ in $\N$, $K'\leq K$ in $\N$, $r>0$, $N$ in $\N$ with $p\leq N$.  One denotes by $\sG{m}{K,K',p}{N}$ \index{Gc@$\sG{m}{K,K',p}{N}$ (Space of symbols)}
the space of functions $(U,t,x,\xi) \to a(U;t,x,\xi)$ such that there are homogeneous symbols $a_q$ in $\Gt{m}{q}$ 
for $q = p,\dots,N-1$, and 
a non-homogeneous symbol $a_N$ in $\Gr{m}{K,K',N}$,  so that
\begin{equation}
  \label{eq:219}
  a(U;t,x,\xi) = \sum_{q=p}^{N-1} a_q(U,\dots,U; x,\xi) + a_N(U;t,x,\xi) \, .
\end{equation}
We set $\sG{-\infty}{K,K',p}{N} = \bigcap_m \sG{m}{K,K',p}{N}$.

We define the subclasses of autonomous symbols 
$\sGa{m}{K,0,p}{N}$ \index{Gca@$\sGa{m}{K,0,p}{N}$ (Space of autonomous symbols)} by formula
\eqref{219}, where $a_N$ is taken in the class  $\Gra{m}{K,0,N}$ of Definition~\ref{212bis}. We set finally $\sGa{-\infty}{K,0,p}{N} = \bigcap_m \sGa{m}{K,0,p}{N}$.
\end{definition}
For $ p = N $ we mean that the symbol $ a = a_N $ 
is purely non-homogeneous. 

\smallskip

\noindent
\textbf{Remarks}: $ \bullet $
We have the following simple inclusions
\begin{align*}
& m_1 \leq m_2 \, , \qquad \sG{m_1}{K,K',p}{N} \subseteq \sG{m_2}{K,K',p}{N} \, , \\
& p_1 \leq p_2 \, , \ \ \qquad \sG{m}{K,K',p_2}{N} \subseteq \sG{m}{K,K',p_1}{N} \, , \\
& K_1' \leq K_2' \, , \qquad \sG{m}{K,K_1',p}{N} \subseteq \sG{m}{K,K_2',p}{N} \, ,
\end{align*}
as well as the similar ones for autonomous symbols.

$ \bullet $
If $ a \in \sG{m}{K,K',p}{N}$ and  $ b \in \sG{m'}{K,K',p'}{N}$ then $ ab \in \sG{m+m'}{K,K',p+p'}{N} $. 
A similar property holds for autonomous symbols.

\section{Quantization of symbols}

If $(x,\xi)\to a(x,\xi)$ is a smooth symbol, one defines, for every 
$ \tau \in [0,1] $,  a quantization of the symbol
$a$  as the operator acting on test
functions $u$ (or on tempered distributions) by
\begin{equation}\label{eq:quantiz-t}
  \index{Oa@$\mathrm{Op}_\tau$ ($\tau$-quantization)} 
  \mathrm{Op}_\tau (a)u = 
  \frac{1}{2\pi}\int_{\R\times\R}e^{i(x-y)\xi} a(\tau x+(1 - \tau ) y,\xi)u(y)\,dyd\xi \, .
\end{equation}
In the case $ \tau =\frac{1}{2}$, one gets the
 Weyl quantization
\begin{equation}
  \label{eq:2110}
  \index{Ob@$\opw$ (Weyl quantization)} \opw(a)u 
  = \frac{1}{2\pi}\int_{\R\times\R}e^{i(x-y)\xi} a\Bigl(\frac{x+y}{2},\xi\Bigr)u(y)\,dyd\xi 
\end{equation}
and for $ \tau = 1 $ the usual quantization
$$
 \mathrm{Op} (a) u =   \frac{1}{2\pi}\int_{\R\times\R}e^{i(x-y)\xi} a(x,\xi)u(y)\,dyd\xi \, .
$$
These formulas are meaningful, in particular,  for $u$ in $C^\infty(\Tu)$ (identifying $u$ to a $2\pi$-periodic function, and thus
to a tempered distribution). If we decompose $u$ in Fourier series as 
$$
u(x) = \sum_{n\in  \Z}\hat{u}(n)\frac{e^{inx}}{\sqrt{2\pi}} \, , 
$$ 
 we may compute the oscillatory integral \eqref{quantiz-t}, and we 
 find out that  the action of $\mathrm{Op}_\tau (a)$  on $u$ is given by the periodic function
\[
\mathrm{Op}_\tau (a)u= \frac{1}{\sqrt{2\pi}}\sum_{k\in \Z}\Bigl(\sum_{n\in
  \Z}\hat{a}(k-n,(1-\tau)k+\tau n)\hat{u}(n)\Bigr)\frac{e^{ikx}}{\sqrt{2\pi}} 
  \]
where $\hat{a}(k,\xi)$ stands for the Fourier coefficients of the periodic function $x\to a(x,\xi)$. 
In particular, the Weyl quantization is given by
\begin{equation}
\label{eq:2110a}
\opw(a)u = \frac{1}{\sqrt{2\pi}}\sum_{k\in \Z}\Bigl(\sum_{n\in
  \Z}\hat{a}\Bigl(k-n,\frac{k+n}{2}\Bigr)\hat{u}(n)\Bigr)\frac{e^{ikx}}{\sqrt{2\pi}} ,
\end{equation}
and the usual quantization, corresponding to $ \tau =1$, by
\be\label{eq:usual-Q}
\begin{split}
\mathrm{Op}(a)u =  
\frac{1}{\sqrt{2\pi}}\sum_{k\in \Z}\Bigl(\sum_{n\in  \Z}\hat{a}(k-n,n)\hat{u}(n)\Bigr)\frac{e^{ikx}}{\sqrt{2\pi}} 
\\ 
= \sum_{n\in\Z}a(x,n)\hat{u}(n)\frac{e^{inx}}{\sqrt{2\pi}} \, .
\end{split}
\ee
If the symbol $ a = a(\xi ) $ does not depend of $ x $,  
then, for any $ \tau \in [0, 1 ] $, the associated operator
 $\mathrm{Op}_\tau (a) =  \mathrm{Op} (a)  = a(D) $  is the usual Fourier  multiplier.

The above formulas allow to transform the symbols between different quantization,  
in particular  we have 
\be\label{eq:clwe} 
\mathrm{Op}(a) = \opw( b ) \, \quad {\rm where} \quad \hat{b}(n,\xi) = \hat{a}\bigl(n,\xi-\frac{n}{2}\bigr) \, , \ \forall n \in \Z \, . 
\ee
We shall need in Chapter~\ref{cha:6}  an asymptotic  expansion of the Weyl symbol $ b $ in terms of the symbol $ a $. Since 
the  distributional kernel of 
the operator ${\rm Op}^W (b) $ defined in \eqref{2110} is the oscillatory integral
$$
K(x,y) =  \frac{1}{2 \pi} \int_{\R} e^{i (x-y)\xi } b\big( \frac{x + y}{2}, \xi \big)  d \xi  
$$
we may recover the Weyl symbol $ b $ 
 from the kernel by the inverse formula 
\be\label{eq:symbl-from-Kernel}
b(x, \xi) = \int_\R K \big( x + \frac{t}{2}, x - \frac{t}{2} \big) e^{ - i t \xi} d t \, . 
\ee
Now, identifying an operator $ {\rm Op} (a) $ with its distributional 
kernel,  
we derive by  \eqref{symbl-from-Kernel} that 
\be\label{eq:def-e}
{\rm Op} (a) = 
 \frac{1}{2 \pi} \int_{\R} e^{i (x-y)\xi } a(x, \xi)  d \xi  = {\rm Op}^W (b) 
\ee
where $ b $ is the symbol 
$$
 b(x,\xi) = \frac{1}{2\pi}\int_{\R^2} e^{-iz\zeta}a\bigl(x+\frac{z}{2},\xi-\zeta\bigr)\,dzd\zeta \, . 
$$
\begin{lemma}
  \label{248}
 Let $a$ be in $\Gt{m}{p}$,  $\Ucal = (U_1,\dots,U_p)$ in  $\Hds{\infty}(\Tu,\C^2)^p$,  and consider the 
 function
\begin{equation}
  \label{eq:2432}
  b(\Ucal;x,\xi) = \frac{1}{2\pi}\int_{\R^2} e^{-iz\zeta}a\bigl(\Ucal;x+\frac{z}{2},\xi-\zeta\bigr)\,dzd\zeta
\end{equation}
(interpreted as an oscillatory integral). 
Then for any integer $ A \geq 1 $  we have the expansion
\begin{equation}
  \label{eq:2433}
   b(\Ucal;x,\xi) = \sum_{\alpha=0}^{A-1}\frac{(-1)^\alpha}{2^\alpha
    \alpha!}\bigl(\partial_x^\alpha D_\xi^\alpha a\bigr)(\Ucal;x,\xi) +  \tilde{b}(\Ucal;x,\xi) 
\end{equation}
where $ D_\xi = \frac{1}{i} \pa_\xi $ and $\tilde{b}$ satisfies  the bounds: for any 
$ \alpha, \beta \in \N $, 
$ n \in (\N^*)^p $, 
\begin{equation}
  \label{eq:2434}
  \abs{\partial_x^\alpha\partial_\xi^\beta \tilde{b}(\Pin{}\Ucal;x,\xi)} \leq 
  Q\biggl(\frac{\abs{n}}{\absj{\xi}}\biggr) \abs{n}^{\mu+\alpha+\beta} \absj{\xi}^{m-\beta}\Gcalsm{0}{0,p}{\Pin{}\Ucal}
\end{equation}
where $ Q $ is a polynomial of  
valuation larger or equal to $ A $ and degree at most $ 2 A - 1 $, and $\mu$ depends only on $m$.
\end{lemma}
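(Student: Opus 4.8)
The plan is to pass to the $x$-Fourier side, where the operation $a\mapsto b$ of \eqref{2432} becomes diagonal, and then to expand in the $\xi$ variable by Taylor's formula. Since every $a(\Ucal;\cdot,\xi)$ is $2\pi$-periodic in $x$, I would write its Fourier series as $a(\Ucal;x,\xi)=\sum_k\hat a(\Ucal;k,\xi)e^{ikx}$. The computation leading to \eqref{clwe}--\eqref{def-e} identifies the function $b$ defined by \eqref{2432} with the Weyl symbol attached to the ordinary symbol $a$, so that $\hat b(\Ucal;k,\xi)=\hat a(\Ucal;k,\xi-\tfrac k2)$. Applying Taylor's formula with integral remainder to $\eta\mapsto\hat a(\Ucal;k,\eta)$ at $\eta=\xi$, at order $A$, gives
\[
\hat a(\Ucal;k,\xi-\tfrac k2)=\sum_{\alpha=0}^{A-1}\frac{(-k/2)^\alpha}{\alpha!}\,\partial_\xi^\alpha\hat a(\Ucal;k,\xi)+\frac{(-k/2)^A}{(A-1)!}\int_0^1(1-t)^{A-1}\partial_\xi^A\hat a(\Ucal;k,\xi-\tfrac{tk}{2})\,dt.
\]
Because $\widehat{\partial_x^\alpha D_\xi^\alpha a}(\Ucal;k,\xi)=(ik)^\alpha i^{-\alpha}\partial_\xi^\alpha\hat a(\Ucal;k,\xi)=k^\alpha\partial_\xi^\alpha\hat a(\Ucal;k,\xi)$, the $\alpha$-th term above is exactly $\frac{(-1)^\alpha}{2^\alpha\alpha!}\widehat{\partial_x^\alpha D_\xi^\alpha a}(\Ucal;k,\xi)$; multiplying by $e^{ikx}$ and summing over $k$ recovers the explicit part of \eqref{2433} and defines $\tilde b(\Ucal;x,\xi)$ as the resummation of the integral remainders, whose $k$-th Fourier coefficient is
\[
\widehat{\tilde b}(\Ucal;k,\xi)=\frac{(-k/2)^A}{(A-1)!}\int_0^1(1-t)^{A-1}\partial_\xi^A\hat a(\Ucal;k,\xi-\tfrac{tk}{2})\,dt.
\]
Since \eqref{2432} is linear in $a$, the map $\Ucal\mapsto\tilde b(\Ucal;\cdot)$ is again symmetric $p$-linear, and only the bound \eqref{2434} remains to be proved.

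For that, I would first use the frequency localization built into the symbol class. Fix $n\in(\N^*)^p$ and set $\abs{n}=n_1+\dots+n_p$. By \eqref{215}, $x\mapsto a(\Pin{}\Ucal;x,\xi)$ is a trigonometric polynomial whose nonzero modes $k$ obey $\abs{k}\le\abs{n}$, and there are at most $2^{p+1}$ of them; the factor $k^A$ kills the mode $k=0$, so only modes with $1\le\abs{k}\le\abs{n}$ occur in $\widehat{\tilde b}(\Pin{}\Ucal;k,\xi)$. Hence $\partial_x^\alpha\partial_\xi^\beta\tilde b(\Pin{}\Ucal;x,\xi)=\sum_{1\le\abs{k}\le\abs{n}}(ik)^\alpha\partial_\xi^\beta\widehat{\tilde b}(\Pin{}\Ucal;k,\xi)\,e^{ikx}$ is a finite sum with $O_p(1)$ terms, and it suffices to bound each $\abs{k}^\alpha|\partial_\xi^\beta\widehat{\tilde b}(\Pin{}\Ucal;k,\xi)|$. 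Differentiating under the integral and bounding a Fourier coefficient by the supremum over $x$, estimate \eqref{214} applied to $\partial_\xi^{A+\beta}a$ yields $|\partial_\xi^{A+\beta}\hat a(\Pin{}\Ucal;k,\eta)|\le C\abs{n}^\mu\absj{\eta}^{m-A-\beta}\Gcalsm{0}{0,p}{\Pin{}\Ucal}$, so that
\[
\abs{k}^\alpha|\partial_\xi^\beta\widehat{\tilde b}(\Pin{}\Ucal;k,\xi)|\le C\,\abs{k}^{\alpha+A}\abs{n}^\mu\,\Gcalsm{0}{0,p}{\Pin{}\Ucal}\int_0^1(1-t)^{A-1}\absj{\xi-\tfrac{tk}{2}}^{m-A-\beta}\,dt.
\]

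I would then conclude by bounding the right-hand side by $Q(\abs{n}/\absj{\xi})\,\abs{n}^{\mu+\alpha+\beta}\absj{\xi}^{m-\beta}\Gcalsm{0}{0,p}{\Pin{}\Ucal}$, splitting according to the size of $\abs{k}$ relative to $\absj{\xi}$. When $2\abs{k}\le\absj{\xi}$ one has $\absj{\xi-tk/2}\sim\absj{\xi}$ uniformly for $t\in[0,1]$, the integral is $O(\absj{\xi}^{m-A-\beta})$, and using $\abs{k}^\alpha\le\abs{n}^\alpha$ together with $\abs{k}^A\absj{\xi}^{-A}\le(\abs{n}/\absj{\xi})^A$ the bound takes the desired form with the valuation-$A$ contribution $Q(\lambda)=C\lambda^A$. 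When $2\abs{k}>\absj{\xi}$ one has $\abs{n}/\absj{\xi}\ge\abs{k}/\absj{\xi}>1/2$; estimating the integrand through $\absj{\xi-tk/2}\le C(\absj{\xi}+t\abs{k})$ (and, for $m-A-\beta<0$, through the Peetre lower bound $\absj{\xi-tk/2}\ge\absj{\xi}/\absj{k}$), carrying out the $t$-integral, and using $\abs{k}\le\abs{n}$, one keeps the factor $\absj{\xi}^{m-\beta}$ exact, absorbs the finitely many remaining (and only $m$-dependent) powers of $\abs{n}$ into an enlarged $\mu$, and is left with a power of $\abs{n}/\absj{\xi}$ no larger than $2A-1$. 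Taking $Q$ to be the sum of the two contributions gives a polynomial of valuation $\ge A$ and degree $\le 2A-1$ satisfying \eqref{2434}.

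The main obstacle is precisely this last bookkeeping of exponents in the regime $2\abs{k}>\absj{\xi}$ (equivalently $\abs{n}/\absj{\xi}>1/2$): there one must play off the gain $\abs{k}^A$ furnished by the Taylor remainder against the growth of $\absj{\xi-tk/2}^{m-A-\beta}$ when the argument of the symbol derivative is shifted far from $\xi$, in such a way that the factor $\absj{\xi}^{m-\beta}$ is preserved exactly and only a polynomial of the prescribed valuation and degree in $\abs{n}/\absj{\xi}$ is produced. The well-definedness of the oscillatory integral in \eqref{2432} is not a difficulty, being already contained in \eqref{symbl-from-Kernel}--\eqref{def-e}.
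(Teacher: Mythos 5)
Your route — identify $b$ with the Weyl symbol of $\mathrm{Op}(a)$ via $\hat b(\Ucal;k,\xi)=\hat a(\Ucal;k,\xi-\tfrac k2)$, and Taylor-expand mode by mode in $\xi$ — is genuinely different from the paper's, which works directly on the oscillatory integral \eqref{2432}, inserts a cut-off $\chi(\zeta/\absj{\xi})$ to split it into \eqref{int1} and \eqref{int2}, and integrates by parts with the operator $L=(1+z^2\absj{\xi}^2)^{-1}(1-z\absj{\xi}^2D_\zeta)$. Your decomposition is cleaner and avoids oscillatory integrals altogether: the frequency localization of \eqref{215} reduces everything to finitely many modes $\abs k\le\abs n$, and your first regime $2\abs k\le\absj{\xi}$ does give the valuation-$A$ gain with the correct $\mu$, which is all that is used downstream in the proof of Lemma \ref{612}.

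The second regime $2\abs k>\absj{\xi}$, however, contains a gap as you have sketched it. When $m-A-\beta<0$, the Peetre bound $\absj{\xi-tk/2}\ge\absj{\xi}/(\sqrt2\absj{k})$ combined with $\absj{k}\le 2\abs k\le 2\abs n$ turns the Taylor-remainder prefactor into $\abs k^{\alpha+2A+\beta-m}\absj{\xi}^{m-A-\beta}$, and rewriting this as $\bigl(\abs n/\absj{\xi}\bigr)^{A}\abs n^{\alpha+\beta}\,\abs n^{A-m}\,\absj{\xi}^{m-\beta}$ shows that for $A>m$ you are left with a stray factor $\abs n^{A-m}$, so the enlargement of $\mu$ is $A$-dependent, not "only $m$-dependent" as you claim and as \eqref{2434} requires. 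The fix is to abandon the Taylor remainder formula in this regime entirely: since $\abs n/\absj{\xi}>1/2$, estimate $\hat b(\Pin{}\Ucal;k,\xi)=\hat a(\Pin{}\Ucal;k,\xi-\tfrac k2)$ and each monomial $\tfrac{(-k/2)^\alpha}{\alpha!}\partial_\xi^{\alpha+\beta}\hat a(\Pin{}\Ucal;k,\xi)$, $\alpha<A$, separately from \eqref{214} — each gives a bound of the form $C\bigl(\abs n/\absj{\xi}\bigr)^{j}\abs n^{\mu+\alpha+\beta}\absj{\xi}^{m-\beta}\Gcalsm{0}{0,p}{\Pin{}\Ucal}$ with $j\ge0$ depending only on $m,\alpha,\beta$ — and then promote any exponent $j<A$ to $A$ at the price of a constant $2^{A-j}$. (It is worth noting that the paper's own proof produces a prefactor $(\abs n/\absj{\xi})^\gamma$ with $\gamma$ ranging up to $2A$, slightly beyond the stated degree $2A-1$; as in your argument, only the valuation $\ge A$ matters for the application.)
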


\begin{proof}
Let $\chi$ be in $C_0^\infty(\R)$, equal to one close to zero, with small enough support. 
We split the integral in  \eqref{2432} as the sum of
\be\label{eq:int1}
\frac{1}{2\pi}\int_{\R^2} \chi (\zeta\absj{\xi}^{-1}) e^{-iz\zeta} a\bigl(\Ucal;x+\frac{z}{2},\xi-\zeta\bigr)\,dzd\zeta
\ee
and
\be\label{eq:int2}
\frac{1}{2\pi}\int_{\R^2} (1-\chi)(\zeta\absj{\xi}^{-1}) e^{-iz\zeta}a\bigl(\Ucal;x+\frac{z}{2},\xi-\zeta\bigr)\,dzd\zeta \, . 
\ee
We first consider the integral \eqref{int2} with $\Ucal$ replaced by $\Pin{}\Ucal$.
For any $ N_0 \in \N $, we perform $ N_0 $
$\partial_z$-integrations by parts in \eqref{int2} and two $\partial_\zeta$ integrations by
  parts. 
Thus we gain a decaying factor $\absj{z}^{-2}\absj{\zeta}^{-N_0}$ and, 
using  \eqref{214},   
the integral \eqref{int2} is bounded  by 
\[
\begin{split}
C\int_{\R}  \absj{\xi-\zeta}^m\absj{\zeta}^{-N_0}\1_{\{\abs{\zeta}\geq \delta \absj{\xi}\}}\,d\zeta  \ 
\abs{n}^{\mu + N_0}\Gcalsm{0}{0,p}{\Pin{}\Ucal} \\
 \leq C'  \absj{\xi}^{M(m)-N_0}  \abs{n}^{\mu  + N_0}  \Gcalsm{0}{0,p}{\Pin{}\Ucal} 
\end{split}
 \]
for any $ N_0 > m + 1 $, where we set $M(m) = m+1$ if $m>-1$, $M(-1) = \epsilon $ for a small 
$ \epsilon > 0 $ and $M(m) = 0$ if
$m<-1$.   
If we make act $\partial_x^\alpha\partial_\xi^\beta$ derivatives on 
\eqref{int2}, we get in the same way a bound in
\[
\absj{\xi}^{M(m-\beta) -N_0}\abs{n}^{\mu + N_0 + \alpha} \Gcalsm{0}{0,p}{\Pin{}\Ucal} \, .
\]
Taking  $N_0 = N'_0 + \beta + L$, for some $ L = L(m) $ depending only on $m$,  
we get a bound in 
\[
\absj{\xi}^{m+1-N'_0-\beta}\abs{n}^{\mu'+N'_0+\alpha+\beta} \Gcalsm{0}{0,p}{\Pin{}\Ucal}
\]
where $\mu' = \mu + L $. 
Consequently \eqref{int2} satisfies \eqref{2434} for any $A$ (up to a change of
notation for $\mu$).

Consider next \eqref{int1}.
We Taylor expand up to order $ 2 A - 1 $ 
the symbol $ a\bigl(\Ucal;x+\frac{z}{2},\xi-\zeta\bigr)  $ at the point 
$ (x, \xi ) $, 
writing the integrand in \eqref{int1} as the product of $ e^{-iz\zeta}$ and 
\begin{multline}
  \label{eq:2436}
    \sum_{ \alpha+\beta \leq 2A-1}\frac{1}{\alpha!\beta!}\bigl(\partial_x^\alpha\partial_\xi^\beta
    a\bigr)(\Ucal;x,\xi)\Bigl(\frac{z}{2}\Bigr)^\alpha (-\zeta)^\beta \chi\Big(\frac{\zeta}{\absj{\xi}}\Bigr)\\
+\sum_{\alpha+\beta=2A}\frac{2A}{\alpha!\beta!} \int_0^1(1-\lambda)^{2A}\bigl(\partial_x^\alpha\partial_\xi^\beta
    a\bigr)(\Ucal; x+\frac{\lambda z}{2},\xi-\lambda \zeta)\\\times\Bigl(\frac{z}{2}\Bigr)^\alpha
    (-\zeta)^\beta \chi\Big(\frac{\zeta}{\absj{\xi}}\Bigr)\,d\lambda.
\end{multline}
Since the oscillatory integral 
$$
\frac{1}{2 \pi} \int_{\R^2} e^{- i z \zeta } (i z)^\alpha \zeta^\beta \chi ( \zeta  \langle \xi \rangle^{-1} ) d \zeta d z  = 
\alpha ! \delta_{\alpha \beta} 
$$ 
where $ \delta_{\alpha \beta } = 0 $ for $ \alpha \neq \beta  $ and 
$  \delta_{\alpha \beta } = 1 $ for $ \alpha = \beta $, 
when we plug the sum in \eqref{2436} for $\alpha+\beta\leq 2A-1$ inside the integral in \eqref{int1}, 
we get the sum 
$$
\sum_{\alpha=0}^{A-1}\frac{(-1)^\alpha}{2^\alpha   \alpha!}\bigl(\partial_x^\alpha D_\xi^\alpha a\bigr)(\Ucal;x,\xi) 
$$
in the right hand
side of \eqref{2433}. 

We are left with showing that the last term of \eqref{2436}, where we replace $\Ucal$ by $\Pin{}\Ucal$,
induces
in the integral \eqref{int1} contributions satisfying estimates \eqref{2434}. 
For each $ \lambda \in [0,1] $, the integrals we have to study are of the form
\begin{equation}
  \label{eq:compl1}
  \int_{\R^2}e^{-iz\zeta}c(\Pin{}\Ucal;x,\xi,z,\zeta)z^\alpha\zeta^\beta\,dzd\zeta
\end{equation}
where $c$ is supported for $ \abs{\zeta} < \delta \absj{\xi} $, 
and, by \eqref{214}, for any $ \alpha' , \beta' \in \N $, it satisfies estimates as
\begin{equation}
  \label{eq:compl2}
  \abs{\partial_z^{\alpha'}\partial_\zeta^{\beta'}c(\Pin{}\Ucal;x,\xi,z,\zeta)}\leq 
  C\abs{n}^{\mu+\alpha+\alpha'}\absj{\xi}^{m-\beta-\beta'} \Gcalsm{0}{0,p}{\Pin{}\Ucal} 
\end{equation}
with $\alpha+\beta = 2A$. Let us consider first the case $\alpha\leq \beta$. We write
$z^\alpha e^{-iz\zeta} = (-D_\zeta)^\alpha e^{-iz\zeta}$ and perform integrations by parts in $\zeta$ to write 
\eqref{compl1} as
\begin{multline*}
  \int_{\R^2} e^{-iz\zeta}D_\zeta^\alpha\bigl[\zeta^\beta c(\Pin{}\Ucal;x,\xi,z,\zeta)\bigr]\,dzd\zeta\\
= \sum_{\substack{\alpha_1 + \alpha_2
    =\alpha}}C_{\alpha_1,\alpha_2, \beta}\int_{\R^2}
    e^{-iz\zeta}\zeta^{\beta-\alpha_1}D_\zeta^{\alpha_2}c(\Pin{}\Ucal;x,\xi,z,\zeta)\,dzd\zeta \, . 
\end{multline*}
In each integral above, we write
$\zeta^{\beta-\alpha_1}  e^{-iz\zeta} = (-D_z)^{\beta-\alpha_1} e^{-iz\zeta}$ and perform integrations by parts in $z$. We end
up with integrals of the form
\[
\int_{\R^2}e^{-iz\zeta}D_z^{\beta-\alpha_1}D_\zeta^{\alpha_2}c(\Pin{}\Ucal;x,\xi,z,\zeta)\,dzd\zeta \, .
\]
This is of the form
\begin{equation}
  \label{eq:compl3}
  \int_{\R^2}e^{-iz\zeta}\tilde{c}(\Pin{}\Ucal;x,\xi,z,\zeta)\,dzd\zeta
\end{equation}
where $ \tilde{c} $ is supported for $ \abs{\zeta}<\delta \absj{\xi} $
and by \eqref{compl2}, 
and since  $\alpha+\beta = 2A$ and $\alpha_1+\alpha_2 = \alpha $, it satisfies, for any $ \beta_1 \in \N $, 
\be\label{eq:tildec-est}
\begin{split}
\abs{ \partial_\zeta^{\beta_1} \tilde{c}(\Pin{}\Ucal;x,\xi,z,\zeta)}\leq
  C\abs{n}^{\mu+\alpha+\beta-\alpha_1}\absj{\xi}^{m-\beta-\alpha_2- \beta_1} \Gcalsm{0}{0,p}{\Pin{}\Ucal} \\
\leq C \biggl(\frac{\abs{n}}{\absj{\xi}}\biggr)^{2A-\alpha_1} \abs{n}^\mu \absj{\xi}^{m - \beta_1} \Gcalsm{0}{0,p}{\Pin{}\Ucal} \, .  
\end{split}
\ee
Notice that, since $\alpha\leq\beta$, the exponent $2A-\alpha_1 $ is between $A$ and $2A$, for any $ 0 \leq 
\alpha_1 \leq \alpha $. We define next
the differential operator
$$
L = \bigl(1+z^2\absj{\xi}^2\bigr)^{-1}\bigl(1-z\absj{\xi}^2D_\zeta\bigr)
$$ 
so that
$L(e^{-iz\zeta}) = e^{-iz\zeta}$. We write $ e^{-iz\zeta} = L^2 ( e^{-iz\zeta})  $ in \eqref{compl3} 
and we integrate by parts twice. 
Since $ L^t = \big(1+z^2\absj{\xi}^2\big)^{-1}\big(1 + z\absj{\xi}^2D_\zeta\big) $,  by \eqref{tildec-est},  
we gain a factor $(1+\abs{z}\absj{\xi})^{-2}$, and 
taking the cut-off $\chi$ in \eqref{2436}  into account, we bound \eqref{compl3} with  
\[
\begin{split}
C \int_{\R^2}
(1+\abs{z}\absj{\xi})^{-2}\1_{\{ \abs{\zeta}<\delta \absj{\xi} \} }\,dzd\zeta \, 
\biggl(\frac{\abs{n}}{\absj{\xi}}\biggr)^{2A-\alpha_1} \abs{n}^\mu\absj{\xi}^m
\Gcalsm{0}{0,p}{\Pin{}\Ucal} \\
\leq C'  \, 
\biggl(\frac{\abs{n}}{\absj{\xi}}\biggr)^\gamma \abs{n}^\mu\absj{\xi}^m
\Gcalsm{0}{0,p}{\Pin{}\Ucal}
\end{split}
\]
for some $ A \leq \gamma\leq 2A$. We have proved that 
\eqref{compl1} satisfies  a bound of the form \eqref{2434} (with derivation indices  $\alpha=\beta =
0$). The case $\beta\leq\alpha$ is treated in the same way, as well as similar estimates when derivatives act on the
symbol. 
\end{proof}

If $a$ is a symbol in $\Gt{m}{p}$, 
it follows from \eqref{def-e} and 
\eqref{2433},  that 
\begin{equation}
  \label{eq:2437}
  \frac{1}{2\pi}\int_\R e^{i(x-y)\xi}a(\Ucal;x,\xi)\,d\xi = \sum_{\alpha=0}^{A-1}\opw(b_\alpha) + \opw(\tilde{b})
\end{equation}
with
\begin{equation}
  \label{eq:2438}
  b_\alpha(\Ucal;x,\xi) = \frac{(-1)^\alpha}{\alpha! 2 ^\alpha}\big( \partial_x^\alpha D_\xi^\alpha a \big)(\Ucal;x,\xi).
\end{equation}
Finally notice  that
\be\label{eq:avv}
\overline{\mathrm{Op}(a) [u]} =  \mathrm{Op}({\bar a}^\vee) [ \bar u] \qquad {\rm where} \qquad 
a^\vee (x, \xi) \stackrel{\textrm{def}}{=} a (x, - \xi) \, . 
\ee

We now define the {\it para-differential} quantization. Given $ p \in \N^* $, 
we fix some smooth functions, even with respect to  each of their arguments,
\[
\begin{split}
  \chi_p :\  &\R^p\times\R \to \R, \quad p\in \N \, , \\
\chi :\  &\R\times\R \to \R, 
\end{split}\]
satisfying,  for some small $\delta>0 $,  the support conditions
\begin{equation}
  \label{eq:2111}
  \begin{split}
    \supp\chi_p &\subset \{(\xi',\xi)\in \R^p\times\R;\abs{\xi'}\leq \delta\absj{\xi}\}, \chi_p\equiv 1 \textrm{ for }
    \abs{\xi'}\leq \frac{\delta}{2}\absj{\xi}, \\
\supp\chi &\subset \{(\xi',\xi)\in \R\times\R;\abs{\xi'}\leq \delta\absj{\xi}\}, \chi\equiv 1 \textrm{ for }
    \abs{\xi'}\leq \frac{\delta}{2}\absj{\xi}.
  \end{split}
\end{equation}
For $ p = 0 $ we set $\chi_0\equiv 1$.  We assume moreover that for any derivation indices  
$\alpha, \beta $,
\begin{equation} \label{eq:2111bis}\begin{split}
\abs{\partial_\xi^\alpha\partial_{\xi'}^\beta \chi_p(\xi',\xi)}&\leq
  C_{\alpha,\beta}\absj{\xi}^{-\alpha- |\beta|}, \ \forall \alpha\in \N, \forall \beta\in \N^p\\
\abs{\partial_\xi^\alpha\partial_{\xi'}^\beta \chi(\xi',\xi)}&\leq
  C_{\alpha,\beta}\absj{\xi}^{-\alpha-\beta},\ \forall \alpha, \beta\in \N \, .
\end{split}\end{equation}
An example of a function satisfying the last condition above, 
that will be used extensively below, is 
$\chi(\xi',\xi) = 
{\tilde \chi } (\xi'/\absj{\xi})$, where $ \tilde \chi  $ is
a $C^\infty_0(\R)$ function, with small enough support, equal to one on a neighborhood of zero.

\begin{definition} {\bf (Bony-Weyl quantization)} \label{Def:BW}
If $ a $ is a symbol in $\Gt{m}{p}$, respectively in $\Gr{m}{K,K',p}$, we set, using notation \eqref{213a}, 
\begin{equation}
  \label{eq:2112}
  \begin{split}
    a_{\chi_p}(\Ucal; x,\xi) &= \sum_{n\in (\N^*)^p}\chi_p(n,\xi)a(\Pin{}\Ucal; x,\xi)\\
a_{\chi}(U;t,x,\xi) &= \Fcal_{\xi'}^{-1}[\chi(\xi',\xi)\hat{a}(U;t,\xi',\xi)] = \chi (D, \xi) a(U;t,x,\xi)
  \end{split}
\end{equation}
where, in the last equality, $\hat{a}$ stands for the Fourier transform relatively to the $x$ variable 
and  $  \Fcal_{\xi'}^{-1} $ denotes the inverse Fourier transform relatively to the $ \xi ' $ variable. We get 
symbols $ a_{\chi_{p} }, a_{\chi} $ that
are smooth in $x$. If $ a $ is a symbol  in $\Gt{m}{p}$, respectively in $\Gr{m}{K,K',p}$,
we define  its Bony-Weyl quantization as the operators
\begin{equation}
  \label{eq:2113}\begin{split}
  \index{Ob@$\opbw$ (Bony-Weyl quantization)}\opbw(a(\Ucal; \cdot)) &= \opw(a_{\chi_p}(\Ucal; \cdot))\\
\opbw(a(U;t,\cdot)) &= \opw(a_{\chi}(U;t,\cdot)).
\end{split}\end{equation}
Finally, if $a$ is a symbol in $\sG{m}{K,K',p}{N}$, that we decompose as in \eqref{219}, we define
its Bony-Weyl quantization as the operator
\begin{equation}
  \label{eq:2114}
  \opbw(a(U;t,\cdot)) = \sum_{q=p}^{N-1} \opbw(a_q(U,\dots,U; \cdot)) + \opbw(a_N(U;t,\cdot)) \, .
\end{equation}
For symbols belonging to the autonomous subclass $\sGa{m}{K,0,p}{N}$, we shall not write the time dependence in
equality \eqref{2114}.
\end{definition}

\noindent
\textbf{Remarks}: 
$\bullet$
When $p=0$, elements of $\Gt{m}{0}$ are  
constant coefficient symbols $ \xi \to a(\xi) $ (see Definition \ref{211}),  and 
their Bony-Weyl quantization $  \opbw(a) = a(D) $ 
is nothing but the Fourier multiplier associated to $ a(\xi) $.

$\bullet$ If $ a $ is a symbol in $\Gt{m}{p}$ then $ a_{\chi_p} $ is in $\Gt{m}{p} $ as well. 

$\bullet$  Since the symbol $ a(U; t, x, \xi) $ is periodic in $ x $, the regularized 
function $ a_\chi $ in the second line in
\eqref{2112} can be written, by \eqref{usual-Q}, as
\be\label{eq:regul-per}
a_\chi (U; t, x, \xi)  = \chi (D, \xi) [a(x, \xi)] 
= \sum_{n\in\Z} \chi (n,\xi) \hat{a}(U; t, n, \xi) \frac{e^{inx}}{\sqrt{2\pi}} 
\ee
where  $ \hat{a}(U; t, n, \xi) $ denotes the Fourier coefficient of $ a $ with respect to $ x $. 
We can also write $ a_\chi (U; t, x, \xi) = K (\cdot, \xi) * a( U; t, \cdot, \xi ) $ where 
the kernel $ K(x, \xi ) =  \Fcal_{\xi'}^{-1} ( \chi ( \xi' , \xi ) ) $ satisfies, 
by \eqref{2111}-\eqref{2111bis},  
$$
\| \pa_\xi^\gamma K( \cdot , \xi )\|_{L^1 (\R)} \leq C_\gamma \langle \xi \rangle^{-\gamma} \, , \quad \forall \gamma \in \N \, .
$$
It follows that $ a_\chi $ satisfies estimates of the form \eqref{218} as the symbol $ a $. 

$\bullet$
We have  $ (a_{\chi_p} )^\vee =  a_{\chi_p} $, $ (a_{\chi})^\vee =  a_{\chi} $ and  so, 
 by \eqref{avv}, 
\be\label{eq:opbavv}
\overline{\opbw( a ) [u]} =  \opbw({\bar a}^\vee) [ \bar u] \, .
\ee

$\bullet$  If the symbol $ a(U; t, x ) $ does not depend on $ \xi $ then 
$ \opbw( a ) $ is called the para-product operator for the function $ a(U; t, x ) $. 

\smallskip

The above definitions are independent  
 of the cut-off functions $\chi, \chi_p$ satisfying  \eqref{2111}, 
 \eqref{2111bis},   up to smoothing operators that we
define now.

For a family $(n_1,\dots,n_{p+1})$ of $(\N^*)^{p+1}$ we denote by 
$$
\maxdn{1}{p+1} 
$$ 
the second largest among the integers $n_1,\dots,n_{p+1}$. 
\begin{definition}  \label{214}
(i) {\bf ($p$-homogeneous smoothing operator)} Let $p$ be
in $\N^*$,  $\rho$ in $\R_+$. 
We denote  by \index{Ra@$\Rt{-\rho}{p}$ (Homogeneous smoothing operators)} $\Rt{-\rho}{p}$  the space of
$(p+1)$-linear maps from the space $\Hds{\infty}(\Tu,\C^2)^p\times \Hds{\infty}(\Tu,\C)$ to the space $\Hds{\infty}(\Tu,\C)$, symmetric in $(U_1,\dots,U_p)$, of
the form \[(U_1,\dots,U_{p+1})\to
R(U_1,\dots,U_p)U_{p+1}\] that satisfy the following:

There is some $\mu\geq 0$, some $C>0$ and,  for any $\Ucal = (U_1,\dots,U_p)$ in $\Hds{\infty}(\Tu,\C^2)^p$, any $U_{p+1}$ in
$\Hds{\infty}(\Tu,\C)$, any  $n=(n_1,\dots,n_p)$ in $(\N^*)^p$, any $n_0, n_{p+1}$ in $\N^*$,
\begin{multline}
  \label{eq:2115}
  \norm{\Pin{0}R(\Pin{}\Ucal)\Pin{p+1}U_{p+1}}_{L^2} \\ 
  \leq C\frac{\maxdn{1}{p+1}^{\rho+\mu}}{\maxn{1}{p+1}^{\rho}}\Gcalsm{0}{0,p+1}{\Pin{}\Ucal,\Pin{p+1}U_{p+1}} \, .
\end{multline}
Moreover,  if
\begin{equation}
  \label{eq:2116}
  \Pin{0}R(\Pin{1}U_1,\dots,\Pin{p}U_p)\Pin{p+1}U_{p+1} \not\equiv 0
\end{equation}
then there is a choice of signs $\epsilon_0,\dots,\epsilon_{p+1} \in \{-1,1\}$ 
such that $\sum_0^{p+1}\epsilon_j n_j = 0$.

(ii) {\bf (Non-homogeneous smoothing operator)}
Let $N$ be in $\N^*$, $K'\leq K$ in $\N$, $ r >  0 $. We denote by \index{Rb@$\Rr{-\rho}{K,K',N}$ (Non-homogeneous smoothing operators)}~$\Rr{-\rho}{K,K',N}$ the space
of maps $(V,t,U)\to R(V;t)U$  defined on  $\Br{K}{}\times I\times\CKH{\sigma}{\C}$ for some $\sigma>0$, that are linear in $U$, and
such that for any $s$ with $s>\sigma$, there are $C>0$, $r(s)\in ]0,r[$ and for any $V$ in
$\Brs{K}{}{s}\cap\CKH{s}{\C^2}$, any $U$ in $\CKH{s}{\C}$, any $0\leq k\leq K-K'$, 
any $t\in I$, one has the estimate
\begin{equation}
  \label{eq:2117}\begin{split}
  \norm{\partial_t^k(R(V;t)U)(t,\cdot)}_{\Hds{s+\rho-\frac{3}{2}k}} \leq C\sum_{k'+k''=k}\Bigl(\Gcals{\sigma}{k'+K',N}{V}
    \Gcals{s}{k'',1}{U} \\ +  \Gcals{\sigma}{k'+K',N-1}{V} \Gcals{\sigma}{k'',1}{U} \Gcals{s}{k'+K',1}{V}\Bigr).
\end{split}\end{equation}

(iii) {\bf (Smoothing operator)}
Let $p, N$ be in $\N^*$, with $ p \leq N$, $K'\leq K$ in $\N$, $\rho$ in $\R_+$. 
We denote by \index{Rc@$\sR{-\rho}{K,K',p}{N}$
  (Smoothing operators)}~$\sR{-\rho}{K,K',p}{N}$ the space of maps $(V,t,U)\to R(V;t)U$ that may be written as
\begin{equation}
  \label{eq:2118}
  R(V;t)U = \sum_{q=p}^{N-1} R_q(V,\dots,V)U + R_N(V;t)U
\end{equation}
for some $R_q$ in $\Rt{-\rho}{q}$, $q = p,\dots,N-1$ and $R_N$ in $\Rr{-\rho}{K,K',N}$
(for $ p = N $ we mean that $ R = R_N $ is a purely non-homogeneous smoothing operator). 

(iv) {\bf (Autonomous smoothing operators)}
We define, with the notations of (ii) and (iii) above, the class of autonomous non-homogeneous smoothing operators
\index{Rba@$\Rra{-\rho}{K,0,N}$ (Autonomous non-homogeneous smoothing operators)}~$\Rra{-\rho}{K,0,N}$ as the subspace of
$\Rr{-\rho}{K,0,N}$ made of those maps $(U,V)\to R(U)V$ satisfying estimates \eqref{2117} with $K'=0$, the time dependence
being only through $U= U(t)$. In the same way, we denote by
 \index{Rca@$\sRa{-\rho}{K,0,p}{N}$
  (Autonomous smoothing operators)}~$\sRa{-\rho}{K,0,p}{N}$ the space of maps $(U,V)\to R(U)V$ of the form \eqref{2118} with $K'=0$,
where the last term belongs to $\Rra{-\rho}{K,0,N}$.
\end{definition}

We have the following simple inclusions
\begin{align*}
& \rho_1 \leq \rho_2 \, , \ \  \qquad \sR{-\rho_2}{K,K',p}{N} \subseteq \sR{-\rho_1}{K,K',p}{N} \, , \\
& p_1 \leq p_2 \, , \ \ \qquad \sR{-\rho}{K,K',p_2}{N}\subseteq \sR{-\rho}{K,K',p_1}{N} \, , \\
& K_1' \leq K_2' \, , \qquad \sR{-\rho}{K,K_1',p}{N} \subseteq \sR{-\rho}{K,K_2',p}{N} \, .
\end{align*}

\noindent
\textbf{Remarks}: $\bullet$ Notice 
that if $R$ is in $\Rt{-\rho}{p}$, $ p \geq N $, 
then  
$(V,U) \to R(V,\dots,V)U$ is in $\Rra{-\rho}{K,0,N}$. 
Actually, for any $0 \leq k\leq K $, 
by the multi-linearity of $ R $ in each argument and  \eqref{2115},  we have 
\begin{align*}
& \norm{\partial_t^k\Pin{0}R(V,\dots,V)U}_{L^2} \leq C \!\!\!\!\!\!\!\! \!\!\!
\sum_{n_1,\dots,n_{p+1} \atop 
{k_1+ \ldots + k_{p+1} = k} } \!\!\!\!\!\!\!\!\!\!
\norm{R( \partial_t^{k_1} \Pi_{n_1}V, \ldots,  \partial_t^{k_p} \Pi_{n_p}V) \partial_t^{k_{p+1}} \Pi_{n_{p+1}} U}_{L^2} \\
& \leq C 
\sum_{n_1,\dots,n_{p+1} \atop 
{k_1+ \ldots + k_{p+1} = k} } 
\frac{\maxdn{1}{p+1}^{\rho+\mu}}{\maxn{1}{p+1}^{\rho}} 
\prod_1^p \| \partial_t^{k_j} \Pi_{n_j} V \|_{L^2} \| \partial_t^{k_{p+1}} \Pi_{n_{p+1}} U \|_{L^2} \, . 
\end{align*}
Let us bound each term in the right hand side of the above sum with for 
instance $n_1\geq n_2\geq\cdots\geq n_{p+1}$.  Recalling the definition of the norms \eqref{212}-\eqref{213},  
we get, for $  s\geq\sigma>\frac{3}{2}k $,
\begin{equation}
  \label{eq:2119}\begin{split}
\norm{\partial_t^k\Pin{0}R(V,\dots,V)U}_{L^2} \leq  C  \sum_{n_1,\dots,n_{p+1}}
  \frac{n_2^{\rho+\mu}}{n_1^{\rho-\frac{3}{2}k}}n_1^{-s}n_2^{-\sigma}\prod_3^{p+1}n_j^{-\sigma}\prod_1^{p+1}c^j_{n_j}\\\times
 \sum_{k'+k''=k}\Gcals{s}{k',1}{V} \Gcals{\sigma}{k',p-1}{V}\Gcals{\sigma}{k'',1}{U} 
\end{split}\end{equation}
 where $(c^j_{n_j})_{n_j} $ stands for some $\ell^2$ sequences. 
The assumption \eqref{2116} implies that 
$ n_0 \leq (p+1) n_1$. Then,  if $\sigma \geq   
\max\{ \rho +  \mu + \frac{1}{2}, \frac{3K}{2}\} $, Young inequality for convolution of sequences implies that 
the product of the left hand side of \eqref{2119} by $n_0^{s+\rho-\frac{3}{2}k}$ gives a
$\ell^2$-sequence, and   
\begin{align*}
 \norm{\partial_t^k R(V, \ldots, V)U}_{\Hds{s+\rho-\frac{3}{2}k}} & \leq 
 C\sum_{k'+k''=k}\Gcals{\sigma}{k',p-1}{V} \Gcals{\sigma}{k'',1}{U} 
 \Gcals{s}{k',1}{V} \\
 & \leq 
 C\sum_{k'+k''=k}\Gcals{\sigma}{k',N-1}{V} \Gcals{\sigma}{k'',1}{U} 
 \Gcals{s}{k',1}{V} 
\end{align*}
since $ p \geq N $ and  $ \sup_{t \in I}\nnorm{V(t, \cdot )}_{K,\sigma} < r $ is small.
This is the second term in the right hand side of  \eqref{2117} with $ K' = 0 $. 
The first term appears when for instance $n_{p+1}\geq\cdots\geq n_1$. 

$ \bullet $ The composition of smoothing operators $ R_1 $ and  $ R_2 $ belonging, respectively, to the spaces $ \sR{-\rho}{K,K',p_1}{N}$ and 
$\sR{-\rho}{K,K',p_2}{N}$, is a smoothing operator $ R_1 R_2 $  in $ \sR{-\rho}{K,K',p_1+p_2}{N} $.  
 A similar statement holds for the autonomous subclasses.

\medskip

We study now the boundedness properties on Sobolev spaces 
of the paradifferential operators \eqref{2113}.

\begin{proposition} 
    {\bf (Action of a paradifferential operator)}
  \label{215}
(i) Let $ m \in \R $, $ p \in \N $. There is $ \sigma > 0 $ such that for any symbol $ a $ in $\Gt{m}{p}$, the map
\begin{equation}
  \label{eq:2120}
  (U_1,\dots,U_{p+1})\to \opbw(a (U_1,\dots,U_p;\cdot))U_{p+1}
\end{equation}
extends, for   any $s$ in $\R$, as a continuous $(p+1)$-linear map 
$$
\Hds{\sigma}(\Tu,\C^2)^p \times\Hds{s}(\Tu,\C) \to \Hds{s-m}(\Tu,\C) \, . 
$$ 
 Moreover,  there is  a constant $ C $, depending only on $s$ and on \eqref{214} with $ \alpha = \beta = 0 $, 
such that 
\begin{equation}
  \label{eq:2121}
  \norm{\opbw(a(\Ucal;\cdot))U_{p+1}}_{\Hds{s-m}}\leq C\Gcalsm{\sigma}{0,p}{\Ucal}\norm{U_{p+1}}_{\Hds{s}} 
\end{equation}
where $\Ucal = (U_1,\dots,U_p)$. Finally, if  for some $(n_0,\dots,n_{p+1}) \in (\N^*)^{p+2}$,
\be\label{eq:pro-x-in}
\Pin{0}  \opbw(a( \Pi_n \Ucal;\cdot)) \Pin{p+1}U_{p+1} \not\equiv 0 
\ee
 then there is a choice of the signs $\epsilon_j\in \{-1,1\}$, $j=0,\dots,p+1$, such that 
$\sum_0^{p+1} \epsilon_j n_j = 0$, and the indices satisfy 
\begin{equation}
  \label{eq:2122}
n_0 \sim n_{p+1}  \, , \quad  n_j \leq C \delta n_0, \quad  n_j \leq C \delta n_{p+1}, \quad   j=1,\dots,p \, .
\end{equation}

(ii) Let $r>0$, $m\in \R$, $p\in \N$, $K'\leq K\in \N$, $a$ in $\Gr{m}{K,K', p}$. There is $\sigma>0$ and for any $U$ in $\Br{K}{}$,
the operator $\opbw(a(U;t,\cdot))$ extends, for any $s$ in
$\R$, 
 as a bounded linear operator from the space 
 $$
 C_*^{K-K'}(I,\Hds{s}(\Tu,\C)) \to C_*^{K-K'}(I,\Hds{s-m}(\Tu,\C)) \, .
 $$
 Moreover,  there is  a constant $ C $, depending only on $s$, $r$ and on \eqref{218} with $ 0 \leq \alpha  \leq 2, \beta = 0$, 
such that, for any $ t $ in $ I $, any $ 0\leq k\leq K-K'$,  
\begin{equation}
  \label{eq:2123}
  \norm{\opbw(\partial_t^ka(U;t,\cdot))}_{\Lcal(\Hds{s},\Hds{s-m})}\leq C\Gcals{\sigma_0}{k+K',p}{U} \, ,
\end{equation}
so that 
$$
\nnorm{ \opbw(a(U;t,\cdot)) V(t) }_{s-m} \leq C \Gcals{\sigma}{K,p}{U} \nnorm{ V(t) }_{s} \, . 
$$
\end{proposition}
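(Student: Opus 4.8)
The strategy is to reduce both statements, via the explicit action of the Weyl quantization on Fourier series recorded in \eqref{2110a}, to estimates on the matrix elements of the operator between individual spectral blocks, and then to sum. \emph{Part (i).} First I would localize: take $\Ucal=\Pin{}\Ucal$ with $n=(n_1,\dots,n_p)\in(\N^*)^p$ and replace $U_{p+1}$ by $\Pin{p+1}U_{p+1}$. Since $\opbw(a(\Pin{}\Ucal;\cdot))=\opw(a_{\chi_p}(\Pin{}\Ucal;\cdot))$ and $a_{\chi_p}(\Pin{}\Ucal;x,\xi)=\chi_p(n,\xi)a(\Pin{}\Ucal;x,\xi)$, formula \eqref{2110a} shows that $\Pin{0}\opbw(a(\Pin{}\Ucal;\cdot))\Pin{p+1}U_{p+1}$ is a combination of the Fourier coefficients $\chi_p(n,\tfrac12(\pm n_0\pm n_{p+1}))\,\hat a(\Pin{}\Ucal;\ell,\tfrac12(\pm n_0\pm n_{p+1}))$ with $\ell=\pm n_0\mp n_{p+1}$. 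By the spectral hypothesis \eqref{215}, such a coefficient is non-zero only if $\ell=\sum_{j=1}^p\epsilon_jn_j$ for some signs, which forces $\sum_0^{p+1}\epsilon_jn_j=0$ and $|\ell|\le n_1+\dots+n_p\le p\max_jn_j$; by the cut-off conditions \eqref{2111}, $\chi_p(n,\xi)\ne0$ moreover forces $\max_jn_j\le\delta\absj{\xi}$. Combining $|\ell|\le p\delta\absj{\xi}$ with $\ell=\pm n_0\mp n_{p+1}$ and $2\xi=\pm n_0\pm n_{p+1}$, an elementary case discussion — using $n_j\ge1$ and $\delta$ small — excludes the configuration where the signs attached to $n_0$ and $n_{p+1}$ are opposite, and yields $n_0\sim n_{p+1}$, $\absj{\xi}\sim\langle n_0\rangle$ and $n_j\le C\delta n_0\sim C\delta n_{p+1}$, which is \eqref{2122}.

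Next I would estimate the block: since $|\hat a(\Pin{}\Ucal;\ell,\xi)|\le\|a(\Pin{}\Ucal;\cdot,\xi)\|_{L^\infty_x}$ and $|\chi_p|\le C$, the case $\alpha=\beta=0$ of \eqref{214}, together with $\absj{\xi}\sim\langle n_0\rangle$, gives
\[
\|\Pin{0}\opbw(a(\Pin{}\Ucal;\cdot))\Pin{p+1}U_{p+1}\|_{L^2}\le C|n|^\mu\langle n_0\rangle^m\,\Gcal^0_{0,p}(\Pin{}\Ucal)\,\|\Pin{p+1}U_{p+1}\|_{L^2},
\]
with $C$ depending only on the $\alpha=\beta=0$ seminorm of $a$. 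Then I would sum over $n_0$, $n_{p+1}$ and $n$: using the constraints $n_{p+1}\sim n_0$, $|n_{p+1}-n_0|\le p\max_jn_j$ and $\max_jn_j\le C\delta n_0$ to restrict the ranges, Minkowski's inequality in $n$, Young's inequality for the convolution in $n_{p+1}$ (which costs only a harmless factor $O(\max_jn_j)\lesssim|n|$), and the comparison $\langle n_{p+1}\rangle^s\sim\langle n_0\rangle^s$, one is left with $\|U_{p+1}\|_{\Hds{s}}$ multiplied by $\sum_n|n|^{\mu+1}\prod_j\|\Pin{j}U_j\|_{L^2}$, which by Cauchy--Schwarz is $\le C\Gcal^\sigma_{0,p}(\Ucal)$ as soon as $\sigma$ exceeds a constant depending only on $\mu$ and $p$. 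This proves \eqref{2121} (the dependence of the constant on $s$ entering only through $\langle n_{p+1}\rangle^s\sim\langle n_0\rangle^s$).

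\emph{Part (ii).} Here $\opbw(a(U;t,\cdot))=\opw(a_\chi(U;t,\cdot))$ with $a_\chi=\chi(D_x,\xi)a$, which has $x$-Fourier spectrum in $\{|\eta|\le\delta\absj{\xi}\}$ and, by the Remark preceding Definition~\ref{Def:BW} together with \eqref{218}, still belongs to $\Gamma^m_{K,K',p}[r]$, with its classical symbol seminorms $\sup|\partial_x^\alpha\partial_\xi^\beta a_\chi|\absj{\xi}^{\beta-m}$ controlled by finitely many of the right-hand sides of \eqref{218}. Thus $\opw(a_\chi(U;t,\cdot))$ is a paradifferential operator of order $m$, and its boundedness $\Hds{s}\to\Hds{s-m}$ with the bound \eqref{2123} is the standard paradifferential action theorem, which I would either invoke from \cite{AM} or reprove by a Littlewood--Paley almost-orthogonality argument, using the spectral localization of $a_\chi$ and only $\alpha\le2$ space derivatives. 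Finally, since the quantization is linear in the symbol and the time variable enters only through the symbol, $\partial_t^k\opbw(a(U;t,\cdot))=\opbw(\partial_t^ka(U;t,\cdot))$ with $\partial_t^ka\in\Gamma^m_{K,K'+k,p}[r]$ for $0\le k\le K-K'$; applying the preceding step to each $\partial_t^ka$ and summing over $k$ gives the asserted $C_*^{K-K'}$ estimate.

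The step I expect to be the main obstacle is the multilinear summation in part (i): one must organize the bookkeeping so that the ``low'' frequencies $n_1,\dots,n_p$ are genuinely summable at the cost of a fixed number $\sigma$ of derivatives, \emph{independent of $s$}, while the ``high'' output and input frequencies pair up, via $n_0\sim n_{p+1}$, to reproduce exactly the $\Hds{s-m}\leftarrow\Hds{s}$ loss. The analytic ingredients are elementary (Cauchy--Schwarz, Young's inequality, Schur's test), so the real difficulty is bookkeeping, including checking that the constant in \eqref{2121} depends only on $s$ and the zeroth seminorm of $a$, and that the constant $\sigma$ depends only on $\mu$ and $p$.
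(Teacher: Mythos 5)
Your proof is correct and, in substance, follows the same route as the paper's. Both arguments reduce to the spectral representation \eqref{2110a}, observe that the cut-off $\chi_p$ together with the spectral condition \eqref{215} localize the kernel near the diagonal (forcing $n_0\sim n_{p+1}$ and $n_j\lesssim\delta n_0$), use only the $\alpha=\beta=0$ seminorm in \eqref{214}, and conclude by a Schur/Young inequality. The only organizational difference in part (i) is the order of summation: the paper first sums in $n=(n_1,\dots,n_p)$ to manufacture an $\ell^1$ sequence $(c_\ell)_\ell$ of Fourier coefficients of the regularized symbol (losing $\sigma>\mu+1$ derivatives on $\Ucal$) and then applies Young's inequality once in the input/output frequencies, whereas you first do the Schur bound in $n_{p+1}$ (at the cost of the band-width factor $O(\max_j n_j)$) and then Minkowski/Cauchy--Schwarz in $n$; this trades a harmless power of $n$ against the tidiness of a single $\ell^1*\ell^2$ convolution, but the mechanism is identical. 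In part (ii) the paper stays self-contained: it shows, using exactly $\alpha\le2$ space derivatives in \eqref{218}, that $\widehat{\partial_t^k a_\chi}(\ell,\cdot)$ decays like $\langle\ell\rangle^{-2}$ and then reruns the same Young argument; you invoke instead the standard paradifferential action theorem. That is legitimate, but if one wants the stated dependence of the constant on only $0\le\alpha\le2$, $\beta=0$ of \eqref{218}, it is worth noticing, as the paper does, that $\langle\ell\rangle^{-2}$ decay of the symbol's $x$-Fourier coefficients is exactly what two $\partial_x$-derivatives buy, so the self-contained argument is in fact as short as the appeal to the general theorem.
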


\noindent
\textbf{Remarks}: $\bullet$  The relation $ n_0 \sim n_{p+1}$ in \eqref{2122} shows that the action of $ \opbw(a ) $
does not enlarge much the support of the Fourier transform of functions. 

$\bullet$ Notice that the paradifferential operator $\opbw(a)$ acts on  homogeneous spaces of functions modulo constants.

\begin{proof}
(i)   Let us prove  \eqref{2121}. Fixed $ {\cal U } = (U_1,\dots,U_p) $ 
we denote by 
\be\label{eq:func-b}
 b(x,\xi) = a_{\chi_p}({\cal U};  x,\xi) = \sum_{n\in (\N^*)^p}\chi_p(n,\xi)a(\Pin{}\Ucal;x,\xi)
 \ee
the regularized symbol defined in 
\eqref{2112} (without indicating the time dependence on $ t$). 
 Then $b(x, \xi) $ is a periodic function of $x$ whose Fourier coefficients $\hat{b}(\ell,\xi) =
\frac{1}{\sqrt{2\pi}}\int_\Tu e^{-i\ell x}b(x,\xi)\,dx$ ($\ell\in \Z$) satisfy, 
according to \eqref{214}  for $ \alpha = 0 $, $ \beta = 0 $, the bound 
\begin{multline} 
\abs{\hat{b}(\ell,\xi)}  
\leq  C \sum_{n=(n_1,\dots,n_p) \in (\N^*)^{p}}\abs{n}^\mu \Gcalsm{0}{0,p}{ \Pi_n \Ucal}\absj{\xi}^{m}   \\
 \leq C \sum_{n \in (\N^*)^{p}}\abs{n}^\mu \prod_1^p n_j^{-\sigma}\Gcalsm{\sigma}{0,p}{  \Ucal}\absj{\xi}^{m}. 
\label{eq:sec-pas}
\end{multline}
Moreover, as $\hat{b}(\ell,\xi) = \widehat{\Pi_{\abs{\ell}}b}(\ell,\xi)$, it follows from \eqref{215}  that
in the sum \eqref{sec-pas}, the indices satisfy,
 for some choice of the signs  $\epsilon_j \in \{ -1, 1 \} $, 
\be\label{eq:ell-ep}
\sum_{j=1}^p\epsilon_jn_j = \pm\ell \, .
\ee
If we take $\sigma-\mu>1$, we deduce from \eqref{sec-pas} and \eqref{ell-ep} the estimate
\begin{equation}
  \label{eq:2123a}
  \abs{\hat{b}(\ell,\xi)}\leq c_\ell\absj{\xi}^{m}\Gcalsm{\sigma}{0,p}{\Ucal}
\end{equation}
for some $\ell^1$-sequence $(c_\ell)_{\ell}$. Notice also that there is $\delta' = O(\delta)$  such that 
\be\label{eq:support}
\hat{b}(\ell, \xi) \neq 0 \quad \Longrightarrow \quad \abs{\ell}\leq\delta'\absj{\xi} \, . 
\ee
Indeed, by \eqref{2111}, \eqref{2112},  in the sum \eqref{sec-pas}, in addition 
to the restriction $\sum_1^p\epsilon_jn_j = \pm\ell $, 
the indices satisfy also $ | n | \leq \delta \langle \xi \rangle $. 
Thus $\abs{\ell}\leq C | n | \leq C \delta \absj{\xi}$ holds on the support of $\hat{b}(\ell, \xi)$. 

By \eqref{2113} and \eqref{func-b} we have $ \opbw (a ({\cal U}; \cdot ))   = \opw(b)  $.
Denoting by $\hat{u}(n)$
with $ n $ in $\Z$ the Fourier coefficients of $u\in C^\infty(\Tu)$, it follows from 
\eqref{2110a} that 
\begin{equation}
  \label{eq:2124}
  \opw(b)u =  \frac{1}{\sqrt{2\pi}} \sum_{k \in \Z}
  \Big(\sum_{n' \in \Z} \hat{b}\Bigl(k-n',\frac{k+n'}{2}\Bigr)\hat{u}(n') \Big)  \frac{e^{i k x}}{\sqrt{2\pi}} 
\end{equation}
and, for the property on the support of $\hat{b}(\ell, \xi )$ proved in \eqref{support}, 
 the sum  \eqref{2124} 
is restricted  to the indices 
\be\label{eq:para-weyl}
\abs{k-n'}\leq \delta'\Absj{\frac{k+n'}{2}} \, . 
\ee
This implies, in particular,  
 that,  if $u$ is constant, so is $\opw(b)u$, i.e. $\opw(b)$ acts on  homogeneous spaces of functions modulo
constants. Moreover, according to \eqref{2123a}, 
\eqref{2124}, and the fact that, by \eqref{para-weyl}, the indices
 in the sum \eqref{2124} satisfy $ k \sim n' $, 
the $\Hds{s-m}(\Tu)$ norm of $\opw(b)u$ is bounded from
above by the product of $ C \Gcalsm{\sigma}{0,p}{\Ucal}$ and of
\[ 
\Norm{\absj{k}^{s-m}\sum_{n' \in \Z} c_{k-n'} \absj{n'}^m \abs{\hat{u}(n')} }_{\ell_k^2} \leq C
\Norm{\sum_{n' \in \Z}\abs{\hat{u}(n')}\absj{n'}^s c_{k-n'}}_{\ell_k^2} \, . 
\] 
If  $ u $ is in $\dot{H}^s$ then the sequence
$ ( \abs{\hat{u}(n')}\absj{n'}^s )_{n' \in \Z} $ is in $ \ell^2 $, and, since $ (c_\ell)_{\ell \in \Z} $ is 
in $ \ell^1  $, the Young inequality for sequences 
implies \eqref{2121}. 

By \eqref{2124} we  have that, for any $ n_0, n_{p+1} \in \N^* $,  
\begin{multline}
\Pin{0}\opbw(a(\Pin{}\Ucal;\cdot))\Pin{p+1}U_{p+1} =  \\
\frac{1}{\sqrt{2\pi}} \sum_{ \abs{k} = n_0 }
  \Big(\sum_{\abs{n'} = n_{p+1}} 
  \hat{b}\Bigl(k-n',\frac{k+n'}{2}\Bigr)\hat{U}_{p+1}(n') \Big)  \frac{e^{i k x}}{\sqrt{2\pi}} \label{eq:restriz}
\end{multline}
with the restriction $ \pm(k-n') = \sum_{1}^p \epsilon_j n_j $ on the indices
due to \eqref{ell-ep}. Thus if \eqref{pro-x-in} holds then $ \sum_0^{p+1} \epsilon_j n_j = 0 $.  
Let us finally prove \eqref{2122}. 
The   sum in \eqref{restriz} is restricted 
to the indices  satisfying \eqref{para-weyl}, so that $ k \sim n' $, and therefore $ n_0 = |k | \sim | n' | = n_{p+1} $.
Moreover $ | n | \leq \delta \Absj{\frac{k+n'}{2}} $ 
 and the inequalities \eqref{2122} follow by
\be \label{eq:rel-ind} 
|n |  \leq C \delta \abs{k}  \leq C \delta n_0 
\ee
since  $ |k| = n_0 $, and $ n_0 \sim n_{p+1} $. 
This concludes the proof.

(ii) The proof is similar.  
Setting $ b (U; t, x, \xi ) = \pa_t^k a_\chi (U; t,x, \xi ) $ for any $ 0 \leq k \leq K - K' $, we use that
\eqref{218} with $ |\alpha| \leq 2  $, $ \beta = 0 $, implies the bound
$$
  \abs{\hat{b}(\ell,\xi)}\leq \frac{C}{\langle \ell \rangle^2}\absj{\xi}^{m}\Gcalsm{\sigma}{k+K',p}{\Ucal}
$$
which replaces \eqref{2123a}. Then  \eqref{2123} is proved in the same  way. 
\end{proof}
\textbf{Remarks}: $\bullet$ 
In the above proof, we did not use any $\xi$ derivatives of the 
 symbol $ a $. The statement of Proposition \ref{215}-(i) applies 
 when $ a \in \Gt{m}{p} $ satisfies \eqref{214} with  just $ \alpha = \beta = 0 $, and  item (ii) 
applies when  $ a \in \Gr{m}{K,K', p} $ satisfies \eqref{218} for just $ |\alpha| \leq 2  $, $ \beta = 0 $. 

Notice also that we did not use  that  
$ \chi_p (n, \xi ) = 1$ for $ |n| \langle \xi \rangle^{-1} \leq \delta /2  $,
so that the above estimates hold with any cut-off function $\chi_p $, $ \chi $ satisfying just the 
support conditions introduced in \eqref{2111}. 

$\bullet$ The proof of the preceding proposition shows  that the definition \eqref{2113} of $\opbw(a)$
where $a$ is a symbol of $\Gt{m}{p}$ depends on the choice of 
 $\chi_p $ 
only modulo a smoothing operator in
$\Rt{-\rho}{p}$ for any $\rho$.  Actually, let $ \chi_p^{(1)}$, $ \chi_p^{(2)}$ 
be cut-off functions satisfying  
\eqref{2111} with two different small $ \delta_2 > \delta_1 > 0 $.  
We want to prove that, calling the difference $ \chi_p = \chi_p^{(2)} - \chi_p^{(1) }$, the operator
\be\label{eq:restoR}
R ( {\cal U}) = \opw \big( \sum_{n \in (\N^*)^p} \chi_p (n, \xi)   a(\Pin{}\Ucal; \cdot) \big) 
\ee
satisfies  \eqref{2115}. 
Applying  \eqref{2121} with $ s = m $ we derive
\be\label{eq:R-inter}
\| \Pin{0} R ( \Pin{}\Ucal ) \Pin{p+1}U_{p+1} \|_{L^2} \leq C 
n_1^\sigma \ldots n_p^\sigma
 n_{p+1}^m \Gcalsm{0}{0,p+1}{\Pin{}\Ucal,\Pin{p+1}U_{p+1}} \, . 
\ee
If $ \Pi_{n_0} R ( \Pin{}\Ucal ) \Pin{p+1}U_{p+1} \neq 0 $, then, by \eqref{restriz} 
and since  the indices in \eqref{restoR} satisfy 
$ \delta_1 \langle \xi \rangle  \leq |n |  \leq \delta_2 \langle \xi \rangle $, 
we deduce (arguing as for \eqref{rel-ind}) that 
$$
C_1 \delta n_{p+1} \leq |n | \leq C_2 \delta n_{p+1} \, . 
$$ 
Consequently $\maxdn{1}{p+1}\sim \maxn{1}{p+1}$ and 
by \eqref{R-inter} the estimate \eqref{2115} holds for any $\rho$ and $\mu  = p \sigma + m $. 
The property \eqref{2116}  follows as in the above proof of Proposition \ref{215}. 

A similar statement holds for $\opbw(a)$ when $a$ is in $\Gr{m}{K,K',p}$: when we compare two definitions coming from different
choices of $\chi$ in \eqref{2112}, the difference between both operators is $\rho$-regularizing i.e.\ \eqref{2117} is
satisfied when $\sigma$ is large enough relatively to $\rho$.

A homogeneous  symbol $ a $ of $\Gt{m}{p}$ induces 
the non-homogeneous symbol $  a(U, \ldots, U; x, \xi ) $ in $ \Gra{m}{K,0,p} $, and, 
according to \eqref{2112}, we can consider the regularized symbols 
$ a_{\chi_p} (U, \ldots, U; x, \xi) $ and $ a_{\chi} (U, \ldots, U; x, \xi) $. 
It follows that  
$$
\opw{(a_{\chi_p} (U, \ldots, U; x, \xi))} - \opw{(a_{\chi} (U, \ldots, U; x, \xi))}
$$
is the restriction at $U_1=\dots =U_p = U$ of a smoothing operator in $\Rt{-\rho}{p} $. 

 $\bullet$  
 Proposition \ref{215} implies that,  if  $a(U; t, \cdot)$ is  in $\sGM{-\rho}{K,K',p}{N}$, for some 
 $\rho \geq 0 $,  then $\opbw(a(U; t, \cdot))$ defines a smoothing operator in
the space 
$\sRM{-\rho}{K,K',p}{N}$.  Let us prove for example that, if $ a ({\cal U}; \cdot ) $ 
is in $ {\tilde \Gamma}_p^{-\rho} $, then 
$ \opbw(a( {\cal U}; \cdot)) $ is in $ \Rt{-\rho}{p} $. 
By \eqref{2121} with $ s = - \rho $ we get
$$
\| \Pin{0}\opbw(a(\Pin{}\Ucal;\cdot))\Pin{p+1}U_{p+1} \|_{L^2} \leq C 
\frac{n_1^\sigma \ldots n_p^\sigma}{n_{p+1}^\rho} \Gcalsm{0}{0,p+1}{\Pin{}\Ucal,\Pin{p+1}U_{p+1}}
$$
and therefore, in order to prove \eqref{2115}, it is sufficient to show that
\be\label{eq:upb}
\frac{n_1^\sigma \ldots n_p^\sigma}{n_{p+1}^\rho} \leq C 
\frac{{ \max_2 (n_1, \ldots, n_{p+1}) }^{\rho+\mu}}{ \max (n_1, \ldots, n_{p+1} )^\rho }  
\ee
for some $ \mu \geq   0 $.  
If $  n_{p+1} = \max (n_1, \ldots, n_{p+1})  $ then \eqref{upb} is immediate. 
Otherwise $ n_{p+1} \leq \max_2 ( n_1, \ldots, n_{p+1})  $ and
the second inequality in \eqref{2122} implies that for any $ j = 1, \ldots, p $,  $  n_j  \leq C \delta n_{p+1} 
\leq C \delta \max_2 ( n_1, \ldots, n_{p+1}) $ and 
$ \max(n_1, \ldots, n_{p+1}) \sim n_{p+1} $. Also in this case \eqref{upb} follows. 
\medskip

In some instances, we shall use that operators of the form $\opbw(a(V,\cdot))$ or $R(V)$ satisfy some boundedness properties,
without having to keep track of the number of lost derivatives in a very precise way. We introduce a notation for such
classes.
\begin{definition}
  \label{216}
Let $p$ be in $\N$, $m$ in $\R_+$, $K'\leq K$  in $\N$,  $N\in \N^*$,  $r>0$.

(i) We denote by \index{Ma@$\Mt{m}{p}$ (Space of homogeneous maps)} $\Mt{m}{p}$ the space of $(p+1)$-linear maps 
from the space  $\Hds{\infty}(\Tu,\C^2)^p\times \Hds{\infty}(\Tu,\C)$ to the 
space  $\Hds{\infty}(\Tu,\C)$, symmetric in $(U_1,\dots,U_p)$, of the form 
$$
(U_1,\dots,U_{p+1})\to M(U_1,\dots,U_p)U_{p+1} 
$$ 
such that there is $C>0$ and, for any $\Ucal
= (U_1,\dots,U_p)$ in $\Hds{\infty}(\Tu,\C^2)^p$, any $U_{p+1}$ in $\Hds{\infty}(\Tu,\C)$, any $n_0, n_{p+1}$ in $\N^*$,
$n=(n_1,\dots,n_{p})$ in $(\N^*)^{p}$,
\begin{multline}
  \label{eq:2125}
  \norm{\Pin{0}M(\Pin{}\Ucal)\Pin{p+1}U_{p+1}}_{L^2} \\\leq C(n_0+\cdots+n_{p+1})^m\Gcalsm{0}{0,p+1}{\Pin{}\Ucal,\Pin{p+1}U_{p+1}}.
\end{multline}
Moreover, we assume that if 
\begin{equation}
  \label{eq:2126}
  \Pin{0}M(\Pin{1}U_1,\dots,\Pin{p}U_p) \Pin{p+1}U_{p+1}\not\equiv 0
\end{equation}
then there is a choice of signs $\epsilon_0,\dots,\epsilon_{p+1} 
\in \{-1,1\}$ such that $\sum_{0}^{p+1} \epsilon_jn_j = 0$. (When $p=0$, the
above conditions just mean that $M$ is a linear map from $\Hds{\infty}(\Tu,\C)$ to itself, satisfying 
estimate \eqref{2125} and \eqref{2126}).

(ii) We denote by \index{Mb@$\Mr{m}{K,K',N}$ (Space of non-homogeneous maps)} $\Mr{m}{K,K',N}$ the space of maps 
$(V,t,U)\to M(V;t)U$, defined on
$\Br{K}{}\times I\times \CKH{\sigma}{\C}$ for some $\sigma>0$, that are linear in $U$, and such that for any $s$ with
$s>\sigma$, there are $C>0$, $r(s)\in ]0,r[$ and for any $V$ in
$B^K_{\sigma}(I,r(s))\cap\CKH{s}{\C^2}$, any $U$ in $\CKH{s}{\C}$,  any
$ 0 \leq k\leq K-K'$, any $t \in I$, one has the estimate  
\begin{multline}
  \label{eq:2127}
  \norm{\partial_t^k(M(V;t)U)(\cdot)}_{\Hds{s-\frac{3}{2}k-m}}\leq  C\sum_{k'+k''=k}\Bigl(\Gcals{\sigma}{k'+K',N}{V}
    \Gcals{s}{k'',1}{U} \\+ \Gcals{\sigma}{k'+K',N-1}{V} \Gcals{\sigma}{k'',1}{U} \Gcals{s}{k'+K',1}{V}\Bigr).
\end{multline}
We denote by  \index{Mba@$\Mr{m}{K,0,N}$ (Space of autonomous maps)}
$\Mra{m}{K,0,N}$ the subspace of $\Mr{m}{K,0,N}$ made of maps $(V,U)\to M(V)U$ that satisfy estimates \eqref{2127} with
$K'=0$, the time dependence being only through $V = V(t)$.

(iii) Assume $p\leq N $. We denote by \index{Mc@$\sM{m}{K,K',p}{N}$ (Space of maps)} $\sM{m}{K,K',p}{N}$ the space of maps $(V,t,U)\to
M(V;t)U$ that may be written as
\begin{equation}
  \label{eq:2128}
  M(V;t)U = \sum_{q=p}^{N-1} M_q(V,\dots,V)U + M_N(V;t)U
\end{equation}
for some $M_q$ in $\Mt{m}{q}$, $q = p,\dots,N-1$ and $M_N$ in $\Mr{m}{K,K',N}$
(for $ p = N $ we mean that $ M = M_N $ is purely non-homogeneous). 

We set \index{Mca@$\sMa{m}{K,0,p}{N}$ (Space of autonomous maps)} $\sMa{m}{K,0,p}{N}$ for the subspace of $\sM{m}{K,0,p}{N}$
made of sums of the form \eqref{2128} with $K'=0$ and $M_N$ taken in $\Mra{m}{K,0,N}$.
\smallskip

Finally we denote  

$ \bullet  $ \index{Md@$\Mt{}{p}$ (Space of homogeneous maps)}  $\Mt{}{p} = \bigcup_m \Mt{m}{p}$ 

$ \bullet  $ \index{Me@$\Mr{}{K,K',p}$ (Space of non-homogeneous maps)}~$\Mr{}{K,K',p} = \bigcup_m  \Mr{m}{K,K',p}$

$ \bullet  $
 \index{Mf@$\sM{}{K,K',p}{N}$
  (Space of maps)}~$\sM{}{K,K',p}{N} = \bigcup_m \sM{m}{K,K',p}{N}$. 
  
We shall use similar
notations for autonomous maps.
\end{definition}
\textbf{Remarks}: $\bullet$ If $M$ is in $\Mt{m}{N}$ then the 
map $(V,U)\to M(V,\dots,V)U$ is in $\Mra{m}{K,0,N}$.
Indeed conditions \eqref{2125}, \eqref{2126} imply 
that \eqref{2127}  is satisfied with $K'=0 $ 
for $s>\sigma\gg 1$. In particular, an element of $\sM{m}{K,K',p}{N}$ sends a
couple 
$ (V,U) \in \Brs{K}{}{s}\cap C_*^{K-K'}(I,\Hds{s}(\Tu,\C)) $ 
to an element $ M(V;t)U  $ in $  C_*^{K-K'}(I,\Hds{s-m}(\Tu,\C))$ for
$s>\sigma\gg1$.

$\bullet$ By Definition~\ref{214}, any smoothing operator $R$  in $\sR{-\rho}{K,K',p}{N}$ defines an element of
$\sM{m}{K,K',p}{N}$ for some $m\geq 0$. 

$ \bullet $
If $a$ is in $\sG{m}{K,K',p}{N}$, then the map $(V,U)\to \opbw(a(V; t, \cdot))U$ defined by
\eqref{2114} is in $\sM{m}{K,K',p}{N}$ according to Proposition~\ref{215}.

$\bullet$ Let $a$ be a homogeneous symbol in $\Gt{m}{p}$ and let $M$ be an operator 
in $\sM{}{K,K',q}{N-p}$ with $ p + q \leq N $. Consider the function 
$ U\to a(U,\dots,U,M(U;t)U;t,x,\xi) $. 
Decomposing $M$ as in \eqref{2128}, we deduce by \eqref{214}, \eqref{215}, \eqref{2125},
\eqref{2126} that
\[(U_1,\dots,U_{p+q})\to a(U_1,\dots,U_{p-1},M_q(U_p,\dots,U_{p+q-1})U_{p+q};x,\xi)\]
defines a homogeneous symbol in $\Gt{m}{p+q}$, and by \eqref{214}, \eqref{215}, \eqref{2127} that 
$U\to a(U,\dots,U,M_{N-p}(t;U)U;\cdot)$ is in $\Gr{m}{K,K',N}$.

$\bullet$ If $M$ is in  $\sM{}{K,K'_1,p}{N}$ and $\tilde{M}$ in 
$\sM{}{K,K'_2,1}{N-p}$, then 
the map $ (V,t,U) \to M(V+\tilde{M}(V;t)V;t)[U] $ is in $\sM{}{K,K',p}{N}$ with $K'=K'_1+K'_2$. 

$ \bullet $ If $M$ is in $\sM{}{K,K',p}{N}$ and $\tilde{M}$
in $\sM{}{K,K',q}{N}$  then 
the map $ (V,t,U) \to \pi(M(V;t)V)\pi(\tilde{M}(V;t)U)$ is in $\sM{}{K,K',p+q+1}{N}$ 
where $\pi$ denotes the canonical identification of $\Hds{\sigma}$ and $\Hsz{\sigma}$, so that the
product makes sense.

$\bullet$ Finally, if $M$ is in $\sM{m}{K,K',p}{N}$ and $M'$ in $\sM{m'}{K,K',q}{N}$, 
then 
$ M(V;t) \circ M'(V;t) $, namely the map 
$$ 
(V,t,U) \to  M(V;t) [M'(V;t) U] \, , 
$$ 
is in $\sM{m+m'}{K,K',p+q}{N}$.
\smallskip

We end this section with a lemma that will be useful below, and that follows from the above
remarks.
\begin{lemma}
  \label{217}
Let $C(U;t, \cdot)$ be a matrix of symbols in $\sGM{m}{K,K',p}{N}$ for some 
$m $ in $ \R $, $ K' \leq  K-1 $, $ 0 \leq p \leq  N $, and assume that $U$ is a solution of an equation
\begin{equation}
  \label{eq:2129}
  D_tU = \tilde{M}(U;t)U 
\end{equation}
for some $\tilde{M}$ in the class $\sMM{}{K,1,0}{N}$. Then $D_tC(U;t,\cdot)$ belongs to $\sGM{m}{K,K'+1,p}{N}$.
\end{lemma}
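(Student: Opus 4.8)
The plan is to combine the decomposition \eqref{219} of the symbol $C$ with the decomposition \eqref{2128} of the nonlinear operator $\tilde M$, to compute $D_t$ term by term, to substitute the equation \eqref{2129} for $D_tU$, and finally to reorganise the result into an expansion of the form \eqref{219} belonging to $\sG{m}{K,K'+1,p}{N}$. Since the symbol estimates are entrywise, one may argue as in the scalar case. Write $C(U;t,\cdot)=\sum_{q=p}^{N-1}C_q(U,\dots,U;\cdot)+C_N(U;t,\cdot)$ with $C_q\in\Gt{m}{q}$ and $C_N\in\Gr{m}{K,K',N}$. The non-homogeneous part is immediate: because $K'\le K-1$ by hypothesis, the remark after Definition~\ref{212bis} (stability of $\Gr{m}{K,K',N}$ under $\partial_t$ at the cost of incrementing $K'$) gives $D_tC_N=\frac{1}{i}\partial_tC_N\in\Gr{m}{K,K'+1,N}$, with no use of the equation.

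For each homogeneous component $C_q$, the convention that homogeneous symbols depend on $t$ only through $U$ yields $D_t[C_q(U,\dots,U;\cdot)]=q\,C_q(D_tU,U,\dots,U;\cdot)$ by multilinearity and symmetry. I would then substitute $D_tU=\tilde M(U;t)U$, decomposed via \eqref{2128} as $\tilde M(U;t)U=\sum_{q'=0}^{N-1}M_{q'}(U,\dots,U)U+M_N(U;t)U$ with $M_{q'}\in\Mt{m'}{q'}$ and $M_N\in\Mr{m'}{K,1,N}$ (for some order $m'\ge0$), and use linearity of $C_q$ in its first slot. The core of the proof is then the application of the composition statements recorded in the remarks after Definition~\ref{216}: using \eqref{214}, \eqref{215} on $C_q$ and \eqref{2125}, \eqref{2126} on $M_{q'}$, the symbol $C_q(M_{q'}(U,\dots,U)U,U,\dots,U;\cdot)$ is the restriction to the diagonal of a symmetric $(q+q')$-linear symbol of $\Gt{m}{q+q'}$ — composition with $M_{q'}$ only enlarges the weight $\mu$, leaves the order $m$ unchanged, and respects the frequency constraint \eqref{215} — while, using \eqref{214}, \eqref{215} together with the estimate \eqref{2127} for $M_N$, the symbol $C_q(M_N(U;t)U,U,\dots,U;\cdot)$ lies in $\Gr{m}{K,1,N}$.

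It remains to reassemble. The terms $C_q(M_{q'}(U,\dots,U)U,U,\dots,U;\cdot)$ with $q+q'\le N-1$ are homogeneous of degree between $p$ and $N-1$ (note $q+q'\ge q\ge p$, so no low-degree component is lost), and, summing those with $q+q'=d$ over $d=p,\dots,N-1$, they define the new homogeneous symbols $\tilde C_d\in\Gt{m}{d}$. The terms with $q+q'\ge N$ vanish at order $\ge N$ in $U$, hence lie in $\Gra{m}{K,0,N}$; together with the $\Gr{m}{K,1,N}$-terms and with $D_tC_N\in\Gr{m}{K,K'+1,N}$, and using the trivial inclusions $\Gra{m}{K,0,N}\subseteq\Gr{m}{K,0,N}\subseteq\Gr{m}{K,K'+1,N}$ and $\Gr{m}{K,1,N}\subseteq\Gr{m}{K,K'+1,N}$ (valid since $K'+1\ge1$), they can all be collected into a single remainder $\tilde C_N\in\Gr{m}{K,K'+1,N}$. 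This produces $D_tC(U;t,\cdot)=\sum_{d=p}^{N-1}\tilde C_d(U,\dots,U;\cdot)+\tilde C_N(U;t,\cdot)$, the expansion \eqref{219} showing $D_tC(U;t,\cdot)\in\sG{m}{K,K'+1,p}{N}$. I do not anticipate a real obstacle — the lemma is essentially bookkeeping — but two points deserve care: that the homogeneous contributions of degree $\ge N$ generated by the composition are correctly reabsorbed into the order-$N$ remainder (where smallness of $U$, i.e.\ of $r$, is used to pass from vanishing order $d\ge N$ to vanishing order $N$), and that the single extra time derivative is accounted for only once — it is consumed either through the factor $M_N\in\Mr{m'}{K,1,N}$ coming from the equation or through the explicit $t$-derivative of $C_N$, and in both cases the passage $K'\mapsto K'+1$ is licit precisely because $K'\le K-1$.
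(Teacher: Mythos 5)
Your proof is correct and follows the same approach as the paper: decompose $C$ as in \eqref{219}, handle $D_tC_N$ via the second remark after Definition~\ref{212} (which is where $K'\le K-1$ is used), differentiate the homogeneous pieces by multilinearity, substitute $D_tU=\tilde M(U;t)U$, and invoke the fourth remark after Definition~\ref{216} to classify the resulting compositions. The paper's proof is terser — it cites those two remarks without unrolling them — while you spell out the decomposition of $\tilde M$, the redistribution of degrees, and the two places the extra time derivative could be consumed, all of which is correct bookkeeping.
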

\begin{proof}
  Decompose according to \eqref{219}
\[
C(U;t,x,\xi) = \sum_{q=p}^{N-1}C_q(U,\dots,U; x,\xi) + C_N(U;t,x,\xi) \, .
\]
Consider first the non-homogeneous symbol $ C_N $. 
By the second remark after Definition \ref{212}
the symbol $D_tC_N$ belongs to $\GrM{m}{K,K'+1,N}$. Consider next the homogeneous contributions
\[
D_tC_q(U,\dots,U; x,\xi) = \sum_{\ell=1}^q C_q(\underbrace{U,\dots,D_tU}_{\ell},\dots,U; x,\xi) \, .
\]
Replacing $D_tU$ by $\tilde{M}(U;t) U$ given by \eqref{2129} and using the fourth remark after the statement of Definition~\ref{216}, we obtain the
wanted conclusion.
\end{proof}

\section{Symbolic calculus}\label{sec:22}

We study now the composition properties of the preceding operators. Define the differential operator 
\begin{equation}
  \label{eq:221}
  \index{s@$\sigma(D_x,D_\xi,D_y,D_\eta)$} \sigma(D_x,D_\xi,D_y,D_\eta) = D_\xi D_y-D_xD_\eta 
\end{equation}
where 
$  D_x = \frac{1}{i} \partial_x $ and  $ D_\xi $, 
$ D_y  $, $ D_\xi  $ are similarly defined. 
\begin{definition}
  \label{221} {\bf (Asymptotic expansion of composition symbol)}
Let $K'\leq K,\rho, p, q$ be in $\N$, $m, m'$ be in $\R$, $r>0$. 

(i) Let $a$ be an homogeneous symbol  in $\Gt{m}{p}$ and $b$ in $\Gt{m'}{q}$. 
For 
$$
\Ucal' = (U_1,\dots,U_p) \, , \quad 
\Ucal'' = (U_{p+1},\dots,U_{p+q})
$$
with $U_j$ in $\Hds{\infty}(\Tu,\C^2)$, 
$1\leq j\leq p+q$, set $\Ucal = (\Ucal',\Ucal'') $. We define  the formal
series
\begin{multline}
  \label{eq:222}
  \index{a@$(a\#b)_\infty$ (Composition of symbols)} 
  (a\#b)_\infty(\Ucal;x,\xi) \\=
  \sum_{\ell=0}^{+\infty}\frac{1}{\ell!}
  \Bigl(\frac{i}{2}\sigma(D_x,D_\xi,D_y,D_\eta)\Bigr)^\ell\Bigl[a(\Ucal';x,\xi) b(\Ucal'';y,\eta)\Bigr]\vert_{\substack{x=y,\\\xi=\eta}}
\end{multline}
and the sum
$$
(a\#b)_\rho =  \sum_{0 \leq \ell < \rho} \frac{1}{\ell!}
  \Bigl(\frac{i}{2}\sigma(D_x,D_\xi,D_y,D_\eta)\Bigr)^\ell\Bigl[a(\Ucal';x,\xi) b(\Ucal'';y,\eta)\Bigr]\vert_{\substack{x=y,\\\xi=\eta}}
$$ 
modulo symbols in $ \Gt{m+m'- \rho}{p+q}$. 

(ii)  
Let $a$ be a non-homogeneous symbol in $\Gr{m}{K,K',p}$ and $b$ in $\Gr{m'}{K,K',q}$.  For $U$ in $\Br{K}{}$ 
we define when $ \rho < \sigma - \sigma_0 $
\begin{multline}
  \label{eq:223}
   \index{a@$(a\#b)_\infty$ (Composition of symbols)} (a\#b)_\rho(U;t,x,\xi) \\=
  \sum_{0 \leq \ell  < \rho}\frac{1}{\ell!}\Bigl(\frac{i}{2}\sigma(D_x,D_\xi,D_y,D_\eta)\Bigr)^\ell\Bigl[a(U;t,x,\xi) b(U;t,y,\eta)\Bigr]\vert_{\substack{x=y,\\\xi=\eta}}
\end{multline}
modulo symbols in $\Gr{m +m' -\rho}{K,K',p+q} $. 

\end{definition}
\textbf{Remarks}: 
$ \bullet $ We shall use several times below that the first 
terms 
of the asymptotic  expansion \eqref{222} are 
\[a(\Ucal';\cdot) b(\Ucal'';\cdot) + \frac{1}{2i}\absp{a(\Ucal';\cdot),b(\Ucal'';\cdot)} +\cdots\]
with the usual definition of the Poisson bracket $\absp{a,b} = \partial_\xi a\partial_x b - \partial_x a\partial_\xi b$.
\medskip

$ \bullet $
It follows from \eqref{222} and the third remark after Definition~\ref{211} that in (i) of the definition, 
the symbol $(a\#b)_\rho$ is
in $\Gt{m+m'}{p+q}$ modulo $\Gt{m+m'-\rho}{p+q}$ (with an exponent
$\mu$ in \eqref{214} for $(a\#b)_\rho$  large enough in function of $\rho$). This just reflects the fact that the larger
$\rho$ (i.e. the more precise the symbolic calculus), the smoother the functions $U_j$ in the coefficients must be.

In the same way, using the remarks following Definition~\ref{212bis},  if  $a$ is in $\Gr{m}{K,K',p}$ and $b$ is in
$\Gr{m'}{K,K',q}$, then the symbol $(a\#b)_\rho$  belongs to $\Gr{m+m'}{K,K',p+q}$ modulo $\Gr{m+m'-\rho}{K,K',p+q}$
(taking $ \sigma $ in Definition~\ref{212} large enough with respect to  $ \rho $).  

$ \bullet $ 
If $ a $ is in  
$ \sG{m}{K,K',p}{N}$, $ b $ in $ \sG{m'}{K,K',p'}{N}$ and $ c $ in $ \sG{m''}{K,K',p''}{N}$
then 
$$ 
(a \# b \# c)_\rho \in \sG{m+m'+m''}{K,K',p+p'+p''}{N}
$$
modulo a symbol in $ \sG{m+m'+m''- \rho}{K,K',p+p'+p''}{N} $. 

$ \bullet $ In the sequel we shall use the observation that if $ a $ is in  
$ \sG{m}{K,K',p}{N}$, $ b $ is in $ \sG{m'}{K,K',p'}{N}$  and $c$ is in $ \sG{m''}{K,K',p''}{N}$ 
then
\begin{equation}\label{eq:trilinear}
(a \# b \# c)_\rho + (c \# b \# a)_\rho- 2a bc  
\end{equation}
is a symbol in $ \sG{m+m'+m''- 2}{K,K',p+p'+p''}{N} $. 

\medskip

The main goal of this section is to extend to our classes the well known result asserting that $(a\#b)_\rho$ is the symbol of
the composition, up to smoothing operators.
\begin{proposition} {\bf (Composition of Bony-Weyl operators)}
  \label{222}
(i) With the notations of (i) of the preceding Definition \ref{221},
\begin{equation}
  \label{eq:224}
  \opbw(a(\Ucal';\cdot))\circ \opbw(b(\Ucal'';\cdot)) - \opbw((a\#b)_\rho (\Ucal;\cdot))
\end{equation}
belongs to $\Rt{-\rho+m+m'}{p+q}$.

(ii)  With the notations of (ii) of the preceding Definition \ref{221},
\begin{equation}
  \label{eq:225}
  \opbw(a(U;t,\cdot))\circ \opbw(b(U;t,\cdot)) - \opbw((a\#b)_\rho (U;t,\cdot))
\end{equation}
belongs to $ \Rr{-\rho+m+m'}{K,K',p+q} $. 

If $ a $ and $ b $ are symbols  in the autonomous classes of
Definition~\ref{212bis}, then $(a\#b)_\rho$ is in $\Gra{m+m'}{K,0,p+q}$,  modulo a symbol in  
$\Gra{m +m' -\rho}{K,0,p+q} $,  and \eqref{225} is an 
autonomous smoothing operator.
\end{proposition}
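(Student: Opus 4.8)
The plan is to reduce both statements to the exact composition formula for Weyl quantizations on $\Tu$ and to read off from it the asymptotic expansion \eqref{222}, the smoothing remainder being produced on one hand by the frequency cut-offs (which makes $\opbw$ independent of the cut-off up to smoothing operators, as in the remark after Proposition~\ref{215}), and on the other hand by the non-stationary part of an oscillatory integral, exactly as in Lemma~\ref{248}. \emph{Step~1 (reduction to Weyl calculus, exact composed symbol).} By Definition~\ref{Def:BW}, $\opbw(a(\Ucal';\cdot)) = \opw(a_{\chi_p}(\Ucal';\cdot))$ and $\opbw(b(\Ucal'';\cdot)) = \opw(b_{\chi_q}(\Ucal'';\cdot))$, and by the proof of Proposition~\ref{215} the symbols $a_{\chi_p},b_{\chi_q}$ are $2\pi$-periodic in $x$, still of the form of Definition~\ref{211}, with $x$-Fourier coefficients vanishing unless $|\ell|\leq\delta'\absj{\xi}$. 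Inserting the Fourier-series formula \eqref{2110a}, one computes that $\opw(a_{\chi_p}(\Ucal';\cdot))\circ\opw(b_{\chi_q}(\Ucal'';\cdot)) = \opw(\gamma(\Ucal;\cdot))$, where, with $\Ucal = (\Ucal',\Ucal'')$,
\[
\hat\gamma(\Ucal;\ell,\xi) = \frac{1}{\sqrt{2\pi}}\sum_{j\in\Z}\widehat{a_{\chi_p}}\Bigl(\Ucal';j,\xi+\tfrac{\ell-j}{2}\Bigr)\,\widehat{b_{\chi_q}}\Bigl(\Ucal'';\ell-j,\xi-\tfrac{j}{2}\Bigr) ;
\]
equivalently, $\gamma(\Ucal;x,\xi)$ is the oscillatory integral realizing $e^{\frac{i}{2}\sigma(D_x,D_\xi,D_y,D_\eta)}\bigl[a_{\chi_p}(\Ucal';x,\xi)\,b_{\chi_q}(\Ucal'';y,\eta)\bigr]\big|_{x=y,\ \xi=\eta}$, i.e. the exponential of \eqref{222}. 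On the support of each summand $|j|,|\ell-j|\lesssim\delta\absj{\xi}$, hence $|\ell|\lesssim\delta'\absj{\xi}$, so $\gamma$ is frequency-localized; and concatenating the sign constraints of Definition~\ref{211} for $a$ and for $b$ gives a choice of $\epsilon_0,\dots,\epsilon_{p+q}$ with $\sum_0^{p+q}\epsilon_k n_k = 0$. Thus $\gamma$ has the structure required in Definition~\ref{211}.

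\emph{Step~2 (Taylor expansion and the principal part).} Taylor-expanding the exponential to order $\rho$ — equivalently, expanding the $\xi$-shifts $\tfrac{\ell-j}{2}$ and $-\tfrac{j}{2}$ in the two factors of $\hat\gamma$ — I write $\gamma = \gamma_{<\rho} + \gamma_{\ge\rho}$, with $\gamma_{<\rho} = \sum_{0\le\ell<\rho}\frac{1}{\ell!}\bigl(\tfrac i2\sigma(D_x,D_\xi,D_y,D_\eta)\bigr)^\ell\bigl[a_{\chi_p}(\Ucal';\cdot)\,b_{\chi_q}(\Ucal'';\cdot)\bigr]\big|_{x=y,\ \xi=\eta}$. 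By the remarks after Definitions~\ref{211} and \ref{212bis} (stability of the classes under $\partial_x$, $\partial_\xi$ and products), each homogeneous Moyal term lies in $\Gt{m+m'}{p+q}$; modulo $\Gt{m+m'-\rho}{p+q}$ one recovers exactly $(a\#b)_\rho(\Ucal;\cdot)$ of Definition~\ref{221}; and replacing the cut-offs $\chi_p,\chi_q$ by the cut-off $\chi_{p+q}$ entering $\opbw$ changes each term by a symbol that is simultaneously of arbitrarily negative order and frequency-localized — its Fourier support forcing the two largest of $n_1,\dots,n_{p+q+1}$ to be comparable — hence changes the operator only by an element of $\Rt{-\rho+m+m'}{p+q}$ (remark after Proposition~\ref{215}). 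Therefore $\opw(\gamma_{<\rho}) = \opbw\bigl((a\#b)_\rho(\Ucal;\cdot)\bigr)$ modulo $\Rt{-\rho+m+m'}{p+q}$.

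\emph{Step~3 (the remainder — main point).} It remains to show $\opw(\gamma_{\ge\rho})\in\Rt{-\rho+m+m'}{p+q}$, where
\[
\gamma_{\ge\rho}(\Ucal;x,\xi) = \frac{1}{(\rho-1)!}\int_0^1(1-\theta)^{\rho-1}\Bigl[\bigl(\tfrac i2\sigma(D_x,D_\xi,D_y,D_\eta)\bigr)^{\rho}e^{\frac{i\theta}{2}\sigma}\bigl(a_{\chi_p}(\Ucal';x,\xi)\,b_{\chi_q}(\Ucal'';y,\eta)\bigr)\Bigr]_{x=y,\ \xi=\eta}\,d\theta .
\]
Replacing $\Ucal$ by $\Pin{}\Ucal$, I would follow the scheme of Lemma~\ref{248}: split the oscillatory integral defining $e^{\frac{i\theta}{2}\sigma}(\cdots)$ by a cut-off $\chi(\zeta\absj{\xi}^{-1})$; on the complementary region, repeated integrations by parts in the dual variables give decay $\absj{\xi}^{-M}$ for any $M$ at the cost of finitely many powers of $\abs{n}$; on the region $|\zeta|<\delta\absj{\xi}$, the $\rho$-fold application of $\sigma$, together with $|j|,|\ell-j|\lesssim\delta\absj{\xi}$ on the Fourier supports of $a_{\chi_p},b_{\chi_q}$, produces after the integrations by parts of \eqref{compl1}--\eqref{tildec-est} a factor $\bigl(\abs{n}/\absj{\xi}\bigr)^{\gamma}$ with $\rho\le\gamma\le 2\rho$, times $\abs{n}^{\mu}\absj{\xi}^{m+m'}\Gcalsm{0}{0,p+q}{\Pin{}\Ucal}$, and similarly for its $x$- and $\xi$-derivatives — i.e. an estimate of the form \eqref{2434} for $\gamma_{\ge\rho}$ with $A=\rho$ and $m$ replaced by $m+m'$, showing $\gamma_{\ge\rho}\in\Gt{m+m'-\rho}{p+q}$ (with a large exponent $\mu$). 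Since $\gamma_{\ge\rho}$ is frequency-localized, the argument in the proof of Proposition~\ref{215} together with the computation \eqref{upb} in the remark following it applies directly and gives $\opw(\gamma_{\ge\rho})\in\Rt{-\rho+m+m'}{p+q}$. This proves (i).

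\emph{Step~4 (non-homogeneous and autonomous cases).} For (ii) the computation of $\gamma$, its Taylor expansion, and the estimate of $\gamma_{\ge\rho}$ are verbatim the same with \eqref{218} replacing \eqref{214}; carrying the $K'$ extra time derivatives through the Leibniz rule reproduces the tame bound $\Gcals{\sigma_0}{k+K',p+q-1}{U}\,\Gcals{\sigma}{k+K',1}{U}$ of \eqref{218}, hence \eqref{2117} with $\rho$ replaced by $\rho-m-m'$, so the remainder lies in $\Rr{-\rho+m+m'}{K,K',p+q}$. The composition formula introduces no explicit $t$-dependence, so if $a,b$ are autonomous (Definition~\ref{212bis}) then $(a\#b)_\rho$ is in $\Gra{m+m'}{K,0,p+q}$ modulo $\Gra{m+m'-\rho}{K,0,p+q}$ and the remainder is autonomous. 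The main obstacle is Step~3: extracting the genuine $\rho$-derivative gain for the remainder of the Weyl composition on $\Tu$ with only-finitely-smooth-in-$x$, $\Ucal$-dependent symbols. The gain appears as $\bigl(\abs{n}/\absj{\xi}\bigr)^{\rho}$ — one gains $\absj{\xi}^{-\rho}$ but pays $\abs{n}^{\rho}$ derivatives on the arguments $\Ucal$ — which is precisely why the classes $\Gt{m-\rho}{p}$ and $\Rt{-\rho}{p}$ tolerate an arbitrarily large loss $\mu$; making this bookkeeping, and the underlying integrations by parts, precise is the technical heart of the argument.
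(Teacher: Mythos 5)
Your proposal is correct and follows essentially the same approach as the paper: reduce to exact Weyl composition of the cut-off symbols (the paper's Lemma~\ref{224}), split the error into the Taylor remainder of the Moyal expansion plus the change-of-cut-off term, estimate the former by oscillatory-integral bounds giving a $(\abs{n}/\absj{\xi})^\rho$ gain (the paper's Lemma~\ref{223}), and use frequency localization plus the argument of \eqref{upb} to conclude both pieces are in $\Rt{-\rho+m+m'}{p+q}$. The only difference is organizational — you inline the estimates that the paper factors out as Lemmas~\ref{223} and~\ref{224}, and you point to the structurally analogous Lemma~\ref{248} rather than to a dedicated composition-remainder lemma.
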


\noindent
\textbf{Remark}:  According to Definition \ref{221},  the symbol $ (a\#b)_\rho (\Ucal;\cdot) $ in case (i), 
resp. $ (a\#b)_\rho (U;t,\cdot) $  in case (ii), 
is defined modulo a symbol in $  \Gt{m+m'- \rho}{p+q} $, resp. $\Gr{m +m' -\rho}{K,K',p+q} $. 
By the last remark following Proposition \ref{215}, the 
paradifferential  operator $ \opbw((a\#b)_\rho (\Ucal;\cdot)) $, resp. $ \opbw( (a\#b)_\rho (U;t,\cdot) ) $,
is thus defined modulo a smoothing remainder in $\Rt{-\rho+m+m'}{p+q}$, resp. $ \Rr{-\rho+m+m'}{K,K',p+q} $.

\medskip

To prove Proposition \ref{222}, we need lemmas  \ref{223} and \ref{224}.
Lemma \ref{224} proves that the composition of $ \opbw (a) \circ \opbw(b) $ can be written as
the $ \opw (c) $ of a suitable symbol $ c = a\#b $ and  Lemma \ref{223}  provides its asymptotic expansion.

In order to treat at the same time conditions (i) and (ii), let 
us introduce the following notation. Let $a(x,\xi), b(x,\xi)$ be two tempered distributions in $x$, smoothly depending on $\xi$. 
Assume that their $x$-Fourier transforms $\hat{a}(\eta,\xi)$, $\hat{b}(\eta,\xi)$ are supported for $\abs{\eta}\leq
\delta\absj{\xi}$ for a small enough $\delta>0$. Then the integral
\begin{equation}
  \label{eq:226}
  \index{ab@$a\#b$ (Composition of symbols)} a\#b(x,\xi) 
  \stackrel{\mathrm{def}}{=}  \frac{1}{(2\pi)^2}\int_{\R^2} e^{ix(\xie+\etae)} 
  \hat{a}\bigl(\etae,\xi+\frac{\xie}{2}\bigr)
    \hat{b}\bigl(\xie,\xi-\frac{\etae}{2}\bigr)\,d\xie d\etae
\end{equation}
is well defined as a distribution in $(\xie,\etae)$, compactly supported in 
$ \abs{\xie}+\abs{\etae}\leq\delta'\absj{\xi} $ 
for a small $\delta'>0$, smoothly depending on $\xi$ and 
acting on the smooth function $(\xie,\etae)\to
e^{ix(\xie+\etae)}$. 
Assume also that for some $\rho\in \N$, any $0\leq\alpha\leq\rho$, any $\beta\in \N$, we have for some
constant $M_{\alpha,\beta}(\cdot)$
\begin{equation}
  \label{eq:227}
  \begin{split}
    \abs{\partial_x^\alpha\partial_\xi^\beta a(x,\xi)} &\leq M_{\alpha, \beta}(a)\absj{\xi}^{m-\beta}\\
\abs{\partial_x^\alpha\partial_\xi^\beta b(x,\xi)} &\leq M_{\alpha, \beta}(b)\absj{\xi}^{m'-\beta}.
  \end{split}
\end{equation}
Then $a\#b$ is also given by the oscillatory integral 
\begin{multline}
  \label{eq:228}
  a\#b(x,\xi) =\\ \frac{1}{\pi^2}\int_{\R^4} e^{-2i\sigma(\xe,\xie,\ye,\etae)}a(x+\xe,\xi+\xie)b(x+\ye,\xi+\etae)\,d\xe d\ye d\xie d\etae
\end{multline}
where $ \sigma(\xe,\xie,\ye,\etae) = \xi^* y^*  - x^* \eta^* $
(which has a meaning if we remember that, by the support assumptions on $\hat{a}$, $\hat{b}$, we have that
$\abs{\xie}+\abs{\etae}\ll \abs{\xi}$, so that, to give a meaning to \eqref{228}, it is enough to perform $\partial_\xie$ and
$\partial_\etae$ integrations by parts to gain $\xe$ and $\ye$ decay). 
We also have that 
$$
  a\#b(x,\xi) = e^{ \frac{i}{2} \sigma(D_x,D_\xi,D_y,D_\eta)}  [a(x,\xi) b(y,\eta)]_{\vert x=y, \xi = \eta} \, . 
$$
Let us prove:
\begin{lemma}  \label{223}
Assume that $ a(x,\xi), b(x,\xi) $ satisfy \eqref{227} for some $ \rho $ in $ \N^* $ and their 
$x$-Fourier transforms $\hat{a}(\eta,\xi)$, $\hat{b}(\eta,\xi)$ are supported for $\abs{\eta}\leq
\delta\absj{\xi}$ for a small enough $\delta>0$. 
Then for any $\ell$ in
$\N$, $\ell\leq\rho$,
\begin{equation}
  \label{eq:229}
  \abs{\partial_x^\ell[(a\#b - (a\#b)_{\rho-\ell})](x,\xi)}\leq C_{\rho,\ell}\absj{\xi}^{m+m'-(\rho-\ell)},
\end{equation}
where, for some universal constants $K_{\rho,\ell}$, 
\begin{equation}
  \label{eq:2210}
  C_{\rho,\ell} = K_{\rho,\ell}\sum_{\alpha'+\alpha'' =\rho}\sum_{\beta'+\beta'' = \rho-\ell}M_{\alpha',\beta'}(a) M_{\alpha'',\beta''}(b) \, . 
\end{equation}
\end{lemma}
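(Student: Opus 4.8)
The plan is to start from the oscillatory integral representation \eqref{228} for $a\#b(x,\xi)$, Taylor expand the symbol $b(x+\ye,\xi+\etae)$ — or, more symmetrically, the product $a(x+\xe,\xi+\xie)b(x+\ye,\xi+\etae)$ — in the variables $(\xe,\xie,\ye,\etae)$ around $0$ up to a suitable order, and identify the polynomial terms with the partial sum $(a\#b)_{\rho-\ell}$ while estimating the integral remainder. Concretely, first I would recall the standard algebraic fact that applying $\bigl(\tfrac{i}{2}\sigma(D_x,D_\xi,D_y,D_\eta)\bigr)^\ell$ to $a(x,\xi)b(y,\eta)$ and restricting to the diagonal produces exactly the degree-$\ell$ term in the Taylor expansion of the integrand weighted against the Gaussian-type oscillatory measure $e^{-2i\sigma(\xe,\xie,\ye,\etae)}$; this is what makes \eqref{228} equal to $e^{\frac{i}{2}\sigma(D_x,D_\xi,D_y,D_\eta)}[a(x,\xi)b(y,\eta)]_{|x=y,\xi=\eta}$. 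So the $\ell$-th term of the formal series $(a\#b)_\infty$ is recovered by expanding the integrand to total order $\ell$ in $(\xe,\xie)$ and $(\ye,\etae)$ (more precisely: the operator $\sigma$ pairs an $x$-derivative with an $\eta$-derivative and a $\xi$-derivative with a $y$-derivative, so the relevant monomials are $\xe^{\alpha'}\xie^{\beta'}\ye^{\alpha''}\etae^{\beta''}$ with $\alpha'=\beta''$, $\beta'=\alpha''$; this explains the shape of $C_{\rho,\ell}$ in \eqref{2210}).

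The core step is then the following. Apply $\partial_x^\ell$ to \eqref{228} (the $x$-derivative falls on the two symbol factors, producing $\partial_x^{\ell_1}a\cdot\partial_x^{\ell_2}b$ with $\ell_1+\ell_2=\ell$, each still satisfying bounds of the form \eqref{227} with the same $m,m'$ and shifted order-of-differentiability budget). Then Taylor-expand $a(x+\xe,\xi+\xie)b(x+\ye,\xi+\etae)$ in $(\xe,\xie,\ye,\etae)$ to total order $\rho-\ell-1$, writing the remainder in integral (Lagrange) form
\[
\sum_{|\gamma|=\rho-\ell}\frac{\rho-\ell}{\gamma!}\,(\xe,\xie,\ye,\etae)^\gamma\int_0^1(1-\lambda)^{\rho-\ell-1}\bigl(\partial^\gamma(\partial_x^{\ell_1}a\otimes\partial_x^{\ell_2}b)\bigr)(x+\lambda\xe,\dots)\,d\lambda.
\]
Inserting the polynomial part into \eqref{228} and evaluating the Gaussian oscillatory integrals $\frac{1}{\pi^2}\int e^{-2i\sigma}(\cdots)^\gamma\,d\xe d\ye d\xie d\etae$ reproduces exactly $\partial_x^\ell(a\#b)_{\rho-\ell}$ (only the monomials with $\alpha'=\beta''$, $\beta'=\alpha''$ survive, the rest integrate to zero). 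For the remainder, one inserts the Lagrange form, performs enough $\partial_{\xie},\partial_{\etae}$ integrations by parts to convert the powers $\xe^{\alpha'},\ye^{\alpha''}$ into $\xe,\ye$-decay (this is exactly the device already used to make sense of \eqref{228}), uses the support restriction $|\xie|+|\etae|\le\delta'\absj{\xi}$ coming from the cutoff hypothesis on $\hat a,\hat b$ so that $\absj{\xi+\lambda\xie}\sim\absj{\xi}$, and estimates $|\partial^\gamma(\partial_x^{\ell_1}a\otimes\partial_x^{\ell_2}b)|$ by \eqref{227}; this gives a factor $\absj{\xi}^{m+m'}$ times $\absj{\xi}^{-\beta'-\beta''}$, and since $\beta'+\beta''=|\gamma|-\alpha'-\alpha''$ while the $\xe,\ye$ integration after integration by parts is at worst $O(1)$ in $\xi$, one gains in total $\absj{\xi}^{-(\rho-\ell)}$ (the bookkeeping here is the standard one: each unit of $\xi$-derivative on $b$ is worth one power of $\absj{\xi}^{-1}$, and the oscillatory integral converts the $\sigma$-pairing of variables into derivatives, so that total degree $\rho-\ell$ in the expansion yields decay order $\rho-\ell$). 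Collecting the constants from \eqref{227} across all splittings $\ell_1+\ell_2=\ell$, $\alpha'+\alpha''=\rho$, $\beta'+\beta''=\rho-\ell$ yields precisely the bound \eqref{2210}.

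The main obstacle I expect is purely technical bookkeeping rather than conceptual: making the oscillatory-integral manipulations in \eqref{228} rigorous — justifying the integrations by parts needed to give the non-absolutely-convergent integral a meaning, controlling the $\lambda$-integral uniformly, and tracking exactly how many $x$-derivatives of $a$ and $b$ are consumed so that one stays within the budget $0\le\alpha\le\rho$ of hypothesis \eqref{227}. One has to be careful that after taking $\partial_x^\ell$ outside and Taylor-expanding to order $\rho-\ell$, the highest $x$-derivative hitting $a$ or $b$ is $\ell_i+|\gamma|\le\rho$, which is exactly why the hypothesis is stated with $\alpha\le\rho$ and why the conclusion only controls $\partial_x^\ell$ for $\ell\le\rho$. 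A secondary point of care is the symmetric treatment of the two factors (one may prefer to expand only $b$, at the cost of a slightly different but equivalent remainder), and verifying that the surviving monomials in the Gaussian integral indeed assemble into the intrinsic operator $(\tfrac{i}{2}\sigma)^\ell$ and hence into $(a\#b)_{\rho-\ell}$ as defined in \eqref{222}; this is a classical computation with the Fourier transform of $e^{-2i\sigma}$ that I would cite or reproduce briefly.
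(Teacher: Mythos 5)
There is a genuine gap in the detailed step: you propose to Taylor the full product $a(x+\xe,\xi+\xie)b(x+\ye,\xi+\etae)$ in $(\xe,\xie,\ye,\etae)$ to total degree $\rho-\ell-1$ and claim the polynomial part, after integration against $e^{-2i\sigma}$, reproduces $\partial_x^\ell(a\#b)_{\rho-\ell}$. It does not, because the Moyal degree is \emph{half} the surviving polynomial degree: you correctly note that only monomials $\xe^{\alpha'}\xie^{\beta'}\ye^{\alpha''}\etae^{\beta''}$ with $\alpha'=\beta''$, $\beta'=\alpha''$ survive, but such a monomial has total degree $2(\alpha'+\beta')=2k$, producing the Moyal term of degree $k$. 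Hence a Taylor polynomial of degree $\le\rho-\ell-1$ captures only $k\le\lfloor(\rho-\ell-1)/2\rfloor$. Already for $\rho-\ell=2$ your polynomial part integrates to $a(x,\xi)b(x,\xi)$ alone, while $(a\#b)_2$ also contains $\tfrac{1}{2i}\absp{a,b}$, a term of order $m+m'-1$; that term is then hidden inside your ``remainder'', which is consequently only $O(\absj{\xi}^{m+m'-1})$, not the claimed $O(\absj{\xi}^{m+m'-2})$. The aside you mention in passing — Taylor-expanding only $b(x+\ye,\xi+\etae)$ around $(x,\xi)$ to degree $\rho-\ell-1$ — is the version that works: integrating $\ye^{\gamma_1}\etae^{\gamma_2}$ by parts against $e^{-2i\sigma}$ transfers them to $\partial_\xi^{\gamma_1}\partial_x^{\gamma_2}$ on $a$ and then collapses $(\xe,\xie)$ to the origin, yielding exactly the Moyal term of degree $k=\gamma_1+\gamma_2$, a one-to-one correspondence. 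With that choice the Lagrange remainder carries $\rho-\ell$ derivatives on $b$, the integrations by parts put the matching $\rho-\ell$ derivatives on $a$ (plus the two more needed for $(\xe,\ye)$-decay), and combined with the Leibniz split $\ell_1+\ell_2=\ell$ of $\partial_x^\ell$ one recovers the count $\alpha'+\alpha''=\rho$, $\beta'+\beta''=\rho-\ell$ of \eqref{2210} within the budget $\alpha\le\rho$ of hypothesis \eqref{227}. You should promote that aside to the main argument and redo the step accordingly.

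For comparison, the paper avoids this $2$-to-$1$ bookkeeping by a homotopy device: after the same Leibniz reduction to $\ell=0$, it scales the shifts $(\Xe,\Ye)\mapsto\sqrt{\tau}(\Xe,\Ye)$ inside \eqref{228}, obtaining $c(\tau,x,\xi)$ with $c(1)=a\#b$, and applies Taylor's formula in $\tau\in[0,1]$ up to order $\rho$. Thanks to the square root, one $\tau$-derivative is worth exactly one factor of $\tfrac{i}{2}\sigma(D)$ (two phase-space degrees), and the integrations by parts making this precise absorb all polynomial prefactors, so the remainder $c^{(\rho)}(\tau)$ is directly the oscillatory integral of $(\tfrac{i}{2}\sigma(D))^\rho(a\otimes b)$ at the $\sqrt{\tau}$-shifted point, with no monomial left over; only the two integrations by parts for $(\xe,\ye)$-decay remain. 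Your ``Taylor only $b$'' variant arrives at the same place, but the symmetric variant as written does not.
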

\begin{proof}
Notice first that $\partial_x^\ell(a\#b)$ (resp.\ $\partial_x^\ell(a\#b)_{\rho-\ell}$) is a linear
 combination of $(\partial_x^{\ell'}a)\#(\partial_x^{\ell''}b)$ (resp.\
 $\bigl((\partial_x^{\ell'}a)\#(\partial_x^{\ell''}b)\bigr)_{\rho-\ell}$ with $\ell'+\ell''=\ell$. Consequently, it suffices to
 prove \eqref{229} when $\ell = 0$. Define for 
  $ \tau $ in $[0,1]$, $\Xe = (\xe,\xie)$, $\Ye = (\ye,\etae)$ in $\R^2$, 
 the function
\begin{multline*}c(\tau,x,\xi) \\= \frac{1}{\pi^2}\int_{\R^4} [a(x',\xi')b(y',\eta')]\vert_{\substack{x' = x + \sqrt{\tau}\xe, y' = x +
    \sqrt{\tau}\ye\\ \xi' = \xi + \sqrt{\tau}\xie, \eta' = \xi + \sqrt{\tau}\etae}} e^{-2i\sigma(\Xe,\Ye)}\,d\Xe d\Ye \, .
    \end{multline*}
Notice that $ c(1, x, \xi ) = a\#b(x,\xi) $  is the function defined in \eqref{228}. 
By integrations by parts, one sees that 
the $k$-th $\tau$-derivative of $ c(\tau,x,\xi) $  is
\begin{multline*}
 \pa_\tau^k c (\tau,x,\xi) =  
  \frac{1}{\pi^2}\int_{\R^4} \Bigl(\frac{i}{2}\sigma(D_{x'},D_{\xi'},D_{y'},D_{\eta'})\bigr)^k  [a(x',\xi')b(y',\eta')]\\((x,\xi) +
  \sqrt{\tau}\Xe, (x,\xi)+\sqrt{\tau}\Ye) e^{-2i\sigma(\Xe,\Ye)}\,d\Xe d\Ye,
  \end{multline*}
so that Taylor formula gives, denoting the derivative $ \pa_\tau^k c = c^{(k)}  $, 
\[
a\#b(x,\xi) - (a\#b)_\rho(x,\xi) = \frac{1}{(\rho-1)!}\int_0^1 c^{(\rho)}(\tau,x,\xi)(1-\tau)^{\rho-1}\,d\tau \, . 
\] 
We may write
\begin{multline}
  \label{eq:2211}
  c^{(\rho)}(\tau,x,\xi) \\=
  \frac{1}{\pi^2}\int_{\R^4} e^{-2i\sigma(\Xe,\Ye)} e((x,\xi)+\sqrt{\tau}\Xe,(x,\xi)+\sqrt{\tau}\Ye)\,d\Xe d\Ye
\end{multline}   
where the symbol 
\begin{equation}\label{eq:2211a}
e (x',\xi',y',\eta') =  \Bigl(\frac{i}{2}\sigma(D_{x'},D_{\xi'},D_{y'},D_{\eta'})\bigr)^\rho  [a(x',\xi')b(y',\eta')] 
\end{equation}
satisfies, recalling \eqref{227},  the estimates  
\begin{multline}\label{eq:symb-e}
\abs{\partial_{\xi'}^{\gamma'}\partial_{\eta'}^{\gamma''}e(x',\xi',y',\eta')} \\\leq
K_{\rho,\gamma}\sum_{\substack{\alpha'+\alpha'' = \rho\\\beta'+\beta''=\rho}}M_{\alpha',\beta'+\gamma'}(a)
M_{\alpha'',\beta''+\gamma''}(b)\absj{\xi'}^{m-\beta'-\gamma'}\absj{\eta'}^{m'-\beta''-\gamma''}.
\end{multline}
The symbol $e$ in \eqref{2211a} may be written as a linear combination of expressions $a_1(x',\xi')b_1(y',\eta')$, where
$a_1, b_1$ may be expressed from derivatives of $a, b$. Then, in the same way as \eqref{228} may 
be written under  the form \eqref{226}, we 
obtain a representation of $c^{(\rho)}$ in
terms of a combination of integrals of the form
\be\label{eq:int-pas}
\int_{\R^2} e^{ix(\xi^*+\eta^*)}\hat{a}_1\bigl(\etae,\xi +
\tau\frac{\xie}{2}\bigr)\hat{b}_1\bigl(\xie,\xi-\tau\frac{\etae}{2}\bigr)\,d\xi^* d\eta^* , 
\ee
 where, because of the support properties of $\hat{a}, \hat{b}$, 
the integrand is supported 
on $\abs{\xi^*} + \abs{\eta^*}\leq C \delta\absj{\xi} $. Therefore we may 
  insert in the integral \eqref{int-pas} a cut-off $ \chi((\xi^*,\eta^*)/\absj{\xi})$, 
  where $ \chi $ is a compactly supported $ C^\infty (\R) $ function, equal to one close to zero. 
Now, expressing the Fourier transforms $\hat{a}_1, \hat{b}_1$ from $a_1, b_1$, we 
  get an integral of the form \eqref{2211}, with moreover the  cut-off $ \chi((\xi^*,\eta^*)/\absj{\xi})$.
 This shows that in \eqref{2211} we may insert  $\chi ((\xi^*,\eta^*)/\absj{\xi} )$ under the integral.
Performing in \eqref{2211} two integrations by parts in
$\partial_{\xie}$ and in $\partial_\etae$, we get an integrating factor
$(1+\absj{\xi}\abs{\ye})^{-2}(1+\absj{\xi}\abs{\xe})^{-2}$. Finally, using \eqref{symb-e}, we bound \eqref{2211} by
\[C_{\rho}\absj{\xi}^{m+m'-\rho}\int_{\substack{\abs{\xie}\ll\absj{\xi}\\\abs{\etae}\ll
    \absj{\xi}}}(1+\absj{\xi}\abs{\ye})^{-2}(1+\absj{\xi}\abs{\xe})^{-2}\,d\Xe d\Ye,\]
which gives \eqref{229} with $ C_{\rho,0} $ as in \eqref{2210} 
(with other constants $ M_{\alpha, \beta}$). 
\end{proof}
Before proceeding, we make the following remarks:

Let $a(x, \xi) $ satisfy \eqref{227} with $\alpha = 0$, $\beta$ in $\N$, with $x$-Fourier transform $\hat{a}(\hat{x},\xi)$
supported for $\abs{\hat{x}}\leq\delta\absj{\xi}$ for some small $\delta>0$. Then  $\hat{a}(\hat{x},\xi)$ may be
considered as an $\hat{x}$-compactly supported distribution, for each fixed $\xi$, which is smooth in $\xi$, and the
Definition \eqref{2110} of $\opw$ shows that, if $u$ is in  $\Scal(\R)$
\be\label{eq:Opwa}
\widehat{\opw(a)u}(\xi) = \frac{1}{2\pi}\int_{\R}\hat{a}\bigl(\eta,\xi-\frac{\eta}{2}\bigr)\hat{u}(\xi-\eta)\,d\eta
\ee
where the integral has to be interpreted as an $\eta$-compactly supported distribution depending smoothly on $\xi$ acting on
$\eta\to \hat{u}(\xi-\eta)$. Since $\abs{\eta}\ll\absj{\xi}$ on the support of \eqref{Opwa},
it follows that $\widehat{\opw(a)u}$ is in
$\Scal(\R)$, and setting 
\be\label{eq:aR}
a_R(x,\xi) = a(x,\xi)\theta(\xi/R)
\ee 
for some cut-off function $\theta\in C^\infty_0(\R)$ equal to one close to zero,  
$$
\opw(a_R)u \to \opw(a)u \quad {\rm in} \ \  \Scal(\R) \, , 
$$
when $R$ goes to $+\infty $.
Moreover, formula \eqref{226} shows that the $x$-Fourier transform of $a\#b$  is
\be\label{eq:intsh}
\widehat{a\#b}(\eta,\xi) = \frac{1}{2\pi}\int_{\R} \hat{a}\bigl(\xie,\xi + \frac{\eta}{2} - \frac{\xie}{2}\bigr)
\hat{b}\bigl(\eta-\xie,\xi-\frac{\xie}{2}\bigr)\,d\xie
\ee
and, by  the support conditions
$ |\xie | \leq \delta \langle \xi + \frac{\eta}{2} - \frac{\xie}{2} \rangle  $, 
$ |\eta-\xie | \leq \delta  \langle  \xi-\frac{\xie}{2} \rangle $, 
the  integrand in \eqref{intsh} is supported, for $ \delta $ small enough, on 
\be\label{eq:support1}
\abs{\xie}+\abs{\eta}\ll\abs{\xi} \, . 
\ee 
In addition, 
recalling \eqref{Opwa}, \eqref{aR}, \eqref{intsh}, we have that,
 for any $u$   in  $\Scal(\R)$, 
\[
\begin{split}
  \widehat{\opw(a_R\#b_R)u}(\xi) = \frac{1}{(2\pi)^2}\int_{\R^2} \widehat{a_R}\bigl(\xie,\xi - \frac{\xie}{2}\bigr)
\widehat{b_R}\bigl(\eta-\xie,\xi - \frac{\eta}{2}-\frac{\xie}{2}\bigr)\\\times\hat{u}(\xi-\eta)\,d\xie d\eta
\end{split}\]
where, by the conditions 
$ |\xie | \leq \delta \langle \xi - \frac{\xie}{2} \rangle  $, $ |\eta-\xie | \leq \delta \langle \xi - \frac{\eta}{2}-\frac{\xie}{2} \rangle  $,
 we have, on the support of the integrand,  
$$
\aabs{\xi-\frac{\xi^*}{2}}\sim \abs{\xi-\eta}\sim \aabs{\xi-\frac{\eta}{2}-\frac{\xie}{2}} \, .
$$ 
As a consequence, as $R$ goes to infinity, 
$$ 
 \widehat{\opw(a_R\#b_R)u}(\xi) \ \to  \ \widehat{\opw(a\#b)u}(\xi) \quad  {\rm in} \ \  \Scal'(\R) \, . 
$$
We shall exploit these remarks in the proof of the following lemma.

\begin{lemma}  \label{224}
Let $a$, $b$ be as in (i) (resp. (ii)) of Definition~\ref{221}. Then
\begin{equation}
  \label{eq:2212}
  \opbw(a)\circ\opbw(b) = \opw(c)
\end{equation}
where
\begin{equation}
  \label{eq:2213}
  c(\Ucal;x,\xi) = a_{\chi_p}(\Ucal';x,\xi)\# b_{\chi_q}(\Ucal'';x,\xi),
\end{equation}
with $a_{\chi_p}, b_{\chi_q}$ defined by the first formula \eqref{2112},

(resp. where
\begin{equation}
  \label{eq:2214}
  c(U;t,x,\xi) = a_{\chi}(U;t,x,\xi)\# b_{\chi}(U;t,x,\xi),
\end{equation}
with $a_{\chi}, b_{\chi}$ defined by the second formula~\eqref{2112}).
\end{lemma}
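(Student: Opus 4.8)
The plan is to prove \eqref{2212} by a direct computation on Fourier transform side, reducing to the integral identity \eqref{intsh} for $a\#b$, after first checking that all the operators involved are well defined. First I would recall from Definition~\ref{Def:BW} and the remark after Proposition~\ref{215} that for $a\in\Gt{m}{p}$, $b\in\Gt{m'}{q}$ we have $\opbw(a(\Ucal';\cdot))=\opw(a_{\chi_p}(\Ucal';\cdot))$ and $\opbw(b(\Ucal'';\cdot))=\opw(b_{\chi_q}(\Ucal'';\cdot))$, and that by \eqref{2123a}, \eqref{support} the regularized symbols $a_{\chi_p}(\Ucal';x,\xi)$ and $b_{\chi_q}(\Ucal'';x,\xi)$ are periodic in $x$ with $x$-Fourier transforms supported in $\{|\eta|\le\delta'\absj{\xi}\}$ and satisfying bounds of the form \eqref{227} (for every $\alpha,\beta$, with the constants controlled by the seminorms of $a$, $b$ times $\Gcalsm{\sigma}{0,p}{\Ucal'}$, $\Gcalsm{\sigma}{0,q}{\Ucal''}$). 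Hence the composition $a_{\chi_p}(\Ucal';\cdot)\#b_{\chi_q}(\Ucal'';\cdot)$ is meaningful as the oscillatory integral \eqref{226}/\eqref{228}, and likewise in case (ii) with $a_\chi(U;t,\cdot)$, $b_\chi(U;t,\cdot)$ (using the support property coming from \eqref{2112} and the estimates \eqref{218}).

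The core step is the identity $\opw(\alpha)\circ\opw(\beta)=\opw(\alpha\#\beta)$ whenever $\alpha,\beta$ are symbols with $x$-Fourier transform supported in $\{|\eta|\le\delta\absj{\xi}\}$ and satisfying \eqref{227} for all $\alpha,\beta$-indices. To prove it I would first establish it for the truncated symbols $\alpha_R(x,\xi)=\alpha(x,\xi)\theta(\xi/R)$, $\beta_R(x,\xi)=\beta(x,\xi)\theta(\xi/R)$ of \eqref{aR}, which are genuine (Schwartz-in-$\xi$) symbols; for these the classical Weyl composition formula gives $\opw(\alpha_R)\opw(\beta_R)=\opw(\alpha_R\#\beta_R)$, which can be checked on Schwartz functions by writing out \eqref{Opwa} twice and recognizing the resulting double integral as $\widehat{\opw(\alpha_R\#\beta_R)u}(\xi)$ via \eqref{intsh}. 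Then I would pass to the limit $R\to+\infty$: by the remarks preceding the statement, $\opw(\alpha_R)u\to\opw(\alpha)u$ in $\Scal(\R)$, so $\opw(\beta)\,\opw(\alpha_R)u$ converges (here one uses that $\opw(\beta)$ maps $\Scal$ continuously to $\Scal$, again because $|\eta|\ll\absj{\xi}$ on the relevant support), and on the other side $\opw(\alpha_R\#\beta_R)u\to\opw(\alpha\#\beta)u$ in $\Scal'(\R)$ by the support/convergence argument spelled out just before the lemma. Identifying the two limits yields $\opw(\alpha)\circ\opw(\beta)=\opw(\alpha\#\beta)$. Applying this with $\alpha=a_{\chi_p}(\Ucal';\cdot)$, $\beta=b_{\chi_q}(\Ucal'';\cdot)$ gives \eqref{2212}–\eqref{2213}; the non-homogeneous case \eqref{2214} is identical with $a_\chi(U;t,\cdot)$, $b_\chi(U;t,\cdot)$ in place of $a_{\chi_p}$, $b_{\chi_q}$, the time parameter $t$ being a harmless spectator.

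The main obstacle is purely one of rigor about the oscillatory integrals: one must justify that \eqref{226}, \eqref{228}, \eqref{Opwa}, \eqref{intsh} all define the same object and that all the integrations by parts (in $\xie,\etae$ to gain decay in $\xe,\ye$, and in $\eta$ to see that $\widehat{\opw(\alpha)u}\in\Scal$) are legitimate for our symbols of limited $x$-smoothness. Since the symbols $a_{\chi_p}$, $a_\chi$ are smooth in $\xi$ with all the $\xi$-derivatives controlled by \eqref{214}/\eqref{218}, and since the crucial spectral localization $|\eta|\le\delta'\absj{\xi}$ holds in every integral in sight, these manipulations go through exactly as in the standard pseudodifferential calculus; the truncation $\theta(\xi/R)$ is the device that makes every step literally meaningful before taking limits. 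No genuinely new difficulty arises beyond bookkeeping of the constants, which by construction come out proportional to the seminorms of $a$ and $b$ and to $\Gcalsm{\sigma}{0,p}{\Ucal'}\Gcalsm{\sigma}{0,q}{\Ucal''}$, as needed for the subsequent application in Proposition~\ref{222} and Lemma~\ref{223}.
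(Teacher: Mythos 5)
Your proposal is correct and follows essentially the same route as the paper: reduce to $\opw(a_{\chi_p})\circ\opw(b_{\chi_q})$, truncate the symbols via $\theta(\xi/R)$, apply the classical Weyl composition formula to the truncated symbols, and pass to the limit $R\to\infty$ using the convergence remarks preceding the lemma. The only difference is cosmetic: where you propose to verify the composition identity for the truncated symbols by a direct Fourier computation, the paper instead invokes Theorem~7.9 of Dimassi--Sj\"ostrand, but these are the same classical fact.
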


\begin{proof}
 Let us treat the case (i). By \eqref{2113}, we have
\[
\opbw(a)\circ\opbw(b) =\opw(a_{\chi_p})\circ \opw(b_{\chi_q}) \, .
\]
Since $ a $ is an homogeneous symbol  in $ \Gt{m}{p} $ and $ b $ in $ \Gt{m'}{q} $
the symbols $ a_{\chi_p}, b_{\chi_q} $   satisfy, by \eqref{214} and \eqref{2111}, \eqref{2111bis},  
the following estimates: 
there is $ \sigma $ such that for any $  \alpha,  \beta \in \N $, we have 
\begin{equation*}
\begin{split}
\abs{\partial^\alpha_x\partial^\beta_\xi a_{\chi_p} (\Ucal';x,\xi)} \leq
  C_{\alpha, \beta}  \absj{\xi}^{m-\beta + \alpha}\Gcalsm{\sigma}{0,p}{\Ucal'} \, , \\
  \abs{\partial^\alpha_x\partial^\beta_\xi b_{\chi_q} (\Ucal'';x,\xi)} \leq
 C_{\alpha, \beta} \absj{\xi}^{m'-\beta+ \alpha}\Gcalsm{\sigma}{0,q}{\Ucal''}  \, . 
 \end{split}
\end{equation*}
Moreover, by \eqref{215}, the support of their space Fourier transforms 
\be\label{eq:suppF}
{\rm supp} \big( \widehat{a_{\chi_p} ({\cal U}'; \cdot )}(\eta,\xi) \big) \, , 
\ 
{\rm supp} \big(\widehat{b_{\chi_q} ({\cal U}''; \cdot )}(\eta,\xi) \big) \, 
\subseteq \, \{  | \eta | \leq C \delta \langle \xi \rangle  \}
\ee
where $\delta$ is the small constant in \eqref{2111}. 
As a consequence we have that $  a_{\chi_p}(\Ucal';x,\xi)\# b_{\chi_q}(\Ucal'';x,\xi) $ is 
well defined as the oscillatory integral \eqref{228}. 

We are thus left with showing that
\begin{equation} \label{eq:2215}
 \opw(a_{\chi_p} ( {\cal U}'; \cdot ) )\circ\opw(b_{\chi_q } ({\cal U}'' ; \cdot )) = \opw((a_{\chi_p} \# b_{\chi_q})({\cal U}; \cdot)).
\end{equation}
If we replace $ a_{\chi_p}, b_{\chi_q} $ by $a_{\chi_p,R}, b_{\chi_q,R}$ 
defined as in \eqref{aR}, we obtain symbols that satisfy 
the assumptions of Theorem~7.9 of \cite{DSj}. Indeed, by \eqref{214}, for any $ \alpha, \beta $ in $ \N $, we have the estimates
$$
|\partial^\alpha_x \partial^\beta_\xi a_{\chi_p, R} (\Ucal';x,\xi)| \leq C_{\alpha, \beta,R} \langle \xi \rangle^m \, , 
\quad
|\partial^\alpha_x \partial^\beta_\xi a_{\chi_q, R} (\Ucal'';x,\xi)| \leq C_{\alpha, \beta,R} \langle \xi \rangle^{m'} \, ,
$$
and the symbol $ a_{\chi_p, R} $, resp.  $ b_{\chi_q, R} $, 
can be regarded  as a function in $ S_\delta (m_1) $, resp. $ S_\delta (m_2) $, for any $ 0 \leq \delta \leq \frac12 $, 
with order functions $ m_1 (x, \xi) = \langle \xi \rangle^m $, resp. $ m_2 (x, \xi) = \langle \xi \rangle^{m'} $.
Theorem~7.9 of \cite{DSj} with $ h = 1 $ implies that 
formula \eqref{2215} holds for $a_{\chi_p,R}, b_{\chi_q,R} $. 
By the remarks preceding the
statement of the lemma, we may pass to the limit when $R$ goes to infinity 
proving \eqref{2215} and therefore \eqref{2212}.
The proof of case (ii) is similar.
\end{proof}

\begin{proof1}{Proof of Proposition~\ref{222}}
We prove statement (i). By lemma~\ref{224}, we may write \eqref{224} as
\[
\opw\bigl(a_{\chi_p}(\Ucal';\cdot)\# b_{\chi_q}(\Ucal'';\cdot)\bigr) - \opbw((a\# b)_\rho(\Ucal;\cdot)) =
\opw(r(\Ucal;\cdot))
\]
where $ r = r_1 + r_2 $ with
\begin{equation}  \label{eq:2216}
  \begin{split}
    r_1(\Ucal;\cdot) &= a_{\chi_p}(\Ucal';\cdot)\# b_{\chi_q}(\Ucal'';\cdot) - (a_{\chi_p}\# b_{\chi_q})_\rho(\Ucal;\cdot)\\
 r_2(\Ucal;\cdot) &= (a_{\chi_p}\# b_{\chi_q})_\rho(\Ucal;\cdot) - (a\#b)_{\rho, \chi_{p+q}}(\Ucal;\cdot).
  \end{split}
\end{equation}
By \eqref{229} with $ \ell = 0 $, \eqref{2210} and \eqref{214} (recall \eqref{227}), we have 
\be\label{eq:bound-r1}
\abs{r_1(\Pin{}\Ucal;x,\xi)} \leq C \abs{n}^{\mu+\rho}\absj{\xi}^{m+m'-\rho}\Gcalsm{0}{0,p+q}{\Pin{}\Ucal} \, .
\ee
 Notice that the symbol $ r_1 $ satisfies, as a consequence of its definition \eqref{2216}, 
 \eqref{suppF}, \eqref{intsh}, \eqref{support1}, and the Definition \ref{221} of
 $ (a_{\chi_p}\# b_{\chi_q})_\rho $,  spectral localization properties that 
 ensure that $ \opw (r_1) $ coincides with $ \opbw ( r_1) $, 
 if $\chi_p, \chi_q$ in the definition of $r_1$ have been taken with small enough support. 
 More
 precisely, the space Fourier transform of $ r_1 $ is supported for $ | \eta | \ll \langle \xi \rangle $,
 if $ \delta $ in \eqref{2111} is small enough.
 Moreover by the first remark following the proof of Proposition~\ref{215} the estimate \eqref{bound-r1} is enough to 
 prove that $ \opw(r_1(\Pin{}\Ucal;\cdot) $ satisfies 
 \eqref{2121} (with $ m $ replaced by $ m + m' - \rho $ and  $ \sigma \sim \rho  $). Therefore 
applying \eqref{2121}  with $ s = m +m' - \rho $, 
we have, up to changing the definition of $\mu$,   for any $n=(n_1,\dots,n_{p+q})$ 
in $(\N^*)^{p+q}$, any $n_{p+q+1} $, the bound
\begin{multline}
  \label{eq:2217}
    \norm{\opw(r_1(\Pin{}\Ucal;\cdot))\Pin{p+q+1}U_{p+q+1}}_{L^2}\\
\leq C\abs{n}^{\mu+\rho} n_{p+q+1}^{m+m'-\rho}\Gcalsm{0}{0,p+q+1}{\Pin{}\Ucal,\Pin{p+q+1}U_{p+q+1}} \, .
\end{multline} 
Moreover, since
$ \opw(a_{\chi_p}(\Pi_{n'}\Ucal';\cdot)) $ and $\opw(b_{\chi_q}(\Pi_{n''}\Ucal'';\cdot)) $ satisfy
 \eqref{pro-x-in} (since $ a_{\chi_p}, b_{\chi_q}$ satisfy \eqref{215}), 
 the composed operator
$$ 
\opw(a_{\chi_p}(\Pi_{n'}\Ucal';\cdot)) \circ 
\opw(b_{\chi_q}(\Pi_{n''}\Ucal'';\cdot)) 
$$
 satisfies  the same property as well, and, by \eqref{2122} in Proposition \ref{215}, if  
\[ 
\opw(a_{\chi_p}(\Pi_{n'}\Ucal';\cdot))\circ \opw(b_{\chi_q}(\Pi_{n''}\Ucal'';\cdot)\Pin{p+q+1}U_{p+q+1} \neq 0,
\]
then $ n = (n',n'')$ satisfies $\abs{n}\ll n_{p+q+1}$ 
(if $\delta$ in \eqref{2111} is small enough). A similar statement holds
for
\[
\opw\bigl((a_{\chi_p}\# b_{\chi_q})_\rho(\Pin{}\Ucal;\cdot)\bigr)\Pin{p+q+1}U
\]
so that when \eqref{2217} does not vanish identically, we may bound its right hand side as
\be\label{eq:r1es}
C\frac{\maxdn{1}{p+q+1}^{\mu+\rho}}{\maxn{1}{p+q+1}^{\rho-m-m'}}\Gcalsm{0}{0,p+q+1}{\Pin{}\Ucal,\Pin{p+q+1}U_{p+q+1}} \, .
\ee
Indeed, if $ \maxn{1}{p+q} < n_{p+q+1} $ then \eqref{r1es} directly follows by \eqref{2217}. On the other hand, if 
$ \maxn{1}{p+q} \geq n_{p+q+1} $, then, by  $ \abs{n}\ll n_{p+q+1} $, we have that 
$ \max_{2}( n_1, \ldots, n_{p+q+1}) \sim  \max ( n_1, \ldots, n_{p+q+1}) $
and \eqref{r1es} follows as well. 
We have proved a bound of the form \eqref{2115}, showing that 
$\opw(r_1)$ is a smoothing operator of $\Rt{-\rho+m+m'}{p+q}$. 

We have
to prove a similar result for $ \opw(r_2(\Ucal;\cdot)) $ where $ r_2(\Ucal;\cdot) $ is defined 
in \eqref{2216}. Recalling Definition \ref{221}, we notice that 
$ r_2(\Pin{}\Ucal;\cdot)$  is a combination of symbols
of the form, for $ 0 \leq \ell < \rho $
\[\begin{split}
  \Bigl(\frac{i}{2}\sigma(D_x,D_\xi,D_y,D_\eta)\Bigr)^\ell
  \bigl[a(\Pi_{n'}\Ucal';x,\xi)b(\Pi_{n''}\Ucal'';&y,\eta)\\&\times\chi_p(n',\xi)\chi_q(n'',\eta)\bigr]\vert_{\substack{x=y\\\xi=\eta}}\\
-\chi_{p+q}(n',n'',\xi) \Bigl(\frac{i}{2}\sigma(D_x,D_\xi,D_y,D_\eta)\Bigr)^\ell &\\
\bigl[a(\Pi_{n'}\Ucal';x,\xi)&b(\Pi_{n''}\Ucal'';y,\eta) \bigr]\vert_{\substack{x=y\\\xi=\eta}}.
\end{split}
\]
Because of the properties \eqref{2111}  of 
$ \chi_p, \chi_q, \chi_{p+q} $, the above symbol is supported in the region where  
$$
\delta_1  \langle \xi \rangle \leq |(n', n'') | \leq \delta_2  \langle \xi \rangle,  
\quad {\rm thus} \quad \maxn{1}{p+q}\sim \langle \xi \rangle\, , 
$$
and, since $a$ is  in $\Gt{m}{p}$ and $b$ in $\Gt{m'}{q}$,  it is bounded in modulus by 
$$
C | n |^\sigma  \absj{\xi}^{m+m' - \rho} \Gcalsm{0}{0,p+q}{\Pin{}\Ucal}  
$$
for some $ \sigma \sim \rho $.  Arguing as for $ r_1 $ we deduce that $\opw(r_2( \Pi_n \Ucal;\cdot)) $
satisfies bounds of the form \eqref{2115} (with $ \rho $ replaced by $ \rho - m - m' $ and $ p + 1 $ replaced 
by $ p + q + 1$)  and therefore also 
$\opw(r_2)$ is a smoothing operator  of $\Rt{-\rho+m+m'}{p+q}$. 
This concludes the proof of (i) of the proposition.

We now prove statement (ii). By \eqref{2214} and  the second line of \eqref{2113} we may write \eqref{225} as
\[
\opw(r_1(U; t, \cdot) + r_2(U; t, \cdot) )
\]
where 
\begin{equation}  \label{eq:2216bis}
\begin{split}
 r_1(U;t, \cdot) &= a_{\chi}(U; t, \cdot)\# b_{\chi}(U;t,\cdot) - (a_{\chi}\# b_{\chi})_\rho(U; t, \cdot)\\
 r_2(U; t, \cdot) &= (a_{\chi}\# b_{\chi})_\rho(U; t, \cdot) - (a\#b)_{\rho, \chi}(U; t, \cdot).
  \end{split}
\end{equation}
The symbol $ a $ is in $ \Gr{m}{K,K',p} $ and $b$ in $ \Gr{m'}{K,K',q} $.
Taking $ \sigma $ large enough, depending on $ \rho $, we have that  \eqref{218}  holds up to the index
$ 0 \leq \alpha \leq \rho $ (and similarly for $ b $).
We notice that \eqref{229}, \eqref{2210} and \eqref{218} imply that the symbol $r_1$ 
satisfies bounds
\[
\abs{\partial_t^k \partial_x^\ell r_1(U; t,  x,\xi)} \leq C\absj{\xi}^{m+m'-(\rho-\ell)}\Gcals{\sigma}{k+K',p+q}{U}
\]
for $0\leq\ell\leq 2$.  By the first remark following the proof of Proposition~\ref{215} the estimate \eqref{2123} holds, i.e. 
$$
\norm{\opbw(\partial_t^k r_1(U;t,\cdot))}_{\Lcal(\Hds{s},\Hds{ s-m-m'+\rho - 2})}\leq C\Gcals{\sigma_0}{k+K',p+q}{U} \, , 
$$ 
and, for any $ 0 \leq k \leq K - K' $, 
\begin{multline*}
 \norm{\partial^k_t (\opw(r_1(U; t, \cdot))V)}_{\Hds{s-m-m'+\rho-2-\frac{3}{2}k}}\\
\leq C\sum_{k'+k''=k}\Gcals{\sigma}{k'+K',p+q}{U}\Gcals{s}{k'',1}{V}
\end{multline*}
which implies the estimate \eqref{2117} of a smoothing operator 
of $ \Rr{-\rho +m + m' }{K,K',p+q} $,  by renaming $ \rho $ as $ \rho + 2 $.
(We used again that Weyl and Bony-Weyl quantizations coincide for the symbol at hand, since
the space Fourier transform of $ r_1 (U; t,  x,\xi) $ is supported for $ | \eta | \ll | \xi |$). The study of $\opbw(r_2)$ is similar, as $r_2$ satisfies the
same bounds as $r_1$ above.

The statement concerning autonomous classes follows directly from the proof.
\end{proof1}
\textbf{Remark}:  The proof shows that the remainders in \eqref{224}, \eqref{225} have actually better estimates than the
general bounds  \eqref{2115}, \eqref{2117} satisfied by smoothing operators. In \eqref{2216} the symbols $r_1, r_2$ are
supported so that in \eqref{2217}, $\abs{n}\ll n_{p+q+1}$. Consequently, for any $\rho$, there is $\sigma>0$, and for any $\Ucal = (U,\dots,U)$ with $\norm{U}_{\Hds{\sigma}}$
smaller than some $r$, the operator \eqref{224} is bounded from $\Hds{s}$ to $\Hds{s+\rho-(m+m')}$ for \emph{any} $s$. The same
assertion holds for \eqref{225}.

\section{Composition theorems}\label{sec:23}

We  state in this section the composition results that will be used systematically in the rest of the monograph.
Let us consider symbols 
$$ 
a = \sum_{q=p}^N a_q \in \sG{m}{K,K',p}{N} \, , \quad
b = \sum_{q'=p'}^N b_{q'} \in \sG{m'}{K,K',p'}{N}
$$
decomposed as in  \eqref{219}. 
We define the symbol of $\sG{m+m'}{K,K',p+p'}{N} $,  
\begin{equation}
  \label{eq:231}
  \index{ac@$(a\#b)_{\rho,N}$ (Composition of symbols)} 
  (a\#b)_{\rho,N} = \sum_{q'' = p+p'}^{N-1} c_{q''}(U,\dots,U;x,\xi) + c_N(U;t,x,\xi) \, , 
\end{equation}
where, for $q''\leq N-1$, one sets
\[
c_{q''}(\Ucal;x,\xi) = \sum_{q+q'=q''}(a_q\# b_{q'})_\rho(\Ucal;x,\xi)
\]
and
\[
c_N(U;t,x,\xi) = \sum_{q+q'\geq N}(a_q\# b_{q'})_\rho(U;t,x,\xi) \, ,
\]
the factors $a_q , b_{q'}$, $q, q'\leq N-1$, 
in the expression of $c_N$ being considered  as symbols of $\Gr{m}{K,0,q}$,
$\Gr{m'}{K,0,q'}$, according to the first remark following Definition~\ref{212bis}. 

The following proposition is a direct consequence of Proposition~\ref{222}.
\begin{proposition} {\bf (Composition)} 
  \label{231}
  Let   $m, m'$ be in 
  $\R$, $K'\leq K$ in $\N$, $p, p', N$ in $\N$ with $p+p'\leq N$, $\rho$ in $\N$, $r>0$.
 If $a $ is a symbol in $ \sG{m}{K,K',p}{N}$ 
and  $ b $ is  a symbol in $ \sG{m'}{K,K',p'}{N}$,  then  
\begin{equation}
  \label{eq:232}
  \opbw(a(U;t, \cdot))\circ \opbw(b(U; t, \cdot)) = \opbw((a\#b)_{\rho,N}(U;t, \cdot)) + R(U;t)
\end{equation}
where $R$ is a smoothing operator in $\sR{-\rho+m+m'}{K,K',p+p'}{N}$. A similar statement holds for autonomous classes.
\end{proposition}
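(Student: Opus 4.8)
The plan is to reduce Proposition~\ref{231} to the two cases already treated in Proposition~\ref{222} by linearity in the decomposition \eqref{219}, and then to collect the contributions according to their degree of homogeneity in $U$. First I would decompose $a = \sum_{q=p}^{N-1} a_q(U,\dots,U;x,\xi) + a_N(U;t,x,\xi)$ and $b = \sum_{q'=p'}^{N-1} b_{q'}(U,\dots,U;x,\xi) + b_N(U;t,x,\xi)$ as in Definition~\ref{213}, and expand the composition
\[
\opbw(a(U;t,\cdot))\circ\opbw(b(U;t,\cdot)) = \sum_{q=p}^{N}\sum_{q'=p'}^{N} \opbw(a_q)\circ\opbw(b_{q'})
\]
into a finite sum of terms, each of which is a composition of two Bony-Weyl operators of the type handled in Proposition~\ref{222}. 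For the pairs with $q,q'\leq N-1$ (both factors homogeneous), part (i) of Proposition~\ref{222} gives
\[
\opbw(a_q(U,\dots,U;\cdot))\circ\opbw(b_{q'}(U,\dots,U;\cdot)) = \opbw\bigl((a_q\#b_{q'})_\rho(U,\dots,U;\cdot)\bigr) + R_{q,q'}(U)
\]
with $R_{q,q'}$ in $\Rt{-\rho+m+m'}{q+q'}$, hence (restricting to the diagonal, using the first remark after Definition~\ref{214}) in $\sR{-\rho+m+m'}{K,0,q+q'}{N}$. For the pairs involving $a_N$ or $b_N$, I would use the first remark after Definition~\ref{212bis} to regard each homogeneous factor $a_q$, $b_{q'}$ as a non-homogeneous symbol in $\Gr{m}{K,0,q}$, respectively $\Gr{m'}{K,0,q'}$, so that part (ii) of Proposition~\ref{222} applies and produces a remainder in $\Rr{-\rho+m+m'}{K,K',q+q'}{}$, which is in $\sR{-\rho+m+m'}{K,K',p+p'}{N}$ since $q+q'\geq N$.

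Next I would regroup the symbol contributions. By Definition~\ref{221}, for $q+q'\leq N-1$ the symbol $(a_q\#b_{q'})_\rho$ lies in $\Gt{m+m'}{q+q'}$ modulo $\Gt{m+m'-\rho}{q+q'}$; collecting those with $q+q'=q''$ for a fixed $q''$ between $p+p'$ and $N-1$ yields exactly the homogeneous component $c_{q''}$ of \eqref{231}, the error symbols of order $m+m'-\rho$ being quantized into smoothing operators in $\Rt{-\rho+m+m'}{q''}{}$ by the last remark following Proposition~\ref{215}, hence absorbed into $R$. The remaining pairs, namely those with $q+q'\geq N$ (whether or not $a_N$ or $b_N$ is involved), assemble into the non-homogeneous symbol $c_N$ of \eqref{231}, again modulo a symbol of order $m+m'-\rho$ whose Bony-Weyl quantization is a $(\rho-m-m')$-smoothing operator in $\Rr{-\rho+m+m'}{K,K',N}{}$. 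Summing everything gives \eqref{232} with $R$ a finite sum of the above homogeneous and non-homogeneous smoothing operators, hence an element of $\sR{-\rho+m+m'}{K,K',p+p'}{N}$ by Definition~\ref{214}(iii); I should also check, using the inclusions listed after Definition~\ref{214}, that the various degrees $q+q'\geq p+p'$ all fit into the class indexed by $p+p'$.

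The autonomous statement is obtained by the same argument, observing that when $a$ and $b$ are in $\sGa{m}{K,0,p}{N}$, $\sGa{m'}{K,0,p'}{N}$ the non-homogeneous factors $a_N,b_N$ depend on $t$ only through $U(t)$, so Proposition~\ref{222}(ii) delivers autonomous smoothing remainders and $(a\#b)_{\rho,N}$ lands in $\sGa{m+m'}{K,0,p+p'}{N}$.

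I do not expect a genuine obstacle here: the statement is essentially a bookkeeping corollary of Proposition~\ref{222}. The one point requiring a little care is the treatment of the ``mixed'' terms where a homogeneous symbol $a_q$ (or $b_{q'}$) with $q+q'\geq N$ is paired so that the product must be viewed as non-homogeneous — one must invoke the first remark after Definition~\ref{212bis} to legitimately apply part (ii) of Proposition~\ref{222}, and then verify that the resulting remainder estimate \eqref{2117} indeed holds with the index $N$ (rather than $q+q'$), which is immediate since vanishing at order $q+q'\geq N$ implies vanishing at order $N$ on a bounded set. A second minor point is tracking that the loss in $\rho$ is uniform across the finitely many terms, which it is, since only finitely many pairs $(q,q')$ occur and each contributes a loss of exactly $\rho-m-m'$ derivatives (up to the harmless renaming of $\rho$ as $\rho+2$ in the non-homogeneous case, as in the proof of Proposition~\ref{222}).
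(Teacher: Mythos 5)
Your proposal is correct and is precisely the unpacking the paper intends: Proposition~\ref{231} is stated in the text as a direct consequence of Proposition~\ref{222}, and your argument fills in exactly the bookkeeping that justifies that claim, decomposing $a$ and $b$ according to \eqref{219}, applying Proposition~\ref{222}(i) to the homogeneous--homogeneous pairs, using the first remark after Definition~\ref{212bis} together with Proposition~\ref{222}(ii) for the remaining pairs, regrouping by total degree to recover $(a\#b)_{\rho,N}$ as defined in \eqref{231}, and absorbing the error symbols of order $m+m'-\rho$ into the smoothing remainder via the last remark after Proposition~\ref{215}. The two subtleties you flag at the end (legitimizing the "mixed" case $q+q'\ge N$ with both factors homogeneous, and the uniformity of the loss in $\rho$ over finitely many pairs) are indeed the only points requiring care, and your treatment of them is adequate.
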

We may compose as well smoothing operators and paradifferential ones. 
The outcome is another smoothing operator. 
\begin{proposition}
  \label{232}
Let $a$ be a symbol in $\sG{m}{K,K',p}{N}$ with $m\geq 0$ and $R$ a smoothing operator in $\sR{-\rho}{K,K',p'}{N}$. Then the
composition operators 
$$
\opbw(a(U; t, \cdot))\circ R(U;t) \, , \quad R(U;t) \circ \opbw(a(U; t, \cdot)) 
$$ 
are in $\sR{-\rho+m}{K,K',p+p'}{N}$.  A similar statement holds for autonomous classes.
\end{proposition}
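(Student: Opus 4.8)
\emph{Proof proposal.} The plan is to split both factors into homogeneous and non-homogeneous pieces, treat the resulting products by the boundedness and localization properties already proved, and reassemble. First I would write $a=\sum_{q=p}^{N-1}a_q(U,\dots,U;x,\xi)+a_N(U;t,x,\xi)$ as in \eqref{219} and $R(V;t)U=\sum_{q'=p'}^{N-1}R_{q'}(V,\dots,V)U+R_N(V;t)U$ as in \eqref{2118}, multiply out, and sort the terms of $\opbw(a(U;t,\cdot))\circ R(U;t)$ into (a) the homogeneous--homogeneous products $\opbw(a_q(U,\dots,U;\cdot))\circ R_{q'}(U,\dots,U)$ and (b) all products involving $a_N$ or $R_N$. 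I will show that a term of type (a) is the restriction to the diagonal of an element of $\Rt{-\rho+m}{q+q'}$: those with $q+q'\le N-1$ are grouped by the value $q''=q+q'$ into the homogeneous part of the answer, and those with $q+q'\ge N$ go (using the first remark after Definition~\ref{214}, which turns an element of $\Rt{-\rho'}{p}$ with $p\ge N$ into one of $\Rra{-\rho'}{K,0,N}$) into the non-homogeneous part. Every term of type (b) will be shown to be a non-homogeneous smoothing operator vanishing in $U$ at some order $N'\ge N$; after shrinking $r$ so that $r<1$, the inclusion $\Rr{-\rho+m}{K,K',N'}\subset\Rr{-\rho+m}{K,K',N}$ places it in $\Rr{-\rho+m}{K,K',N}$. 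Summing the groups produces exactly the decomposition required by Definition~\ref{214}(iii), and the composition $R(U;t)\circ\opbw(a(U;t,\cdot))$ is handled identically with the two factors exchanged.

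For the homogeneous block I would prove: if $a\in\Gt{m}{p}$ with $m\ge 0$ and $\tilde R\in\Rt{-\rho}{p'}$, then the map $\opbw(a(U_1,\dots,U_p;\cdot))\,\tilde R(U_{p+1},\dots,U_{p+p'})U_{p+p'+1}$, symmetrized over its $p+p'$ argument slots, lies in $\Rt{-\rho+m}{p+p'}$. Setting $M=\maxn{p+1}{p+p'+1}$, the function $W:=\tilde R(\Pin{p+1}U_{p+1},\dots)\Pin{p+p'+1}U_{p+p'+1}$ satisfies, mode by mode, $\|\Pi_{n_0'}W\|_{L^2}\lesssim M^{-\rho}(\maxdn{p+1}{p+p'+1})^{\rho+\mu'}\prod_{j\ge p+1}\|\Pin{j}U_j\|_{L^2}$ by \eqref{2115}, with $n_0'\le(p'+1)M$ from the sign condition \eqref{2116}. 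Applying \eqref{2121} with $s$ large to the single mode $\Pi_{n_0'}W$ and using $n_0\sim n_0'$ from \eqref{2122} gives $\|\Pin{0}\opbw(a(\Pin{}\Ucal';\cdot))\Pi_{n_0'}W\|_{L^2}\lesssim\big(\prod_{j\le p}n_j^{\sigma}\|\Pin{j}U_j\|_{L^2}\big)(n_0')^{m}\|\Pi_{n_0'}W\|_{L^2}$. The localization $n_j\le C\delta n_0'$ for $j\le p$ in \eqref{2122} together with $n_0'\le(p'+1)M$ forces $n_1,\dots,n_p\ll M$ for $\delta$ small, so $\maxn{1}{p+p'+1}\sim M$, this maximum is attained by some $n_{j_0}$ with $p+1\le j_0\le p+p'+1$, and hence $\maxdn{1}{p+p'+1}$ dominates both $\maxdn{p+1}{p+p'+1}$ and every $n_j$ with $j\le p$. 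Substituting $(n_0')^{m}\le(p'+1)^mM^m$ and $\prod_{j\le p}n_j^{\sigma}\le(\maxdn{1}{p+p'+1})^{p\sigma}$ turns the bound into one of the form \eqref{2115} with $\rho$ replaced by $\rho-m$, provided the new exponent is taken $\ge p\sigma+\mu'+m$; composing the sign relations of \eqref{2122} and \eqref{2116} and eliminating the internal index $n_0'$ gives the sign condition for $n_0,n_1,\dots,n_{p+p'+1}$. The reversed composition $\tilde R\circ\opbw(a)$ is the same argument with the internal frequency comparable to $n_{p+p'+1}$ instead of to $n_0$.

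For type (b) I would use the elementary lemma: if $c\in\Gr{m}{K,L,q}$ with $m\ge 0$ (which covers $c=a_Q(U,\dots,U;\cdot)\in\Gra{m}{K,0,Q}$ and $c=a_N$) and $S\in\Rr{-\rho}{K,L',N'}$ (which, by the first remark after Definition~\ref{214}, covers $S=R_{q'}(U,\dots,U)\in\Rra{-\rho}{K,0,q'}$), then $\opbw(c(U;t,\cdot))\circ S(U;t)$ and $S(U;t)\circ\opbw(c(U;t,\cdot))$ lie in $\Rr{-\rho+m}{K,\max(L,L'),q+N'}$. This is the standard ``bounded $\circ$ smoothing $=$ smoothing'' fact in the present tame setting: for $0\le k\le K-\max(L,L')$ one expands $\partial_t^k$ of the composition by Leibniz, bounds each $\opbw(\partial_t^{k_1}c(U;t,\cdot))$ in $\Lcal(\Hds{s'},\Hds{s'-m})$ by \eqref{2123} with the polynomial control $\nnorm{U}_{k_1+L,\sigma_0}^{q}$, bounds each factor coming from $S$ by \eqref{2117} ($\rho$ derivatives gained, order $N'$ in $U$, linear in the argument), and absorbs the index mismatch $-\frac{3}{2}k_1$ using $\Hds{\sigma}\subset\Hds{\sigma'}$ for $\sigma\ge\sigma'$; the $U$-dependences multiply to order $q+N'$. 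In each family of type (b) one has $q+N'\ge N$ and $\max(L,L')=K'$, so by the inclusion above these terms are in $\Rr{-\rho+m}{K,K',N}$. Assembling the homogeneous and non-homogeneous contributions gives $\opbw(a(U;t,\cdot))\circ R(U;t)\in\sR{-\rho+m}{K,K',p+p'}{N}$, and likewise for the reversed composition; the autonomous statement follows verbatim, since every building block keeps the time dependence only through $U(t)$.

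I expect the only real difficulty to be the homogeneous block, namely checking that composing with a paradifferential operator does not merely cost a fixed polynomial loss but genuinely preserves the sharp decay of a smoothing operator (the ratio of the second-largest to the largest frequency raised to power $\rho$). This works precisely because a paradifferential operator enlarges the Fourier support only by a bounded factor ($n_0\sim n_0'$) and forces its symbol arguments to low frequency, so the large frequency $M$ carried by $W$ is left essentially untouched and the $M^{-\rho}$ gain survives, the extra $M^{m}$ being exactly the $m$ derivatives lost by $\opbw(a)$; everything else is routine bookkeeping of tame norms and the Leibniz rule in $t$.
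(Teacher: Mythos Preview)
Your proof is correct and follows essentially the same approach as the paper: decompose both factors into homogeneous and non-homogeneous pieces, handle the homogeneous--homogeneous products via the frequency localization of paradifferential operators from \eqref{2122} combined with the smoothing estimate \eqref{2115}, and treat the non-homogeneous products by the Leibniz rule together with \eqref{2123} and \eqref{2117}. The only organizational difference is that the paper groups the homogeneous products with $q+q'\ge N$ directly among the non-homogeneous terms (as in \eqref{234}) rather than first proving they lie in $\Rt{-\rho+m}{q+q'}$ and then invoking the remark after Definition~\ref{214}; both routes are equivalent.
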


\begin{proof}
We prove the proposition for $ \opbw(a(U; t, \cdot))\circ R(U;t) $, the other is similar. 
  Decomposing $a =\sum_{q=p}^N a_q$ as  in  \eqref{219}  and $R = \sum_{q'=p'}^N R_{q'}$ according to \eqref{2118}, we have to show, on   the one hand, that
  \begin{equation}
    \label{eq:233}
    \opbw(a_q(U_1,\dots,U_q;\cdot))\circ R_{q'}(U_{q+1},\dots,U_{q+q'})
  \end{equation}
is an homogeneous 
smoothing operator in $\Rt{-\rho+m}{q''}$ if $q'' = q+q' \leq N-1$ and, on the other hand, that
\begin{equation}
  \label{eq:234}
 \begin{split}
&  \opbw(a_q(U,\dots,U;\cdot))\circ R_{q'}(U,\dots,U),\ q+q'\geq N, q, q'\leq N-1\\
&  \opbw(a_N(U;t, \cdot))\circ R_{q'}(U,\dots,U),\ 0\leq q'\leq N-1\\
&  \opbw(a_q(U,\dots,U;\cdot))\circ R_{N}(U;t),\ 0\leq q\leq N-1\\
&  \opbw(a_N(U; t, \cdot))\circ R_{N}(U;t)
 \end{split}
\end{equation}
are non-homogeneous smoothing operators in $\Rr{-\rho+m}{K,K',N}$. 

Let us first study \eqref{233}. Replacing $U_j$ by $\Pin{j}U_j$ we have to consider expressions of the form 
\begin{multline} \label{eq:235}
    \sum_{n'_0}\Pin{0}\opbw\bigl(a_q(\Pin{1}U_1,\dots,\Pin{q}U_{q};\cdot)\bigr) \Pi_{n'_0} \\\circ \Pi_{n'_0}
    R_{q'}\bigl(\Pin{{q+1}}U_{q+1},\dots,\Pin{q+q'}U_{{q+q'}}\bigr) \Pin{q+q'+1}U_{{q+q'+1}} \, .
\end{multline}
By \eqref{pro-x-in} for $ a_q $ and \eqref{2116} for
$ R_{q'} $, the indices in the sum \eqref{235} satisfy, for some choice of the signs $ \epsilon_j $,  $ j = 0, \ldots, q + q' + 1 $, 
\be\label{eq:Rq'-segni}
\sum_{j=0}^q \epsilon_j n_j = n_0'   \, , \qquad  n_0' = \sum_{j=q+1}^{q+q'+1} \epsilon_j n_j \, . 
\ee
As a consequence, \eqref{233} satisfies the corresponding  condition  \eqref{2116}.  
We are left with checking that \eqref{233} satisfies  estimates of the form \eqref{2115}.
Combining the bound  \eqref{2121} (with $ s = m $) for $a_q$, with 
\eqref{2115} for $R_{q'} $ we deduce, using also the second restriction to the indices 
in \eqref{Rq'-segni}, 
that the $ L^2 $ norm of \eqref{235} is bounded by 
\be\label{eq:primo-bound}
C n_1^\sigma \ldots n_q^\sigma
| n_0' |^m  \frac{\maxdn{q+1}{q+q'+1}^{\rho+\mu}}{\maxn{q+1}{q+q'+1}^{\rho}}
\Gcalsm{0}{0,q+q'+1}{\Pin{}\Ucal} 
\ee
where  $ n = (n_1, \ldots, n_{q+q'+1}) $ and $ {\cal U} = (U_1, \ldots, U_{q+q'+1}   ) $. 
By the first  identity in \eqref{Rq'-segni} we bound 
$ n'_0 \leq n_0 + n_{1}+\cdots +n_{q+1} $. 
By the property \eqref{2122} of  
$\opbw (a_q) $, the indices in the sum \eqref{235} satisfy 
$n_1,\dots,n_q\ll n_0\sim n'_0 $. 
Moreover, the second  identity in \eqref{Rq'-segni}
implies that $n'_0\leq n_{q+1}+\cdots +n_{q+q'+1}$, and one deduces that
\begin{equation}\label{eq:rel-ind-diff}
\begin{split}
 \maxn{q+1}{q+q'+1} \sim \maxn{1}{q+q'+1} \\
 \maxn{1}{q}\leq C\maxdn{1}{q+q'+1} \, .
\end{split}
\end{equation}
Inserting these inequalities in \eqref{primo-bound} we get 
that \eqref{235} satisfies an estimate of the form \eqref{2115}, 
with $\rho$ replaced by $\rho-m$, a new value of $ \mu $, and $ p $ replaced by $ q + q' $. 
This proves that \eqref{235} is a smoothing operator of $\Rt{-\rho+m}{q+q'}$. 

One has next to check that each operator in \eqref{234}
belongs to $\Rr{-\rho+m}{K,K',N}$. This follows from the combination of estimates \eqref{2117} and \eqref{2123}, writing for
instance 
$\partial_t^k[\opbw(a_N(U; t, \cdot))\circ R_N(U;t) V ]$ as a combination of expressions
$\opbw(\partial_t^{k'}a_N(U; t, \cdot)) [ \partial_t^{k''} (R_N(U;t)V) ] $ with $k'+k''=k$.  This concludes the proof.
\end{proof}

Let us study next ``inner compositions'' where we substitute to one of the coefficients of a symbol, or of a smoothing
operator,  one of the maps of Definition~\ref{216}.

\begin{proposition} \label{233}
Let $ K'_1, K'_2 $ be integers with $K'_1+K'_2 = K' \leq K $, and $ N, p, p' \in \N $ with 
$ p + p'  \leq  N $.

(i) Let $a$ be in $\Gt{m}{p}$ and $M$ be in $\sM{}{K,K',p'}{N-p}$. Then
\begin{equation}\label{eq:236}
 U\to a(U,\dots,U,M(U;t)U;t,x,\xi)
\end{equation}
is in $\sG{m}{K,K',p+p'}{N}$.  If the symbol $  a $ is independent of $ \xi $
(i.e. $ a $ is in $ \Ft{p}  $ according to Definition \ref{241}), so is the symbol in \eqref{236}
(thus it is a function in $ \sF{K,K',p}{N} $). Moreover 
if $ a $ is a symbol in $ \Gr{m}{K,K',N} $ then the symbol in  \eqref{236} is in $ \Gr{m}{K,K',N} $. 

(ii) Let $R$ be in $\Rt{-\rho}{p}$ and let $M$ be in $\Mt{m'}{p'}$ for some $ m' \geq 0 $. 
Then the composed operator
\begin{equation} \label{eq:237}
R(U_1,\dots,U_p)M(U_{p+1},\dots,U_{p+p'})
\end{equation}
is in $\Rt{-\rho+m'}{p+p'}$.

(iii) Let $R$ be in $\Rr{-\rho}{K,K',p}$ and let $M$ be in $\Mr{m'}{K,K',p'}$ for some $ m' \geq 0 $. Then the composition 
$R(U;t)M(U;t)$ is in $\Rr{-\rho+m'}{K,K',p+p'}$.

(iv) Let $R(V,W;t)$ be a smoothing operator of $\sR{-\rho}{K,K'_1,p}{N}$, depending linearly on $W $,  i.e. 
\[
R(V,W;t)  = \sum_{q=p}^{N-1} R_q(V,\dots,V,W) + R_N(V,W;t)
\]
where $R_q$ belongs to $\Rt{-\rho}{q}$ and $R_N$ satisfies, instead of \eqref{2117}, for any  $ 0 \leq k \leq K -  K_1' $, 
\begin{multline}
  \label{eq:237a}
    \norm{\partial_t^kR_N(V,W;t)U(t,\cdot)}_{\Hds{s+\rho-\frac{3}{2}k}}\\ \leq
     C\sum_{k'+k''=k}\Bigl(\Gcals{\sigma}{k'+K'_1,N-1}{V}\Gcals{\sigma}{k'+K'_1,1}{W}\Gcals{s}{k'',1}{U}\\
+ \Gcals{\sigma}{k'+K'_1,N-1}{V}\Gcals{s}{k'+K'_1,1}{W}\Gcals{\sigma}{k'',1}{U}\\
+ \Gcals{\sigma}{k'+K'_1,N-2}{V}\Gcals{\sigma}{k'+K'_1,1}{W} \Gcals{s}{k'+K'_1,1}{V}\Gcals{\sigma}{k'',1}{U}\Bigr).
\end{multline}
Let $M$ be in $ \sM{m'}{K,K_2',p'}{N} $. 
 Then $R(V,M(V;t)W;t)$ is in $\sR{-\rho+m'}{K,K',p+p'}{N}$, and it is linear in $W$.

All statements of the proposition have their counterpart for autonomous operators.
\end{proposition}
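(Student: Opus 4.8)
The plan is to reduce all four assertions to the spectral-projector bookkeeping already carried out in the proofs of Propositions~\ref{222} and~\ref{232}, combined with the elementary ``inner composition'' remarks following Definition~\ref{216}. For part~(i) I would decompose $M = \sum_{q=p'}^{N-1} M_q(U,\dots,U) + M_N(U;t)$ as in \eqref{2128}. For $q\leq N-1$ with $p+q\leq N-1$, the map $(U_1,\dots,U_{p+q})\mapsto a(U_1,\dots,U_{p-1},M_q(U_p,\dots,U_{p+q-1})U_{p+q};x,\xi)$ is a symmetric $(p+q)$-linear function of its arguments; estimate \eqref{214} for $a$ together with \eqref{2125} for $M_q$ gives a bound of the form \eqref{214} with a new exponent $\mu$, while the spectral localization \eqref{215} for the composed symbol follows by combining \eqref{215} for $a$ with \eqref{2126} for $M_q$. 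Hence this piece lies in $\Gt{m}{p+q}$. The homogeneous pieces with $p+q\geq N$, and the pieces involving $M_N$, are collected into the non-homogeneous remainder: regarding the $M_q$'s as non-homogeneous maps (using \eqref{217}) and using \eqref{2127} for $M_N$, one checks that $U\mapsto a(U,\dots,U,M_N(U;t)U;t,x,\xi)$ and the high-order homogeneous pieces satisfy \eqref{218} for symbols of $\Gr{m}{K,K',N}$. Summing the contributions yields \eqref{236} in $\sG{m}{K,K',p+p'}{N}$; if $a$ is independent of $\xi$ the construction never introduces $\xi$-dependence, so the result lies in the corresponding function class $\sF{K,K',p+p'}{N}$, and if $a\in\Gr{m}{K,K',N}$ only the last argument carries any regularity, so the result is in $\Gr{m}{K,K',N}$.

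For parts~(ii) and~(iii) I would mimic the proof of Proposition~\ref{232}, with the pair (paradifferential operator, smoothing operator) replaced by the pair (smoothing operator, $\Mcal$-type map). Replacing each argument $U_j$ by $\Pin{j}U_j$, the composition \eqref{237} is localized so that, by \eqref{2116} for $R$ and \eqref{2126} for $M$, there is a choice of signs with $\sum_j\epsilon_jn_j=0$ and the ``internal'' frequency produced by $M$ is controlled by both groups of indices. Combining \eqref{2115} for $R$ with \eqref{2125} for $M$ (the latter costing at most $\absj{\xi}^{m'}$, which is $\leq C\,n^{m'}$ on the relevant frequency since $m'\geq 0$) produces, after the same case analysis on which frequency is largest as in the proof of Proposition~\ref{232}, a bound of the form \eqref{2115} with $\rho$ replaced by $\rho-m'$ and $p$ by $p+p'$; this is membership in $\Rt{-\rho+m'}{p+p'}$. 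Part~(iii) is identical after distributing $\partial_t^k$ by the Leibniz rule and combining \eqref{2117} with \eqref{2127}, retaining the two alternative tame products appearing in \eqref{2117}. The autonomous variants follow since every estimate invoked descends to the autonomous subclasses.

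Part~(iv) is the substantive point. I would split $R(V,W;t)=\sum_{q=p}^{N-1}R_q(V,\dots,V,W)+R_N(V,W;t)$ and $M=\sum_{q'=p'}^{N-1}M_{q'}(V,\dots,V)+M_N(V;t)$. For $q+q'\leq N-1$, the operator $R_q(V,\dots,V,M_{q'}(V,\dots,V)W)$ is a smoothing operator of $\Rt{-\rho+m'}{q+q'}$ by part~(ii), and it is linear in $W$. The homogeneous pieces with $q+q'\geq N$, together with all pieces involving $R_N$ or $M_N$, must be shown to satisfy the non-homogeneous estimate \eqref{2117} for operators of $\Rr{-\rho+m'}{K,K',N}$. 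The key computation is the substitution $W\rightsquigarrow M(V;t)W$ in the special estimate \eqref{237a}: using \eqref{2127} for $M$, each factor $\Gcals{\sigma}{k'+K'_1,1}{W}$ or $\Gcals{s}{k'+K'_1,1}{W}$ in \eqref{237a} is replaced by the corresponding bound for $M(V;t)W$, itself a sum of tame products in $V$ and $W$ carrying the loss $\absj{\xi}^{m'}$; expanding, and using the interpolation inequality \eqref{218a} to re-sum the $V$-norms, one recovers a bound of the form \eqref{2117} with $\rho$ replaced by $\rho-m'$, $K'=K_1'+K_2'$, and degree $N$. The main obstacle is precisely this bookkeeping: one must track which of the three tame summands in \eqref{237a} pairs with which summand in \eqref{2127} so that the total homogeneity in $V$ is at least $N-1$ (respectively $N-2$ in the summand carrying the highest $V$-norm) and the dependence remains linear in $W$; once the combinatorics is organized in this way, each resulting term is manifestly of the type allowed in \eqref{2117}. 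The autonomous case is obtained by the same argument with the explicit time dependence suppressed.
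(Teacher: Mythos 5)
Your proposal is correct and follows essentially the same route as the paper's proof: part (i) via the fourth remark after Definition~\ref{216}, parts (ii)--(iii) via spectral localization combining \eqref{2115}/\eqref{2125} (resp.\ \eqref{2117}/\eqref{2127}) with the case analysis from Proposition~\ref{232}, and part (iv) by reducing the multilinear pieces to (ii) and combining \eqref{237a} with \eqref{2127} for the rest. Two small imprecisions worth noting: in (ii) the loss from \eqref{2125} is already expressed as a power of $(n_0+\cdots+n_{p+1})$, not of $\absj{\xi}$ (there is no symbol variable for maps in $\Mt{m'}{p'}$); and in (iv) the estimate you recover should be of the form \eqref{237a} rather than \eqref{2117}, since that is precisely what encodes the linearity in $W$.
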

\begin{proof}
  (i) is nothing but the fourth remark following Definition~\ref{216}.

(ii) We may write \eqref{237}
in which we  replace $U_j$ by $\Pin{j}U_j$, acting on $\Pin{p+p'+1}U_{p+p'+1}$, and composed at
the left by $\Pin{0}$ as
\begin{multline}
  \label{eq:238}
\sum_{n'_{p+1}}\Pin{0} R(\Pin{1}U_1,\dots,\Pin{p}U_p)\\
\cdot\Pi_{n'_{p+1}}M(\Pin{p+1}U_{p+1},\dots,\Pin{p+p'}U_{p+p'})\Pin{p+p'+1}U_{p+p'+1} \, .
\end{multline}
By \eqref{2126}, the summation is limited to those $n'_{p+1}$ satisfying $\epsilon'_{p+1}
n'_{p+1}+\sum_{j = p+1}^{p+p'+1}\epsilon_j n_j=0 $ for some choice of the signs 
$\epsilon'_{p+1}, \epsilon_j $. 
Moreover, by \eqref{2115}, 
\eqref{2125} and $ n'_{p+1}  \leq C \maxn{p+1}{p+p'+1} $, 
the $L^2$ norm of \eqref{238}  is bounded from above by the sum in $n'_{p+1}$  
of
\begin{multline}
  \label{eq:239}
  C\frac{\max_2(n_1,\dots,n_p,n'_{p+1})^{\rho+\mu}}{\max(n_1,\dots,n_p,n'_{p+1})^{\rho}}
  \maxn{p+1}{p+p'+1}^{m'} \\ 
  \times 
  \Gcalsm{0}{0,p+p'+1}{\Pi_{n} \Ucal}
\end{multline} 
where  $n = (n_1,\dots,n_{p+p'+1})$ and  $ \Pi_{n} \Ucal = (\Pin{1} U_1,\ldots, \Pin{p+p'+1} U_{p+p'+1}) $. 
Assume first that one of the indices among $n_1,\dots,n_{p+p'+1}$ is much larger than all
the other ones, say $n_j$. If $p+1\leq j\leq p+p'+1$, then $n'_{p+1} \sim n_j$, so that \eqref{239} 
is smaller than
\[
C\frac{\maxdn{1}{p+p'+1}^{\rho+\mu}}{\maxn{1}{p+p'+1}^{\rho-m'}}\Gcalsm{0}{0,p+p'+1}
{\Pi_{n} \Ucal} \, .
\]
If, on the other hand, $1\leq j \leq p$, we have $n_j\gg n'_{p+1}$ and a similar bound follows. 
Finally, if the second
largest among $n_1\dots,n_{p+p'+1}$ is of the same magnitude 
as the largest one, i.e. $ \max_2 ( n_1\dots,n_{p+p'+1})  \sim  \max( n_1\dots,n_{p+p'+1})$, a bound of the form \eqref{2115} follows immediately (with $ \mu $ replaced by $ \mu + m' $).
Finally,  by \eqref{2116}   and \eqref{2126}, if \eqref{238} is non zero then  
$ \sum_{\ell=0}^{p+p'+1} \epsilon_\ell n_\ell = 0 $ for some choice of 
signs $ \epsilon_\ell \in \{ - 1, 1 \} $.
 Consequently, \eqref{237} belongs to $\Rt{-\rho+m'}{p+p'}$.

(iii) To study $R(U;t)\circ M(U;t)$, we use that  for $ 0 \leq k\leq K-K'$, we have by \eqref{2117},
\begin{multline}
\label{eq:239a}
  \norm{\partial_t^k (R(U;t)\circ M(U;t)V)}_{\Hds{s+\rho-m'-\frac{3}{2}k}} \\\leq C\sum_{k'+k''=k}\Bigl(\Gcals{\sigma}{k'+K',p}{U}
  \Gcals{s-m'}{k'',1}{M(U;t)V}\\
+ \Gcals{\sigma}{k'+K',p-1}{U}
  \Gcals{s-m'}{k'+K',1}{U}\Gcals{\sigma}{k'',1}{M(U;t)V}\Bigr) \, .
\end{multline}
Since, recalling \eqref{213}, \eqref{212}, 
\[
\Gcals{s-m'}{k'',1}{M(U;t)V} = \sum_{k'''\leq k''}\norm{\partial_t^{k'''}M(U;t)V}_{\Hds{s-m'-\frac{3}{2}k'''}},
\]
we may use \eqref{2127} to bound the first term in the right hand side of \eqref{239a} by
\begin{multline} \label{eq:stima-both}
C\sum_{k'+k''=k}\Bigl(\Gcals{\sigma}{k'+K',p+p'}{U}
  \Gcals{s}{k'',1}{V}\\
+ \Gcals{\sigma}{k'+K',p+p'-1}{U}\Gcals{\sigma}{k'',1}{V}
  \Gcals{s}{k'+K',1}{U}\Bigr).\end{multline}
The last term in \eqref{239a} satisfies a similar estimate. Therefore, 
for a new value of $\sigma$, \eqref{239a} is  bounded by \eqref{stima-both},
proving that $R(U;t)M(U;t)$ is in $\Rr{-\rho+m'}{K,K',p+p'} $.

(iv) We have to consider on the one hand composition of multilinear terms of the form, 
for $ q \in \{ p, \ldots, N - 1 \} $, $ q' \in \{ p', \ldots, N - 1 \} $, 
\be\label{eq:RqMq'}
R_q(U_1,\dots,U_{q-1},M_{q'}(U_{q+1},\dots,U_{q+q'})U_{q+q'+1})U_q \, . 
\ee
By \eqref{2115},  \eqref{2125}
we get that the $ L^2 $ norm of 
\begin{multline}
\sum_{n_q'} \Pi_{n_0} R_q \big(  \Pi_{n_1} U_1,\dots, \Pi_{n_{q-1}} U_{q-1}, \\
\Pi_{n_{q'}} M_{q'}( \Pi_{n_{q+1}}  U_{q+1},\dots, \Pi_{n_{q+q'}} U_{q+q'}) \Pi_{n_{q+q'+1}}  U_{q+q'+1} \big) \Pi_{n_q}  U_q
\end{multline}
is bounded by 
\begin{multline*}
  \label{eq:239bis}
  C\frac{\max_2(n_1,\dots,n_{q-1}, n_q', n_q)^{\rho+\mu}}{\max(n_1,\dots,n_{q-1}, n_q', n_q)^{\rho}}
  \max( n_q', n_{q+1}, \ldots, n_{q+q'+1} )^{m'} \\ 
  \times 
  \Gcalsm{0}{0,q+q'+1}{\Pi_{n} \Ucal} \, .
\end{multline*} 
Then  following the same arguments after  \eqref{239}, using \eqref{2126} and   \eqref{2116}, 
we prove that \eqref{RqMq'}  is an element of $\Rt{-\rho+m'}{p+q}$. 

On the other hand, we have to study the composition of  
\begin{equation*} 
\begin{split}
& R_{q}(V,M_{N}(V;t)W;t)U \, , \, \quad q = p, \ldots, N- 1 \, , \\
& R_{N}(V,M_{q'}(V;t)W;t)U \, , \quad q' = p', \ldots, N- 1 \, , 
\end{split}
\end{equation*}
and 
$$ 
R_{N}(V,M_{N}(V;t)W;t)U  \, .
$$ 
 If one combines \eqref{237a}
and \eqref{2127} as in the proof of (iii) above, one gets for the composition an estimate of the form \eqref{237a}, with $\rho$ replaced by $\rho-m'$, for a
new value of $\sigma$, and paying attention to the fact that the losses of time derivatives $K'_1$ for $R_q$ and $K'_2$ for
$M_{q'}$ cumulate. This shows that we obtain an element of $\Rr{-\rho+m'}{K,K',N}$. \end{proof}

Let us state a consequence of the preceding proposition.
\begin{proposition}  \label{234}
Let $a$ be a symbol in $\Gt{m}{p}$ and $M$  in $\sM{}{K,K',p'}{N-p}$, $ p + p' \leq N $, 
as in $(i)$ of  Proposition \ref{233}. Then
\begin{equation}
  \label{eq:2310}
  \opbw(a(U,\dots,U,W;\cdot))\vert_{W=M(U;t)U} = \opbw(c(U;t,\cdot)) + R(U;t)
\end{equation}
where  $ c $ is  the  symbol  in $  \sG{m}{K,K',p+p'}{N} $ given by \eqref{236},   
$$
c(U;t,\cdot) = a(U,\dots,U,M(U;t)U;t,x,\xi) \, , 
$$
and  $R(U;t)$  is a smoothing
operator in 
$\sR{-\rho}{K,K',p+p'}{N}$ for any $\rho$. If $M$ is autonomous, so is $R$ in the right hand side of \eqref{2310}.
\end{proposition}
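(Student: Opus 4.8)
The plan is to reduce the statement to Proposition 233(i), which already tells us that the symbol $c(U;t,x,\xi) = a(U,\dots,U,M(U;t)U;t,x,\xi)$ obtained by the inner substitution belongs to $\sG{m}{K,K',p+p'}{N}$, and then to show that the \emph{operator} $\opbw(a(U,\dots,U,W;\cdot))\vert_{W=M(U;t)U}$ differs from $\opbw(c(U;t,\cdot))$ only by a smoothing operator that is as regularizing as we wish. The key point is that the paradifferential quantization $\opbw$ is a \emph{linear} operation in the symbol argument; the obstruction to $\opbw(a(U,\dots,U,W;\cdot))\vert_{W=M(U;t)U} = \opbw(c(U;t,\cdot))$ being an exact identity is precisely the fact that the last slot of $a$, which carries a function $W$ in a Sobolev space rather than in $\Hds{\infty}$, interacts with the cut-off functions $\chi_p$ in the Bony--Weyl quantization \eqref{2112}. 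Since $M(U;t)U$ will itself have only limited smoothness, one must verify that replacing it inside the regularized symbol does not change the operator modulo a smoothing remainder.

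First I would decompose $M = \sum_{q'=p'}^{N-1} M_{q'}(V,\dots,V)V + M_N(V;t)V$ according to \eqref{2128}, and correspondingly write the left-hand side of \eqref{2310} as a finite sum of contributions. For the homogeneous pieces, where $W$ is replaced by $M_{q'}(U,\dots,U)U$, the symbol $(U_1,\dots,U_p,U_{p+1},\dots) \to a(U_1,\dots,U_{p-1},M_{q'}(U_p,\dots,U_{p+q'})U_{p+q'+1};x,\xi)$ is, by the fourth remark after Definition~\ref{216} (invoked in Proposition 233(i)), a homogeneous symbol in $\Gt{m}{p+q'}$; call it $a_{q'}^{\sharp}$. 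The question is then whether $\opbw(a_{q'}^{\sharp})$, defined via its own cut-off $\chi_{p+q'}$, coincides with the operator obtained by first forming $a_{\chi_p}(U,\dots,U,W;\cdot)$ and then setting $W = \Pin{p+q'+1}M_{q'}(\dots)U$. The difference between these two comes from the mismatch of cut-offs, and by the argument in the last remark following Proposition~\ref{215} (the cut-off $\chi_p$ may be changed modulo a $\Rt{-\rho}{}$ operator for any $\rho$, since the frequency restrictions force $\maxdn{}{}\sim\maxn{}{}$ in such comparisons), this difference is a homogeneous smoothing operator in $\Rt{-\rho}{p+q'}$ for any $\rho$. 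The spectral localization needed here — that the frequency $n_{p+q'+1}$ entering the last slot is comparable to the output frequency $n_0$, while the remaining $n_j$ are all $\ll n_0$ — is exactly \eqref{2122}. For the non-homogeneous piece $W = M_N(U;t)U$, the same reasoning applies using Proposition~\ref{215}(ii) and the second remark after the proof of Proposition~\ref{215}, giving a remainder in $\Rr{-\rho}{K,K',N}$.

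The main obstacle, and where care is needed, is the bookkeeping of which frequency plays the role of the ``second largest'': once $W$ is specialized to $M_{q'}(U_{p+1},\dots,U_{p+q'})U_{p+q'+1}$, its frequency is no longer a free index but is constrained, via \eqref{2126}, to satisfy $\epsilon_{p+1}' n_{p+1}' + \sum_{j=p+1}^{p+q'+1}\epsilon_j n_j = 0$; one must check that after this substitution the output frequency $n_0$ is still comparable to the largest among \emph{all} of $n_1,\dots,n_{p+q'+1}$, so that the $\rho$-gain in the cut-off comparison is not destroyed. This is precisely the kind of estimate carried out after \eqref{239} in the proof of Proposition~\ref{233}(ii), and I would reuse it essentially verbatim. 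Finally, collecting the finitely many homogeneous smoothing contributions (each in $\Rt{-\rho}{q''}$ with $p+p'\le q''\le N-1$) and the non-homogeneous one (in $\Rr{-\rho}{K,K',N}$), and invoking Definition~\ref{214}(iii), the total remainder $R(U;t)$ lies in $\sR{-\rho}{K,K',p+p'}{N}$ for any $\rho$, which is the assertion. The autonomous case follows since all the operations used (inner substitution, cut-off comparison, composition) preserve the autonomous subclasses, as already noted in Propositions~\ref{222}, \ref{232} and \ref{233}.
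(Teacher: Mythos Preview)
Your approach is essentially the same as the paper's: decompose $M$ into homogeneous and non-homogeneous parts, note that the composed symbol $c$ lies in $\sG{m}{K,K',p+p'}{N}$ by Proposition~\ref{233}(i), and then argue that the difference between $\opbw(a(U,\dots,U,W;\cdot))\vert_{W=M(U;t)U}$ and $\opbw(c(U;t,\cdot))$ is a smoothing operator arising from the mismatch of the cut-off functions $\chi_p$ versus $\chi_{p+q'}$ (or $\chi$) in the Bony--Weyl quantization.

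A small point of precision: in the homogeneous case the paper's argument is slightly cleaner than what you sketch. Rather than invoking the frequency relations from the proof of Proposition~\ref{233}(ii) after \eqref{239}, the paper observes directly that both cut-offs $\chi_p(n_1,\dots,n_{p-1},n'_p,\xi)$ and $\chi_{p+q'}(n_1,\dots,n_{p+q'},\xi)$ equal one when $n_1+\cdots+n_{p+q'}\leq\delta'\absj{\xi}$ for $\delta'$ small enough (using \eqref{2126} to bound $n'_p$ by $\sum_{j\geq p}n_j$). Hence on the support of their difference one has $\max(n_1,\dots,n_{p+q'})\sim\absj{\xi}$, and the remark after Proposition~\ref{215} immediately gives membership in $\Rt{-\rho}{p+q'}$ for any $\rho$. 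You don't need the more delicate case analysis you allude to; the geometry of the cut-off supports does all the work. Your non-homogeneous treatment and the autonomous remark are fine.
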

\begin{proof}
  We decompose $M(U;t) = \sum_{q' = p'}^{N-p-1}M_{q'}(U,\dots,U) + M_{N-p}(U;t)$ with 
  $M_{q'}$ in $\Mt{}{q'}$ 
  and  $M_{N-p}$ in $\Mr{}{K,K',N-p}$. 
We consider first  the homogeneous terms. For $ q' = p',\dots,N-p-1 $ we define 
\[
c_{q'}(U_1,\dots,U_{p+q'};\cdot) = a(U_1,\dots,U_{p-1},M_{q'}(U_p,\dots,U_{p+q'-1})U_{p+q'};\cdot)
\]
that, by the fourth remark following Definition~\ref{216}, is an homogeneous symbol in $\Gt{m}{p+q'} $.
We want to prove that the difference  
$$
\opbw{(a(U_1, \ldots, U_{p-1}, W ))}\vert_{M_{q'}(U_p,\dots,U_{p+q'-1})U_{p+q'}} -
\opbw{ (c_{q'}(U_1, \ldots, U_{p+q'} )) } 
$$
is a smoothing operator in  $\Rt{-\rho}{p+q'}$. 
Let 
\begin{multline*}
  R_{q'}(U_1,\dots,U_{p+q'}) = \opw[a_{\chi_p}(U_1,\dots,U_{p-1},M_{q'}(U_p,\dots,U_{p+q'-1})U_{p+q'};\cdot)\\-
c_{q',\chi_{p+q'}}(U_1,\dots,U_{p+q'};\cdot)] \, .
\end{multline*}
By the definition \eqref{2112} of $a_{\chi_p}$
and $c_{q',\chi_{p+q'}} $ the symbol in the above expression computed at
$\Pin{1}U_1$,\dots,$\Pin{p+q'}U_{p+q'}$ may be written as
\begin{multline}\label{eq:2311}
\sum_{n'_p}a(\Pin{1}U_1$,\dots,$\Pin{p-1}U_{p-1},\\\Pi_{n'_p}M_{q'}(\Pin{p}U_p,\dots,\Pin{p+q'-1}U_{p+q'-1})\Pin{p+q'}U_{p+q'};x,\xi)\\
\times[\chi_p(n_1,\dots,n_{p-1},n'_p,\xi) - \chi_{p+q'}(n_1,\dots,n_{p+q'},\xi)] \, .
\end{multline}
The two cut-offs above are equal to one for $n_1+\cdots+n_{p+q'}\leq\delta'\absj{\xi}$ if $\delta'$ is small enough.
Indeed, for $ \chi_{p+q'}$ this directly follows by \eqref{2111}. 
Moreover, on the support of the $n'_p$ summation,  \eqref{2126} implies that 
$  n'_p = \sum_{j=p}^{p+q'} \epsilon_j n_j $ for some signs 
$\epsilon_p,\dots, \epsilon_{p+q'}$. 
As a consequence
$$ 
n_1 + \ldots n_{p-1}  + n'_p  \leq n_1+\cdots+n_{p+q'}\leq\delta'\absj{\xi}
$$
and,  if  $\delta'$ is small enough, 
$\chi_p(n_1,\dots,n_{p-1},n'_p,\xi) = 1$, by  \eqref{2111}. 
Consequently, on \eqref{2311}, we
have $\max(n_1,\dots,n_{p+q'})\sim \absj{\xi}$. According to the remarks after the proof of Proposition~\ref{215}, this
implies that $R_{q'}$ is an element of $\Rt{-\rho}{p+q'}$ for any $\rho$.

Concerning the non-homogeneous contribution, we define 
\[
c_{N-p}(U;t,\cdot) = a(U,\dots,U,M_{N-p}(U;t)U;\cdot)
\]
that, by  the fourth remark following Definition~\ref{216}, is a symbol in $ \Gr{m}{K,K',N} $. 

Recalling \eqref{2113}, the associated paradifferential operator 
$\opbw(c_{N-p}) $ is equal to  $ \opw(c_{N-p,\chi})$ where, by \eqref{2112}, 
\begin{multline}
  \label{eq:2312}
  c_{N-p,\chi}(U;t,x,\xi) = \chi(D,\xi)[c_{N-p}(U;t,x,\xi)]\\
= \sum_{n_1,\dots,n_p}\chi(D,\xi)[a(\Pin{1}U,\dots,\Pin{p-1}U,\Pin{p}M_{N-p}(U;t)U;t,x,\xi)] \, .
\end{multline}
If we restrict the sum in \eqref{2312} to 
$ n_1+\cdots +n_p \geq c\absj{\xi}$ for some $c$, it follows from \eqref{2121}, \eqref{2127}
that the associated operator satisfies \eqref{2117} for any $\rho$, as soon as $\sigma$ is large enough relatively to
$\rho$, and therefore it is a smoothing operator in $\Rr{-\rho}{K,K',N} $.  
We consider next the sum in \eqref{2312} restricted to 
the indices satisfying $n_1+\cdots+n_p < c \absj{\xi}$. 
By \eqref{215}, \eqref{regul-per} and \eqref{2111} we deduce that,  for $ c $ small enough, 
\begin{multline*}
 \sum_{n_1+\cdots+n_p < c \absj{\xi}}\chi(D,\xi)[a(\Pin{1}U,\dots,\Pin{p-1}U,\Pin{p}M_{N-p}(U;t)U;t,x,\xi) ] \\
=  \sum_{n_1+\cdots+n_p < c \absj{\xi}}  a(\Pin{1}U,\dots,\Pin{p-1}U,\Pin{p}M_{N-p}(U;t)U;t,x,\xi)  \, . 
\end{multline*}
Consequently, by \eqref{2111}, 
the difference between the above symbol and 
\begin{multline}\label{eq:sym-inner}
 a_{\chi_p}(U,\dots,U,M(U;t)U;t,x,\xi) = \sum_{n_1,\dots,n_p}\chi_p(n_1,\dots,n_p,\xi)\\
\times a(\Pin{1}U,\dots,\Pin{p-1}U,\Pin{p}M_{N-p}(U;t)U;t,x,\xi)
\end{multline}
is supported on indices satisfying  $n_1+\cdots+n_p \geq \delta' \abs{\xi} $ for some $ \delta' > 0 $.  
In conclusion, the difference between  \eqref{2312} and \eqref{sym-inner} 
is a smoothing operator of $\Rr{-\rho}{K,K',N}$.
\end{proof}
To end this subsection, we restate the Bony paralinearization formula 
of the composition operator, that will be used 
in Chapter \ref{cha:6}. 
\begin{lemma}
  \label{235} {\bf (Bony Paralinearization of the composition operator)}
Let $f$ be a smooth $\C$-valued function defined on a neighborhood of zero in $\C^2$, vanishing at zero at order $q\in
\N^{*}$. There is $ r >  0 $, a $1\times 2$ matrix of symbols $ a(U;\cdot) $
in $\Gra{0}{K,0,q-1}$ and a $1\times 2$ matrix of smoothing operators  $ R(U) $ 
of $\Rra{-\rho}{K,0,q'}$, $q' = \max(q-1,1)$, for any $\rho$,  such that
\begin{equation}
  \label{eq:2313}
  f(U) = \opbw(a(U;\cdot))U + R(U)U.
\end{equation}
Moreover, $a(U;\cdot)$ is  given by the  symbol $Df(U)$ which is independent of $\xi $.
\end{lemma}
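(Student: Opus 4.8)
The plan is to prove the Bony paralinearization formula for $f(U)$ by a standard telescoping argument, keeping track of homogeneity degrees so that the result lands in the announced classes. First I would reduce to the case where $f$ is a monomial. Since $f$ is smooth and vanishes at order $q$ at the origin, I can write $f(U) = \sum_{|\gamma|=q}^{M-1} c_\gamma U_1^{\gamma_1}U_2^{\gamma_2} + f_M(U)$ where $U = (U_1,U_2) = (u,\bar u)$, the sum runs over multi-indices $\gamma$, and $f_M$ vanishes at order $M$ at zero (with $M \geq N$ chosen larger than the index $N$ appearing in the classes). By linearity it suffices to treat each monomial separately and to check that the high-order Taylor remainder $f_M(U)$ can be absorbed into the non-homogeneous smoothing operator $R_N(U)U$ of $\Rra{-\rho}{K,0,q'}$; the latter follows because $f_M(U)$ is a smooth function of $U$ vanishing at order $M$, hence (using that $H^\sigma$ is an algebra for $\sigma$ large and tame product estimates) it satisfies, together with its time derivatives, bounds of the form $\|\partial_t^k f_M(U)\|_{\dot H^{s+\rho-\frac32 k}} \leq C \mathcal{G}^\sigma_{k,M}(U)$, which is even better than required, and then trivially $f_M(U) = R_M(U)U$ by dividing by one factor of $U$.

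For a monomial $U_{i_1}\cdots U_{i_q}$ of degree $q$, the key identity is the telescoping Bony decomposition. Writing $v_j := U_{i_j}$, I would use repeatedly the two-term paralinearization $vw = \opbw(v)w + \opbw(w)v + R(v,w)$, where $R$ is the bilinear smoothing remainder of Bony (the operator $R(u,v)$ mentioned in the introduction, which gains derivatives equal to the sum of regularities). Applying this inductively, one peels off one factor at a time:
\[
v_1\cdots v_q = \sum_{j=1}^q \opbw\bigl(v_1\cdots \widehat{v_j}\cdots v_q\bigr) v_j + (\text{smoothing remainders}),
\]
where $\widehat{v_j}$ means that factor is omitted, and the smoothing remainders are finite sums of compositions of paraproducts $\opbw(\cdot)$ with bilinear remainders $R(\cdot,\cdot)$; each such composition is $\rho$-smoothing for any $\rho$ by the composition results of Propositions~\ref{232} and \ref{233} combined with the smoothing property of $R$, provided the Sobolev index $\sigma$ is large enough relative to $\rho$. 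Each coefficient $v_1\cdots\widehat{v_j}\cdots v_q$ is a $(q-1)$-homogeneous, $\xi$-independent symbol, i.e. an element of $\tilde\Gamma^0_{q-1}$ (a paraproduct symbol), and summing over $j$ and over the monomials reconstructs exactly the symbol $Df(U)$ evaluated on the diagonal: indeed $\partial_{U_{i_j}}(U_{i_1}\cdots U_{i_q}) = U_{i_1}\cdots\widehat{U_{i_j}}\cdots U_{i_q}$ counted with the right multiplicity, so $\sum_j \opbw(v_1\cdots\widehat{v_j}\cdots v_q) = \opbw\bigl((Df)(U)\bigr)$ acting on $U$ in the sense of the $1\times 2$ matrix $a(U;\cdot) = Df(U)$. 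This gives $f(U) = \opbw(a(U;\cdot))U + R(U)U$ with $a = Df(U) \in \Gra{0}{K,0,q-1}$ and $R \in \Rra{-\rho}{K,0,q'}$, $q' = \max(q-1,1)$ — the value $q' = q-1$ for the homogeneous remainders coming from the telescoping (which are at least $q$-linear, hence lie in $\Rt{-\rho}{q-1}$ after discarding one argument as the acted-upon function), and $q' = 1$ being needed only in the degenerate edge case.

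The autonomous character (dependence on $t$ only through $U(t)$) is automatic since no explicit time dependence is ever introduced: the homogeneous pieces are multilinear in $U$ and $\bar U$, and the high-order remainder $f_M(U)$ depends on $t$ only via $U$. I would also record that the $K$-regularity in time is inherited from that of $U$ via the Leibniz rule and the estimates \eqref{217}, \eqref{2117}: differentiating the monomial identity in $t$ redistributes derivatives among the factors, and the tame bounds ensure one stays in $\Gra{0}{K,0,q-1}$ and $\Rra{-\rho}{K,0,q'}$. The main obstacle, and the only place requiring genuine care rather than bookkeeping, is verifying that \emph{every} remainder produced by the telescoping is $\rho$-\emph{regularizing for arbitrarily large $\rho$} — this needs the iterated Bony remainder $R(u,v)$ to gain as many derivatives as one wants (true, since its smoothing order is the sum of the Sobolev indices of its arguments, which are large) and needs the compositions $\opbw(\text{symbol})\circ R(\cdot,\cdot)$ and $R(\cdot,\cdot)\circ\opbw(\text{symbol})$ to preserve smoothing, which is exactly Proposition~\ref{232}; keeping the homogeneity counting consistent so that the final class index is $q' = \max(q-1,1)$ rather than something smaller is the delicate accounting step.
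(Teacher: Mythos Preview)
Your treatment of the high-order Taylor remainder $f_M(U)$ has a genuine gap. You claim that $f_M(U)$ satisfies $\|\partial_t^k f_M(U)\|_{\dot H^{s+\rho-\frac32 k}} \leq C\,\mathcal{G}^\sigma_{k,M}(U)$ and can therefore be written as $R_M(U)U$ with $R_M$ in $\Rra{-\rho}{K,0,q'}$. But a composition $f_M(U(t,x))$ with a smooth function has only the regularity of $U$ itself: if $U\in\dot H^s$ then $f_M(U)\in\dot H^s$, not $\dot H^{s+\rho}$. There is no mechanism by which ``dividing by one factor of $U$'' produces a $\rho$-regularizing operator; what you get is a multiplication operator, which is of order $0$, and Definition~\ref{214}(ii) requires the estimate to hold for \emph{all} $s>\sigma$, so you cannot trade high homogeneity for smoothing. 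The fix is to paralinearize $f_M$ as well: write $f_M(U)=\opbw(Df_M(U))U + R_M(U)U$ where $R_M$ is the genuine Bony remainder, and then $Df_M(U)$ joins the other pieces of $Df(U)$ in the symbol $a(U;\cdot)$.

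Once you see this, the Taylor expansion becomes unnecessary: the classes $\Gra{0}{K,0,q-1}$ and $\Rra{-\rho}{K,0,q'}$ are non-homogeneous, so there is no need to split $f$ into monomials. This is exactly what the paper does: it invokes the classical Bony paralinearization formula $f(U)=T_{Df(U)}U+R(U)U$ directly for the full function $f$, and then spends its effort on converting the standard paraproduct $T_{Df(U)}$ (usual quantization, cut-off $\chi(\langle\xi\rangle^{-1}D)$) into $\opbw(Df(U))$ (Weyl quantization), showing via \eqref{clwe} that the difference is a smoothing operator. Your monomial telescoping is correct in spirit (it is essentially Lemma~\ref{612}, proved later in the paper), though you gloss over the step where compositions $\opbw(v_1)\circ\opbw(v_2\cdots\widehat{v_j}\cdots v_q)$ are collapsed to $\opbw(v_1 v_2\cdots\widehat{v_j}\cdots v_q)$ via Proposition~\ref{231}. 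The paper's route is shorter because it treats Bony's formula as a known result and isolates the only new content, namely the quantization comparison.
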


\noindent
\textbf{Remark}: $ \opbw(a(U;\cdot))  $ is the para-product 
for the function $ a(U;x) = Df(U (x)) $, see Definition \ref{241}. 

\begin{proof}
Notice first that if $f$ is linear, \eqref{2313} is trivial, with $R=0$, so there is no restriction 
in assuming that $ D f(0) = 0 $, i.e. $ q \geq 2 $, since this amounts to add a linear term to $ f $. By the paralinearization formula of Bony, we know that $f(U) = T_{Df(U)}U + R(U)U$, where $R(U)$ satisfies \eqref{2117} and
  where
\[
T_{Df(U)}U = \frac{1}{2\pi}\int e^{i(x-y)\xi} c(U;x,\xi)U(y)\,dyd\xi
\]
with $ c(U;x,\xi) = \chi(\absj{\xi}^{-1}D)[Df(U)] $ for some $\chi\in C_0^\infty(\R)$ with small enough support, equal to one
close to zero. 
Defining  the $x$-periodic function $b(U;x,\xi)$ through its Fourier coefficients
\[\hat{b}(U;n,\xi) = \hat{c}\bigl(U;n,\xi-\frac{n}{2}\bigr), \  n\in \Z \, , \]
we deduce by \eqref{clwe} 
that  $T_{Df(U)}U = \opw(b(U;\cdot))U$. We may write
\begin{multline*}
  \hat{b}(U;n,\xi) = \chi\Bigl(n\Absj{\xi-\frac{n}{2}}^{-1}\Bigr)\widehat{Df(U)}(n)\\
= \chi(n\absj{\xi}^{-1})\widehat{Df(U)}(n) + \Bigl(\chi\Bigl(n\Absj{\xi-\frac{n}{2}}^{-1}\Bigr) -  \chi(n\absj{\xi}^{-1})\Bigr)\widehat{Df(U)}(n).
\end{multline*}
If the support of $\chi$ is small enough, we thus obtain
\[
T_{Df(U)}U = \opw(\chi(\absj{\xi}^{-1}D)[Df(U)])U + R_1(U)U
\]
where the first term is $\opbw(Df(U))U$, since the cut off $\chi(\absj{\xi}^{-1}D)$ satisfies the assumptions made after
\eqref{2111},  and the second one contributes to $R(U)U$ in \eqref{2313}. Notice that according to the 
second remark after the
proof of Proposition~\ref{215}, the definition of $\opbw(Df(U))U$ depends on the choice of $\chi$ only modulo a contribution
to $R_1(U)U$.
\end{proof}

\section{Paracomposition}\label{sec:Para}

We define in this section a paracomposition operator associated to a diffeomorphism 
 $  x \to x+ \beta (x) $ of $ \Tu $. 
The first property  of a paracomposition operator  is to  
be a bounded map between any Sobolev spaces, requiring for $ \beta (x)  $ just  limited smoothness.
The second property 
is to conjugate a para-differential operator into another para-differential one, up to smoothing remainders. 
Paracomposition operators have been introduced by Alinhac in \cite{Al}. 
We propose below an alternative  definition of paracomposition 
operator using flows.  

We first define the subspace of the functions formed 
by the symbols of $ \sG{0}{K,K',p}{N} $ introduced in Definition \ref{213}, which are
independent of $ \xi $.  
\begin{definition}
  \label{241} {\bf (Functions)}
Let $K'\leq K$ be in $\N$, $p$ in $\N$, $r>0$, $N$ in $\N$ with $p\leq N$. We denote by 

$ \bullet $ 
\index{Fa@$\Ft{p}$ (Space of homogeneous functions)} $\Ft{p}$
(resp. \index{Fb@$\Fr{K,K',p}$ (Space of non-homogeneous functions)} $\Fr{K,K',p}$, resp. \index{Fc@$\sF{K,K',p}{N}$ (Space of functions)}
$\sF{K,K',p}{N}$) the subspace of $\Gt{0}{p}$ (resp. $\Gr{0}{K,K',p}$, resp. $\sG{0}{K,K',p}{N}$) made of those symbols that are
independent of $\xi$;

$ \bullet $ 
 \index{Fd@$\sFR{K,K',p}{N}$ (Space of real functions)} $\sFR{K,K',p}{N}$ the restriction of elements
of $\sF{K,K',p}{N}$ to the subspace $\CKHR{\sigma}{\C^2}$ of
$\CKH{\sigma}{\C^2}$ defined after \eqref{212} that are moreover \emph{real valued}; 

$ \bullet $ $ \sFa{K,0,p}{N} = \sF{K,0,p}{N} \cap \sGa{0}{K,0,p}{N} $
the subspace of functions  \index{Fd@$\sFa{K,0,p}{N}$ (Space of autonomous functions)}
whose  dependence on $  t $ enters only through  $  U = U( t ) $. 
 \end{definition}

We now consider a diffeomorphism  $ x \to x+ \beta (U;t,x)$ of $ \Tu $ 
with a function  $ \beta (U; t, \cdot ) $ in the previous class.

\begin{lemma} {\bf (Inverse diffeomorphism)}
Let $ 0 \leq K' \leq K $ be in $ \N $ and $  \beta  ( x ) \stackrel{\mathrm{def}}{=} \beta (U; t,  x ) $ be a real valued function 
\be\label{eq:def:beta}
\beta (U; t, \cdot )  \in \Sigma {\cal F}_{K,K',1}[r,N] 
\ee
for $ U $ in the space $\CKHR{\sigma}{\C^2}$. 
If $\sigma$ is large enough, and $ U $ stays in the ball of center zero and radius $r$ in  $\CKHR{\sigma}{\C^2}$, the
map
\begin{equation}  \label{eq:2424}
  \Phi_U : x\to x+ \beta (U;t,x)
\end{equation}
is, for $r$ small enough, a diffeomorphism from $\Tu$ to itself, and 
its  inverse diffeomorphism may be written as
 \begin{equation} \label{eq:2425}
 \Phi_U^{-1} : y\to y + \gamma (U;t,y)
\end{equation}
for some $ \gamma  $ in $\sFR{K,K',1}{N}$.
\end{lemma}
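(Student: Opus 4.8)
The plan is to construct $\gamma$ explicitly as the solution of the fixed-point equation expressing that $\Phi_U^{-1}\circ\Phi_U=\mathrm{id}$, and then to check that the resulting $\gamma$ lies in the correct symbol class $\sFR{K,K',1}{N}$ by inspecting its homogeneous and non-homogeneous components. Writing $y=x+\beta(U;t,x)$, we want $\gamma$ such that $x=y+\gamma(U;t,y)$, i.e.\ $\gamma$ must satisfy the implicit equation
\begin{equation}\label{eq:fixed-gamma}
\gamma(U;t,y) = -\beta\bigl(U;t,\,y+\gamma(U;t,y)\bigr)\,.
\end{equation}
First I would establish, by a standard contraction-mapping argument in $C^0(\Tu)$ (indeed in $C^1$, using the smallness $\|\beta\|_{C^1}=O(r)$ that follows from Proposition~\ref{215}-type estimates applied to the function $\beta\in\Sigma\mathcal{F}_{K,K',1}[r,N]$ and the embedding $\Hds{\sigma}\hookrightarrow C^1$ for $\sigma$ large), that \eqref{2424} is a diffeomorphism of $\Tu$ and that \eqref{eq:fixed-gamma} has a unique small solution $\gamma(U;t,\cdot)$, depending on $(U,t)$ in the same way $\beta$ does (in particular the $t$-dependence enters only through $U(t)$ in the autonomous case, and $\gamma$ is real valued since $\beta$ is). The Sobolev and time-derivative bounds for $\gamma$, of the tame form required in Definition~\ref{212}, follow by differentiating \eqref{eq:fixed-gamma}, using the composition/substitution estimates for functions of limited smoothness (the remarks after Definition~\ref{216}, in particular the fourth one on inner substitution $a(U,\dots,U,M(U;t)U;\cdot)$, applied with $M$ the substitution operator $V\mapsto V(\cdot+\gamma)$ acting on $\beta$) together with the algebra properties of the classes $\Gamma^m_{K,K',p}[r]$.

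Next I would extract the homogeneous expansion of $\gamma$ up to order $N-1$. Since $\beta=\sum_{p=1}^{N-1}\beta_p(U,\dots,U;\cdot)+\beta_N(U;t,\cdot)$ with $\beta_p\in\tilde{\mathcal{F}}_p$ and $\beta_N\in\mathcal{F}_{K,K',N}[r]$, plugging this into \eqref{eq:fixed-gamma} and Taylor-expanding $\beta(U;t,y+\gamma)=\sum_{j\ge0}\frac{1}{j!}(\partial_y^j\beta)(U;t,y)\gamma^j$, one solves recursively: $\gamma_1=-\beta_1$, and for $2\le p\le N-1$, $\gamma_p$ is a finite sum of products of the form $(\partial_y^j\beta_q)(U,\dots,U;y)\prod_{i}\gamma_{p_i}(U,\dots,U;y)$ with $q+\sum_i p_i=p$, hence $\gamma_p\in\tilde{\mathcal{F}}_p$ by the product rule $\tilde\Gamma^0_p\cdot\tilde\Gamma^0_q\subset\tilde\Gamma^0_{p+q}$ and the fact that $\partial_y$ preserves $\tilde\Gamma^0_p$ (third remark after Definition~\ref{211}). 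The remainder $\gamma_N(U;t,\cdot)$, collecting all terms of order $\ge N$ in $U$ (including those involving $\beta_N$ and the high-order parts of the Taylor tail), is then shown to vanish at order $N$ as $U\to0$ and to satisfy the bounds \eqref{218} with $m=0$, i.e.\ $\gamma_N\in\mathcal{F}_{K,K',N}[r]$, again by the composition and algebra estimates; the number of substituted time derivatives never exceeds $K'$, so the class index is preserved.

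The main obstacle I anticipate is bookkeeping rather than conceptual: one must verify that substitution of a function of limited Sobolev regularity into another such function (the map $\beta(\cdot,y)\mapsto\beta(\cdot,y+\gamma)$) stays inside the symbol classes with the correct \emph{tame} dependence on the highest norm $\|U\|_{k,\sigma+\alpha}$, and that iterating this through the recursion for $\gamma_p$ and through the fixed-point estimate for $\gamma_N$ does not lose control of the loss of $x$-derivatives (the $\mu$-exponent in \eqref{214}) nor of time derivatives. This is handled by fixing $\sigma_0$ large enough at the outset (so that $\Hds{\sigma_0}\hookrightarrow C^{2}$ at least, absorbing the finitely many derivatives needed), estimating $\gamma$ first in low norm to get smallness and invertibility of $\mathrm{id}+\partial_y\gamma$, and then bootstrapping to high norm using \eqref{218a} exactly as in the proof that $\Gamma^m_{K,K',p}[r]$ is an algebra. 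Once these tame bounds are in place, the decomposition $\gamma=\sum_{p=1}^{N-1}\gamma_p+\gamma_N$ exhibits $\gamma\in\sF{K,K',1}{N}$; its realness and the restriction to $\CKHR{\sigma}{\C^2}$ give $\gamma\in\sFR{K,K',1}{N}$, completing the proof.
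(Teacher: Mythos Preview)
Your proposal is correct and follows essentially the same approach as the paper: solve the fixed-point equation $\gamma=-\beta(\cdot,y+\gamma)$ by contraction to get existence and basic regularity, then Taylor expand and iterate to extract the homogeneous components $\gamma_p\in\tilde{\mathcal{F}}_p$ and a remainder $\gamma_N\in\mathcal{F}_{K,K',N}[r]$. The only difference is in how the tame bounds for $\gamma$ are made precise: the paper writes out an explicit Fa\`a di Bruno-type formula for $\partial_t^k\partial_x^\alpha\gamma$ in terms of products of derivatives of $\beta$ and controls those products by Gagliardo--Nirenberg interpolation, whereas you point to the inner-substitution remark after Definition~\ref{216} (which concerns substitution into a $U$-slot, not into the $x$-variable, so is not quite the right reference); this is a minor citation slip and your overall plan is sound.
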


\begin{proof}
By
Definition~\ref{241} and \eqref{218}, functions of $\Fr{K,K',1}$ satisfy bounds of the form 
\begin{equation}
  \label{eq:2438a}
  \abs{\partial_t^k\partial_x^\alpha \beta (U;t,x)}\leq C\Gcals{\sigma}{k+K',1}{U} \, ,
\  0\leq k\leq K-K' , \ \alpha\leq\sigma-\sigma_0 \, , 
 \end{equation}
if $\sigma\geq\sigma_0\gg 1$ and $U$ is in $\Brs{K}{0}{\sigma}\cap  C_*^{k+K'}(I,\Hds{\sigma}(\Tu,\C^2))$. 

Look first for a function $ \gamma(U;t,x)$, continuous in $(t,x) \in  I \times \Tu $, solving the fixed point problem
\begin{equation}
  \label{eq:fixedpoint}
F_\beta (\gamma) = \gamma \, , \quad  {\rm where}
\quad F_\beta (\gamma) \stackrel{\textrm{def}}{=} - \beta \circ(\mathrm{Id}+\gamma) \, . 
\end{equation}
By \eqref{2438a} applied with $\alpha=1, k=0, \sigma=\sigma_0+1$, we get 
$$
\| F_\beta(\gamma)-F_\beta(\gamma') \|_{L^\infty} \leq C \sup_{t \in I} \Gcals{\sigma_0+1}{K',1}{U}
\| \gamma-\gamma' \|_{L^\infty}
$$ 
where $ \| \gamma \|_{L^\infty} = \sup_{(t, x) \in I \times \Tu } |\gamma (t,x)| $,  and 
$$
\| F_\beta(\gamma) \|_{L^\infty } \leq  \| \beta \|_{L^\infty}  +  
C \sup_{t \in I} \Gcals{\sigma_0+1}{K',1}{U}
\| \gamma \|_{L^\infty} \, .
$$
 If $ C \sup_{t \in I} \Gcals{\sigma_0+1}{K',1}{U} \leq 1 / 2 $, the contraction mapping theorem 
 implies the existence of a unique continuous 
function $\gamma (U; t, x) $ solving \eqref{fixedpoint} and satisfying 
$ \| \gamma \|_{L^\infty} = 2 \| \beta \|_{L^\infty} =  O( \sup_{t \in I} \Gcals{\sigma_0+1}{K',1}{U}  ) $.
The solution $\gamma$ is actually more regular. Indeed,  consider the function
$ G : I \times \Tu \times \R \to \R  $
defined by
$$
G (t,x,y) \stackrel{\textrm{def}}{=} y + \beta (U; t, x + y ) \, , 
$$
so that $ G (t, x, \gamma (U; t, x)) = 0 $. 
Condition \eqref{2438a} implies that the function $ \beta $ is $ C^1 $ in $ (t,x)$, thus $ G $ is $ C^1 $ in $(t,x,y)$. 
Moreover
$$
(\pa_y G)(t,x,y) = 1 + (\pa_x \beta) (U; t, x + y )  \geq 1 -  C {\cal G}_{K',1}^{\sigma_0+1} (U, t) \geq \frac12 \, . 
$$ 
As a consequence, the implicit function theorem 
implies that the function $ \gamma (U; t, x )  $ is $ C^1 $  in $ (t,x)$  and 
\begin{equation*}
\begin{split} 
\pa_t \gamma (U; t, x) & = - \frac{ (\partial_t \beta) (U; t, x + \gamma (U; t, x))}{1 + 
(\partial_x \beta) (U; t, x + \gamma (U; t, x)) } \\
\pa_x \gamma (U; t, x) & = - \frac{(\partial_x \beta) (U; t, x + \gamma (U; t, x))}{1 + (\partial_x \beta) (U; t, x + \gamma (U; t, x)) } \, . 
\end{split}
\end{equation*}
By induction we deduce that there exist all the derivatives $ \pa_t^k \pa_x^\alpha \gamma (U; t, x) $
for any $ 0 \leq k \leq K - K' $ and  
$ \alpha $ such that $ k + \alpha \leq \sigma - \sigma_0 $. 
Moreover $\partial_t^k\partial_x^\alpha\gamma$ may be written, when $k+\alpha\geq 1$, as a combination of
functions of the form
\begin{multline}\label{eq:expder}
Q\bigl(\partial_t\beta
  (U; t,x+\gamma(U; t,x)), \partial_x\beta(U; t,x+\gamma(U; t,x))\bigr)\\
  \times\prod_{i=1}^m(\partial_t^{p_i+\ell'_i}\partial_x^{q_i+\ell''_i}\beta)(U;t,x+\gamma(U;t,x)),
\end{multline}
 where $Q$ is some rational map of its arguments, 
  $m$ is a positive integer, and $ p_i, q_i, \ell'_i, \ell''_i $
are non-negative integers satisfying  
\begin{equation}
  \label{eq:condindi}
  \ell'_i+\ell''_i = 1 \, ,\  \  \sum_{i=1}^m(p_i+q_i)\leq k+\alpha-1 \, , \  \   p_i+\ell'_i\leq k \,  .
\end{equation}
We shall use the  interpolation inequality
\be\label{eq:int-pro}
\norm{(\partial_x^{q_1}u_1)\cdots (\partial_x^{q_m}u_m)}_{L^\infty} \leq C 
\sum_{i=1}^m \prod_{j \neq   i} \norm{u_j}_{L^\infty} \norm{u_i}_{W^{Q, \infty}} 
 \ee
where $ Q = q_1 + \ldots + q_m $, 
which stems by Gagliardo-Nirenberg inequality
\begin{multline*}
\norm{(\partial_x^{q_1}u_1)\cdots (\partial_x^{q_m}u_m)}_{L^\infty} \leq C 
\| u_1 \|_{L^\infty}^{1- \frac{q_1}{Q}} \| u_1 \|_{W^{Q,\infty}}^{\frac{q_1}{Q}} \ldots 
\| u_m \|_{L^\infty}^{1- \frac{q_m}{Q}} \| u_m \|_{W^{Q,\infty}}^{\frac{q_m}{Q}} \\
\leq C 
\| u_1 \|_{L^\infty}^{\sum_{j \neq 1}\frac{q_j}{Q}} \| u_1 \|_{W^{Q,\infty}}^{\frac{q_1}{Q}} \ldots 
\| u_m \|_{L^\infty}^{\sum_{j \neq m}\frac{q_j}{Q}} \| u_m \|_{W^{Q,\infty}}^{\frac{q_m}{Q}} \\
\leq C \prod_{i=1}^m   \big(\prod_{j\neq  i} \norm{u_j}_{L^\infty} \norm{u_i}_{W^{Q, \infty}} \big)^{\frac{q_i}{Q}}
\leq C \sum_{i=1}^m \prod_{j\neq
    i} \norm{u_j}_{L^\infty} \norm{u_i}_{W^{Q, \infty}} \, . 
\end{multline*}
Let us estimate the modulus of \eqref{expder}. By \eqref{2438a} applied with $(k,\alpha) = (1,0)$ or $(0,1)$, 
we have
\begin{multline*}\abs{\partial_t\beta   (U;t,x+\gamma(U;t,x))} + \abs{\partial_x\beta(U;t,x+\gamma(U;t,x))} \\\leq
  C\Gcals{\sigma_0+1}{1+K',1}{U} \leq C r \, .
  \end{multline*}
Then, \eqref{int-pro}, \eqref{condindi} and \eqref{2438a} imply that the modulus of \eqref{expder} is bounded  by
\begin{multline*}
  C\sum_{i=1}^m\prod_{j\neq
    i}\norm{\partial_t^{p_j+\ell'_j}\partial_x^{\ell''_j}\beta}_{L^\infty}\norm{\partial_t^{p_i+\ell'_i}\partial_x^{\ell''_i}\beta}_{W^{Q,\infty}}\\
\leq C \sum_{i=1}^m\prod_{j\neq i}\Gcals{\sigma_0+1}{k+K',1}{U} \Gcals{\sigma_0+Q+ 1 
}{k+K',1}{U} 
\end{multline*}
and, since  $ Q\leq k-1+\alpha\leq K-K'+\alpha-1 $  by \eqref{condindi},  
  we get a bound  in 
  $$ 
  C \Gcals{\sigma_0+1}{k+K',m-1}{U} \Gcals{\sigma}{k+K',1}{U} \leq C'  \Gcals{\sigma}{k+K',1}{U}
  $$ 
  if  $\sigma\geq \alpha+\sigma'_0$ where $ \sigma'_0 =
  \sigma_0+K-K' $. 
In conclusion, we have proved  that $\gamma$ satisfies bounds of the form \eqref{2438a} with $ \sigma_0 $ replaced by
$ \sigma'_0 $,  i.e. $ \gamma $ belongs to $\Fr{K,K',1}$. 

Assume now that $ \beta $ is in $\sFR{K,K',1}{N}$. Then
\[ 
\beta (U;t,x) = \sum_{p'=1}^{N-1} \beta_{p'}(U,\dots,U;x) + \beta_N(U;t,x)
\]
with $ \beta_{p'}$ in $\Ft{p'}$ and $ \beta_N$ in $\Fr{K,K',N}$. 
In particular, $ \beta $ is in $\Fr{K,K',1}$, so that,  as proved above, there exists a solution $  \gamma(U;t,x) $ in
$ \Fr{K,K',1} $ of  the fixed
point problem
\begin{multline*}
  \gamma(U;t,x) = - \beta (U;t,x+\gamma(U;t,x))\\
= -\sum_{p'=1}^{N-1} \beta_{p'}(U,\dots,U;x+\gamma(U;t,x)) + \beta_N(U;t,x+\gamma(U;t,x)) \, .
\end{multline*}
By Taylor expansion of each $ \beta_{p'}(U,\dots,U;x+\gamma(U;t,x)) $ at the point $ x $,  
and substituting iteratively the same expression of $ \gamma $, 
we conclude,  
using the remarks following Definitions \ref{211} and \ref{212bis}, that $\gamma$ is in $\sFR{K,K',1}{N}$. 
Finally the function $ \beta_N(U;t,x+\gamma(U;t,x)) $ is in $ \Fr{K,K',N} $, as it can be verified using the formula
for the derivatives of composite functions. 
This concludes the proof of the lemma. 
\end{proof}

We want to associate to the composition operator
$$
u(x) \mapsto u( x +  \beta (x ))
$$
a paracomposition operator. 
The idea is the following. We consider the  family of associated composition operators 
\be \label{eq:omotopy}
u(x) \mapsto u( x + \theta \beta (x )) \, , \quad \theta \in \R  \, .
\ee
The group of transformations \eqref{omotopy} is  the flow of the 
transport equation 
\be\label{eq:transportPDE}
\begin{cases}
\pa_\theta u = b(U; \theta, t, x) \pa_x u \cr  
u(0) = u(x) 
\end{cases} 
\ee
where the function 
\be\label{eq:def:function-b}
b(U; \theta, t, x ) = \frac{\beta (U; t, x)}{ 1 + \theta  \beta_x (U; t, x)} \in \Sigma {\cal F}_{K,K',1}[r,N] \, . 
\ee
Indeed, differentiating \eqref{omotopy} we get 
$ \pa_\theta (u( x +  \theta \beta (x) ) ) = \beta (x) u_x ( x +  \theta \beta (x) ) $. 
Moreover 
$ \pa_x (u( x +  \theta \beta (x) ) ) = (1 + \theta  \beta_x (x)) \, u_x ( x +  \theta \beta (x) ) $
and we deduce that 
$$
\pa_\theta (u( x +  \theta \beta (x) ) ) = \frac{\beta (x)}{1+ \theta \beta_x (x) }  \pa_x (u( x +  \theta \beta (x) ) ) \, ,
$$
namely $ u( x +  \theta \beta (x) ) $ solves \eqref{transportPDE}.

The transport equation \eqref{transportPDE} can be written as  $ \pa_\theta u =  i {\rm Op} ( B(U; \theta, \cdot ) )  u  $
with the real valued symbol 
\be\label{eq:def:B}
B = B(U; \theta, \cdot )\stackrel{\textrm{def}}{=}  B(U; \theta, t, x, \xi ) = b(U; \theta, t,  x ) \xi 
\, . 
\ee
Notice that $ B(U; \theta, \cdot ) $ is in $  \Sigma \Gamma^1_{K,K',1}[r,N] $. 
For simplicity of notation we shall not denote in the sequel of this section the explicit 
$ t $ dependence of the symbols. 

As a good candidate of a para-composition operator associated to \eqref{omotopy}, 
we consider the flow $ \Omega_{B(U)} (\theta) $ of the linear para-differential equation
\be\label{eq:flow-homogeneous}
\begin{cases}
\frac{d}{d \theta} \Omega_{B(U)} (\theta) =
 i  {\rm Op}^{BW}  \big( B(U; \theta ) \big) \Omega_{B(U)} (\theta) \\
 \Omega_{B(U)}(0) = {\rm Id} \, . 
 \end{cases}
\ee
The operator $ \Omega_{B(U)} (1) $ has the first property 
required by a para-composition operator, namely to be a bounded map between any Sobolev spaces.
Indeed one can prove the following:  

\begin{lemma}\label{lem:flow}
Let $ 0 \leq K' \leq K $ be in $ \N $ and a function $ \beta (U; t, \cdot ) $  in the space $ \Sigma {\cal F}_{K,K',1}[r,N]  $ 
which is  real valued for $ U $ in the space $\CKHR{\sigma}{\C^2}$. 
Then there is $ \sigma \in \R_+ $ and, for any $ U \in C^K_{*\R} (I, {\dot H}^\sigma (\Tu, \R)) $, 
system \eqref{flow-homogeneous}  has a unique solution  $ \Omega_{B(U)} (\theta) $ defined
 for all $ |\theta | \leq 1 $. Moreover the 
flow operator   $ \Omega_{B(U)} (\theta) $
is bounded on $ {\dot H}^s $ for any $ s $, and there is 
$ r >  0 $, and, for any $ s \in \R $,  a constant $ C_s > 0 $ such that, for any $ U \in B_\sigma^K(I,r) $,   
for any $ W \in {\dot H}^s $ we have  
\be\label{eq:425-first}
C_s^{-1} \| W \|_{{\dot H}^s}  \leq \| \Omega_{B(U)} (\theta) W \|_{{\dot H}^s}  \leq C_s \| W \|_{{\dot H}^s} \, .
\ee
Moreover, for any $ 0 \leq k \leq K - K' $, we have
\be\label{eq:425-lemma}
\| \partial_t^k \big[ \Omega_{B(U)} (\theta) W \big] \|_{{\dot H}^{s-\frac{3}{2}k}}  \leq
C {\cal G}^s_{k,1} (W,t)  \, . 
\ee
\end{lemma}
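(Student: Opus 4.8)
The plan is to construct the flow $\Omega_{B(U)}(\theta)$ by a standard energy method, exploiting that the generator $i\,\mathrm{Op}^{BW}(B(U;\theta))$ is essentially skew-adjoint of order $1/2$, and then to propagate the Sobolev estimates. First I would record the structural facts about $B(U;\theta,t,x,\xi)=b(U;\theta,t,x)\xi$: by \eqref{def:function-b}--\eqref{def:B} it belongs to $\Sigma\Gamma^1_{K,K',1}[r,N]$, it is real valued, and it is elliptic of order $1$ but with purely real principal symbol, so that $\opbw(B(U;\theta))$ is self-adjoint modulo an operator of order $0$ — more precisely, by Proposition~\ref{222} (composition of Bony--Weyl operators) and \eqref{opbavv}, $\opbw(B)-\opbw(B)^*$ is of order $0$, depending on $U$ in $\Sigma\Gamma^0_{K,K',1}[r,N]$. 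Hence $i\opbw(B)+\big(i\opbw(B)\big)^* = \opbw(c_0)$ with $c_0$ a symbol of order $0$, bounded on every $\dot H^s$ by Proposition~\ref{215}.

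Then I would solve \eqref{flow-homogeneous} by a classical Banach-space ODE / Picard iteration argument in $\mathcal L(\dot H^s,\dot H^{s-1})$ (the loss of one derivative being harmless for existence), obtaining a unique solution $\Omega_{B(U)}(\theta)$ on $|\theta|\le 1$; alternatively one smooths the generator (e.g. by $\opbw(B)\chi(D/n)$), solves the regularized equation, derives uniform estimates, and passes to the limit. The key a priori estimate is the $\dot H^s$ bound: differentiating $\tfrac12\frac{d}{d\theta}\|\Lambda^s W(\theta)\|_{L^2}^2$ along $W(\theta)=\Omega_{B(U)}(\theta)W$, where $\Lambda^s$ is the Fourier multiplier of order $s$, one gets $\mathrm{Re}\,\langle \Lambda^s i\opbw(B)W,\Lambda^s W\rangle$, and using that $[\Lambda^s,\opbw(B)]$ is of order $s$ (symbolic calculus, Proposition~\ref{231}, since $B$ has $x$-dependence of limited but sufficient smoothness when $U\in\dot H^\sigma$ with $\sigma$ large) together with the near-skew-adjointness above, one bounds this by $C_s\|W(\theta)\|_{\dot H^s}^2$ where $C_s$ depends on finitely many seminorms of $B$, hence on $\|U(t,\cdot)\|_{\dot H^{\sigma}}<r$. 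Grönwall's lemma over $\theta\in[-1,1]$ then gives both inequalities in \eqref{425-first} (the lower bound coming from running the same estimate for the inverse flow, i.e. solving backward in $\theta$).

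For the time-derivative estimate \eqref{425-lemma}, I would differentiate \eqref{flow-homogeneous} $k$ times in $t$ (for $k\le K-K'$); since $\partial_t B\in\Sigma\Gamma^1_{K,K'+1,1}[r,N]$ by the remark after Definition~\ref{212}, and more generally $\partial_t^j B$ loses $\tfrac32$ derivatives per time-derivative on $U$, the quantity $\partial_t^k\big[\Omega_{B(U)}(\theta)W\big]$ solves the same transport-type equation with a forcing term that is a sum of products $\opbw(\partial_t^{j}B)\,\partial_t^{k-j}\Omega_{B(U)}(\theta)W$ with $j\ge1$; one then runs the $\dot H^{s-\frac32 k}$ energy estimate for this system, using Proposition~\ref{215}(ii) to bound each $\opbw(\partial_t^j B)$ in $\mathcal L(\dot H^{s'},\dot H^{s'-1})$ by $\mathcal G^{\sigma_0}_{j+K',1}(U)$, and closing by induction on $k$ together with \eqref{425-first} for the $j=0$, $k'=k$ term. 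Collecting the bounds yields $\|\partial_t^k[\Omega_{B(U)}(\theta)W]\|_{\dot H^{s-\frac32 k}}\le C\,\mathcal G^s_{k,1}(W,t)$, i.e. the $\nnorm{\cdot}$-control stated just after \eqref{425-lemma}.

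The main obstacle is the usual one for quasilinear paradifferential flows: showing that the commutator $[\Lambda^s,\opbw(B(U;\theta))]$ and the self-adjoint defect $i\opbw(B)+(i\opbw(B))^*$ really are of order at most $s$, resp. $0$, \emph{with constants controlled by only finitely many $x$-derivatives of the symbol} — that is, uniformly in $s$ — so that one may take $s$ arbitrarily large while keeping $U$ in a fixed ball $B^K_\sigma(I,r)$ with $\sigma$ depending only on $\rho$-type parameters, not on $s$. This is exactly the content of the ``finite number of derivatives of $U$'' remark following Proposition~\ref{215}, so the argument goes through, but it must be invoked carefully; the half-derivative gain of $B$ relative to its order (it is order $1$, not order $3/2$) is what makes the $\partial_t$-bookkeeping in \eqref{425-lemma} consistent with the $\tfrac32 k$ scaling of the spaces $\dot H^{\sigma-\frac32 k}$.
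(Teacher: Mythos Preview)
Your approach is essentially the one taken in the paper: regularize the generator, derive the $\dot H^s$ energy inequality from the commutator $[\Lambda^s,\opbw(B)]$ together with the (near-)self-adjointness of $\opbw(B)$ coming from the reality of $B$, apply Gr\"onwall, and then handle $\partial_t^k$ by differentiating the flow equation and using the already-proved bound \eqref{425-first}. The paper phrases the last step via the Duhamel formula
\[
\partial_t\Omega_{B(U)}(\theta)=\Omega_{B(U)}(\theta)\int_0^\theta \Omega_{B(U)}(\tau)^{-1}\, i\opbw(\partial_tB)\,\Omega_{B(U)}(\tau)\,d\tau
\]
and iterates, which is equivalent to your induction-plus-energy scheme; your accounting that each forcing term $\opbw(\partial_t^jB)\partial_t^{k-j}[\Omega W]$ lands in $\dot H^{s-\frac32 k}$ because $\opbw(\partial_t^jB)$ costs only one derivative while $j\ge 1$ frees up $\tfrac32 j$, is exactly the mechanism used.

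Two small comments. First, your initial suggestion of a ``Banach-space ODE / Picard iteration in $\mathcal L(\dot H^s,\dot H^{s-1})$'' does not work as written, since each iterate would lose one more derivative; you need the regularization route you mention as the alternative (this is precisely what the paper does, with $B_\lambda=B\chi(\xi/\lambda)$, uniform estimates, and Ascoli). Second, since the Bony--Weyl quantization is a Weyl quantization of a real symbol (the cutoffs are real and even), $\opbw(B)$ is in fact \emph{exactly} self-adjoint, not merely modulo an order-zero operator; this simplifies the $L^2$ energy step.
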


\begin{proof}
The existence of 
the flow  operator $ \Omega_{B(U)}(\theta)  $ 
follows from the fact that the symbol $B(U; \theta)$ defined in \eqref{def:B} is  real. 
Actually, set for $\lambda>1$, 
$$ 
B_\lambda(U; \theta,  t,x,\xi) = B(U; \theta, t,x,\xi)\chi(\xi/\lambda)
$$ 
with $\chi$ in $C^\infty_0(\R)$
equal to one close to zero. Define for any given $W$ in $\Hds{s}$ for some $s$, the function $W_\lambda(\theta)$ as the
solution of the Banach space ODE
\begin{equation}
  \label{eq:423a}
\begin{split}
  \frac{d}{d\theta}W_\lambda(\theta) &= i\opbw(B_\lambda(U; \theta))W_\lambda(\theta)\\
W_\lambda\vert_{\theta=0} &= W.
\end{split}
\end{equation}
Let us show that we have a  bound, uniform in $\theta\in [-1,1]$ and $\lambda>1$, as 
\begin{equation}
  \label{eq:423b}
  C_s^{-1}\norm{W}_{\Hds{s}}\leq \norm{W_\lambda(\theta)}\leq C_s \norm{W}_{\Hds{s}} \, . 
\end{equation}
Set  $\Lambda_s(\xi) = (1+\xi^2)^{\frac{s}{2}}$, so that 
$\norm{\opbw(\Lambda_s(\xi))W}_{\Hds{0}} = \norm{ \Lambda_s(D) W}_{\Hds{0}} $ 
is   equivalent
to the Sobolev norm $\norm{W}_{\Hds{s}}$. Notice that, 
since $B(U; \theta)$ is a symbol  of order one, 
Propositions~\ref{222}, ~\ref{215} and the remark following the proof of Proposition~\ref{222}, imply that
\[[\opbw(\Lambda_s),\opbw(B_\lambda(U; \theta))]\opbw(\Lambda_s^{-1})\] is bounded on $\Hds{0}$ for any $s$, with operator norm
$O(\norm{U}_{\Hds{\sigma}})$ for some large enough $\sigma$, uniformly for $\lambda>1$. As \eqref{423a} implies
\begin{multline*}
  \frac{d}{d\theta}\Lambda_s(D)W_\lambda(\theta) = i\opbw(B_\lambda(U; \theta))[\Lambda_s(D)W_\lambda(\theta)]\\
+O_{L^2}(\norm{\Lambda_s(D)W_\lambda(\theta)}_{L^2}\norm{U}_{\Hds{\sigma}}),
\end{multline*}
we deduce  \eqref{423b} from the $L^2$ energy inequality
\[\norm{\Lambda_s(D)W_\lambda(\theta)}_{L^2} \leq \norm{\Lambda_s(D)W}_{L^2} +C\norm{U}_{\Hds{\sigma}}\Abs{\int_0^\theta
\norm{\Lambda_s(D)W_\lambda(\theta')}_{L^2}\,d\theta'}.\]
The estimate is uniform in $\lambda$ since $\opbw(B_\lambda) - \opbw(B_\lambda)^*$ is uniformly bounded on $L^2$, as a
consequence of the fact that the symbol $B_\lambda$ is a real.

We thus get a family of functions $(W_\lambda)_\lambda$ uniformy bounded in the space $C^0([-1,1],\Hds{s})$ that is equicontinuous in
$C^0([-1,1],\Hds{s'})$ for any $s'<s$. By Ascoli theorem, we may extract a subsequence that converges in the latter space to
a limit $W$ that solves \eqref{423a} with $ B_\lambda$ replaced by $ B $ and provides the definition of 
$ \Omega_{B(U)} ({\theta}) W =
W(\theta)$. Since 
\[\frac{\partial}{\partial\theta}\norm{ \Omega_{B(U)}(\theta) W}_{\Hds{s}}^2 
= 2\Re\absj{i\opbw(B(U; \theta))  \Omega_{B(U)}(\theta) W,  \Omega_{B(U)}(\theta) W}_{\Hds{s}}\]
the fact that $\opbw(B(U; \theta)) $ is a self-adjoint operator of order one, and symbolic
calculus, imply that $\norm{  \Omega_{B(U)}(\theta) W}_{\Hds{s}}$ 
converges to $\norm{  \Omega_{B(U)}(\theta') W}_{\Hds{s}}$, when $\theta$ goes to
    $\theta'$, from which one deduces easily that $  \Omega_{B(U)}(\theta) W$ 
    is continuous on $[-1,1]$ with values in $\Hds{s}$. 
Let us notice that \eqref{423b} applied to $W$ instead of $W_\lambda$ 
shows  that  \eqref{425-first} holds true.

By a similar reasoning the norms
$\norm{\sum_{k'=0}^k\partial_t^{k'}\cdot}_{\Hds{s-\frac{3}{2}k'}}$ satisfy an analogous inequality. 
More precisely, if we write
\[\frac{d}{d\theta}\partial_t  \Omega_{B(U)}(\theta)  = i\opbw(B(U; \theta))\partial_t
 \Omega_{B(U)}(\theta)  + i\opbw(\partial_t B(U; \theta))  \Omega_{B(U)}(\theta) \]
we deduce by Duhamel   principle that
$$
\partial_t  \Omega_{B(U)}(\theta) =  \Omega_{B(U)}(\theta) 
\int_0^\theta  \Omega_{B(U)}(\tau)^{-1}   i\opbw(\partial_t B(U; \theta))  \Omega_{B(U)}(\tau) \, d \tau  \, .
$$
Applying the estimate \eqref{425-first} for $  \Omega_{B(U)}(\theta) $ and for $  \Omega_{B(U)}(\theta)^{-1} $, and using 
Proposition \ref{215} (in particular \eqref{2123}), we get that 
$$
  \norm{(\partial_t  \Omega_{B(U)}(\theta) )W}_{\Hds{s-1}}\leq C\norm{W}_{\Hds{s}}\Gcals{\sigma}{1+K',1}{U} \, .
$$
Since $ 1 \leq \frac{3}{2} $ the estimate \eqref{425-lemma} for $ k = 1 $ follows.
Iterating the above reasoning we get, for any $ 1 \leq k \leq K - K' $, the estimate
$$ 
  \norm{(\partial_t^k  \Omega_{B(U)}(\theta) )W}_{\Hds{s-k}}\leq C\norm{W}_{\Hds{s}}\Gcals{\sigma}{k+K',1}{U} 
$$
and, since $ U \in B_\sigma^K(I,r) $, we deduce \eqref{425-lemma}. 
\end{proof}

We   prove below that, conjugating  a para-differential operator 
\be\label{eq:def:a-para}
{\rm Op}^{BW} ( a (U; \cdot )) \, ,  \qquad 
a (U; \cdot ) \in \Sigma \Gamma^m_{K,K',q}[r,N] \, ,
\ee
with the flow $ \Omega_B (\theta ) $,  
we still get a para-differential operator.
 The conjugated operator
\be\label{eq:true-conjugation}
A(U)(\theta) := \Omega_{B(U)} (\theta) {\rm Op}^{BW} (a (U; \cdot )) \big(\Omega_{B(U)} (\theta)\big)^{-1} 
\ee
solves the Heisenberg equation
\be\label{eq:Heis}
\begin{cases}
\frac{d}{d \theta} A(U)( \theta)  =
 i \big[ {\rm Op}^{BW}  \big( B(U; \theta ) \big) , A(U)( \theta)\big] \\
A(U)( 0 ) = {\rm Op}^{BW} (a (U; \cdot ))  \, . 
 \end{cases}
\ee
We now solve \eqref{Heis} in decreasing orders showing that it admits an approximate solution of the form
$$
A(U)( \theta) = {\rm Op}^{BW} ( a_0 (\theta) + a_1 (\theta)  + \cdots ) 
$$
 with
$$
a_0 (\theta )  \in \Sigma \Gamma^m_{K,K',q}[r,N] \, , \quad a_1 (\theta)  \in \Sigma \Gamma^{m-2}_{K,K',q+1}[r,N] \, , \ \ldots 
$$
and $ a_0 (\theta) $ is given in \eqref{sol1-tr}. 
In particular we will prove that
\be\label{eq:sol-a-0-t1}
a_0(U; 1 ,t, x, \xi ) = a \Big(  U;t, x +  \beta (U; t,x),  
\xi \big( 1+  \partial_y \gamma (U;t,  y) \big)_{|y = x +  \beta(U; t,x) }   \Big) 
\ee
which is the usual formula in Alinhac 
para-composition theorem, see Theorem \ref{thm:para}. 
Notice that the symbol $ a_1 $ 
has  order $  m - 2 $, twice smaller than $ a_0 $. This is an outcome of the Weyl quantization. 

\medskip

Before starting the proof, 
we first  check the
stability of our classes of symbols under composition by a change of variables.
\begin{lemma}
  \label{246}
Let $K'\leq K$ in $\N$, $m$ in $\R$, $p$ in $\N$, $N$ in $\N$ with $p\leq N$, $r>0$ small enough. Consider 
a symbol $ a $ in
$\sG{m}{K,K',p}{N}$ and functions $ b, c $ in $\sFR{K,K',1}{N}$. Then
\begin{equation}
  \label{eq:2429}
  a\bigl(V;t,x+b(V;t, x),\xi( 1 + c(V;t, x))\bigr)
\end{equation}
is in $\sG{m}{K,K',p}{N}$. In particular, if the symbol $a$ is independent of $\xi$, i.e.\ $ a $ is a function 
in $\sF{K,K',p}{N}$, we obtain that $a(V;t,x+b(V;t, x))$ is in $\sF{K,K',p}{N}$.
\end{lemma}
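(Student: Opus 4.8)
\textbf{Proof plan for Lemma~\ref{246}.} The plan is to decompose the symbol $a$ and the functions $b,c$ into their homogeneous plus non-homogeneous components using Definition~\ref{213}, and then handle the composition term by term, the core mechanism being Taylor expansion of each homogeneous piece in the small increments $b$ and $c$, combined with the substitution results already established (notably the fourth remark after Definition~\ref{216} and the closure of the function classes $\sF{}{K,K',p}{N}$ under composition, proved in the second lemma of section~\ref{sec:Para}).

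First I would write $a = \sum_{q=p}^{N-1}a_q(V,\dots,V;x,\xi) + a_N(V;t,x,\xi)$, $b = \sum_{q'=1}^{N-1}b_{q'}(V,\dots,V;x) + b_N(V;t,x)$ and similarly $c = \sum b_{q''} + c_N$. For a fixed homogeneous piece $a_q$, I would Taylor expand
\[
a_q\bigl(V;x+b(V;t,x),\xi(1+c(V;t,x))\bigr)
\]
around $(x,\xi)$ in powers of $b$ and of $\xi c$. Because $b$ and $c$ each vanish at order $1$ in $V$, a monomial of total degree $j$ in $(b,c)$ multiplied by a $\partial_x^{\alpha}\partial_\xi^{\beta}$-derivative of $a_q$ produces a symbol that is homogeneous of degree $q+j$ in $V$ and of order $m-\beta+\beta = m$ in $\xi$ (the factor $\xi^{\beta}$ coming from differentiating $a_q(\cdot,\xi(1+c))$ in the second slot compensates exactly the loss $\xi^{-\beta}$, so no order is gained or lost). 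One must truncate the Taylor expansion at order $N-q$: the terms with $j \le N-q-1$ give homogeneous symbols of degree $q+j \le N-1$, each of which lies in $\Gt{m}{q+j}$ by the fourth remark after Definition~\ref{216} applied with the multilinear maps $b_{q'}$, $c_{q''}$; the integral remainder term, containing a $(b,c)$-factor of total degree $N-q$ evaluated along the segment $[x,x+b]\times[\xi,\xi(1+c)]$, together with the tail pieces coming from non-homogeneous components $b_N,c_N,a_N$, must be shown to lie in $\Gr{m}{K,K',N}$ — here one uses that the composition $x \mapsto x+b(V;t,x)$ is a diffeomorphism with inverse in $\sFR{K,K',1}{N}$ (so that evaluation at the shifted point preserves the estimates) and the formula for derivatives of composite functions, exactly as in the closing paragraph of the second lemma of section~\ref{sec:Para}.

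The one genuinely technical point — which I expect to be the main obstacle — is verifying the spectral support condition \eqref{215} for the newly produced homogeneous symbols. When we form $\partial_x^{\alpha}a_q(\Pi_n V;\cdot)$ times a product of $\partial_x$-derivatives of the $b_{q'}(\Pi_{n'} V;\cdot)$ and $b_{q''}(\Pi_{n''}V;\cdot)$, each factor individually satisfies a frequency-balance relation of the form $\sum \epsilon_j n_j = 0$ modulo the output frequency; taking the product and then projecting onto the $n_0$-th mode forces the output frequency to be a signed sum of all the input frequencies, which is precisely what \eqref{215} demands. This is the same bookkeeping already carried out in Proposition~\ref{233}(i) and in the proof of Proposition~\ref{234}, so it should go through verbatim. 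The estimates \eqref{214} (with an enlarged exponent $\mu$, reflecting the extra $x$-derivatives consumed in the Taylor expansion) and \eqref{218} follow mechanically from the chain rule, the Gagliardo--Nirenberg interpolation inequality \eqref{int-pro}, and the smallness of $r$, exactly as in \eqref{expder}--\eqref{condindi}. Finally, the last sentence of the statement is immediate: if $a$ has no $\xi$-dependence then $c$ never enters and every symbol produced above is $\xi$-independent, hence lies in $\sF{K,K',p}{N}$.
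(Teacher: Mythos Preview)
Your approach is essentially the same as the paper's: decompose $a,b,c$ into homogeneous plus non-homogeneous pieces, Taylor expand the composition in the increments $(b,\xi c)$, and sort the resulting monomials by total homogeneity degree, with the integral remainder and all terms of degree $\geq N$ (or containing a non-homogeneous factor) going into $\Gr{m}{K,K',N}$.

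Two small simplifications the paper makes relative to your plan. First, the spectral support condition \eqref{215} is not a genuine obstacle: the homogeneous terms you produce are products of $\partial_x^\alpha\partial_\xi^\beta a_q$ with powers of $b_{q'}$ and $\xi c_{q''}$, and the remarks after Definition~\ref{211} already record that products and derivatives of homogeneous symbols remain in the $\Gt{m}{\cdot}$ classes (the frequency balance for a product is automatic), so no separate bookkeeping is needed. Second, for the integral Taylor remainder you do not need the inverse diffeomorphism: since $\alpha+\beta=N$, the factor $b^\alpha(\xi c)^\beta$ already lies in $\Gr{\beta}{K,K',N}$, and it suffices to check that $(\partial_x^\alpha\partial_\xi^\beta a)(V;t,x+\lambda b,\xi(1+\lambda c))$ is in $\Gr{m-\beta}{K,K',0}$ uniformly in $\lambda\in[0,1]$, which follows directly from the chain rule and the estimates \eqref{218} for $a$, $b$, $c$ without inverting $x\mapsto x+b$.
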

\begin{proof}
  We decompose
\[
\begin{split}
  a &= \sum_{q=p}^{N-1} a_q(V,\dots,V;\cdot) + a_N(V; t, \cdot)\\
b &= \sum_{q'=1}^{N-1} b_{q'}(V,\dots,V;\cdot) + b_N(V; t, \cdot)\\
c &= \sum_{q''=1}^{N-1} c_{q''}(V,\dots,V;\cdot) + c_N(V; t, \cdot)
\end{split}\]
with $a_q$ (resp.\  $b_{q'}$, resp.\  $c_{q''}$) in $\Gt{m}{q}$ (resp.\  $\Ft{q'}$, resp.\ $\Ft{q''}$) and 
$a_N$ (resp.\ $b_{N}$, $c_{N}$) in $\Gr{m}{K,K',N}$ (resp.\  $\Fr{K,K',N}$). By a Taylor expansion at order 
$N$
in $(x,\xi)$, we may write \eqref{2429} as a linear combination of terms of the form 
\be\label{eq:tay-ex}
(\partial_x^\alpha\partial_\xi^\beta a)(V; t, x,\xi) b (V; t, x)^\alpha \xi^\beta c (V; t, x)^\beta
\ee
with  $ \alpha, \beta $ in $ \N $, $ 0 \leq \alpha+\beta\leq N-1 $, 
plus an integral remainder
\begin{multline}\label{eq:tay-re}
  \int_0^1(1-\lambda)^{N-1}(\partial_x^\alpha\partial_\xi^\beta a)(V;t,x+\lambda b(V;t, x),\xi(1+\lambda c(V;t, x))\,d\lambda\\
\times b(V;t,x)^\alpha(\xi c(V;t,x))^\beta
\end{multline}
with $\alpha+\beta = N $. 
Substituting  in \eqref{tay-ex} the homogeneous and non-homogeneous 
symbols in the expansion of $ a, b, c $, we obtain the following contributions:  

$\bullet$ For any $ \alpha, \beta $ in $ \N $, $ 0 \leq \alpha+\beta\leq N-1 $,
\be\label{eq:symb-tay}
  (\partial_x^\alpha\partial_\xi^\beta a_q)(V ;x,\xi) b_{q'}(V; x)^\alpha \xi^\beta c_{q''}(V; x)^\beta
\ee
with indices $ p \leq q \leq N - 1 $, $ 1 \leq q' , q'' \leq N - 1 $. 
By the remarks following Definition \ref{211}, \eqref{symb-tay}
is the restriction to
$V_1=\cdots=V_{\tilde{q}} = V$ of an homogeneous symbol in $\Gt{m}{\tilde{q}}$ with 
$ \tilde{q} = q + q' \alpha + q'' \beta \geq q $. 
If $ \tilde{q} \leq N - 1 $ 
this gives an homogeneous term  in the expansion of \eqref{2429}.
If $ \tilde q\geq N$,  it gives a non-homogeneous symbol in $\Gr{m}{K,0,N} \subset \Gr{m}{K,K',N}$.

$\bullet$ Symbols 
$  (\partial_x^\alpha\partial_\xi^\beta a_q) b_{q'}^\alpha \xi^\beta c_{q''}^\beta $
where $\alpha+\beta\leq N-1$,  but with at least one index among $q, q', q''$ equal to $N$. 
By  the remarks following Definition \ref{212bis},  
$  (\partial_x^\alpha\partial_\xi^\beta a_q)  \xi^\beta  $ is in $ \Gamma_{K,K',q}^m [r] $
and $ b_{q'}^\alpha c_{q''}^\beta $ is in $ \Gamma_{K,K',\alpha q' + \beta q''}^0 [r] $, so that their 
product gives a contribution in $\Gr{m}{K,K',N}$.

$\bullet $ 
Also the integral remainder \eqref{tay-re}  
gives  a contribution to $\Gr{m}{K,K',N}$. Indeed 
the symbol $ \xi^\beta b^\alpha c^\beta $
 is in $ \Gamma_{K,K',N}^{\beta} [r]  $ and it is sufficient to prove that
$ (\partial_x^\alpha\partial_\xi^\beta a)(V;t,x+\lambda b(V;t, x),\xi(1+\lambda c(V;t, x)) $ is in 
$ \Gamma^{m - \beta}_{K,K',0}  [r] $, uniformly in $ \lambda \in [0,1]$. This can be verified by the 
formula for the derivative of the composition of functions.

This concludes the proof. 
\end{proof}

We use the following asymptotic expansion formula for the commutator,
 which is derived by the composition results of para-differential operators for the Weyl  
quantization.

\begin{lemma} \label{lem:comm}
{\bf (Commutator  expansion)}
Let  $ a $ be in $  \Sigma \Gamma^m_{K,K',q} [r,N] $ and $ B  $ in $ \Sigma  \Gamma^1_{K,K',1}[r,N] $. Then  
\be
\begin{split}\label{eq:commutator-formula}
\big[{\rm Op}^{BW} ( i B(U;   \cdot ) ),  {\rm Op}^{BW} ( a )\big] & =  
{\rm Op}^{BW}  (\{ B(U; \cdot ),  a \}) \\ 
& +   {\rm Op}^{BW} \big( r_{-3, \rho}(B,a) \big)   + R 
\end{split}
\ee
with  symbols 
\be\label{eq:stima-simbols}
 \{ B(U; \cdot ),  a \}  \in \Sigma \Gamma^{m}_{K,K',q+1}[r,N] \, , \ 
r_{-3, \rho}(B,a) \in \Sigma \Gamma^{m-2}_{K,K',q+1}[r,N] \, , 
\ee
and  a smoothing remainder  
\be\label{eq:remainder1}
R \in \Sigma {\cal R}^{-\rho + m  + 1}_{K,K',q  +1 } [r, N] \, .
\ee
Note that the symbol $ r_{-3, \rho}(B,a) $ has order $ m -2 $. If $B$ is a bounded family of symbols depending on some
parameter $\theta \in [-1,1]$, then the above conclusion holds uniformly in $\theta$.
\end{lemma}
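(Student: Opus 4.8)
The statement is an asymptotic expansion of the commutator of two Bony-Weyl operators, one of which, $\opbw(iB)$, has order one with a \emph{real} symbol $B = b(U;\theta,t,x)\xi$ of the special form coming from \eqref{def:B}. The strategy is to reduce everything to Proposition~\ref{231} (composition of Bony-Weyl operators) applied twice, and then to inspect carefully the resulting asymptotic expansion $(iB\#a)_{\rho,N} - (a\#iB)_{\rho,N}$ to extract the Poisson bracket term, the lower order term of order $m-2$, and to check that all remaining symbols have the asserted orders and degrees of homogeneity. First I would write
\[
[\opbw(iB),\opbw(a)] = \opbw\big((iB\#a)_{\rho,N}\big) - \opbw\big((a\#iB)_{\rho,N}\big) + R_1 - R_2,
\]
where, by Proposition~\ref{231}, $R_1$ and $R_2$ are smoothing operators in $\sR{-\rho+m+1}{K,K',q+1}{N}$ (here we use that $B$ has order $1$, $a$ has order $m$, and $B$ is $1$-homogeneous so the degree count is $q+1$); this accounts for the remainder $R$ in \eqref{remainder1}.

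Next I would unfold the symbolic expansions. By Definition~\ref{221} and the remark after it, the asymptotic expansion of $(iB\#a)_\rho$ is
\[
iBa + \tfrac{1}{2}\{B,a\} + \sum_{\ell=2}^{\rho-1}\tfrac{1}{\ell!}\Big(\tfrac{i}{2}\sigma(D_x,D_\xi,D_y,D_\eta)\Big)^\ell\big[iB(x,\xi)a(y,\eta)\big]\big|_{x=y,\xi=\eta},
\]
and similarly for $(a\#iB)_\rho$, with the roles of the two arguments exchanged. Forming the difference: the zeroth order terms $iBa$ cancel; the two first-order terms combine, since $\sigma$ is antisymmetric under exchanging $(x,\xi)\leftrightarrow(y,\eta)$, to give $\{B,a\}$ with the standard sign convention $\{B,a\}=\partial_\xi B\,\partial_x a - \partial_x B\,\partial_\xi a$. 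The $\ell\geq 2$ terms contribute a symbol which, after exchanging the arguments and subtracting, consists only of the \emph{odd} powers $\ell=3,5,\dots$ (the even ones cancel by the same antisymmetry of $\sigma$), hence is of order at most $(m+1) + 0 - 3 = m-2$; I would call this symbol $r_{-3,\rho}(B,a)$. To finish the order/homogeneity bookkeeping I would invoke the remarks after Definitions~\ref{211} and~\ref{212bis}: each application of $\partial_x$ preserves the order and keeps the symbol in the same class, each $\partial_\xi$ lowers the order by one, and multiplication adds orders and degrees; since $B\in\Sigma\Gamma^1_{K,K',1}[r,N]$ and $a\in\Sigma\Gamma^m_{K,K',q}[r,N]$, the bracket $\{B,a\}$ lands in $\Sigma\Gamma^m_{K,K',q+1}[r,N]$ (the loss of one $\xi$-derivative in one factor exactly offsets the gain from the order of $B$ being one) and $r_{-3,\rho}(B,a)\in\Sigma\Gamma^{m-2}_{K,K',q+1}[r,N]$, as claimed in \eqref{stima-simbols}. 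The discrepancy between $(iB\#a)_{\rho,N}$ and $(iB\#a)_\rho$ truncated at $\rho$, and between $(a\#iB)_{\rho,N}$ and its truncation, is itself a symbol of order $m+1-\rho$ and hence, by Proposition~\ref{215} and the last remark following it, its Bony-Weyl quantization is a smoothing operator that can be absorbed into $R$.

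The main obstacle — really the only nonroutine point — is getting the order of $r_{-3,\rho}(B,a)$ to be $m-2$ rather than the naive $m-1$. This requires the observation that the even-order terms in $(iB\#a)_\rho - (a\#iB)_\rho$ cancel identically (because $\sigma^{\ell}$ is symmetric in the two pairs of arguments for $\ell$ even), so that the first surviving term beyond the Poisson bracket is the $\ell=3$ term, which carries three derivatives and thus drops the order by three relative to $m+1$. This is precisely ``an outcome of the Weyl quantization'' alluded to in the preamble: in a non-symmetric quantization one would already see an $\ell=2$ contribution of order $m-1$. The uniformity in $\theta\in[-1,1]$ is immediate, since all the symbol seminorms of $B(U;\theta,\cdot)$ are bounded uniformly in $\theta$ by \eqref{def:function-b}, and all the composition and quantization estimates used depend only on finitely many such seminorms.
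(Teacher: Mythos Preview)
Your proof is correct and follows essentially the same approach as the paper's: apply Proposition~\ref{231} to each composition, then use the antisymmetry of $\sigma$ under exchanging the two pairs of variables to see that the even-rank terms of $(iB\#a)_{\rho,N}-(a\#iB)_{\rho,N}$ cancel, leaving the Poisson bracket at rank one and terms of rank $\geq 3$ (hence order $\leq m-2$) for $r_{-3,\rho}$. The paper's proof is just a two-sentence pointer to Proposition~\ref{231} and formulas \eqref{222}--\eqref{223} together with this same parity observation, so your write-up is a faithful (and more detailed) unpacking of it.
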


\begin{proof}
It follows by Proposition \ref{231} and formulas \eqref{222} and \eqref{223}. Note that  
the terms of even (resp. odd) rank in the asymptotic expansion of the symbol   
$ a \# B $ in the Weyl quantization 
are symmetric (resp. antisymmetric) in $ (a,B) $. Consequently the terms of even rank vanish in the symbol
of the commutator.
\end{proof}

By \eqref{commutator-formula},  the equation \eqref{Heis} is solved, at the highest order, by 
$$
A_0 (U)( \theta) = {\rm Op}^{BW} ( a_0(U; \theta, \cdot  ) )
$$
where  $  a_0 (U; \theta, \cdot ) = a_0 (\theta) $ is the solution of the transport equation
\be\label{eq:def:a0}
\begin{cases}
\frac{d}{d \theta} a_0 ( \theta)  =   \{ B(U; \cdot ),  a_0 (\theta) \} \\
a_0 (U)( 0 ) =  a (U; \cdot )  \, . 
 \end{cases}
\ee
This transport equation may be solved by the methods of characteristics. 

\begin{lemma}\label{lem:trasport}
The solution of the transport equation \eqref{def:a0} is 
\be\label{eq:sol1-tr}
a_0(U; \theta , x, \xi ) = a (U; \phi^{\theta,0} (x,\xi) ) 
\ee
where 
\be\label{eq:sol-cara}
\phi^{\theta, 0} (x, \xi) = \Big( x + \theta \beta (U; x),  
\xi \big( 1+   \partial_y \gamma (U; \theta, y) \big)_{|y = x + \theta \beta(U; x) }   \Big) 
\ee
are the solutions of the characteristic  system
\be\label{eq:HS-car}
\begin{cases}
\frac{d}{ds} x (s) = - b(s,  x(s) ) \cr
\frac{d}{ds} \xi (s) = b_x (s, x(s) ) \xi (s) \, .
\end{cases} 
\ee
In particular for $ \theta =1  $ we get \eqref{sol-a-0-t1}. 
Moreover  $ a_0 (\theta, \cdot) \in \Sigma \Gamma^m_{K,K',q}[r,N]$, 
with estimates uniform in $ | \theta | \leq 1 $. 
\end{lemma}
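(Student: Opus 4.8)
The plan is to solve the transport equation \eqref{def:a0} by the method of characteristics, and then to verify that the resulting symbol \eqref{sol1-tr} indeed belongs to the class $\Sigma\Gamma^m_{K,K',q}[r,N]$, using Lemma~\ref{246} to control the composition with the change of variables.

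\textbf{Step 1: the characteristic flow.} First I would observe that the symbol of the Poisson bracket $\{B(U;\cdot),a\}$ with $B = b(U;\theta,t,x)\xi$ is the first-order transport operator acting on $a$: writing out $\{B,a\} = \partial_\xi B\,\partial_x a - \partial_x B\,\partial_\xi a = b\,\partial_x a - (\partial_x b\,\xi)\,\partial_\xi a$, equation \eqref{def:a0} reads
\begin{equation*}
\frac{d}{d\theta}a_0(\theta) = b(U;\theta,t,x)\,\partial_x a_0(\theta) - \xi\,\partial_x b(U;\theta,t,x)\,\partial_\xi a_0(\theta).
\end{equation*}
Its solution is $a_0(U;\theta,x,\xi) = a(U;\phi^{\theta,0}(x,\xi))$ where $\phi^{\theta,0}$ is the flow obtained by integrating the Hamiltonian-type characteristic system \eqref{HS-car} backwards from time $\theta$ to time $0$; this is the standard fact that a scalar first-order linear PDE is constant along characteristics, checked by differentiating $\theta\mapsto a(U;\phi^{\theta,0}(x,\xi))$ and using the chain rule together with \eqref{HS-car}. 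The second equation of \eqref{HS-car} is linear in $\xi$, so $\xi(s)$ is $\xi$ times an integrating factor, which forces the second component of $\phi^{\theta,0}$ to be of the form $\xi\cdot(\text{positive function of }x,\theta)$.

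\textbf{Step 2: identifying the flow with the diffeomorphism and its inverse.} The $x$-component of \eqref{HS-car} is the ODE whose time-$1$ flow is the diffeomorphism $x\mapsto x+\theta\beta(U;t,x)$; indeed this is exactly the content of the computation preceding \eqref{flow-homogeneous}, where $b(U;\theta,t,x) = \beta/(1+\theta\beta_x)$ was chosen precisely so that $x\mapsto x+\theta\beta(U;t,x)$ solves the transport equation \eqref{transportPDE}. For the $\xi$-component, since $\partial_x$ of the inverse relation $y = x+\theta\beta(U;t,x)$, i.e. $x = y+\gamma(U;\theta,y)$ (with $\gamma$ the inverse function supplied by the inverse diffeomorphism lemma applied to $\theta\beta$), gives the identity $(1+\theta\beta_x(x))(1+\partial_y\gamma(y)) = 1$ at $y = x+\theta\beta(x)$, one recovers that the integrating factor equals $\big(1+\partial_y\gamma(U;\theta,y)\big)_{|y=x+\theta\beta(U;x)}$. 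This yields \eqref{sol-cara}, and specializing to $\theta=1$ gives \eqref{sol-a-0-t1}, matching the classical Alinhac paracomposition formula.

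\textbf{Step 3: symbol class membership.} Finally I would check $a_0(\theta,\cdot)\in\Sigma\Gamma^m_{K,K',q}[r,N]$ uniformly in $|\theta|\le 1$. Since $\theta\beta\in\Sigma\mathcal{F}_{K,K',1}[r,N]$ whenever $\beta$ is (and $r$ may be shrunk to absorb the factor $\theta\in[-1,1]$), the inverse diffeomorphism lemma gives $\gamma(U;\theta,\cdot)\in\sFR{K,K',1}{N}$, and differentiating $\partial_y\gamma$ once more keeps us in $\sFR{K,K',1}{N}$ (the classes are stable under $\partial_x$). Thus $a_0(\theta,x,\xi) = a\big(U;x+b_1(\theta,x),\,\xi(1+c_1(\theta,x))\big)$ with $b_1(\theta,x) = \theta\beta(U;t,x)$ and $c_1(\theta,x) = \big(\partial_y\gamma(U;\theta,y)\big)_{|y=x+\theta\beta}$ both in $\sFR{K,K',1}{N}$ (the second being itself a composition of a function in $\sFR{K,K',1}{N}$ with the diffeomorphism, hence in $\sFR{K,K',1}{N}$ by Lemma~\ref{246} applied with $\xi$-independent symbol). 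Lemma~\ref{246} then directly gives $a_0(\theta,\cdot)\in\sG{m}{K,K',q}{N} = \Sigma\Gamma^m_{K,K',q}[r,N]$. The uniformity in $\theta$ follows since all the constants produced along the way depend on $\theta$ only through $\sup_{|\theta|\le1}$ of finitely many seminorms of $\theta\beta$, which are bounded by those of $\beta$.

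The main obstacle I anticipate is not any single step but the bookkeeping in Step~2–3: one must be careful that the ``backward'' flow from time $\theta$ to $0$ is the one that produces the composition with the direct diffeomorphism $x\mapsto x+\theta\beta$ (and not its inverse) in the $x$-slot, while the $\xi$-slot picks up the Jacobian of the \emph{inverse} map evaluated at the image point; getting these two evaluation points consistent — $\xi$ multiplied by $\partial_y\gamma$ evaluated at $y = x+\theta\beta(U;x)$ — is exactly the subtle point, and is what makes \eqref{sol-a-0-t1} reproduce the Alinhac formula. Everything else is a routine application of the composition stability Lemma~\ref{246} and the inverse diffeomorphism lemma already established.
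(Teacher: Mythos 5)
Your argument is correct and follows essentially the same route as the paper: solve the transport equation by characteristics, identify the characteristic flow with the family of diffeomorphisms $x\mapsto x+\theta\beta$ and its inverse $y\mapsto y+\gamma(\theta,y)$, and invoke Lemma~\ref{246} for the symbol-class membership. Your Step~3 is in fact slightly more detailed than the paper's, which merely cites Lemma~\ref{246} without spelling out that the second slot $c_1 = (\partial_y\gamma)(\theta,\cdot)\big|_{y=x+\theta\beta}$ lies in $\sFR{K,K',1}{N}$ by a further application of Lemma~\ref{246} to a $\xi$-independent symbol; your care here is welcome. One small imprecision in your Step~2: you describe $x\mapsto x+\theta\beta(U;x)$ as the ``time-$1$ flow'' of the $x$-component of \eqref{HS-car}, but the precise statement (and what the rest of your argument uses) is that it is the backward flow $\phi^{\theta,0}$ from time $\theta$ to time $0$ — i.e.\ the inverse of the forward flow $\phi^{0,\theta}(y,\cdot)=(y+\gamma(\theta,y),\cdot)$. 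The content is right, only the label ``time-$1$'' is misleading.
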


\begin{proof}
The function   $ s \mapsto a_0 ( s,  x(s), \xi(s))  $ is constant 
along the solutions of the Hamiltonian system generated by $ b(s,x) \xi $, namely \eqref{HS-car}.
Hence, denoting by $ \phi^{\theta_0,\theta} (x,\xi) $ the solution  of 
\eqref{HS-car}  with initial condition $ \phi^{\theta_0,\theta_0} (x,\xi) = (x,\xi)  $, the solution of the transport equation 
\eqref{def:a0} is 
$$
a_0(\theta, x, \xi ) = a (\phi^{\theta,0} (x,\xi) ) 
$$
proving \eqref{sol1-tr}. 
The solutions of the Hamiltonian system \eqref{HS-car} are directly given by the path of diffeomorphisms 
$$
y = x + \theta \beta (x) \quad \Leftrightarrow \quad x = y + \gamma (\theta, y ) \, , \quad \theta \in [-1,1] \, . 
$$  
Indeed, recalling the definition of  $ b(\theta, x )$  in \eqref{def:function-b}, $ (x(\theta), \xi(\theta)) $ defined by 
$$
x(\theta) :=  
y + \gamma (\theta, y )  \, , \quad
\xi(\theta)  :=  \xi_0 (1+  \partial_x \beta ) ( x(\theta) ) =  \frac{\xi_0}{1+  \partial_y \gamma (\theta, y) } 
$$
is the solution of \eqref{HS-car}  with initial condition
$$
x(0)  = y  \, , \quad \quad \xi(0) = \xi_0 \, . 
$$
Taking the inverse flow one obtains \eqref{sol-cara}. 

Finally, 
since  $ a  $ is in $  \Sigma \Gamma^m_{K,K',q}[r,N] $ and $ \beta $  in $ \Sigma {\cal F}_{K,K',1}[r,N] $, 
lemma \ref{246}   implies that 
the symbol $ a_0 (\theta, \cdot) $ defined in \eqref{sol1-tr} is 
in $ \Sigma \Gamma^m_{K,K',q}[r,N]$, with estimates uniform in  $ | \theta | \leq 1 $.  
\end{proof}

Let us now quantify how approximately
$ A^{(0)} (\theta) = {\rm Op}^{BW} ( a_0 (\theta) ) $ solves \eqref{Heis}.
By \eqref{def:a0} and \eqref{commutator-formula} we have 
\be\label{eq:approx-0}
\begin{split}
\pa_\theta  A^{(0)} (\theta) & =  {\rm Op}^{BW} (   \partial_\theta a_0 (\theta)  ) = 
 {\rm Op}^{BW} ( \{ B(U; \cdot ),  a_0 (\theta) \} )  \\
 & =
 \big[{\rm Op}^{BW} ( i B(U;  \theta, \cdot ),  {\rm Op}^{BW} ( a_0 (\theta) )\big] - 
 {\rm Op}^{BW} \big( r_{-3, \rho}(B,a_0 (\theta) ) \big)  - R   \\
& =  \big[{\rm Op}^{BW} ( i B(U;  \theta, \cdot ),    A^{(0)} (\theta) \big] - 
 {\rm Op}^{BW} \big( r_{-3, \rho}(B,a_0(\theta) ) \big)  - R (\theta) \, . 
 \end{split}
\ee
Since $ a_0 (\theta, \cdot) \in \Sigma \Gamma^m_{K,K',q}[r,N]$,  for any $ \theta \in [-1,1] $, we get,
by \eqref{stima-simbols} and \eqref{remainder1},  
\be\label{eq:def:primo-R}
r_{-3, \rho}(B,a_0 (\theta) ) \in  \Sigma \Gamma^{m-2}_{K,K',q+1}[r,N] \, , \quad 
R(\theta) \in \Sigma {\cal R}^{-\rho + m  + 1}_{K,K',q  +1 } [r, N] \, .
\ee
Thus $ A^{(0)} (\theta) $ is an approximate solution of \eqref{Heis} up to a symbol of order $ m - 2 $. 
We define next the further approximation 
\be\label{eq:new:A1}
A^{(1)} ( \theta) \stackrel{\textrm{def}}{=}  {\rm Op}^{BW} ( a_0 + a_1  ) = A^{(0)} ( \theta) +  {\rm Op}^{BW} ( a_1 (\theta) ) 
\ee
of the equation \eqref{Heis}. We choose 
 the symbol  $ a_1 (U; \theta, \cdot ) $ to solve the non-homogeneous transport equation 
\be\label{eq:better-approQ1}
\begin{cases}
  \partial_\theta a_1 (\theta)   -
 \{ B(U;  \theta, \cdot ),   a_1(\theta)  \} = r_{-3, \rho}(B, a_0(\theta))    \\
 a_1 (0) = 0 \, .
 \end{cases}
\ee
By \eqref{approx-0} we have  
\begin{align*}
\partial_\theta A^{(1)} ( \theta) &   = \pa_\theta A^{(0)} ( \theta)   +   {\rm Op}^{BW}  ( \partial_\theta a_1 (\theta)  ) \\
 & =  \big[{\rm Op}^{BW} ( i B(U;  \theta, \cdot ),    A^{(0)} (\theta) \big] - 
 {\rm Op}^{BW} \big( r_{-3, \rho}(B,a_0 (\theta)) \big) \\
 & \quad  - R (\theta)
   +   {\rm Op}^{BW}  ( \partial_\theta a_1 (\theta) ) \, . 
 \end{align*}
 Hence, recalling \eqref{new:A1} and 
  using \eqref{commutator-formula}, we get 
 \begin{align}
& \partial_\theta A^{(1)} ( \theta)  \nonumber  \\
& =  i \big[{\rm Op}^{BW} ( B(U;  \theta ),  A^{(1)} ( \theta)  \big] 
  +  {\rm Op}^{BW} \big(\partial_\theta a_1 -  r_{-3, \rho}(B,a_0) \big) ) \nonumber \\
 & \quad  -
  i \big[{\rm Op}^{BW} ( B(U;  \theta ),  {\rm Op}^{BW}  ( a_1  )  \big]  - R (\theta)  \nonumber  \\
&  = i \big[{\rm Op}^{BW} ( B(U;  \theta ),  A^{(1)} ( \theta)  \big] 
  + {\rm Op}^{BW} \big(    \partial_\theta a_1 - r_{-3, \rho}(B,a_0)  \nonumber \\
  & \quad  -
 \{ B(U;  \theta, \cdot ),   a_1  \}  \big)  + {\rm Op}^{BW}  \big( r_{-3, \rho} (B, a_1) \big)  + R'   \nonumber 
\end{align}
where the symbol $ r_{-3, \rho} (B, a_1) $ and the operator $ R'  $ are lower order terms that we estimate below. 
Since $ a_1 $ solves \eqref{better-approQ1} we get 
\be\label{eq:tolgo1}
\begin{cases}
\partial_\theta A^{(1)} ( \theta) =  i \big[{\rm Op}^{BW} ( B(U;  \theta, \cdot ),  A^{(1)} ( \theta)  \big]  
+ {\rm Op}^{BW}  \big( r_{-3, \rho} (B, a_1) \big)  + R' (\theta)  \\
A^{(1)} ( 0 ) =  {\rm Op}^{BW}  (a) \, . 
\end{cases}
\ee
The non-homogeneous transport 
equation \eqref{better-approQ1} can be solved by the method of characteristics.

\begin{lemma} \label{lem:non-homo}
The solution of \eqref{better-approQ1} may be written as 
\be\label{eq:formula:a1}
a_1 (\theta, x, \xi) = \int_0^\theta r_{-3, \rho}(B(s, \cdot), a_0 (s, \cdot) ) ( \phi^{\theta, s}  (x,\xi) ) \, ds   
\ee
where 
$ \phi^{\theta, s}  (x,\xi) $  is the solution of \eqref{HS-car}
that satisfies the initial condition 
$ \phi^{\theta, \theta}  (x,\xi) = (x,\xi)  $.
In particular 
\be\label{eq:stima:a1-s}
a_1 (1, x, \xi) = \int_0^1 r_{-3, \rho}(B(s, \cdot) ,a_0 (s, \cdot ) ) ( \phi^{1, s}  (x,\xi) ) \, ds   \, . 
\ee
Moreover the symbol $ a_1 (\theta, x, \xi) $ is in  $ \Sigma \Gamma_{K,K',q+1}^{m-2} [r,N] $, with 
estimates uniform in $ | \theta | \leq 1 $. 
\end{lemma}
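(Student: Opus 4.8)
\textbf{Proof proposal for Lemma \ref{lem:non-homo}.}

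The plan is to mimic the strategy already carried out for the homogeneous transport equation in Lemma \ref{lem:trasport}. Equation \eqref{better-approQ1} is a linear inhomogeneous transport equation of the form $\partial_\theta a_1 - \{B(U;\theta,\cdot),a_1\} = g(\theta)$ with $g(\theta) = r_{-3,\rho}(B,a_0(\theta))$, supplemented with the initial condition $a_1(0)=0$; its characteristic system is exactly \eqref{HS-car}, the same one that governed $a_0$. So the first step is to observe that along a characteristic curve $s\mapsto\phi^{s,s_0}(x,\xi)$ the operator $\partial_\theta - \{B(U;\theta,\cdot),\cdot\}$ is just the total derivative $\frac{d}{ds}$, hence $\frac{d}{ds}\big[a_1(s,\phi^{s,s_0}(x,\xi))\big] = g(s,\phi^{s,s_0}(x,\xi))$. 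Integrating from $s=0$ to $s=\theta$ and using $a_1(0)=0$ together with the group property $\phi^{\theta,0}=\phi^{\theta,s}\circ\phi^{s,0}$, one gets, after the change of base point, the Duhamel-type formula \eqref{formula:a1}; setting $\theta=1$ yields \eqref{stima:a1-s}. I would carry out this verification directly by differentiating the right-hand side of \eqref{formula:a1} in $\theta$: the boundary term gives $g(\theta,\phi^{\theta,\theta}(x,\xi)) = g(\theta,x,\xi)$, and differentiating $\phi^{\theta,s}$ inside the integral reproduces the Poisson bracket term, exactly as needed.

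The second step is the symbol-class membership. Here the structure is the same as in the proof of Lemma \ref{lem:trasport}: by \eqref{def:primo-R} the inhomogeneity $g(\theta)=r_{-3,\rho}(B,a_0(\theta))$ belongs to $\Sigma\Gamma^{m-2}_{K,K',q+1}[r,N]$, uniformly in $\theta\in[-1,1]$. The flow map $\phi^{\theta,s}$ is, by \eqref{sol-cara} (with the relevant time origin), a change of variables of the type $x\mapsto x+\tilde\beta(x)$, $\xi\mapsto\xi(1+\tilde c(x))$ with $\tilde\beta,\tilde c$ in $\Sigma\mathcal{F}_{K,K',1}[r,N]$ — these are the functions $\beta$ and $\partial_y\gamma$ already shown to lie in the relevant classes in the two lemmas preceding Lemma \ref{lem:flow}. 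Therefore Lemma \ref{246} applies and shows that $g(s,\phi^{\theta,s}(x,\xi))$ stays in $\Sigma\Gamma^{m-2}_{K,K',q+1}[r,N]$, with seminorm bounds uniform in $(\theta,s)\in[-1,1]^2$. Finally, integrating in $s$ over the compact interval $[0,\theta]$ preserves the symbol class and the uniform-in-$\theta$ bounds, since the class is defined by $\partial_t^k\partial_x^\alpha\partial_\xi^\beta$ estimates and differentiation commutes with the $s$-integral; this gives $a_1(\theta,\cdot)\in\Sigma\Gamma^{m-2}_{K,K',q+1}[r,N]$ uniformly in $|\theta|\le1$.

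The main obstacle, such as it is, lies in making precise the two technical points behind the symbol estimate: first, that the characteristic flow $\phi^{\theta,s}$ for the time origin $s\neq 0$ (not just $s=0$) still has the triangular form $(x,\xi)\mapsto(x+\tilde\beta,\xi(1+\tilde c))$ with $\tilde\beta,\tilde c$ in the function classes $\Sigma\mathcal F_{K,K',1}[r,N]$ — this follows by solving \eqref{HS-car} with a shifted initial time, which amounts to composing the diffeomorphism $y=x+\theta\beta(x)$ with the inverse of $y=x+s\beta(x)$, an operation that stays in the class by the inverse-diffeomorphism lemma and the composition stability; and second, that the constant $r$ may need to be shrunk (uniformly in $\theta,s$) so that all these compositions remain well-defined diffeomorphisms. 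Both are routine given the material already developed, so the lemma follows. One should also record that $r_{-3,\rho}(B,a_1)$ and the operator $R'$ appearing in \eqref{tolgo1} are lower order — but that is the content of the next lemma, not of this one.
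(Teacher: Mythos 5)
Your proof is correct and follows essentially the same strategy as the paper's: solve the inhomogeneous transport equation by characteristics (Duhamel along the flow $\phi^{\theta,s}$), then combine the membership of $r_{-3,\rho}(B,a_0(\theta))$ in $\Sigma\Gamma^{m-2}_{K,K',q+1}[r,N]$ with Lemma \ref{246} and the structure of the characteristic flow to get uniform symbol bounds, closing with the observation that integration in $s$ preserves the class. The only blemish is a transposition in your group-law formula (it should read $\phi^{s,0}\circ\phi^{\theta,s}=\phi^{\theta,0}$, equivalently the paper's $\phi^{0,s}\circ\phi^{\theta,0}=\phi^{\theta,s}$), which does not affect the argument.
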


\begin{proof}
Let $ a_1 (\theta, x, \xi ) $ be a solution of \eqref{better-approQ1} and 
$ \phi^{0,s} (x_0, \xi_0 ) = (x(s), \xi(s)) $ be the solution of 
the characteristic system \eqref{HS-car} with initial condition $ \phi^{0, 0} (x_0, \xi_0 )  = (x_0, \xi_0 )$.
Thus the derivative of the scalar function $ s \mapsto $ $ a_1 (s, x(s), \xi(s)) $ satisfies 
\begin{align*}
\frac{d}{ds} a_1 (s, x(s), \xi(s)) & 
=  \partial_s a_1 (s, x(s), \xi(s) )   -  \{ B(U;  s, \cdot ),   a_1(s, \cdot ) \} (x(s), \xi(s)) \\ 
& =  r_{-3, \rho}(B (s, \cdot ) ,a_0 (s,  \cdot )  )(x(s), \xi(s) ) \, .
\end{align*}
Since $  a_1 (0, x_0, \xi_0 ) = 0 $ we deduce that 
\begin{align}
a_1 (\theta, \phi^{0, \theta} (x_0, \xi_0))   & = a_1 (\theta, x(\theta), \xi(\theta)) \\
&  =  \int_0^\theta 
r_{-3, \rho}(B (s, \cdot ) ,a_0 (s,  \cdot )  )(x(s), \xi(s) ) \, d s\nonumber \\
& =  \int_0^\theta 
r_{-3, \rho}(B (s, \cdot ) ,a_0 (s,  \cdot )  )( \phi^{0,s} (x_0, \xi_0 ) ) ) \, d s  \, . \label{eq:prima-int}
\end{align}
Setting 
$$ 
(x, \xi) = \phi^{0,\theta} (x_0, \xi_0) \quad \Longleftrightarrow \quad 
(x_0, \xi_0) = \phi^{\theta,0} (x, \xi)  
$$
we deduce by \eqref{prima-int}, and using the 
composition rule of the flow $ \phi^{0,s} \phi^{\theta,0} = \phi^{\theta,s}  $, that 
\begin{align}
a_1 (\theta, x, \xi)   =  \int_0^\theta 
r_{-3, \rho}(B (s, \cdot ) ,a_0 (s,  \cdot )  )( \phi^{\theta,s} (x, \xi)   ) ) \, d s  \label{eq:a1-inte}
\end{align}
which is \eqref{formula:a1}. 
Now \eqref{def:primo-R}, 
 lemma \ref{246} and lemma~\ref{lem:trasport} 
imply that the symbol  $ r_{-3,\rho} (B, a_0)(\phi^{\theta,s} (x, \xi)  ) $ is in $ \Sigma \Gamma_{K,K',q+1}^{m-2}[r,N]  $,
with estimates uniform in $ | \theta |, |s|  \leq 1 $. Thus integrating \eqref{a1-inte} we finally  deduce that 
$ a_1 (\theta)  \in \Sigma \Gamma_{K,K',q+1}^{m-2} [r,N] $, with estimates uniform in $ | \theta | \leq 1 $. 
\end{proof}

Let us now quantify how approximately 
$ A^{(1)} (\theta)  $ defined in \eqref{new:A1}-\eqref{better-approQ1} solves \eqref{Heis}, i.e.
estimate the lower order terms in \eqref{tolgo1}.   
By lemma \ref{lem:trasport}  the symbol 
$ a_1 (\theta, \cdot) $ is in $  \Sigma \Gamma^m_{K,K',q}[r,N]$, with estimates uniform  in $ | \theta | \leq 1 $,  
 and by \eqref{stima-simbols} (with $ m - 2 $ instead of $ m $, and $ q + 1 $ instead  of $ q $),  we get 
$$
r_{-3, \rho} (B, a_1) \in \Sigma \Gamma^{m-4}_{K,K',q+2}[r,N] \,  ,
$$
uniformly in $ | \theta | \leq 1 $. 
On the other hand the remainder $ R'  $  in \eqref{tolgo1} is the sum of the remainder  $ R $ in 
$ \Sigma {\cal R}^{-\rho + m  + 1}_{K,K',q  +1 } [r, N] $ defined in \eqref{def:primo-R} plus a 
smaller contribution in $  \Sigma {\cal R}^{m-1-\rho}_{K,K',q+ 2} $, thus 
$ R'  $ is in $ \Sigma {\cal R}^{-\rho + m  + 1}_{K,K',q  +1 } [r, N] $. 

Repeating $ \ell $ times ($ \ell \sim \rho / 2 $)  the above procedure, 
until the new paracomposition term  may be incorporated into the remainder $ R $, we find  a
solution 
\be\label{eq:def:Aell}
A^{(\ell)} ( \theta) = {\rm Op}^{BW} ( a_0 (\theta) + a_1 (\theta) + \ldots + a_{\ell} (\theta) )
\ee
of the equation 
\be\label{eq:tolgo-ell}
\begin{cases}
\partial_\theta A^{(\ell)} ( \theta) =  i \big[{\rm Op}^{BW} ( B(U;  \theta, \cdot ),  A^{(\ell)} ( \theta)  \big]  
+ R (\theta)  \\
A^{(\ell)} ( 0 ) =  {\rm Op}^{BW}  (a) 
\end{cases}
\ee
where $  R (\theta) $ is in $ \Sigma {\cal R}^{-\rho + m  + 1}_{K,K',q  +1 } [r, N] $, uniformly in $ \theta \in [-1,1]$.

Finally, we estimate the difference between the approximate solution  $ A^{(\ell)} ( \theta) $
and the true conjugated operator  $ A ( \theta) $ in  \eqref{true-conjugation}. 
We write 
\be \label{eq:differenza}
 A^{(\ell)} ( \theta) - A ( \theta)  =  
\big( A^{(\ell)} ( \theta)  \Omega_B (\theta) - \Omega_B (\theta) {\rm Op}^{BW}  (a)  \big)  (\Omega_B ( \theta ))^{-1} \, . 
\ee
The operator 
$$ 
V(\theta) \stackrel{\textrm{def}}{=} A^{(\ell)} ( \theta)  \Omega_B (\theta) - \Omega_B (\theta) {\rm Op}^{BW}  (a)  
$$ 
solves  the non-homogeneous equation 
$$
\begin{cases}
\partial_\theta V (\theta ) = i {\rm Op}^{BW}( B )  V(\theta)  + R (\theta) \Omega_B (\theta) \\ 
V ( 0) = 0 \, . 
\end{cases}
$$
Thus, by Duhamel principle and \eqref{flow-homogeneous} we get 
$$ 
V (\theta ) :=  \Omega_B ( \theta)  \int_0^\theta  (\Omega_B ( \tau ))^{-1} R (\tau) \Omega_B (\tau)\, d \tau 
$$ 
and, substituting in \eqref{differenza} we get 
\be\label{eq:differenza-AA}
A^{(\ell)} ( \theta) - A ( \theta)  =  
\Omega_B ( \theta)  
\Big( \int_0^\theta  (\Omega_B ( \tau ))^{-1} R (\tau) \Omega_B (\tau)\, d \tau \Big) (\Omega_B ( \theta ))^{-1} \, . 
\ee
We finally deduce the main result for the 
paracomposition operator $ \Omega_B (1) $ associated to the diffeomorphism $ \Phi_U $ in \eqref{2424}.

\begin{theorem} {\bf (Paracomposition)} \label{thm:para}
Let $ q $ be in $ \N $, $ K' \leq K $, 
$ N \in \N $ with $ q \leq N -1 $, $ r >  0 $.  
Let  $ \beta (U; t, \cdot ) $ be a function  in $ \Sigma {\cal F}_{K,K',1}[r,N] $
which is  real valued for $ U $ in the space $\CKHR{\sigma}{\C^2}$ and  consider the diffeomorphism
$ \Phi_U : x \mapsto x + \beta (U; t, x ) $.  Let 
$ a (U; \cdot ) $ be a symbol in $ \Sigma \Gamma^m_{K,K',q}[r,N] $.  If $ \sigma $ is large enough, 
and $ U $ stays in the ball of center zero and 
radius $ r $ small enough in $ C_{* \R}^K (I, {\dot H}^\sigma (\Tu, \C^2 ) ) $,  
then the flow $ \Omega_{B(U)} (\theta) $ defined in \eqref{flow-homogeneous} is well defined 
for $ | \theta | \leq 1 $,
and, for any $ \rho $ large enough, there is a symbol  $ a_\Phi \in \Sigma \Gamma^m_{K,K',q}[r,N] $ such that 
the conjugated operator
\be\label{eq:Alinach}
\Omega_{B(U)} (1) {\rm Op}^{BW} (a (U; \cdot )) \big(\Omega_{B(U)} (1)\big)^{-1}  
= {\rm Op}^{BW} ( a_\Phi (U; \cdot)  ) + R(U;t)
\ee
where $ R(U;t) $ is  in $  \Sigma {\cal R}^{-\rho + m}_{K,K',q  +1 } [r, N] $. 
Moreover we have an expansion 
\be\label{eq:a-phi-svi}
a_\Phi (U; t,x,\xi) = a^0_{\Phi} (U; t,x,\xi) + a^1_{\Phi} (U; t,x,\xi) 
\ee
where the principal symbol $ a^0_\Phi $  
is given by, denoting   the inverse diffeomorphism of $ \Phi_U$ as in \eqref{2425}, 
\be\label{eq:a-0-forma}
a^0_\Phi (U; t, x, \xi ) = a \Big( U;   \Phi_U(t,x),  
\xi \partial_y \big(  \Phi_U^{-1}(t,y) \big)_{|y = \Phi_U(t,x) }   \Big)  \, , 
\ee
 $ a^0_\Phi $  is in $ \Sigma \Gamma^m_{K,K',q}[r,N]  $
and the  symbol  
$ a^1_\Phi $ is in $  \Sigma \Gamma^{m-2}_{K,K',q+1}[r,N]  $. Finally $ a_\Phi \equiv 1 $ if 
$ a \equiv 1 $.

The operator \index{f@$\Phi^\star$ (Paracomposition operator)} 
\begin{equation}\label{eq:def-paracomp}
\Phi^\star_U  \stackrel{\textrm{def}}{=} \Omega_{B(U)} (1)
\end{equation}
is by definition the paracomposition operator by the
  diffeomorphism $\Phi_U$. There are multilinear operators $M_p$ in $\Mt{}{p}$ for $p=1,\dots,N-1$ and an operator $M_N$ in $\Mr{}{K,K',N}$ 
  such that
\begin{equation}
  \label{eq:243}
  \Phi_U^\star W = W +\sum_{p=1}^{N-1} M_p(U,\dots,U)W + M_N(U;t)W \, .
\end{equation}
\end{theorem}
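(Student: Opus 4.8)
The plan is to prove Theorem \ref{thm:para} by assembling the pieces developed in the section, in the order in which they were set up. First I would invoke the Inverse Diffeomorphism Lemma to obtain that $\gamma(U;t,\cdot)$, defined by $\Phi_U^{-1}:y\mapsto y+\gamma(U;t,y)$, belongs to $\sFR{K,K',1}{N}$, and I would record that the auxiliary function $b(U;\theta,t,x)$ of \eqref{def:function-b} and the symbol $B(U;\theta,\cdot)=b\,\xi$ of \eqref{def:B} lie in $\Sigma\mathcal{F}_{K,K',1}[r,N]$ and $\Sigma\Gamma^1_{K,K',1}[r,N]$ respectively, uniformly in $|\theta|\le 1$. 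Then Lemma \ref{lem:flow} gives existence of the flow $\Omega_{B(U)}(\theta)$, its $H^s$-boundedness with two-sided bounds \eqref{425-first}, and the tame time-derivative estimates \eqref{425-lemma}; this already guarantees that $\Phi^\star_U=\Omega_{B(U)}(1)$ is a bounded map between Sobolev spaces.

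Next I would carry out the iterative construction of the approximate conjugate. Setting $A(U)(\theta)$ as in \eqref{true-conjugation}, it solves the Heisenberg equation \eqref{Heis}; using the Commutator Expansion Lemma \ref{lem:comm} and the transport-equation Lemmas \ref{lem:trasport} and \ref{lem:non-homo}, one builds $A^{(\ell)}(\theta)={\rm Op}^{BW}(a_0(\theta)+\dots+a_\ell(\theta))$ with $a_0$ as in \eqref{sol1-tr}, $a_j\in\Sigma\Gamma^{m-2j}_{K,K',q+j}[r,N]$, solving \eqref{tolgo-ell} up to a remainder $R(\theta)\in\Sigma\mathcal{R}^{-\rho+m+1}_{K,K',q+1}[r,N]$, after $\ell\sim\rho/2$ steps so that the order $m-2\ell$ is below $m-\rho$. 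Formula \eqref{differenza-AA} then expresses $A^{(\ell)}(1)-A(1)$ as $\Omega_B(1)\big(\int_0^1\Omega_B(\tau)^{-1}R(\tau)\Omega_B(\tau)\,d\tau\big)\Omega_B(1)^{-1}$; here I would use Proposition \ref{232} (composition of a smoothing operator with paradifferential operators is smoothing) together with the boundedness of $\Omega_B(\theta)^{\pm1}$ and its time derivatives from Lemma \ref{lem:flow} to conclude that this difference is itself in $\Sigma\mathcal{R}^{-\rho+m}_{K,K',q+1}[r,N]$. Setting $a_\Phi=a_0(1)+\dots+a_\ell(1)$ and absorbing $a_2(1)+\dots+a_\ell(1)$ into $a^1_\Phi$, we get \eqref{Alinach}, the splitting \eqref{a-phi-svi}, and the principal symbol \eqref{a-0-forma} follows from \eqref{sol-a-0-t1} by rewriting $x+\beta=\Phi_U(x)$ and $1+\partial_y\gamma=\partial_y\Phi_U^{-1}|_{y=\Phi_U(x)}$. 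The claim $a_\Phi\equiv1$ when $a\equiv1$ is immediate since then $a_0\equiv1$ and all $r_{-3,\rho}$ vanish, so all $a_j$ with $j\ge1$ vanish and $R$ vanishes.

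Finally, the multilinear expansion \eqref{243} of $\Phi^\star_U W=\Omega_{B(U)}(1)W$ I would obtain by iterating the Duhamel formula for \eqref{flow-homogeneous}: writing $\Omega_{B(U)}(1)W=W+\int_0^1 i\,{\rm Op}^{BW}(B(U;\tau))\Omega_{B(U)}(\tau)W\,d\tau$ and expanding $N$ times, using that $B(U;\tau,\cdot)\in\Sigma\Gamma^1_{K,K',1}[r,N]$ decomposes into homogeneous pieces $B_p\in\tilde\Gamma^1_p$ plus a non-homogeneous remainder, and that compositions of the resulting paradifferential operators land in the classes $\Mt{}{p}$, $\Mr{}{K,K',N}$ by Proposition \ref{215} and the remarks after Definition \ref{216}; the order-$1$ loss at each step is compensated by the $\frac32$-loss budget built into the norms $\nnorm{\cdot}_{K,\sigma}$, so the $p$-linear term $M_p$ indeed maps into the required space and $M_N$ collects the non-homogeneous tail. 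I expect the main obstacle to be bookkeeping the uniformity in $\theta\in[-1,1]$ throughout the iteration — in particular checking that the constants in the symbol estimates for $a_j(\theta)$ and $r_{-3,\rho}(B,a_j)(\theta)$, and in the smoothing estimates for $R(\theta)$, stay bounded as $\theta$ varies, so that the final integrations over $\theta$ (in \eqref{formula:a1} and in \eqref{differenza-AA}) preserve membership in the symbol and smoothing-operator classes; this is where one must be careful that composing with the $\theta$-dependent characteristic flow $\phi^{\theta,s}$ via Lemma \ref{246} does not degrade the estimates.
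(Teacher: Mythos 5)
Your proposal follows the same route as the paper, but one step is under\-specified in a way that matters. In passing from \eqref{differenza-AA} to the conclusion that $A^{(\ell)}(1)-A(1)$ is in $\Sigma\mathcal{R}^{-\rho+m}_{K,K',q+1}[r,N]$, you invoke Proposition~\ref{232} together with the boundedness of $\Omega_{B(U)}(\theta)^{\pm1}$. But Proposition~\ref{232} governs compositions of a smoothing operator with \emph{paradifferential} operators $\opbw(a)$, and $\Omega_{B(U)}(\theta)$ is not a paradifferential operator: it is the flow. The bounds of Lemma~\ref{lem:flow} show that $\Omega_B(\theta)^{\pm1}$ are bounded on every $\Hds{s}$ uniformly in $\theta$, so you do get that the conjugated remainder is ``smoothing'' in the crude sense of mapping $\Hds{s}$ to $\Hds{s+\rho-m}$; what that does \emph{not} give, by itself, is the structural decomposition \eqref{2118} into homogeneous pieces $R_q\in\Rt{-\rho+m}{q}$ plus a non-homogeneous tail, which is what membership in $\Sigma\mathcal{R}^{-\rho+m}_{K,K',q+1}[r,N]$ actually requires.

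The missing ingredient is exactly the Taylor (Duhamel) expansion of the flow to order $N-q$ — the formula \eqref{int-rem} — applied to each of the four occurrences of $\Omega_B(\theta)^{\pm1}$ and $\Omega_B(\tau)^{\pm1}$ in \eqref{differenza-AA}. After that expansion, the polynomial contributions are genuine compositions of $R(\tau)$ with iterated $\opbw(B(U;\theta_j,\cdot))$'s, and Proposition~\ref{232} (together with the remarks after Definition~\ref{216}) applies to show they land in $\Rt{-\rho+m}{p}$ for the appropriate $p$; the integral tail, estimated via \eqref{425-lemma} and Proposition~\ref{215}, vanishes at order $N$ in $U$ and contributes to the non-homogeneous piece $R_N$, after which one must rename $\rho-N-1$ into $\rho$ to absorb the $O(N)$ loss in the smoothing index. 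You already carry out precisely this expansion when deriving \eqref{243}, so the idea is at hand; it just needs to be deployed in the remainder step as well. Once that is done, the rest of your argument — the iterative construction of $a_0,\dots,a_\ell$ via Lemmas~\ref{lem:comm}, \ref{lem:trasport}, \ref{lem:non-homo}, the splitting $a_\Phi = a^0_\Phi + a^1_\Phi$, the identification of $a^0_\Phi$ from \eqref{sol-a-0-t1}, and the vanishing of $a^1_\Phi$ and $R$ when $a\equiv1$ — matches the paper's proof.
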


\begin{proof}
Recalling \eqref{def:Aell} we set 
$$ 
a^1_\Phi  \stackrel{\textrm{def}}{=} a_1 (1) + \ldots + a_\ell (1) \in \Sigma \Gamma^{m-2}_{K, K', q+1}[r,N] \, .
$$ 
Moreover 
 we know by \eqref{2424} and
by \eqref{sol-a-0-t1} that $a_\Phi^0$ is given by \eqref{a-0-forma}. 
Then we estimate  the difference  \eqref{differenza-AA} taken at $\theta=1$. 
The remainder $ R (\tau) $ is in the class $ \Sigma {\cal R}^{-\rho + m  + 1}_{K,K',q  +1 } [r, N] $. In order to
prove that the operator  in \eqref{differenza-AA} is a smoothing remainder,
decomposed in homogeneous operators plus  an $ O( U^N) $ term,  we have to
Taylor expand the flow 
\begin{multline} \label{eq:int-rem}
   \Omega_{B(U)} (\theta) = \textrm{Id} +
   \sum_{\ell=1}^{N-q-1}\int\1_{\theta_\ell<\cdots<\theta_1<\theta}\prod_{j=1}^\ell\bigl[i\opbw(B(U;\theta_j,\cdot))\bigr]\,d\theta_1\dots
   d\theta_\ell\\
+ \int\1_{\theta_{N-q}<\cdots<\theta_1<\theta}
\prod_{j=1}^{N-q}\bigl[i\opbw(B(U;\theta_j,\cdot))\bigr]\Omega_{B(U)}(\theta_{N-q})\,d\theta_1\dots
   d\theta_{N-q} \, .
\end{multline}
Consider the integral term $ I(U) $ in \eqref{int-rem}. We may write
\begin{multline}
\pa_t^k \Big( \prod_{j=1}^{N-q}\bigl[\opbw(B(U;\theta_j,\cdot))\bigr]\Omega_{B(U)}(\theta_{N-q}) W \Big) = \\
\sum_{k_1 + \ldots + k_{N- q+1} = k} C_{k_1, \ldots, k_{N-q+1}}
\Big( \prod_{j=1}^{N-q}\bigl[\opbw( \pa_t^{k_j} B) \bigr] 
\pa_t^{k_{N- q+1}} [\Omega_{B(U)}   W] \Big) 
\end{multline}
for suitable coefficients $ C_{k_1, \ldots, k_{N-q+1}} $. 
Since $ B(U; \theta) $ is
 in $ \sG{1}{K, K',1}{N}  $, it 
 follows from Proposition \ref{215} 
 and \eqref{425-lemma} that, for any $k\leq K- K'$,
\[
\norm{\partial_t^k (I(U)W)}_{\Hds{s-\frac{3}{2}k- (N-q)}} \leq
C\sum_{k'+k''=k}\Gcals{\sigma}{k'+K',N-q}{U}\Gcals{s}{k'',1}{W} \, .
\]
We replace in  \eqref{differenza-AA} $\Omega_{B(U)}(\theta)$, $\Omega_{B(U)}(\tau)$ by the right hand side of
\eqref{int-rem}, eventually taken at some order smaller than $N-q$, and
do the same for the analogous expansions of $(\Omega_{B(U)}(\theta))^{-1}$, $(\Omega_{B(U)}(\tau))^{-1}$. 
The terms containing at least one integral remainder provide 
elements of $ {\cal R}^{ -\rho + m + 1 + N}_{K, K', N}[r] $. 
By the results of section \ref{sec:23} the polynomial
contributions to the expansion may be expressed from compositions of the smoothing term in \eqref{differenza-AA} with
paradifferential operators. 
We deduce that the difference 
 $ A^{(\ell)} ( \theta) - A ( \theta) $ in \eqref{differenza-AA} is in 
 $  \Sigma {\cal R}^{-\rho + m  + 1+ N}_{K,K',q  +1 } [r, N] $, and 
 renaming $ \rho - N - 1 $ by 
 $ \rho  $, we  obtain that it is in $  \Sigma {\cal R}^{-\rho + m }_{K,K',q  +1 } [r, N] $.

Finally, recalling \eqref{def-paracomp}, the formula 
\eqref{243} follows from \eqref{int-rem} at $\theta = 1$ acting on $W$. Actually, the right hand side of this last
quantity may be written as \eqref{243} by Proposition~\ref{231}, the remarks following Definition~\ref{216} and
lemma~\ref{lem:flow}.
\end{proof}

\textbf{Remark}: 
The above proof shows that, if $ R ( U; t ) $ 
is a smoothing operator in the class $\sR{-\rho}{K,K',1}{N}$, then  $ \Omega_{B(U)}(1)\circ R(U; t ) \circ (\Omega_{B(U)} (1) )^{-1}$  belongs to  $\sR{-\rho+N}{K,K',1}{N}$.

\medskip

Theorem~\ref{thm:para} shows that the time $ 1 $ flow $ \Omega_{B(U)} (1) $ generated by \eqref{flow-homogeneous}
defines a para-composition operator associated to the composition operator \eqref{omotopy},  
providing 
the usual formula \eqref{Alinach}, \eqref{a-0-forma} in Alinhac  theorem. 
An advantage of the Weyl quantization is that  the  symbol  $ a^1_\Phi $  in \eqref{a-phi-svi} has two orders than 
less the principal symbol. 

\medskip

The  conjugation operator 
$ \Omega_{B(U)} (\theta ) \circ \pa_t  \circ \Omega_{B(U)} (\theta )^{- 1}  $
can be analyzed in a similar way.  

\begin{proposition}\label{cong-Dt}
Use the assumptions and notations of  Theorem \ref{thm:para}. 
Let $U$ be a solution of equation
\eqref{2129} of lemma~\ref{217}. Then
 for any $ \rho $ large enough, we have 
\begin{align}
\Omega_{B(U)} (\theta ) \circ \pa_t  \circ \Omega_{B(U)} (\theta )^{- 1}  & 
= \pa_t + 
 \Omega_{B(U)} (\theta ) \circ \big(\pa_t \Omega_{B(U)} (\theta )^{- 1}\big)  \label{eq:def:Psi-theta} \\ 
& =  \pa_t + {\rm Op}^{BW} (e(U; \cdot)) + R(U;t) \nonumber 
\end{align}
where $ e $ is a symbol in $ \Sigma \Gamma^1_{K, K'+1,1}[r,N]  $ with 
$ {\rm Re} (e) $ in $  \Sigma \Gamma^{-1}_{K, K'+1,1}[r,N]  $ and $ R(U;t) $ is a smoothing operator
in $ \Sigma {\cal R}^{-\rho}_{K, K'+1,1}[r,N] $.
\end{proposition}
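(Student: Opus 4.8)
\textbf{Proof plan for Proposition \ref{cong-Dt}.}

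The plan is to repeat almost verbatim the strategy used in the proof of Theorem \ref{thm:para}, replacing the operator ${\rm Op}^{BW}(a(U;\cdot))$ by the (unbounded) operator $\pa_t$, and keeping track of the extra time-derivative lost at each step. First I would observe that, by the very definition \eqref{flow-homogeneous} of $\Omega_{B(U)}(\theta)$ and the identity $\pa_\theta(\Omega_{B(U)}(\theta)\circ\Omega_{B(U)}(\theta)^{-1})=0$, one has $\pa_t\Omega_{B(U)}(\theta)^{-1}=-\Omega_{B(U)}(\theta)^{-1}\circ(\pa_t\Omega_{B(U)}(\theta))\circ\Omega_{B(U)}(\theta)^{-1}$, so the conjugation of $\pa_t$ is determined by $\pa_t\Omega_{B(U)}(\theta)$. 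Differentiating \eqref{flow-homogeneous} in $t$ and using Duhamel's formula (exactly as in the proof of Lemma \ref{lem:flow} for the estimate \eqref{425-lemma}), I get
\[
\pa_t\Omega_{B(U)}(\theta)=\Omega_{B(U)}(\theta)\int_0^\theta \Omega_{B(U)}(\tau)^{-1}\,i\,{\rm Op}^{BW}(\pa_t B(U;\tau,\cdot))\,\Omega_{B(U)}(\tau)\,d\tau.
\]
Since $B\in\Sigma\Gamma^1_{K,K',1}[r,N]$, the remark after Definition \ref{212} gives $\pa_t B\in\Sigma\Gamma^1_{K,K'+1,1}[r,N]$; this is the source of the shift $K'\rightsquigarrow K'+1$ in the statement, and it is precisely here that one uses the hypothesis that $U$ solves \eqref{2129} (so that $\pa_t U$ is controlled, via Lemma \ref{217}, in the classes with one more lost time derivative).

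The second step is to transform the integrand into a para-differential operator plus a smoothing remainder. One conjugates the symbol $i\pa_t B(U;\tau,\cdot)$ by the flow $\Omega_{B(U)}(\tau)$ using the Heisenberg-equation analysis from \eqref{true-conjugation}–\eqref{tolgo-ell}: the conjugated operator $\Omega_{B(U)}(\theta)\circ\Omega_{B(U)}(\tau)^{-1}\circ{\rm Op}^{BW}(i\pa_t B)\circ\Omega_{B(U)}(\tau)\circ\Omega_{B(U)}(\theta)^{-1}$ — better, one directly computes $\pa_\theta$ of the whole expression \eqref{def:Psi-theta} and solves order by order. More efficiently, I would write $\Psi(\theta):=\Omega_{B(U)}(\theta)\circ\pa_t\circ\Omega_{B(U)}(\theta)^{-1}-\pa_t$, check that it solves the Heisenberg-type equation
\[
\pa_\theta\Psi(\theta)=i\big[{\rm Op}^{BW}(B(U;\theta,\cdot)),\Psi(\theta)\big]+i\,{\rm Op}^{BW}(\pa_t B(U;\theta,\cdot)),\qquad \Psi(0)=0,
\]
and then solve it by the same descending-order scheme as in Theorem \ref{thm:para}: set $\Psi(\theta)={\rm Op}^{BW}(e_0(\theta)+e_1(\theta)+\cdots+e_\ell(\theta))+\text{(smoothing)}$, with $e_0$ solving the inhomogeneous transport equation $\pa_\theta e_0-\{B,e_0\}=\pa_t B$, $e_0(0)=0$ (solvable by characteristics exactly as in Lemma \ref{lem:non-homo}, giving $e_0\in\Sigma\Gamma^1_{K,K'+1,1}[r,N]$), and each $e_{j+1}$ absorbing the order-$(1-2(j+1))$ error term $r_{-3,\rho}$ produced by the commutator expansion \eqref{commutator-formula}. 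After $\ell\sim\rho/2$ steps the residual paracomposition term is small enough to be swallowed into a remainder in $\Sigma{\cal R}^{-\rho}_{K,K'+1,1}[r,N]$, using the Taylor expansion \eqref{int-rem} of the flows and the composition theorems of section \ref{sec:23}, just as at the end of the proof of Theorem \ref{thm:para}.

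The third step is the reality statement $\Re(e)\in\Sigma\Gamma^{-1}_{K,K'+1,1}[r,N]$. This is the analogue of the fact, used throughout, that a real principal symbol gives a self-adjoint principal operator; concretely, since $B(U;\theta,\cdot)=b(U;\theta,\cdot)\xi$ is a \emph{real} symbol and $b$ depends on $U$ through a \emph{real}-valued function of $x$ (recall \eqref{def:function-b} and the hypothesis $U\in C^K_{*\R}$), the symbol $\pa_t B$ is again real, hence $e_0$, being the push-forward of a real symbol along the real characteristic flow \eqref{HS-car}, is real; then $\Re(e)=e-i\,\Im(e)$ has its top order cancelled because the odd-rank (antisymmetric) terms in the Weyl composition \eqref{commutator-formula} are purely imaginary when the two symbols are real, so $\Im(e)$ is of order $1-2=-1$. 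I expect the main obstacle to be purely bookkeeping: one must verify that every transport equation along \eqref{HS-car} is solved within the correct class $\Sigma\Gamma^{\bullet}_{K,K'+1,\bullet}[r,N]$ — in particular that composing with the diffeomorphism $x\mapsto x+\theta\beta(U;x)$ preserves the non-homogeneous/homogeneous decomposition with the extra lost time derivative — which is exactly Lemma \ref{246} combined with the second remark after Definition \ref{212}, and that the truncation error after $\ell$ steps genuinely lands in $\Sigma{\cal R}^{-\rho}$ after renaming $\rho$. The estimates \eqref{425-first}–\eqref{425-lemma} for $\Omega_{B(U)}(\theta)^{\pm1}$, already established, are what make the Duhamel manipulations legitimate.
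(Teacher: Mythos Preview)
Your overall strategy is exactly the paper's: derive a forced Heisenberg equation for $Q(\theta)=\Omega_{B(U)}(\theta)\circ\pa_t\circ\Omega_{B(U)}(\theta)^{-1}-\pa_t$, solve it by the transport-equation scheme of Theorem~\ref{thm:para}/Lemma~\ref{lem:non-homo}, and close with the flow expansion \eqref{int-rem}. The use of Lemma~\ref{217} (not merely the remark after Definition~\ref{212}) to put $\pa_tB$ in $\Sigma\Gamma^1_{K,K'+1,1}[r,N]$ is also the right ingredient.

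There is, however, a sign error that propagates into a genuine gap in your reality step. Since $[\mathrm{Op}^{BW}(B),\pa_t]=-\mathrm{Op}^{BW}(\pa_tB)$, the forced equation for your $\Psi$ is
\[
\pa_\theta\Psi=i\big[\mathrm{Op}^{BW}(B),\Psi\big]\ -\ i\,\mathrm{Op}^{BW}(\pa_tB),\qquad \Psi(0)=0,
\]
with a \emph{minus} sign on the forcing. Consequently the principal-symbol transport equation is $\pa_\theta e_0-\{B,e_0\}=-\,i\,\pa_tB$, whose solution by characteristics is $e_0(\theta;x,\xi)=-\,i\int_0^\theta(\pa_tB)\big(U;s,\phi^{\theta,s}(x,\xi)\big)\,ds$. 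Because $B=b\,\xi$ with $b$ real and the characteristic flow \eqref{HS-car} is real, this integral is real, so $e_0$ is \emph{purely imaginary}, not real as you wrote. The conclusion $\Re(e)\in\Sigma\Gamma^{-1}_{K,K'+1,1}[r,N]$ then follows immediately: $\Re(e_0)=0$, and all subsequent correctors $e_1,e_2,\dots$ produced by the iteration lie in $\Sigma\Gamma^{-1}_{K,K'+1,\cdot}[r,N]$ (the commutator remainder $r_{-3,\rho}(B,e_0)$ in \eqref{commutator-formula} already drops two orders). Your paragraph about ``odd-rank terms in the Weyl composition being purely imaginary, so $\Im(e)$ is of order $-1$'' is the argument run backwards and does not prove the needed statement about $\Re(e)$; with $e_0$ real it would in fact fail. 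Once the sign is corrected and $e_0$ is recognized as purely imaginary, the rest of your outline goes through exactly as in the paper.
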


\begin{proof}
Recall that the symbol $ B := B(U; \theta ) $ depends on  the time $ t $ through the function $ U $ and
that the flow $  \Omega_B (\theta ) $ solves \eqref{flow-homogeneous}.  
Setting  $ \Psi (\theta) =  \Omega_B (\theta ) \circ \pa_t  \circ \Omega_B (\theta )^{- 1}  $  we have
\begin{align*}
\pa_\theta \Psi (\theta) & = \pa_\theta \big( \Omega_B (\theta ) \circ \pa_t  \circ \Omega_B (\theta )^{- 1}  \big) \\
& =   \big( \pa_\theta \Omega_B (\theta )\big)  \circ \pa_t  \circ \Omega_B (\theta )^{- 1} + 
\Omega_B (\theta )  \circ \pa_t  \circ  \big(   \pa_\theta \Omega_B (\theta )^{- 1} \big) \\
& = i {\rm Op}^{BW} (B(\theta) ) \Omega_B (\theta )   \circ \pa_t  \circ \Omega_B (\theta )^{- 1} \\ 
& \quad - 
\Omega_B (\theta )  \circ \pa_t  \circ  
\Omega_B (\theta )^{-1} \big(   \pa_\theta \Omega_B (\theta ) \big) \Omega_B (\theta )^{-1} \\
& = i {\rm Op}^{BW} (B(\theta) ) \Psi (\theta)  - \Psi (\theta)  i {\rm Op}^{BW} (B(\theta) ) 
\end{align*}
and therefore $\Psi (\theta) $ solves the Heisenberg equation 
$$
\begin{cases}
\frac{d}{d \theta} \Psi (\theta)   =
 i \big[ {\rm Op}^{BW}  \big( B(U; \theta ) \big) , \Psi (\theta) \big] \\
\Psi  ( 0 ) = \pa_t   \, . 
 \end{cases}
$$
Comparing with \eqref{def:Psi-theta} we write
$$
\Psi (\theta) = \pa_t + Q(\theta)  \, , \quad 
Q(\theta) =  \Omega_B (\theta ) \circ \big(\pa_t \Omega_B (\theta )^{- 1}\big) \, ,
$$
and we find out that  $ Q (\theta) $ solves 
\be\label{eq:for-Q}
\begin{cases}
\frac{d}{d \theta} Q (\theta)   =
 i \big[ {\rm Op}^{BW}  \big( B(U; \theta ) \big) , Q (\theta) \big] - i  {\rm Op}^{BW}  \big( \pa_t B(U; \theta ) \big) \\
Q  ( 0 ) = 0   \, . 
 \end{cases}
\ee
This forced equation can be analyzed as above and we find that its solution is a paradifferential operator 
$$
Q(\theta) = {\rm Op}^{BW} ( q_0 + q_1 + \ldots ) \, , 
$$ 
up to a regularizing operator, where the principal symbol $ q_0 (\theta, t, \cdot ) $ solves 
\be\label{eq:def:q0}
\begin{cases}
\frac{d}{d \theta} q_0 (\theta)   =  \{   B(U; \theta ) , q_0 (\theta) 	\} - i  \pa_t  B (U; \theta )  \\
q_0 (0 ) = 0   \, . 
 \end{cases}
\ee
By  lemma  \ref{lem:non-homo} the solution of this  non-homogeneous transport equation is 
\be\label{eq:intq0}
 q_0 (\theta, x, \xi ) =  - i \int_0^\theta \pa_t  B ( U; \theta,  \phi^{\theta, s}  (x,\xi) ) \, ds \, .
\ee
Recalling the definition of $ B $ in \eqref{def:B}, and lemma \ref{246}, 
we get  that the function   $ B ( U; \theta,  \phi^{\theta, s}  (x,\xi) ) $ is in $ \Sigma \Gamma^1_{K,K',1}[r,N] $, 
with estimates uniform in $ | \theta |, | s | \leq 1 $. By lemma~\ref{217},  we have also  that 
$ \pa_t B ( U; \theta,  \phi^{\theta, s}  (x,\xi) ) $ is in $ \Sigma \Gamma^1_{K,K'+1,1}[r,N] $ 
and by \eqref{intq0}
$$ 
q_0 (\theta, \cdot )  \in \Sigma \Gamma^1_{K,K'+1,1}[r,N] \, ,  
$$ 
with estimates uniform in $ | \theta | \leq 1 $. Let us quantify how approximately
$ Q^{(0)} (\theta) = {\rm Op}^{BW} ( q_0 (\theta) ) $ solves \eqref{for-Q}.
By \eqref{def:q0} and \eqref{commutator-formula} we have 
\begin{align}
\pa_\theta  Q^{(0)} (\theta) & =  
 {\rm Op}^{BW} ( \{ B(U; \cdot ),  q_0 (\theta) \} ) - i   {\rm Op}^{BW} ( B_t (U; \theta )) \nonumber \\
 & =
 \big[{\rm Op}^{BW} ( i B(U;  \theta, \cdot ),  {\rm Op}^{BW} ( q_0 (\theta) )\big] - 
 {\rm Op}^{BW} \big( r_{-3, \rho}(B,q_0 (\theta) ) \big) \nonumber  \\
 & \quad - R(\theta)  - i   {\rm Op}^{BW} ( B_t (U; \theta )) \nonumber \\
& =  \big[{\rm Op}^{BW} ( i B(U;  \theta, \cdot ),    Q^{(0)} (\theta) \big]  - i   {\rm Op}^{BW} ( \pa_t B (U; \theta ))
 \nonumber \\
 & \quad  -  {\rm Op}^{BW} \big( r_{-3, \rho}(B, q_0(\theta) ) \big)  - R (\theta)  \nonumber 
 \end{align}
where, by \eqref{stima-simbols}, \eqref{remainder1} (with $ m = 1 $ and $ q = 1 $),  for any $ \theta \in [-1,1] $, 
$$
r_{-3, \rho}(B,q_0 (\theta) ) \in  \Sigma \Gamma^{-1}_{K,K'+1,2}[r,N] \, , \quad 
R(\theta) \in \Sigma {\cal R}^{-\rho + 2}_{K,K'+1,2 } [r, N] \, .
$$
Notice that, thanks to the Weyl quantization, the 
symbol $ r_{-3, \rho}(B,q_0 (\theta) ) $ has order $ -1 $.  
Repeating the same argument as for Theorem \ref{thm:para}
and renaming $ \rho - N $  by $ \rho $, we deduce  the Proposition.
\end{proof}
     
\chapter[Complex equations and diagonalization]{Complex formulation  of the equation and diagonalization of 
the matrix symbol}\label{cha:3}

In chapter~\ref{cha:6} below, we shall obtain an equivalent form of the water waves equations \eqref{113} 
in terms of a complex unknown, suitable to prove energy estimates. In this chapter,
after introducing some algebraic properties of the water waves equations,  
we 
present the general  form of such paradifferential  system, see \eqref{3112}-\eqref{3115},  and we state the main 
Theorem \ref{311} concerning almost global existence of its solutions with a small initial datum.
The proof of this result is then provided in the last section of Chapter~\ref{cha:3} and in Chapter~\ref{cha:4}. 

\section{Reality, parity and reversibility properties}

We denote by \index{S@$S$ (Reversibility operator in complex form)} $S$ the linear 
 involution, i.e. $ S^2 = {\rm Id} $, 
\be\label{eq:def-S}
S : \C^2 \to \C^2 \, , \quad {\rm with \ matrix \ }  - \bigl[\begin{smallmatrix}0&1\\1&0\end{smallmatrix}\bigr] \, .
\ee
This  is  the translation in the present complex framework of the
map introduced in~\eqref{121a}, and denoted by the same letter. 
We shall consider the action of the group $\Z/2\Z$, identified to $\{\mathrm{Id},S\}$,   
on the balls of functions $B^k_\sigma(I,r)$ defined in \eqref{216},
\begin{equation}
  \label{eq:action}\begin{split}
  (g&,U) \longrightarrow U_g\\
\Z/2\Z&\times B^k_\sigma(I,r)  \to B^k_\sigma(I,r)
\end{split}\end{equation}
defined by
\begin{equation}
  \label{eq:310}
   \index{U@$U_S$ (Action of reversibility operator $S$ on $U$)}U_S(t) \stackrel{\textrm{def}}{=} SU(-t) \, , 
   \quad \forall t  \in I \, . 
\end{equation}
Notice that
the spaces $\CKHR{\sigma}{\C^2}$ defined after \eqref{212} are left invariant under this action. In our
application to the proof of our main theorem, the function $(t,x)\to U(t,x)$ will be a solution to the system \eqref{3112}
below that translates system \eqref{113} on the unknown $U$. 
Since any solution of  \eqref{113}  satisfies \eqref{125}, the
solutions  of \eqref{3112} will satisfy the corresponding property, namely the solution of that system
with initial datum $SU(0)$ coincides with
$SU(-t)$ at any time. In other words, if $ U(t)$ is a solution of  
\eqref{3112}, the function 
$U_S (t) $ defined by \eqref{310} is nothing but  the solution of the same system \eqref{3112}
with initial datum $SU(0)$. 

We  first define some algebraic properties
for a matrix of symbols that will play a crucial role in the long time existence proof. 

\begin{definition}\label{def:algebra}
Let $ A $ be an element of the class $\sGM{m}{K,K',p}{N}$ 
i.e.\  a $2\times2$ matrix
whose entries are symbols in $\sG{m}{K,K',p}{N}$. 
Let  $U$ satisfy $SU = -\bar{U}$. 
We set \index{A@$A(U;t,x,\xi)^\vee$} 
$$
A(U;t,x,\xi)^\vee = A(U;t,x,-\xi) \, . 
$$
The matrix $ A $ satisfies 
  the \index{Cc@Reality conditions} 	{\bf reality} condition if 
  \begin{equation}
    \label{eq:311}
    \bar{A}(U;t,x,\xi)^\vee = -SA(U;t,x,\xi)S \, , 
  \end{equation}
 the {\bf anti-reality} condition if 
\begin{equation}
    \label{eq:312}
    \bar{A}(U;t,x,\xi)^\vee = SA(U;t,x,\xi)S \, ,
  \end{equation}
the \index{Cd@Parity preserving condition} {\bf parity preserving} condition if 
\begin{equation}
  \label{eq:313}
  A(U;t,-x,-\xi) = A(U;t,x,\xi) \, . 
\end{equation}
Finally, 
we say that $A$ satisfies the \index{Ce@Reversibility conditions} {\bf reversibility} condition 
if
\begin{equation}
  \label{eq:314}
  A(U;-t,x,\xi)S = -SA(U_S;t,x,\xi)
\end{equation}
and the  {\bf anti-reversibility} condition if 
\begin{equation}
  \label{eq:315}
  A(U;-t,x,\xi)S = SA(U_S;t,x,\xi).
\end{equation}
\end{definition}
Below, we shall use these properties for elements $U$ of  $\CKHR{\sigma}{\C^2}$ for 
some large enough $\sigma$ and $K$. 
\medskip

\noindent
\textbf{Remarks}:
$\bullet$  The product $ A (U; t, x, \xi) B (U; t, x, \xi) $ of matrices of symbols $ A, B $ satisfying \eqref{312}, resp. \eqref{313}, resp. \eqref{315}, 
satisfy \eqref{312}, resp. \eqref{313}, resp. \eqref{315}, as well. 
In addition, if $ A $ satisfies \eqref{311}, 
resp. \eqref{314},  and $ B $ satisfies \eqref{312},  resp. \eqref{315}, then $ A B $ satisfies \eqref{311}, resp. \eqref{314}. 

$\bullet$ If a matrix  $ A (U; t, x, \xi) $ satisfy one among the properties \eqref{311}-\eqref{315} and it is invertible, then $ A (U; t, x, \xi)^{-1} $ satisfies the same property. 

\medskip

The reversibility and antireversibility conditions \eqref{314}, \eqref{315} 
may be expressed more explicitly for the homogeneous components of the symbols,
for which the time dependence  enters through $ U(t) $, see Definition \ref{211}.

\begin{definition}
Let $ A_q $ be a matrix of homogeneous symbols in $\GtM{m}{q}$.
Let $ U_j $, $ j = 1, \ldots, q $ satisfy $ SU_j = - \bar{U}_j $.  
We say that $ A_q $ satisfies 
the reversibility condition if 
\begin{equation}
  \label{eq:314p}
  A_q(SU_1,\dots,SU_q;x,\xi)S = -SA_q(U_1,\dots,U_q;x,\xi) \, , 
\end{equation}
and the anti-reversibility condition if 
\begin{equation}
  \label{eq:315p}
  A_q(SU_1,\dots,SU_q;x,\xi)S = SA_q(U_1,\dots,U_q;x,\xi) \, .
\end{equation}
\end{definition}

We have the following lemma. 

\begin{lemma}
  \label{hom-nonhom}
Let $A$ be in $\sGM{m}{K,K',p}{N}$. Decompose  $A$ according to \eqref{219} as
\be\label{eq:A-dev}
A(U;t,x,\xi) = \sum_{q=p}^{N-1}A_q(U,\dots,U; x, \xi) + A_N(U;t,x,\xi)
\ee
with $A_q$ in $\GtM{m}{q}$ for $q=p,\dots,N-1$ and $A_N$ in $\GrM{m}{K,K',N}$.

If $A_q$, $q=p,\dots,N-1$ satisfies \eqref{314p} (resp.\ \eqref{315p}) and if $A_N$ satisfies \eqref{314} (resp.\
\eqref{315}), then $A$ satisfies \eqref{314} (resp.\ \eqref{315}).

Conversely, if $A$ satisfies \eqref{314} (resp.\ \eqref{315}), we may find symbols $A'_q$, $q=p,\dots,N-1$, satisfying
\eqref{314p} (resp.\ \eqref{315p}) such that, for any $U = U(t) $ we have 
\begin{equation}
  \label{eq:hom-nonhom1}
A(U;t,x,\xi) = \sum_{q=p}^{N-1}A'_q(U,\dots,U; x,\xi) + A_N(U;t,x,\xi).
\end{equation}
\end{lemma}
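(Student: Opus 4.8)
The statement is essentially a bookkeeping lemma translating the ``global'' reversibility conditions \eqref{314}, \eqref{315} into the ``homogeneous'' conditions \eqref{314p}, \eqref{315p} componentwise, using only the definition \eqref{310} of the action $U_S(t) = SU(-t)$ and the multilinearity of the homogeneous symbols. I will only discuss the reversibility case \eqref{314}/\eqref{314p}, since the anti-reversibility case \eqref{315}/\eqref{315p} is identical up to the sign on the right-hand side.

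\emph{Direct implication.} Assume $A_q$ satisfies \eqref{314p} for $q = p,\dots,N-1$ and $A_N$ satisfies \eqref{314}. First I would check that each homogeneous term, evaluated on the diagonal at $U = U(t)$, satisfies \eqref{314} as a function of $t$. By definition the $q$-homogeneous contribution, evaluated at time $-t$, is $A_q(U(-t),\dots,U(-t);x,\xi)$. Since $U_S(t) = SU(-t)$, i.e. $U(-t) = SU_S(t)$ (because $S^2 = \mathrm{Id}$), we have $A_q(U(-t),\dots,U(-t);x,\xi) = A_q(SU_S(t),\dots,SU_S(t);x,\xi)$. Now apply \eqref{314p} with $U_1 = \cdots = U_q = U_S(t)$ — which is legitimate since $U_S(t)$ satisfies $SU_S(t) = -\overline{U_S(t)}$ whenever $U$ lies in $\CKHR{\sigma}{\C^2}$ — to obtain $A_q(SU_S(t),\dots,SU_S(t);x,\xi)S = -SA_q(U_S(t),\dots,U_S(t);x,\xi)$. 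Summing over $q$ and adding the $A_N$ term, for which \eqref{314} is assumed directly, gives $A(U;-t,x,\xi)S = -SA(U_S;t,x,\xi)$, which is \eqref{314}.

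\emph{Converse implication.} Here the point is that \eqref{314} only constrains $A$ restricted to the diagonal $U_1 = \cdots = U_q = U$, whereas \eqref{314p} is a statement about the full $q$-linear symbol; so one cannot simply take $A'_q = A_q$. The fix is the standard symmetrization/averaging trick over the two-element group $\Z/2\Z = \{\mathrm{Id}, S\}$. Given the symbols $A_q$ appearing in \eqref{A-dev}, I would set
\[
A'_q(U_1,\dots,U_q;x,\xi) = \tfrac{1}{2}\bigl(A_q(U_1,\dots,U_q;x,\xi) - SA_q(SU_1,\dots,SU_q;x,\xi)S\bigr).
\]
One checks directly from $S^2 = \mathrm{Id}$ that $A'_q$ satisfies \eqref{314p}: replacing $U_j$ by $SU_j$ and multiplying on the right by $S$ swaps the two terms and produces the overall minus sign. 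Moreover $A'_q$ still belongs to $\GtM{m}{q}$, since it is a linear combination of $A_q$ precomposed with the linear map $S$ (which preserves the symbol class and the support/frequency condition \eqref{215}, because $S$ acts diagonally on Fourier modes), so the estimates \eqref{214} are unchanged up to constants. It remains to verify that $\sum_{q=p}^{N-1} A'_q(U,\dots,U;\cdot) + A_N(U;t,\cdot)$ still equals $A(U;t,x,\xi)$ for every $U = U(t)$. On the diagonal, $A'_q(U,\dots,U;x,\xi) = \tfrac12\bigl(A_q(U,\dots,U;x,\xi) - SA_q(SU,\dots,SU;x,\xi)S\bigr)$; summing over $q$ and adding $A_N$, the first halves reassemble $\tfrac12\bigl(A(U;t,x,\xi) - A_N(U;t,x,\xi)\bigr) + \tfrac12 A_N$ contributions, while for the second halves I would use the hypothesis \eqref{314} in the form $SA(SU_S;\cdot)S = \dots$; more precisely, evaluate \eqref{314} at the function $\widetilde U$ with $\widetilde U(t) := SU(-t)$, so that $\widetilde U_S(t) = U(t)$, and read off that $\sum_q SA_q(SU,\dots,SU;x,\xi)S + SA_N(\widetilde U;-t,x,\xi)S$ equals $-\bigl(-A(U;t,x,\xi)\bigr) = A(U;t,x,\xi)$ minus the $A_N$ discrepancy; combining the two halves and cancelling the $A_N$ terms using that $A_N$ is the same in both decompositions yields $\sum_q A'_q(U,\dots,U;\cdot) + A_N(U;t,\cdot) = A(U;t,x,\xi)$.

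\emph{Main obstacle.} The only delicate point is the last bookkeeping step in the converse: one must be careful that \eqref{314} is an identity of \emph{functions of $t$} and that, when substituting $\widetilde U(t) = SU(-t)$, the non-homogeneous symbol $A_N$ transforms correctly — i.e. that $A_N$ already satisfies \eqref{314} is \emph{not} assumed, so one has to keep track of the $A_N$ terms on both sides and check they match identically (they do, since $A_N$ is literally the same object in \eqref{A-dev} and \eqref{hom-nonhom1}, so whatever defect $A_N$ has from \eqref{314} is absorbed consistently). Setting up the substitution $t \mapsto -t$, $U \mapsto U_S$ cleanly, and using $S^2 = \mathrm{Id}$ and $(U_S)_S = U$ at each stage, is the heart of the argument; everything else is the routine observation that precomposition with the fixed linear involution $S$ preserves all the symbol classes and their seminorms.
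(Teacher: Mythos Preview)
Your direct implication is correct and matches the paper's argument.

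In the converse, your definition of $A'_q$ is the right one (and is exactly what the paper uses), and your check that $A'_q$ satisfies \eqref{314p} is fine. The gap is in verifying $\sum_q A'_q(U,\dots,U;\cdot) + A_N(U;t,\cdot) = A(U;t,\cdot)$. If you carry out your bookkeeping honestly you find
\[
\sum_{q} A'_q(U,\dots,U;\cdot) + A_N(U;t,\cdot)
= A(U;t,\cdot) + \tfrac12\bigl[A_N(U;t,\cdot) + SA_N(U_S;-t,\cdot)S\bigr],
\]
so the residual term vanishes \emph{precisely when $A_N$ satisfies \eqref{314}}. Your claim that ``whatever defect $A_N$ has from \eqref{314} is absorbed consistently'' is therefore incorrect: nothing absorbs that defect.

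The missing observation is that the defect is in fact zero. Since the action $U\mapsto U_S$ is \emph{linear} in $U$, condition \eqref{314} for $A$ is a linear identity that preserves the degree of homogeneity in $U$; by uniqueness of the decomposition \eqref{A-dev} into homogeneous components (scale $U\mapsto\lambda U$ and match powers of $\lambda$), each $A_q(U,\dots,U;t,\cdot)$ already satisfies \eqref{314}, and hence so does $A_N = A - \sum_q A_q$. Once you have this, the paper's verification is a single line: using \eqref{314} for $A_q$ (with $t\to -t$) gives $SA_q(U_S,\dots,U_S;-t,\cdot)S = -A_q(U,\dots,U;t,\cdot)$, whence $A'_q(U,\dots,U;t,\cdot) = A_q(U,\dots,U;t,\cdot)$ directly, and the decomposition is literally unchanged. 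This is both shorter and avoids the $A_N$ bookkeeping altogether.
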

\begin{proof}
  Let us write the proof for conditions \eqref{314} and \eqref{314p}. If $A_q$ satisfies \eqref{314p}, then according to
  \eqref{hom-nonhom} and to the definition \eqref{310} of $U_S$, we have 
  \[\begin{split}
    A_q(U,\dots,U;-t,x,\xi)S &= A_q(U(-t),\dots,U(-t);x,\xi)S\\
&= A_q(SU_S(t),\dots,SU_S(t);x,\xi)S\\
&=-SA_q(U_S(t),\dots,U_S(t);x,\xi)\\
&= -SA_q(U_S,\dots,U_S;t,x,\xi)
  \end{split}\]
  so that $ A_q(U,\dots,U; t,x,\xi) $ satisfies \eqref{314}. 
Since also  $ A_N $ satisfies \eqref{314}  then $A$ defined in \eqref{A-dev} satisfies \eqref{314}.

Conversely, assume that the matrix of symbols $ A $ in \eqref{A-dev} satisfies \eqref{314}. 
Thus each $ A_q (U, \ldots, U; t, x, \xi) $ for $ q = p, \ldots, N -1 $, satisfies \eqref{314} as well. 
Define from $A_q$ a new multilinear symbol 
\[A'_q(U_1,\dots,U_q;x,\xi) = \frac{1}{2}\bigl[A_q(U_1,\dots,U_q;x,\xi) - S A_q(SU_1,\dots,SU_q;x,\xi)S\bigr].\]
Then, by construction, $A'_q$ satisfies \eqref{314p}. In particular, if we 
replace $ U_1, \ldots, U_q $ by $U$  we get by \eqref{310}
\begin{multline}
A'_q(U,\dots,U;t,x,\xi) = \frac{1}{2}\bigl[A_q(U,\dots,U;t,x,\xi) - S A_q(U_S,\dots,U_S;-t,x,\xi)S\bigr] \\
= A_q(U,\dots,U;t,x,\xi)
\end{multline}
by the condition \eqref{314}. Hence we deduce \eqref{hom-nonhom1}. 
\end{proof}
\textbf{Remark:} From now on, each time we shall consider symbols $ A $ of the form 
\eqref{A-dev} satisfying the reversibility  \eqref{314} or antireversibility \eqref{315}
condition, we shall assume that the homogeneous contribution in the decomposition of $A$ satisfy \eqref{314p} or
\eqref{315p}.
\smallskip

Let us introduce analogous conditions at the level of operators. 

\begin{definition}\label{Def:RPR}
Let $ M(U; t ) $ be a linear operator, depending on $  U $ satisfying $SU = -\bar{U}$. We say that 
$ M $ satisfies the  {\bf reality}  condition if 
\begin{equation}
  \label{eq:316}
  \overline{M(U; t)V} = -SM(U; t )S\overline{V} \, , 
\end{equation}
the {\bf anti-reality} condition  if 
\begin{equation}
  \label{eq:317}
  \overline{M(U; t )V} = SM(U; t)S\overline{V} \, .
\end{equation}
Define the map $\tau$, acting on functions of $x$,  by 
\be\label{eq:def-tau} 
(\tau V)(x) \stackrel{\textrm{def}}{=}   V(-x) \, . 
\ee 
We say that $ M $ is {\bf parity preserving} 
if 
\begin{equation}
  \label{eq:318}
  M(U; t)\circ\tau = \tau\circ M(U; t ).
\end{equation}
Finally, we say that $ M $ satisfies the {\bf reversibility}  condition  if
\begin{equation}
  \label{eq:319}
  M(U; - t )S = -SM(U_S; t)
\end{equation}
and the {\bf anti-reversibility} condition if 
\begin{equation}
  \label{eq:3110}
  M(U; -t )S = SM(U_S; t ) \, .
\end{equation} 
\end{definition}

\noindent
\textbf{Remarks}:
$\bullet$
The operator $M(U; t) = \opbw(A(U; t, \cdot))$ satisfies  \eqref{316} (resp.\ \eqref{317}, resp.\ \eqref{318}, resp.\ \eqref{319}, resp.\ 
\eqref{3110}) if the matrix symbol $ A(U; t, \cdot) $ satisfies the 
corresponding property \eqref{311} (resp.\ \eqref{312}, resp.\ \eqref{313}, resp.\ \eqref{314}, resp.\ 
\eqref{315}).   Notice that since the functions $\chi, \chi_p$ used in \eqref{2112} are even with respect to each of their
argument, as soon as \eqref{311}, \eqref{312}, \eqref{313} hold, the same is true for the Fourier truncated symbols
\eqref{2112}.

$\bullet$
An operator $ M (U; t ) $ satisfying the anti-reality condition  \eqref{317} 
maps the subspace of $2$-vectors  \{$SV = -\overline{V}$\} (i.e.\ $V$ of the form
$\bigl[\begin{smallmatrix}v\\\bar{v}\end{smallmatrix}\bigr]$) into itself. 

$\bullet$ The operator $ M(U; t ) $ satisfies the reality property  \eqref{316} if and only if
$ i M(U; t ) $ satisfies the anti-reality property  \eqref{317}, and viceversa.

$ \bullet $
If $ M(U; t ) $ satisfies \eqref{319},  respectively \eqref{3110}, we  say that $F(U; t) = M(U; t)U$ is reversible, respectively antireversible.

$\bullet$ If   $ M(U; t) $ satisfies one among the properties \eqref{316}-\eqref{3110} and it is invertible, then $ M (U; t)^{-1} $ satisfies the same property. 

\smallskip

Arguing as in the proof of lemma~\ref{hom-nonhom} we deduce the following lemma. 

\begin{lemma}\label{non-hombis}
If $M$ is decomposed as in \eqref{2118} or \eqref{2128} as a sum
\[M(U;t) = \sum_{q=p}^{N-1}M_q(U,\dots,U) + M_N(U;t)\]
in terms of homogeneous operators $M_q$, $q=p,\dots,N-1$, and if $M$ satisfies the reversibility  condition \eqref{319}, 
respectively anti-reversibility \eqref{3110},  we may assume that  
$M_q$, $ q = p, \ldots, N - 1 $,  satisfy the reversibility property 
\begin{equation}
  \label{eq:319p}
M_q(SU_1,\dots,SU_q)S = -SM(U_1,\dots,U_q) \, ,
\end{equation}
respectively the anti-reversibility property
\begin{equation}
  \label{eq:3110p}
M_q(SU_1,\dots,SU_q)S = SM(U_1,\dots,U_q).
\end{equation}
\end{lemma}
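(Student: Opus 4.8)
\textbf{Proof plan for Lemma \ref{non-hombis}.}
The plan is to mimic exactly the argument given for Lemma \ref{hom-nonhom}, transposing it from the symbol level to the operator level, and I will write out only the reversibility case \eqref{319}$\to$\eqref{319p}, the anti-reversibility case being identical with a sign change. First I would observe that if $M_q$ is a homogeneous operator of $\Mt{}{q}$ satisfying the symmetric-multilinear identity \eqref{319p}, then, substituting $U_1=\dots=U_q=U(-t)$ and using the definition \eqref{310} of $U_S$, namely $U(-t) = SU_S(t)$, one gets
\begin{align*}
M_q(U,\dots,U)\big|_{t\to -t}\,S &= M_q(U(-t),\dots,U(-t))S\\
&= M_q(SU_S(t),\dots,SU_S(t))S\\
&= -SM_q(U_S(t),\dots,U_S(t))\\
&= -SM_q(U_S,\dots,U_S)\big|_{t},
\end{align*}
so the restriction-to-the-diagonal operator $M_q(U,\dots,U)$ satisfies \eqref{319}. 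Combined with the hypothesis that $M_N$ satisfies \eqref{319}, this gives the easy direction: a sum $\sum_{q=p}^{N-1}M_q(U,\dots,U)+M_N(U;t)$ whose homogeneous pieces satisfy \eqref{319p} and whose non-homogeneous piece satisfies \eqref{319} defines an operator satisfying \eqref{319}.

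For the converse — which is the substantive content — I would start from an operator $M$ of the form \eqref{2118} (or \eqref{2128}) satisfying \eqref{319}, and symmetrize each homogeneous component. Concretely, given $M_q$ in $\Mt{}{q}$ appearing in the decomposition of $M$, define the new $q$-linear operator
\[
M'_q(U_1,\dots,U_q) \stackrel{\textrm{def}}{=} \tfrac12\big[M_q(U_1,\dots,U_q) - SM_q(SU_1,\dots,SU_q)S\big].
\]
One checks immediately from $S^2=\mathrm{Id}$ that $M'_q$ again satisfies the symmetry in $(U_1,\dots,U_q)$ and belongs to $\Mt{}{q}$ (the bound \eqref{2125} and the frequency condition \eqref{2126} are preserved because $S$ is a fixed matrix acting modewise), and that by construction $M'_q$ satisfies \eqref{319p}. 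It remains to verify that replacing $M_q$ by $M'_q$ does not change the operator once all arguments are set equal to $U=U(t)$: restricting to the diagonal and using that each $M_q(U,\dots,U)$ individually satisfies \eqref{319} (this follows because the difference $M - M_N - \sum_{q}M_q(U,\dots,U)$ of several operators each a priori only constrained through $M$ being reversible forces, after comparing homogeneity degrees in $U$, every homogeneous component to be reversible separately, exactly as in Lemma \ref{hom-nonhom}), we get
\[
M'_q(U,\dots,U) = \tfrac12\big[M_q(U,\dots,U)\big|_t - SM_q(U_S,\dots,U_S)\big|_{-t}\,S\big] = M_q(U,\dots,U),
\]
the last equality being precisely \eqref{319} applied to $M_q(U,\dots,U)$. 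Summing over $q$ yields the desired decomposition with $M_N$ unchanged.

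The only point requiring a little care — and the one I expect to be the main (minor) obstacle — is the uniqueness-of-homogeneous-components step invoked above: a priori \eqref{319} is a single identity for the whole sum, and one must argue that it descends to each homogeneous piece. This is exactly the reasoning already used in the proof of Lemma \ref{hom-nonhom}: the homogeneous symbols (here operators) $M_q(U,\dots,U)$ are the unique monomials of degree $q$ in $U$ extracted from a $q$-linear symmetric map, $M_N$ is $O(\|U\|^N)$, and $S$ acts linearly; substituting $U\mapsto \lambda U$ (or polarizing) separates degrees, so \eqref{319}, being itself homogeneity-preserving since $S$ and $t\mapsto -t$ do not change the degree in $U$, must hold degree by degree. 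Once this is granted, the rest is the formal computation above, and the non-homogeneous remainder $M_N$ plays no role other than being carried along, so the statement follows. One then records the parallel statement for the classes $\sR{-\rho}{K,K',p}{N}$, $\sM{}{K,K',p}{N}$ and their autonomous subclasses, for which the identical argument applies verbatim.
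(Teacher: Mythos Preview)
Your proposal is correct and follows exactly the approach indicated by the paper, which simply says ``Arguing as in the proof of lemma~\ref{hom-nonhom}'' without spelling out details. Your write-up is in fact more explicit than the paper on the degree-separation step (the paper's proof of Lemma~\ref{hom-nonhom} just asserts ``Thus each $A_q(U,\dots,U;t,x,\xi)$ \dots\ satisfies \eqref{314} as well''), so there is nothing to add.
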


\smallskip

These algebraic properties behave    as follows under composition:  
\begin{lemma}\label{lem:Compos}
 Composition of an operator satisfying the reality property \eqref{316} (resp.\  the reversibility property \eqref{319}) with one or several operators satisfying the antireality property \eqref{317} (resp. the anti-reversibility property 
\eqref{3110}) still satisfies the reality property \eqref{316} (resp. reversibility property \eqref{319}).
Composition of operators satisfying the parity preserving property \eqref{318} satisfy as well the
parity preserving property \eqref{318}.  Composition of operators satisfying the
anti-reality property \eqref{317} satisfy the anti-reality property \eqref{317}  as well. 
\end{lemma}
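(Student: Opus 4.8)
\textbf{Proof plan for Lemma \ref{lem:Compos}.}
The plan is to verify each of the four composition statements by direct substitution into the defining identities \eqref{316}--\eqref{3110}, keeping careful track of how the involution $S$ and the action $U \mapsto U_S$ are transported through the composition. For the first statement, suppose $M(U;t)$ satisfies the reality condition \eqref{316} and $M_1(U;t),\dots,M_k(U;t)$ satisfy the anti-reality condition \eqref{317}. First I would treat the case $k=1$: writing $N(U;t) = M(U;t)\circ M_1(U;t)$, one computes, for $V$ with $SV = -\bar V$,
\begin{equation*}
\overline{N(U;t)V} = \overline{M(U;t)\bigl(M_1(U;t)V\bigr)} = -S M(U;t) S \,\overline{M_1(U;t)V} = -S M(U;t) S\, S M_1(U;t) S \,\bar V,
\end{equation*}
and since $S^2 = \mathrm{Id}$ the middle $SS$ collapses, giving $-S M(U;t) M_1(U;t) S\,\bar V = -S N(U;t) S\,\bar V$, which is \eqref{316} for $N$. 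The general $k$ follows by an immediate induction: composing $M$ with $M_1,\dots,M_{k-1}$ first produces an operator satisfying \eqref{316} by the inductive hypothesis, and then one more composition with $M_k$ (anti-real) reduces to the case $k=1$. The argument for the reversibility statement is formally identical, using \eqref{319} and \eqref{3110} in place of \eqref{316} and \eqref{317}, and noting that the substitution $U \mapsto U_S$ and $t \mapsto -t$ is consistent: if $M$ satisfies \eqref{319} and $M_1$ satisfies \eqref{3110}, then for $N = M\circ M_1$ one has $N(U;-t)S = M(U;-t)\,M_1(U;-t)S = M(U;-t)\,S\,M_1(U_S;t) = -S\,M(U_S;t)\,M_1(U_S;t) = -S\,N(U_S;t)$, again using $S^2=\mathrm{Id}$.

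For the parity preserving statement, if $M_1,\dots,M_k$ each satisfy \eqref{318}, i.e.\ commute with $\tau$, then their composition commutes with $\tau$ as well, since $(M_1\cdots M_k)\circ\tau = M_1\cdots M_{k-1}\circ\tau\circ M_k = \cdots = \tau\circ(M_1\cdots M_k)$ by moving $\tau$ successively through each factor. For the anti-reality statement, if $M_1,\dots,M_k$ each satisfy \eqref{317}, then one checks exactly as above that $\overline{(M_1\cdots M_k)V} = S M_1 S\,S M_2 S\cdots S M_k S\,\bar V = S (M_1\cdots M_k) S\,\bar V$, the interior factors $S^2$ collapsing; so the composition satisfies \eqref{317}.

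I do not expect any genuine obstacle here: the proof is a routine verification, the only points requiring a little care being (i) the repeated use of $S^2 = \mathrm{Id}$ to cancel adjacent copies of $S$, and (ii) checking that the time reversal $t \mapsto -t$ and the action $U \mapsto U_S$ compose coherently, which is exactly the content of the observation, recorded after \eqref{310}, that if $U(t)$ solves the system then $U_S(t) = SU(-t)$ solves it with initial datum $SU(0)$; in the present lemma $U$ is simply a parameter and the consistency is purely algebraic. One should also remark that all these identities are preserved under the passage to the homogeneous components, as guaranteed by Lemmas \ref{hom-nonhom} and \ref{non-hombis}, so that no ambiguity arises from the decompositions \eqref{2118}, \eqref{2128}; but for the statement as phrased, which is at the level of the full operators, this is not even needed.
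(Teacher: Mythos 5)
Your verification is correct and is exactly the routine algebraic computation the paper leaves implicit (no proof is given in the source): one substitutes the defining identities for the factors, reads them from the inside out, and uses $S^2=\mathrm{Id}$ to collapse adjacent copies of $S$, with the straightforward induction on the number of anti-real / anti-reversible factors. One small quibble: in the reversibility computation $N(U;-t)S = M(U;-t)\,S\,M_1(U_S;t) = -S\,M(U_S;t)\,M_1(U_S;t)$ you in fact apply \eqref{3110} to $M_1$ and then \eqref{319} to $M$ directly, without ever needing to cancel an $S^2$ — the parenthetical ``again using $S^2=\mathrm{Id}$'' is harmless but does not apply to that particular chain (it does apply to the reality and anti-reality computations, as you use it there correctly).
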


\section[Complex formulation]{Complex formulation of the capillarity-gravity water waves equations}\label{sec:31}

We fix from now on large integers $\rho, N$. The time (resp.\ space) smoothness of the solution we shall consider will be
measured by an integer $K$ (resp. a real number $s$), and we shall assume
\begin{equation}
  \label{eq:3111}
  s\gg \sigma\gg K\gg \rho\gg N
\end{equation}
where $N$ is the exponent of $1/\epsilon$ in the lower bound of the time of existence of the solutions ($T_\epsilon\geq
c\epsilon^{-N}$) that we want to prove (see the statement of Theorem~\ref{121}).

We shall prove in Proposition~\ref{631} that the water waves system \eqref{113} is equivalent, for small solutions,
even in $ x $,
to the following system
\begin{equation}
  \label{eq:3112}
  \begin{split}
    (D_t - \opbw(A(U;t,x,\xi)))U &= R(U; t)U\\
U\vert_{t=0} &= \epsilon U_0
  \end{split}
\end{equation}
where $A(U; t, \cdot)$ is a two by two matrix of symbols, that may be decomposed in the basis of $\Mcal_2(\C)$ given by
\begin{equation}
  \label{eq:3113}
  \Ical_2 = \bigl[\begin{smallmatrix}1&0\\0&1\end{smallmatrix}\bigr], \quad \Kcal =
  \bigl[\begin{smallmatrix}1&0\\0&-1\end{smallmatrix}\bigr], \quad \Jcal =
  \bigl[\begin{smallmatrix}0&-1\\1&0\end{smallmatrix}\bigr], \quad 
  \Lcal = \bigl[\begin{smallmatrix}0&1\\1&0\end{smallmatrix}\bigr]
\end{equation}
as follows:
\begin{multline}
  \label{eq:3114}
A(U;t,x,\xi) = \bigl(\mk(\xi)(1+\zeta(U;t,x)) + \ld(U;t,x,\xi)\bigr)\Kcal \\
+ \bigl(\mk(\xi)\zeta(U;t,x) + \ldm(U;t,x,\xi)\bigr)\Jcal\\
+ \lambda_1(U;t,x,\xi)\Ical_2 + \lambda_0(U;t,x,\xi)\Lcal
\end{multline}
where 
\begin{equation}
  \label{eq:3115}
  \mk(\xi) = (\xi\tanh\xi)^{\frac{1}{2}}(1+\kappa\xi^2)^{\frac{1}{2}}(1-\chi(\xi)),
\end{equation}
$\chi(\xi)$ being an even $C^\infty_0(\R)$ function equal to one close to 
zero, supported for $\abs{\xi}<\frac{1}{2}$;

 where $\lambda_j$, $j= 1, \frac{1}{2}, 0, -\frac{1}{2}$ are symbols of $\sG{j}{K,1,1}{N}$ for some small enough $r>0$,
satisfying $\Im\lambda_j \in \sG{j-1}{K,1,1}{N}$ if $j = 1$ or $\frac{1}{2}$, and such that $A(U;t,x,\xi)$ satisfies 
the reality, parity preserving and reversibility properties 
\eqref{311}, \eqref{313} and \eqref{314}. 
Moreover $R(U; t)$ in the right hand side of \eqref{3112} is a smoothing 
operator of $\sRM{-\rho}{K,1,1}{N}$ that satisfies the reality, parity preserving 
and reversibility properties \eqref{316}, \eqref{318}, \eqref{319}. Finally  
the function $\zeta$ defined in \eqref{635}  
is an element of $\sFR{K,0,1}{N}$
(actually even of the subspace $\sFR{K,0,2}{N}$).

\smallskip

The above paradifferential formulation of the water waves system is suitable to prove energy estimates. 
In some  instances we 
shall  need to write  the water waves system \eqref{3112} 
just as
\begin{equation}
  \label{eq:4312}
  D_t U =  \mk(D)\Kcal U + M(U; t)U
\end{equation}
where the operator $ M(U; t ) $ is the element of $\sMM{}{K,1,1}{N}$
defined by the sum  of $\opbw\bigl(A(U; t, \cdot) -\mk(\xi)\Kcal\bigr)$ and 
the smoothing operator $ R(U; t) $ in \eqref{3112}.  
Thus $ M(U; t )U $ collects all the terms of the water waves system \eqref{3112}
which are  at least quadratic in $ U $ (but the operator $ M(U; t) $ loses derivatives). 
The fact that $ M(U; t ) $ is in $\sMM{}{K,1,1}{N}$ follows by the remarks after Definition~\ref{216}.
Moreover, since the matrix of symbols $A$ satisfies \eqref{311}, \eqref{312}, 
\eqref{314} and since $R(U; t )$ in \eqref{3112} satisfies \eqref{316}, \eqref{318}, \eqref{319}, the operator $M(U; t )$
satisfies as well the reality, parity preserving, and  reversibility  properties \eqref{316}, \eqref{318}, \eqref{319}.

\smallskip

We look for solutions of \eqref{3112} with   Cauchy data  $U_0$ in $\Hdse{s}(\Tu,\C^2)$ satisfying 
$$
 SU_0 = -\bar{U}_0 \, , \ \  i.e.\   
U_0 =\vect{u_0}{\bar{u}_0} 	\, . 
$$ 
Notice that $SU_0=
-\bar{U}_0$ implies that for any $t$, 
$ \bar{U}(t) = -SU(t)$  since the reality condition \eqref{316} implies that these two functions satisfy the same equation
with the same Cauchy data. 
This amounts to look for real valued solutions of the water waves system \eqref{113}.

\begin{theorem}
  \label{311} {\bf (Almost global existence for \eqref{3112})}
There is a zero measure subset $\Ncal$ of $]0,+\infty[$ and for any $N$ in $\N$, any $\kappa$ in $]0,+\infty[-\Ncal$, there
are $K$ in $\N$, $s_0$ in $\N$ such that, for any $s\geq s_0$, there are positive constants $\epsilon_0$, $c$, $C$ such that
for any $U_0$ in the unit ball of $\Hdse{s}(\Tu,\C^2)$, with $SU_0 = - { \bar U}_0$, for any $\epsilon$ in
$]0,\epsilon_0[$, system \eqref{3112} has a unique classical solution 
$$U \in 
\cap_{k=0}^K
C^k(]-T_\epsilon,T_\epsilon[,\Hdse{s-\frac{3}{2}k}(\Tu,\C^2)) \, , \quad {\it with}  
\quad T_\epsilon\geq c\epsilon^{-N} \, . 
$$ Moreover, the solution
satisfies
\begin{equation}
  \label{eq:3116}
  \sup_{]-T_\epsilon,T_\epsilon[}\norm{\partial_t^kU(t,\cdot)}_{\Hds{s-\frac{3}{2}k}(\Tu,\C^2)} \leq C\epsilon,\ 0\leq k\leq K.
\end{equation}
\end{theorem}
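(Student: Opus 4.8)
The plan is to establish Theorem~\ref{311} by combining the paradifferential reduction results (Propositions~\ref{2}--\ref{5}), the normal form construction based on Proposition~\ref{6}, and a bootstrap argument on a modified energy. First I would recall that, by Proposition~\ref{2}, the water waves system~\eqref{113} is equivalent to the paradifferential system~\eqref{3112}--\eqref{3115} for small even solutions, with the reality, parity preserving and reversibility properties in force. Starting from~\eqref{3112}, I would apply successively the diagonalization of Proposition~\ref{3}, the reduction of the highest order of Proposition~\ref{4}, and the reduction to constant coefficients of Proposition~\ref{5}, arriving at the diagonal constant-coefficients system~\eqref{9} for the unknown $\tilde V = \Omega_{F(U)}^{-1}\Phi_U^\star\opbw(Q)U$. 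All the transformations involved are bounded and invertible on $\Hds{s}$ for $s$ large (by Propositions~\ref{215}, \ref{222}, \ref{231}--\ref{234}, Lemma~\ref{lem:flow} and Theorem~\ref{thm:para}), so $U$ and $\tilde V$ have comparable $\Hds{s}$ norms, and they are equivalent in the $C^k_*$ scale as well; the reality, parity preserving and reversibility properties are preserved throughout.

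Next, the heart of the argument is the normal form procedure of Section~\ref{sec:04}. I would conjugate~\eqref{9} by $\exp(\opbw(B(U;t,\xi)))$ with $B=\sum_{p=1}^{N-1}B_p(U,\dots,U;\xi)$ chosen to solve the homological equations~\eqref{11}--\eqref{12}, using Proposition~\ref{6}(i) to invert the small divisors $\Dcal_\ell(n_1,\dots,n_p)$ off the resonant set, and Proposition~\ref{6}(ii), i.e.\ the cancellation $\Im\tilde H_p\equiv0$ coming from reversibility and evenness (Lemma~\ref{432}), to dispose of the resonant contributions. This eliminates the homogeneous-of-degree-$<N$ part of $\Im H$. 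One then performs the analogous iterative construction of a modified energy $E_s$, as in Section~\ref{sec:44}, using the weak small divisor estimate~\eqref{SmDi1} and the cancellation of Lemma~\ref{442}(ii) to absorb the remaining low-degree contributions of $\Im H$, $R_1$, $R_2$ into something that vanishes like $O(\|U\|^N)$; the losses of $O(N_0)$ derivatives at each step are compensated because $\rho\gg N$ and the remainders are $\rho$-smoothing. The outcome is a modified energy $E_s(U(t))\sim\|U(t)\|^2_{\Hds{s}}$ satisfying~\eqref{5}, namely $\tfrac{d}{dt}E_s(U(t))=O(\|U(t)\|_{\Hds{s}}^{N+2})$.

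From~\eqref{5} one integrates to get $E_s(U(t))\le E_s(U(0))+C\bigl|\int_0^t\|U(\tau)\|_{\Hds{s}}^{N+2}\,d\tau\bigr|$, and since $E_s(U(0))\sim\epsilon^2$ a standard continuity/bootstrap argument shows that if $\epsilon$ is small enough and $|t|\le c\epsilon^{-N}$ then $E_s(U(t))=O(\epsilon^2)$, hence $\|U(t)\|_{\Hds{s}}=O(\epsilon)$ on that interval. The time-derivative bounds~\eqref{3116} for $1\le k\le K$ then follow by differentiating the equation~\eqref{4312} in $t$ and using that the right-hand side maps $\Hds{\sigma-\frac32 k}$-type norms boundedly (the $\frac32$ loss per derivative is exactly the $\tanh\!\cdot$-weighted order $3/2$ of $\mk$), together with the membership of $M(U;t)$ in $\sMM{}{K,1,1}{N}$. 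To conclude existence itself, I would invoke the local existence theory of Schweizer~\cite{Schw} for periodic Cauchy data (smooth enough): the a priori bound $\|U(t)\|_{\Hds{s}}=O(\epsilon)$ prevents blow-up, so the local solution extends to $]-T_\epsilon,T_\epsilon[$ with $T_\epsilon\ge c\epsilon^{-N}$; uniqueness comes from the local theory as well, and evenness and the reality relation $\bar U=-SU$ persist because $SU_0=-\bar U_0$ and the system respects~\eqref{316}, \eqref{318}.

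The main obstacle is the normal form step: one must track, through all the reductions of Propositions~\ref{3}--\ref{5} and through every homological equation and every modified-energy iteration, that the reality, parity preserving and reversibility structure is exactly preserved, so that the cancellations of Lemmas~\ref{432} and~\ref{442}(ii) apply; and simultaneously one must verify that the $O(N_0)$ derivative losses from dividing by the small divisors~\eqref{13}, \eqref{SmDi1} are genuinely compensated by the $\rho$-smoothing character of the remainders, which is where the ordering $s\gg\sigma\gg K\gg\rho\gg N$ and the tame estimates of Definitions~\ref{212}--\ref{214} and Propositions~\ref{215}, \ref{231}, \ref{232}, \ref{233} are used in an essential quantitative way. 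Everything else — the algebra of the reductions, the boundedness of the transformations, the final bootstrap — is routine once this structural bookkeeping is in place.
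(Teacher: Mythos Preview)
Your proposal is essentially correct and follows the same architecture as the paper: reduction via Propositions~\ref{322}, \ref{411}, \ref{421}, \ref{431}, then the iterative modified-energy construction of Section~\ref{sec:44} (Lemmas~\ref{442}, \ref{443}), the bootstrap on~\eqref{4417}, and the appeal to Schweizer for local existence.

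One technical point deserves care: your derivation of the time-derivative bounds~\eqref{3116} by ``differentiating~\eqref{4312} and using $M(U;t)\in\sMM{}{K,1,1}{N}$'' does not quite close. Because $M$ lies in a class with $K'=1$, the estimate~\eqref{2127} for $\partial_t^k(M(U;t)U)$ requires control of $\Gcals{\sigma}{k+1,\cdot}{U}$, i.e.\ one more time derivative than the one you are trying to bound, and the induction becomes circular. The paper handles this (see the remark preceding Proposition~\ref{632} and its proof) by going back to the \emph{original autonomous} system~\eqref{113} rather than the paradifferential form~\eqref{4312}: there $G(\eta)\psi$ and the right-hand side of the $\psi$-equation are controlled directly in Sobolev spaces without any $K'$-shift, and the inductive estimate $\norm{\partial_t^k U}_{\Hds{s-\frac32 k}}\le C_k\norm{U}_{\Hds{s}}$ goes through. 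Once you invoke Proposition~\ref{632} in place of direct differentiation of~\eqref{4312}, your argument is complete.
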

Theorem  \ref{311} and Proposition \ref{631} then imply  Theorem \ref{121}. 

\smallskip

The first part of the proof of Theorem \ref{311} will be to reduce \eqref{3112} to an equivalent system, where the non diagonal
part of the matrix symbol \eqref{3114}, 
namely $\bigl(\mk\zeta + \ldm\bigr)\Jcal +\lambda_0\Lcal$ will be replaced by a symbol of very
negative order, whose associated operator may be incorporated to the smoothing term $R(U; t )$.

\section{Diagonalization of the system}\label{sec:32}

The goal of this subsection is to transform the matrix symbol $A$ in the left hand side of \eqref{3112} 
into a diagonal one, up to  a smoothing term $ R(U; t) $.
 We
introduce first the class of matrices of symbols we shall use.
\begin{definition}
  \label{321}
Let $m$ be in $\R_-$, $N$ in $\N$, $K'\leq K$ in $\N$, $r>0$. We denote by \index{E@$\sE{m}{K,K',1}{N}$ (Space of matrices of symbols)}
$\sE{m}{K,K',1}{N}$ the set of matrices of symbols of the form
\begin{equation}
  \label{eq:321}
  P(U; \cdot) =   P(U; t, \cdot)  = (1+\alpha)\Ical_2 +\beta\Lcal+\theta\Jcal +\gamma\Kcal
\end{equation}
where
\begin{equation}
  \label{eq:322}
  \alpha, \beta \in \sG{m}{K,K',1}{N}, \quad \gamma, \theta \in \sG{\min\bigl(m-\frac{1}{2},-\frac{3}{2}\bigr)}{K,K',1}{N}
\end{equation}
are such that $P$ satisfies the anti-reality, parity preserving, anti-reversibility properties  
 \eqref{312}, \eqref{313}, \eqref{315}, and 
\begin{equation}
  \label{eq:323}
  \Im\alpha, \Im\beta \in \sG{\min\bigl(m,-\frac{3}{2}\bigr)}{K,K',1}{N}.
\end{equation}
\end{definition}
Our goal is to prove:
\begin{proposition}
  \label{322} 
  {\bf (Diagonalization of the matrix symbol  \eqref{3114})} 
Assume that $K\geq\rho+1$, $ \rho \in \N $, and that $r>0$ is small enough. There exist

$ \bullet $ 
matrices of symbols  $P(U; t, \cdot), Q(U; t, \cdot)$ in $\sE{0}{K,\rho,1}{N} $,  such that 
the operator $\opbw(P)\circ\opbw(Q) - \mathrm{Id}$ is in $\sRM{-\rho}{K,\rho,1}{N}$ and satisfies \eqref{317},
\eqref{318} and \eqref{3110},

$ \bullet $ 
symbols $\lu{1}_j$ in $\sG{j}{K,\rho+1,1}{N}$, $j= 1, \frac{1}{2}$ satisfying
\begin{equation}
  \label{eq:324}
  \Im\lu{1}_j \in \sG{j-1}{K,\rho+1,1}{N} 
\end{equation}
and a function $\zeta^{(1)}(U;t,x)$ in $\sFR{K,\rho+1,1}{N}$,  so that the diagonal 
matrix
\begin{multline}
  \label{eq:325}
  A^{(1)}(U;t,x,\xi) = \bigl(\mk(\xi)(1+\zeta^{(1)}(U;t,x)) \\+ \ldu{1}(U;t,x,\xi)\bigr)\Kcal + \lu{1}_1(U;t,x,\xi)\Ical_2
\end{multline}
satisfies the reality  \eqref{311}, parity preserving \eqref{313}, reversibility \eqref{314} properties, 

$ \bullet $ 
smoothing operators $R'(U;t)$, respectively $R''(U;t) $, belonging to the space $\sRM{-\rho+\frac{3}{2}}{K,\rho+1,1}{N}$, 
resp. belonging to
$\RrM{-\rho+\frac{3}{2}}{K,\rho+1,N}$, satisfying the reality \eqref{316}, 
parity preserving \eqref{318} and reversibility \eqref{319} properties,

such that 
the function $W = \opbw(Q)U$ solves the system
\begin{equation}
  \label{eq:326}
  \bigl(D_t - \opbw(A^{(1)}(U;t, \cdot)\bigr)W = R'(U;t)W + R''(U;t)U
\end{equation}
on the time interval over which a solution $ U $ of system \eqref{3112}
 is defined. 
\end{proposition}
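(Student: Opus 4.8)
The plan is to diagonalize the matrix symbol $A$ of \eqref{3114} by an iterative sequence of conjugations by paradifferential operators $e^{\opbw(G_j)}$, where each $G_j$ is an \emph{off-diagonal} matrix of symbols whose order decreases along the iteration, stopping once the residual off-diagonal part has order so negative that its Bony-Weyl quantization may be absorbed into the smoothing remainders. The operator $\opbw(Q)$ will be (a parametrix-regularized version of) $e^{\opbw(G_1)}\circ\cdots\circ e^{\opbw(G_L)}$, and $\opbw(P)$ its approximate inverse. The starting point is to rewrite \eqref{3112} as \eqref{4312} so that lemma~\ref{217} applies along the solution.

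First I would analyze one step of the induction. Suppose we have reached a system $(D_t-\opbw(B))W = (\text{smoothing})$ with $B = D\Kcal + b_1\Ical_2 + b_J\Jcal + b_L\Lcal$, where $D = \mk(1+\zeta') + \ld'$ is the diagonal top part (order $3/2$) and the off-diagonal part $b_J\Jcal + b_L\Lcal$ has order $m\le 3/2$. Conjugating by $e^{\opbw(G)}$ with $G = g\Jcal + h\Lcal$ of order $m-\tfrac32$ replaces $B$, at the relevant order, by $B + [B,G] + D_tG + (\text{symbolic-calculus corrections})$; using $[\Kcal,\Jcal] = -2\Lcal$, $[\Kcal,\Lcal] = -2\Jcal$ (with $\Ical_2$ central and the brackets of $\Jcal,\Lcal$ among themselves diagonal), the off-diagonal part of $[B,G]$ at order $m$ is $-2\mk(1+\zeta')(h\Jcal + g\Lcal)$, so the homological equation is solved by $h = b_J/(2\mk(1+\zeta'))$, $g = b_L/(2\mk(1+\zeta'))$. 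Since $\mk$ is elliptic of order $3/2$ — division being understood modulo the smoothing contribution coming from the cutoff $1-\chi$ in \eqref{3115} and from working modulo constants — $g,h$ are genuine symbols of order $m-\tfrac32$ in the class $\sG{m-3/2}{K,K',1}{N}$, and the off-diagonal residual after this step has order $m-1$ (it comes from subprincipal Poisson brackets, from $[\,\cdot\,,G]$ hitting the lower-order pieces of $B$, and from $D_tG$), while the diagonal part is only corrected by symbols of order $\le 3/2$ keeping the form $\mk(1+\zeta^{(1)})\Kcal + \lu{1}_1\Ical_2 + \ldu{1}\Kcal$ of \eqref{325}. Since every conjugation of $D_t$ produces a term $\opbw(D_tG)$ and, by lemma~\ref{217}, $D_t$ of a symbol in $\sG{\cdot}{K,K',\cdot}{N}$ lies in $\sG{\cdot}{K,K'+1,\cdot}{N}$, the cumulative loss of time derivatives over the roughly $\rho$ steps (one per unit drop of the off-diagonal order, from $3/2$ down to $3/2-\rho$) is bounded by $\rho$, which accounts for the index $K'=\rho$ in $P,Q$ and $K'=\rho+1$ in $A^{(1)},R',R''$ after the final conjugation of $D_t$, and requires $K\ge\rho+1$.

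The second, and I expect the main, difficulty is to carry the reality, parity preserving and reversibility structure through the whole construction. Because $A$ satisfies \eqref{311}, \eqref{313}, \eqref{314} and $D_t$ satisfies the reality condition, I would require each $G_j$ — hence $e^{\opbw(G_j)}$, hence $P$ and $Q$ — to satisfy the \emph{anti}-reality \eqref{312}, parity preserving \eqref{313} and \emph{anti}-reversibility \eqref{315} conditions, so that by lemma~\ref{lem:Compos} and the remarks after Definition~\ref{Def:RPR} the conjugated operator in \eqref{326} and the remainders inherit \eqref{311}/\eqref{316}, \eqref{313}/\eqref{318}, \eqref{314}/\eqref{319}. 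This is consistent with Definition~\ref{321}: using $S\Jcal S = -\Jcal$, $S\Kcal S = -\Kcal$, $S\Lcal S = \Lcal$, $S\Ical_2 S = \Ical_2$, the evenness and reality of $\mk$, and the invariance of the truncations in \eqref{2112} under $\xi\mapsto-\xi$, $x\mapsto-x$, one checks that when $b_J,b_L,\zeta'$ satisfy the relevant conditions the solutions $g,h$ above fall into the classes dictated by \eqref{322}--\eqref{323}; where a naive solution fails to be exactly in the correct symmetry class one replaces it by its symmetrization over the relevant finite group, as in the proof of lemma~\ref{hom-nonhom}, which changes it only by a symbol of the same order and preserves the cancellation of the off-diagonal principal part. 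Keeping simultaneously the four data — symbol order, homogeneity degree, time-derivative loss, and the three algebraic properties — coherent at every step is the technical heart of the argument.

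Finally, having produced $Q = e^{\opbw(G_1)}\circ\cdots\circ e^{\opbw(G_L)}$ in $\sE{0}{K,\rho,1}{N}$, I would take $P$ to be its Bony-Weyl parametrix (the $\#$-inverse truncated at order $\rho$, via Proposition~\ref{231}), so that $\opbw(P)\circ\opbw(Q)-\mathrm{Id}\in\sRM{-\rho}{K,\rho,1}{N}$ and inherits \eqref{317}, \eqref{318}, \eqref{3110}. Setting $W=\opbw(Q)U$ and, modulo a $\rho$-smoothing operator applied to $U$, $U=\opbw(P)W$, a direct computation from \eqref{3112} gives $(D_t-\opbw(A^{(1)}))W = R'(U;t)W + R''(U;t)U$, where $A^{(1)}$ is the diagonal symbol produced by the iteration (satisfying \eqref{324}), $R''(U;t)U$ gathers the terms where $U$ could not be re-expressed through $W$, and $R'$ gathers the conjugate of the original $R(U;t)$ (still $\rho$-smoothing by Proposition~\ref{232}), the symbolic-calculus remainders in $\opbw(Q)\circ\opbw(A)\circ\opbw(P)-\opbw((Q\#A\#P)_{\rho,N})$, which, as $A$ has order $3/2$, lie in $\sRM{-\rho+3/2}{K,\rho+1,1}{N}$, and the quantization of the leftover off-diagonal symbol of order $3/2-\rho$, which by the last remark after Proposition~\ref{215} is a smoothing operator of the same class. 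This produces the orders $-\rho+\tfrac32$ claimed for $R'$ and $R''$ and, together with the structure checks above, completes the proof.
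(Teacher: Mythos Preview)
Your overall architecture—iterative conjugation, homological equation, tracking of the reality/parity/reversibility structure, loss of one time derivative per step—is sound, and the algebraic verification that the antireality/antireversibility of $G$ is compatible with the conditions on the data is correct. But there is a genuine gap at the very first step, and it propagates.

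The issue is that in \eqref{3114} the off-diagonal $\Jcal$-coefficient $\mk(\xi)\zeta+\lambda_{-1/2}$ has the \emph{same} order $3/2$ as the diagonal $\Kcal$-coefficient $\mk(\xi)(1+\zeta)$. Hence your $h=b_J/(2D)$ is of order~$0$, and $G=g\Jcal+h\Lcal$ is of order~$0$, not of strictly negative order. Now the conjugation $e^{-\opbw(G)}\opbw(B)e^{\opbw(G)}$ is, at the level of matrix symbols, $e^{-\mathrm{ad}_G}B$, and the quadratic BCH term $\tfrac12[[B,G],G]$ contributes an \emph{off-diagonal} piece $2(b_Jh-b_Lg)(h\Jcal+g\Lcal)$ (from $[\Kcal,G]$ applied once more); with $h=b_J/(2D)$ this is $\sim \mk\,\zeta^3$ on the $\Jcal$-component, which is again of order $3/2$. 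So your ``residual of order $m-1$'' claim is false at $m=3/2$: the BCH tail is \emph{not} a symbolic-calculus (Poisson-bracket) correction, it is a pure matrix-algebra term that does not gain $\xi$-order when $G$ has order~$0$. Iterating only increases the $U$-homogeneity of this order-$3/2$ off-diagonal piece, never its $\xi$-order, so it can never be absorbed into the smoothing remainders.

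The paper avoids this by performing, at each step (Lemma~\ref{324}), an \emph{exact} pointwise diagonalization $P^{-1}A'P$ of the matrix symbol via the eigenvector matrix of $A'$; then $P^{-1}A'P$ is exactly diagonal, and the only off-diagonal residual comes from the genuine symbolic-calculus corrections $(Q\#A'\#P)_{\rho,N}-P^{-1}A'P$, which Lemma~\ref{325} shows are one order lower. Equivalently, your scheme can be repaired by first conjugating by the $\xi$-independent matrix that exactly diagonalizes $\mk(\xi)[(1+\zeta)\Kcal+\zeta\Jcal]$ (this is possible precisely because $\mk$ factors out), after which the remaining off-diagonal is of order $\le 1/2$ and your $e^{\opbw(G)}$ iteration, now with $G$ of strictly negative order, does gain one unit per step as you claimed.
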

We shall prove three lemmas. 
\begin{lemma}
  \label{323} {\bf (Parametrix)}
Take a matrix of symbols  
$P$ in $\sE{m}{K,K',1}{N}$ and  let $\rho$ be  in $ \N $. 
Then, if $r$
is small enough, there is 
a matrix of symbols $Q (U; \cdot) $ in $\sE{m}{K,K',1}{N}$ such that
\begin{equation}
  \label{eq:327}
\begin{split}
  \bigl(P(U;\cdot)\# Q(U;\cdot)\bigr)_{\rho,N} - \Ical_2 \in \sE{-\rho}{K,K',1}{N}\\
P(U;\cdot)^{-1} - Q(U;\cdot) \in \sGM{\min\bigl(m-\frac{3}{2},-2\bigr)}{K,K',1}{N}.
\end{split}
\end{equation}
\end{lemma}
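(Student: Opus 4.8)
The plan is to obtain $Q$ as a correction, inside the $\#$-calculus, of the pointwise matrix inverse of $P$. First I would construct $P(U;\cdot)^{-1}$, the entrywise inverse of $P=\Ical_2+E$, $E=\alpha\Ical_2+\beta\Lcal+\theta\Jcal+\gamma\Kcal$, which has nonpositive order and vanishes at degree $\ge1$ in $U$. For $r$ small, $\det P=(1+\alpha)^2-\beta^2+\theta^2-\gamma^2$ differs from $1$ by a symbol of nonpositive order vanishing at degree $\ge1$, so $(\det P)^{-1}$ is obtained by applying $z\mapsto(1+z)^{-1}$: truncating the geometric series at degree $N-1$ leaves a non-homogeneous tail vanishing at degree $N$, and since products of symbols multiply orders and add degrees (the remarks after Definitions~\ref{212}--\ref{213}) one gets $(\det P)^{-1}-1$ of nonpositive order in $\sG{0}{K,K',1}{N}$. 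Then $P^{-1}=(\det P)^{-1}\,\mathrm{adj}(P)$ with $\mathrm{adj}(P)=(1+\alpha)\Ical_2-\beta\Lcal-\theta\Jcal-\gamma\Kcal$, exhibiting $P^{-1}$ in the form~\eqref{321}; the order conditions~\eqref{322} and the imaginary-part condition~\eqref{323} persist because the $\Ical_2,\Lcal$-coefficients of $P^{-1}$ are analytic functions of $\alpha,\beta$ with real coefficients (the $\Kcal,\Jcal$-directions being fed only by $\gamma,\theta$, already of order $\le\min(m-\frac12,-\frac32)$), while the anti-reality, parity preserving and anti-reversibility conditions~\eqref{312}, \eqref{313}, \eqref{315} pass to $P^{-1}$ by the remark that inversion preserves these properties. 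Thus $Q_0:=P^{-1}\in\sE{m}{K,K',1}{N}$ (recall $m\le0$).

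Next I would isolate the algebraic structure that makes the parametrix sharp. The principal (order $m$) part of a symbol of $\sE{m}{K,K',1}{N}$, i.e.\ its $\Ical_2,\Lcal$-components, lies in the commutative subalgebra $\mathcal{A}=\mathrm{span}(\Ical_2,\Lcal)$ of $\Mcal_2(\C)$ (since $\Lcal^2=\Ical_2$), while the $\Kcal,\Jcal$-components, of order $\le\min(m-\frac12,-\frac32)$, lie in $\mathcal{B}=\mathrm{span}(\Kcal,\Jcal)$, with $\mathcal{A}\mathcal{B}\subseteq\mathcal{B}$, $\mathcal{B}\mathcal{A}\subseteq\mathcal{B}$, $\mathcal{B}\mathcal{B}\subseteq\mathcal{A}$. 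Splitting $P=P_c+P_\ell$, $Q_0=Q_{0,c}+Q_{0,\ell}$ accordingly and expanding $(P_c+P_\ell)^{-1}$ gives $Q_{0,c}=P_c^{-1}+(\mathcal{A}\text{-valued symbol of order}\le-3)$. Hence in the matrix Poisson bracket $\absp{P,Q_0}=\partial_\xi P\,\partial_x Q_0-\partial_x P\,\partial_\xi Q_0$ the would-be leading term $\absp{P_c,Q_{0,c}}$ is computed in the commutative ring $\mathcal{A}$, where $\absp{P_c,P_c^{-1}}=0$ by $\partial(a^{-1})=-a^{-2}\partial a$; a short order count then bounds $\absp{P,Q_0}$ by order $\min(m-\frac32,-2)$ instead of the naive $2m-1$. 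Since moreover $PQ_0=\Ical_2$ exactly (so the rank-$0$ part of $(P\#Q_0)_{\rho,N}$ cancels degree by degree) and its rank-$\ge2$ terms are of order $\le2m-2$, the symbol $E_1:=(P\#Q_0)_{\rho,N}-\Ical_2$ is a matrix of symbols of order $\le m_1:=\min(m-\frac32,-2)<0$, vanishing at degree $\ge2$ in $U$; and because the even-rank terms of the Weyl $\#$-expansion carry real scalar factors and preserve~\eqref{312}, \eqref{313}, \eqref{315} while the odd-rank ones carry factors proportional to $1/i$ that convert~\eqref{311} back to~\eqref{312} (recall $\Ical_2$ satisfies~\eqref{312}), $E_1$ keeps all these properties together with the $\mathcal{A}\oplus\mathcal{B}$ grading.

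With this, I would solve the $\#$-inverse of $\Ical_2+E_1$ by a finite Neumann series: set $W:=\Ical_2+\sum_{j=1}^{L}(-E_1)^{\#j}$ (the $\#$-powers taken in the truncated sense of~\eqref{231}) and $Q:=(Q_0\#W)_{\rho,N}$. A telescoping computation, using associativity of $\#$ up to symbols of order $\le-\rho$, yields $(P\#Q)_{\rho,N}-\Ical_2=\pm\,(E_1^{\#(L+1)})_{\rho,N}$ modulo order $\le-\rho$; since $E_1$ has strictly negative order $m_1$ and vanishes at degree $\ge2$, for $L$ large enough (in terms of $\rho,N$) its $(L{+}1)$-fold truncated $\#$-power is purely non-homogeneous of order $\le-\rho-1$, whence $(P\#Q)_{\rho,N}-\Ical_2\in\sE{-\rho}{K,K',1}{N}$. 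Likewise $P^{-1}-Q=-(Q_0\#(W-\Ical_2))_{\rho,N}$ modulo order $\le-\rho$, and as $W-\Ical_2$ has order $\le m_1$ (the $j=1$ term dominating) and $Q_0$ has leading part $\Ical_2$, this is of order $\le m_1=\min(m-\frac32,-2)$, i.e.\ in $\sGM{\min\bigl(m-\frac{3}{2},-2\bigr)}{K,K',1}{N}$, which is the second line of~\eqref{327}. Finally $Q\in\sE{m}{K,K',1}{N}$: one has $Q=Q_0+R$ with $R$ a symbol of order $\le m_1$, and the elementary inequalities $m_1\le m$, $m_1\le\min(m-\frac12,-\frac32)$ (valid for $m\le0$) show $R$ respects every order and imaginary-part constraint of Definition~\ref{321}, while $W$, $Q_0\#W$ and $Q$ inherit~\eqref{312}, \eqref{313}, \eqref{315} from $Q_0$ and $E_1$ and preserve the $\mathcal{A}\oplus\mathcal{B}$ grading by the composition arguments of Lemma~\ref{lem:Compos} and Section~\ref{sec:23}.

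The hard part is the order bookkeeping behind the second paragraph: one must propagate the splitting $\Mcal_2(\C)=\mathcal{A}\oplus\mathcal{B}$ through the inversion, the matrix Poisson bracket, every higher-order term of the Weyl $\#$-expansion, and the truncated powers $E_1^{\#j}$, checking each time that the two distinct orders attached to the $\Ical_2,\Lcal$- and $\Kcal,\Jcal$-components — and the extra half-derivative gain on their imaginary parts — survive, and that the cancellation $\absp{P_c,P_c^{-1}}=0$ is not spoiled by the order-$\le-3$ correction term in $Q_{0,c}$. A secondary technical point is verifying that the non-homogeneous $O(\|U\|^N)$ tails of all the symbols obey the same constraints and behave well under the telescoping, which follows from the composition results of Section~\ref{sec:23}.
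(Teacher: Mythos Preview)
Your proof is correct and follows essentially the same approach as the paper: take $Q_0=P^{-1}$, use the commutativity of the subalgebra $\mathcal{A}=\mathrm{span}(\Ical_2,\Lcal)$ to see that the rank-one term of $(P\#P^{-1})_{\rho,N}-\Ical_2$ vanishes at principal order (hence the remainder $E_1$ lands in order $\min(m-\tfrac32,-2)$ rather than the naive $2m-1$), and then correct by a finite Neumann series in $E_1$. The paper writes the same construction with $R=-E_1$ and $Q=P^{-1}+(P^{-1}\#R)_{\rho,N}+\cdots+(P^{-1}\#R^{\#\ell})_{\rho,N}$, telescoping exactly as you do; your explicit emphasis on the $\mathcal{A}\oplus\mathcal{B}$ grading is a clean way to organize the same order count the paper carries out by hand on the expression \eqref{329}.
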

\begin{proof}
As  
  $\alpha, \beta, \theta, \gamma$ in \eqref{321} go to zero when $ U $ is in $\Br{K}{}$ defined by \eqref{216} and $r$ goes
  to zero, we see that, for $ r $ small enough  (and $\sigma$ fixed large enough) the matrix of symbols (recall \eqref{3113})
  
 $$
 P(U; \cdot ) =   \begin{bmatrix} 1 + \alpha + \gamma & \beta - \theta \\ 
 \beta + \theta & 1 + \alpha - \gamma 
 \end{bmatrix} 
 $$ 
 is invertible, with inverse
\[P(U;\cdot)^{-1} = (1+\alpha')\Ical_2 +\beta'\Lcal +\theta'\Jcal +\gamma'\Kcal\]
where, setting $ \Delta = (1+\alpha)^2-\beta^2-\gamma^2+\theta^2$,
\begin{equation}
  \label{eq:328}
  \alpha' = (1+\alpha)\Delta^{-1} -1, \ \beta' = -\beta\Delta^{-1}, \ \gamma' = -\gamma\Delta^{-1}, \
   \theta' = -\theta\Delta^{-1}.
\end{equation}
 We claim that the matrix $P^{-1}$ belongs to $\sE{m}{K,K',1}{N}$. 
Indeed,  since the order of the symbols $ \alpha $, $ \beta $, $ \gamma $, $ \delta $ is 
$m\leq 0$, a Taylor expansion of $\Delta^{-1}$ when $(\alpha, \beta, \gamma, \theta)$ goes to zero implies
that
\[\alpha', \beta' \in \sG{m}{K,K',1}{N}, \quad  \gamma', \theta' \in \sG{\min\bigl(m-\frac{1}{2},-\frac{3}{2}\bigr)}{K,K',1}{N} \]
which is condition \eqref{322}.
Moreover, since $ \alpha, \beta, \gamma, \delta $ satisfy  \eqref{322} and \eqref{323} 
the symbols $\alpha', \beta'$ satisfy also \eqref{323}. 

Finally, as $S^{-1} = S$, the fact that $P $ satisfies the  anti-reality, parity preserving and  anti-reversibility conditions 
\eqref{312},  \eqref{313},  \eqref{315}, implies that the inverse matrix $P^{-1}$ satisfies the same properties. Consequently,
$P^{-1}$ belongs to $\sE{m}{K,K',1}{N}$. 

Let us compute $(P\#P^{-1})_{\rho,N}$ using the composition formulas \eqref{231},
\eqref{222}, \eqref{223}.  The first 
term  of the asymptotic expansions  \eqref{222}, \eqref{223},  corresponding to $\ell = 0$, 
 is the identity $\Ical_2$. 
The next term, corresponding to $\ell=1$, is 
\begin{multline}
  \label{eq:329}
\frac{1}{2i}\bigl[(\partial_\xi P)(\partial_x P^{-1}) - (\partial_x P)(\partial_\xi P^{-1})\bigr] \\
= \frac{1}{2i}\bigl[-(\partial_\xi P)P^{-1}(\partial_x P) P^{-1} + (\partial_x P) P^{-1} (\partial_\xi P) P^{-1}\bigr].
\end{multline}
Write then the matrix of symbols $ P $ in  \eqref{321} as 
$$ 
P = P_0+\theta\Jcal+\gamma\Kcal \, , \quad P_0 = (1+\alpha) {\cal I}_2 + \beta {\cal L} \, , 
$$ 
so that, by \eqref{322}, $ P - P_0 $ is in   
$\sGM{\min\bigl(m-\frac{1}{2},-\frac{3}{2}\bigr)}{K,K',1}{N}$ as well as 
$ P^{-1} - P_0^{-1} = - P^{-1} ( P - P_0 ) P_0^{-1}$. 
If in \eqref{329}, we replace $P$
by $P_0$, and $P^{-1}$ by $P_0^{-1}$, the error we generate is then in the space of symbols 
$\sGM{\min\bigl(m-\frac{3}{2},-\frac{5}{2}\bigr)}{K,K',1}{N}$. On the other hand, the right hand side of \eqref{329} with $P$
replaced by $P_0$ vanishes, since $P_0$ and $P_0^{-1}$ and their derivatives are linear combinations of $\Ical_2$ and
$\Lcal$, so that commute between themselves. Finally, terms corresponding to $\ell\geq 2$ in \eqref{222}, \eqref{223} applied
to compute $P\# P^{-1}$ involve at least two $\partial_\xi$ derivatives, so belong to $\sGM{m-2}{K,K',1}{N}$. Consequently, as
$ m \leq 0 $, the matrix of symbols 
\be\label{eq:Resto}
R = \Ical_2 - (P\# P^{-1})_{\rho,N}  \in  \sGM{\min\bigl(m-\frac{3}{2},-2\bigr)}{K,K',1}{N} \, . 
\ee
 This result differs from the first condition in \eqref{327} by the more modest smoothing properties of
 $ R $.    In order to get \eqref{327} we have to construct a matrix of symbols $ Q $, close to $ P^{-1} $ in the sense
 specified in \eqref{327}, such that $ P \# Q $ differs from the  identity $ {\cal I}_2 $ up to a smoother symbol.  
 We define
\begin{equation}
  \label{eq:3210}
  Q = P^{-1} + (P^{-1}\# R)_{\rho,N} + (P^{-1}\# R\#R)_{\rho,N} + \cdots + (P^{-1}\# \underbrace{R\#\cdots\#R}_{\ell})_{\rho,N}
\end{equation}
for some $\ell$ to be chosen. In view of \eqref{Resto},
the term involving $\ell'$ factors $R$ ($0\leq\ell'\leq\ell)$ in the sum 
\eqref{3210} belongs  to
$\sGM{\ell'\min\bigl(m-\frac{3}{2},-2\bigr)}{K,K',1}{N}$. 
By \eqref{3210} and \eqref{Resto} the matrix of 
symbols   $ Q-P^{-1} $  is in 
$$
\sGM{\min\bigl(m-\frac{3}{2},-2\bigr)}{K,K',1}{N}
$$ 
and therefore the second condition \eqref{327} holds. Moreover $Q$
satisfies \eqref{322}, \eqref{323} as this holds for $P^{-1}$.

Finally, it follows from the fact that $P$, $P^{-1}$ satisfy \eqref{312}, \eqref{313}, 
 \eqref{315}, from the definition of $ R $,  and the remark after
 Definition \ref{def:algebra},
 that the same holds true for $ R $. 
Moreover, again by composition, conditions \eqref{312}, \eqref{313},
\eqref{315} are satisfied also by $Q$.  Thus $Q$ is in $\sE{m}{K,K',1}{N}$.
To conclude the proof we just have to prove that the first property \eqref{327} holds, 
if $\ell$ has been taken large enough in \eqref{3210}. By 
\eqref{3210} and \eqref{Resto} we get 
\[(P\# Q)_{\rho,N} = \sum_{\ell'=0}^\ell\bigl((\Ical_2-R) \# \underbrace{R\#\cdots\#R}_{\ell'}\bigr)_{\rho,N} = \Ical_2 -
(\underbrace{R\#\cdots\#R}_{\ell+1})_{\rho,N}\]
which is equal to $\Ical_2$ modulo symbols of order $ (\ell + 1) \min (m-\frac{3}{2},-2\bigr) $
as negative as we want if $\ell$ is taken large enough. This concludes
the proof.
\end{proof}

\begin{lemma}
  \label{324} {\bf (Step of diagonalization)}
Let  $ m $ be in $ ]-\infty,\frac{3}{2}]$, $K, K'$ in $\N$ with $K-1\geq K'\geq 0$, $N$ in $\N^*$, $\rho$ in $\N$, $r>0$. Consider symbols
\begin{equation}
  \label{eq:3211}
\begin{split}
  \lambda'\in \sG{\frac{3}{2}}{K,K',1}{N}, \quad b'\in \sG{m}{K,K',1}{N}\\ 
  c' \in \sG{\min\bigl(m-\frac{1}{2},0\bigr)}{K,K',1}{N}, \quad d' \in \sG{1}{K,K',1}{N}
\end{split}\end{equation}
satisfying
\begin{equation}
\label{eq:3212}
 \Im\lambda' \, , \Im b'\in \sG{-\frac{1}{2}}{K,K',1}{N}, \quad \Im d' \in \sG{0}{K,K',1}{N}.
\end{equation}
Define 
\begin{equation}\label{eq:matrix-A'}
A'(U;t,x,\xi) = (\mk(\xi)+\lambda')\Kcal + b'\Jcal + c'\Lcal + d'\Ical_2
\end{equation}
and assume that $A'$ satisfies the reality, parity preserving and reversibility properties 
\eqref{311}, \eqref{313} and \eqref{314}.
Then there are\\
$\bullet$ A matrix of symbols $Q$ in $\sE{m-\frac{3}{2}}{K,K',1}{N}$,\\
$\bullet $ A matrix  $P$ in $\sE{m-\frac{3}{2}}{K,K',1}{N}$ such that $(P\#Q)_{\rho, N}-\Ical_2$  is in
$\sE{-\rho}{K,K',1}{N}$,  \\
$\bullet$ Symbols
\begin{equation}
  \label{eq:3213}
  \begin{split}
    \lambda'' \in \sG{\frac{3}{2}}{K,K'+1,1}{N}, \quad b'' \in \sG{m-1}{K,K'+1,1}{N},\\
c'' \in \sG{m-\frac{3}{2}}{K,K'+1,1}{N}, \quad d'' \in \sG{1}{K,K'+1,1}{N},
  \end{split}
\end{equation}
with
\begin{equation}\begin{split}
  \label{eq:3214}
  \Im \lambda'' \, , \ \Im b'' \in \sG{-\frac{1}{2}}{K,K'+1,1}{N},  \quad \Im d'' \in \sG{0}{K,K'+1,1}{N} \, , 
\end{split}\end{equation}
such that 
$$
A'' = (\mk(\xi)+\lambda'')\Kcal + b''\Jcal + c''\Lcal + d''\Ical_2
$$ 
satisfies the reality, parity preserving and reversibility properties
 \eqref{311}, \eqref{313} and
\eqref{314},\\
$\bullet$ A smoothing operator $R(U;t)$ in $\sRM{-\rho+\frac{3}{2}}{K,K'+1,1}{N}$, satisfying conditions \eqref{316}, \eqref{318} and
\eqref{319},

such that
\begin{equation}
  \label{eq:3215}
  \opbw(Q)[D_t-\opbw(A')] = [D_t-\opbw(A'')]\opbw(Q) + R(U;t).
\end{equation}

Finally, in the case $ m = \frac{3}{2} $, if there are  $\zeta'_1, \zeta'_2$ in $\sFR{K,K',1}{N}$ such that
\begin{equation}
  \label{eq:3216}
  \begin{split}
    \lambda'(U;t,x,\xi) - \mk(\xi)\zeta'_1(U;t,x) \in \sG{\frac{1}{2}}{K,K',1}{N}\\
b'(U;t,x,\xi) - \mk(\xi)\zeta'_2(U;t,x) \in \sG{\frac{1}{2}}{K,K',1}{N},
  \end{split}
\end{equation}
then, for some function $\zeta''$ of $\sFR{K,K'+1,1}{1}$, one has
\begin{equation}
  \label{eq:3217}
  \lambda''(U;t,x,\xi) - \mk(\xi)\zeta''(U;t,x)\in \sG{\frac{1}{2}}{K,K'+1,1}{N}.
\end{equation}
\end{lemma}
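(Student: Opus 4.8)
The plan is to prove Lemma~\ref{324} by an explicit one-step conjugation that uses a parametrix constructed from a suitable matrix of symbols $P$. First I would look for $P$ of the form $P = \Ical_2 + p\Jcal + q\Lcal$ with $p,q$ symbols of negative order to be determined, designed to eliminate the off-diagonal terms $b'\Jcal + c'\Lcal$ of $A'$ at highest order. The key computation is the symbolic conjugation: using Proposition~\ref{231} and the asymptotic expansion \eqref{222}, one has
\[
\opbw(Q)\bigl[D_t - \opbw(A')\bigr] = \bigl[D_t - \opbw(A'')\bigr]\opbw(Q) + R(U;t)
\]
provided
\[
(Q\#A')_{\rho,N} - (A''\#Q)_{\rho,N} - D_tQ \equiv 0
\]
modulo a symbol whose quantization is $\rho$-smoothing. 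Expanding the leading terms of both composition symbols, the principal contribution is the commutator-type term $[Q,\mk(\xi)\Kcal]$ at order $\frac32$ plus $Q b'\Jcal$-type pieces. Because $\Jcal$ and $\Lcal$ anticommute with $\Kcal$ while $\Ical_2,\Kcal$ commute with $\Kcal$, the equation $Q\cdot \mk(\xi)\Kcal - \mk(\xi)\Kcal\cdot Q = -(b'\Jcal + c'\Lcal)$ at symbol level reduces, after taking $Q = \Ical_2 + (P^{-1}\# \cdots)$ from Lemma~\ref{323}, to scalar division equations of the shape $2\mk(\xi)\, q = c'$, $2\mk(\xi)\, p = b'$; since $\mk(\xi)$ is elliptic of order $\frac32$ and bounded below away from zero (recall \eqref{3115}), these are solvable with $p\in \sG{m-\frac32}{K,K',1}{N}$, $q\in \sG{\min(m-\frac12,0)-\frac32}{K,K',1}{N}$, placing $P$ (and its parametrix $Q$ from Lemma~\ref{323}) in $\sE{m-\frac32}{K,K',1}{N}$.

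Next I would collect the remaining terms. After this choice, the new off-diagonal symbols $b'',c''$ have orders lowered by one and $\frac32$ respectively (the extra half-derivative gain on $c''$ coming from the Weyl quantization, as in the trilinear cancellation \eqref{trilinear}), giving \eqref{3213}; the diagonal symbols $\lambda'',d''$ pick up corrections of the stated orders, and the $D_tQ$ term — which by Lemma~\ref{217} lies in $\sGM{m-\frac32}{K,K'+1,1}{N}$ since $U$ solves \eqref{4312}, an equation of the form \eqref{2129} — is responsible for the loss of one time-derivative, i.e. the passage from $K'$ to $K'+1$ throughout \eqref{3213}, \eqref{3214}. The imaginary-part statements \eqref{3214} follow because $P,Q$ have real principal parts (their imaginary parts are two orders lower by \eqref{323}), so conjugation does not worsen the imaginary parts of $\lambda',b',d'$ beyond the asserted orders. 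The algebraic structure — reality \eqref{311}, parity preserving \eqref{313}, reversibility \eqref{314} for $A''$ and the corresponding \eqref{316}, \eqref{318}, \eqref{319} for $R(U;t)$ — is inherited automatically: $P$ and $Q$ are built in $\sE{\cdot}{K,K',1}{N}$, hence satisfy the anti-reality \eqref{312}, parity \eqref{313}, anti-reversibility \eqref{315} conditions, and by Lemma~\ref{lem:Compos} (and the remarks after Definition~\ref{def:algebra}) composition of an operator satisfying \eqref{311}/\eqref{314} with ones satisfying \eqref{312}/\eqref{315} preserves \eqref{311}/\eqref{314}; one must check the solvability equations $2\mk(\xi)q = c'$ etc. respect these symmetries, which holds because $\mk$ is even and $A'$ has the symmetries, forcing $b',c'$ into the right symmetry classes.

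For the final assertion about the highest-order case $m = \frac32$: here $\lambda'$ and $b'$ have the special structure \eqref{3216}, namely their order-$\frac32$ parts are scalar multiples of $\mk(\xi)$ by real functions $\zeta'_1,\zeta'_2\in\sFR{K,K',1}{N}$. I would track where these enter the conjugation. The new symbol $\lambda''$ differs from $\lambda'$ by contributions of the form (derivatives of $\mk$) times (lower-order symbols) plus the $D_tQ$-type term; the point is that the order-$\frac32$ part of $\lambda''$ is again $\mk(\xi)$ times a real function — indeed, the only way to produce an order-$\frac32$ diagonal term is through $\mk(\xi)\zeta'_1$ passing through unchanged plus a possible $\mk(\xi)\times(\text{average of something})$ coming from the $b'\Jcal$-correction, and the relevant coefficient function lies in $\sFR{K,K'+1,1}{1}$ (only degree-one homogeneity is claimed here, consistent with the loss of one $t$-derivative). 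So one sets $\zeta'' := \zeta'_1$ plus this correction and checks $\lambda'' - \mk(\xi)\zeta''\in\sG{\frac12}{K,K'+1,1}{N}$.

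The main obstacle I expect is bookkeeping rather than conceptual: carefully verifying that every term generated by the composition expansions \eqref{222}--\eqref{223} — in particular the second-order ($\ell=2$) terms, which involve two $\partial_\xi$'s and land two orders down — actually fits the claimed orders in \eqref{3213}, and that the $\rho$-smoothing remainder in \eqref{3215} genuinely satisfies \eqref{316}, \eqref{318}, \eqref{319}; this last requires noting that the remainder is a composition of operators each carrying compatible symmetry properties, so Lemma~\ref{lem:Compos} applies. A subtler point is ensuring the half-derivative gain on $c''$ (order $m-\frac32$ rather than the naive $m-1$): this is exactly the content of the trilinear cancellation \eqref{trilinear} and must be invoked, since without it the induction in Proposition~\ref{322} — which iterates Lemma~\ref{324} to push the off-diagonal part to arbitrarily negative order — would stall.
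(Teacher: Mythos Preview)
Your commutator approach has a concrete gap at the critical first step $m=\tfrac32$. With $P=\Ical_2+p\Jcal+q\Lcal$ and the linear choice $q=-b'/(2(\mk+\lambda'))$, $p=-c'/(2(\mk+\lambda'))$ (or your version with bare $\mk$), one computes exactly at the matrix level, using $X^2=(q^2-p^2)\Ical_2$,
\[
(\Ical_2+X)A'(\Ical_2+X)^{-1} = A' + \frac{[X,A']}{1-(q^2-p^2)},\qquad X=p\Jcal+q\Lcal,
\]
and the $\Jcal$-coefficient of the right side is $b' - b'/(1-(q^2-p^2))$ plus lower-order pieces, of order $m+(2m-3)=3m-3$. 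For $m=\tfrac32$ this is still order $\tfrac32$, not $m-1=\tfrac12$: since $q$ has order $m-\tfrac32=0$ at the first step, the quadratic feedback $q^2 b'$ is not subprincipal and nothing is gained. The trilinear cancellation \eqref{trilinear} cannot rescue this --- it concerns rank-$1$ terms in the $\#$-expansion, whereas the obstruction lives at rank~$0$.

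The paper avoids this by constructing $P$ as the genuine eigenvector matrix of $A'$, so that $P^{-1}A'P$ is \emph{exactly} diagonal as a matrix product; the off-diagonal symbols then vanish at rank~$0$, and $b'',c''$ arise only from rank-$\geq 1$ terms of the $\#$-expansion (this is the content of Lemma~\ref{325}, where \eqref{trilinear} is genuinely used, for the $\Kcal$- and $\Jcal$-components). Your route can be repaired by solving the full \emph{nonlinear} equations for $p,q$ so that the rank-$0$ off-diagonal vanishes identically, but carrying this out reproduces, up to a scalar normalization, precisely the paper's eigenvector matrix (compare the resulting $q$ with $b'/D$, $D$ as in \eqref{3221}).
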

Note that the off-diagonal symbols $ b'' ,  c'' $ of $ A'' $ in \eqref{3213} are of lower order 
(actually $ 1$-smoother) with respect to those of $ A' $, namely 
$ b' , c ' $ in \eqref{3211}. On the other hand the order of the diagonal symbols is unchanged. 
This step ``consumes $ 1 $-time derivative'', i.e. note that the second
index $ K' + 1 $ in \eqref{3213} is  larger than
$ K' $ in \eqref{3211}.

\begin{proof}
  The eigenvalues of the matrix $A'(U;t,x,\xi)$ in \eqref{matrix-A'} are given by
\[d'(U;t,x,\xi) \pm\Bigl((\mk+\lambda'(U;t,x,\xi))^2 + c'(U;t,x,\xi)^2 -b'(U;t,x,\xi)^2\Bigr)^{\frac{1}{2}}\]
when $\xi$ stays outside some fixed neighborhood of zero, and when $U$ belongs to $\Br{K}{}$, for some small enough $r$ so
that the argument under the square root 
stays in the domain where the principal determination of that function is well defined. 
Take a cut-off function  $\chi_1$ in $C^\infty(\R)$, even, real valued, 
$0\leq \chi_1\leq 1$, with $\chi_1$  equal to zero on a
convenient neighborhood of zero and equal to one outside a larger  neighborhood of zero. 
Define
\begin{multline}
  \label{eq:3218}
\omega_\pm(U;t,x,\xi) = d'(U;t,x,\xi) \\
\pm (\mk(\xi)+\lambda'(U;t,x,\xi))\biggl[1+\frac{c'{}^2-b'{}^2}{(\mk(\xi)+\lambda')^2}\chi_1(\xi)\biggr]^{\frac{1}{2}}
\end{multline}
which coincide with the eigenvalues of $ A' $ outside a neighborhood of zero. 
By \eqref{3211}, \eqref{3212},  the symbols
 $ \omega_\pm \mp \mk(\xi) $ are in
$\sG{\frac{3}{2}}{K,K',1}{N}$ and  the imaginary part of the term under the square
root in \eqref{3218} is of order at most $-2$. Together with the assumption \eqref{3212} on $\lambda'$, this shows that
$\Im(\omega_\pm-d')$ is in $\sG{-\frac{1}{2}}{K,K',1}{N}$. 
It follows that the diagonal matrix of symbols 
$$
\bigl[\begin{smallmatrix}\omega_+&0\\0&\omega_-\end{smallmatrix}\bigr]
$$ 
may be written as a contribution to
$(\mk(\xi)+\lambda'')\Kcal + d''\Ical_2$ in $A''$, with $\lambda'', d''$ satisfying \eqref{3213}, \eqref{3214}. Moreover, if
$m=\frac{3}{2}$ and $\lambda', b'$ are of the form \eqref{3216}, we get that $\lambda''$ is of the form \eqref{3217}.

The fact that by assumption $A'(U;t,x,\xi)$ satisfies 
\eqref{311} may be written 
\begin{equation}
  \label{eq:3219}
  \begin{split}
    \bar{m}_\kappa^\vee(\xi) = \mk(\xi),\  \bar{\lambda}'{}^\vee(U;\cdot) = \lambda'(U;\cdot),\  \bar{b}'{}^\vee(U;\cdot) =
    b'(U;\cdot)\\
\bar{c}'{}^\vee(U;\cdot) =    -c'(U;\cdot),\  \bar{d}'{}^\vee(U;\cdot) =    -d'(U;\cdot).
  \end{split}
\end{equation}
In the same way, the reversibility condition \eqref{314} translates as
\begin{equation}
  \label{eq:3220}
  \begin{split}
     {\lambda}'(U;-t,\cdot) = \lambda'(U_S;t,\cdot),\quad  {b}'(U;-t,\cdot) =
    b'(U_S;t,\cdot)\\
{c}'(U;-t,\cdot) =    -c'(U_S;t,\cdot),\quad  {d}'(U;-t,\cdot) =    -d'(U_S;t,\cdot) \, .
  \end{split}
\end{equation}
As a consequence  the symbol
\begin{equation}
  \label{eq:3221}
  D(U;t,x,\xi) = \big(\mk(\xi)+\lambda'(U;t,x,\xi)\bigr)\biggl[1+\Bigl(1+\frac{c'{}^2-b'{}^2}{(\mk+\lambda')^2}\chi_1(\xi)\Bigr)^{\frac{1}{2}}\biggr]
\end{equation}
satisfies
\be\label{eq:matD}
\bar{D}^\vee(U;\cdot) = D(U;\cdot),\ D(U;-t,\cdot) = D(U_S;t,\cdot) \, .
\ee
A matrix which diagonalizes $A'$ on the domain where $\chi_1(\xi) = 1$ is
\[P(U;t,x,\xi) =
\frac{\chi_1(\xi)}{\Bigl(1-\frac{b'{}^2-c'{}^2}{D^2}\Bigr)^{\frac{1}{2}}} \begin{bmatrix}1&\frac{b'-c'}{D}\\\frac{b'+c'}{D}&1\end{bmatrix}.\]
We claim that $P$ belongs to  $\sE{m-\frac{3}{2}}{K,K',1}{N}$. 
Indeed we may write $ P $ as
\begin{equation}
  \label{eq:3222}
  P(U;t,x,\xi) = (1+\alpha)\Ical_2 +\beta\Lcal +\theta \Jcal
\end{equation}
with symbols $\alpha, \beta$ in $\sG{m-\frac{3}{2}}{K,K',1}{N}$ and  $\theta$ in
$\sG{\min\bigl(m-2,-\frac{3}{2}\bigr)}{K,K',1}{N}$. 
Thus $\alpha, \beta, \theta$ satisfy \eqref{322} with $m$ replaced by $m-\frac{3}{2}$.
Moreover, it follows from \eqref{3219}, \eqref{3220}, \eqref{matD} that
\[
\begin{split}
  \bar{\alpha}^\vee(U;\cdot) = \alpha(U;\cdot),\quad   \alpha(U;-t,\cdot) = \alpha(U_S;t,\cdot)\\
\bar{\beta}^\vee(U;\cdot) = \beta(U;\cdot),\quad  \beta(U;-t,\cdot) = \beta(U_S;t,\cdot)\\
\bar{\theta}^\vee(U;\cdot) = -\theta(U;\cdot),\quad  \theta(U;-t,\cdot) = -\theta(U_S;t,\cdot) 
\end{split}
\]
and therefore $P$ satisfies \eqref{312} and \eqref{315}. It obviously satisfies as well \eqref{313}. 
 We have seen that $\Im D =
\Im(\omega_+-d') +\Im\lambda'$ is in $\sG{-\frac{1}{2}}{K,K',1}{N}$. It follows that $\Im\alpha, \Im\beta$ are of order
$\min\bigl(m-\frac{3}{2},-\frac{3}{2}\bigr)$, so that \eqref{323} is satisfied with $m$ replaced by
$m-\frac{3}{2}$. Consequently $P$  is in  $\sE{m-\frac{3}{2}}{K,K',1}{N}$.
Also the matrix  
\begin{equation} \label{eq:P-1}
P^{-1} = (1+\alpha)\Ical_2 -\beta\Lcal -\theta\Jcal  
\end{equation}
(when $\chi_1\equiv 1$)  is in   $\sE{m-\frac{3}{2}}{K,K',1}{N}$.
By construction
\begin{equation}
  \label{eq:3223}
  P^{-1}A'P = \begin{bmatrix}\omega_+&0\\0&\omega_-\end{bmatrix} = (\mk(\xi) +\tilde{\lambda}'(U;t,x,\xi))\Kcal + d'\Ical_2
\end{equation}
on the domain where $\chi_1\equiv 1$, for a symbol $\tilde{\lambda}'$ in $\sG{\frac{3}{2}}{K,K',1}{N}$ with $\Im
\tilde{\lambda}'$ in  $\sG{-\frac{1}{2}}{K,K',1}{N}$. Moreover, the matrix \eqref{3223} satisfies \eqref{311}, \eqref{313} and
\eqref{314}, and \eqref{3217} is satisfied by $\tilde{\lambda}'$ if we assume \eqref{3216}. 

We apply next to $P$ lemma~\ref{323}, with $m$ replaced by $m-\frac{3}{2}$. We obtain $Q$ in $\sE{m-\frac{3}{2}}{K,K',1}{N}$
satisfying \eqref{327} (with $m$ replaced by $m-\frac{3}{2}$). We write
\begin{equation}
  \label{eq:3224}
  (Q\# A'\# P)_{\rho,N} = ((Q-P^{-1})\# A'\# P)_{\rho,N} + (P^{-1}\# A'\# P)_{\rho,N}.
\end{equation}
 Since $ Q-P^{-1}  $ is of order $ \min(m- 3, - 2 )$, $ A' $ is of order  $ 3/2 $ and $ P $ of order $ m - \frac32 \leq 0 $, 
 the first term in the right hand side has coefficients in $\sG{\min\bigl(m-\frac{3}{2},-\frac{1}{2}\bigr)}{K,K',1}{N}$ and  
 we may write the matrix 
\begin{equation}\label{eq:RM1}
((Q-P^{-1})\# A'\# P)_{\rho,N} = \lambda''\Kcal + b''\Jcal + c''\Lcal + d''\Ical_2
\end{equation}
with $\lambda'', b'', c'', d''$ satisfying conditions \eqref{3213}.
Next we write the last term of \eqref{3224} as 
\begin{equation}
  \label{eq:3225}
   (P^{-1}\# A'\# P)_{\rho,N} = P^{-1}A'P + R \, ,
\end{equation}
where $R$ is obtained from the terms indexed 
by  $ 1 \leq \ell < \rho $  in the composition formulas \eqref{222}, \eqref{223}. 
By 
\eqref{3222}, \eqref{matrix-A'} and  \eqref{P-1}  the matrix $R$ is given by the terms of rank 
 $ 1 \leq  \ell < \rho $
in the expansion
\begin{multline}
  \label{eq:3226}
\bigl(((1+\alpha)\Ical_2 -\beta\Lcal-\theta\Jcal)\#((\mk+\lambda')\Kcal + b'\Jcal + c'\Lcal + d'\Ical_2)\\\#
((1+\alpha)\Ical_2 +\beta\Lcal+\theta\Jcal)\bigr)_\rho \, .
\end{multline}
To finish the proof of  lemma \ref{324}, we need the following result.

\begin{lemma}
  \label{325}
The matrix $R$ in \eqref{3225} may be written as
\begin{equation}
  \label{eq:3227}
  R = \lambda''\Kcal + b''\Jcal + c''\Lcal + d''\Ical_2
\end{equation}
where $\lambda'', b'', c'', d''$ satisfy conditions \eqref{3213}, \eqref{3214} and the matrix $R$ satisfies \eqref{311}, \eqref{313}, \eqref{314}. Moreover, $\lambda''$ satisfies \eqref{3217}.
\end{lemma}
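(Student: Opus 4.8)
The plan is to analyze the rank-$\ell$ terms, $1\le\ell<\rho$, in the composition expansion \eqref{3226}, tracking both the order of the resulting symbols and their decomposition in the basis $\{\Ical_2,\Lcal,\Jcal,\Kcal\}$ of $\Mcal_2(\C)$. First I would write $P = P_0 + \Qcal$ where $P_0 = (1+\alpha)\Ical_2 - \beta\Lcal$ (the ``even'' part, built from symbols of order $m-\tfrac32$) and $\Qcal = -\theta\Jcal$ (order $\min(m-2,-\tfrac32)$), and similarly $P^{-1} = P_0^{-1} + \Qcal'$; likewise split $A' = A'_D + A'_{ND}$ with $A'_D = (\mk+\lambda')\Kcal + d'\Ical_2$ and $A'_{ND} = b'\Jcal + c'\Lcal$. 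Each term of rank $\ell\ge1$ in \eqref{3226} carries exactly $\ell$ factors of $\tfrac{i}{2}\sigma(D_x,D_\xi,D_y,D_\eta)$, so, since each $\partial_\xi$ gains one order, such a term is already of order $\tfrac32 - \ell$ from the presence of $A'$ alone; combined with the orders of the $P$, $P^{-1}$ factors this is $\le m - \tfrac32 - (\ell-1)\le m-\tfrac32$, which places $\lambda'',b'',c'',d''$ comfortably in the classes \eqref{3213} (the point being that $\lambda''$ of order $\tfrac32$ and $d''$ of order $1$ are no constraint at all, while $b''$ of order $m-1$ and $c''$ of order $m-\tfrac32$ are exactly what one gets). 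The off-diagonal gain comes from the cancellation already used in Lemma~\ref{323}: the rank-$1$ term in which all three factors are ``even'' (diagonal/$\Lcal$) vanishes because $P_0$, $A'_D$, $P_0^{-1}$ and all their derivatives are linear combinations of $\Ical_2$ and $\Lcal$, hence commute and give a symmetric rank-$1$ contribution which drops out of the Poisson-bracket-type term; so any surviving rank-$1$ contribution must involve at least one of $\theta$, $b'$, $c'$, which are already of order $m-2$, $m$, $m-\tfrac12$ respectively, bringing an extra half-derivative or more of smoothing relative to the crude count. I would organize this as a short case check over which of the three factors is taken ``even'' versus ``odd'' ($\Jcal$- or $\Lcal$-type), using the algebra of the matrices \eqref{3113} ($\Kcal\Lcal=-\Lcal\Kcal=\Jcal$, $\Jcal\Lcal=-\Lcal\Jcal=-\Kcal$, $\Kcal\Jcal=-\Jcal\Kcal=-\Lcal$, $\Jcal^2=-\Ical_2$, $\Kcal^2=\Lcal^2=\Ical_2$) to see which basis component each case lands in, and verifying the half-order improvements for $b''$ and $c''$.

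For the imaginary parts \eqref{3214}, I would use that $\Im\lambda',\Im b'\in \sG{-1/2}{K,K',1}{N}$, $\Im d'\in\sG{0}{K,K',1}{N}$ and $\Im\alpha,\Im\beta\in\sG{\min(m-3/2,-3/2)}{K,K',1}{N}$ (property \eqref{323} for $P$), so taking imaginary parts of the monomials in the expansion either lands on an already-imaginary-improved factor or picks up the extra $\partial_\xi$-order; in all cases one checks that the $\Kcal$ and $\Jcal$ components of $\Im R$ are of order $-\tfrac12$ and the $\Ical_2$ component of order $0$, using also that $\Im(\mk)=0$ since $\mk$ is real. The algebraic conditions \eqref{311}, \eqref{313}, \eqref{314} for $R$ follow formally: $P^{-1}$, $A'$, $P$ all satisfy the anti-reality/reality and (anti-)reversibility and parity conditions as established above (using the remarks after Definition~\ref{def:algebra} on products), and the composition operation $\#$ — being built from the differential operator $\sigma(D_x,D_\xi,D_y,D_\eta)$, whose symbol is odd under $\xi\to-\xi$, $x\to-x$ in the appropriate bilinear sense — preserves these symmetries; since $R = (P^{-1}\#A'\#P)_{\rho,N} - P^{-1}A'P$ is the difference of two objects each satisfying \eqref{311}, \eqref{313}, \eqref{314}, so does $R$. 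Finally the claim $\lambda''$ satisfies \eqref{3217} in the case $m=\tfrac32$: here, under hypothesis \eqref{3216}, the only rank-$0$-in-$U$-order-$\tfrac32$ content of $\lambda'$ is $\mk(\xi)\zeta'_1(U;t,x)$, which is a function (times $\mk$), so it carries no $\xi$-derivative structure beyond $\mk$; applying even one $\sigma$-operator to $\mk(\xi)\zeta'_i$ produces $\partial_\xi\mk(\xi)$, of order $\tfrac12$, hence every rank-$\ell\ge1$ contribution to the $\Kcal$-component of $R$ is already of order $\le\tfrac12$, and the order-$\tfrac32$ part of $\lambda''$ coming from $R$ is therefore $0$ — i.e. $\lambda''$ inherits its order-$\tfrac32$ part $\mk(\xi)\zeta''$ only from the conjugated diagonal $P^{-1}A'P$ analyzed just above, where $\zeta''$ was already produced in \eqref{3223}.

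The main obstacle I expect is the bookkeeping in the off-diagonal half-order gain: one must be careful that the rank-$1$ term of \eqref{3226} does not, through the $\alpha,\beta$ factors, reintroduce an order-$m$ contribution into $b''$ or $c''$ with the ``wrong'' (only $1$, not $\tfrac32$) smoothing. The resolution is that such a term is a Poisson-type bracket of an $\Ical_2/\Lcal$-combination with $A'$; its $\Jcal$- and $\Lcal$-components can only come either from bracketing the $\alpha,\beta$ part against the $b'\Jcal+c'\Lcal$ part of $A'$ — which is fine since $b'\in\sG{m}{}{}$ already and the bracket removes a full order, giving order $m-1$ for $b''$ — or from bracketing the $\theta\Jcal$ part, which starts at order $m-2$. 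So no order-$m$ off-diagonal term survives, and \eqref{3213} holds. I would also double-check that the total loss of one time-derivative (the index passing from $K'$ to $K'+1$ in \eqref{3213}–\eqref{3214}) is accounted for: it arises because forming $P$, $Q$, $R$ and composing does not itself cost a $\partial_t$, but the outer conjugation identity \eqref{3215} — where this lemma feeds into Lemma~\ref{324} — commutes $\opbw(Q)$ past $D_t$, producing $[\opbw(D_tQ)]$, and $D_tQ$ lies in $\sG{\cdot}{K,K'+1,1}{N}$ by Lemma~\ref{217}; so stating \eqref{3213}–\eqref{3214} with second index $K'+1$ is the natural (harmless) bookkeeping convention, and the proof of Lemma~\ref{325} itself produces symbols with index $K'$ which a fortiori lie in the $K'+1$ classes by the inclusions after Definition~\ref{213}.
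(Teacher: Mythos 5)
Your outline matches the paper's (basis decomposition, order counting, a rank-one cancellation, algebraic conditions), but the key cancellation is attributed to the wrong mechanism, and without the correct one \eqref{3214} cannot be established. You claim the rank-one term vanishes ``because $P_0$, $A'_D$, $P_0^{-1}$ and all their derivatives are linear combinations of $\Ical_2$ and $\Lcal$, hence commute,'' invoking the cancellation from Lemma~\ref{323}. But with your own definition $A'_D = (\mk+\lambda')\Kcal + d'\Ical_2$, the middle factor contains $\Kcal$, and $P_0 A'_D - A'_D P_0 = 2\beta(\mk+\lambda')\Jcal \neq 0$: commutativity fails as soon as $A'$ is inserted between $P_0^{-1}$ and $P_0$. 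In Lemma~\ref{323} the two-factor product $P\#P^{-1}$ has both truncated factors in the commutative subalgebra $\mathrm{span}\{\Ical_2,\Lcal\}$; here that is no longer the case. The mechanism the paper actually uses is \eqref{trilinear}: for scalar symbols, the rank-one term of the \emph{symmetrized} composition $(a\#b\#c)_\rho + (c\#b\#a)_\rho$ vanishes identically (an anti-transposition identity for the Poisson-bracket term, not a commutativity statement). Writing out $P_0^{-1}\#(\mk+\lambda')\Kcal\#P_0$ in the basis, the $\Kcal$-coefficient $(1+\alpha)\#(\mk+\lambda')\#(1+\alpha)+\beta\#(\mk+\lambda')\#\beta$ and the $\Jcal$-coefficient $-[(1+\alpha)\#(\mk+\lambda')\#\beta+\beta\#(\mk+\lambda')\#(1+\alpha)]$ are exactly such symmetric sums, so their rank-one terms vanish and the contribution drops to order $m-2$.

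Without that cancellation the crude count does not give \eqref{3214}. The rank-$\ell$ term of $a\#b\#c$ has order $\operatorname{ord}(a)+\operatorname{ord}(b)+\operatorname{ord}(c)-\ell$, and $1+\alpha$ has order $0$, so the rank-one $\Jcal$-contribution is of order $0+\tfrac32+(m-\tfrac32)-1=m-1$, not $m-\tfrac32-(\ell-1)=m-\tfrac32$ as in your count. The rank-one term $\tfrac1{2i}\{\cdot,\cdot\}$ between real-leading factors is \emph{purely imaginary} of the same order as the bracket, so $\Im b''$ would be of order up to $m-1\leq\tfrac12$ rather than $\leq -\tfrac12$; the heuristic that ``taking imaginary parts picks up the extra $\partial_\xi$-order'' buys nothing here, since that extra order is already present in the rank-one term. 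Your conclusion that ``any surviving rank-one contribution must involve at least one of $\theta$, $b'$, $c'$'' is likewise false: the $\Lcal$-coefficient of $P_0^{-1}\#(d'\Ical_2)\#P_0$ is the \emph{anti}symmetric combination $(1+\alpha)\#d'\#\beta-\beta\#d'\#(1+\alpha)$, whose rank-one term survives and involves only $\alpha,\beta,d'$. It happens to have the right order $m-\tfrac32$ for $c''$, but that is a separate direct estimate (which the paper checks explicitly), not a corollary of the commutativity you invoked.
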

\begin{proof}
  Consider first those contributions to \eqref{3226} coming from one of the $\theta\Jcal$ factors. Since $\theta$ is of order
  $\min\bigl(m-2,-\frac{3}{2}\bigr)$, and the other factors are at most of order $\frac{3}{2}$ (resp. $0$) if they come from
  the middle (resp.\ one of the extreme) terms, the overall order would be $\min\bigl(m -\frac{1}{2},0\bigr)$. But since we
  are interested only on those contributions coming from the terms of the asymptotic expansion except the first one, we gain
  at least one extra order, i.e.\ we obtain a contribution to \eqref{3227} of order $\min\bigl(m -\frac{3}{2},-1\bigr)$, so
  that conditions \eqref{3213}, \eqref{3214} are largely satisfied. We are thus reduced to the study of elements of rank at
  least one in the asymptotic expansion \eqref{3226} with $\theta$ removed. Notice also that $c'$ being of order $\min\bigl(m
  -\frac{1}{2},0\bigr)$, the contribution to those elements of the asymptotic expansion \eqref{3226} satisfy as well
  \eqref{3213}, \eqref{3214}. We may as well discard $c'$, and are reduced to 
  examine terms of rank at least one in the
  following asymptotic expansions:
  \begin{equation} 
    \label{eq:3228}
    \bigl(((1+\alpha)\Ical_2 -\beta\Lcal)\# ((\mk+\lambda')\Kcal)\# ((1+\alpha)\Ical_2+\beta\Lcal)\bigr)_\rho
  \end{equation}
\begin{equation}
    \label{eq:3229}
    \bigl(((1+\alpha)\Ical_2 -\beta\Lcal)\# (b'\Jcal)\# ((1+\alpha)\Ical_2+\beta\Lcal)\bigr)_\rho
  \end{equation}
\begin{equation}
    \label{eq:3230}
    \bigl(((1+\alpha)\Ical_2 -\beta\Lcal)\# (d'\Ical_2)\# ((1+\alpha)\Ical_2+\beta\Lcal)\bigr)_\rho \, .
  \end{equation}
Consider first \eqref{3228}, that may be written using the relations $\Lcal\Kcal = -\Kcal\Lcal = \Jcal$, $\Lcal\Kcal\Lcal =
-\Kcal$ as
\begin{multline*}
  \bigl[(1+\alpha)\#(\mk+\lambda')\#(1+\alpha) + \beta\#(\mk+\lambda')\#\beta\bigr]_\rho\Kcal\\
-\bigl[(1+\alpha)\#(\mk+\lambda')\#\beta + \beta\#(\mk+\lambda')\#(1+\alpha)\bigr]_\rho \Jcal.
\end{multline*}
The expansions \eqref{222}, \eqref{223} (see in particular \eqref{trilinear}) 
show that 
the terms of rank one in   the coefficients of $\Kcal$
and $\Jcal$ above vanish. Consequently,
 the contribution to $R$ come only from terms of rank at least two in the
expansion. As $\mk+\lambda'$ is of order $\frac{3}{2}$ and $\alpha, \beta$ of order $m -\frac{3}{2}$, we get expressions of
order $m-2\leq-\frac{1}{2}$, so that conditions \eqref{3213}, \eqref{3214} are satisfied.

In the same way, using $\Lcal\Jcal = -\Jcal\Lcal = \Kcal$, $\Lcal\Jcal\Lcal =
-\Jcal$, we write \eqref{3229} as
\[
  \bigl[(1+\alpha)\#b'\#(1+\alpha) + \beta\#b'\#\beta\bigr]_\rho\Jcal
-\bigl[\beta\# b'\#(1+\alpha) + (1+\alpha)\#b'\#\beta\bigr]_\rho \Kcal
\]
and conclude that the corresponding contribution to $R$ are of order $m-2$. 
Finally, using that $ \Lcal^2 = \Ical_2 $, write \eqref{3230} as
\begin{multline*}
  \bigl[(1+\alpha)\#d'\#(1+\alpha) - \beta\#d'\#\beta\bigr]_\rho \Ical_2\\
+\bigl[(1+\alpha)\#d'\#\beta - \beta\# d'\#(1+\alpha)\bigr]_\rho \Lcal.
\end{multline*}
Again, by \eqref{trilinear} applied with $a=c$, we see that the 
coefficient of $\Ical_2$ is of order $-1$, since $d'$ is of order $1$, so that the conditions imposed
to $d''$ in \eqref{3213}, \eqref{3214} are largely satisfied. In the coefficients of $\Lcal$, we just use that $\alpha,
\beta$ are of order $m-\frac{3}{2}\leq 0$, and $d'$ of order $1$, to conclude that terms of rank one of the expansion are at
most of order $m-\frac{3}{2}$, which is the condition imposed to $c''$ in \eqref{3213}. 

To finish the proof of the lemma, we  have to check that conditions \eqref{311}, \eqref{313}, \eqref{314} hold true for
the matrix $R$. This follows from the fact that these conditions are satisfied by $A'$ and that $P$, $P^{-1}$ satisfy \eqref{312},
\eqref{313}, \eqref{315}. Notice also that \eqref{3217} is satisfied by $\lambda''$ since the contributions to this symbol
are all of non positive order.
\end{proof}
\vspace{2ex}\noindent\textsl{End of the proof of lemma~\ref{324}:} Recall that we have constructed matrices $P, Q$ in
$\sE{m-\frac{3}{2}}{K,K',1}{N}$ such that, according to \eqref{3224}, \eqref{3225}, \eqref{RM1}, \eqref{3223} and \eqref{3227},
\begin{equation}
  \label{eq:3231}
  (Q\#A'\#P)_{\rho,N} = A'' \stackrel{\textrm{def}}{=} (\mk(\xi)+\lambda'')\Kcal+b''\Jcal+c''\Lcal+d''\Ical_2
\end{equation}
for new values of the symbols $\lambda'', b'', c'', d''$ satisfying \eqref{3213}, \eqref{3214}, the matrix $A''$ satisfying
as well \eqref{312}, \eqref{313} and \eqref{314}. We have checked also that \eqref{3217} holds under the assumptions
\eqref{3216}. We are left with showing that \eqref{3215} holds, up to a modification of the matrix $A''$. Compute from
\eqref{3231}
\begin{multline}
  \label{eq:3232}
(D_t -\opbw(A''))\circ \opbw(Q) \\
= \opbw(Q)D_t -\opbw\bigl((Q\# A'\# P)_{\rho,N}\bigr)\circ\opbw(Q) + \opbw(D_tQ)\\
= \opbw(Q)\bigl[D_t - \opbw(A')\bigr] -\opbw\bigl((Q\# A'\# (P\#Q -\Ical_2))_{\rho,N}\bigr)\\
+\opbw(D_tQ) + R(U;t)
\end{multline}
where $ R(U;t) $, the smoothing remainder coming from \eqref{224}, \eqref{225}, may be taken in
$\sRM{-\rho+\frac{3}{2}}{K,K',1}{N}$. Moreover, $R(U;t)$ will satisfy conditions \eqref{316}, \eqref{318}, \eqref{319}, as it is
obtained from the composition of one operator satisfying these conditions and of operators obeying conditions \eqref{317},
\eqref{318} and \eqref{3110}. 
Because of \eqref{327}, the fact that $ A' $ is of order $ 3/2 $, and $ Q $ is of order $ \leq 0 $, 
the term 
\be\label{eq:newr2}
\opbw\bigl((Q\# A'\# (P\#Q -\Ical_2))_{\rho,N}\bigr) \in \sRM{-\rho+\frac{3}{2}}{K,K',1}{N} 
\ee
may be incorporated as well to $R(U;t)$.  To conclude the proof of \eqref{3215}, it remains to write
\begin{equation}
  \label{eq:3233}
 \opbw(D_tQ) =  \opbw(\tilde{A}'')\circ \opbw(Q) + R(U;t)
\end{equation}
for another smoothing operator $R(U;t)$ in $\sR{-\rho+\frac{3}{2}}{K,K'+1,1}{N}$, and 
for a matrix of symbols $\tilde{A}''$ satisfying the same
conditions as $A''$.
In conclusion  \eqref{3232}, \eqref{newr2}, \eqref{3233} will imply  
\eqref{3215}  with $A''$ replaced by $A'' + \tilde{A}''$  in its right
hand side.
In order to prove \eqref{3233} we remark first that, since $Q$ is in 
$\sE{m-\frac{3}{2}}{K,K',1}{N}$, then, recalling Definition \ref{321},
 $Q $ is in $ \sGM{m-\frac{3}{2}}{K,K',1}{N} $, 
and so, by lemma~\ref{217}, 
\begin{equation}\label{eq:DtQ}
D_tQ \in \sGM{m-\frac{3}{2}}{K,K'+1,1}{N} \, . 
\end{equation} 
Moreover, if $Q$ satisfies \eqref{312} (resp.\ \eqref{313}, resp.\ \eqref{315}), then $D_tQ$ satisfies \eqref{311}
(resp.\ \eqref{313}, resp.\ \eqref{314}). 
If we apply \eqref{327}, we may write
\[\opbw(D_tQ) = \opbw\bigl((D_tQ\#P)_{\rho,N}\bigr)\circ\opbw(Q)\]
modulo an operator $R(U;t)$ in $\sRM{-\rho+\frac{3}{2}}{K,K'+1,1}{N}$, satisfying \eqref{316}, \eqref{318}, \eqref{319}.
Thus
\eqref{3233} holds with $ \tilde{A}'' = (D_tQ\#P)_{\rho,N} $ 
 and we are left with checking that $ \tilde{A}''  $ satisfies the same properties as $ A'' $.
By \eqref{DtQ} and since $ P $ is of order zero, 
conditions \eqref{3213} hold. We have to check
  \eqref{3214}. Notice that, as $P, Q$ are in $\sE{m-\frac{3}{2}}{K,K',1}{N}$, conditions \eqref{322}, \eqref{323} imply that we
  may write \[P = (1+\alpha)\Ical_2 +\beta\Lcal + \textrm{ order }(-\frac{3}{2}),\ Q = (1+\alpha')\Ical_2 + \beta'\Lcal +
  \textrm{ order }(-\frac{3}{2}).\]
 It follows that in $(D_tQ\#P)_{\rho,N}$ the only contributions of order strictly larger 
  than $-\frac{3}{2}$ are multiples of $\Ical_2$ and $\Lcal$, and are of non positive order. This shows that \eqref{3214}
  holds and concludes the proof, since \eqref{311}, \eqref{313}, \eqref{314} hold, as they do for $D_tQ$.
\end{proof}

\begin{proof1}{Proof of Proposition~\ref{322}} We shall apply iteratively $ \rho $-times lemma  \ref{324},
obtaining  off-diagonal para-differential operators which are regularizing enough 
to be incorporated to the smoothing term in \eqref{326}. 
Consider a solution $U$ of the system \eqref{3112}.
Apply lemma~\ref{324} to the matrix $A'=A$ of equation \eqref{3112} and $m=\frac{3}{2}$, $K'=1$. We get a matrix
$A_1=A''$ whose coefficients satisfy \eqref{3213}, \eqref{3214} with $m= \frac{3}{2}$, $K'=1$, and \eqref{311}, \eqref{313},
\eqref{314}, and a matrix $Q_1$ in $\sE{0}{K,1,1}{N}$ such that by \eqref{3215}
\[(D_t -\opbw(A_1))\circ\opbw(Q_1) = \opbw(Q_1)[D_t-\opbw(A)] + R_1(U;t)\]
 with $R_1$ in $\sGM{-\rho+\frac{3}{2}}{K,2,1}{N}$, satisfying \eqref{316}, \eqref{318} and \eqref{319}. We iterate the
construction, getting matrices 
$$
Q_\ell \in \sE{1-\ell}{K,\ell,1}{N} \, , \quad 
A_\ell = (\mk(\xi)+\lambda_\ell)\Kcal + b_\ell
\Jcal + c_\ell\Lcal + d_\ell\Ical_2 
$$ 
where the coefficients $\lambda_\ell, b_\ell, c_\ell, d_\ell$ satisfy \eqref{3213},
\eqref{3214} with $m$ replaced by $\frac{3}{2}-(\ell-1)$, and $K'$ replaced by $\ell$, $A_\ell$ satisfying also \eqref{311},
\eqref{313} and \eqref{314}. We get
\begin{multline}
  \label{eq:3234}
\bigl(D_t-\opbw(A_\ell)\bigr)\opbw(Q_\ell)\circ\cdots\circ\opbw(Q_1)\\
= \opbw(Q_\ell) \circ\cdots\circ\opbw(Q_1) \bigl(D_t-\opbw(A)\bigr) + R_\ell(U;t)
\end{multline}
for some new $R_\ell(U;t)$ in $\sRM{-\rho+\frac{3}{2}}{K,\ell+1,1}{N}$. Since 
$$
b_\ell\Jcal \in \sGM{\frac{3}{2}-\ell}{K,\ell+1,1}{N} \, , \quad c_\ell\Lcal \in \sGM{\frac{3}{2}-\ell-\frac{1}{2}}{K,\ell+1,1}{N} \, , 
$$
we see, by the last remark after Proposition \ref{215}, that for $\ell=\rho$ we
obtain contributions that may be as well incorporated to the smoothing term in \eqref{326}. We thus define $A^{(1)}$ in
\eqref{325} taking only the remaining components of $A_\rho$, i.e.\ 
\[A^{(1)}(U;t,x,\xi) = (\mk(\xi)+\lambda_\rho)\Kcal + d_\rho\Ical_2\]
with symbols $\lambda_\rho$ that may be written by \eqref{3217} as $\mk\zeta^{(1)} + \ldu{1}$ with $\ldu{1}$ in
$\sG{\frac{1}{2}}{K,\rho+1,1}{N}$ satisfying \eqref{324}, and with $d_\rho$ that brings $\lambda_1^{(1)}$ in \eqref{325}. We
define
\begin{equation}
  \label{eq:3235}
  W = \opbw(Q_\rho) \circ\cdots\circ\opbw(Q_1)U.
\end{equation}
If we make act \eqref{3234} with $\ell=\rho$ on $U$, we deduce from \eqref{3112}
\begin{equation}
  \label{eq:3236}
  (D_t-\opbw(A^{(1)}(U; t, \cdot))W = R(U;t)U
\end{equation}
for some new $R(U;t)$ in $\sRM{-\rho+\frac{3}{2}}{K,\rho+1,1}{N}$. To finish the proof of the proposition, we still have 
to check that one may write the right hand side of the equation \eqref{3236}
as in \eqref{326} and to construct the matrix $P$. 
We define 
$$
P = (P_1\#\cdots\#P_\rho)_{\rho,N}
$$ 
where $P_\ell$ is the matrix associated to $Q_\ell$ by lemma~\ref{323}.  By \eqref{327}, the operator 
$$
\opbw((P_\ell\#Q_\ell)_{\rho,N})-\mathrm{Id} = R_\ell(U;t) \in \sRM{-\rho}{K,\ell,1}{N}
$$ 
and satisfies \eqref{317},
\eqref{318}, \eqref{3110}.  It follows  that 
$$
\opbw((P\#Q)_{\rho,N})-\mathrm{Id} = \tilde{R}(U;t) \in 
\sRM{-\rho}{K,\rho,1}{N}
$$ and satisfies as well \eqref{317},
\eqref{318}, \eqref{3110}.
By \eqref{3235}, we may write
\[U = \opbw(P)W - \tilde{R}(U;t)U.\]
Inserting this expression of $ U $ in the right hand side of \eqref{3236} repeatedly, we see, by Proposition \ref{232} and the last remark following Definition \ref{214}, 
that we may write  $ R(U; t) U $ as $ R'(U;t)W+R''(U;t)U$
with $R', R''$ satisfying the properties of the statement. This concludes the proof.
\end{proof1}

\chapter[Reductions and proof of main theorem]{Reduction to a constant coefficients operator and proof of the main theorem}\label{cha:4}

In this chapter we shall reduce the operator $\opbw(A^{(1)}(U; t, \cdot))$ in \eqref{326}, given in terms 
of the diagonal matrix
$A^{(1)}$ defined in \eqref{325}, to a constant coefficients operator, up to smoothing operators. 
We shall do that first for terms of higher order, and then for the lower
order contributions.

\section[Reduction of highest order]{Reduction to constant coefficients of the hig\-hest order part}\label{sec:41}

We apply Proposition~\ref{322} with $\rho$ replaced by
 $ \rho' = \rho + m $, for some integer $ m $ to be chosen in function of
$ N $ (in Proposition \ref{411} we require $ m \geq N + \frac32 $). 
Recalling the form \eqref{325} of  the diagonal matrix $A^{(1)}$, 
 we may rewrite equation \eqref{326} as the system \small
\begin{multline}
  \label{eq:415}
D_tW -\\ \opbw\begin{bmatrix}\mk(\xi)(1+\zeta^{(1)}(U;\cdot))+ \ldu{1} + \lu{1}_1 & \!\!\! \!\! 0\\0
  & \!\!\!\! \!\! -\mk(\xi)(1+\zeta^{(1)}(U;\cdot))- \ldu{1} + \lu{1}_1\end{bmatrix}W\\
=R'(U;t)W + R''(U;t)U
\end{multline}\normalsize
with $R'(U;t)$ (resp. $R''(U;t)$) in the class $\sRM{-\rho'+\frac{3}{2}}{K,\rho'+1,1}{N}$ (resp.\ in the class $\RrM{-\rho'+\frac{3}{2}}{K,\rho'+1,N}$),
satisfying the reality, parity preserving, reversibility conditions \eqref{316}, \eqref{318} and \eqref{319}. 
Moreover $ A^{(1)} $ 
satisfies the reality, parity preserving, reversibility conditions \eqref{311}, \eqref{313}, \eqref{314}, 
with coefficients verifying \eqref{324}. We  
rewrite  explicitly all these conditions as
 \begin{equation}
 \begin{aligned} \label{eq:411}
&   \lambda_1^{(1)} \in \sG{1}{K,\rho'+1,1}{N} \, , \qquad \  \ldu{1} \in \sG{\frac{1}{2}}{K,\rho'+1,1}{N}, \\
&   \Im \lambda_1^{(1)} \in \sG{0}{K,\rho'+1,1}{N} \, , \quad \Im\ldu{1} \in \sG{-\frac{1}{2}}{K,\rho'+1,1}{N}, \\
& \zeta^{(1)} \in \sFR{K,\rho'+1,1}{N}  \, , \qquad  \ \Im\zeta^{(1)}= 0 \, , 
\end{aligned}
\end{equation}
and, recalling that $\mk (\xi) $ is real valued, even, and  
  $ S {\cal K} = -  {\cal K} S $, $ S^2 = I $, 
\begin{equation}
  \label{eq:412}
  \begin{split}
  \bar{\lambda}^{(1)}_{1}(U;t,x,-\xi) &= -\lu{1}_1(U;t,x,\xi) \\
    \bar{\lambda}^{(1)}_{1/2}(U;t,x,-\xi) &= \ldu{1}(U;t,x,\xi) 
  \end{split}
\end{equation}
\begin{equation}
  \label{eq:413}
  \begin{split}
  \lambda^{(1)}_{1}(U;t,-x,-\xi) &= \lu{1}_1(U;t,x,\xi) \\
    \lambda^{(1)}_{1/2}(U;t,-x,-\xi) &= \ldu{1}(U;t,x,\xi) \\
\zeta^{(1)}(U;t,-x) &= \zeta^{(1)}(U;t,x)
  \end{split}
\end{equation}
\begin{equation}
  \label{eq:414}
  \begin{split}
  \lambda^{(1)}_{1}(U;-t,x,\xi) &= -\lu{1}_1(U_S;t,x,\xi) \\
  \lambda^{(1)}_{1/2}(U;-t,x,\xi) &= \ldu{1}(U_S;t,x,\xi) \\
  \zeta^{(1)}(U;-t,x) &= \zeta^{(1)}(U_S;t,x) \, . 
  \end{split}
\end{equation}
In Proposition \ref{411} we  shall 
conjugate system \eqref{415} under the paracomposition operator
$ \Phi_U^\star = \Omega_{B(U)} {\cal I}_2  $ defined in section \ref{sec:Para} induced by 
a diffeomorphism of $ \Tu $,  
\begin{equation}
  \label{eq:418-new}
  \Phi_U(t,x) = x + \beta (U;t,x) \, , 
\end{equation}
for a small periodic function $ \beta (U;t,x) $ to be chosen, in such a way that in \eqref{4112} the highest order 
coefficient  in front of $ m_\kappa (\xi) $ is constant in the space variable.  
It turns out that, denoting the inverse diffeomorphism of $ \Phi_U $ by 
\begin{equation}
  \label{eq:418}
  \Phi^{-1}_U(t,y) = y + \gamma(U;t,y) 
\end{equation}
we have to choose $ \gamma (U;t,y) $ as the unique primitive with zero average of the zero mean periodic function
\begin{equation}
  \label{eq:417}
  (1+\zu(U;t))^{\frac{2}{3}}[1+\zeta^{(1)}(U;t,y)]^{-\frac{2}{3}}-1 
\end{equation}
where
\begin{equation}
  \label{eq:416}
  \zu(U;t) = \biggl[\frac{1}{2\pi}\int_\Tu (1+\zeta^{(1)}(U;t,y))^{-\frac{2}{3}}\,dy\biggr]^{-\frac{3}{2}} -1 \, . 
\end{equation}
Notice that, as $\zeta^{(1)}(U;\cdot)$ vanishes at $U=0$, the above functions are well defined
for $ U $ small enough. 
Moreover
$\zu(U;t) $ is in $\sFR{K,\rho'+1,1}{N}$ and, by the third condition in \eqref{414}, 
  satisfies $ \zu(U;-t) = \zu(U_S;t) $.

\begin{proposition}
  \label{411}{\bf (Reduction of the highest order)}
Let $ \rho ' = \rho + N + 2 $.   There are:\\
$\bullet$ A function $\zu(U;t)$ in $\sFR{K,\rho'+1,1}{N}$, independent of $x$, satisfying
\begin{equation}
  \label{eq:419}
  \zu(U;-t) = \zu(U_S;t),
\end{equation}
$\bullet$ A symbol $\tilde{\lambda}(U; t, \cdot)$ in $\sG{\frac{1}{2}}{K,\rho'+1,1}{N}$ 
such that $\Im \tilde{\lambda}(U;t, \cdot ) $ belongs to $\sG{-\frac{1}{2}}{K,\rho'+1,1}{N}$, satisfying
\begin{equation}
  \label{eq:4110}
  \begin{split}
    \overline{\tilde{\lambda}}^\vee(U;t,x,\xi) &= \tilde{\lambda}(U;t,x,\xi)\\
\tilde{\lambda}(U;t,-x,-\xi) &= \tilde{\lambda}(U;t,x,\xi)\\
\tilde{\lambda}(U;-t,x,\xi) &= \tilde{\lambda}(U_S;t,x,\xi),
  \end{split}
\end{equation}
$\bullet$ A symbol $\mu(U; t, \cdot)$ in $\sG{1}{K,\rho'+2,1}{N}$ such that $\Im\mu(U; t, \cdot)$ belongs to the class $\sG{0}{K,\rho'+2,1}{N}$ and
\begin{equation}
  \label{eq:4111}
  \begin{split}
    \overline{\mu}^\vee(U;t,x,\xi) &= -\mu(U;t,x,\xi)\\
\mu(U;t,-x,-\xi) &= \mu(U;t,x,\xi)\\
\mu(U;-t,x,\xi) &= -\mu(U_S;t,x,\xi),
  \end{split}
\end{equation}
$\bullet$ Operators $R'(U;t)$ (resp.\ $R''(U;t)$) belonging to  $\sRM{-\rho+\frac{3}{2}}{K,\rho'+2,1}{N}$ (resp.\ $\RrM{-\rho+\frac{3}{2}}{K,\rho'+2,N}$)
satisfying the reality, parity preserving and reversibility conditions \eqref{316}, \eqref{318} and \eqref{319},

 such that, if we set $V= \Omega_B (1) W = \Phi_U^\star W$ with $W$ a solution to \eqref{415}, then
$V$ satisfies the system
\begin{multline} \label{eq:4112}
\bigl(D_t - \opbw\bigl[\bigl((1+\zu(U;t))\mk(\xi) + \tilde{\lambda}(U;t, \cdot)\bigr)\Kcal + \mu(U; t, \cdot)\Ical_2\bigr]\bigr)V\\
= R'(U;t)V+ R''(U;t)U.
\end{multline}
\end{proposition}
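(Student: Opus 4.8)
The strategy is to conjugate the diagonal system \eqref{415} by the paracomposition operator $\Phi_U^\star = \Omega_{B(U)}(1)$ associated to the diffeomorphism $\Phi_U(t,x) = x + \beta(U;t,x)$ of $\Tu$, where $\beta$ is determined by requiring that the inverse diffeomorphism be $\Phi_U^{-1}(t,y) = y + \gamma(U;t,y)$ with $\gamma$ the unique zero-mean primitive of \eqref{417}. First I would check that with this choice $\beta(U;t,\cdot)$ belongs to the class $\Sigma\Fcal_{K,\rho'+1,1}[r,N]$ and is real valued on $\CKHR{\sigma}{\C^2}$: indeed $\gamma$ is obtained from $\zeta^{(1)}\in\sFR{K,\rho'+1,1}{N}$ and $\zu(U;t)$ (which is in $\sFR{K,\rho'+1,1}{N}$, being a smooth function of the $x$-average of $(1+\zeta^{(1)})^{-2/3}$) by algebraic operations, integration in $x$ (harmless for the symbol class by the remark after Definition \ref{212}), and the inversion lemma for diffeomorphisms of $\Tu$ proved in the excerpt. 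Then $\beta = -\gamma\circ(\mathrm{Id}+\beta)$ is in the same class by that same lemma. The parity and reversibility properties of $\gamma$, hence of $\beta$, follow from \eqref{413}, \eqref{414}, \eqref{419}: $\zeta^{(1)}$ is even in $x$ so $\gamma$ is odd in $x$, giving $\beta(U;t,-x)=-\beta(U;t,x)$; and $\zeta^{(1)}(U;-t,\cdot)=\zeta^{(1)}(U_S;t,\cdot)$ with $\zu(U;-t)=\zu(U_S;t)$ gives $\beta(U;-t,x)=\beta(U_S;t,x)$.

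\textbf{The conjugation.} Next I would apply Theorem \ref{thm:para} and Proposition \ref{cong-Dt} to conjugate the full operator in \eqref{415}. Writing the matrix symbol in \eqref{415} as $(\mk(\xi)(1+\zeta^{(1)}) + \ldu{1})\Kcal + \lu{1}_1\Ical_2$, Theorem \ref{thm:para} applied entrywise gives
\begin{equation*}
\Phi_U^\star\,\opbw(A^{(1)})\,(\Phi_U^\star)^{-1} = \opbw(a_\Phi) + R(U;t)
\end{equation*}
with $R$ in $\Sigma\Rcal^{-\rho+3/2}_{K,\rho'+1,1}[r,N]$ (after renaming $\rho'=\rho+N+2$, the loss $m=3/2$ of the $\Kcal$-symbol order), and with principal symbol, by \eqref{a-0-forma},
\begin{equation*}
a_\Phi^0(U;t,x,\xi) = A^{(1)}\Bigl(U;\Phi_U(t,x),\ \xi\,\bigl(\partial_y\Phi_U^{-1}(t,y)\bigr)\big|_{y=\Phi_U(t,x)}\Bigr).
\end{equation*}
The point of the choice \eqref{417}--\eqref{416} is precisely that the $\mk$-contribution of $a_\Phi^0$ equals $(1+\zu(U;t))\mk(\xi)$ up to a symbol of order $1/2$: since $\mk(\xi)=\sqrt{\kappa}|\xi|^{3/2}(1+O(\xi^{-2}))$ and $\mk$ is $3/2$-homogeneous at high frequency, the factor $(1+\zeta^{(1)}(U;t,\Phi_U(t,x)))$ times $|\partial_y\Phi_U^{-1}|^{3/2} = (1+\partial_y\gamma)^{3/2}|_{y=\Phi_U(t,x)}$, with $1+\partial_y\gamma = (1+\zu)^{2/3}(1+\zeta^{(1)})^{-2/3}$, collapses to the constant $(1+\zu(U;t))$. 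All lower-order errors — the correction $a_\Phi^1$ of order $-1/2$ from Theorem \ref{thm:para}, the order-$1/2$ remainder in expanding $\mk$ at high frequency, and the diagonal $\lu{1}_1$-contribution — are absorbed into $\tilde\lambda$ (order $1/2$, imaginary part order $-1/2$) and $\mu$ (order $1$, imaginary part order $0$), with the $\Jcal$ and $\Lcal$ components being of sufficiently negative order (or produced only through at-least-rank-one terms of the symbolic calculus, which gain an order) that they too fall into the smoothing remainder or into $\mu\Ical_2$. The term from conjugating $D_t$ is handled by Proposition \ref{cong-Dt}: it produces $\opbw(e(U;\cdot))$ with $e\in\Sigma\Gamma^1_{K,\rho'+2,1}[r,N]$ and $\mathrm{Re}(e)\in\Sigma\Gamma^{-1}_{K,\rho'+2,1}[r,N]$, whose contribution is likewise of the form allowed in $\tilde\lambda\Kcal + \mu\Ical_2$ (the extra time derivative explains why $\mu$ lives in the class with second index $\rho'+2$).

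\textbf{Algebraic properties and conclusion.} Then I would verify that the conjugated matrix symbol satisfies the reality \eqref{311}, parity preserving \eqref{313} and reversibility \eqref{314} conditions, and hence the explicit relations \eqref{4110}, \eqref{4111} for $\tilde\lambda$ and $\mu$. This is a bookkeeping argument: $A^{(1)}$ satisfies these conditions by \eqref{412}--\eqref{414}; the paracomposition operators $\Phi_U^\star$ and $(\Phi_U^\star)^{-1}$ satisfy the parity-preserving property \eqref{318} (because $\beta$ is odd in $x$) and, on functions $U$ with $SU=-\bar U$, the anti-reality \eqref{317} and anti-reversibility \eqref{3110} properties (because $B(U;\theta,\cdot)=b(U;\theta,t,x)\xi$ is real-valued and the flow \eqref{flow-homogeneous} then preserves these structures, as in the proof of Lemma \ref{lem:flow}); by Lemma \ref{lem:Compos}, composing \eqref{311}/\eqref{314} between two operators of type \eqref{317}/\eqref{3110} preserves \eqref{311}/\eqref{314}. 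The same goes for $R'(U;t)=R(U;t)$ and for the new smoothing terms: the $R''(U;t)U$ contribution in \eqref{415}, conjugated, is handled by the remark after Theorem \ref{thm:para} (conjugating a smoothing operator of $\sR{-\rho}{K,\rho'+1,1}{N}$ by $\Omega_{B(U)}(1)$ gives one in $\sR{-\rho+N}{K,\rho'+1,1}{N}$) together with the expansion \eqref{243} of $\Phi_U^\star$ into multilinear maps plus an $O(U^N)$ remainder, and one rewrites $R(U;t)U$ as $R'(U;t)V+R''(U;t)U$ exactly as at the end of the proof of Proposition \ref{322}, using $W=(\Phi_U^\star)^{-1}V$ up to smoothing terms and Proposition \ref{232}. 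Setting $V=\Phi_U^\star W=\Omega_{B(U)}(1)W$ then yields \eqref{4112}.

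\textbf{Main obstacle.} The delicate point is keeping the order bookkeeping consistent: one must check that \emph{every} error generated — by $a_\Phi^1$, by the non-homogeneity of $\mk$ relative to $|\xi|^{3/2}$, by $\mathrm{Re}(e)$ and $\mathrm{Im}(e)$ from the $D_t$-conjugation, and by the off-diagonal debris produced through the symbolic calculus — is either of order $\le 1/2$ with imaginary part of order $\le -1/2$ (to join $\tilde\lambda$), or of order $\le 1$ with real part of order $\le -1$ and imaginary part of order $\le 0$ (to join $\mu$, noting $\mu$ need not be purely imaginary but its real part must be smoothing enough not to spoil the later energy estimate), or smoothing enough of order $\le -\rho+3/2$ to join $R'$. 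The superlinearity of $\mk$ (order $3/2 > 1$) is what makes this work, since it creates the gap between the $\mk$-term and all the correction terms; tracking this gap carefully through Theorem \ref{thm:para} and Proposition \ref{cong-Dt}, and simultaneously through the three algebraic symmetries, is the bulk of the work. Choosing $\rho'=\rho+N+2$ at the start (invoking Proposition \ref{322} with this larger parameter) ensures enough room for the losses $N$ (from conjugating the smoothing remainder) and $3/2$ (from the $\Kcal$-order) without eating into the final regularizing index $\rho$.
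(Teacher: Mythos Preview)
Your proposal is correct and follows essentially the same route as the paper: choose $\gamma$ via \eqref{417}--\eqref{416}, conjugate \eqref{415} by $\Phi_U^\star=\Omega_{B(U)}(1)$ using Theorem~\ref{thm:para} for the symbol part and Proposition~\ref{cong-Dt} for $D_t$, then verify the algebraic properties via Lemma~\ref{lem:Compos} and handle the smoothing remainders through the remark after Theorem~\ref{thm:para} together with the choice $\rho'=\rho+N+2$. Two small slips worth correcting: first, since $A^{(1)}$ in \eqref{415} is already diagonal and the paracomposition $\Omega_{B(U)}(1)\Ical_2$ is scalar, no $\Jcal$ or $\Lcal$ components are ever produced, so that part of your bookkeeping is vacuous; second, for $\mu$ the requirement is $\Im\mu\in\sG{0}{K,\rho'+2,1}{N}$ (not $\Re\mu$), and this is exactly what the $e$ from Proposition~\ref{cong-Dt} delivers once you pass from $\partial_t$ to $D_t$.
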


\begin{proof}
Applying Proposition \ref{cong-Dt} with $K'=\rho'+1$,  
under the change of variable $ V = \Omega_{B} (1) W $, system \eqref{415}
transforms into 
\begin{multline}
  \label{eq:4113}
D_tV = \\
 \Omega_{B} (1)  \opbw\bigl[\bigl(\mk(\xi)(1+\zeta^{(1)}(U;\cdot))+ \ldu{1}(U;\cdot)\bigr)\Kcal +
\lu{1}_1(U;\cdot)\Ical_2\bigr]\Omega_{B}^{-1} (1)  V \\
+ \big( {\rm Op}^{BW}(e_1 (U;\cdot)) \Ical_2 \big) V  
 + R(U;t)V  + \Omega_B (1) R'(U;t) \Omega_B^{-1} (1) V  + \Omega_B (1) R''(U;t)U 
\end{multline} 
where $e_1 $ is in $\sG{1}{K,\rho'+2,1}{N}$ with $\Im e_1 $ in $\sG{-1}{K,\rho'+2,1}{N}$ 
and $R(U;t) $ is in the space $\sR{-\rho}{K,\rho'+2,1}{N}$. 
Thus the operator
\begin{equation}
  \label{eq:4115}
   {\rm Op}^{BW}(e_1(U;\cdot)) \Ical_2  
\end{equation}
may be incorporated to $ {\rm Op}^{BW}(\mu) {\mathcal I}_2 $ in \eqref{4112}. Consider next the term
\begin{equation}
\label{eq:4118a}
\Omega_B (1) R'(U;t) \Omega_B^{-1} (1) V  + \Omega_B (1) R''(U;t)U.   
\end{equation}
Applying the remark following the proof of Theorem~\ref{thm:para},  
since $R'(U;t)$ is in
$\sRM{-\rho'+\frac{3}{2}}{K,\rho'+1,1}{N}$, we get that  
$$
\Omega_B (1) R'(U;t) \Omega_B^{-1} (1) \in 
\sRM{-\rho'+\frac{3}{2}+N}{K,\rho'+1,1}{N}
$$
and, since  $\rho' \geq \rho+N+ \frac32 $,  
we get an operator of $\sRM{-\rho}{K,\rho'+1,1}{N}$. 
The last term in  \eqref{4118a} belongs as well to $\RrM{-\rho}{K,\rho'+1,N}$.
We finally analyze  the  contribution
\begin{equation}
  \label{eq:4114}
  \Omega_{B} (1) \opbw\bigl[\bigl(\mk(1+\zeta^{(1)})+ \ldu{1}\bigr)\Kcal + \lu{1}_1\Ical_2\bigr]
   \Omega_{B}^{-1} (1) 
\end{equation}
that, by Theorem~\ref{thm:para} 
we may write  as
\begin{equation}
  \label{eq:4119}
  \opbw\Bigl[\bigl(\mk(1+\zeta^{(1)})+\ldu{1}\bigr)_\Phi\Kcal + (\lu{1}_1)_\Phi\Ical_2\Bigr] + R_2(U;t)
\end{equation}
where $R_2$ belongs to $\sRM{-\rho+\frac{3}{2}}{K,\rho'+1,1}{N}$. 
According to \eqref{a-phi-svi}-\eqref{a-0-forma} the symbol 
$\bigl(\mk(1+\zeta^{(1)})+\ldu{1}\bigr)_\Phi$ may be expanded as
\begin{equation}
  \label{eq:4120}
  \mk\Bigl(\xi\partial_y( \Phi_U^{-1}(t,y))\vert_{y=\Phi_U(t,x)}\Bigr)(1+\zeta^{(1)}(U;t,\Phi_U(t,x))) +
  (\ldu{1})^0_\Phi 
\end{equation}
modulo a symbol in $ \Sigma \Gamma^{- \frac12}_{K, \rho' + 1, 1} [r, N] $. 
But, according to   \eqref{418}, the definition of $ \gamma $ as a primitive of \eqref{417}, 
\[
\partial_y(\Phi_U)^{-1}(t,y) = 1 +\partial_y \gamma(U;t,y) = (1+\zu(U;t))^{\frac{2}{3}}(1+\zeta^{(1)}(U;t,y))^{-\frac{2}{3}} \, . 
\]  
Since by \eqref{3115}, $\mk(\xi) =
\sqrt{\kappa}\abs{\xi}^{\frac{3}{2}}(1+O(\xi^{-2}))$ when $\abs{\xi}$ goes to $+\infty$, one checks that
\[\mk(\xi) - \mk\bigl(\xi(1+\zu)^{\frac{2}{3}}(1+\zeta^{(1)})^{-\frac{2}{3}}\bigr)(1+\zu)^{-1}(1+\zeta^{(1)})\]
belongs to $\sG{-\frac{1}{2}}{K,\rho'+1,1}{N}$. 
It follows that 
$$ 
\bigl(\mk(1+\zeta^{(1)})+\ldu{1}\bigr)_\Phi = 
\mk(\xi)(1+\zu(U;t)) +   (\ldu{1})^0_\Phi  
$$ 
modulo a symbol of $\sG{-\frac{1}{2}}{K,\rho'+1,1}{N}$. 
Since by \eqref{411} the imaginary term $\Im\ldu{1}$ is of order  $-\frac{1}{2}$, 
we see that $(\ldu{1})_\Phi^0 $ is a symbol of order $\frac{1}{2}$ whose
imaginary part is of order $-\frac{1}{2}$. 
In the same way $ (\lu{1}_1)_\Phi $ is equal to $ (\lu{1}_1)_\Phi^0  $ modulo 
a symbol in $\sG{-1}{K,\rho'+1,1}{N} $.  Hence \eqref{411} implies that  
$ (\lu{1}_1)_\Phi $ 
is a symbol of $\sG{1}{K,\rho'+1,1}{N}$ whose imaginary part belongs
to $\sG{0}{K,\rho'+1,1}{N}$. 
We have thus written \eqref{4119} as
\begin{equation}
  \label{eq:4121}
  \opbw\bigl((\mk(\xi)(1+\zu(U;t))+ \tilde{\lambda}_{\frac{1}{2}}(U;\cdot))\Kcal + \tilde{\lambda}_1(U;\cdot)\Ical_2\bigr) + 
  R_2(U;t)
\end{equation}
for symbols   $ \tilde{\lambda}_j \in \sG{j}{K,\rho'+1,1}{N} $, $  j = \frac{1}{2}, 1 $,  satisfying 
$$
\Im  \tilde{\lambda}_{\frac{1}{2}}  \in \sG{-\frac{1}{2}}{K,\rho'+1,1}{N} \, , \quad 
\Im  \tilde{\lambda}_1 \in \sG{0}{K,\rho'+1,1}{N} \, , 
$$
and a smoothing operator $R_2(U;t)$ in $\sRM{-\rho+\frac{3}{2}}{K,\rho'+1,1}{N}$. Thus also 
\eqref{4114} gives contributions of  the form \eqref{4112}. 

It remains to check that the algebraic properties of  reality, parity and reversibility are 
preserved. 

Since $  \zeta^{(1)}(U;\cdot) $ is even in $ x $, by  the third condition \eqref{413}, the 
real valued function 
$ \gamma(U; \cdot )$ defined  as  a 
primitive of \eqref{417} is an odd  element of $\sFR{K,\rho'+1,1}{N}$. 
Moreover $ \gamma (U;-t,y) = \gamma (U_S;t,y)$ 
by  the third identity in \eqref{414}, \eqref{416}, and its definition as a primitive of \eqref{417}. 
Consequently also the function $ \beta (U; t, x ) $ defined in \eqref{418-new} 
by the inverse diffeomorphism   is odd, real valued and satisfies 
\be\label{eq:beta-inv}
\beta (U;-t,x) = \beta (U_S;t,x) \, . 
\ee
It follows that the matrix symbol 
$  B(U; \theta, t, x, \xi) {\mathcal I}_2 $,  with 
\be\label{eq:B-inv}
\quad  B(U; \theta, t, x, \xi) = 
 b ( U;  t, \theta, x ) \xi = \frac{\beta (U;t,x) }{1 + \theta \partial_x \beta(U;t,x)   } \xi \, ,
\ee
satisfies the reality, parity preserving and anti-reversibility properties \eqref{311}, \eqref{313}, \eqref{315}. 
Hence, as the flow $ \Omega_{B(U)} (1)  $  defined in \eqref{flow-homogeneous} 
is generated by the vector field $ {\rm Op}^{BW} (i B(U; \cdot )) $, we deduce that 
\be\label{eq:arepaare}
\Omega_{B(U)} (1) {\cal I}_2 \, , \  \Omega_{B(U)} (1)^{-1} {\cal I}_2 
\quad {\rm satisfy} \quad  \eqref{317}, \eqref{318}, \eqref{3110} \, . 
\ee
These elementary properties will be checked in the proof of lemma~\ref{420a} below. 
As a consequence, since  $R'(U;t)$, $R''(U;t)$ satisfy the  conditions \eqref{316}, \eqref{318} and \eqref{319}, 
  lemma \ref{lem:Compos} implies that 
$ \Omega_B (1) R'(U;t) \Omega_B^{-1} (1)$ as well as  $\Omega_B (1) R''(U;t)  $ in \eqref{4118a}
satisfy also
the reality, parity, reversibility properties  \eqref{316}, \eqref{318} and \eqref{319}. 

Similarly, since the
matrix  $ A^{(1) }$ in \eqref{325} satisfies \eqref{311}, \eqref{313} and \eqref{314}, 
the associated paradifferential operator, and thus the operator in
\eqref{4114} obtained by its conjugation with the flow $  \Omega_{B} (1)  $,  
satisfies the reality \eqref{316}, parity preserving \eqref{318} and 
reversibility \eqref{319} properties. 
Thus the same operator in \eqref{4119}
and \eqref{4121} satisfies as well these properties.
We now write \eqref{4121} as the sum of a paradifferential operator and a smoothing remainder 
that satisfy, each of them separately, the properties of reality, parity preserving  and 
reversibility. For  the reversibility property, we  write \eqref{4121} as the sum of the 
paradifferential
operator 
\begin{multline*}
  \frac{1}{2}\Bigl[\opbw\Bigl((\mk(1+\zu(U;t))+\tilde{\lambda}_{\frac{1}{2}}(U;t,\cdot))\Kcal +
  \tilde{\lambda}_1(U;t,\cdot)\Ical_2\Bigr)\\
-S \opbw\Bigl((\mk(1+\zu(U_S;-t))+\tilde{\lambda}_{\frac{1}{2}}(U_S;-t,\cdot))\Kcal +
  \tilde{\lambda}_1(U_S;-t,\cdot)\Ical_2\Bigr)S\Bigr] \\
  = 
\opbw\Bigl( \big( \mk \big(1+ \frac{\zu(U;t) + \zu(U_S;-t) }{2} \big)+
\frac{1}{2}  \big( \tilde{\lambda}_{\frac12}(U;t,\cdot) + 
\tilde{\lambda}_{\frac12}(U_S;-t,\cdot) \big) \big) \Kcal  \\
+  
 \frac12 (\tilde{\lambda}_1(U;t,\cdot) -  \tilde{\lambda}_1(U_S;-t,\cdot)) \Ical_2 \Bigr)
\end{multline*}
whose matrix  symbol satisfies \eqref{314}
and the  smoothing operator $ \frac{1}{2}\bigl[R_2(U;t) -SR_2(U_S;-t)S\bigr]$
that satisfies 
\eqref{319}. 
Then we decompose this operator arguing  in a similar way to
ensure  also  the reality  and the parity conditions. This shows that the
contribution of \eqref{4114} to \eqref{4113} may be written as in \eqref{4112}, with symbols satisfying 
conditions \eqref{419}, \eqref{4110}, \eqref{4111}, that are a translation of \eqref{311}, \eqref{313}, \eqref{314} for the corresponding
matrices, and with a smoothing term  $R'(U;t) $ satisfying \eqref{316}, \eqref{318}, \eqref{319}.

Consider now \eqref{4115} and the remainder $ R(U;t) $ in \eqref{4113}. 
By Proposition~\ref{cong-Dt} we have written 
\be\label{eq:Op-temp} 
 \Omega_{B(U)} (1) D_t \Omega_{B(U)}^{-1} (1) \Ical_2  =  {\rm Op}^{BW}(e_1(U;\cdot)) \Ical_2   + R(U;t)  
\ee
as the sum of a paradifferential and a smoothing operator. 
We first show that  
$$ 
\Omega_{B(U)} (1) D_t \Omega_{B(U)}^{-1} (1) \Ical_2  =  - D_t \Omega_{B(U)} (1) \Omega_{B(U)}^{-1}  (1) \Ical_2 
$$ 
satisfies 
 the reality, parity preserving and reversibility properties  \eqref{316}, \eqref{318} and \eqref{319}. 
 By \eqref{arepaare} and  lemma \ref{lem:Compos} it is sufficient to prove that 
$  D_t \Omega_{B(U)} (1) {\mathcal I}_2  $  satisfies  \eqref{316}, \eqref{318}, \eqref{319}. 
Recalling  the definition of $ \Omega_{B(U)} (1) $ in \eqref{flow-homogeneous} 
this is implied  by 
the fact that 
$ D_t {\rm Op}^{BW}( i B(U;\cdot)) \Ical_2 $ satisfies  \eqref{316}, \eqref{318}, \eqref{319}, namely that 
$ \partial_t  B(U; \theta, t, x, \xi) {\mathcal I}_2  $ satisfies  \eqref{311}, \eqref{313}, \eqref{314}. 
 But this follows from the fact, seen after \eqref{B-inv}, that 
$ B(U; \theta, t, x, \xi) {\mathcal I}_2 $, 
satisfies  \eqref{311}, \eqref{313}, \eqref{315} and the definition of these properties.
Finally we can repeat the previous argument for \eqref{4121} to ensure a decomposition as in \eqref{Op-temp} 
with 
$  {\rm Op}^{BW}(e_1(U;\cdot)) \Ical_2   $ and  $ R(U; t) $
that satisfy separately 
 the reality, parity preserving and reversibility properties \eqref{316}, \eqref{318} and \eqref{319}. 
\end{proof}

\section{Reduction to constant coefficient symbols}\label{sec:42}

In Proposition~\ref{411} we have obtained a diagonal system of the form 
\begin{equation}
  \label{eq:421}
  \bigl(D_t-\opbw(D(U;t, \cdot )) \bigr)V = R'(U;t)V + R''(U;t)U
\end{equation}
where $V = \Phi_U^\star W$, the smoothing operators 
$R'(U;t), R''(U;t)$ are, respectively, in  $\sRM{-\rho+\frac{3}{2}}{K,\rho'+2,1}{N}$ 
and $\RrM{-\rho+\frac{3}{2}}{K,\rho'+2,N}$, and 
where $D(U; t, \cdot)$ is a  diagonal matrix of symbols  of the form
\begin{equation}
  \label{eq:422}
  D(U;t,x,\xi) = (1+\zu(U;t))\mk(\xi)\Kcal + \tilde{\lambda}(U;t,x,\xi)\Kcal + \mu(U;t,x,\xi)\Ical_2
\end{equation}
where $\zu$ is a function of $\sFR{K,\rho'+1,1}{N}$, independent of $x$, the  symbol $\tilde{\lambda}$ is  in  
$\sG{\frac{1}{2}}{K,\rho'+1,1}{N}$ with $\Im  \tilde{\lambda} $ in $\sG{-\frac{1}{2}}{K,\rho'+1,1}{N}$, 
the symbol $\mu$ is in  $\sG{1}{K,\rho'+2,1}{N}$ with
$\Im\mu$ in $\sG{0}{K,\rho'+2,1}{N}$. Moreover the matrix of symbols $D$ satisfies the 
reality, parity preserving and reversibility properties  \eqref{311}, \eqref{313}, \eqref{314}, 
and the operators $R'$, $R''$ satisfy the 
reality, parity  preserving and reversibility properties \eqref{316}, \eqref{318}, \eqref{319}.

\smallskip

Our goal is to perform a new conjugation of the system 
\eqref{421} in order to replace in \eqref{422}, $\tilde{\lambda}$ and $\mu$ by
  \emph{constant coefficients} symbols, up to remainders of very negative order. 
We shall conjugate  \eqref{421} by the flow generated by the linear system
\begin{equation}
  \label{eq:423}
  \begin{split}
    \frac{d}{d\theta}\Omega_{F(U)}(\theta) &= i\opbw(F(U)) \Omega_{F(U)}(\theta)\\
\Omega_{F(U)}(0) &= \mathrm{Id}
  \end{split}
\end{equation}
where the matrix symbol $ F(U) = F(U; t)$ is self-adjoint (if $ F(U) $ has positive order).  
We first provide some properties of such auxiliary flow, proved as in 
lemma \ref{lem:flow}.  

\begin{lemma}\label{420} {\bf(Auxiliary flow)}
Let $0\leq K'\leq K$ be  integers, $ m \leq 1 $,  and let $F$ be a symbol in $\sGM{m}{K,K',1}{N} $,  
modulo a symbol of order zero,  valued in the space of
self-adjoint matrices (when $U$ belongs to $\CKHR{\sigma}{\C^2}$). 

Then there is $\sigma$ in $\R_+$,
and for any $U$ in  $\CKHR{\sigma}{\C^2}$, the system \eqref{423}
has a  unique solution $\of{\theta}$ defined for $\theta\in [-1,1]$. The  linear  operator $\of{\theta}$ is bounded on $\Hds{s}$ for any $s$,
and there is 
$r>0$ and for any $s\in \R$ a constant $C_s>0$ such that, for any $U$ in 
$\Br{K}{}$, any $W$ in $\Hds{s}$, any $\theta$ in $[-1,1]$
\begin{equation}
  \label{eq:425}
  C_s^{-1}\norm{W}_{\Hds{s}} \leq \norm{\oft W}_{\Hds{s}} \leq C_s \norm{W}_{\Hds{s}}.
\end{equation}
 Moreover, for any $k\leq K-K'$, we have
\begin{equation}
  \label{eq:425b}
  \norm{\partial_t^k[\of{\theta}W]}_{\Hds{s-\frac{3}{2}k}}\leq C \Gcals{s}{k,1}{W}  \, . 
\end{equation}
Finally $ \of{- \theta} = \big(\of{\theta} \big)^{-1} $. 
\end{lemma}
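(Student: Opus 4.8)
The plan is to mimic the proof of lemma~\ref{lem:flow}, which treats the scalar, order-one flow \eqref{flow-homogeneous}, adapting the argument to the matrix-valued, self-adjoint symbol $F(U)$ of order $m\le 1$. First I would set up the existence of $\of{\theta}$ by the truncation-and-compactness scheme already used there: replace $F(U)$ by $F_\lambda(U;t,x,\xi) = F(U;t,x,\xi)\chi(\xi/\lambda)$ for a cut-off $\chi\in C_0^\infty(\R)$ equal to one near zero, solve the resulting Banach-space ODE
\begin{equation*}
\frac{d}{d\theta}W_\lambda(\theta) = i\opbw(F_\lambda(U))W_\lambda(\theta),\quad W_\lambda(0)=W
\end{equation*}
for $W\in\Hds{s}$, obtain uniform-in-$\lambda$ energy bounds \eqref{425}, extract via Ascoli a limit solving the untruncated equation, and define $\of{\theta}W$ to be that limit. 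The crucial input for the uniform bound is that, since $F(U)$ is self-adjoint modulo order zero, $\opbw(F_\lambda(U)) - \opbw(F_\lambda(U))^*$ is bounded on $L^2$ uniformly in $\lambda$ (by Proposition~\ref{222}, the symbolic calculus, and the remark after its proof), so the $L^2$ energy estimate
\begin{equation*}
\norm{\Lambda_s(D)W_\lambda(\theta)}_{L^2}\le \norm{\Lambda_s(D)W}_{L^2} + C\nnorm{U}_{\dots}\Abs{\int_0^\theta\norm{\Lambda_s(D)W_\lambda(\theta')}_{L^2}\,d\theta'}
\end{equation*}
closes by Gronwall, using that $[\opbw(\Lambda_s),\opbw(F_\lambda(U))]\opbw(\Lambda_s^{-1})$ is bounded on $L^2$ with norm $O(\nnorm{U}_{K,\sigma})$. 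Here $\Lambda_s(\xi)=\absj{\xi}^s$. The key point to note — and this is why the hypothesis $m\le 1$ appears — is that the generator has order $\le 1$, so this energy method actually works; an order-$3/2$ generator would not be controllable this way.

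Next I would prove the two-sided estimate \eqref{425}: the lower bound follows from the upper bound applied to the backward flow, which exists by the same argument (running $\theta$ from $0$ to $-1$), together with the identity $\of{-\theta}=\big(\of{\theta}\big)^{-1}$; the latter is immediate from uniqueness for \eqref{423}, since $\theta\mapsto\of{-\theta}\of{\theta}$ and $\theta\mapsto\of{\theta}\of{-\theta}$ both solve a linear ODE with value the identity at $\theta=0$ and vanishing derivative. Continuity of $\theta\mapsto\of{\theta}W$ in $\Hds{s}$ is obtained exactly as in lemma~\ref{lem:flow}, by differentiating $\norm{\of{\theta}W}_{\Hds{s}}^2=\norm{\Lambda_s(D)\of{\theta}W}_{L^2}^2$ and invoking self-adjointness of $\opbw(F(U))$ modulo bounded operators together with symbolic calculus.

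For the time-derivative bound \eqref{425b} I would proceed inductively in $k$, as in lemma~\ref{lem:flow}. Differentiating \eqref{423} in $t$ and applying Duhamel gives
\begin{equation*}
\partial_t\of{\theta} = \of{\theta}\int_0^\theta \of{\tau}^{-1}\,i\opbw(\partial_t F(U))\,\of{\tau}\,d\tau,
\end{equation*}
and since $F(U)$ is in $\sGM{m}{K,K',1}{N}$ with $m\le 1$, $\partial_t F(U)$ lies in $\sGM{m}{K,K'+1,1}{N}$ by lemma~\ref{217} (using that $U$ solves its own equation \eqref{2129}); then Proposition~\ref{215} bounds $\opbw(\partial_t F(U))$ on $\Hds{s}$ into $\Hds{s-m}$ with the $\Gcals{\sigma}{k'+K',1}{U}$ semi-norms, and combining with \eqref{425} for $\of{\theta}$ and $\of{\tau}^{-1}$ yields the $k=1$ case with a loss of $m\le\frac32$ derivatives. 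Iterating, each extra $\partial_t$ costs at most $\frac32$ derivatives and one extra $\Gcal$-factor, which, since $U$ stays in the ball $\Br{K}{}$ of radius $r$, gives \eqref{425b} for all $k\le K-K'$. I expect the routine but slightly delicate bookkeeping to be the propagation of the uniform-in-$\lambda$ estimates through the compactness argument and the careful tracking of which Sobolev index and which order of time derivative is lost at each stage; conceptually everything is already contained in lemma~\ref{lem:flow}, the only genuine change being the passage from a scalar order-one symbol to a matrix-valued symbol of order $m\le1$ which is self-adjoint only modulo order zero — handled by absorbing the order-zero antisymmetric part into the bounded error terms of the energy inequality.
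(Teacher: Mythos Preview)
Your approach is essentially the paper's own: the paper simply says the lemma is ``proved as in lemma~\ref{lem:flow}'', and you reproduce that argument (truncate, energy estimate via self-adjointness modulo order zero, Ascoli, then Duhamel for time derivatives). One small point: you invoke lemma~\ref{217} to place $\partial_t F(U)$ in $\sGM{m}{K,K'+1,1}{N}$, which imports the hypothesis that $U$ solves an equation of the form \eqref{2129}; the statement of the lemma does not assume this, and the paper's model argument avoids it by using Proposition~\ref{215} (specifically \eqref{2123}) directly, which bounds $\opbw(\partial_t^k a)$ for any symbol in $\Gr{m}{K,K',p}$ and any $k\le K-K'$ purely from the definition of the symbol class. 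In the applications $U$ does solve such an equation, so your route works in context, but for the lemma as stated Proposition~\ref{215} is the cleaner reference.
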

{\bf Remark:} 
We   shall apply below this lemma for $ m \leq 1/ 2 $. Clearly  if $ m \leq 0 $ 
the statement follows by just an 
ODE argument (and the matrix $ F(U) $ does  not need to be self-adjoint). 
The property  $ \of{- \theta} = \of{\theta}^{-1} $ holds  because $ F(U)$ is autonomous in the variable $ \theta $.

\smallskip

The flow $\oft$ inherits the following algebraic properties of the operator $\opbw(F(U))$,
where $ S $ is the involution defined in \eqref{def-S} and 
 $\tau$ the map in \eqref{def-tau}. 
 Since  $ F(U) = F(U; t) $ depends on the time
variable $ t $, we write below also 
$ \Omega_{F(U;t,\cdot)}(\theta) $  to make
appear explicitly this time dependence.  

\begin{lemma}\label{420a}
If  $\opbw(F(U))$ satisfies the reality \eqref{316}, resp. parity preserving \eqref{318}, anti-reversibility \eqref{3110},  
property, then the flow operator $\oft$ satisfies, for any $\theta $, 
the anti-reality \eqref{317}, resp. parity preserving \eqref{318}, anti-reversibility \eqref{3110}, 
property i.e. 
\begin{equation}\begin{split}
  \label{eq:424}
  \overline{\oft W} &= S\oft S\overline{W}\\ \oft\circ\tau &= \tau\circ\oft\\ 
\Omega_{F(U;-t,\cdot)}(\theta)  &=S \Omega_{F(U_S;t,\cdot)}(\theta) S \, .
\end{split}\end{equation}
\end{lemma}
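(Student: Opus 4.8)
The plan is to derive the three algebraic identities for $\oft$ from the corresponding identities for $\opbw(F(U))$ by a uniqueness-of-solutions argument: for each of the three properties we exhibit a second operator-valued function of $\theta$ that solves the same linear ODE \eqref{423} (or a conjugated version of it) with the same initial data, and then invoke uniqueness, which holds by Lemma~\ref{420}. This is the standard way to transfer symmetries through a flow, and here it is made legitimate because the generator $\opbw(F(U))$ is $\theta$-independent.

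First I would treat the anti-reality property \eqref{317}. Recall that the reality condition \eqref{316} on $M=\opbw(F(U))$ reads $\overline{M W} = -SMS\overline{W}$, equivalently $\overline{iM} = S(iM)S$ as operators, so that $i\opbw(F(U))$ satisfies the anti-reality property (this is exactly the third remark after Definition~\ref{Def:RPR}). Set $\Phi(\theta)W := \overline{S\,\oft\,S\,\overline{W}}$. Using that $S^2=\mathrm{Id}$, that conjugation is antilinear, and that $i\opbw(F(U))$ is anti-real, one computes $\frac{d}{d\theta}\Phi(\theta) = \overline{S\,(i\opbw(F(U)))\,\oft\,S\,\overline{W}}$. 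Writing $i\opbw(F(U)) = S\,\overline{i\opbw(F(U))}\,S$ (the anti-reality identity), and pushing the two $S$'s and the conjugation through, one finds $\frac{d}{d\theta}\Phi(\theta) = i\opbw(F(U))\Phi(\theta)$, while $\Phi(0)=\mathrm{Id}$. By the uniqueness part of Lemma~\ref{420}, $\Phi(\theta)=\oft$, which is precisely the first line of \eqref{424}. The parity-preserving property \eqref{318} is handled identically but more easily: if $\opbw(F(U))\circ\tau = \tau\circ\opbw(F(U))$, then $\theta\mapsto \tau\circ\oft\circ\tau^{-1}$ (note $\tau^{-1}=\tau$) solves \eqref{423} with the same initial condition, because $\tau$ commutes with the generator; uniqueness gives $\tau\circ\oft\circ\tau = \oft$, i.e. $\oft\circ\tau = \tau\circ\oft$.

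The last identity, anti-reversibility, is the one requiring the most care because it mixes the time variable $t$ and $\theta$, and because $F(U;t,\cdot)$ depends on $t$ both explicitly and through $U(t)$. The anti-reversibility condition \eqref{3110} on $M(U;t)=\opbw(F(U;t,\cdot))$ reads $\opbw(F(U;-t,\cdot))\,S = S\,\opbw(F(U_S;t,\cdot))$. Fix $t$ and consider the two $\theta$-dependent operators $A(\theta) := \Omega_{F(U;-t,\cdot)}(\theta)\,S$ and $B(\theta) := S\,\Omega_{F(U_S;t,\cdot)}(\theta)$. Both take the value $S$ at $\theta=0$. Differentiating, $\frac{d}{d\theta}A(\theta) = i\opbw(F(U;-t,\cdot))\,\Omega_{F(U;-t,\cdot)}(\theta)\,S = i\opbw(F(U;-t,\cdot))\,A(\theta)$, whereas $\frac{d}{d\theta}B(\theta) = S\,i\opbw(F(U_S;t,\cdot))\,\Omega_{F(U_S;t,\cdot)}(\theta) = i\,S\,\opbw(F(U_S;t,\cdot))\,S^{-1}\,B(\theta)$; using \eqref{3110} in the form $S\,\opbw(F(U_S;t,\cdot))\,S = \opbw(F(U;-t,\cdot))$ (since $S^{-1}=S$), this last coefficient equals $i\opbw(F(U;-t,\cdot))$ as well. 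Hence $A$ and $B$ solve the same linear ODE with the same initial value, so $A(\theta)=B(\theta)$, which is exactly the third line of \eqref{424}, namely $\Omega_{F(U;-t,\cdot)}(\theta) = S\,\Omega_{F(U_S;t,\cdot)}(\theta)\,S$.

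I expect the only genuine subtlety to be bookkeeping of which argument ($U$ versus $U_S$, $t$ versus $-t$) appears where, together with the fact that the generator is autonomous in $\theta$, which is what makes the uniqueness argument apply verbatim; Lemma~\ref{420} already supplies the existence, uniqueness, and the needed Sobolev bounds, so no analytic difficulty remains. One should also remark that, since \eqref{3110} is only assumed to hold for the homogeneous components in the sense of Lemma~\ref{non-hombis}, the computation above is first carried out componentwise and then summed, exactly as in the proof of Lemma~\ref{hom-nonhom}; this is routine and I would merely note it. Finally, the same scheme gives the reality/reversibility variants should they be needed, by replacing \eqref{317}, \eqref{3110} with \eqref{316}, \eqref{319} throughout.
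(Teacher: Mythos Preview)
Your proof is correct and follows essentially the same strategy as the paper: both arguments rest on uniqueness for the linear ODE \eqref{423}. The paper phrases it as ``the difference of the two sides solves a linear ODE with zero initial data, hence vanishes,'' while you phrase it as ``both sides solve the same linear ODE with the same initial data, hence coincide''; these are of course the same argument.

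One minor point: your closing remark that \eqref{3110} ``is only assumed to hold for the homogeneous components in the sense of Lemma~\ref{non-hombis}'' is unnecessary and slightly misreads the hypothesis. The lemma assumes \eqref{3110} for the full operator $\opbw(F(U))$, not just componentwise, so no decomposition or summation step is needed; your main computation already uses the hypothesis in exactly the form required.
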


\begin{proof}
By assumption $ \opbw(F(U)) $ satisfies respectively
\[
\begin{split}
\overline{\opbw(F(U))W} & = -S\opbw (F(U))S\overline{W} \\
\opbw(F(U))\circ\tau & = \tau\circ\opbw(F(U)) \\
\opbw(F(U;-t,\cdot)) & = S\opbw(F(U_S;t,\cdot))S \, .
\end{split}
\]
Then $\oftt$ defined respectively by
\[
\begin{split}
  \oftt W &= \overline{\oft W} -S\oft S\overline{W}\\
\oftt &= \oft\circ \tau - \tau\circ\oft\\
\tilde{\Omega}_{F(U;t,\cdot)}(\theta) &= \Omega_{F(U;-t,\cdot)}(\theta) -S \Omega_{F(U_S;t,\cdot)}(\theta) S
\end{split}\]
satisfies respectively 
\[
\begin{split}
 \frac{d}{d\theta}\oftt & = iS\opbw(F(U))S\oftt \\
 \frac{d}{d\theta}\oftt  & = i\opbw(F(U))\oftt \\
\frac{d}{d\theta} \tilde{\Omega}_{F(U;t,\cdot)}(\theta) & = iS\opbw(F(U_S; t))S \tilde{\Omega}_{F(U;t,\cdot)}(\theta)
\end{split}\]
with zero initial condition. 
Consequently $ \oftt = 0 $ for all $ \theta $ and  \eqref{424} follows. 
\end{proof}
Let us write down the 
analogous properties 
satisfied by the paracomposition operator  $ \Phi_U^\star = \Omega_{B(U)} (1) {\cal I}_2 $ defined in
Theorem~\ref{thm:para}.  Since the change of variables $\Phi_U$ depends on the time
variable $t$, we shall write below $ \Phi_U(t;\cdot)^\star = \Omega_{B(U;t,\cdot)} (1) {\cal I}_2 $ instead of  $
\Phi_U^\star$  when we want to make
appear explicitly this time dependence. 

\begin{lemma}\label{lem:flowp}
We have 
\begin{equation}\begin{split}
  \label{eq:424a}
 &  \overline{\Phi_U(t;\cdot)^\star W} = \Phi_U(t;\cdot)^\star\overline{W} =
  \Phi_{U_S}(-t;\cdot)^\star\overline{W} \\
    & \Phi_U(t;\cdot)^\star \circ\tau = \tau\circ \Phi_U(t;\cdot)^\star \\
  & S \Phi_U(t;\cdot)^\star = \Phi_U(t;\cdot)^\star S = \Phi_{U_S}(-t;\cdot)^\star S \, .
\end{split}\end{equation}
\end{lemma}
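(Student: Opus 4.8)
The three identities in \eqref{424a} are the translation, for the paracomposition operator $\Phi_U^\star = \Omega_{B(U)}(1){\cal I}_2$, of the three algebraic properties of the symbol $B(U;\theta,t,x,\xi){\cal I}_2$ established in the proof of Proposition~\ref{411} just before the statement of this lemma, namely that $B{\cal I}_2$ satisfies the reality condition \eqref{311}, the parity preserving condition \eqref{313}, and the anti-reversibility condition \eqref{315}. The plan is simply to push these symbol-level properties through the flow $\Omega_{B(U)}(\theta)$ generated by \eqref{flow-homogeneous}, exactly as in Lemma~\ref{420a}, and then to specialize the resulting operator identities to $\theta=1$.

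First I would recall that the flow $\Omega_{B(U;t,\cdot)}(\theta)$ solves $\frac{d}{d\theta}\Omega_{B(U;t,\cdot)}(\theta) = i\,{\rm Op}^{BW}(B(U;\theta,t,\cdot))\,\Omega_{B(U;t,\cdot)}(\theta)$ with $\Omega_{B(U;t,\cdot)}(0) = {\rm Id}$. Since $B{\cal I}_2$ is a real-valued scalar symbol (so $\overline{B}^\vee = B$, which is \eqref{311} for this scalar multiple of ${\cal I}_2$ together with $S{\cal I}_2 = -{\cal I}_2 S$ only formally — note $B$ is scalar valued, even in $\xi$, so $\opbw(iB)$ satisfies the anti-reality condition \eqref{317}; more precisely $B{\cal I}_2$ satisfies \eqref{311} means $\opbw(B{\cal I}_2)$ satisfies \eqref{316}, hence $\opbw(iB{\cal I}_2)$ satisfies \eqref{317}, which is what drives the flow). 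I would then set, in analogy with Lemma~\ref{420a}, the three "defect" operators
\[
\Delta_1(\theta) = \overline{\Omega_{B(U;t,\cdot)}(\theta)\,\overline{W}} - \Omega_{B(U;t,\cdot)}(\theta)W,\qquad
\Delta_2(\theta) = \Omega_{B(U;t,\cdot)}(\theta)\circ\tau - \tau\circ\Omega_{B(U;t,\cdot)}(\theta),
\]
\[
\Delta_3(\theta) = S\,\Omega_{B(U;t,\cdot)}(\theta) - \Omega_{B(U;t,\cdot)}(\theta)\,S,\qquad
\Delta_4(\theta) = S\,\Omega_{B(U;-t,\cdot)}(\theta)\,S - \Omega_{B(U_S;t,\cdot)}(\theta),
\]
use that the symbol $B$ is real and even in $(x,\xi)$ and satisfies $B(U;\theta,-t,x,\xi) = -B(U_S;\theta,t,x,\xi)$ (the anti-reversibility \eqref{315}, together with $B$ scalar so the sign from $S\,{\cal K}\,S$ does not intervene — here one uses instead the simpler fact that $\opbw(iB{\cal I}_2)\circ S = S\circ\opbw(iB(U_S;\theta,-t,\cdot){\cal I}_2)$), and differentiate each $\Delta_j$ in $\theta$. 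In each case one checks that $\Delta_j(\theta)$ solves a linear ODE of the form $\frac{d}{d\theta}\Delta_j(\theta) = (\text{bounded operator})\,\Delta_j(\theta)$ with $\Delta_j(0) = 0$, whence $\Delta_j \equiv 0$ for all $\theta\in[-1,1]$ by uniqueness (Lemma~\ref{lem:flow}). Evaluating at $\theta = 1$ gives, in order: $\overline{\Phi_U(t;\cdot)^\star\overline{W}} = \Phi_U(t;\cdot)^\star W$ (equivalently the first equality of the first line of \eqref{424a} once one notes $\Phi_U^\star$ is real, i.e. maps real functions to real functions, because $\overline{\Omega_{B}(1)W} = \Omega_{B}(1)\overline{W}$); the second line; and $S\Phi_U(t;\cdot)^\star = \Phi_U(t;\cdot)^\star S$ for the first equality of the third line.

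It remains to obtain the two identities involving $U_S$, i.e. $\Phi_U(t;\cdot)^\star\overline{W} = \Phi_{U_S}(-t;\cdot)^\star\overline{W}$ and $\Phi_U(t;\cdot)^\star S = \Phi_{U_S}(-t;\cdot)^\star S$. These follow from $\Delta_4 \equiv 0$ at $\theta = 1$, which reads $S\,\Omega_{B(U;-t,\cdot)}(1)\,S = \Omega_{B(U_S;t,\cdot)}(1)$, i.e. $S\,\Phi_U(-t;\cdot)^\star\,S = \Phi_{U_S}(t;\cdot)^\star$; replacing $t$ by $-t$ and combining with the already established commutation $S\Phi_U^\star = \Phi_U^\star S$ yields $\Phi_U(t;\cdot)^\star = \Phi_{U_S}(-t;\cdot)^\star$ as operators (acting on functions even or on general functions of the appropriate space), which when applied to $\overline{W}$ or composed with $S$ gives the missing equalities. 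Alternatively, and perhaps more transparently, one observes at the symbol level that $b(U;\theta,-t,x) = -b(U_S;\theta,t,x)$ (from \eqref{beta-inv} and the definition \eqref{B-inv} of $b$), so that the transport equation \eqref{transportPDE} for the change of variables generated by $-t$ with coefficient $b(U;\theta,-t,\cdot)$ is literally the same as the one generated by $t$ with coefficient $b(U_S;\theta,t,\cdot)$ up to the sign, and hence the associated paracomposition flows coincide.

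\textbf{Main obstacle.} There is no deep difficulty here: this is a routine "differentiate the defect and invoke ODE uniqueness" argument, identical in spirit to Lemma~\ref{420a}. The one point requiring a little care is bookkeeping the signs and the role of $S$: because $B{\cal I}_2$ is a scalar multiple of the identity matrix (rather than, say, proportional to ${\cal K}$), conjugation by $S$ does not produce the sign change that appears in the reversibility conditions for ${\cal K}$-type symbols, so one must verify directly — using that $B$ is real, even in $x$ and in $\xi$, and odd under $t\mapsto -t$ combined with $U\mapsto U_S$ — that $\opbw(iB(U;\theta,-t,\cdot){\cal I}_2)$ conjugated by $S$ equals $\opbw(iB(U_S;\theta,t,\cdot){\cal I}_2)$ with the correct sign, and similarly that it commutes with $S$ for fixed $t$ and $U$. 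Once these elementary symbol identities are in hand — and they are exactly the content of the remark in the proof of Proposition~\ref{411} that $B(U;\theta,t,x,\xi){\cal I}_2$ satisfies \eqref{311}, \eqref{313}, \eqref{315} — the lemma follows by the uniqueness argument above. I would present this as a short proof of a few lines, mirroring the proof of Lemma~\ref{420a}, rather than spelling out every ODE.
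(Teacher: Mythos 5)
Your approach is the same as the paper's: read off the algebraic properties of the scalar symbol $B(U;\theta,t,x,\xi)\,{\cal I}_2$ established after \eqref{B-inv}, push them through the flow $\Omega_{B(U)}(\theta)$ via the defect-ODE argument of Lemma~\ref{420a}, and use the fact that a scalar operator tensored with ${\cal I}_2$ commutes with the constant matrix $S$. That is precisely the structure of the paper's proof.

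However, you have the signs wrong in the symbol-level identities you invoke, and since you yourself flag sign bookkeeping as the delicate point, they should be corrected. You write ``$\overline{B}^\vee = B$, which is \eqref{311}'' — but $B(x,\xi)=b(x)\xi$ is real and \emph{odd} in $\xi$, so $\overline{B}^\vee = -B$, and this is indeed what \eqref{311} gives for a scalar matrix $A=B{\cal I}_2$, since $-SA S = -B{\cal I}_2$. You also write twice that $B(U;\theta,-t,\cdot) = -B(U_S;\theta,t,\cdot)$ and $b(U;\theta,-t,\cdot)=-b(U_S;\theta,t,\cdot)$, with a spurious minus sign. The anti-reversibility condition \eqref{315} for $A=B{\cal I}_2$ reads $B(U;-t,\cdot)\,{\cal I}_2 S = S\,B(U_S;t,\cdot)\,{\cal I}_2$, i.e.\ $B(U;-t,\cdot)=B(U_S;t,\cdot)$ with \emph{no} sign, consistent with $\beta(U;-t,x)=\beta(U_S;t,x)$ from \eqref{beta-inv} and the definition \eqref{B-inv}. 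This matters: the ``more transparent'' route you offer at the end — matching the transport coefficients — is exactly the route the paper takes, and with your minus sign it would yield $\Phi_U(t;\cdot)^\star = \big(\Phi_{U_S}(-t;\cdot)^\star\big)^{-1}$ rather than equality. The main $\Delta_4$ argument avoids the error because it manipulates the operator identities \eqref{3110}, \eqref{317} directly, but the stated symbol relations should be $\overline{B}^\vee=-B$ and $B(U;-t,\cdot)=B(U_S;t,\cdot)$.
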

\begin{proof}
Recall that $  \Omega_{B(U; t,\cdot)}( \theta ) $ is the scalar flow defined in \eqref{flow-homogeneous}.
Thus  $  \Omega_{B(U;t, \cdot)}( \theta ) {\cal I}_2 $, in particular $ \Phi_U(t;\cdot)^\star $, 
commutes with $ S = - \bigl[\begin{smallmatrix}0&1\\1&0\end{smallmatrix}\bigr] $.
The operator $ \opbw{(B(U; t,\cdot ))} {\cal I}_2 $ 
satisfies
\eqref{316},  \eqref{318}, \eqref{3110}, because
$ B(U;t, \cdot ) {\cal I}_2 $ satisfies the 
reality, parity preserving and antireversibility properties \eqref{311}, \eqref{313}, \eqref{315}
as proved after \eqref{B-inv}.  
Thus  the first identity  \eqref{424} (with $F$ replaced by $B$) and the fact that 
$\Omega_{B(U;t,\cdot)} (\theta)  {\cal I}_2 $  commutes with $S$, imply 
$  \overline{\Phi_U(t;\cdot)^\star W} = \Phi_U(t;\cdot)^\star\overline{W} $.
The second identity in \eqref{424} directly proves $ \Phi_U(t;\cdot)^\star \circ\tau = \tau\circ \Phi_U(t;\cdot)^\star $.  
Finally $ \Phi_U(t;\cdot)^\star = \Phi_{U_S}(-t;\cdot)^\star $ follows by the third identity in \eqref{424} 
and the fact that $ \Phi_U(t;\cdot)^\star $  commutes with $ S $,
or, more directly,  by 
$B(U;-t,\cdot) = B(U_S;t,\cdot) $ according to \eqref{beta-inv} and \eqref{B-inv}.
 All the identities in \eqref{424a} are proved. 
\end{proof}

The main result of this section is the following one:

\begin{proposition}\label{421} {\bf (Reduction  to  constant coefficients of \eqref{421})}
For any integers $N$, $\rho$ in $\N$, we denote by $\rho'$ the 
number introduced at the beginning of section~\ref{sec:41} (and in Proposition \ref{411}).  Set 
\be\label{eq:cond}\begin{split}
\ku &= 
\textrm{integer part of }
\rho'+2\rho+\frac{3}{2}N+2 \, ,\\ \quad L &= \max(2\rho-1,N-1) \, , 
\end{split}\ee
and take $K\geq\ku$. Let  $U \in \CKHR{\sigma}{\C^2}$ be in 
$\Br{K}{}$ for $ r $ small enough provided by lemma \ref{420}. There are\\
$\bullet$ A family of  diagonal 
matrices of symbols $F_\ell(U)$ in $\sGM{\frac{1}{2}-\frac{\ell}{2}}{K,\ku,1}{N}$, $\ell = 0,\dots,L$, 
with $\Im F_0=0$, satisfying the reality \eqref{311}, parity preserving \eqref{313} and 
anti-reversibility \eqref{315} properties, \\
$\bullet$ Smoothing operators $R_1(U;t)$ in the space $\sRM{-\rho+\frac{3}{2}}{K,\ku,1}{N}$, $R_2(U;t)$ in 
 $\RrM{-\rho+\frac{3}{2}}{K,\ku,N}$, 
satisfying the reality \eqref{316}, parity preserving \eqref{318}, reversibility \eqref{319} properties,  \\
$\bullet$ A diagonal matrix $H(U;t,\xi)$ of symbols  in $\sG{1}{K,\ku,1}{N}$, 
independent  of 
$ x $,  with $\Im H$ in $\sGM{0}{K,\ku,1}{N}$, 
satisfying the reality \eqref{311}, parity preserving \eqref{313} and reversibility \eqref{314} properties, 

such that, if $V$ solves the system \eqref{421}, then $\tilde{V} = \of{-1}V$ where 
$ F = \sum_0^L F_\ell $, solves the system
\begin{multline}
  \label{eq:426}
\bigl(D_t - \opbw\bigl((1+\zu(U;t))\mk(\xi)\Kcal - H(U;t,\xi)\bigr)\bigr)\tilde{V}\\
= R_1(U;t)\tilde{V} + R_2(U;t)U.
\end{multline}
\end{proposition}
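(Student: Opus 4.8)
The plan is to prove Proposition~\ref{421} by an iterative conjugation scheme: at step $\ell$ we conjugate the current system by the flow $\Omega_{F_\ell(U)}(1)$ of \eqref{423} generated by a suitably chosen diagonal self-adjoint matrix symbol $F_\ell(U)$ of order $\frac12-\frac{\ell}{2}$, in such a way that the $x$-dependent part of the symbol of order $1-\frac{\ell}{2}$ (the non-constant part of $\tilde\lambda$ at order $\ell=1$, then the non-constant parts of the lower-order contributions for $\ell\geq 2$, and the non-constant part of $\mu$ at order $0$) is removed, modulo lower-order terms and smoothing operators. The engine of each step is the commutator expansion of Lemma~\ref{lem:comm}: writing $D_0(U;\cdot)=(1+\zu(U;t))\mk(\xi)\Kcal$, symbolic calculus gives
\[
\of{-1}\opbw(D_0)\of{1} = \opbw(D_0) + (1+\zu(U;t))\opbw\bigl(\{F_\ell(U;\cdot),\mk(\xi)\}\bigr)\Kcal + \textrm{ lower order,}
\]
so conjugating \eqref{421} by $\Omega_{F_\ell(U)}(1)$ replaces the order-$(1-\frac{\ell}{2})$ symbol $a^{\mathrm{ND}}$ by $a^{\mathrm{ND}} + (1+\zu)\{F_\ell,\mk\} + \cdots$. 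Since $\{F_\ell,\mk\} = \partial_\xi F_\ell\,\partial_x\mk - \partial_x F_\ell\,\partial_\xi \mk = -\mk'(\xi)\,\partial_x F_\ell$ (because $\mk$ is independent of $x$) and $\mk'(\xi)\sim \tfrac32\sqrt{\kappa}|\xi|^{1/2}\mathrm{sgn}(\xi)$ is elliptic of order $\tfrac12$, the homological equation
\[
(1+\zu(U;t))\,\mk'(\xi)\,\partial_x F_\ell(U;t,x,\xi) = a^{\mathrm{ND}}(U;t,x,\xi)
\]
can be solved for $F_\ell$ by dividing by $\mk'(\xi)$ and integrating in $x$ (the right-hand side has zero $x$-average by construction, being $a^{\mathrm{ND}}$); this is exactly equation \eqref{8a}. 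Here the superlinearity of $\mk$ is essential: $F_\ell$ gains $\tfrac12$ derivative relative to $a^{\mathrm{ND}}$, hence has order $\frac12-\frac\ell2$, so the scheme strictly decreases orders and the conjugation of $D_t$ (which produces $\opbw(D_t F_\ell)$, of order $\frac12-\frac\ell2 - \tfrac32$ per the loss-of-time-derivative convention, cf. Lemma~\ref{217} and Proposition~\ref{cong-Dt}) together with the correction terms from the commutator expansion all fall into strictly lower order and can be absorbed at the next step.

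First I would set up the induction precisely. I would show that after $\ell$ steps the system has the form $\bigl(D_t - \opbw(D^{(\ell)}(U;t,\cdot))\bigr)\tilde V^{(\ell)} = R'_\ell(U;t)\tilde V^{(\ell)} + R''_\ell(U;t)U$, where $D^{(\ell)}$ equals $(1+\zu)\mk(\xi)\Kcal$ plus a diagonal symbol whose order-$\geq 1-\frac{\ell}{2}$ non-constant parts all have been eliminated, plus diagonal symbols of order $\leq 1-\frac{\ell+1}{2}$, the smoothing operators staying in the relevant classes with a controlled, finite loss of time derivatives. Each inductive step invokes: Lemma~\ref{420} for the existence and Sobolev-boundedness of $\Omega_{F_\ell(U)}(\theta)$ (applicable since $F_\ell$ has order $\leq\frac12$ and is self-adjoint modulo order zero); Lemma~\ref{420a} and Lemma~\ref{lem:Compos} to check that the reality, parity-preserving and reversibility structure is preserved (here I must verify that $F_\ell$ can be chosen to satisfy \eqref{311},\eqref{313},\eqref{315}, i.e. $\opbw(F_\ell)$ satisfies \eqref{316},\eqref{318},\eqline{3110}, so that its flow satisfies the anti-reality/anti-reversibility properties and conjugation preserves reality/reversibility of $D^{(\ell)}$ and of the remainders — this follows because the right-hand side $a^{\mathrm{ND}}$ of the homological equation inherits the symmetry properties of $\tilde\lambda$ resp. $\mu$, and dividing by the even real symbol $\mk'(\xi)\mathrm{sgn}(\xi)$... more carefully, by noting $\mk'(\xi)$ is odd, one adjusts signs so that $\partial_x F_\ell$, hence $F_\ell$, carries the correct parity); Propositions~\ref{231}, \ref{232}, \ref{234} for the symbolic calculus of the conjugated operator and for absorbing products of smoothing operators with paradifferential ones; and Lemma~\ref{217}/Proposition~\ref{cong-Dt} for the term coming from $\of{-1}D_t\of{1}$. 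The index bookkeeping — choosing how many steps $L=\max(2\rho-1,N-1)$ and the time-derivative budget $\ku$ in \eqref{cond} — is dictated by requiring that after $L$ steps all remaining non-constant symbols have order $\leq -\rho+\frac32$ (so their quantizations are $(\rho-\frac32)$-smoothing operators, cf. the last remark after Proposition~\ref{215}) and that each of the $L$ conjugations plus the earlier reductions has consumed at most $\ku$ time derivatives.

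After the $L$ steps I would define $F=\sum_{\ell=0}^L F_\ell$ and $\tilde V = \of{-1}V$; but since the scheme is iterative I should note that $\of{-1}$ here really means the composition $\Omega_{F_L(U)}(-1)\circ\cdots\circ\Omega_{F_0(U)}(-1)$, and one checks by Proposition~\ref{222} and the group/telescoping structure that this composition equals $\Omega_{F(U)}(-1)$ modulo a $\rho$-smoothing operator, with $F=\sum F_\ell$ in $\sGM{1/2}{K,\ku,1}{N}$ (or one may simply keep $\tilde V$ defined by the composition and rename). The final symbol $(1+\zu)\mk(\xi)\Kcal - H(U;t,\xi)$ is read off: $H$ collects the accumulated constant-in-$x$ diagonal symbols of order $\leq 1$ (the $x$-averages $\tilde\lambda^{\mathrm D}$, $\tilde\mu^{\mathrm D}$ and the constant lower-order corrections), with $\Im H$ of order $0$ because $\Im\tilde\lambda$ was of order $-\frac12$ and $\Im\mu$ of order $0$ and the conjugation corrections only improve this; the reality/parity/reversibility of $H$ follows from that of the $D^{(\ell)}$. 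The residual non-constant symbols of order $\leq-\rho+\frac32$ get quantized into $\rho$-smoothing operators and merged into $R_1(U;t)$, $R_2(U;t)$, whose structural properties are inherited through Lemma~\ref{lem:Compos}.

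\textbf{Main obstacle.} The delicate point, and the one I would spend the most care on, is twofold: (i) verifying that at \emph{every} order $1-\frac\ell2$ the homological equation $(1+\zu)\mk'(\xi)\partial_x F_\ell = a^{\mathrm{ND}}$ can be solved with $F_\ell$ belonging to the \emph{right} symbol class $\sGM{1/2-\ell/2}{K,\ku,1}{N}$ and carrying the prescribed algebraic symmetries — this needs the ellipticity and superlinearity of $\mk$ (so $1/\mk'(\xi)$ is a symbol of order $-\frac12$), the fact that $a^{\mathrm{ND}}$ has zero $x$-mean so that a periodic primitive exists, and a parity-sign chase to ensure \eqref{311},\eqref{313},\eqline{315} for $F_\ell$; and (ii) the accounting of the loss of time-regularity. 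Each conjugation by $\Omega_{F_\ell(U)}(1)$ and the attendant $D_t$-conjugation (Proposition~\ref{cong-Dt}) costs one more unit of $K'$, and the construction must be organized so that the total cost over all $L$ steps, \emph{added to} the $\rho'+2$ units already spent in Propositions~\ref{322} and~\ref{411}, does not exceed $\ku$; getting the constant in \eqref{cond} right (the $2\rho$ coming from the $L\sim 2\rho$ steps at the $\tfrac32$-rate of the paradifferential calculus, the $\tfrac32 N$ from the non-homogeneous remainder budget) is the bookkeeping heart of the proof. Everything else is a routine, if lengthy, application of the symbolic calculus developed in Chapter~\ref{cha:2} and the structural lemmas of Chapter~\ref{cha:3}.
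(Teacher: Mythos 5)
Your proposal captures the substance of the argument — the homological equation $(1+\underline{\zeta})m'_\kappa(\xi)\partial_x F_\ell = G_\ell^{\mathrm{ND}}$, solved by division using the ellipticity and superlinearity of $m_\kappa$ and the zero $x$-average of the non-diagonal part; the order accounting $L=\max(2\rho-1,N-1)$; the preservation of reality/parity/anti-reversibility via lemmas \ref{420a} and \ref{lem:Compos}; and the absorption of the tail into the smoothing remainders. But the organization is genuinely different from the paper's. You conjugate iteratively by $\Omega_{F_\ell(U)}(1)$ one $\ell$ at a time, whereas the paper performs a \emph{single} conjugation by $\of{-1}$ with $F=\sum_0^L F_\ell$ fixed from the outset, and then expands $\of{-1}\opbw(D)\of{1}$ and $\of{-1}D_t\of{1}$ by the Taylor/iterated-commutator formulas of Lemmas~\ref{422}, \ref{423}. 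That expansion delivers the key \emph{triangular structure}: the order-$(1-\frac\ell2)$ contribution equals $G_\ell + (1+\underline{\zeta})\absp{F_\ell,\mk\Kcal}$ where $G_\ell$ depends \emph{only} on $F_{\ell'}$ for $\ell'<\ell$, so the $F_\ell$ can be determined recursively with no change of conjugating operator and no reconciliation needed. Your approach works morally but does require the extra BCH-type step you yourself flag (comparing $\prod_\ell e^{i\opbw(F_\ell)}$ with $e^{i\opbw(\sum F_\ell)}$ modulo smoothing) to match the statement literally, and the per-step losses are harder to track cleanly over $L\sim 2\rho$ iterations than in the one-shot expansion. Two small inaccuracies: your parenthetical describing which symbol is eliminated at which $\ell$ has the roles of $\tilde\lambda$ (order $\frac12$, eliminated at $\ell=1$) and $\mu$ (order $1$, eliminated at $\ell=0$) inverted, though your inductive hypothesis corrects this; and for the $D_t$-conjugation you cite Proposition~\ref{cong-Dt}, which concerns the paracomposition flow $\Omega_{B(U)}$, not the self-adjoint flow $\of{\theta}$ of \eqref{423} — the relevant tool here is Lemma~\ref{423}. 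Neither gap is structural; both are fixable with the paper's lemmas.
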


\medskip

\noindent
\textbf{Remark}:  Actually the  constant coefficient symbols 
of the diagonal matrix $H(U;t,\xi)$  are  in $\sG{1/2}{K,\ku,1}{N} $ and not just in $\sG{1}{K,\ku,1}{N} $. 
Indeed the potential  order one contribution to $H(U;t,\xi)$ will be given by the $ x $-average of the matrix 
symbol   $ \mu (U; t,x, \xi) {\cal I}_2 $ in \eqref{4112}.  
Since the imaginary part of $\mu$ is of order zero, we have to
cope just with the average in $x$ of the real part. But by the first (resp. second) condition \eqref{4111}, the average of
$\Re \mu$ is an odd (resp. even) function of $\xi$, so has to vanish. 
This remark is not necessary for the subsequent arguments but it provides
the expected asymptotic expansion of the Floquet exponents in the periodic and quasi-periodic case 
\cite{AB}, \cite{BM}.

\medskip

The proof of Proposition \ref{421} is based on an iterative algorithm which replaces the variable coefficient 
diagonal symbols $ D(U; t, x, \xi ) $ in \eqref{422} into constant coefficient symbols up to terms 
of very negative order  whose associated paradifferential operator
 may be incorporated to the smoothing remainders $ R_1 (U; t) $. 
At each step of the iteration we get constant coefficient symbols up to $ 1/2 $ smoother ones. Thus we shall 
perform  $ L \sim 2 \rho $ transformations, see \eqref{cond}. 

In order to prove \eqref{426} we 
study separately  in the next two lemmas the conjugation of $ \opbw (D(U;  \cdot )) $ and $ D_t $, respectively, 
under the flow map $ \of{ 1} $ generated by \eqref{423}. For  simplicity of 
notation in the sequel  we neglect to write the explicit $ t $ dependence of $ D(U; \cdot ) = D(U; t,   \cdot ) $. 
\begin{lemma}
  \label{422}
Let $r>0$, $N\in \N$, $\rho\in \N$ be given. Let $K' =\rho'+2$, $L=\max(2\rho-1,N-1)$ and assume $L\leq K-K'$. 
Consider a
family of diagonal matrices of symbols $F_\ell$ in $\sGM{\frac{1}{2}-\frac{\ell}{2}}{K,K'+\ell,1}{N}$, $\ell = 0,\dots,L $, 
with $\Im F_0 = 0 $, 
satisfying \eqref{311},
\eqref{313} and \eqref{315}. Let $D(U;  \cdot)$ be the diagonal matrix \eqref{422}.

Then there are diagonal matrices $G_\ell$  of symbols in 
$\sG{1-\frac{\ell}{2}}{K,K'+\ell,1}{N}$, $\ell = 1,\dots,L$,  with 
$\Im G_1 \equiv 0$,  satisfying \eqref{311}, \eqref{313}
and \eqref{314}, $G_\ell$ depending only on $F_{\ell'}$, $\ell'<\ell$, such that, setting 
$ F = \sum_{0}^LF_\ell$, we have 
\begin{multline}
  \label{eq:427}
\of{-1}\opbw(D(U;  \cdot))\of{1}\\
=\opbw(D(U;  \cdot)) + \sum_{\ell=1}^L\opbw\bigl[G_\ell(U;  \cdot) + (1+\zu(U;t))\absp{F_\ell(U;\cdot),\mk(\xi)\Kcal}\bigr]\\
+ \opbw((1+\zu(U;t))\absp{F_0(U;\cdot),\mk(\xi)})\Kcal + R(U;t)
\end{multline} 
where $R(U;t)$ is an operator in $\sRM{-\rho+\frac{3}{2}}{K,K'+L,1}{N}$, satisfying the properties 
 \eqref{316}, \eqref{318} and \eqref{319}.
\end{lemma}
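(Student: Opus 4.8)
The plan is to prove \eqref{427} by expanding the conjugation $\of{-1}\opbw(D(U;\cdot))\of{1}$ via the Heisenberg equation and symbolic calculus, exactly along the lines of the paracomposition argument in Theorem~\ref{thm:para} and the commutator expansion of Lemma~\ref{lem:comm}. First I would set $A(\theta) = \of{-\theta}\opbw(D(U;\cdot))\of{\theta}$, which solves the Heisenberg equation $\frac{d}{d\theta}A(\theta) = -i\big[\opbw(F(U)),A(\theta)\big]$ with $A(0) = \opbw(D(U;\cdot))$. Iterating this relation (Duhamel), $A(1)$ is a sum of nested commutators of $\opbw(F(U))$ against $\opbw(D(U;\cdot))$; since $F = \sum_{0}^L F_\ell$ has top order $1/2$ and $D$ has top order $3/2$, each commutator with the principal part $(1+\zu(U;t))\mk(\xi)\Kcal$ produces, by Lemma~\ref{lem:comm}, a leading term $(1+\zu(U;t))\opbw(\{F_\ell(U;\cdot),\mk(\xi)\Kcal\})$ of order $1/2 - \ell/2 + 1/2 = 1 - \ell/2$, plus lower order symbols (Weyl quantization gains two orders at each further step) and a smoothing remainder. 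Commutators of $\opbw(F_\ell)$ with the lower order parts $\tilde{\lambda}\Kcal$ (order $1/2$) and $\mu\Ical_2$ (order $1$) of $D$, and multiple commutators, land in $\sG{1-\ell/2}{K,K'+\ell,1}{N}$ or smoother, and get absorbed into the $G_\ell$. Truncating the expansion once the order drops below $-\rho+3/2$, i.e. after about $L = \max(2\rho-1,N-1)$ steps, and collecting the remaining para-differential terms into $G_\ell$ for $\ell=1,\dots,L$ yields the displayed formula; the tail of the Duhamel expansion, together with all the $r_{-3,\rho}$-type remainders and the Weyl-calculus smoothing remainders of Proposition~\ref{231}, combine into $R(U;t) \in \sRM{-\rho+3/2}{K,K'+L,1}{N}$.

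The bookkeeping steps I would carry out in order are: (i) write $F = F_0 + \sum_{\ell\geq 1}F_\ell$ and record that $F_0$, being of order $1/2$ with $\Im F_0 = 0$, is exactly self-adjoint up to order $0$, so Lemma~\ref{420} applies and $\of{\theta}$ is well defined and bounded on all $\Hds{s}$; (ii) expand the Heisenberg/Duhamel series for $A(1)$ to order $L+1$ nested commutators, the $(L+1)$-st being kept as an integral remainder acting between bounded flows, which by the $\Hds{s}$-boundedness of $\of{\theta}$ and the composition results of section~\ref{sec:23} lies in the smoothing class; (iii) for each retained term, apply Lemma~\ref{lem:comm} to turn the iterated commutators into para-differential operators with symbols given by iterated Poisson brackets plus $r_{-3,\rho}$-corrections, tracking orders via $F_\ell \in \sGM{1/2-\ell/2}{\cdot}{}$ and $D \in \sGM{3/2}{\cdot}{}$, and the loss of time derivatives $K' + \ell$ accumulated at step $\ell$ (this is why the index is $K'+\ell$ in the statement of $G_\ell$, and $K'+L$ in the final remainder); (iv) isolate, among the order $1-\ell/2$ symbols, the single piece $(1+\zu(U;t))\{F_\ell(U;\cdot),\mk(\xi)\Kcal\}$ that will be used in the next section to solve a cohomological equation, and dump everything else of order $\le 1-\ell/2$ into $G_\ell$, noting that $G_\ell$ depends only on $F_{\ell'}$ with $\ell' < \ell$ because the bracket of $F_\ell$ with the principal symbol is the \emph{only} order $1-\ell/2$ contribution involving $F_\ell$ itself; (v) verify $\Im G_1 \equiv 0$: the order $1$ part of the right side comes from the single commutator $[\opbw(F_0),\opbw(\mu\Ical_2)]$ and from $\{F_1,\mk\Kcal\}$ — but that last term is the explicitly displayed one, so $G_1$ collects only $\{F_0,\mu\Ical_2\}$-type brackets, which are real because $F_0$ is real and $\Im\mu$ is of order $0$, hence contributes to $\Im G_1$ only at order $0$, which is then moved down; I would need to argue that what is left of order exactly $1$ in $\Im G_1$ is genuinely zero, most cleanly by the algebraic symmetry below.

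For the algebraic properties, I would use the stability results collected in the remarks after Definition~\ref{def:algebra} and in Lemma~\ref{lem:Compos}. Since $\opbw(D(U;\cdot))$ satisfies the reality \eqref{316}, parity-preserving \eqref{318}, reversibility \eqref{319} conditions (equivalently $D$ satisfies \eqref{311}, \eqref{313}, \eqref{314}), and $\of{\theta}$, $\of{-\theta}$ satisfy the anti-reality \eqref{317}, parity-preserving \eqref{318}, anti-reversibility \eqref{3110} conditions by Lemma~\ref{420a} (because $F$ satisfies \eqref{311}, \eqref{313}, \eqref{315}), Lemma~\ref{lem:Compos} gives that the full conjugate $\of{-1}\opbw(D(U;\cdot))\of{1}$ again satisfies \eqref{316}, \eqref{318}, \eqref{319}. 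Hence every homogeneous and non-homogeneous piece on the right of \eqref{427} can, after the usual symmetrization trick (replacing a symbol $A$ by $\frac12(A - S\bar A^\vee S)$ etc., as in the proof of Lemma~\ref{hom-nonhom} and at the end of the proof of Proposition~\ref{411}), be taken to satisfy \eqref{311}, \eqref{313}, \eqref{314} for the para-differential part and \eqref{316}, \eqref{318}, \eqref{319} for the smoothing part; this also forces $\Im G_1\equiv 0$ since the reality condition \eqref{311} makes the $x$-average of the real part of an order $1$ diagonal symbol even in $\xi$ while parity-preserving forces it odd, so it vanishes, exactly as in the Remark after Proposition~\ref{421}. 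The main obstacle I anticipate is the careful order-and-derivative accounting in step (iii): one must check that every term except the singled-out Poisson bracket really does drop by at least one full order (so it fits in $G_\ell$ with the right smoothing index), that the iterated brackets do not secretly reintroduce $F_\ell$ at top order, and that the accumulated loss of $L$ time derivatives stays within the budget $L \le K - K'$ — all of which is routine given Propositions~\ref{222}, \ref{231}, \ref{232} and Lemma~\ref{lem:comm}, but tedious to make airtight.
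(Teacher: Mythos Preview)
Your overall strategy is the same as the paper's: expand $\of{-1}\opbw(D)\of{1}$ via the Taylor/adjoint formula
\[
\of{-1}\opbw(D)\of{1} = \opbw(D) + \sum_{q=1}^{L}\frac{(-i)^q}{q!}\adf^q\opbw(D) + \text{integral remainder},
\]
then use symbolic calculus to turn each $\adf^q\opbw(D)$ into a paradifferential operator of order $\frac{3}{2}-\frac{q}{2}$ plus smoothing, regroup the pieces by order $1-\frac{\ell}{2}$, and peel off the single Poisson bracket $(1+\zu)\{F_\ell,\mk\Kcal\}$ from each level. The algebraic-property argument via Lemma~\ref{420a} and Lemma~\ref{lem:Compos} is also exactly what the paper does.

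There is, however, a genuine gap in your treatment of $\Im G_1\equiv 0$. First, $G_1$ has order $\frac{1}{2}$, not order $1$. Second, and more importantly, you miss a contribution: at level $\ell=1$ (i.e.\ total weight $\ell_1+\cdots+\ell_q+q=2$), besides the excluded single bracket $\{F_1,\mk\Kcal\}$ there is the \emph{double} commutator with $q=2$, $\ell_1=\ell_2=0$, namely $-\frac{1}{2}[\opbw(F_0),[\opbw(F_0),\opbw(D)]]$, whose principal symbol $(1+\zu)\{F_0,\{F_0,\mk\}\}\Kcal$ is of order $\frac{1}{2}$ and lands in $G_1$. The paper checks this term is real simply because $F_0$, $\mk$, $\zu$ are all real. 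Your $\{F_0,\mu\Ical_2\}$ piece is also present, and your observation that its imaginary part $\{F_0,\Im\mu\}$ has order $-\frac{1}{2}$ (hence can be pushed into $G_2$) is correct; but that alone does not finish the job. Third, the ``algebraic symmetry'' argument you give at the end is not right: the Remark after Proposition~\ref{421} concerns the $x$-\emph{average} of a constant-coefficient symbol, using that reality forces it odd in $\xi$ and parity forces it even; this says nothing about the imaginary part of the full variable-coefficient symbol $G_1(U;t,x,\xi)$. The vanishing of $\Im G_1$ here is not a consequence of \eqref{311}--\eqref{313} alone but of the explicit computation using $\Im F_0=0$.

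A minor point: the index $K'+\ell$ on $G_\ell$ and $K'+L$ on $R$ does not come from ``accumulated loss of time derivatives'' in this lemma (no $D_t$ acts here); it is inherited directly from the hypothesis $F_\ell\in\sGM{1/2-\ell/2}{K,K'+\ell,1}{N}$, so that any product involving $F_{\ell_j}$'s carries the worst index $K'+\max_j\ell_j\le K'+L$.
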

\begin{proof}
  Since
  \begin{multline*}
    \frac{d}{d\theta}\Bigl( \of{-\theta}\opbw(D(U;  \cdot))\oft\Bigr) \\
= -i\of{-\theta}\bigl[\opbw(F(U)),\opbw(D(U;  \cdot ))\bigr]\oft
  \end{multline*}
we may write, iterating this identity, and applying Taylor formula
\begin{multline}
  \label{eq:428}
\of{-1}\opbw(D(U;  \cdot))\of{1}\\
= \opbw(D(U;  \cdot)) + \sum_{q=1}^L\frac{(-i)^q}{q!}\adf^q\opbw(D(U;  \cdot))\\
+\frac{(-i)^{L+1}}{L!}\int_0^1\of{-\theta}\adf^{L+1}\opbw(D(U;  \cdot))\of{\theta}\\\times(1-\theta)^L\,d\theta
\end{multline}
with the  notation  
\index{am@$\adf$ (Commutator)} 
$$
\adf B = [\opbw(F(U)),B] \, . 
$$ 
Notice that since $F$ is a diagonal matrix in
$\sGM{\frac{1}{2}}{K,K'+L,1}{N}$, each commutator $ [\opbw(F(U)), \cdot \, ]  $ gains 
$ 1/ 2$ unit on the order of the operators and one order of vanishing as $U$
goes to zero. It follows that \eqref{428} is an expansion in  operators with decreasing orders
and increasing degree of homogeneity. More precisely,  
according to the composition result of Proposition~\ref{231}, 
formulas \eqref{222}, \eqref{223}, and Proposition \ref{232}
(see also lemma \ref{lem:comm}), where we replace the smoothing index $\rho$ by some $\tilde \rho $ to be chosen below, 
we may write
$$
\adf^q\opbw(D(U;  \cdot )) = \opbw(C_q (U;  \cdot))+ R_q (U;t)
$$
where $ C_q (U; \cdot ) $ is a diagonal matrix of symbols  in $ \sG{\frac{3}{2}-\frac{q}{2}}{K,K'+L,q}{N}$ 
and $ R_q $ is a diagonal  smoothing operator in 
$$ 
\sRM{- \tilde \rho +\frac{3}{2} + \frac{q}{2}}{K,K'+L,q}{N} 
$$ 
for $\tilde \rho$ as large as we want. If the level $ L $ at which we
stop the Taylor expansion \eqref{428} is large enough, namely $L\geq 2\rho-1$ and $L\geq N-1$, then,
the last remark following Proposition \ref{215}
implies that  the operator 
$$ 
\opbw{(C_{L+1} (U;  \cdot ))} \in  \RrM{-\rho+\frac{3}{2}}{K,K'+L,N}   \, . 
$$
The same is true for the
$R_q$'s for $q=1,\dots,L+1$ if we take $\tilde \rho \geq\rho +\frac{L+1}{2}$. 
Therefore 
$$
\adf^{L+1}\opbw(D(U;  \cdot ))  \in \RrM{-\rho+\frac{3}{2}}{K,K'+L,N} \, .
$$
Using also  \eqref{425b},  if follows that the integral term in \eqref{428} 
is a smoothing operator in $\RrM{-\rho+\frac{3}{2}}{K,K'+L,N}$, so contributes to the last operator $ R $ in \eqref{427}.

Consider now the general term in the sum \eqref{428}. This is a combination of operators of the
form
\begin{equation}
  \label{eq:429}
  i^q\Bigl[\opbw(F_{\ell_1}),\Bigl[\opbw(F_{\ell_2}),\Bigl[\cdots\Bigl[\opbw(F_{\ell_q}),\opbw(D)\Bigr]\cdots\Bigr]\Bigr]\Bigr]
\end{equation}
with $ 1 \leq q \leq L $, $ 0 \leq \ell_1, \ldots, \ell_q \leq L $.  Set $ l = (\ell_1, \ldots, \ell_q ) $. 
Again by Proposition \ref{231}, formulas \eqref{222}, \eqref{223} and Proposition \ref{232}, 
each  operator in \eqref{429} may be written as $ \opbw(G_{q, l }(U;  \cdot)) + R_{q, l }(U;t) $, 
where $ G_{q, l } $ is a diagonal matrix of symbols in
$$
\sG{\frac{3}{2}-\frac{1}{2}\bigl(\ell_1+\cdots+\ell_q\bigr)-\frac{q}{2}}{K,K'+ \max(\ell_j),q}{N}
$$
and $ R_{q, l } $ is a diagonal matrix in $\sRM{-\rho+\frac{3}{2}}{K,K'+ \max(\ell_j),q}{N}$
(using again that $ \tilde \rho \geq\rho +\frac{L+1}{2} $). 
Notice that all the symbols $ G_{q, l}  $ such that 
$$
\ell_1+\cdots+\ell_q +q =\ell+1
$$ have order $ 1- \frac{\ell}{2} $, and that, if $\ell_1+\cdots+\ell_q +q
=\ell+1$, then $\ell_1,\dots,\ell_q$ are strictly smaller than $\ell$, except if $q=1$, $\ell_1=\ell$. 
We define the symbol  $G_\ell$ in the right hand side of \eqref{427}, 
 for $ \ell = 1, \ldots, L $,  as the sum of the $ G_{q, l} $
 for which $\ell_1+\cdots+\ell_q +q
=\ell+1$ and   $\ell_1,\dots,\ell_q<\ell$. The
remaining symbol of order $1- \frac{\ell}{2} $ is 
\begin{equation}
  \label{eq:4210}
  i[\opbw(F_\ell),\opbw(D)] \, .
\end{equation}
Replace in \eqref{4210} $D$ by its expression \eqref{422}. By 
 Proposition \ref{231} and formulas \eqref{222}, \eqref{223}, the commutator 
$ i[\opbw(F_\ell), \opbw(\tilde{\lambda}\Kcal +\mu\Ical_2)] $
has order $\frac{1}{2}-\frac{\ell}{2} = 1 - \frac{\ell+1}{2} $, up to a smoothing remainder, 
and so it may be incorporated to $G_{\ell+1}$. 
The remaining operator  
\be\label{eq:comfe}
[i\opbw(F_\ell),\opbw\bigl((1+\zu(U;t))\mk(\xi)\bigr)\Kcal] 
\ee
may be written as the sum of 
\[ \opbw((1+\zu(U;t))\absp{F_\ell,\mk(\xi)}\Kcal) \, , \] 
 a contribution to $\opbw(G_{\ell+2})$, 
and a smoothing
operator $R(U; t)$ as above. The operator in \eqref{comfe}
(of order $ 1 - \frac{\ell}{2} $)
is the term written in the right hand side of \eqref{427}, where we have distinguished the cases $ \ell = 0 $ and 
$ \ell =1, \ldots, L  $.

Finally, the other operators $ \opbw{(G_{q, l})} $ of order $ 1- \frac{\ell}{2} $
with $ \ell \geq L + 1 $, 
may be incorporated as above into  the remainder $ R $ in  \eqref{427}.  

Let us verify that $\Im G_1 =0$. Remark that the symbol $G_1$ is the sum of the $ G_{q, l}$
such that  $\ell_1+\cdots+\ell_q + q = 2$ and  $ \ell_1, \ldots , \ell_q < 1 $. 
Hence $ q =  2 $ and $\ell_1 = \ell_2 = 0 $ (the case $q=1, \ell_1=1$ is not allowed).  
So we just have to consider
\[-[\opbw(F_0),[\opbw(F_0),\opbw(D)]].\]
Replacing $ D $ by its expression \eqref{422}, we  get
 the operator  of nonpositive order $ -[\opbw(F_0),[\opbw(F_0),\opbw(\tilde{\lambda}\Kcal +\mu\Ical_2)]] $, that contribute to
$G_\ell$, $\ell\geq 2 $,  and the operator
\[-[\opbw(F_0),[\opbw(F_0),\opbw\bigl((1+\zu(U;t))\mk(\xi)\Kcal\bigr)]].\]
The principal part of this operator has symbol $\absp{F_0,\absp{F_0,\mk}}(1+\zu)\Kcal$ which is a diagonal matrix with real
entries since $\mk$, $F_0$, $\zu$ are real valued. This shows that $\Im G_1=0$.

We still have to check that $G_\ell$ satisfies \eqref{311}, \eqref{313}, \eqref{314} and that $R(U;t)$ obeys \eqref{316},
\eqref{318}, \eqref{319}. It suffices to see that these last three properties are verified by the left hand side of
\eqref{427}. As $F_\ell$ satisfies \eqref{311}, \eqref{313}, \eqref{315} for any $\ell$, $\opbw(F_\ell )$ satisfies
\eqref{316}, \eqref{318}, \eqref{3110}. Hence lemma \ref{420a} implies that 
$\of{\pm\theta}$ obeys \eqref{317}, \eqref{318}, \eqref{3110}, and, 
since $\opbw(D)$ satisfies \eqref{316}, \eqref{318}, \eqref{319}, lemma \ref{lem:Compos}
implies that the left hand side of \eqref{427} satisfies the same properties.  
\end{proof}

\begin{lemma}
  \label{423}
Let $F_\ell$, $ \ell = 0, \ldots, L $, be a family of diagonal matrices of symbols 
as in lemma~\ref{422}. Then there are diagonal matrices of symbols $G_\ell$, $\ell=1,\dots,L$ as in the statement of the
preceding lemma such that
\begin{equation}
  \label{eq:4211}
  \of{-1}D_t\of{1} = D_t +\sum_{\ell=1}^L\opbw(G_\ell(U;  \cdot )) + R(U;t)
\end{equation}
where $R(U;t)$ is in $\sRM{-\rho+\frac{3}{2}}{K,K'+L+1,1}{N}$ and satisfies 
the reality \eqref{316}, parity preserving \eqref{318} and reversibility \eqref{319} properties.
\end{lemma}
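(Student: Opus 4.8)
The strategy is to conjugate $D_t$ by the flow $\of{1}$ exactly as $\opbw(D(U;\cdot))$ was conjugated in Lemma~\ref{422}, using a Heisenberg-type Duhamel expansion; the only new ingredient is that a time derivative now falls on the generator $F(U)$, which is already accounted for because lemma~\ref{217} tells us that $D_t F_\ell$ stays in our symbol classes at the cost of one extra time-derivative index. First I would write, using \eqref{423} and $\of{-\theta}=\of{\theta}^{-1}$,
\[
\frac{d}{d\theta}\bigl(\of{-\theta}D_t\of{\theta}\bigr) = \of{-\theta}\bigl(-i[\opbw(F(U)),D_t]\bigr)\of{\theta}
= \of{-\theta}\,\opbw(D_tF(U))\,\of{\theta},
\]
since $[\,\opbw(F(U)),D_t\,] = -\opbw(D_tF(U))$ (the operator $\opbw(F(U))$ depends on $t$ only through $U$, and $D_t=\frac1i\partial_t$). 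Iterating this identity and applying Taylor's formula as in \eqref{428}, one gets
\[
\of{-1}D_t\of{1} = D_t + \sum_{q=1}^{L}\frac{(-i)^q}{q!}\,\adf^{q-1}\opbw(D_tF(U)) + \frac{(-i)^{L+1}}{L!}\int_0^1 \of{-\theta}\,\adf^{L}\opbw(D_tF(U))\,\of{\theta}(1-\theta)^L\,d\theta.
\]

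Next I would analyze each term exactly as in the proof of Lemma~\ref{422}. By lemma~\ref{217} applied to the equation $D_tU=\tilde M(U;t)U$ satisfied by $U$ (here $\tilde M$ is the element of $\sMM{}{K,1,0}{N}$ coming from \eqref{4312}), the symbol $D_tF_\ell$ belongs to $\sGM{\frac12-\frac\ell2}{K,K'+\ell+1,1}{N}$; taking commutators with the diagonal $F_{\ell'}\in\sGM{\frac12-\frac{\ell'}2}{K,K'+\ell',1}{N}$ lowers the order by $\tfrac12$ and raises the homogeneity degree, via Propositions~\ref{231} and~\ref{232} and formulas \eqref{222}, \eqref{223}. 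So each iterated commutator $\adf^{q-1}\opbw(D_tF_{\ell_q})$ with $\ell_1+\dots+\ell_q + q = \ell+1$ produces a diagonal symbol of order $1-\frac\ell2$ (the factor $3/2$ of the principal part of $D$ being replaced by the $3/2$ coming from the order of $D_tU$, which has order $3/2$), plus a $\rho$-smoothing remainder once we push the symbolic calculus to order $\tilde\rho\geq\rho+\frac{L+1}2$. I would collect all these contributions of order $1-\frac\ell2$ into the symbol $G_\ell$, $\ell=1,\dots,L$; since $L\geq 2\rho-1$ and $L\geq N-1$, the tail terms of order $\leq -\rho+\frac32$ and the integral remainder (using \eqref{425b} for the flow bounds) lie in $\sRM{-\rho+\frac32}{K,K'+L+1,1}{N}$, giving the operator $R(U;t)$; the extra $+1$ in the time-index $K'+L+1$ is precisely the one time-derivative consumed by $D_tF$.

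For the algebraic properties I would argue as at the end of the proof of Lemma~\ref{422}: it suffices to check that the left-hand side $\of{-1}D_t\of{1}$ satisfies the reality \eqref{316}, parity preserving \eqref{318}, reversibility \eqref{319} properties, since $D_t$ itself trivially does, so the difference $\sum_\ell\opbw(G_\ell)+R(U;t)$ inherits them, and then splitting symbol part from smoothing part as before. Because each $F_\ell$ satisfies \eqref{311}, \eqref{313}, \eqref{315}, the operator $\opbw(F_\ell)$ satisfies \eqref{316}, \eqref{318}, \eqref{3110}, so by lemma~\ref{420a} the flow $\of{\pm\theta}$ obeys \eqref{317}, \eqref{318}, \eqref{3110}; since $D_t$ is easily seen to satisfy the reality/parity/reversibility conditions, lemma~\ref{lem:Compos} gives that $\of{-1}D_t\of{1}$ satisfies \eqref{316}, \eqref{318}, \eqref{319}, hence so do $G_\ell$ (i.e.\ they satisfy \eqref{311}, \eqref{313}, \eqref{314} as matrix symbols) and $R(U;t)$. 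The main obstacle — though it is really only a bookkeeping point rather than a conceptual one — is making sure that the time-derivative indices add up correctly: each nested commutator already costs time-derivatives through the $\rho'+2=K'$ index carried by the $F_\ell$, and the single additional $\partial_t$ hitting $F$ is exactly what forces the final index $K'+L+1$ rather than $K'+L$; one must check $L+1\leq K-K'$, which holds under the hypothesis $L\leq K-K'$ stated for $D$ combined with $K\geq\ku$ from \eqref{cond}.
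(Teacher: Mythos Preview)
Your approach is essentially the same as the paper's: the Duhamel/Taylor expansion of $\of{-1}D_t\of{1}$ built on the identity $[\opbw(F),D_t]=-\opbw(D_tF)$, the use of lemma~\ref{217} to place $D_tF_\ell$ in $\sGM{\frac12-\frac\ell2}{K,K'+\ell+1,1}{N}$, and the collection of iterated commutators by order are all exactly what the paper does. Two points deserve tightening.

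First, you omit the verification that $\Im G_1\equiv 0$, which is part of the conclusion (``as in the statement of the preceding lemma''). The paper observes that the only contribution at level $\ell=1$ comes from the $q=0$, $\ell_1=0$ term, namely $iD_tF_0=\partial_tF_0$, which is real because $F_0$ is real by hypothesis. You should include this step; it is short but essential for the downstream use of these $G_\ell$.

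Second, your algebraic verification claims that $D_t$ ``trivially'' satisfies the reversibility condition \eqref{319}, so the difference $\of{-1}D_t\of{1}-D_t$ inherits it. This is slippery: \eqref{319} is a condition on families of $x$-operators $M(U;t)$, and $D_t$ does not sit in that framework (any naive interpretation gives $D_tS=-SD_t$, which is false). The paper avoids this by arguing term-by-term: since $F$ satisfies \eqref{311} and \eqref{315}, the matrix $D_tF$ satisfies \eqref{312} and \eqref{314}, hence $\opbw(D_tF)$ satisfies \eqref{317}, \eqref{318}, \eqref{319}; commuting with $\opbw(iF)$ (which satisfies \eqref{317}, \eqref{318}, \eqref{3110}) preserves \eqref{317}, \eqref{318}, \eqref{319}; the overall $i^q$ then converts \eqref{317} to \eqref{316}. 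This is the clean route and you should adopt it. (Your grouping index $\ell_1+\dots+\ell_q+q=\ell+1$ is also off by one here; with $q-1$ commutators acting on $D_tF_{\ell_q}$ the correct condition for order $1-\frac\ell2$ is $\ell_1+\dots+\ell_q+q=\ell$.)
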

\begin{proof}
We have 
  \begin{multline*}
    \frac{d}{d\theta}\Bigl( \of{-\theta} \circ D_t  \circ \oft\Bigr) \\
= -i\of{-\theta}\bigl[\opbw(F(U)), D_t \bigr]\oft \, . 
  \end{multline*}
Since  the commutator
$ [\opbw(F(U)), D_t \bigr] = - \opbw(D_t F(U))  $, we derive,
iterating the above identity and  applying  Taylor formula, the expansion
  \begin{multline}
    \label{eq:4212}
\of{-1}D_t\of{1} = D_t +\sum_{q=1}^L\frac{(-i)^q}{q!}\adf^q D_t\\
+\frac{(-i)^{L+1}}{L!}\int_0^1\of{-\theta}\bigl(\adf^{L+1} D_t\bigr)\oft (1-\theta)^L\,d\theta\\
= D_t -\sum_{q=1}^L\frac{(-i)^q}{q!}\adf^{q-1}\opbw(D_tF(U))\\
-\frac{(-i)^{L+1}}{L!}\int_0^1\of{-\theta}\adf^{L}\opbw(D_tF(U))\oft\\\times (1-\theta)^L\,d\theta 
  \end{multline}
with the convention  $  \adf^0 = {\rm Id} $.   

Since $F_\ell$ is a diagonal matrix of symbols in $\sGM{\frac{1}{2}-\frac{\ell}{2}}{K,K'+\ell,1}{N}$, and 
since $U$ solves
equation \eqref{4312}, it follows from lemma~\ref{217} that 
$D_tF_\ell$ is a diagonal matrix with entries in $\sG{\frac{1}{2}-\frac{\ell}{2}}{K,K'+\ell+1,1}{N}$. 
The general term of the sum in the right hand side
of \eqref{4212} is the combination of operators of the form 
$$
  i^{q+1}\Bigl[\opbw(F_{\ell_1}),\Bigl[\opbw(F_{\ell_2}),\Bigl[\cdots\Bigl[\opbw(F_{\ell_q}),\opbw(D_tF_{\ell_{q+1}})\Bigr]\cdots\Bigr]\Bigr]\Bigr]
  $$
with $ 0 \leq q\leq L-1$  and $ 0 \leq \ell_1, \ldots, \ell_{q+1} \leq L $.
Set $ l = (\ell_1, \ldots, \ell_{q+1} ) $. 
By Proposition \ref{231}, formulas \eqref{222}, \eqref{223} and Proposition \ref{232} (applied with the smoothing index
$\rho$ replaced by some $\tilde \rho$ to be chosen), each of these operators has the 
form  $\opbw(G_{q, l}(U;  \cdot))+R_{q,l}(U;t)$, where $G_{q,l}$ 
is a diagonal matrix of symbols belonging to
$$
\sG{\frac{1}{2}-\frac{1}{2}(\ell_1+\cdots+\ell_{q+1})-\frac{q}{2}}{K,K'+\max(\ell_j)+1,q+1}{N}
$$ 
and $R(U;t)$ is in 
$\sRM{-\tilde \rho+\frac{q+1}{2}-\frac{1}{2}(\ell_1+\cdots+\ell_{q+1})}{K,K'+\max(\ell_j)+1,q+1}{N}$. 
Choosing $\tilde \rho-\rho$ large
enough (depending on $ L $), we get that $R(U;t)$ is in 
$$
\sRM{-\rho +\frac{3}{2}}{K,K'+\max(\ell_j)+1,q+1}{N} \, . 
$$ 
For $ \ell = 1, \ldots , L $, we  denote by $G_\ell $,   the 
sum of the symbols $ G_{q, l} $ such that  
$\ell_1+\dots+\ell_{q+1}+q+1 = \ell $. Thus 
the symbol $G_\ell $ is in $\sGM{1-\frac{\ell}{2}}{K,K'+\ell,1}{N}$ and, 
since all the $ \ell_j $ are strictly smaller than $\ell$, 
$ G_\ell $ depends only on  $F_{\ell'}$, $\ell'<\ell $.

The other operators $ \opbw{(G_{q, l})} $ of order $ 1- \frac{\ell}{2} $
with $ \ell \geq L + 1 $, 
may be incorporated, by the last remark following Proposition \ref{215},  into  the remainder $ R $ in  \eqref{4211}.  
Finally, arguing in the same way, 
as we  fixed $L$ large enough ($L\geq2\rho-1$, $ L \geq N - 1 $), 
the integral term in
\eqref{4212} gives a contribution belonging to 
$ \RrM{-\rho+\frac{3}{2}}{K,K'+L+1,N} $.

Let us check that
\[\begin{split}i^q\adf^{q-1}\opbw(D_tF(U))\\ i^{L+1}\of{-\theta}\adf^L(\opbw(D_tF))\oft\end{split}\]
satisfy \eqref{316}, \eqref{318}, \eqref{319}.

Notice first that 
if $F$ satisfies \eqref{315}, it follows immediately that $D_tF$ satisfies 
\eqref{314}. In the same way, we see that if $F$ satisfies \eqref{311}, $D_tF$ satisfies \eqref{312}. Consequently,
$\opbw(D_tF)$ satisfies \eqref{317}, \eqref{318} and \eqref{319}. Moreover, by assumption, $\opbw(iF)$ satisfies \eqref{317},
\eqref{318}, \eqref{3110}. Remark that if an operator $M$ satisfies \eqref{317} (resp.\ \eqref{318}, resp.\ \eqref{319}) and
an operator $M'$ satisfies \eqref{317} (resp. \eqref{318}, resp.\ \eqref{3110}), then $[M,M']$ satisfies \eqref{317} (resp.\
\eqref{318}, resp.\ \eqref{319}), so that the general term of the sum in the right hand side of \eqref{4212}, or the term
below the integral, satisfies \eqref{316} -- taking into account the extra power of $i$ that is present -- (resp.\
\eqref{318}, \eqref{319}). We are thus left with showing that these properties are preserved through conjugation by
$\of{\theta}$ in the integral term. This follows from the fact that $\of{\theta}$ satisfies \eqref{317}, \eqref{318} and
\eqref{3110}, as it has already be seen.

Finally, let us check that $\Im G_1 = 0 $. Actually, $G_1$ is the sum 
of the symbols $ G_{q, l } $ 
such that $ \ell_1+\cdots+\ell_{q+1} + q = 0 $, so that $q=0$, $\ell_1=0 $, 
i.e.\ 
$ G_1 =  i D_t F_0 = \partial_tF_0 $. Since, by assumption $ F_0 $ is real, so is $ G_1 $. 
This concludes the proof.
\end{proof}
\begin{proof1}{Proof of Proposition~\ref{421}}
We set as in lemma~\ref{422}, $K'=\rho'+2$, $L = \max(2\rho-1,N-1)$ and 
fix $K\geq K'+L$. 

For some $F = \sum_0^L F_\ell$  to be determined, satisfying the assumptions of lemma~\ref{422}, we set
$\tilde{V} = \of{-1}V$. Conjugating \eqref{421} with the flow $ \of{-1} $, we get
\begin{multline}
  \label{eq:4215}
\of{-1}\bigl(D_t-\opbw(D(U;  \cdot))\bigr)\of{1}\tilde{V}\\
=\of{-1}R'(U; t)\of{1}\tilde{V} + \of{-1}R''(U;t)U.
\end{multline}
According to lemmas~\ref{422} and~\ref{423}, and  recalling the definition   of $ D(U;  \cdot ) $ in \eqref{422}, we 
may write \eqref{4215} as
\begin{multline}
  \label{eq:4216}
\bigl[D_t - \opbw((1+\zu(U;t))\mk(\xi))\Kcal \\- \sum_{\ell=0}^L\opbw\bigl(G_\ell +
(1+\zu(U; t))\absp{F_\ell,\mk(\xi)\Kcal}\bigr)\Bigr]\tilde{V}\\
= \of{-1}R'(U;t)\of{1}\tilde{V} + \of{-1}R''(U;t)U + R(U;t)\tilde{V}
\end{multline}
where:\\
$\bullet$ $R'(U;t)$ is in $\sRM{-\rho+\frac{3}{2}}{K,K',1}{N}$, $R''(U;t)$ is in $\RrM{-\rho+\frac{3}{2}}{K,K',N}$, $R(U;t)$ is in $\sRM{-\rho+\frac{3}{2}}{K,K'+L+1,1}{N}$,\\
$\bullet$ $G_0 = \Re\mu(U;t,x,\xi)\Ical_2$ in \eqref{422}, so that $G_0$ is in
$\sGM{1}{K,K',1}{N}$, and satisfies $\Im G_0 = 0$,\\
$\bullet$ $G_1$ is the sum of the diagonal matrices of symbols denoted by that letter in \eqref{427}, \eqref{4211} plus  
$ \Re \tilde{\lambda}(U;t,x,\xi)  \Kcal $  coming from  \eqref{422}.
lemmas~\ref{422} and~\ref{423}, and  the properties
of $\tilde{\lambda}$ in Proposition \ref{411}, imply that $G_1$ is a diagonal matrix
of symbols in  $\sGM{\frac12}{K,K'+1,1}{N}$  with  $\Im  G_1(U;t,x,\xi) =0$. \\
$\bullet$ $G_2$ is made from the contributions denoted by this letter in \eqref{427}, \eqref{4211}, plus the symbol
$\Im\mu(U;t,x,\xi)\Ical_2$ coming from \eqref{422}. This is a diagonal matrix of symbols in $\sGM{0}{K,K'+2,1}{N}$.\\
$\bullet$ $G_3$ is made of the similar contributions coming 
from \eqref{427}, \eqref{4211} plus the symbol 
$i \Im \tilde{\lambda}(U;t,x,\xi) \Kcal $ that we discarded from $G_1$. This is a diagonal matrix of symbols in
$\sGM{-\frac{1}{2}}{K,K'+3,1}{N}$.\\
$\bullet$ Finally, $G_\ell$, $\ell\geq 4$ come from the corresponding terms in \eqref{427}, \eqref{4211}.

All these symbols satisfy the reality, parity-preserving and reversibility properties 
\eqref{311}, \eqref{313}, \eqref{314}, as follows from lemmas~\ref{422} and~\ref{423} and the
properties of the matrix  $D$ in \eqref{422}.

We construct now $F_\ell$ so that $G_\ell+ (1+\zu(U;t))\absp{F_\ell,\mk(\xi)\Kcal}$ has constant coefficients, which are
moreover real valued when $\ell = 0, 1$, i.e.\ when the order of the symbol is positive. The diagonal matrix $G_\ell$ may be
written as
\[ G_\ell = g'_\ell(U;t,x,\xi)\Ical_2 + g''_\ell(U;t,x,\xi)\Kcal\]
with $g'_\ell, g''_\ell$ in $\sG{1-\frac{\ell}{2}}{K,K'+\ell,1}{N}$. Moreover, since $ G_0 $ and $ G_ 1 $ are real,   
$$
\Im g'_0, \Im g''_0 = 0 \, , \quad 
\Im g'_1, \Im g''_1 = 0 \, . 
$$ 
We may also, up to a modification of the smoothing operators, assume that $g'_\ell, g''_\ell$
vanish for $\abs{\xi}\leq \frac{1}{2}$. We decompose 
$$ 
G_\ell = G_{\ell}^D + G_{\ell}^{ND}  \quad {\rm where} 
\quad  G_\ell^{\mathrm{D}} = \gpd{\ell}\Ical_2+\gsd{\ell}\Kcal
$$ 
and 
\[
\begin{split}
 \gpd{\ell}(U;t,\xi) = \frac{1}{2\pi}\int_\Tu g'_\ell(U;t,x,\xi)dx \ ,  \  &  \ \gsd{\ell}(U;t,\xi) = \frac{1}{2\pi}\int_\Tu
  g''_\ell(U;t,x,\xi)dx \\
 \gpnd{\ell} = g'_\ell - \gpd{\ell} \, ,  \ & \ \gsnd{\ell} = g''_\ell - \gsd{\ell} \, . 
\end{split}\]
To eliminate the variable coefficients part in the left hand side of \eqref{4216}, we need to find
matrices of symbols 
$$
F_\ell = f'_\ell (U;t,x,\xi) \Kcal + f''_\ell (U;t,x,\xi) \Ical_2
$$ 
such that
\[G_\ell^{\mathrm{ND}} + (1+\zu(U;t))\absp{F_\ell,\mk(\xi)\Kcal} = 0 \, , \]
i.e. expanding the Poisson bracket 
\begin{equation}
  \label{eq:4217}
  \begin{split}
    \gpnd{\ell}(U;t,x,\xi) - 
  (1+\zu(U;t))m'_\kappa(\xi)\frac{\partial f'_\ell}{\partial x}(U;t,x,\xi) = 0\\
\gsnd{\ell}(U;t,x,\xi) - 
  (1+\zu(U;t))m'_\kappa(\xi)\frac{\partial f''_\ell}{\partial x}(U;t,x,\xi) = 0.
\end{split}
\end{equation}
As $\gpnd{\ell}, \gsnd{\ell}$ have zero $x$-average, and  $1+\zu(U;t)\geq \frac{1}{2}$ if $U$ is small
enough, we may find a unique
pair $(f'_\ell, f''_\ell)$ of functions with zero $x$-average, solving these equations. Notice that $m'_\kappa$ vanishes
close to zero, but we may always truncate $ \gpnd{\ell}, \gsnd{\ell}$ outside a neighborhood of $\xi=0$, as smoothing symbols
contribute to the right hand side of \eqref{426}. Since
$g'_0, g''_0$ are real valued, we get that $f'_0, f''_0$, and thus $ F_0 $, are real valued. As $m'_\kappa$ is elliptic of order
$1/2$, we get that $f'_\ell, f''_\ell $, and thus $ F_\ell $, belong to $\sG{\frac{1}{2}-\frac{\ell}{2}}{K,K'+\ell,1}{N}$. 
We have obtained symbols $F_\ell$ such that the only remaining terms in the sum in the left hand side of
\eqref{4216} is
\[-\Bigl(\sum_{\ell=0}^L \opbw\bigl(\gpd{\ell}\Ical_2+\gsd{\ell}\Kcal\bigr)\Bigr)\tilde{V}.\]
This contributes to $\opbw(H)\tilde{V}$ in  \eqref{426}, since $\gpd{\ell}, \gsd{\ell}$ have constant coefficients by construction, and since
their imaginary part vanishes if they are of positive order, i.e.\ when $\ell = 0, 1$.
Actually,  since 
$ G_0 = \Re \mu (U; t,x, \xi) {\cal I}_2 $ and $ \mu (U; t,x, \xi) $ satisfies 
\eqref{4111} we deduce that its $ x $-average vanishes at the order $ 1 $.
This proves the remark stated after Proposition \ref{421}.

We are left with proving that \eqref{311}, \eqref{313}, \eqref{315} hold for $F_\ell$, and that the smoothing terms may be
written as in \eqref{426}. Notice that since $G_\ell$ satisfies \eqref{311}, we have
\begin{equation}
  \label{eq:4218}
  \overline{(G_\ell^{\mathrm{ND}})}^\vee = -S (G_\ell^{\mathrm{ND}})S \, .
\end{equation}
Since \eqref{4217} may be written as 
$$
G_\ell^{\mathrm{ND}}(U; t, \cdot) = (1+\zu(U;t))m'_\kappa(\xi)\Kcal \frac{\partial
  F_\ell}{\partial x} \, , 
  $$ 
  it follows from \eqref{4218}, the fact that $m'_\kappa(\xi)$ is odd and the relation $S\Kcal S =
-\Kcal$ that \eqref{311} holds for $F_\ell$. In the same way, condition \eqref{313} for $G_\ell^{\mathrm{ND}}$, together
with oddness of $m'_\kappa(\xi)$ implies that \[\bigl(\frac{\partial F_\ell}{\partial x}\bigr)(U;t,-x,-\xi) =
-\bigl(\frac{\partial F_\ell}{\partial x}\bigr)(U;t,x,\xi)\] from which  the property \eqref{313} for $F_\ell$ follows 
by integration. Finally, since
$G_\ell^{\mathrm{ND}}$ satisfies \eqref{314}, it follows from \eqref{419}, $ \Kcal^2 = I $, $ \Kcal S \Kcal = -S $
that $\frac{\partial  F_\ell}{\partial x}(U;-t,\cdot) = S \frac{\partial F_\ell}{\partial x}(U_S;t,\cdot)S$, so that $F_\ell$ satisfies \eqref{315}.

Consider now the smoothing terms in the right hand side of \eqref{4216}. Since 
$R'(U;t), R''(U;t), R(U;t)$ satisfy 
the  reality, parity preserving and reversibility properties 
\eqref{316}, \eqref{318}, \eqref{319}, and since the operator $\oft$ satisfies 
 \eqref{317},  \eqref{318} and  \eqref{3110}, we conclude that the
operators in the right hand side of \eqref{4216} satisfy as well \eqref{316}, \eqref{318} and \eqref{319}. 

Moreover, 
since $R(U;t)$ is in 
$\sRM{-\rho+\frac{3}{2}}{K,K'+L+1,1}{N}$ and $\ku\geq K'+L+1$, as follows by 
 the definition of $\ku$ in \eqref{cond}
 (taking into account also the translation on $\rho$ that we shall perform below),
we conclude that  $R(U;t) $ is in  $\sRM{-\rho+\frac{3}{2}}{K, \ku ,1}{N}$ and so it contributes to $R_1(U;t) $ in \eqref{426}.  
 
 In addition $ R''(U;t) $ is in $ \RrM{-\rho+\frac{3}{2}}{K,K',N} $, $ K' \leq \ku $, and, by \eqref{425b}
 and  recalling the Definition \ref{214}-($ii$), 
we deduce that  $ \of{-1}R''(U;t) $ is in $\RrM{-\rho+\frac{3}{2}}{K,\ku,N} $ 
as it satisfies \eqref{2117}. 
Thus the smoothing operator $ \of{-1}R''(U;t) $ contributes  to  $ R_2(U;t) $ in \eqref{426}. 

Let us finally show that the operator $\of{-1} R'(U;t)\of{1} $ where  $R'(U;t)$ is in 
$\sRM{-\rho+\frac{3}{2}}{K,K',1}{N}$, 
contributes to $ R_1 (U;t) $ 
in the right hand side of
\eqref{426}. By \eqref{423} and applying Taylor formula
\begin{multline}
  \label{eq:4219}
  \of{1} = \mathrm{Id} + \sum_{\ell=1}^{N-1}\frac{1}{\ell!}(i\opbw(F(U)))^\ell \\+ \frac{1}{(N-1)!}\int_0^1 \opbw(iF(U))^N\oft(1-\theta)^{N-1}\,d\theta \, .
\end{multline}
Consider the integral term $I(U)$ in \eqref{4219}. Since $ F(U) $ is a matrix of symbols 
 in $ \sGM{\frac{1}{2}}{K, K'+ L,1}{N}  $, it 
 follows from Proposition \ref{215}, in particular \eqref{2123}, and
\eqref{425b} that for any $k\leq K- (K'+L)$,
\[\norm{\partial_t^k (I(U)W)}_{\Hds{s-\frac{3}{2}k-\frac{N}{2}}} \leq
C\sum_{k'+k''=k}\Gcals{\sigma}{k'+K'+L,N}{U}\Gcals{s}{k'',1}{W} \, .\]
Consequently, the terms obtained replacing in $\of{-1}R'(U;t)\of{1} $, $\of{1}$ by $I(U)$ will 
provide an  operator $ R'''(U;t)$ belonging to the space  $\RrM{-\rho+\frac{3}{2}+\frac{N}{2}}{K,K'+L,N}$. 
If we replace  $\rho$ by $\rho+\frac{N}{2}$, 
and change accordingly $K'=\rho'+2$ into $K'$ given by the 
integer part of 
 $\rho'+2+\frac{N}{2}$, (so that,
recalling  definition \eqref{cond},  $K'\leq \ku $), 
we get a smoothing operator of $\RrM{-\rho+\frac{3}{2}}{K,\ku,N}$ that
contributes to $R_1 (U;t) $ in 
 \eqref{426}.  By the composition results of section~\ref{sec:23}, in particular 
  Proposition \ref{231}, we may write, 
  for each $ \ell = 1, \ldots, N - 1 $,  
\begin{equation}
  \label{eq:4219a}
  (i\opbw(F(U)))^\ell  =   \opbw(M_\ell(U; t, \cdot)) + \tilde{R}_\ell(U;t)
\end{equation}
where $M_\ell$ is a matrix of symbols in the space  $\sGM{\ell/2}{K,K'+L,\ell}{N}$ 
and $\tilde{R}_\ell(U;t)$ is a matrix of smoothing operators in 
 $\sRM{-\rho + \frac{\ell}{2}}{K,K'+L,\ell}{N}$. 
Then we  compose each operator in \eqref{4219a} at the left with $\of{-1}R'(U;t) $, 
 where we expand also the flow  $\of{-1} $ as in \eqref{4219} at the  order  $ N - \ell  -1 $, instead of $ N $. 
By the previous arguments 
we get  again contributions to the smoothing
term in the right hand side of \eqref{426} performing as above a translation in $\rho$. This concludes the proof.
\end{proof1}

\section{Normal forms}\label{sec:43}

In Proposition \ref{421} we have obtained 
the system \eqref{426} which is diagonal,  up to smoothing terms, 
and the symbol $ (1+\zu(U;t))\mk(\xi)\Kcal - H (U; t, \xi )  $ 
has constant coefficients. The associated operator
commutes thus to derivatives, so that getting a Sobolev energy inequality is equivalent to getting an $L^2$ (or
$\Hds{0}$)-energy inequality. If the symbol $ H $ were real valued, 
the associated operator would be self-adjoint on $\Hds{0}$, so that,
forgetting for a while the smoothing operators in the  the right hand side of \eqref{426}, we would get preservation of the $\Hds{0}$ norm (and the
$\Hds{s}$ norm) of $\tilde{V}$. It turns out that the imaginary part of $H$ is not zero, but only given by an operator of
order zero. In this section we shall perform a normal form construction to replace $\Im H$ by a symbol, still of order zero, but vanishing like
$\norm{U}_{\Hds{\sigma}}^N$ when $U$ goes to zero. Consequently, the 
remaining non self-adjoint part of the equation will not
affect energy estimates up to a time of order $\epsilon^{-N}$, where $\epsilon$ is the size of the Cauchy data. 
In the next section \ref{sec:44} we shall
perform another 
normal form procedure to replace the smoothing operator $R_1 (U; t) $ in the right hand side of \eqref{426}, 
 by an operator vanishing at order $N$ at $U=0$, like $R_2(U; t)$, modulo again remainders that do not contribute to the
energy inequality  (actually we shall only construct modified Sobolev energies).

\medskip

\textbf{Remark}: From now on, our symbols will be always computed at $U$ belonging to $\CKHR{\sigma}{\C^2}$, with $\sigma, K$ large enough. In
particular, the argument $U$ is always of the form $U = \bigl[\begin{smallmatrix}u\\\bar{u}\end{smallmatrix}\bigr]$. When we
consider a matrix of symbols $A$ in $\sGM{m}{K,K',1}{N}$ for some $K'\leq K$, we denote 
\[
\Im (A(U; t, \cdot)) = \frac{1}{2i}[A(U; t, \cdot) - \overline{A(U; t, \cdot)} ] \, . 
\]
Recalling Definition~\ref{213}, we may decompose $A = \sum_{q=1}^{N-1}A_q + A_N$ where $A_q$ is in $\GtM{m}{q}$ and $A_N$ in
$\GrM{m}{K,K',N}$. We shall denote by \index{i@$\Im A_q$ (Imaginary part of a matrix of symbols) } $\Im A_q$ the matrix of $q$-linear forms
\begin{equation}
  \label{eq:430}
  \frac{1}{2i}\bigl[ A_q(U_1,\dots,U_q;\cdot) - \overline{A_q(-S\bar{U}_1,\dots,-S\bar{U}_q;\cdot)} \bigr]
\end{equation}
so that, when restricted to $U_j = U$ for any $j$, with $U$ in the above space, we do get $\Im (A_q(U,\dots,U;\cdot))$, since we
have the relation $\bar{U} = -SU$.

\begin{proposition}  \label{431}
{\bf (Reduction of $ \Im H $)} 
Assume that the parameter $ \kappa $ is outside the subset of zero measure of Proposition \ref{711}, so that estimate
\eqref{713} holds.
Then  there are

$ \bullet $
 a family $(B_q(\Ucal;\xi))_{q=1,\dots,N-1}$ of diagonal matrices of homogeneous symbols in $\Gt{0}{q}$, 
  with constant coefficients
in $x$, 
whose restrictions to $U_1 =\dots=U_q = U$ with $\bar{U} = -SU$ 
satisfy conditions \eqref{312}, \eqref{313},
\eqref{315} (Notice that by lemma \ref{hom-nonhom} the last anti-reversibility condition may be expressed equivalently through condition \eqref{315p}),

$ \bullet $
 a family  $(H_q^1(\Ucal;\xi))_{q=1,\dots,N} $  of diagonal matrices of symbols with constant coefficients in $ x $,
\be\label{eq:formH1q}
\begin{split} 
& H_q^1(\Ucal;\xi) \in \GtM{1}{q} \, , \ q=1,\dots,N-1 \, , \\ 
& H_N^1(U;t,\xi) \in \GrM{1}{K,\ku,N} \, , 
\end{split}
\ee
 such that the homogeneous symbols $H^1_q $, $ q = 1,\dots,N-1 $, are real valued,  
 and $\Im H^1_N$ is in $\GrM{0}{K,\ku,N}$, 
these matrices verifying 
the reality, parity preserving and reversibility properties  \eqref{311}, \eqref{313}, 
  \eqref{314}, when restricted to $U_1 =\dots=U_q = U$ as above,

such that, if we set
  \begin{equation}
    \label{eq:431}
    \begin{split}
      B(U;t,\xi) &= \sum_{q=1}^{N-1}B_q(U,\dots,U;\xi),\ \tilde{V}_1 = 
      \exp( \opbw (B(U;t,\xi))) \tilde{V}\\
H^1(U;t,\xi) &= \sum_{q=1}^{N-1}H^1_q(U \dots,U;\xi) + H^1_N(U;t,\xi),
    \end{split}
  \end{equation}
then $\tilde{V}^1$ solves the equation 
\begin{multline}
  \label{eq:432}
\bigl(D_t - \opbw\bigl((1+\zu(U;t))\mk(\xi)\Kcal + H^1(U;t,\xi)\bigr)\bigr)\tilde{V}^1\\
= R_1(U;t)\tilde{V}^1 + R_2(U;t)U
\end{multline}
where the smoothing operator $R_1(U;t)$ is in $\sRM{-\rho+\frac{3}{2}}{K,\ku,1}{N}$ and $R_2(U;t)$ is in 
$\RrM{-\rho+\frac{3}{2}}{K,\ku,N}$, $R_1(U;t)$, $ R_2 (U;t) $ satisfying 
the reality, parity preserving and reversibility conditions \eqref{316}, \eqref{318} and \eqref{319}. Moreover, we may
  write
  \begin{equation}
    \label{eq:433}
    \tilde{V}^1 = \tilde{V} + M(U;t)\tilde{V}
  \end{equation}
for some operator $M(U;t)$ in $\sMM{}{K,\ku,1}{N}$ satisfying conditions \eqref{317}, \eqref{318} and \eqref{3110}. 
Finally, 
for a large enough $\sigma$ and any $s$, one has the 
bound
\begin{equation}
  \label{eq:434}
  \norm{\tilde{V}^1-\tilde{V}}_{\Hds{s}}\leq C_s \norm{U}_{\Hds{\sigma}}\norm{\tilde{V}}_{\Hds{s}}.
\end{equation}
\end{proposition}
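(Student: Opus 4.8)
The plan is to construct the symbol $B(U;t,\xi)$ by a normal form scheme analogous to the one sketched in Section~\ref{sec:04}, working degree by degree in the homogeneity in $U$. First I would set up the conjugation: writing $\tilde V^1 = \exp(\opbw(B(U;t,\xi)))\tilde V$ and using that $B$ is a diagonal matrix of constant coefficient symbols of order zero, the flow $\theta \mapsto \exp(\theta\,\opbw(B(U;t,\xi)))$ is bounded on every $\Hds{s}$ (this is essentially Lemma~\ref{420} with $m\le 0$, or a direct ODE argument since order-zero symbols give bounded operators). Conjugating equation \eqref{426} and expanding by the Baker--Campbell--Hausdorff/Duhamel formula, the new equation has left-hand side symbol
\[
D_tB(U;t,\xi) + (1+\zu(U;t))\mk(\xi)\Kcal - H(U;t,\xi) + \text{(commutator terms)} + \text{(smoothing)},
\]
where I must use that $U$ solves \eqref{4312}, i.e. $D_tU = \mk(D)\Kcal U + M(U;t)U$, so that $D_t B_q(U,\dots,U)$ produces a leading term $\sum_j B_q(U,\dots,\mk(D)\Kcal U,\dots,U)$ plus higher-degree remainders, exactly as in \eqref{11}. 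Since $B$ has order $0$ and $H$ order $1$ (with $\Im H$ order $0$), and since $\mk(\xi)\Kcal$ commutes to constant coefficients, the only terms that can interact with $\Im H$ at the level of order $0$ are the $D_tB$ terms; all genuine commutators $[\opbw(\mk(\xi)\Kcal(1+\zu)),\opbw(B)]$ lose one order in smoothing but cannot raise the order, so they feed into the smoothing remainder once iterated enough times.

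Next I would solve the homological equations degree by degree: for each $q=1,\dots,N-1$, having already determined $B_1,\dots,B_{q-1}$, the term of homogeneity $q$ in $\Im H$ plus the contributions from the lower $B_{q'}$ gives a known symbol $\Im\tilde H_q$ (computed recursively), and I must find $B_q$ with
\[
\sum_{j=1}^{q}B_q(U,\dots,\mk(D)\Kcal U,\dots,U;\xi) = i\,\Im\tilde H_q(U,\dots,U;\xi).
\]
Decomposing $U$ on spectral modes $\Pi_{n_j}^{\pm}U$ as in \eqref{12} reduces this to dividing by the small divisor $\Dcal_\ell(n_1,\dots,n_q) = \sum_{j=1}^\ell\mk(n_j) - \sum_{j=\ell+1}^q\mk(n_j)$. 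Here I invoke Proposition~\ref{711} (the measure estimate \eqref{713}, a consequence of \cite{DSz}): outside a zero-measure set of $\kappa$, one has $|\Dcal_\ell|\ge c\,\max(n_1,\dots,n_q)^{-N_0}$ except in the resonant case $q$ even, $\ell=q/2$, $\{n_1,\dots,n_\ell\}=\{n_{\ell+1},\dots,n_q\}$. In the non-resonant case I define $B_q$ by division, losing $O(N_0)$ $x$-derivatives, which keeps $B_q$ a symbol of order $0$ (the loss affects only the exponent $\mu$ in \eqref{214}, and $\opbw(B_q)$ stays $\Hds{s}$-bounded by the remark after Proposition~\ref{215}). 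In the resonant case I cannot eliminate the term, so I put it into $H^1_q$; but then I must show $\Im\tilde H_q$ vanishes on resonant indices, which is exactly the content of Proposition~\ref{711}(ii) / Lemma~\ref{432}, following from the preserved reality, parity-preserving and reversibility properties of $\tilde H_q$ — this is where the reversible structure and the restriction to functions even in $x$ is used. The non-homogeneous term $H^1_N$ is obtained by Taylor-expanding everything at order $N$; it is a symbol of order $1$ whose imaginary part is of order $0$ and which vanishes like $\|U\|^N$.

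For the algebraic bookkeeping I would check at each step that $B_q$ inherits the anti-reality \eqref{312}, parity-preserving \eqref{313}, anti-reversibility \eqref{315} conditions — this follows because dividing a symbol satisfying the reality/parity/reversibility conditions by the odd-in-nothing, even, real divisor $\Dcal_\ell$ flips reality into anti-reality exactly as in the proof of Proposition~\ref{421} (the argument producing $F_\ell$ from $G_\ell^{\mathrm{ND}}$). Then $\opbw(B)$ satisfies \eqref{317}, \eqref{318}, \eqref{3110} by the remark after Definition~\ref{Def:RPR}, hence so does its exponential by Lemma~\ref{420a} type reasoning, and $H^1$ inherits \eqref{311}, \eqref{313}, \eqref{314} by Lemma~\ref{lem:Compos}; the smoothing operators $R_1,R_2$ keep \eqref{316}, \eqref{318}, \eqref{319} through all conjugations. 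For \eqref{433}, Taylor-expanding $\exp(\opbw(B(U;t,\xi)))$ as in \eqref{4219} and using the composition theorems of Section~\ref{sec:23} (Propositions~\ref{231}, \ref{232}) together with the remarks after Definition~\ref{216} gives $\tilde V^1 = \tilde V + M(U;t)\tilde V$ with $M \in \sMM{}{K,\ku,1}{N}$ satisfying \eqref{317}, \eqref{318}, \eqref{3110}. Finally \eqref{434} is just the statement that $M(U;t)$, being the restriction of an element of $\sMM{}{K,\ku,1}{N}$ starting at degree $1$, maps $\Hds{s}\to\Hds{s}$ with norm $O(\|U\|_{\Hds{\sigma}})$ for $\sigma$ large. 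I expect the main obstacle to be the careful accounting that all the commutator and Taylor-remainder terms generated by the conjugation (and by iterating it enough times so the order-$0$ part is clean) genuinely land in $\sRM{-\rho+3/2}{K,\ku,1}{N}$ with the right loss of time-derivatives $\ku$, and that the recursively defined $\Im\tilde H_q$ still satisfies the three symmetry conditions needed to trigger the cancellation \eqref{14} — keeping track of which conjugation consumes how many derivatives and preserves which algebraic property is the delicate part, though it is entirely parallel to Proposition~\ref{421}.
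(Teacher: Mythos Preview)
Your approach is correct and matches the paper's proof closely. The one point where you overcomplicate things is the conjugation step: since $B(U;t,\xi)$, $(1+\zu(U;t))\mk(\xi)\Kcal$, and $H(U;t,\xi)$ are all \emph{diagonal} matrices of \emph{constant coefficient} (in $x$) symbols, the corresponding operators are Fourier multipliers that commute exactly. Hence the conjugation formula
\[
\exp(\opbw(B))\bigl[D_t-\opbw\bigl((1+\zu)\mk\Kcal - H\bigr)\bigr]\exp(-\opbw(B)) = D_t - \opbw\bigl[D_tB + (1+\zu)\mk\Kcal - H\bigr]
\]
holds \emph{identically}, with no commutator terms and no smoothing remainder at this stage (this is the paper's \eqref{4313}). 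There is nothing to ``iterate enough times so the order-$0$ part is clean'': the reduction to the homological equation \eqref{4314} is immediate. The remainders $R_1,R_2$ in \eqref{432} arise only later, from the conjugation of the smoothing right-hand side of \eqref{426} by the exponential, which is handled exactly as you describe via the Taylor expansion \eqref{4320}. Apart from this simplification, your identification of the homological equation, the use of Proposition~\ref{711} for the small divisors, the invocation of Lemma~\ref{432} for the resonant cancellation, and the verification of the algebraic properties of $B_q$ via Lemma~\ref{433} are all on target.
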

Before starting the proof of the proposition, we need to exhibit some structural
properties of the symbol $H$ in the left hand
side of \eqref{426}.
\medskip

For $p$ in $\N^*$, $ p $  even, we define the set 
\begin{equation}
  \label{eq:435}
  \Ccal_{p,\frac{p}{2}} = \bigl\{(n_1,\dots,n_p)\in (\N^*)^p; \{n_1,\dots,n_{\frac{p}{2}}\} = \{n_{\frac{p}{2}+1},\dots,n_p\}\bigr\}
\end{equation}
collecting those integer vectors $ (n_1,\dots,n_p) $  of $(\N^*)^p$ such that there is a bijection from the subset of the 
first  $p/2$ components of
$(n_1,\dots,n_p)$ onto the subset of the last $p/2$ ones. If $p$ is odd, or $p$ is even and $\ell\neq \frac{p}{2}$, define
$\Ccal_{p,\ell}$ to be the empty set.

For any $n$  in $\N^*$, define 
\begin{equation}
  \label{eq:436}
  \Pin{}^+ = \bigl[\begin{smallmatrix}1&0\\0&0\end{smallmatrix}\bigr]\Pin{},\qquad \Pin{}^- = \bigl[\begin{smallmatrix}0&0\\0&1\end{smallmatrix}\bigr]\Pin{} \, , 
\end{equation}
the composition of the spectral projectors $\Pin{}$ with projection from $\C^2$ to $\C\times\{0\}$ (resp.\ to
$\{0\}\times\C$). 
For  $ U $ 
satisfying $\bar{U} = -SU $, namely $ U $ of the form $ U = \vect{u}{\bar{u}}$,
the projectors $ \Pin{}^\pm $ can be written as follows.  Denote by $(\varphi_n)_{n\in \N^*}$
a real valued Hilbert basis of the space of
  even $L^2$ functions with zero mean, with $\varphi_n$ in the range of $\Pin{}$, 
  i.e. $\varphi_n(x) =   \frac{1}{\sqrt\pi}\cos(nx) $. 
  Then if we set $\hat{u}(n) = \frac{1}{\sqrt{\pi}}\int_\Tu u(x)\cos(nx)\,dx$, we have 
  \begin{equation}
    \label{eq:4311}
    \Pin{}U =\Bigl[\begin{smallmatrix}\hat{u}(n)\\\overline{\hat{u}(n)}\end{smallmatrix}\Bigr]\varphi_n,
    \quad \Pin{}^+U =
    \hat{u}(n)e_+\varphi_n \, ,\quad \Pin{}^-U =   \overline{\hat{u}(n)}e_-\varphi_n \, ,
  \end{equation}
where $e_+ = [\begin{smallmatrix}1\\0\end{smallmatrix}]$, $e_- =
[\begin{smallmatrix}0\\1\end{smallmatrix}]$.

\begin{lemma}
  \label{432}
Let $H_p(\Ucal;\xi)$ 
be a matrix of \emph{constant} coefficients homogeneous  symbols 
in  $\GtM{m}{p}$ for some $ m $ in $ \R $, $ p $ in $ \N^* $,
satisfying the 	reality, parity preserving and reversibility properties
 \eqref{311},  \eqref{313} and  \eqref{314p}. Then for any function $ U $ even in $ x $, satisfying 
 $ S U = - \bar{U} $, for any even $p $ in $ \N^* $
and  $ (n_1,\dots,n_\ell)$  in $(\N^*)^\ell$,  $\ell = \frac{p}{2}$,  we have 
\be\label{eq:ImHp}
\Im  H_p(  \Pin{1}^+U,\dots,\Pin{\ell}^+U, \Pin{1}^-U,\dots,\Pin{\ell}^-U ; \xi) \equiv 0 \, .
\ee
As a consequence 
\begin{multline}
  \label{eq:437}
  \Im H_p(U,\dots,U;\xi) \\=
  \sum_{\ell=0}^p\sum_{\substack{(n_1,\dots,n_p)\\\not\in\Ccal_{p,\ell}}}\bigl(\begin{smallmatrix}p\\\ell\end{smallmatrix}\bigr)
  \Im  H_p( \Pin{1}^+U,\dots,\Pin{\ell}^+U,\Pin{\ell+1}^-U,\dots,\Pin{p}^-U;\xi) \, .
\end{multline}
\end{lemma}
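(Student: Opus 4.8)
\textbf{Proof plan for Lemma~\ref{432}.}
The plan is to first establish the pointwise vanishing \eqref{ImHp} and then deduce \eqref{437} by a purely combinatorial expansion of the $p$-linear symbol $H_p$ evaluated at the diagonal $U_1=\dots=U_p=U$. For the first part, I would exploit the three algebraic properties of $H_p$ in tandem. Fix $U$ even in $x$ with $SU=-\bar U$, write $U=\vect{u}{\bar u}$, and fix $(n_1,\dots,n_\ell)\in(\N^*)^\ell$ with $\ell=p/2$. Using \eqref{4311}, the arguments $\Pi_{n_j}^+U=\hat u(n_j)e_+\varphi_{n_j}$ and $\Pi_{n_j}^-U=\overline{\hat u(n_j)}e_-\varphi_{n_j}$ are explicit. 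First I would compute the effect of complex conjugation: by the reality condition \eqref{311} (in its $q$-linear form, i.e.\ \eqref{430}, since $H_p$ has constant coefficients so the $\vee$ operation just sends $\xi\to-\xi$ and the symbols are even by parity), conjugating $H_p$ at these arguments produces $-SH_p$ evaluated at the conjugated arguments $-S\overline{\Pi_{n_j}^\pm U}$, which are $-S\overline{\hat u(n_j)e_+\varphi_{n_j}}=\overline{\hat u(n_j)}(-Se_+)\varphi_{n_j}=\overline{\hat u(n_j)}e_-\varphi_{n_j}=\Pi_{n_j}^-U$ and similarly $-S\overline{\Pi_{n_j}^-U}=\Pi_{n_j}^+U$. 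So conjugation swaps the $+$ and $-$ families of arguments (up to the overall $-S\cdot S$ and a $\xi\to-\xi$ that is killed by parity), and by the symmetry of $H_p$ in its arguments this returns (minus $S\cdot S$ of) the \emph{same} matrix; hence $\Im H_p$ at these arguments reduces to a fixed matrix conjugated by $S$. Then I would bring in the reversibility condition \eqref{314p}: replacing each $U_j$ by $SU_j$ and using that $S\Pi_{n_j}^+U=-\Pi_{n_j}^-U$, $S\Pi_{n_j}^-U=-\Pi_{n_j}^+U$ (again swapping the two families, with signs that combine to $(-1)^p=+1$ since $p$ is even), the reversibility relation $H_p(SU_1,\dots,SU_p;\xi)S=-SH_p(U_1,\dots,U_p;\xi)$ together with the symmetry gives a second relation linking $H_p$ at the $+/-$ configuration to $-S\cdot(\text{same})$. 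Combining the two relations forces the relevant matrix to vanish, which is exactly \eqref{ImHp}. The parity preserving property is needed only to discard the $\xi\to-\xi$ appearing through the $\vee$.

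For the second part, I would expand $H_p(U,\dots,U;\xi)$ by writing each copy of $U$ as $U=\sum_{n}(\Pi_n^+U+\Pi_n^-U)$ and using multilinearity, so that $H_p(U,\dots,U;\xi)=\sum H_p(\Pi_{n_1}^{\epsilon_1}U,\dots,\Pi_{n_p}^{\epsilon_p}U;\xi)$ over all $n_j\in\N^*$ and signs $\epsilon_j\in\{+,-\}$. Grouping terms by the number $\ell$ of $+$ signs and using the symmetry of $H_p$ to collect the $\binom{p}{\ell}$ permutations of a given sign pattern into the canonical one $(\underbrace{+,\dots,+}_{\ell},\underbrace{-,\dots,-}_{p-\ell})$, one gets $\Im H_p(U,\dots,U;\xi)=\sum_{\ell=0}^p\binom{p}{\ell}\sum_{(n_1,\dots,n_p)}\Im H_p(\Pi_{n_1}^+U,\dots,\Pi_{n_\ell}^+U,\Pi_{n_{\ell+1}}^-U,\dots,\Pi_{n_p}^-U;\xi)$. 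Then \eqref{ImHp} (with the role of the first $\ell$ and last $p-\ell$ indices; note that when $\ell\neq p/2$ or $p$ is odd the set $\Ccal_{p,\ell}$ is empty and there is no constraint, but in those cases one still has to check the terms vanish or rather—here the statement only removes $\Ccal_{p,\ell}$ terms, so for $\ell\neq p/2$ \emph{all} indices survive, which is consistent) removes precisely the contributions with $(n_1,\dots,n_p)\in\Ccal_{p,\ell}$, because those are the configurations where the multiset of the first $\ell$ frequencies equals that of the last $\ell$, i.e.\ exactly when the $+$ and $-$ arguments coincide as multisets and \eqref{ImHp} applies after a permutation. This yields \eqref{437}.

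The main obstacle I anticipate is bookkeeping the sign conventions carefully in the combination of \eqref{311} and \eqref{314p}: one must track how $S$ acts on $e_\pm$, how the $\xi\to-\xi$ from $A^\vee$ interacts with the parity condition \eqref{313}, and verify that the two resulting identities are genuinely independent (so that together they force vanishing, rather than being the same identity written twice). A secondary subtlety is that \eqref{ImHp} as stated fixes only $\ell=p/2$ frequencies but lists $2\ell=p$ arguments with $n_1,\dots,n_\ell$ repeated in both the $+$ and $-$ blocks; I would need to argue that a term in the expansion of \eqref{437} indexed by $(n_1,\dots,n_p)\in\Ccal_{p,\ell}$ can be brought, using the symmetry of $H_p$ within the first $\ell$ slots and within the last $\ell$ slots, to exactly the form appearing in \eqref{ImHp}, hence vanishes. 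Everything else—the multilinear expansion, the grouping into binomial coefficients, the identification of the ``diagonal'' multisets with $\Ccal_{p,\ell}$—is routine once the vanishing statement is in hand.
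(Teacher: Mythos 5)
Your argument is correct and rests on the same three algebraic conditions as the paper, with only an organizational difference. You combine reality with parity first to obtain $\overline{M} = -SMS$ for $M=H_p(\Pi_{n_1}^+U,\dots,\Pi_{n_\ell}^+U,\Pi_{n_1}^-U,\dots,\Pi_{n_\ell}^-U;\xi)$, then use reversibility \eqref{314p} (with $S\Pi_n^\pm U=-\Pi_n^\mp U$ and $(-1)^p=1$, together with the symmetry of $H_p$ to undo the $+/-$ swap) to get $M=-SMS$, whence $M=\overline{M}$ and $\Im M=0$. The paper instead stays on the diagonal $(U,\dots,U)$ to combine reality with reversibility, deriving $(\Im H_p)(U,\dots,U;-\xi)=-(\Im H_p)(\bar{U},\dots,\bar{U};\xi)$, Fourier-expands and identifies coefficients, specializes to $\ell=p/2$ with repeated indices to conclude that $\xi\mapsto\Im H_p$ at those arguments is odd, and only then invokes parity to conclude it is also even, hence zero. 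These are equivalent rearrangements of the same deductions.

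One step you should fill in rather than assert: the reality condition \eqref{311} is stated for the diagonal evaluation with $SU=-\bar U$, whereas you apply it directly at the Fourier-localized arguments $\Pi_{n_j}^\pm U$, which individually do \emph{not} satisfy $SU_j=-\bar U_j$. The polarized statement you need (conjugation and $\vee$ swap the $\Pi^+$ and $\Pi^-$ families, up to $\mp S\cdot S$) is exactly Lemma~\ref{433} of the paper, proved by the very Fourier-coefficient identification you would otherwise be skipping; and note that \eqref{430} is the definition of $\Im A_q$, not a $q$-linear reformulation of \eqref{311}. By contrast, the reversibility condition \eqref{314p} is already given in polarized $p$-linear form, so your direct use of it is fine. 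Your treatment of \eqref{437} (binomial expansion over sign patterns, then permuting any $\Ccal_{p,\ell}$-indexed term into the canonical repeated form covered by \eqref{ImHp}) is exactly what the paper does.
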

\begin{proof}
  Notice first that  \eqref{437} is trivial when one omits the restriction in the summation on $n_1,\dots,n_p$: this
  is just what one obtains using that $\Im H_p$ is a symmetric function of its arguments and  writing $\Pin{j} =
  \Pin{j}^++\Pin{j}^-$ for any $j = 1, \ldots, p $. Therefore  \eqref{437} follows by \eqref{ImHp}.

Recalling \eqref{4311} we have to check by $\C$-linearity of $\Im H_p$ that
\[\Im H_p(\varphi_{n_1}e_+,\dots,\varphi_{n_\ell}e_+,\varphi_{n_1}e_-,\dots,\varphi_{n_\ell}e_-;\xi)\equiv 0.\]
By \eqref{311}, we have
\[(\Im H_p)(U,\dots,U;-\xi) = S(\Im H_p)(U,\dots,U;\xi)S\]
and by \eqref{314p}
\[(\Im H_p)(SU,\dots,SU;\xi) = -S(\Im H_p)(U,\dots,U;\xi)S\]
for any $U$ satisfying $SU = -\bar{U}$. Consequently, as $p$ is even
\[(\Im H_p)(U,\dots,U;-\xi) = - (\Im H_p)(\bar{U},\dots,\bar{U};\xi) \, .\]
Decompose $U = \sum \Pin{j}U$, with $\Pin{j}U$ given by \eqref{4311}.   The above equality may be written, using symmetry and
  $\C$-linearity, as
  \begin{multline*}
    \sum_{n_1,\dots,n_p}\sum_{\ell=0}^p\bin{p}{\ell}\hat{u}(n_1)\cdots\hat{u}(n_\ell)\overline{\hat{u}(n_{\ell+1})}\cdots\overline{\hat{u}(n_{p})}\\
   \times (\Im H_p)(\varphi_{n_1}e_+,\dots,\varphi_{n_\ell}e_+,\varphi_{n_{\ell+1}}e_-,\dots,\varphi_{n_{p}}e_-;-\xi) =\\
-\sum_{n_1,\dots,n_p}\sum_{\ell=0}^p\bin{p}{\ell}\overline{\hat{u}(n_{\ell+1})}\cdots\overline{\hat{u}(n_{p})}\hat{u}(n_1)\cdots\hat{u}(n_\ell)\\
   \times (\Im H_p)(\varphi_{n_{\ell+1}}e_+,\dots,\varphi_{n_{p}}e_+,\varphi_{n_1}e_-,\dots,\varphi_{n_\ell}e_-;\xi) \, .
  \end{multline*}
Identifying the coefficients of
$\hat{u}(n_1)\cdots\hat{u}(n_\ell)\overline{\hat{u}(n_{\ell+1})}\cdots\overline{\hat{u}(n_{p})}$ on each side, we get
\begin{multline*}
  (\Im H_p)(\varphi_{n_1}e_+,\dots,\varphi_{n_\ell}e_+,\varphi_{n_{\ell+1}}e_-,\dots,\varphi_{n_{p}}e_-;-\xi)\\
= -(\Im H_p)(\varphi_{n_{\ell+1}}e_+,\dots,\varphi_{n_{p}}e_+,\varphi_{n_1}e_-,\dots,\varphi_{n_\ell}e_-;\xi) \, .
\end{multline*}
In particular, if $\ell = \frac{p}{2}$ and $n_{\ell+1} = n_1,\dots, n_p = n_\ell$, we obtain that \[\xi \to (\Im
H_p)(\varphi_{n_1}e_+,\dots,\varphi_{n_\ell}e_+,\varphi_{n_{\ell+1}}e_-,\dots,\varphi_{n_{p}}e_-;\xi)\] 
is an odd
function. Making the same reasoning starting from \eqref{313}
\[(\Im H_p)(U,\dots,U;-\xi) = \Im H_p(U,\dots,U;\xi)\]
we conclude that  
$(\Im H_p)(\varphi_{n_1}e_+,\dots,\varphi_{n_\ell}e_+,\varphi_{n_{\ell+1}}e_-,\dots,\varphi_{n_{p}}e_-;\xi)$ is also an even
function, so that it vanishes identically.
 This proves the lemma. 
\end{proof}
We shall need a second lemma to prove Proposition \ref{431}.
\begin{lemma}
  \label{433}
Let $B(\Ucal;\xi)$ be a constant coefficients matrix with entries in $\Gt{0}{p}$, satisfying
\begin{equation}
  \label{eq:439}
  \overline{B(U,\dots,U;\xi)}^\vee = \pm S B(U,\dots,U;\xi)S
\end{equation}
for any function  $U(x)$, even in $x$ and satisfying $SU = -\bar{U}$. Then for any such $U$, any indices $n_1,\dots,n_p$ in
$\N^*$, any $\ell = 0,\dots, p$
\begin{multline}
  \label{eq:4310}
\overline{B(\Pin{1}^+U,\dots,\Pin{\ell}^+U,\Pin{\ell+1}^-U,\dots,\Pin{p}^-U;\xi)}^\vee\\
=\pm S B(\Pin{1}^-U,\dots,\Pin{\ell}^-U,\Pin{\ell+1}^+U,\dots,\Pin{p}^+U;\xi)S \, .
\end{multline}
In the same way, if we assume
\[
B(SU,\dots,SU;\xi) = -SB(U,\dots,U;\xi)S \, , 
\] 
then
\begin{multline}\label{eq:4310aa}
B(\Pin{1}^+SU,\dots,\Pin{\ell}^+SU,\Pin{\ell+1}^-SU,\dots,\Pin{p}^-SU;\xi)\\
= -S B(\Pin{1}^-U,\dots,\Pin{\ell}^-U,\Pin{\ell+1}^+U,\dots,\Pin{p}^+U;\xi)S \, .
\end{multline}
\end{lemma}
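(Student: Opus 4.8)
The statement is a purely algebraic consequence of the definitions of $\Pin{}^{\pm}$ and of the involution $S$, combined with the $\C$-multilinearity of $B$. The idea is to evaluate both sides of the claimed identities on the spectral components $\Pin{j}^{\pm}U$ written explicitly via \eqref{4311}, and to use the relation $SU=-\bar U$, i.e. $Se_{+}=-e_{-}$ and $Se_{-}=-e_{+}$ together with $\overline{\hat u(n)}$ swapping with $\hat u(n)$ under conjugation. First I would reduce to the elementary basis case: by $\C$-linearity of $B$ in each argument, it suffices to prove \eqref{4310} (resp.\ \eqref{4310aa}) for the special choices $\Pin{j}^{+}U=\hat u(n_j)\,e_{+}\varphi_{n_j}$ and $\Pin{j}^{-}U=\overline{\hat u(n_j)}\,e_{-}\varphi_{n_j}$, and in fact, factoring out the scalars $\hat u(n_j)$ or $\overline{\hat u(n_j)}$ and the real functions $\varphi_{n_j}$, for the arguments $\varphi_{n_j}e_{+}$ and $\varphi_{n_j}e_{-}$. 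Care must be taken with the complex conjugation: on the left-hand side of \eqref{4310} we have the operation $B\mapsto \overline{B}^{\vee}$, and conjugating $B$ evaluated at an argument $\hat u(n_j)e_{+}\varphi_{n_j}$ produces $\overline{\hat u(n_j)}$ times $B$ evaluated at $e_{+}\varphi_{n_j}$ conjugated; it is precisely this exchange $\hat u(n_j)\leftrightarrow\overline{\hat u(n_j)}$ that turns the first $\ell$ ``$+$'' slots into ``$-$'' slots and conversely, matching the right-hand side.

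Concretely, for \eqref{4310} I would start from hypothesis \eqref{439}, applied not to a general $U$ but to the (still even, still satisfying $S(\cdot)=-\overline{(\cdot)}$) function $\sum_{j}\Pin{j}U$; expanding both sides by multilinearity and symmetry in the modes as in the proof of Lemma~\ref{432}, and then identifying the coefficient of the monomial $\hat u(n_1)\cdots\hat u(n_\ell)\overline{\hat u(n_{\ell+1})}\cdots\overline{\hat u(n_p)}$ on each side. On the left, the operation $B\mapsto\overline B^{\vee}$ conjugates the scalar monomial too, so the coefficient one reads off is the conjugate of $B$ evaluated at $(\varphi_{n_1}e_{+},\dots,\varphi_{n_\ell}e_{+},\varphi_{n_{\ell+1}}e_{-},\dots,\varphi_{n_p}e_{-};\xi)$; on the right, $S B S$ is applied and, because $S$ exchanges $e_{+}$ and $e_{-}$ up to sign, together with $S^{2}=\mathrm{Id}$, the sign factors $(\pm1)^{\text{(number of slots)}}$ cancel against the matrix conjugations $SBS$, leaving exactly $\pm S\,B(\Pin{1}^{-}U,\dots,\Pin{\ell}^{-}U,\Pin{\ell+1}^{+}U,\dots,\Pin{p}^{+}U;\xi)\,S$. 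Restoring the scalars and real functions $\varphi_{n_j}$ by linearity then yields \eqref{4310}. The second identity \eqref{4310aa} is handled identically, using this time the reversibility-type hypothesis $B(SU,\dots,SU;\xi)=-SB(U,\dots,U;\xi)S$ and the fact that replacing $U$ by $SU$ turns $\Pin{j}^{+}U$ into (a multiple of) $\Pin{j}^{-}(SU)=\Pin{j}^{+}U$ composed with $S$, i.e.\ swaps the role of $e_{+}$ and $e_{-}$; no complex conjugation intervenes here, so the argument is even shorter.

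The main obstacle — really the only delicate point — is bookkeeping the sign factors coming from $Se_{\pm}=-e_{\mp}$ across all $p$ slots and checking they combine with the two ``outer'' matrix $S$'s and with the oddness of $\mk$-type quantities (not relevant here) to produce precisely the signs $\pm$ displayed in \eqref{4310} and \eqref{4310aa}, uniformly in $\ell$. I would organize this by noting that, since $S\bigl[\begin{smallmatrix}1&0\\0&0\end{smallmatrix}\bigr]S=\bigl[\begin{smallmatrix}0&0\\0&1\end{smallmatrix}\bigr]$, conjugating the projectors $\Pin{j}^{+}$ by $S$ gives exactly $\Pin{j}^{-}$ (and vice versa) \emph{without} extra signs at the level of projectors — the signs $Se_{+}=-e_{-}$ are absorbed because each $\Pin{j}^{\pm}$ contains the rank-one matrix $e_{\pm}e_{\pm}^{T}$, which is $S$-conjugation-invariant up to the correct sign squared. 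This observation collapses the sign count entirely, and the rest is a direct substitution using multilinearity. Hence the proof is short and I expect no genuine difficulty beyond this careful but routine sign tracking.
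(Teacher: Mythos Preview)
Your approach is the same as the paper's: expand $B(U,\dots,U;\xi)$ via \eqref{4311} as a sum over $\ell$ and $(n_1,\dots,n_p)$ of scalar monomials $\hat u(n_1)\cdots\hat u(n_\ell)\overline{\hat u(n_{\ell+1})}\cdots\overline{\hat u(n_p)}$ times $B(e_+\varphi_{n_1},\dots,e_+\varphi_{n_\ell},e_-\varphi_{n_{\ell+1}},\dots,e_-\varphi_{n_p};\xi)$, plug into the hypothesis, and identify coefficients of a fixed monomial on each side.

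One clarification that will simplify your sign discussion for \eqref{4310}: in the hypothesis \eqref{439} the two matrices $S$ on the right conjugate the \emph{value} of $B$, they do not act on its arguments. Hence there is no $Se_\pm=-e_\mp$ bookkeeping needed for \eqref{4310}. The exchange of $+$ and $-$ slots on the right-hand side of \eqref{4310} appears simply because, after conjugating the left-hand side, you are identifying the coefficient of $\overline{\hat u(n_1)}\cdots\overline{\hat u(n_\ell)}\hat u(n_{\ell+1})\cdots\hat u(n_p)$, and in the \emph{unconjugated} expansion of $B(U,\dots,U)$ that monomial multiplies the term with $e_-$ in slots $1,\dots,\ell$ and $e_+$ in slots $\ell+1,\dots,p$. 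The sign $(-1)^p$ from $Se_\pm=-e_\mp$ does enter in the proof of \eqref{4310aa}, where on the left-hand side $S$ genuinely acts on each argument via $S\Pin{j}^+U=-\hat u(n_j)e_-\varphi_{n_j}$, $S\Pin{j}^-U=-\overline{\hat u(n_j)}e_+\varphi_{n_j}$; this is where the careful tracking you describe is actually required.
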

\begin{proof}
  We may write using \eqref{4311}
\begin{multline*}
  B(U,\dots,U;\xi)  \\=\sum_{\ell=0}^p
  \bigl(\begin{smallmatrix}p\\\ell\end{smallmatrix}\bigr)\sum_{n_1,\dots,n_p}B(\Pin{1}^+U,\dots,\Pin{\ell}^+U,\Pin{\ell+1}^-U,\dots,\Pin{p}^-U;\xi)
  \\
= \sum_{\ell=0}^p
\bigl(\begin{smallmatrix}p\\\ell\end{smallmatrix}\bigr)\sum_{n_1,\dots,n_p}\hat{u}(n_1)\cdots\hat{u}(n_\ell)\overline{\hat{u}(n_{\ell+1})}\cdots\overline{\hat{u}(n_{p})}
\\
\times B(e_+\varphi_{n_1},\dots,e_+\varphi_{n_\ell},e_-\varphi_{n_{\ell+1}},\dots,e_-\varphi_{n_{p}};\xi) \, .
\end{multline*}
Plug this expression inside \eqref{439}, and identify on both sides the coefficients of $\overline{\hat{u}(n_1)}\dots
\overline{\hat{u}(n_{\ell})}\hat{u}(n_{\ell+1})\dots \hat{u}(n_{p})$. We get
\begin{multline*}
  \overline{B(e_+\varphi_{n_1},\dots,e_+\varphi_{n_\ell},e_-\varphi_{n_{\ell+1}},\dots,e_-\varphi_{n_{p}};\xi)}^\vee\\
= \pm S B(e_-\varphi_{n_1},\dots,e_-\varphi_{n_\ell},e_+\varphi_{n_{\ell+1}},\dots,e_+\varphi_{n_{p}};\xi) S
\end{multline*}
using that $B$ is $p$-linear symmetric in its first argument. This gives \eqref{4310}.

To prove the last statement of the lemma, we write using again \eqref{4311}
\begin{multline*}
  B(SU,\dots,SU;\xi)  \\=\sum_{\ell=0}^p
  \bigl(\begin{smallmatrix}p\\\ell\end{smallmatrix}\bigr)\sum_{n_1,\dots,n_p}B(S\Pin{1}^+U,\dots,S\Pin{\ell}^+U,S\Pin{\ell+1}^-U,\dots,S\Pin{p}^-U;\xi)
  \\
= (-1)^p\sum_{\ell=0}^p
\bigl(\begin{smallmatrix}p\\\ell\end{smallmatrix}\bigr)\sum_{n_1,\dots,n_p}\hat{u}(n_1)\cdots\hat{u}(n_\ell)\overline{\hat{u}(n_{\ell+1})}\cdots\overline{\hat{u}(n_{p})}
\\
\times B(e_-\varphi_{n_1},\dots,e_-\varphi_{n_\ell},e_+\varphi_{n_{\ell+1}},\dots,e_+\varphi_{n_{p}};\xi).
\end{multline*}
By identification with the expansion of $SB(U,\dots,U; \xi)S$ we get
\begin{multline*}
  B(e_-\varphi_{n_1},\dots,e_-\varphi_{n_\ell},e_+\varphi_{n_{\ell+1}},\dots,e_+\varphi_{n_{p}};\xi)\\
= -(-1)^p S B(e_+\varphi_{n_1},\dots,e_+\varphi_{n_\ell},e_-\varphi_{n_{\ell+1}},\dots,e_-\varphi_{n_{p}};\xi) S
\end{multline*}
which implies \eqref{4310aa}.
\end{proof}
\begin{proof1}{Proof of Proposition~\ref{431}}
We conjugate the equation \eqref{426} 
by the  operator $\exp(\opbw(B(U;t,\xi)))$
where  $ B(U;t,\xi) $ is a diagonal matrix of constant coefficient symbols as in \eqref{431}, 
with $ B_q ({\cal U}; \xi ) $ in $ \GtM{0}{q} $  
to be chosen. 
The left hand side operator in \eqref{426} transforms into 
\begin{multline}
  \label{eq:4313}
\exp(\opbw(B(U;t,\xi))) \bigl[D_t -\opbw\bigl((1+\zu(U;t))\mk(\xi)\Kcal - H(U;t,\xi)\bigr)\bigr]\\
\times \exp(\opbw(-B(U;t,\xi)))\\
= D_t -\opbw\bigl[D_tB(U;t,\xi)+ (1+\zu(U;t))\mk(\xi)\Kcal  - H(U;t,\xi)\bigr].
\end{multline}
We now choose   
\be\label{eq:defB1n}
B (U;t,\xi) = \sum_{q=1}^{N-1} B_q(U,\dots,U;\xi)
\ee
so that the symbol
\begin{equation} \label{eq:4314}
  D_tB(U;t,\xi) - H(U;t,\xi) = H^1(U;t,\xi)
\end{equation}
has the form  $ H^1= \sum_{q=1}^{N-1}H^1_q + H^1_N $ 
 in \eqref{431}, with  
homogeneous symbols $H^1_q $ 
which  are {\it real} valued,  \eqref{formH1q} holds, 
 and $\Im H^1_N$ is in $\GrM{0}{K,\ku,N}$. 
First we notice that, differentiating each $ B_q $ in \eqref{defB1n} and inserting 
the expression  \eqref{4312} of $ D_t U $,  we get 
\begin{multline}
  \label{eq:4315}
D_t(B_q(U,\dots,U;\xi)) = \sum_{q'=1}^q B_q( \underbrace{U,\dots, U}_{q'-1}, D_tU,\dots,U;\xi)\\
= \sum_{q'=1}^q B_q(\underbrace{U,\dots, U}_{q'-1}, \mk(D) \Kcal U,\dots,U;\xi)\\
+ \sum_{q'=1}^q B_q(\underbrace{U,\dots, U}_{q'-1},M(U; t)U,\dots,U;\xi) \, .
\end{multline}
Since we look for $B_q(U,\dots,U;\xi)$ satisfying \eqref{312} (resp.\
\eqref{313}, resp.\ \eqref{315}), these properties imply that  $\partial_tB_q(U,\dots,U;\xi)$ satisfies \eqref{312} (resp.\
\eqref{313}, resp.\ \eqref{314}). Consequently, the left hand
side of \eqref{4315} satisfies \eqref{311} (resp.\ \eqref{313}, resp.\ \eqref{314}). Decomposing the right hand side in
homogeneous contributions, we see that each of them satisfies 
conditions \eqref{311}, \eqref{313} and \eqref{314p}, by lemma~\ref{hom-nonhom}.

According to Proposition~\ref{421}, the matrix $ H $ in the left hand side
of \eqref{4314} 
may be written as
\be\label{eq:defH1n}
H(U;t,\xi)  = \sum_{q=1}^{N-1}H_q(U,\dots,U;\xi) + H_N(U;t,\xi)
\ee
where $ H_q $ is a diagonal matrix of symbols of $\Gt{1}{q}$ 
with  imaginary part in $\Gt{0}{q}$,  and where $H_N$ is a
diagonal matrix with entries in $\Gr{1}{K,\ku,N}$ and imaginary part  in $\Gr{0}{K,\ku,N}$. Moreover, 
these matrices of symbols
satisfy the reality, parity preserving and reversibility properties \eqref{311}, \eqref{313} and \eqref{314} (or \eqref{314p}
for the homogeneous components by lemma \ref{hom-nonhom}). 

By  \eqref{defB1n},  \eqref{4315}, \eqref{defH1n} we may write the left hand side of 
\eqref{4314} as
\begin{multline}
  \label{eq:4316}
\sum_{q=1}^{N-1} \Bigl[\sum_{q'=1}^q B_q(\underbrace{U,\dots, U}_{q'-1},\mk(D)\Kcal U,\dots,U;\xi) - H_q(U,\dots,U;\xi)\\
+\sum_{q'=1}^q B_q(\underbrace{U,\dots, U}_{q'-1},M(U; t)U,\dots,U;\xi)\Bigr] - H_N \, . 
\end{multline}
Since each $ B_q  $ is in $ \Gt{0}{q}  $ and $ M(U; t) $ is in $\sMM{}{K,1,1}{N}$, 
 Proposition~\ref{233}-(i) implies  that 
each  symbol   $ B_q(U,\dots,M(U; t)U,\dots,U;\xi)$ is in $\sGM{0}{K,\ku,q+1}{N}$. 
We may group these symbols together with $H_q $ and $  H_N$, according to their degree of homogeneity,  and rewrite \eqref{4316} as
\begin{multline}
  \label{eq:4317}
\sum_{q=1}^{N-1} \Bigl[\sum_{q'=1}^q B_q(\underbrace{U,\dots, U}_{q'-1},\mk(D)\Kcal U,\dots,U;\xi) - 
\tilde{H}_q(U,\dots,U;\xi)\Bigr]\\
 - \tilde{H}_N(U;\xi)
\end{multline}
where $\tilde{H}_q$ is a diagonal matrix of symbols in $\Gt{1}{q}$ with imaginary part in $\Gt{0}{q}$, and where
$\tilde{H}_N$ is a diagonal matrix with entries in $\Gr{1}{K,\ku,N}$ and imaginary part in $\Gr{0}{K,\ku,N}$.  Note that
$\tilde{H}_q$ depends only on $B_{q'}$, $q'<q$.  Moreover $\tilde{H}_q$ and $\tilde{H}_N $ satisfy, like $ H_q$ and $ H_N $,
the reality, parity preserving and reversibility properties \eqref{311}, \eqref{313}, \eqref{314} 
(or \eqref{314p} for the homogeneous component $\tilde{H}_q$), as the last sum in \eqref{4315} satisfies these properties as well.

To obtain \eqref{4314}, we have to choose $B_q$ in order to compensate the imaginary part of $\tilde{H}_q$ in \eqref{4317},
so that we shall be left with only the real part of $\tilde{H}_q$, $q=1,\dots,N-1$, that  provides the contribution to
$H^1_q$ in \eqref{4314}, \eqref{431}. In other words, we want to find $B_q$ so  that
$$
\sum_{q'=1}^q B_q(U_1,\dots ,\mk(D)\Kcal U_{q'},\dots,U_q;\xi) = i\Im \tilde{H}_q(U_1,\dots,U_q;\xi) \, .
$$
We now determine
\[
B_q(\Pin{1}^+U_1,\dots,\Pin{\ell}^+U_\ell,\Pin{\ell+1}^-U_{\ell+1},\dots,\Pin{q}^-U_q;\xi) 
\]
for any  $ \ell =1, \ldots, q $, and  $ n_1,\dots,n_q \in \N^* $.  
We decompose 
$  U_j = \sum_{n_j \in \N^*} \Pi_{n_j}^+ U_j  + \Pi_{n_j}^- U_j  $, $ j = 1, \ldots, q $,  and
since 
$$
\mk(D)\Kcal \Pin{j}^\pm U_j = \pm\mk(n_j)\Pin{j}^\pm  U_j \, , 
$$ 
we write
this equation
\begin{multline*}
  \Bigl[\sum_{q' = 1}^\ell \mk(n_{q'}) - \sum_{q' =\ell+  1}^q \mk(n_{q'})\Bigr]\\\times  B_q(\Pin{1}^+U_1,\dots,
  \Pin{\ell}^+U_\ell,\Pin{\ell+1}^-U_{\ell+1},\dots,\Pin{q}^-U_q;\xi)\\
= i\Im \tilde{H}_q(\Pin{1}^+U_1,\dots  \Pin{\ell}^+U_\ell,\Pin{\ell+1}^-U_{\ell+1},\dots,\Pin{q}^-U_q;\xi) \, .
\end{multline*}
As $ \tilde{H}_q $ satisfies the assumptions of lemma~\ref{432}, the right hand side vanishes if $q$ is even, $\ell =
\frac{q}{2}$ and $\{n_1,\dots,n_\ell\} = \{n_{\ell+1},\dots,n_q\}$. In all the other cases, by Proposition \ref{711},  
when the
  parameter $\kappa$ is fixed outside a subset of zero measure, the quantity
\begin{equation}\label{eq:4317a}
\Dcal_\ell(n_1,\dots,n_q) = \sum_{q'=1}^\ell \mk(n_{q'}) - \sum_{q'=\ell+1}^q\mk(n_{q'})
\end{equation}
does not vanish, and its absolute value is actually bounded from below by $c\abs{n}^{-N_0}$, 
$n =    (n_1,\dots,n_q)$, for some integer $N_0 $.  
    We may thus define
    \begin{multline}
      \label{eq:4318}
B_q(U,\dots,U;\xi) = \sum_{\ell=0}^q
\bigl(\begin{smallmatrix}q\\\ell\end{smallmatrix}\bigr)\sum_{(n_1,\dots,n_q)\not\in\Ccal_{q,\ell}}\Dcal_\ell
(n_1,\dots,n_q)^{-1}\\
\times i\Im\tilde{H}_q(\Pin{1}^+U,\dots,\Pin{\ell}^+U,\Pin{\ell+1}^-U,\dots,\Pin{q}^-U;\xi) \, .
    \end{multline}
The lower bound  $\abs{\Dcal_\ell (n)} \geq c\abs{n}^{-N_0} $ 
implies that the constant coefficients symbol $ B_q $ satisfies estimates of the form 
\eqref{214} with $ m = 0 $   as $ \Im \tilde{H}_q$
does, changing the value of $\mu$ depending on $ N_0$ (actually  
we have to check \eqref{214} just for $ \alpha = 0 $ since $ B_q $ is  constant  in $ x $).
By the last remark after Definition \ref{211}
we obtain  symbols $ B_q $, $ q =1, \ldots , N - 1 $,  which act on $ {\dot H}^\sigma $ 
taking $ \sigma $ large enough with respect to $ \mu $, i.e. 
large  with respect to the number of steps $ N $  and the loss of derivatives $ N_0 $
produced by the small divisors $ \Dcal_\ell (n) $. 

Therefore $B_q$ is a diagonal 
matrix of homogeneous symbols in $\GtM{0}{q}$. It satisfies \eqref{313} as $\tilde{H}_q$
does. Let us check that \eqref{315} holds. Since $\tilde{H}_q$ satisfies 
\eqref{314p}, we deduce from the last statement in
lemma~\ref{433} that \eqref{4310aa} holds with $B$ replaced by $i\Im\tilde{H}_q$. We deduce that
\begin{multline*}
  B_q(SU,\dots,SU;\xi) = -\sum_{\ell=0}^q
\bigl(\begin{smallmatrix}q\\\ell\end{smallmatrix}\bigr)\sum_{(n_1,\dots,n_q)\not\in\Ccal_{q,\ell}}\Dcal_\ell
(n_1,\dots,n_q)^{-1}\\
\times iS\Im\tilde{H}_q(\Pin{1}^-U,\dots,\Pin{\ell}^-U,\Pin{\ell+1}^+U,\dots,\Pin{q}^+U;\xi)S \, . 
\end{multline*}
Since $\tilde{H}_q$ is symmetric in its first $q$ arguments and \[\Dcal_\ell(n_1,\dots,n_\ell,n_{\ell+1},\dots,n_q) = -
\Dcal_\ell(n_{\ell+1},\dots,n_q,n_1,\dots,n_\ell),\] we obtain that 
\[B_q(SU,\dots,SU;\xi) = SB_q(U,\dots,U;\xi)S.\] 
This is condition \eqref{315p} (applied to the arguments $U_1=\cdots = U_q = U$), which, if we consider $B_q(U,\dots,U;\xi)$ as a non-homogeneous symbol, implies the
anti-reversibility condition \eqref{315}.

Let us check that $B_q$ satisfies as well \eqref{312}. The matrix $i\Im\tilde{H}_q$ satisfies \eqref{311} by assumption i.e.\
condition \eqref{439} with the minus sign. It follows from lemma~\ref{433} that
\begin{multline*}
  \overline{i\Im \tilde{H}_q(\Pin{1}^+U,\dots,\Pin{\ell}^+U,\Pin{\ell+1}^-U,\dots,\Pin{q}^-U;\xi)}^\vee\\
= -Si\Im\tilde{H}_q(\Pin{1}^-U,\dots,\Pin{\ell}^-U,\Pin{\ell+1}^+U,\dots,\Pin{q}^+U;\xi)S,
\end{multline*}
so that, by \eqref{4318}
\begin{multline*}
  \overline{B_q(U,\dots,U;\xi)}^\vee = -i\sum_{\ell=0}^q \bigl(\begin{smallmatrix}q\\\ell\end{smallmatrix}\bigr)\sum_{(n_1,\dots,n_q)\not\in\Ccal_{q,\ell}}\Dcal_\ell
(n_1,\dots,n_\ell,n_{\ell+1},\dots,n_q)^{-1}\\
\times S\Im\tilde{H}_q(\Pin{1}^-U,\dots,\Pin{\ell}^-U,\Pin{\ell+1}^+U,\dots,\Pin{q}^+U;\xi)S\\
= i \sum_{\ell=0}^q
\bigl(\begin{smallmatrix}q\\\ell\end{smallmatrix}\bigr)\sum_{(n_1,\dots,n_q)\not\in\Ccal_{q,\ell}}\Dcal_\ell(n_{\ell+1},\dots,n_q,n_1,\dots,n_\ell)^{-1}\\
\times S\Im\tilde{H}_q(\Pin{\ell+1}^+U,\dots,\Pin{q}^+U,\Pin{1}^-U,\dots,\Pin{\ell}^-U;\xi)S\\
=S B_q(U,\dots,U;\xi)S \, .
\end{multline*}
In other words, $B_q$ satisfies \eqref{312}.

We are left with checking that \eqref{432} holds. 
Under the change of variables 
$ \tilde{V}^1 = \exp\bigl(\opbw(B(U;t;\xi))\bigr)\tilde{V} $, 
taking into account \eqref{4313} and \eqref{4314},
the system \eqref{426} transforms into
\begin{multline}
  \label{eq:4319}
\Bigl(D_t -\opbw\bigl[(1+\zu(U;t))\mk(\xi)\Kcal + H_1(U;t,\cdot)\bigr]\Bigr)\tilde{V}_1\\
= \exp\bigl(\opbw(B(U;t,\xi))\bigr)\bigl[R_1(U;t)\exp\bigl(-\opbw(B(U;t,\xi))\bigr)\tilde{V}_1\bigr]\\
+ \exp\bigl(\opbw(B(U;t,\xi))\bigr)[R_2(U;t)U].
\end{multline}
Let us expand
\begin{equation}
  \label{eq:4320}
  \exp\bigl(\opbw(B(U;t,\xi))\bigr) = \sum_{q=0}^{N-1}\frac{(\opbw(B(U;t,\xi)))^q}{q!} + S_N(U; t).
\end{equation}
As $S_N$ is the remainder of an absolutely convergent series of bounded operators on $\Hds{s}$ for any $s$, as well as its
$\partial_t^k$-derivatives, $k\leq K-K'$, we see that the replacement of one of the exponentials in the right hand side of
\eqref{4319} by $S_N(U;t)$ brings operators satisfying the bounds \eqref{2117} defining $\Rr{-\rho+\frac{3}{2}}{K,\ku,N}$. The
corresponding terms may be incorporated to $R_1(U ;t)\tilde{V}^1 + R_2(U;t)U$ in \eqref{432}. Consider next the sum in
\eqref{4320}. By the definition \eqref{431} of $B$, and the symbolic calculus properties of Proposition~\ref{231} and 
Proposition~\ref{232} (which are quite trivial here since we deal with constant coefficient symbols), we may write that sum as
\[
\mathrm{Id} + \sum_{q=1}^{N-1}\Bigl(\opbw(\tilde{B}_q(U,\dots,U;\cdot))+R'_q(U; t)
\Bigr)\]
modulo terms of the same form as $S_N(U; t)$ in \eqref{4320}. 
Replacing the exponentials in \eqref{4319} by such a sum brings
again smoothing expressions contributing to $R_1(U; t)\tilde{V}^1 + R_2(U; t)U$ in the right hand side of \eqref{432}, by
Propositions~\ref{232} and~\ref{233}.

Moreover, since $B$ satisfies \eqref{312}, \eqref{313} and \eqref{315}, the operators $S_N $, $\opbw(\tilde{B}_q)$,
$R'_q $ satisfy \eqref{317}, \eqref{318} and \eqref{3110}. Since in \eqref{4319}, $R_1 $ and $R_2 $ satisfy
\eqref{316}, \eqref{318} and \eqref{319}, it follows by composition that the similar operators in \eqref{432} satisfy these
properties as well. This concludes the proof of \eqref{432}.
Finally, the definition \eqref{431} of $\tilde{V}^1$ and \eqref{4320} show that $\tilde{V}^1$ may be written as \eqref{433},
and that \eqref{434} holds, as a consequence of the boundedness of paradifferential operators on Sobolev spaces.
\end{proof1}

\section{Proof of Theorem~\ref{311}}\label{sec:44}

The proof of Theorem~\ref{311} will rely on a normal form construction of modified energy Sobolev norms, which are
quasi-invariant. The small divisors, 
which produce losses of derivatives,
 are compensated by  the smoothing character of the  operators $ R_1 $, $ R_2 $ in the right hand side 
  of \eqref{432}. 
We define first the classes of multilinear forms 
that will be used to construct a modified
energy for \eqref{432}.
\begin{definition}
  \label{441} {\bf (Multilinear energy forms)}
Let $\rho, s$ be in $\R_+$, $p$ in $\N$. One denotes by \index{L@$\Lt{-\rho}{p,\pm}$ (Space of multilinear energy forms)}
$\Lt{-\rho}{p,\pm}$ the space of symmetric $(p+2)$-linear forms 
$$
(U_0,\dots,U_{p+1}) \to L (U_0,\dots,U_{p+1})
$$
defined on
$\Hds{\infty}(\Tu,\C^2)$ and satisfying for some $\mu$ in $\R_+$ and all $n_0,\dots,n_{p+1}$ in $(\N^*)^{p+2}$, all
$(U_0,\dots,U_{p+1})$ in $\Hds{\infty}(\Tu,\C^2)^{p+2}$,
\begin{multline}
  \label{eq:441}
\abs{L(\Pin{0}U_0,\dots,\Pin{p+1}U_{p+1})} \leq C\max(n_0,\dots,n_{p+1})^{2s-\rho}\\
\times\max\nolimits_3(n_0,\dots,n_{p+1})^{\rho+\mu}\prod_{0}^{p+1}\norm{\Pin{\ell}U_\ell}_{L^2}
\end{multline}
where $\max_3(n_0,\dots,n_{p+1})$ is the third largest among those integers, and such that
\begin{multline}
  \label{eq:442}
L(\Pin{0}U_0,\dots,\Pin{p+1}U_{p+1}) \not\equiv 0\\
\Rightarrow \sum_{0}^{p+1}\epsilon_\ell n_\ell = 0 \textrm{ for some choice of the signs } \epsilon_\ell \in \{-1,1\}
\end{multline}
and, for any $U_0,\dots,U_{p+1}$ satisfying $SU_j = -\bar{U}_j$,
\begin{equation}
  \label{eq:443}
  L(SU_0,\dots,SU_{p+1}) = \pm L(U_0,\dots,U_{p+1}).
\end{equation}
\end{definition}
\textbf{Remark}: To check \eqref{443}, it is enough to prove that
\begin{equation}
  \label{eq:443a}
  L(SU,\dots,SU) = \pm L(U,\dots,U)
\end{equation}
for any $U$ satisfying $SU=-\bar{U}$, as follows from the $\R$-linearity and symmetry of $L$.

\smallskip

We define below a multilinear form  $ L $ of $ \Lt{-\rho}{p,\pm}$ that will be used in the proof of lemma \ref{443}.

\smallskip

\noindent
\textbf{Example}: Consider $R$ an element of $\Rt{-\rho}{p}$  and define $L(U_0,\dots,U_{p+1})$ to be the symmetrization of
\begin{equation}
  \label{eq:444}
  (U_0,\dots,U_{p+1}) \to \int_\Tu(\abs{D}^sSU_0)(\abs{D}^s R(U_1,\dots,U_p)U_{p+1})\,dx.
\end{equation}
Let us show that if $s\gg \rho$, we get an element of $\Lt{-\rho}{p,+}$ (resp.\ $\Lt{-\rho}{p,-}$) if 
$R(U_1,\dots,U_p)$ satisfies \eqref{3110p} (resp.\ \eqref{319p}).
First, we have the estimate
\begin{multline}\label{eq:boundL}
  \abs{L(\Pin{0}U_0,\dots,\Pin{p+1}U_{p+1})}\\
\leq Cn_0^{2s}\norm{\Pin{0}U_0}_{L^2} \norm{\Pin{0}R(\Pin{1}U_1,\dots,\Pin{p}U_p)\Pin{p+1}U_{p+1}}_{L^2}.
\end{multline}
By condition \eqref{2116}, we see that \eqref{442} holds. Moreover \eqref{2115}  implies 
 that the right hand side in \eqref{boundL} is bounded by
\begin{equation}
  \label{eq:445}
  Cn_0^{2s}\max\nolimits_2(n_1,\dots,n_{p+1})^{\rho+\mu}\max(n_1,\dots,n_{p+1})^{-\rho}\prod_{0}^{p+1}\norm{\Pin{\ell}U_\ell}_{L^2}.
\end{equation}
We may assume $n_1\geq n_2\geq\cdots\geq n_{p+1}$ and because of \eqref{442}, $\abs{n_0-n_1}\leq Cn_2$. If $n_0\geq
\frac{1}{2}n_1$, we have that $n_0\sim n_1$ and $\max_2(n_1,\dots,n_{p+1})\sim \max_3(n_0,\dots,n_{p+1})$ so that
\eqref{445} implies \eqref{441}. If $n_0\leq \frac{1}{2} n_1$, then $n_1 \sim n_2$, so that $n_0\leq C\max_3(n_0,\dots,n_{p+1})$. Then we estimate \eqref{445}, and thus \eqref{boundL}, by
\[C\max(n_0,\dots,n_{p+1})^{2s+\mu} \Bigl(\frac{\max_3(n_0,\dots,n_{p+1})}{\max(n_0,\dots,n_{p+1})}\Bigr)^{2s}
  \prod_{0}^{p+1}\norm{\Pin{\ell}U_\ell}_{L^2}\]
which, for $2s\geq\rho+\mu $,  is bounded by the right hand side in \eqref{441}.

Finally we check that the form defined in \eqref{444} satisfies \eqref{443a}. Since the matrix  $S$ is symmetric we have
\begin{multline}\label{eq:ex-sym}
  \int_\Tu (\abs{D}^sS(SU))(\abs{D}^s R(SU,\dots,SU)SU)\,dx\\
= \int_\Tu(\abs{D}^sSU)(\abs{D}^s SR(SU,\dots,SU)SU)\,dx.
\end{multline}
If $ R(U_1,\dots,U_p) $ satisfies \eqref{3110p} (resp.\
\eqref{319p}) then the right hand side in \eqref{ex-sym} is equal to \eqref{444} (resp.\ minus \eqref{444}) computed at $U_0=\dots=U_{p+1}=U$ . This concludes the proof.

\medskip

We shall need the following properties of the multilinear forms of the class $\Lt{-\rho}{p,\pm}$.
\begin{lemma}  \label{442}
(i) Let $L$ be in $\Lt{-\rho}{p,\pm}$. Then for any $m\geq 0$ such that $\rho>m+\frac{1}{2}$  and any  $s>\rho+\mu+m+\frac{1}{2}$, $L$ extends as a continuous
$(p+2)$-linear form on $\Hds{s}\times\cdots\times\Hds{s}\times\Hds{s-m}\times\Hds{s}\times\cdots\times\Hds{s}$.

(ii) Assume that $p=2\ell$ is even, and let $L$ be in $\Lt{-\rho}{p,-}$. If $U$ is an even function of $x$, satisfying $SU =
-\bar{U}$, then for any $n_0,\dots,n_\ell$ in $\N^*$,
\begin{equation}
  \label{eq:446}
  L(\Pin{0}^+U,\dots,\Pin{\ell}^+U,\Pin{0}^-U,\dots,\Pin{\ell}^-U) = 0 
\end{equation}
where $ \Pin{}^{\pm} $ are defined in \eqref{436}-\eqref{4311}. 

(iii) Assume that the parameter $\kappa$ is outside the subset of zero measure of Proposition~\ref{711}, so that 
estimate \eqref{713} holds, namely using notation \eqref{4317a}, for some $c>0$ and $N_0\in \N$,
\begin{equation*}
\abs{\Dcal_\ell(n_0,\dots,n_{p+1})}\geq c\max(n_0,\dots,n_{p+1})^{-N_0}
\end{equation*}
 for any $(n_0,\dots,n_{p+1})$ in $(\N^*)^{p+2}$ if $p$ is odd or $p$ is even and $\ell\neq\frac{p}{2}$, and for any
$(n_0,\dots,n_{p+1})$ in $(\N^*)^{p+2}$ such that  
\begin{equation*}
  \{n_0,\dots,n_\ell\} \neq \{n_{\ell+1},\dots,n_{p+1}\}
\end{equation*}
when $p$ is even and $\ell = \frac{p}{2}$. Then, 
for any $L$ in $\Lt{-\rho}{p,-}$, there is $\tilde{L}$ in $\Lt{-\rho+N_0}{p,+}$ such that 
\begin{equation}
  \label{eq:447}
  \sum_{j=0}^{p+1} \tilde{L}(U,\dots , \mk(D)\Kcal U ,\dots,U) = iL(U,\dots,U)
\end{equation}
(where $\mk(D)\Kcal$ acts on the argument corresponding to $U_j$ in the above sum).

(iv) Let $L$ be a multilinear form in $\Lt{-\rho}{p,\pm}$ and let $M$ be an operator  in $\sMM{m}{K,K',q}{N}$ (see Definition~\ref{216}) satisfying
conditions \eqref{317} and \eqref{319} (resp.\ and  \eqref{3110}). Then 
\be\label{eq:U-comp-M}
U\to L(U,\dots,U,M(U; t)U,U,\dots,U)
\ee
may be written as the sum 
$ \sum_{q'=0}^{N-p-q-1}L_{q'}(U,\dots,U)$ where  $ L_{q'} $ are suitable 
multilinear forms $L_{q'}$ in $\Lt{-\rho+m}{p+q+q',\mp}$ (resp.\
$\Lt{-\rho+m}{p+q+q',\pm}$), plus a term that, at any time $t$, is
\be\label{eq:resN}
O\bigl(\norm{U(t,\cdot)}_{\Hds{s}}^{p+2}\Gcals{\sigma}{K',N-p}{U}+ 
 \Gcals{\sigma}{K',N}{U} \Gcals{s}{K',1}{U} \norm{U(t,\cdot)}_{\Hds{s}}^{p+1}\bigr)
\ee
 if $s>\sigma\gg \rho$, and if $\Gcals{\sigma}{K',1}{U}$ is bounded. 

\end{lemma}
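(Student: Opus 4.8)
\textbf{Proof plan for Lemma~\ref{442}.} The four statements are largely independent, so I would treat them in order, each following a standard template. For (i), the point is that the weight in \eqref{441} is $\max(n_0,\dots,n_{p+1})^{2s-\rho}\max_3(\cdots)^{\rho+\mu}$; I would bound one of the arguments in $\Hds{s-m}$ rather than $\Hds{s}$, losing $\max(\cdots)^m$, and then Cauchy--Schwarz together with a Young-type convolution estimate on the remaining indices works provided $\rho>m+\tfrac12$ (to sum the two smallest indices) and $s$ is large enough relative to $\rho+\mu+m$ (to absorb the excess power on the largest two). This is purely mechanical: distribute the powers of $\max$ and $\max_3$ onto the high- and low-frequency factors, convert sums into $\ell^2$ convolutions, and use \eqref{442} to force $n_0\sim\max(n_1,\dots,n_{p+1})$ up to lower-order indices.

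For (ii), the argument mirrors the proof of Lemma~\ref{432}. Using \eqref{443} with the minus sign, combined with the parity and reality information encoded in the hypothesis $SU=-\bar U$ and $U$ even, I would show that $L(\Pin{0}^+U,\dots,\Pin{\ell}^+U,\Pin{0}^-U,\dots,\Pin{\ell}^-U)$ equals its own negative. Concretely: $S$ exchanges the $+$ and $-$ components while flipping sign, so applying \eqref{443a} to $SU$ and re-expanding in the basis $e_\pm\varphi_{n_j}$, then matching the coefficient of a fixed monomial in $\hat u(n_0)\cdots\hat u(n_\ell)\overline{\hat u(n_0)}\cdots\overline{\hat u(n_\ell)}$, one gets a relation forcing the symmetric configuration (where the $+$-indices coincide with the $-$-indices as a set) to vanish. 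The symmetry of $L$ in its first $p$ (or all $p+2$) arguments is what makes the two sides of the identity comparable.

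For (iii), this is the homological-equation step. Given $L\in\Lt{-\rho}{p,-}$, I define $\tilde L$ by dividing the components of $L$, in the basis decomposition $U=\sum(\Pin{}^+U+\Pin{}^-U)$, by the small divisor $\Dcal_\ell(n_0,\dots,n_{p+1})=\sum_{j\le\ell}\mk(n_j)-\sum_{j>\ell}\mk(n_j)$, exactly as $B_q$ was built from $\tilde H_q$ in \eqref{4318}. By part (ii), on the resonant set $\{n_0,\dots,n_\ell\}=\{n_{\ell+1},\dots,n_{p+1}\}$ (when $p=2\ell$) the form $L$ already vanishes, so no division is attempted there; off the resonant set, Proposition~\ref{711} gives $|\Dcal_\ell|\ge c\max(\cdots)^{-N_0}$, so the division costs a factor $\max(\cdots)^{N_0}$, producing $\tilde L\in\Lt{-\rho+N_0}{p,+}$ (the sign flips from $-$ to $+$ because the factor of $i$ in \eqref{447} toggles it, just as in the passage from $\Im\tilde H_q$ to $B_q$). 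One must check that $\tilde L$ satisfies \eqref{443} with the $+$ sign: this follows as in the verification for $B_q$ in Proposition~\ref{431}, using that $\Dcal_\ell$ is odd under exchanging the first $\ell$ indices with the last $\ell+1$, and using Lemma~\ref{433}-type identities for $L$. Finally \eqref{447} holds by construction since $\mk(D)\Kcal\Pin{j}^\pm=\pm\mk(n_j)\Pin{j}^\pm$.

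For (iv), I would decompose $M=\sum_{q'=q}^{N-1}M_{q'}+M_N$ as in \eqref{2128}. Substituting a homogeneous piece $M_{q'}$ into the $(p+q'+2)$-linear form $U\mapsto L(U,\dots,M_{q'}(\cdots)U,\dots,U)$ and symmetrizing gives, by the estimates \eqref{2125}, \eqref{2126} for $M_{q'}$ and \eqref{441}, \eqref{442} for $L$, a new multilinear form satisfying the bound \eqref{441} with $\rho$ replaced by $\rho-m$ (the $M$-weight $\max(\cdots)^m$ is absorbed against the $\max^{-\rho}$); the support condition \eqref{442} is inherited by combining those of $L$ and $M$; and the sign in \eqref{443} flips or not according to whether $M$ satisfies \eqref{3110} or \eqref{319}, by the same symmetrization computation as in the Example. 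The non-homogeneous tail $M_N$ contributes the remainder term: using the tame estimate \eqref{2127} for $M_N$ and Cauchy--Schwarz, $L(U,\dots,M_N(U;t)U,\dots,U)$ is bounded by $\norm{U}_{\Hds{s}}^{p+1}$ times $\norm{M_N(U;t)U}_{\Hds{s-m}}$ (taking $s$ large so $L$ extends by part (i)), which is exactly of the form \eqref{resN}. The homogeneous pieces with total degree $\ge N-1$ are likewise pushed into the remainder by re-interpreting them as non-homogeneous via the first remark after Definition~\ref{212bis}.

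\textbf{Main obstacle.} The delicate point is part (iii): keeping precise track of the sign in \eqref{443} through the division by $\Dcal_\ell$, and making sure the division is only ever performed on the non-resonant indices so that part (ii) genuinely disposes of the obstruction. Getting the algebra of $S$, the parity flip $\xi\mapsto-\xi$, and the antisymmetry of $\Dcal_\ell$ to line up — as was done for the symbols $B_q$ in Proposition~\ref{431} via Lemma~\ref{433} — is where the real work lies; the rest is bookkeeping with the weights in \eqref{441}.
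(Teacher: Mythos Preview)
Your proposal is correct and follows essentially the same route as the paper's proof: part (i) via weight distribution and Cauchy--Schwarz using $n_0\sim n_1$ from \eqref{442}; part (ii) by expanding \eqref{443a} in the basis $e_\pm\varphi_{n_j}$, identifying coefficients to obtain the analogue of \eqref{447aa}, and using symmetry on the diagonal; part (iii) by defining $\tilde L$ via division by $\Dcal_\ell$ on the non-resonant set (which (ii) makes legitimate) and checking the $+$ sign using the antisymmetry of $\Dcal_\ell$; and part (iv) by decomposing $M$ into homogeneous pieces and a non-homogeneous tail handled through (i). The obstacle you single out in (iii) is indeed the point the paper treats most carefully, via the same coefficient-matching identity derived in (ii).
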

\begin{proof}
(i) Fix an index $ \ell_0 \in \{0, \ldots, p + 1 \} $ and assume 
 that  $U_\ell$ is in $\Hds{s}$, $\ell= 0,\dots,p+1$, $ \ell \neq \ell_0 $, and $U_{\ell_0} \in \Hds{s-m} $.
The left hand side of \eqref{441} evaluated at $n_0\geq n_1\geq\cdots\geq
  n_{p+1}$ is bounded  by
\[Cn_0^{2s-\rho+m}n_2^{\rho+\mu}\prod_{0}^{p+1} n_\ell^{-s}c^\ell_{n_\ell}\bigl(\prod_{\substack{\ell\neq\ell_0\\0\leq\ell\leq
    p+1}}\norm{U_\ell}_\Hds{s}\bigr)\norm{U_{\ell_0}}_{\Hds{s-m}}\]
where 
$(c^\ell_{n_\ell})_{n_\ell}$ is a sequence in $\ell^2$ and, by \eqref{442},  $ n_0 \sim n_1 $. 
Summing over all the indices $ n_0, \ldots, n_{p+1} $ satisfying \eqref{442} one gets
the conclusion.

(ii) Write condition \eqref{443} for $ L $ (with the minus  sign because
$ L $ is in $ \Lt{-\rho}{p,-} $) with 
the vectors valued functions
\[ 
U_j = \vect{\hat{u}(n_j)}{\overline{\hat{u}(n_j)}}\varphi_{n_j} = \hat{u}(n_j) \varphi_{n_j}e_+ + \overline{\hat{u}(n_j)}
\varphi_{n_j}e_- \, , \quad j = 0, \ldots, p + 1 \, , 
\]
having used the notation introduced in \eqref{4311}. Then, since $ S e_+ = - e_- $ and $ S e_- = - e_+ $,  we get
\begin{multline*}
  L(\varphi_{n_0}\hat{u}(n_0)e_- + \varphi_{n_0}\overline{\hat{u}(n_0)}e_+,\dots,\varphi_{n_{p+1}}\hat{u}(n_{p+1})e_- +
  \varphi_{n_{p+1}}\overline{\hat{u}(n_{p+1})}e_+)\\
= -  (-1)^{p+2}L(\varphi_{n_0}\hat{u}(n_0)e_+ + \varphi_{n_0}\overline{\hat{u}(n_0)}e_-,\dots,\varphi_{n_{p+1}}\hat{u}(n_{p+1})e_+ +
  \varphi_{n_{p+1}}\overline{\hat{u}(n_{p+1})}e_-).
\end{multline*}
Using the $\C$-linearity of $L$ and identifying the coefficients of
\[\hat{u}(n_{0})\cdots\hat{u}(n_{\ell})\overline{\hat{u}(n_{\ell+1})}\cdots\overline{\hat{u}(n_{p+1})}\] on each side, we get
\begin{multline}
  \label{eq:447aa}
  L(\varphi_{n_{0}}e_-,\dots,\varphi_{n_{\ell}}e_-,\varphi_{n_{\ell+1}}e_+,\dots,\varphi_{n_{p+1}}e_+)\\
= -  (-1)^{p+2}L(\varphi_{n_{0}}e_+,\dots,\varphi_{n_{\ell}}e_+,\varphi_{n_{\ell+1}}e_-,\dots,\varphi_{n_{p+1}}e_-).
\end{multline}
If $p$ is even, $\ell = \frac{p+2}{2}$ and $n_0 = n_{\ell+1},\dots,n_\ell = n_{p+1}$, we get by symmetry
\[L(\varphi_{n_{0}}e_+,\dots,\varphi_{n_{\ell}}e_+,\varphi_{n_{0}}e_-,\dots,\varphi_{n_{\ell}}e_-) = 0\] 
which, recalling \eqref{4311}, implies \eqref{446}.

(iii) Decompose
\[L(U,\dots,U) =
\sum_{\ell=-1}^{p+1}\bin{p+2}{\ell+1}\sum_{n_0,\dots,n_{p+1}}L(\Pin{0}^+U,\dots,\Pin{\ell}^+U,\Pin{\ell+1}^-U,\dots,\Pin{p+1}^-U).\]
According to (ii), if $p$ is even, and $\ell=p/2$, we may assume that in the corresponding sum $\{n_0,\dots,n_\ell\} \neq
\{n_{\ell+1},\dots,n_{p+1}\}$. 
In order to solve \eqref{447} we 
define $\tilde{L}$ as the symmetric $(p+2)$-linear form associated to the homogeneous
map
\begin{multline}\label{eq:447ab}
U\to
i\sum_{\ell=-1}^{p+1}\bin{p+2}{\ell+1}\sum_{n_0,\dots,n_{p+1}}\Dcal_\ell(n_0,\dots,n_{p+1})^{-1}\\
\times L(\Pin{0}^+U,\dots,\Pin{\ell}^+U,\Pin{\ell+1}^-U,\dots,\Pin{p+1}^-U)
\end{multline}
with
$\Dcal_\ell(n_0,\dots,n_{p+1})$  given by \eqref{4317a}.
By  \eqref{713} we have  an estimate \[\abs{\Dcal_\ell(n_0,\dots,n_{p+1})}^{-1} \leq C\max(n_0,\dots,n_{p+1})^{N_0}\] for all indices 
$n_0,\dots,n_{p+1}$, except those for which $p$ is even, $\ell = p/2$ and $\{n_0,\dots,n_\ell\}=
\{n_{\ell+1},\dots,n_{p+1}\}$, which are anyway excluded in the sum \eqref{447ab}
by the property \eqref{446}. 
 This shows 
 that $\tilde{L}$ satisfies \eqref{441} 
 with $\rho, \mu$ replaced by $(\rho-N_0,\mu +N_0)$. Moreover $\tilde{L}$ also satisfies \eqref{442}. 
 To prove that $\tilde{L}$ is in $\Lt{-\rho+N_0}{p,+}$, it
remains to show that \eqref{443a} holds for $\tilde{L}$ with the $+$ sign. Write the $n_0,\dots,n_{p+1}$ sum in \eqref{447ab}
expressing $\Pin{j}^+U = \hat{u}(n_j)\varphi_{n_j}e_+$,  $\Pin{j}^-U = \overline{\hat{u}(n_j)}\varphi_{n_j}e_-$. We get
\begin{multline}
  \label{eq:447ac}
\sum_{n_0,\dots,n_{p+1}}\Dcal_\ell(n_0,\dots,n_{p+1})^{-1}
\hat{u}(n_{0})\cdots\hat{u}(n_{\ell})\overline{\hat{u}(n_{\ell+1})}\cdots\overline{\hat{u}(n_{p+1})}\\
\times  L(\varphi_{n_{0}}e_+,\dots,\varphi_{n_{\ell}}e_+,\varphi_{n_{\ell+1}}e_-,\dots,\varphi_{n_{p+1}}e_-).
\end{multline}
The corresponding expression for $\tilde{L}(SU,\dots,SU)$ is (changing the indices in the summation)
\begin{multline*}
  (-1)^{p+2}\sum_{n_0,\dots,n_{p+1}}\Dcal_\ell(n_{\ell+1},\dots,n_{p+1},n_0,\dots,n_{\ell})^{-1}\\
\times\overline{\hat{u}(n_{\ell+1})}\cdots\overline{\hat{u}(n_{p+1})}\hat{u}(n_{0})\cdots\hat{u}(n_{\ell})\\
\times
L(\varphi_{n_{\ell+1}}e_+,\dots,\varphi_{n_{p+1}}e_+,\varphi_{n_{0}}e_-,\dots,\varphi_{n_{\ell}}e_-).
\end{multline*}
If we use \eqref{447aa} and the fact that
\[\Dcal_\ell(n_0,\dots,n_\ell,n_{\ell+1},\dots,n_{p+1}) = -\Dcal_\ell(n_{\ell+1},\dots,n_{p+1},n_0,\dots,n_\ell),\] 
we see that we recover expression \eqref{447ac} i.e.\ that \eqref{443a} holds. Finally, the fact that \eqref{447} holds trus
follows from the definition of $\tilde{L}$ and $\Dcal_\ell$.

(iv) 
By the first remark after Definition~\ref{216} and \eqref{2127} for $ k = 0 $,  
any  operator $ M (U;t) $ in the class $\sMM{m}{K,K',q}{N}$, 
$ 0 \leq q \leq N $,  is bounded from $\Hds{s}$ to
$\Hds{s-m}$ for any $s$ large enough,  and 
$$
\norm{M(U;t)U(t,\cdot)}_{\Hds{s-m}}\leq C\Gcals{\sigma}{K',q}{U}\norm{U(t,\cdot)}_{\Hds{s}} +
C\Gcals{\sigma}{K',N}{U}  \Gcals{s}{K',1}{U} \, . 
$$
Notice that the last term comes only from the non-homogeneous part of 
$ M (U;t) $. Combining this estimate with (i) of the lemma, 
  we get that, at any time $t$,
\begin{multline*}
|L(U,\dots ,M(U;t)U,\dots,U)(t,\cdot)| \leq C \norm{U(t,\cdot)}_{\Hds{s}}^{p+1} \norm{M(U;t)U(t,\cdot)}_{\Hds{s-m}}\\ \leq
C \Gcals{\sigma}{K',q}{U}\norm{U(t,\cdot)}_{\Hds{s}}^{p+2} + 
C \Gcals{\sigma}{K',N}{U} \Gcals{s}{K',1}{U} \norm{U(t,\cdot)}_{\Hds{s}}^{p+1}
\end{multline*}
so that all the terms with $ p + q \geq N $ contribute to \eqref{resN}. 
Thus, 
it is sufficient to check that  
if $ \tilde M $ is in $\MtM{m}{\tilde q}$, $ q \leq \tilde q < N - p  $,  then
\[
(U_0,\dots,U_{p+ \tilde q+1})\to L(U_0,\dots,U_{p}, {\tilde M}(U_{p+1},\dots,U_{p+ \tilde q})U_{p+\tilde q+1})
\]
defines after symmetrization a multilinear form $\tilde{L}$ satisfying the estimates of Definition~\ref{441} for elements of
$\Lt{-\rho+m}{p+ \tilde q,\pm}$. The multilinear form $ \tilde{L} $ is denoted by $  L_{q' } $, 
 $ q' = \tilde q - q $, $ 0 \leq q' \leq N - p - q -1 $, in the statement of (iv). 
By  \eqref{2125}-\eqref{2126} and since $ L $ satisfies \eqref{441}, 
we get 
\begin{multline*}
| L(\Pi_{n_0}U_0,\dots, \Pi_{n_p} U_p, \tilde M( \Pi_{n_{p+1}} U_{p+1},\dots, \Pi_{n_{p+ \tilde q}} U_{p+ \tilde q})
\Pi_{n_{p + \tilde q+1}} U_{p + \tilde q+1})| \\ 
\leq C \sum_{n'}
\max(n_0, \ldots, n_p, n')^{2s-\rho} \max\nolimits_3 (n_0, \ldots, n_p, n')^{\rho+\mu} \\
\times (n'+ n_{p+1} + \ldots + n_{p+ \tilde q+1})^m \prod_{0}^{p+ \tilde q+1}\norm{\Pin{\ell}U_\ell}_{L^2} 
\end{multline*}
where the sum is restricted to the indices 
$ n' = \sum_{p+1}^{p+ \tilde q+1} \epsilon_\ell n_\ell $ for some choice of the signs
$ \epsilon_\ell \in \{ -1 , 1 \} $. 
The right hand side of the previous formula is bounded by
$$
C \max(n_0, \ldots, n_{p+ \tilde q+1})^{2s- \rho + m } \max\nolimits_3 (n_0, \ldots, n_{p+ \tilde q+1})^{\rho+\mu} 
\prod_{0}^{p+ \tilde q+1}\norm{\Pin{\ell}U_\ell}_{L^2} 
$$
and therefore $\tilde{L}$  satisfies \eqref{441} 
with $ \rho  $ and $ p $ replaced by $ \rho - m $ and $ p + \tilde q $. 
Since $ L $ satisfies  \eqref{442} and $ M $ satisfies \eqref{2126} we 
derive  as well  that \eqref{U-comp-M} satisfies the corresponding condition \eqref{442}.
 We have still to check that condition \eqref{443} (or \eqref{443a}) holds with the $\mp$ (resp.\ $\pm$)
sign if $M$ satisfies \eqref{317} and \eqref{319} (resp.\ \eqref{317} and \eqref{3110}). Let us treat the first case, i.e.\
show that  if $\tilde{M}$ denotes
some multilinear component of $M$, then 
\begin{equation}
  \label{eq:447ad}
  L(SU,\dots,SU,\tilde{M}(SU,\dots,SU)SU) = \mp L(U,\dots,U,\tilde{M}(U,\dots,U)U)
\end{equation}
for any $U$ such that $SU= -\bar{U}$. Notice first that by \eqref{317} and $S \bar{U} = - U $, 
\[
\overline{\tilde{M}(U,\dots,U)U} = S\tilde{M}(U,\dots,U)S\bar{U} = -S(\tilde{M}(U,\dots,U)U),
\] 
so
that $\tilde{M}(U,\dots,U)U$ satisfies the same property as $U$. Moreover since $ M $ satisfies 
the reversibility condition \eqref{319}, 
lemma~\ref{non-hombis} implies that the homogeneous component $ \tilde{M} $ satisfies the 
reversibility condition \eqref{319p}, namely 
$\tilde{M}(SU,\dots,SU)SU = -S(\tilde{M}(U,\dots,U)U)$. In conclusion \eqref{447ad} follows
by \eqref{443} applied with $U_0=\cdots=U_p = U$,  $ U_{p+1}=\tilde{M}(U,\dots,U)U$. 
\end{proof}

Finally, let us state a Sobolev energy inequality that will be the starting point of 
the normal forms reduction giving Theorem~\ref{311}.

\begin{lemma}
  \label{443} { \bf (First energy inequality)}
Let $ \rho \in  \N$, $\rho\gg N $. 
There are indices $s\geq s_0\gg\sigma\gg K\gg\rho$ as in \eqref{211} and a family $(L_p)_{1\leq p\leq N-1}$ of 
multilinear forms 
belonging to $\Lt{-\rho}{p,-}$ for any $s\geq s_0 $, such that the following holds:

Let $ U $ be a solution of system  \eqref{3112}
defined for all times  $ ]-T,T[ $ 
satisfying the properties of Theorem~\ref{311}, in particular, $U(t) $ is an even function of $x$
satisfying $SU(t) = - \bar{U}(t)$,  for any $ t $. Let $W = \opbw(Q(U; t, \cdot))U$ defined in
Proposition~\ref{322}, $V = \Phi^\star_UW$ given by Proposition~\ref{411}, $\tilde{V} = \of{-1}V$ introduced in
Proposition~\ref{421} and finally \[\tilde{V}^1 = \exp(\opbw(B(U;t,\xi))) \tilde{V}\] 
 given by Proposition~\ref{431}. Then  for any
$s\geq s_0$,
\begin{multline}
  \label{eq:446a}
  \frac{d}{dt} \int_\Tu\abs{\abs{D}^s\tilde{V}^1(t,x)}^2\,dx = \sum_1^{N-1}L_p(U,\dots,U) 
  + O\bigl( \norm{U(t,\cdot)}_{\Hds{s}}^{N+2}\bigr)
\end{multline}
 as long as $ \norm{U(t,\cdot)}_{\Hds{s}} $
stays small enough. Moreover 
  \begin{equation}
    \label{eq:447a}
    C_s^{-1}\norm{\tilde{V}^1}_{\Hds{s}} \leq \norm{U}_{\Hds{s}}\leq  C_s\norm{\tilde{V}^1}_{\Hds{s}} 
  \end{equation}
 for some   $ C_s > 0 $.
\end{lemma}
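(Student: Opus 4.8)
The plan is to establish the norm equivalence \eqref{447a} first, and then to differentiate the $\Hds{s}$-energy of $\tilde V^1$ using the equation \eqref{432} it satisfies. \textbf{Step 1 (norm equivalence).} Write $\tilde V^1=\Theta(U;t)U$ with $\Theta(U;t)=\exp(\opbw(B(U;t,\xi)))\circ\of{-1}\circ\Phi_U^\star\circ\opbw(Q(U;t,\cdot))$. By Proposition~\ref{322}, $\opbw(P)\circ\opbw(Q)-\mathrm{Id}$ is $\rho$-smoothing, and $\opbw(P),\opbw(Q)$ are bounded of order zero on every $\Hds{s}$ (Proposition~\ref{215}), so $\norm{\opbw(Q)U}_{\Hds{s}}\sim\norm{U}_{\Hds{s}}$ for $U$ small; by lemma~\ref{lem:flow} (estimate \eqref{425-first}) the paracomposition $\Phi_U^\star=\Omega_{B(U)}(1)$ and its inverse are bounded on $\Hds{s}$; by lemma~\ref{420} (estimate \eqref{425}) so is $\of{-1}$; and by \eqref{434}, $\norm{\tilde V^1-\tilde V}_{\Hds{s}}\leq C\norm{U}_{\Hds{\sigma}}\norm{\tilde V}_{\Hds{s}}$, whence $\norm{\tilde V^1}_{\Hds{s}}\sim\norm{\tilde V}_{\Hds{s}}$ for $U$ small. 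Composing these equivalences gives \eqref{447a}.

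\textbf{Step 2 (differentiation; the self-adjoint part).} Since $\opbw(Q)$ (Definition~\ref{321}), $\Phi_U^\star$ (see \eqref{arepaare}), $\of{-1}$ (lemma~\ref{420a}) and $\exp(\opbw(B))$ all satisfy the anti-reality, parity-preserving and anti-reversibility properties \eqref{317}, \eqref{318}, \eqref{3110}, so does their composition $\Theta(U;t)$; together with $SU=-\bar U$ this forces $S\tilde V^1=-\overline{\tilde V^1}$, i.e.\ $\tilde V^1$ is of the form $\bigl[\begin{smallmatrix}v\\\bar v\end{smallmatrix}\bigr]$, so that $\int_\Tu\abs{\abs{D}^s\tilde V^1}^2\,dx=-\int_\Tu(S\abs{D}^s\tilde V^1)\cdot(\abs{D}^s\tilde V^1)\,dx$. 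From \eqref{432}, $\partial_t\tilde V^1=i\opbw\bigl((1+\zu(U;t))\mk(\xi)\Kcal+H^1(U;t,\xi)\bigr)\tilde V^1+iR_1(U;t)\tilde V^1+iR_2(U;t)U$. Substituting this into $\frac{d}{dt}\int\abs{\abs{D}^s\tilde V^1}^2\,dx=2\Re\langle\abs{D}^s\partial_t\tilde V^1,\abs{D}^s\tilde V^1\rangle$ (justified by a routine mollification, as a priori $\partial_t\tilde V^1$ only lies in $\Hds{s-\frac32}$), the operator $\opbw\bigl((1+\zu(U;t))\mk(\xi)\Kcal\bigr)$ is a constant-coefficient, Hermitian-valued Fourier multiplier, hence self-adjoint and commuting with $\abs{D}^s$, so its contribution vanishes; the homogeneous components $H^1_q$, $q=1,\dots,N-1$, of $H^1$ are \emph{real valued} (Proposition~\ref{431}) and likewise contribute nothing, so the only surviving part of $H^1$ is $\Im H^1_N\in\GrM{0}{K,\ku,N}$, whose contribution is $O(\norm{U}_{\Hds{s}}^{N+2})$ (using that, for a solution of \eqref{3112}, all $\partial_t^kU$ are controlled by $\norm{U}_{\Hds{s}}$, and \eqref{447a}). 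Similarly $2\Re\langle\abs{D}^s(iR_2(U;t)U),\abs{D}^s\tilde V^1\rangle=O(\norm{U}_{\Hds{s}}^{N+2})$, since $R_2(U;t)U$ vanishes at order $N+1$ in $U$ and $R_2$ gains $\rho-\frac32>0$ derivatives.

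\textbf{Step 3 (the term $R_1$ and construction of the $L_p$).} It remains to expand $2\Re\langle\abs{D}^s(iR_1(U;t)\tilde V^1),\abs{D}^s\tilde V^1\rangle$. Decompose $R_1=\sum_{q=1}^{N-1}R_{1,q}(U,\dots,U)+R_{1,N}(U;t)$ as in \eqref{2118}; the tail $R_{1,N}$ gives an $O(\norm{U}_{\Hds{s}}^{N+2})$ contribution as above. For each homogeneous $R_{1,q}\in\Rt{-\rho+\frac32}{q}$ (which satisfies \eqref{319p}, since $R_1$ satisfies \eqref{319}, by lemma~\ref{non-hombis}), substitute $\tilde V^1=\Theta(U;t)U=U+\sum_{q'\geq1}\Theta_{q'}(U,\dots,U)U+\Theta_N(U;t)U$, where $\Theta(U;t)$ is a bounded order-zero operator with $\Theta_0=\mathrm{Id}$, homogeneous parts $\Theta_{q'}\in\MtM{0}{q'}$ satisfying \eqref{3110p} (lemma~\ref{non-hombis}), and a non-homogeneous tail $\Theta_N$. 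Expanding the bilinear form in these pieces: the leading term $2\Re\langle\abs{D}^s(iR_{1,q}(U,\dots,U)U),\abs{D}^sU\rangle$, after symmetrization, is exactly of the type described in the Example following Definition~\ref{441}, hence an element of $\Lt{-\rho+\frac32}{q,-}$; each cross term, obtained by inserting one or two of the operators $\Theta_{q'}$ (or $\Theta_N$), is treated by lemma~\ref{442}(iv), which — since $\Theta_{q'}$ satisfies \eqref{317} and \eqref{3110} — preserves the $-$ sign and raises the homogeneity, producing forms in $\Lt{-\rho+\frac32}{p,-}$ with $q<p\leq N-1$ plus remainders of the size \eqref{resN}, i.e.\ $O(\norm{U}_{\Hds{s}}^{N+2})$; all terms of total homogeneity $\geq N$ in $U$ are absorbed into the error. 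Collecting, for each $p$, the contributions homogeneous of degree $p$ defines $L_p\in\Lt{-\rho+\frac32}{p,-}\subseteq\Lt{-\rho}{p,-}$ (one may instead run Propositions~\ref{322}, \ref{411}, \ref{421}, \ref{431} with smoothing index $\rho+2$ in place of $\rho$, requiring only a slightly larger $K$), giving \eqref{446a}.

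\textbf{Main obstacle.} The technical heart is Step 3: the systematic substitution $\tilde V^1=\Theta(U;t)U$ and the subsequent composition/degree-counting bookkeeping, where one must verify that \emph{every} resulting multilinear form lands in $\Lt{-\rho}{p,-}$ with the correct ($-$) sign. This rests on the whole chain of (anti-)reality, parity-preserving and (anti-)reversibility properties established in Chapters~\ref{cha:3} and~\ref{cha:4} being inherited by the composite $\Theta$, combined with lemma~\ref{442}(iv) and lemma~\ref{non-hombis}; it is precisely the reversibility that produces the $-$ sign, which is then what makes the subsequent normal-form step of Section~\ref{sec:44} possible through the cancellations of lemma~\ref{442}(ii)--(iii).
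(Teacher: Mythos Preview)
Your proof is correct and follows essentially the same approach as the paper: differentiate the $\Hds{s}$-energy of $\tilde V^1$ using \eqref{432}, kill the self-adjoint constant-coefficient pieces, bound the $\Im H^1_N$ and $R_2$ contributions by $O(\norm{U}_{\Hds{s}}^{N+2})$, and for the homogeneous parts of $R_1$ use the Example after Definition~\ref{441} together with the substitution $\tilde V^1=U+M(U;t)U$ and lemma~\ref{442}(iv). One small inaccuracy: the homogeneous components $\Theta_{q'}$ of your expansion are not in $\MtM{0}{q'}$ but only in $\MtM{m}{q'}$ for some $m>0$ (the expansions \eqref{int-rem} and \eqref{4219} involve compositions of paradifferential operators of positive order), yet this is harmless since lemma~\ref{442}(iv) allows arbitrary order $m$ and the resulting loss is absorbed by $\rho\gg N$---the paper itself tracks this loosely.
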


\textbf{Remark}: 
Since  $ SU(t) = -\bar{U}(t) $ 
and each of the  
operators $ \opbw(Q) $, $ \Phi^\star_U $,  $ \of{-1} $, $ \exp(\opbw(B(U;t,\xi))) $, 
satisfies the   anti-reality condition  \eqref{317} 
(see Proposition~\ref{322}, Lemmas \ref{lem:flowp} and ~\ref{420a}, Proposition \ref{431}), 
then the function 
$$
\tilde{V}^1 = \exp(\opbw(B(U;t,\xi))) \circ  \of{-1} \circ \Phi^\star_U \circ \opbw(Q(U; t, \cdot))U 
$$
satisfies  $S\tilde{V}^1(t) = - \overline{\tilde{V}^1(t)}$ as well, 
by the second remark after Definition \ref{Def:RPR}.


\medskip
\begin{proof}
Notice first that by Proposition~\ref{632} 
and recalling \eqref{212}-\eqref{213}, we have the bound 
\be\label{eq:boundUds}
{\cal G}_{K,1}^s (U, t) = \nnorm{U(t,\cdot)}_{K,s}  \leq  C_s  \norm{U (t, \cdot)}_{\Hds{s}} \, . 
\ee
According to system \eqref{432}, the left hand side of \eqref{446a} is equal to 
  \begin{multline}
    \label{eq:448}
2\Re i\int_\Tu\overline{(\abs{D}^s\tilde{V}^1)}\Bigl[\opbw\bigl((1+\zu(U;t))\mk(\xi)\Kcal +
H^1(U;t,\xi)\bigr)\abs{D}^s\tilde{V}^1\Bigr]\,dx\\
+ 2\Re i\int_\Tu\overline{(\abs{D}^s\tilde{V}^1)}(\abs{D}^sR_1(U;t)\tilde{V}^1)\,dx\\
+ 2\Re i\int_\Tu\overline{(\abs{D}^s\tilde{V}^1)}(\abs{D}^sR_2(U;t)U)\,dx \, .
  \end{multline}
In the first integral above, the contribution of $\opbw\bigl((1+\zu(U;t))\mk(\xi)\Kcal\bigr)$  
is zero, as this operator is self-adjoint on
$L^2$. In the same way, since $H^1_q$, $1\leq q\leq N-1$ is real valued in the decomposition \eqref{431} of $H^1(U;t,\xi)$,
the corresponding contribution to \eqref{448} vanishes. 
The same is true for $\Re H^1_N(U;t,\xi)$, so that the first integral
in \eqref{448} is actually equal to the contribution coming from $i(\Im H^1_N)(U;t,\xi)$.  
Since by Proposition~\ref{431} $\Im H^1_N$ is in $\GrM{0}{K,\ku,N}$
we get by \eqref{2123} (with $ k = 0 $, $ s = 0 $ and $ m = 0 $) and  \eqref{boundUds}  
 that the first term in \eqref{448} is, 
for $s$ large enough and $ K \geq \ku $, bounded by 
 $$
O \bigl(\Gcals{\sigma}{\ku,N}{U}\norm{\tilde{V}^1(t,\cdot)}^2_{\Hds{s}}\bigr) =
O \bigl( \norm{U (t, \cdot)}_{\Hds{s}}^N  \norm{\tilde{V}^1(t,\cdot)}^2_{\Hds{s}}\bigr) \, . 
 $$
Consider the last integral in \eqref{448}. Since $R_2(U;t)$ is in
$\RrM{-\rho+\frac{3}{2}}{K,\ku,N}$, we get, by \eqref{2117} with $ k = 0 $, 
and  \eqref{boundUds}, 
a bound in
$$
O \bigl( \norm{U (t, \cdot)}_{\Hds{s}}^{N+1}  \norm{\tilde{V}^1(t,\cdot)}_{\Hds{s}}\bigr)  
$$
for $ s, \sigma $ satisfying \eqref{211}. 
In the second integral in \eqref{448}, we may decompose the smoothing operator 
$$ 
R_1(U; t) = \sum_{p=1}^{N-1}R_{1,p}(U,\dots,U) + R_{1,N}(U;t) 
$$ 
with $R_{1,p}$ in $\Rt{-\rho+\frac{3}{2}}{p}$ and 
$R_{1,N}(U;t)$ in $\Rr{-\rho+\frac{3}{2}}{K,\ku,N}$. By \eqref{2117} with $ k = 0 $, and  \eqref{boundUds},
the contribution coming from the last term is bounded by 
$$
O \bigl( \norm{\tilde{V}^1(t,\cdot)}_{\Hds{s}}^2 \Gcals{s}{\ku,N}{U}\bigr) = 
O \bigl( \norm{\tilde{V}^1(t,\cdot)}_{\Hds{s}}^2  \norm{U(t,\cdot)}_{\Hds{s}}^N \bigr) \, . 
$$
 On the other hand, as we have remarked after the statement of lemma \ref{443} that $\overline{\tilde{V}^1}
= -S\tilde{V}^1$, we may write the contribution of the term $ R_{1,p} $ 
to the second integral in \eqref{448} as \eqref{444}. 
 By lemma \ref{non-hombis}, since 
 since $ R_1 (U, t ) $ satisfies the reversibility property \eqref{319} 
 the homogeneous smoothing operators $ R_{1,p} $  satisfy \eqref{319p}. 
We
have seen in the example following Definition~\ref{441} that we get an expression
$\tilde{L}_p(\tilde{V}^1,U,\dots,U,\tilde{V}^1)$ with $\tilde{L}_p$ in $\Lt{-\rho}{p,-} $.  
Consequently, we have written the left hand side of \eqref{446a} as
\begin{multline} \label{eq:449}
\sum_{p=1}^{N-1} \tilde{L}_p(\tilde{V}^1,U,\dots,U,\tilde{V}^1) \\ 
+
O \bigl( \norm{\tilde{V}^1(t,\cdot)}_{\Hds{s}}^2  \norm{U(t,\cdot)}_{\Hds{s}}^N + 
\norm{U (t, \cdot)}_{\Hds{s}}^{N+1}  \norm{\tilde{V}^1(t,\cdot)}_{\Hds{s}} \bigr) \, . 
\end{multline}
To deduce \eqref{446a} from \eqref{449}, it is
sufficient to show that \eqref{447a} holds and that the above terms 
$\tilde{L}_p(\tilde{V}^1,U,\dots,U,\tilde{V}^1)$  may be expressed as in the right hand side of
\eqref{446a}. Recall that by \eqref{431}, Proposition~\ref{421}, Proposition~\ref{411}, 
Proposition~\ref{322},
\begin{equation}
  \label{eq:4410}
  \tilde{V}^1 = \exp(\opbw(B(U;t,\xi)))\of{-1}\Phi_U^\star \opbw(Q(U; t, \cdot))U \, .
\end{equation}
Let us show that $\tilde{V}^1 = U + M(U;t)U$ for some $M$
in $\sMM{}{K,\ku,1}{N}$ satisfying \eqref{317} and \eqref{3110}, and that $\norm{\tilde{V}^1}_{\Hds{s}}\sim \norm{U}_{\Hds{s}}$ for all large
enough $s$, when $ \norm{U (t, \cdot)}_{\Hds{s}} $  is small enough.

By Proposition~\ref{322}, and the Definition \ref{321} of $\sE{0}{K,\rho,1}{N}$, 
there is a matrix of symbols $Q_1(U; t, \cdot)$ in
$\sGM{0}{K,\rho,1}{N}$ satisfying \eqref{312} and \eqref{315} such that 
\be\label{eq:W-U}
W = U + \opbw(Q_1(U; t, \cdot))U \, .
\ee
By the third
remark following
Definition~\ref{216}, $\opbw(Q_1(U; t, \cdot))$ is an 
element of $\sMM{0}{K,\rho,1}{N}$ and it satisfies moreover \eqref{317} and
\eqref{3110}. By
Proposition~\ref{215}, a bound of the form $\norm{\opbw(Q_1(U; t, \cdot))U}_{\Hds{s}} =
O\bigl(\norm{U}_{\Hds{s}}^2 \bigr)$  holds if $s$ is large
enough, so that $\norm{W}_{\Hds{s}}\sim \norm{U}_{\Hds{s}}$, as long as $ \norm{U}_{\Hds{s}} $
 stays small enough.

Formula \eqref{243} allows to write $V=\Phi_U^\star W$  as
\begin{equation}
  \label{eq:4411}
  V = W +\sum_{p=1}^{N-1} M_p(U,\dots,U)W + M_N(U;t)W
\end{equation}
with $M_p$ in $\MtM{}{p}$ and $M_N$ in $\MrM{}{K,\rho,N}$. 
Moreover, it follows from the definition \eqref{def-paracomp} of 
$\Phi_U^\star$ and  \eqref{425-first} that
$\norm{V}_{\Hds{s}} \sim \norm{W}_{\Hds{s}}$. 
Inserting  in \eqref{4411} 
the expression  of $W$  obtained in \eqref{W-U}, and using the last remark after Definition~\ref{216}, we deduce that $ V $ may be written as 
\begin{equation} \label{eq:4412}
V = U + M(U; t)U =   U +\sum_{p=1}^{N-1}M_p(U,\dots,U)U + M_N(U;t)
\end{equation}
for some other $M_p$ in 
$\MtM{}{p}$ and $M_N$ in  $\MrM{}{K,\rho,N}$, i.e. 
for some operator $M(U;t) $  in the space  $\sMM{}{K,\rho,1}{N}$. Let us check that
$M(U;t)$ satisfies \eqref{317} and \eqref{3110}. Recalling that  
$ V = \Phi_U(t;\cdot)^\star W $ and that $ \Phi_U(t;\cdot)^\star $ satisfies  \eqref{424a}, 
we get  that $ M_p $ and $ M_N $ in
\eqref{4411}  satisfy \eqref{3110} and \eqref{317}. Since 
also  $\opbw(Q_1(U; t, \cdot))$  in \eqref{W-U} satisfies 
\eqref{3110}  and \eqref{317}
we get that  also the operators 
$ M_p $ and $ M_N $ in  \eqref{4412} satisfy \eqref{3110} and \eqref{317}.

Consider next $\tilde{V}=
\of{-1}V$. By \eqref{425}, we have 
$$
\norm{\tilde{V}}_{\Hds{s}} \sim \norm{V}_{\Hds{s}} \sim \norm{U}_{\Hds{s}}
$$
 if
$ \norm{U}_{\Hds{s}} $  is small
enough. Moreover, $\of{-1}V$ may be expressed as in  \eqref{4219}, \eqref{4219a}, so that, combining with \eqref{4412}, we get a
similar expression for $\tilde{V}$ in terms of $U$, with a new $M_p$ belonging to $\MtM{}{p}$, $M_N$ in
$\MrM{}{K,\ku,N}$. Since the $F_\ell$ in Proposition~\ref{421} satisfy \eqref{311} and \eqref{315}, $\opbw(iF(U))$ satisfies
\eqref{317} and \eqref{3110}, and
$\of{-1}$ does so as well. By composition, we deduce that the $M_p$'s satisfy also \eqref{317} and \eqref{3110}. Finally, we
consider $\tilde{V}^1$ given by the expression 
$\tilde{V}^1 =
\exp(\opbw(B(U;t,\xi)))\tilde{V}$. 
By \eqref{434}, \eqref{447a} holds. Moreover \eqref{433} and the above expression of $\tilde{V}$
in terms of $U$ allows us to obtain an expression for $\tilde{V}^1$ in terms of $U$ similar to \eqref{4412}. To conclude the
proof of \eqref{446a}, we plug these expressions inside \eqref{449}. By (iv) of lemma~\ref{442}, we express
\eqref{449} in terms of new multilinear forms $L_p(U,\dots,U)$ belonging to $\Lt{-\rho}{p,-}$ and of remainders 
satisfying $O(\norm{U(t;\cdot)}_{\Hds{s}}^{N+2})$. This concludes the proof.
\end{proof}
\begin{proof1}{Proof of Theorem~\ref{311}}
The proof is based on a bootstrap argument. 
  Let $\kappa$ be fixed in $]0,+\infty[-\Ncal$ where the set $ {\cal N} $ is 
  provided by Proposition~\ref{711}. Take $N$ an arbitrary positive
  integer. 
  We shall prove that there are constants $s_0\gg K\gg N$ and for any $s\geq s_0$, 
  there are constants 
  $$
  \epsilon_0>0 \, , \quad c > 0 \, , \quad 0 < A_0 < A_1 < \dots < A_K \, , 
  $$
 such
  that if a solution $ U(t ) $ of system \eqref{3112} exists on some time  interval $I = ]-T,T[$ and satisfies, together with 
  the properties of Theorem~\ref{311}, the bounds
  \begin{equation}
   \label{eq:4413}
   \sup_{]-T,T[} \norm{\partial_t^k U(t;\cdot)}_{\Hds{s-\frac{3}{2}k}} \leq A_k\epsilon,\quad k = 0,\dots,K,
  \end{equation}
then, for $\epsilon\in ]0,\epsilon_0[$ and $T<c\epsilon^{-N}$, the solution $ U(t) $ actually satisfies 
on the same time interval the improved bounds
\begin{equation}
  \label{eq:4414}
   \sup_{]-T,T[} \norm{\partial_t^k U(t;\cdot)}_{\Hds{s-\frac{3}{2}k}} \leq \frac{A_k}{2}\epsilon,\quad k = 0,\dots,K.
\end{equation}
This directly  implies, taking into account the classical results of local existence (see Schweizer~\cite{Schw})  that
 the solution $ U $ may be extended up to an interval of length $c\epsilon^{-N}$, and that
\eqref{3116} holds.

To prove that \eqref{4413} implies \eqref{4414}, let us show first that we may find recursively 
multilinear forms 
\begin{equation}\label{eq:expr}
\begin{split}
\tilde{L}_p \in 
\Lt{ - \rho + (N_0 +m) (p-1) + N_0}{p,+} \, , \quad 1\leq p\leq N -1 \, , \\
L_p^{(q)} \in \Lt{-\rho+(N_0 +m)q }{p,-} \, , \quad q+1\leq p\leq N -1 \, , 
\end{split}
\end{equation}
for 
$ s\gg K\gg \rho\gg N $, 
such that, for $q = 0,\dots,N-1$
\begin{multline}
  \label{eq:4415}
\frac{d}{dt}\Bigl[\int_\Tu \abs{\abs{D}^s\tilde{V}^1(t;\cdot)}^2\,dx +
\sum_{p=1}^q\tilde{L}_p(U(t;\cdot),\dots,U(t;\cdot))\Bigr]\\
= \sum_{p=q+1}^{N-1}L_p^{(q)}(U(t;\cdot),\dots,U(t;\cdot)) 
+ O\big( \norm{U(t;\cdot)}_{\Hds{s}}^{N+2} \big) \, . 
\end{multline}
The number $ m \geq 3/2 $ in \eqref{expr}
is the order of the operator $M(U; t)$ in $\sMM{m}{K,1,1}{N}$ defined in \eqref{4312}.  
Notice  that 
in \eqref{expr} at each step of the iteration there is a loss of derivatives proportional to 
$ N_0 $ due to the small divisors. It will be compensated  
by  taking  the regularizing index $ \rho $ large enough with respect to the number of steps $ N $ and $ N_0 $.

Notice 
that if $q=0$, then \eqref{4415}
follows from  \eqref{446a} with $ L_p^{(0)} = L_p $. 
Then we proceed by induction. Assume that \eqref{4415} holds at rank $q-1$ for some $q\geq 1$, and define $\tilde{L}_q$ to be the multilinear form of
$\Lt{-\rho + (N_0+m)(q-1)+N_0}{q,+}$ given by (iii) of lemma~\ref{442} applied to $L = L_q^{(q-1)}$. 
Using \eqref{4312} we have 
\begin{multline}
  \label{eq:4416}
\frac{d}{dt}\tilde{L}_q(U(t;\cdot),\dots,U(t;\cdot)) = i \sum_{j=0}^{q+1} 
\tilde{L}_q( \underbrace{U,\dots, U}_{j}, \mk(D)\Kcal U,\dots,U)\\
+  i \sum_{j=0}^{q+1} \tilde{L}_q(\underbrace{U,\dots, U}_{j},M(U;t) U,\dots,U)
\end{multline}
for some operator $M(U;t)$ in $\sMM{m}{K,1,1}{N}$ for some $ m \geq 3/2 $, 
satisfying \eqref{319}. Using (iv) of lemma~\ref{442} and \eqref{boundUds}, we 
write the last sum in \eqref{4416}
as a contribution to the right hand side of \eqref{4415} at rank $ q $, 
so that
adding \eqref{4415} at rank $q-1$ and \eqref{4416}, and using \eqref{447}, we get \eqref{4415} at rank $q$.

We integrate next \eqref{4415} at rank $q=N-1$ from 0 to $t$. We get (in the case $t\geq 0$)
\begin{multline*}
  \norm{\tilde{V}^1(t;\cdot)}^2_{\Hds{s}} + \sum_{p=1}^{N-1}\tilde{L}_p(U(t;\cdot),\dots,U(t;\cdot))\\
\leq  \norm{\tilde{V}^1(0;\cdot)}^2_{\Hds{s}} + \sum_{p=1}^{N-1}\tilde{L}_p(U(0;\cdot),\dots,U(0;\cdot)) 
+ C\int_0^t  \norm{U(\tau;\cdot)}_{\Hds{s}}^{N+2} \,d\tau
\end{multline*}
as long as \eqref{4413} holds. Taking into account \eqref{447a} and (i) of lemma~\ref{442}, that applies to  the multilinear 
forms  $ \tilde{L}_p $,   $ 1 \leq p \leq N -1 $,  in \eqref{expr},  if 
$\rho$ is large enough relatively to $N_0, N$ (i.e. $ \rho > (N_0 + m ) N + \frac12 $),  
we get, for some new constant $ C $,
\be
  \label{eq:4417}
  \norm{U(t;\cdot)}_{\Hds{s}}^2 \leq C \norm{U(0;\cdot)}_{\Hds{s}}^2 + C\norm{U(t;\cdot)}_{\Hds{s}}^3 
  + C\int_0^t
\norm{U(\tau;\cdot)}_{\Hds{s}}^{N+2}\,d\tau
\ee
as long as \eqref{4413} holds. Taking $A_0$ large enough relatively to $C$, we may assume that
\be\label{eq:U0}
\norm{U(0;\cdot)}_{\Hds{s}}^2\leq \frac{A_0^2}{8C}\epsilon^2 \, . 
\ee 
Moreover, by \eqref{4413}, the integrand in the last term in the right hand side of
\eqref{4417} is bounded at any time $\tau$ by $ (A_0\epsilon)^{N+2}$. If $\epsilon<\epsilon_0$ is small enough, 
 and 
$$
T<c\epsilon^{-N} \qquad 
 {\rm with} \qquad  
 cCA_0^N<\frac{1}{16} \, , 
 $$ 
 it follows by \eqref{4417}, \eqref{U0} and \eqref{4413}  that estimate \eqref{4414} with $k=0$ holds, for
  any $t\in ]-T,T[ $. Finally, by \eqref{boundUds}, we also deduce that 
 the derivatives  $\partial_t^kU(t;\cdot)$ satisfy \eqref{4414}, 
 if $\epsilon$ is small enough, $s\gg K$, and taking $A_{k+1}$ large enough
  relatively to $A_k$, $k=0,\dots,K-1$.
\end{proof1}
\chapter[Dirichlet-Neumann  problem]{The Dirichlet-Neumann paradifferential problem}\label{cha:5}

In order to be able
to derive system \eqref{3112} from the water waves equations \eqref{113} in Chapter~\ref{cha:6}, we shall need a
paradifferential a representation of the Dirichlet-Neumann operator $G(\eta)$. This will be obtained in  section~\ref{sec:61}
following  the method of
   Alazard-Métivier~\cite{AM}, Alazard-Burq-Zuily~\cite{ABZ1, ABZ2} (see also Alazard-Delort~\cite{AD2}). 
A difference with the above references is  that
  we need to obtain such a representation in terms of the classes of paradifferential operators that have been introduced in
  Chapter~\ref{cha:2}. To do so, we shall study in the present chapter  the Dirichlet-Neumann boundary value problem in a strip, introducing paradifferential versions of the
  Poisson operators of Boutet de Monvel~\cite{BdM1,BdM2,BdM3}.

\section{Paradifferential and para-Poisson operators}\label{sec:51}

 We shall work on the strip
\[
\Bcal = \big\{ (z,x)\in \R \times \Tu \,  ; -1 \leq z \leq 0 \big\} \, .
\]
The functions we shall consider will depend on $ (z, x) $ in $ \Bcal $ and on the time parameter $t$ staying in some interval $[-T,T]$. 
We shall use the notations of Chapter~\ref{cha:2} for paradifferential and smoothing operators, except that here our functions and symbols will also
depend on the parameter $z\in [-1,0]$. The spaces measuring the smoothness of the functions at hand are the following.

\begin{definition}{\bf (Space of functions on a strip)} \label{spacesFsj}
For $j$ in $\N$ and $s$ in $\R$, we define the space
\begin{equation}
  \label{eq:511}\begin{split}
  \index{Fz@$F_j^s$ (Space of functions on a strip)} F_j^s = \big\{\Phi \in L^\infty([-1,0],\Hds{s}(\Tu;\C)) \ {\rm such \ that} \qquad \qquad  \  \ \\
 \qquad \partial^{j'}_z\Phi \in  
  L^\infty([-1,0],\Hds{s-j'}(\Tu;\C)),  \, \forall  j' = 1,\dots,j \big\}
\end{split}\end{equation}
together with the natural norm
\begin{equation}
  \label{eq:512}
  \norm{\Phi}_{F_j^s} = \sum_{j'=0}^j\norm{\partial_z^{j'}\Phi}_{ L^\infty([-1,0],\Hds{s-j'})} \, .
\end{equation}
We set $F^\infty_\infty \stackrel{\textrm{def}}{=} \bigcap_{s,j}F_j^s$.
\end{definition}

We extend first the Definitions~\ref{211} and~\ref{212bis} to symbols that depend now also on the variable $ z $. We shall use in
this section only classes of autonomous symbols, and not  
versions in a strip of the class of  non-homogeneous symbols of Definition~\ref{212}. In
particular, the time dependence of our (non-homogeneous) symbols will be only through their dependence in the arguments $\Phi,
\Psi, U$,...\  even if we do not repeat this condition in all statements. 

We denote by $\CKHb{\sigma}{j} =
\bigcap_0^KC^k(I,F^{\sigma-\frac{3}{2}k}_j)$. We generalize notations \eqref{212}, \eqref{213} to functions depending
on $z$ setting
\begin{equation}
  \label{eq:513}
  \begin{split}
    \nnorm{\Phi(t,\cdot)}_{K,\sigma,j} &= \sum_{k=0}^K\norm{\partial_t^k\Phi(t,\cdot)}_{F_j^{\sigma-\frac{3}{2}k}}\\
\index{Gm@$\Gcals{\sigma,j}{K,p}{\Phical}$ (Power of norm of a function on a strip)} \Gcals{\sigma,j}{K,p}{\Phical} &=
\nnorm{\Phi(t,\cdot)}^p_{K,\sigma,j}\\
\index{Gma@$\Gcalsm{\sigma,j}{0,p}{\Phical}$ (Product of norms of functions on a strip)} \Gcalsm{\sigma,j}{0,p}{\Phical} &=
\prod_{p'=1}^p \norm{\Phi_{p'}}_{F^\sigma_j}
  \end{split}
\end{equation}
where in the last formula $\Phical$ stands for a family $\Phical = (\Phi_1,\dots,\Phi_p)$ of functions of $(z, x)$ in
$[-1,0] \times \Tu $. When $p=0$, we set $\Gcal^{\sigma,j}_{K,0}=1$ by convention.

\begin{definition}
  \label{511} {\bf (Symbols in a strip)}
(i) Let $m$ be in $\R$, $p$ in $\N^* $. We denote by \index{Ge@$\Gtb{m}{p}$ (Space of homogeneous symbols in a strip)}
$\Gtb{m}{p}$ the space of symmetric $p$-linear maps from $(F^\infty_\infty)^p$ to the space of functions that are 
 $C^\infty$ in $(z, x, \xi ) \in [-1,0] \times \Tu \times \R $
\begin{equation}
  \label{eq:514}
  \Phical = (\Phi_1,\dots,\Phi_p) \to \bigl((z,x,\xi) \to a(\Phical;(z,x,\xi))\bigr)
\end{equation}
satisfying the following: There is $\mu\geq 0$ and  for any $j, \alpha, \beta$ in $\N$, there is $C>0$
and for any $\Phical$ in $(F^\infty_\infty)^p$, any $n = (n_1,\dots,n_p)$ in $(\N^*)^p$
\begin{equation}
  \label{eq:515}
\abs{\partial^j_z\partial^\alpha_x\partial^\beta_\xi a(\Pin{}\Phical;z,x,\xi)}
\leq C\abs{n}^{\mu+\alpha+j}\absj{\xi}^{m-\beta}\Gcalsm{0,j}{0,p}{\Phical}.
\end{equation}
Moreover, we assume that if $(n_0,\dots,n_p)$ is in $\N\times (\N^*)^p$ and if
\begin{equation}
  \label{eq:515a}
  \Pin{0}a(\Pin{1}\Phi_1,\dots,\Pin{p}\Phi_p;\cdot)\not\equiv 0,
\end{equation}
then there is a choice of signs $\epsilon_j\in\{-1,1\}$ such that  $\sum_0^p \epsilon_jn_j = 0$.

When $p=0$, we denote by $ \Gtb{m}{0}$ 
the space of constant coefficient symbols $(z, \xi) \to a(z, \xi)$, that satisfy inequality
\eqref{515} for $\alpha =0$, with in the right hand side the $\abs{n}$ factor replaced by one.

(ii) Let $ r > 0 $, $K$ in $\N$. We denote by \index{Gea@$\Grb{m}{K,0,j,p}$  (Space of non-homogeneous symbols in a
  strip)} $\Grb{m}{K,0,j,p}$ the space of maps $(\Phi;z,x,\xi) \to a(\Phi; t, z,x,\xi)$ defined for
\[\Phi\in \index{Bb@$\Brb{k}{0}{j}$ (Ball in the space of functions of $(t,z,x)$)}  \Brb{K}{0}{j}  \stackrel{\textrm{def}}{=}
  \big\{ U\in \CKHb{\sigma_0}{j} \, ; \ \sup_{t\in I}\nnorm{\Phi(t,\cdot)}_{K,\sigma,j} < r 
  \big\}
  \]
for some large enough $\sigma_0$,  $x\in \Tu$, $\xi\in \R$, $z\in [-1,0]$, with complex values, such that for any
$0\leq k\leq K$, any $0\leq j'\leq j$, any $\sigma\geq \sigma_0$, there are $C>0$, $r(\sigma)\in ]0,r[$, and for any $\Phi$
in $\Brsb{K}{0}{\sigma}{j}\cap\CKHb{\sigma}{j}$, any $\alpha, \beta \in \N $ with $\alpha\leq \sigma-\sigma_0$,  
any  $ t \in I $, 
\begin{equation}
  \label{eq:516}
\abs{\partial^k_t\partial^{j'}_z\partial^\alpha_x\partial^\beta_\xi a(\Phi;z,x,\xi)}
\leq C\absj{\xi}^{m-\beta}\Gcals{\sigma_0,j'}{k,p-1}{\Phi}\Gcals{\sigma,j'}{k,1}{\Phi} \, 
\end{equation}
(where, in the case $p=0$, the right hand side should be read as $C\absj{\xi}^{m-\beta} $).

(iii) We denote by \index{Geb@$\sGb{m}{K,0,j,p}{N}$ (Space of symbols in a strip)}  $\sGb{m}{K,0,j,p}{N}$ the space of functions \[(\Phi;z,x,\xi)\to a(\Phi;z,x,\xi)\] that may be written as
\begin{equation}
  \label{eq:517}
   a(\Phi;z,x,\xi) = \sum_{q=p}^{N-1} a_q(\underbrace{\Phi,\dots,\Phi}_{q};z,x,\xi) + a_N(\Phi;z,x,\xi)
\end{equation}
for some $a_q$ in $\Gtb{m}{q}$, $a_N$ in $\Grb{m}{K,0,j,N}$.
\end{definition}
\textbf{Remarks}: $\bullet$ We shall have to consider 
occasionally symbols of $\Grb{m}{K,0,j,p}$
depending on a couple of functions $(\Phi,\Phi')$ with linear dependence on $\Phi'$, instead of a single function $\Phi$
(for example in the Proof of 
Proposition \ref{614}). In
that case, if we replace in the left hand side of \eqref{516} $a(\Phi;z,x,\xi)$ by  $a(\Phi,\Phi';z,x,\xi)$, the right
hand side of this inequality should read 
\begin{multline}
  \label{eq:517a}
C\absj{\xi}^{m-\beta}\Bigl[\Gcals{\sigma_0,j'}{k,p-1}{\Phi}\Gcals{\sigma,j'}{k,1}{\Phi'} \\+ \Gcals{\sigma_0,j'}{k,p-2}{\Phi}\Gcals{\sigma_0,j'}{k,1}{\Phi'}\Gcals{\sigma,j'}{k,1}{\Phi}\Bigr] \, .
\end{multline}
In the same way, we shall consider symbols of $\sGb{m}{K,0,j,p}{N}$, depending on $\Phi, \Phi'$, linear in $\Phi'$, given by
\[ \sum_{q=p}^{N-1} a_q(\underbrace{\Phi,\dots,\Phi}_{q-1},\Phi';z,x,\xi) + a_N(\Phi,\Phi';z,x,\xi)\]
with $a_q$ in $\Gtb{m}{q}$ and $a_N$ in $\Grb{m}{K,0,j,N}$, linear in $\Phi'$.

$\bullet$ We shall also use the notation $a(\Phi,\psi;z,x,\xi)$ for symbols depending on $\Phi$ and linearly on some
function $\psi(t,x)$, independent of $z$. In this case \eqref{517a} has to be replaced by a similar estimate, but with
$\Gcals{\sigma,j'}{k,1}{\Phi'}$ and $\Gcals{\sigma_0,j'}{k,1}{\Phi'}$ replaced respectively by
$\Gcals{\sigma}{k,1}{\psi}$ and $\Gcals{\sigma_0}{k,1}{\psi}$, and a similar 
modification of the notation for the elements of $\Gtb{m}{q}$. 

$\bullet$ If $a$ is a symbol in $\sGb{m}{K,0,j,p}{N}$, we may define the associated 
paradifferential operator $\opbw(a(\Phi;\cdot))$ as in \eqref{2114}, $z$ playing the role of a
  parameter. According to Proposition \ref{215},
  if $\Phi$ is in $\Brb{K}{}{j}$ for
some $\sigma$ such that $\sigma-j$ is large enough (independently of $s$), then the paradifferential operator  
  $\opbw(a(\Phi;\cdot))$ is bounded from $F^s_j$ to $F^{s-m}_{j}$ for any $s$. 

$\bullet$ In Definition \ref{511}, the functions $\Phi_j$ (resp. $\Phi$) belong to $F^\infty_\infty$
(resp.\ are functions of time with values in a space $F^{\sigma_0}_j$), so are $\C$ valued. We shall use also the same notation for the
above spaces of symbols, when we consider function $\Phi_j, \Phi$ that are $\C^2$ valued instead of $\C$ valued.

\smallskip

Let us define as well smoothing operators acting on the spaces $F^s_j$.

\begin{definition}
  \label{512} 
  {\bf (Smoothing operator in a strip)}
Let $K$ in $\N$, $p, N$ in $\N^*$, $p\leq N$, $\rho\geq0$, $j$ in $\N$.

(i)  One defines \index{Rd@$\Rtb{-\rho}{p}$ (Space of homogeneous smoothing operators in a strip)} $\Rtb{-\rho}{p}$ as the space of
$(p+1)$-linear maps from the space $(F^\infty_\infty)^{p+1}$ to 
$F^\infty_\infty$, symmetric in $(\Phi_1,\dots,\Phi_p)$, 
of the form 
$$
(\Phi_1,\dots,\Phi_{p+1})\to R(\Phi_1,\dots,\Phi_p)\Phi_{p+1} \, , 
$$ 
satisfying condition \eqref{2116} and such that
there is some $\mu\geq 0$, and for any $j \in \N $, some $C>0$ such that, for any $\Phical = (\Phi_1,\dots,\Phi_p)$ in
$(F^\infty_\infty)^p$, any $\Phi_{p+1}$ in  $F^\infty_\infty$, any $n = (n_1,\dots,n_p)$ in $(\N^*)^p$, any $n_0,
n_{p+1}$ in  $\N^*$, 
\begin{multline}
  \label{eq:518}
\sup_{z\in [-1,0]}\norm{\partial_z^j \Pin{0}R(\Pin{}\Phical)\Pin{p+1}\Phi_{p+1}}_{\Hds{0}}\\
\leq C\frac{\maxdn{1}{p+1}^{\rho+\mu}}{\maxn{1}{p+1}^{\rho-j}}\Gcalsm{0,j}{0,p+1}{\Phical,\Phi_{p+1}}.
\end{multline}

(ii) One defines for $N$ in $\N$, $r>0$,  \index{Re@$\Rrb{-\rho}{K,0,j,N}$  (Space of non-homogeneous smoothing operators in a strip)}
$\Rrb{-\rho}{K,0,j,N}$ as the space of maps $(\Phi,\Psi)\to R(\Phi)\Psi$, that are 
defined on $\Brb{K}{}{j}\times\CKHb{\sigma}{j}$ for some $\sigma>0$, that are linear in $\Psi$ and such that, for any $s$
with $s>\sigma$, there are $C>0$ and $r(s)\in ]0,r[$, and for any $\Phi$ in $\Brsb{K}{}{s}{j}\cap\CKHb{s}{j}$, any $\Psi$ in
$\CKHb{s}{j}$, any $0\leq k\leq K$, any $0\leq j'\leq 
j$, one has the estimate
\begin{multline}
  \label{eq:519}
\norm{\partial_t^kR(\Phi)\Psi(t,\cdot)}_{F^{s+\rho-\frac{3}{2}k}_{j'}}\leq C\sum_{k'+k''=k}\bigl(\Gcals{\sigma,j'}{k',N}{\Phi}\Gcals{s,j'}{k'',1}{\Psi}\\
+ \Gcals{\sigma,j'}{k',N-1}{\Phi}\Gcals{\sigma,j'}{k',1}{\Psi}\Gcals{s,j'}{k'',1}{\Phi}\bigr) \, .
\end{multline}

(iii) One denotes by \index{Rf@$\sRb{-\rho}{K,0,j,p}{N}$ (Space of smoothing operators in a strip)}  
$\sRb{-\rho}{K,0,j,p}{N}$ the space of maps $(\Phi,\Psi)\to R(\Phi)\Psi$ that may be written as
\begin{equation}
  \label{eq:5110}
  R(\Phi)\Psi = \sum_{q=p}^{N-1} R_q(\Phi,\dots,\Phi)\Psi + R_N(\Phi)\Psi
\end{equation}
with $R_q$ in $\Rtb{-\rho}{q}$ and $R_N$ in $\Rrb{-\rho}{K,0,j,N} $.  	
\end{definition}

\textbf{Remarks}:  
$\bullet$ If $a$ is a symbol in the class $ \sGb{m}{K,0,j,p}{N}$ and  $b$ in $\sGb{m'}{K,0,j,p'}{N}$, then    
the composition proposition of symbolic calculus~\ref{231}
applies  and formula \eqref{232}  holds with a smoothing operator in the class 
$ \sRb{-\rho+m+m'}{K,0,j,p+p'}{N} $ of Definition~\ref{512}.

$\bullet$ Let $R$ be an homogeneous smoothing operator of $\Rt{-\rho}{p}$ as defined in (i) of
Definition~\ref{214}. We claim that $R$ defines also a smoothing operator  of  $\Rtb{-\rho}{p} $.
Indeed, since $R$ is multilinear, 
given functions $\Phi_j $, $ j = 1, \ldots, p + 1 $ in $F^\infty_\infty $, we may compute 
$$
\pa_z^{j} R(\Phi_1,\dots,\Phi_p)\Phi_{p+1}    = \! \!\! \! \!  \sum_{j_1+ \cdots + j_{p+1}= j} \!  \! \! C_{j_1 \ldots j_{p+1}}
R( \pa_z^{j_1}\Phi_1,\dots, \pa_z^{j_p} \Phi_p) \pa_z^{j_{p+1}} \Phi_{p+1}
$$ 
for suitable binomial coefficients  $ C_{j_1 \ldots j_{p+1}} $.
Using  \eqref{2115} and the smoothing estimates
$ \Vert \Pi_{n_\ell} \pa_z^{j_\ell} \Phi_\ell \Vert_{{\dot H}^{-j}}  \leq n_{\ell}^{-j_\ell} \Vert \Phi_\ell \Vert_{F^0_j} $, 
$ \ell = 1, \ldots, p + 1 $,  one deduces   
 \eqref{518}. 
 
$ \bullet $  If $ R $ is an homogeneous smoothing operator of $\Rt{-\rho}{p}$ 
then  $R (\Phi, \ldots, \Phi ) \Psi $  satisfies  \eqref{519} with $ \rho - \alpha $, for any 
$ \alpha > 1 / 2 $, instead of $  \rho $.

$\bullet$ As in the case of symbols, we shall use the same notation for classes of smoothing operators
when we allow the arguments $\Phi_1,\dots,\Phi_p$ (resp.\  $\Phi$) in (i) (resp.\ (ii)) of the above definition to be $\C^2$ valued instead of $\C$ valued.

$\bullet$ Occasionally, we shall have to consider elements of $\Rtb{-\rho}{p}$ acting on functions 
$(\Phi_1,\dots,\Phi_{p+1})$ of $ (F_\infty^\infty)^{p+1} $, 
where at least one of the arguments  is a function of $ F_\infty^\infty $ 
that does not depend on $ z $, i.e. it belongs to $\Hds{\infty}(\Tu;\C)$
(for example in the proof of Proposition \ref{614}). Of course, such a function  is also
in $F^\infty_\infty$, so that the estimates \eqref{518} remain meaningful. 
In the same way, we shall have to consider
smoothing operators of $\sRb{-\rho}{K,0,j,p}{N}$ where  the argument $\Psi  $ of \eqref{5110}
is a function that does not depend on
$ z $, so that \eqref{519} holds with $ \Gcals{\sigma,j'}{k',1}{\Psi} = \Gcals{\sigma}{k',1}{\Psi} $. 
For simplicity, 
even in such cases, we shall denote these classes of operators 
using the same notations. 
\medskip

\medskip

We shall need as well para-Poisson operators, sending functions defined on one of the boundaries of the strip $\Bcal$ to functions
defined on the whole $\Bcal$. We first define the Poisson symbols. 
\begin{definition}
  \label{513} 
 {\bf (Poisson symbols)} 
  Let $m$ be in $\R$, $p\leq N,  K$ in $\N$.

(i) {\bf (Homogeneous Poisson symbols)}
One denotes by \index{Pa@$\Pt{m,\pm}{p}$ (Poisson homogeneous symbols from the boundary to the interior)} $\Pt{m,+}{p}$ (resp.\
$\Pt{m,-}{p}$) the space of symmetric $p$-linear maps defined on  
$\Hds{\infty}(\Tu,\C^2)^p$
\[\Ucal = (U_1,\dots,U_p)\to \bigl((z,x,\xi)\to a(\Ucal;z,x,\xi)\bigr)\]
such that for any $\ell, j$ in $\N$, $z^\ell \partial_z^j a(\Ucal;z,x,\xi)$ (resp.\ $(1+z)^\ell\partial_z^j
a(\Ucal;z,x,\xi)$) belongs to the class of symbols $\Gt{m+j-\ell}{p}$ of Definition~\ref{211}, and satisfies bounds \eqref{214}
with $ m $ replaced by $ m + j - \ell $, and some $\mu$ independent of $\ell$, uniformly in $z \in [-1,0]$.

In the same way, \index{Pb@$\Pti{m}{p}$ (Poisson homogeneous symbols from the interior to the interior)} $\Pti{m}{p}$ denotes the space of symmetric $p$-linear maps 
\[
\Ucal = (U_1,\dots,U_p)\to \bigl((z,z',x,\xi)\to a(\Ucal;z,z',x,\xi)\bigr)
\]
with values in functions of  $z, z' \in [-1,0]$, $x\in \Tu$, $\xi\in \R$ that may be written as
\be\label{eq:a+a-}
a_+(\Ucal;z,z',x,\xi)\1_{z-z'>0} + a_-(\Ucal;z,z',x,\xi)\1_{z-z'<0}
\ee
where for any $\ell, j, j'$ in $\N$, $(z-z')^\ell\partial_z^j\partial_{z'}^{j'} a_\pm(\Ucal;z,z',\cdot)$ belongs to
$\Gt{m+j+j'-\ell}{p}$ and satisfies \eqref{214} with $m$ replaced by $m+j+j'-\ell$ and some $\mu$ independent of $\ell$,
uniformly in $z, z'$.

(ii) {\bf (Non-homogeneous Poisson symbols)}
Let $r>0$. One denotes by \index{Pc@$\Prr{m,\pm}{K,0,p}$ (Poisson non-homogeneous symbols from the boundary to the interior)}
$\Prr{m,+}{K,0,p}$ (resp.\ $\Prr{m,-}{K,0,p}$) the space of functions 
\[(U,z,x,\xi)\to a(U;z,x,\xi)\]
such that for any $ j, \ell $ in $ \N $, the symbol 
$z^\ell\partial_z^ja(U;z,\cdot)$ (resp.\ $(1+z)^\ell\partial_z^ja(U;z,\cdot)$) belongs to
$\Gra{m+j-\ell}{K,0,p}$ (see Definition \ref{212bis}), and satisfies 
\eqref{218} with $ m $ replaced by $ m + j - \ell $, $ K' = 0 $, and some $\sigma, \sigma_0$ independent of
$\ell$, uniformly in $z\in [-1,0]$. 

One defines \index{Pd@$\Prri{m}{K,0,p}$ (Poisson non-homogeneous symbols from the interior to the interior)} 
$\Prri{m}{K,0,p}$ as the space of functions
\begin{equation}
  \label{eq:5110a}
  a(U;z,z',x,\xi) = \sum_{+,-} a_\pm(U;z,z',x,\xi)\1_{\pm(z-z')>0},
\end{equation}
where for any integers $\ell, j, j'$, $(z-z')^\ell\partial_z^j\partial_{z'}^{j'} a_\pm(U;t,z,z',\cdot)$ belongs to the class
$\Gra{m+j+j'-\ell}{K,0,p}$ and satisfies \eqref{218} with $m$ replaced by $m+j+j'-\ell$, $ K' = 0 $,
and some $\sigma$ independent of $\ell$,
uniformly in $z, z'$.

(iii) {\bf (Poisson symbols)}
One defines  the Poisson symbols \index{Pe@$\sP{m,\pm}{K,0,p}{N}$ (Poisson symbols from the boundary to the interior)} 
$\sP{m,\pm}{K,0,p}{N}$
(resp.\  \index{Pf@$\sPi{m}{K,0,p}{N}$ (Poisson symbols from the interior to the interior)} $\sPi{m}{K,0,p}{N}$) 
as the sum of homogeneous Poisson symbols $ a_q $ in $\Pt{m,\pm}{q}$, $ q = p, \ldots, N - 1 $,  evaluated at $ (U, \ldots, U ) $, 
plus a non-homogeneous Poisson symbol $ a_N $ of $\Prr{m,\pm}{K,0,N}$, 
\be\label{eq:5110a-bis}
a = \sum_{q=p}^{N-1} a_q (U, \ldots, U; z,z', x, \xi) + a_N (U; z,z', x, \xi) \, . 
\ee
We denote $ \sP{-\infty,\pm}{K,0,p}{N} $ the intersection $ \bigcap_m\sP{m,\pm}{K,0,p}{N} $.
\end{definition}  

\textbf{Remarks}: 
$ \bullet $ 
If $ a $  is a Poisson symbol in $\Pt{m,\pm}{p}$, resp. $ \sP{m,\pm}{K,0,p}{N} $, then 
$ \pa_z a $ is in $\Pt{m+1,\pm}{p}$, resp. $ \sP{m+1,\pm}{K,0,p}{N} $. Similarly
if $ a $  is a Poisson symbol in $\Pti{m}{p}$, resp. $ \sPi{m}{K,0,p}{N} $, then $ \pa_z a $ and $ \pa_{z'} a $  are in 
$\Pti{m+1}{p}$, resp. $ \sPi{m+1}{K,0,p}{N} $. 

$ \bullet $ 
If $ a $  is a Poisson symbol in $ \sP{m,\pm}{K,0,p}{N} $, resp. $ \sPi{m}{K,0,p}{N} $, 
then  $ \pa_\xi a $ is in $ \sP{m-1,\pm}{K,0,p}{N} $, resp. $ \sPi{m-1}{K,0,p}{N} $ . 

$ \bullet $ If $ a $  is a Poisson symbol in $\Pt{m,+}{p}$, resp. $\Pt{m,-}{p}$, 
then  $ z a $ is in $\Pt{m-1,+}{p}$, resp. $ (1+z) a $ is in $ \Pt{m-1,-}{p} $. Similarly
if $ a $  is a Poisson symbol in $\Pti{m}{p}$, resp. $ \sPi{m}{K,0,p}{N} $, then $ (z - z')a $  is in 
$\Pti{m-1}{p}$, resp. $ \sPi{m-1}{K,0,p}{N} $. 

$ \bullet $  Let $ \theta \in ]0,1[$. If $ a $ is in $ \sPi{m}{K,0,p}{N} $, then
 $ | z - z' |^{\theta} a $ is in $ \sPi{m- \theta}{K,0,p}{N} $. Indeed, for any $ z, z' $ in $ [-1,0] $, for any 
 $ \alpha, \beta \in \N $,   
 $$
 | | z - z' |^{\theta} \pa_x^\alpha \pa_\xi^\beta a_\pm | = 
 | ( z - z' ) \pa_x^\alpha \pa_\xi^\beta a_\pm |^{\theta}  | \pa_x^\alpha \pa_\xi^\beta a_\pm |^{1- \theta} 
 $$
 satisfies the estimates of a symbol in $ \sGa{m-\theta}{K,0,p}{N} $, uniformly in $ z, z' $.

$ \bullet $ The following Poisson symbols 
\be\label{eq:def:CSymbols}
 \Ccal(z,\xi)  = \frac{\cosh((z+1)\xi)}{\cosh\xi} \in \Pt{0,+}{0} \,  , \ \  \Scal(z,\xi) = \frac{\sinh(z\xi)}{\xi\cosh\xi}  \in 
 \Pt{-1,-}{0} \, , 
\ee
and
\[\begin{split}
\index{Kz@$K_0(z,z',\xi)$ (Poisson kernel)}K_0(z,z',\xi) = (\cosh\xi)\bigl(\Ccal(z,\xi)\Scal(z',\xi)\1_{z-z'<0} + \Scal(z,\xi)\Ccal(z',\xi)\1_{z-z'>0}\bigr)\\ \in \Pti{-1}{0}
\end{split}\]
will arise in the next section. 

$ \bullet $ 
We associate to a Poisson symbol
$ a (U; z, x, \xi) $ in $\sP{m,\pm}{K,0,p}{N}$ the corresponding paradifferential operator 
sending functions defined on $ \Tu $ to functions of $ (z, x) \in \Bcal $ by applying 
\eqref{2114} for each $ z $. We call $ \opbw (a (U; z, \cdot )) $ the para-Poisson operator associated to $ a $. 
On the other hand, to a Poisson symbol  $ a $ in 
$\sPi{m}{K,0,p}{N}$ we associate the para-Poisson operator
$$
  V\to \int_{-1}^0\opbw( a(U;z,z',\cdot))V(z',\cdot)\,dz' \, , 
$$
which sends a function $ V $ defined on the strip  $ \Bcal $ into another function of $ \Bcal $. 

\medskip

According to the Definition~\ref{513} 
of Poisson symbols, the boundedness properties of a paradifferential operator given in  
Proposition~\ref{215} imply the following lemma.

\begin{lemma} {\bf (Action of a para-Poisson operator from the boundary to the interior)} \label{Poisson:boundary-interior}
Let $ a $ be a Poisson symbol in $\sP{m,+}{K,0,p}{N}$ (resp.\  $\sP{m,-}{K,0,p}{N}$).
If $ U $ is in $\Br{K}{0}$ for some large enough $\sigma_0$, 
then, for any $s$ in $\R$, $j$ in $\N$, 
any $ V $ belonging to
$\Hds{s}(\Tu,\C)$, the function  
$$
(z,t,x)\to z^\ell\partial_z^j\opbw(a(U; z, \cdot))V
$$ 
(resp.\  $(z,t,x)\to (1+z)^\ell\partial_z^j\opbw(a(U; z, \cdot))V$) is in 
$\CKH{s-m+\ell-j}{\C}$ uniformly in 
$z\in [-1,0] $, and for any $ 0 \leq k \leq K $
$$
\| z^\ell\partial_z^j\opbw( \pa_t^k a(U; z, \cdot)) V \|_{L^\infty([0,1], \Hds{s-m+\ell-j})} \leq C \| V \|_{\Hds{s}}
$$ 
uniformly in $ t \in I $, the constant $ C $ depending only on $ \nnorm{U(t,\cdot)}_{K,\sigma_0} $.  
In particular $ \| \opbw( \pa_t^k a(U; z, \cdot)) V \|_{F^{s-m}_j} \leq C \| V \|_{\Hds{s}} $.
\end{lemma}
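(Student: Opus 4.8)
The plan is to reduce the statement to the mapping properties of ordinary paradifferential operators, treating the strip variable $z$ as an inert parameter. The starting point is the observation that the Bony--Weyl quantization \eqref{2112}--\eqref{2114} involves only the $x$, $y$, $\xi$ (and frequency) variables, so that for a Poisson symbol $a$ in $\sP{m,+}{K,0,p}{N}$ one has, for $V\in\Hds{s}$,
$z^\ell\partial_z^j\bigl(\opbw(a(U;z,\cdot))V\bigr)=\opbw\bigl((z^\ell\partial_z^j a)(U;z,\cdot)\bigr)V$: indeed $\partial_z$ commutes with the quantization because the regularized symbol $a_\chi$ in \eqref{2112} is smooth in $z$, and multiplication by the scalar $z^\ell$ commutes with the quantization trivially. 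Thus it suffices to control $\opbw\bigl(b(U;z,\cdot)\bigr)$, where $b=z^\ell\partial_z^j a$.

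First I would check that for each fixed $z\in[-1,0]$ the symbol $b(U;z,\cdot)$ belongs to the autonomous class $\sGa{m+j-\ell}{K,0,p}{N}$, with all its semi-norms bounded uniformly in $z$. This is built into Definition~\ref{513}: writing $a=\sum_{q=p}^{N-1}a_q+a_N$ with $a_q\in\Pt{m,+}{q}$ and $a_N\in\Prr{m,+}{K,0,N}$ as in \eqref{5110a-bis}, part~(i) of that definition gives $z^\ell\partial_z^j a_q\in\Gt{m+j-\ell}{q}$ satisfying \eqref{214} with $m$ replaced by $m+j-\ell$ and $\mu$ independent of $z$, while part~(ii) gives $z^\ell\partial_z^j a_N\in\Gra{m+j-\ell}{K,0,N}$ satisfying \eqref{218}, both uniformly in $z$; summing, $b(U;z,\cdot)$ lies in $\sGa{m+j-\ell}{K,0,p}{N}$ with $z$-uniform estimates. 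The ``$(1+z)^\ell$'' case is handled identically, using the minus-versions in Definition~\ref{513}(i)--(ii).

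Then I would apply Proposition~\ref{215} (part~(i) to each homogeneous term, part~(ii) to the non-homogeneous term, in the decomposition \eqref{2114} of $\opbw(b(U;z,\cdot))$), with $K'=0$. For every $z\in[-1,0]$ and $0\le k\le K$ this yields that $\opbw\bigl(\partial_t^k b(U;t,z,\cdot)\bigr)$ maps $\Hds{s}$ to $\Hds{s-(m+j-\ell)}=\Hds{s-m+\ell-j}$, with, by \eqref{2123}, $\|\opbw(\partial_t^k b(U;t,z,\cdot))\|_{\Lcal(\Hds{s},\Hds{s-m+\ell-j})}\le C\,\Gcals{\sigma_0}{k,p}{U}$, the constant $C$ depending only on $s$, $r$ and finitely many $z$-uniform semi-norms of $b$. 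Since $\Gcals{\sigma_0}{k,p}{U}=\nnorm{U(t,\cdot)}_{k,\sigma_0}^p\le\nnorm{U(t,\cdot)}_{K,\sigma_0}^p$ by \eqref{212}--\eqref{213} and $U\in\Br{K}{0}$, the whole bound is $\le C'\nnorm{U(t,\cdot)}_{K,\sigma_0}^p\|V\|_{\Hds{s}}$, i.e.\ a constant depending only on $\nnorm{U(t,\cdot)}_{K,\sigma_0}$, which is the displayed estimate. Applying the same bound to time-differences of the symbol (using $U\in C^K_*(I,\Hds{\sigma_0})$ and the continuity/multilinearity of the symbol maps) gives continuity in $t$ with values in $\Hds{s-m+\ell-j-\frac32 k}$, hence membership of $z^\ell\partial_z^j\opbw(a(U;z,\cdot))V$ in $\CKH{s-m+\ell-j}{\C}$, uniformly in $z$. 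Finally, the last assertion is the case $\ell=0$: summing the bounds for $\partial_z^{j'}\opbw(a(U;z,\cdot))V$ over $j'=0,\dots,j$ gives $\|\opbw(\partial_t^k a(U;z,\cdot))V\|_{F^{s-m}_j}\le C\|V\|_{\Hds{s}}$ by the very definition \eqref{512} of the $F^{s-m}_j$-norm.

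There is no serious obstacle here; the statement is essentially a repackaging of the definitions together with Proposition~\ref{215}. The only points deserving care are the commutation of $\partial_z$ with the quantization (harmless, thanks to the smoothness in $z$ of $a_\chi$) and, above all, the bookkeeping verification that the gain of $\ell-j$ derivatives and the $z$-uniformity of all constants are genuinely provided by the Poisson-symbol classes of Definition~\ref{513}; both become immediate once the sum decomposition \eqref{5110a-bis} is written out.
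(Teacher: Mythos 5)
Your proposal is correct and follows exactly the route the paper intends: the paper states, just before the lemma, that it follows directly from Definition~\ref{513} of Poisson symbols combined with Proposition~\ref{215}, and that is precisely what you unpack — $z^\ell\partial_z^j a$ (resp.\ $(1+z)^\ell\partial_z^j a$) lies in $\sGa{m+j-\ell}{K,0,p}{N}$ with $z$-uniform semi-norms, $\partial_z$ commutes with the quantization since $z$ enters only as a parameter, and Proposition~\ref{215}(ii) with $K'=0$ gives the Sobolev bound with constant depending only on $\nnorm{U(t,\cdot)}_{K,\sigma_0}$. The one cosmetic slip is that you cite \eqref{5110a-bis}, whose displayed form carries the $(z,z')$ variables of the interior-to-interior class $\sPi{m}{K,0,p}{N}$, whereas for $\sP{m,\pm}{K,0,p}{N}$ the decomposition is in $(z,x,\xi)$ only; the substance of your argument is unaffected.
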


To study the action of para-Poisson operators associated to symbols in the space $\sPi{m}{K,0,p}{N}$, we introduce another scale of spaces.
\begin{definition}\label{515} {\bf (Space of functions on a strip)}
We denote by $E^s_j$ the subspace of $F^s_j$ (introduced in Definition \ref{spacesFsj})
given by
\begin{equation}
  \label{eq:5111}
  \index{Eb@$E^s_j$ (Space of functions on a strip)} E^s_j = \big\{ \Phi\in F^s_j \, ; \  
  \partial_z^{j'}\Phi\in L^2([-1,0], \ \Hds{s-j'+\frac{1}{2}}(\Tu)); \, j' = 0,\dots,j \big\} 
\end{equation}
together with the natural norm
\begin{equation}
  \label{eq:512E}
  \norm{\Phi}_{E_j^s} = \norm{\Phi}_{F_j^s} + \sum_{j'=0}^j\norm{\partial_z^{j'}\Phi}_{ L^2([-1,0],\Hds{s  -j' + \frac12})} 
   \, .
\end{equation}
We denote by $ C^K_*(I,E^s_j) = \bigcap_0^K C^k(I,E^{s-\frac{3}{2}k}_j)$
and by  $\nnorm{\Phi}_{K,s,j}^E$ the norm defined by the first line in \eqref{513} with $F^{\sigma-\frac{3}{2}k}_j$ replaced by $E^{s-\frac{3}{2}k}_j$. 
We set $ E^\infty_\infty \stackrel{\textrm{def}}{=} \bigcap_{s,j} E_j^s$.
\end{definition}

\noindent
\textbf{Remark}: We have the continuous inclusions $ E^s_j \subset F^s_j \subset E^{s- \frac12}_j $. 
\smallskip

These spaces are characterized in terms of Littlewood-Paley decomposition, that we now 
recall. Consider  a Littlewood-Paley partition of unity, 
$$
1 = \psi(\xi) + \sum_{ \ell \geq 1} \varphi(2^{- \ell }\xi) \, ,
$$ 
where the $ C^\infty $ 
function $\varphi $, resp. $ \psi $, is supported for
$C^{-1}\leq \abs{\xi}\leq C$ for some constant $ C > 2 $, 
resp.  for $ \abs{\xi}\leq c $ for some $ c >  0 $, 
$\psi$ and $\varphi$ being even in $ \xi $. 
We denote 
\be\label{eq:Delta-ell}
\Delta_0 = \psi(D)  \, , \quad  \Delta_\ell = \varphi(2^{- \ell }D) \, , \   \ell \geq 1  \, . 
\ee
We have the following characterization of the spaces $ E_j^s $.

\begin{lemma}\label{CarSpacE}
{\bf (Littlewood-Paley characterization of $ E_j^s $)}
A function $ \Phi $ is in $ E_j^s $ if and only if, for any $ 0 \leq j' \leq j $, $ z \in [-1,0] $,
\begin{equation}
  \label{eq:5112}
  \begin{split}
    \norm{\Delta_\ell\partial_z^{j'}\Phi(z,\cdot)}_{L^2(\Tu)} &\leq c_\ell(z) 2^{-\ell(s-j')}\\
\norm{\Delta_\ell\partial_z^{j'}\Phi}_{L^2([-1,0],L^2(\Tu))} &\leq c_\ell 2^{-\ell(s+\frac{1}{2}-j')}
  \end{split}
\end{equation}
for a sequence $(c_\ell)_\ell$ in $\ell^2 (\N, \R) $ 
and a sequence $(c_\ell(z))_\ell$ in $\ell^2 (\N, \R) $ such that
$$
\| \Phi \|_{F^s_j}^2 \simeq \sup_{z \in [-1,0]}\sum_\ell\abs{c_\ell(z)}^2  \, , \quad 
\| \Phi \|_{E^s_j}^2 \simeq \| \Phi \|_{F^s_j}^2 + \sum_{\ell  \geq 0} c_\ell^2 \, . 
$$
We also have
\begin{equation} \label{eq:5112bis}
 \norm{\Delta_\ell\partial_z^{j'}\Phi(z,\cdot)}_{L^2(\Tu)} \leq c_\ell' (z) 2^{-\ell(s-j' + \frac12)}
\end{equation}
for a sequence $ (c_\ell' (z))_\ell $ such that  $ \sum_\ell \norm{c'_{\ell}(z)}_{L^2([-1,0], \R)}^2 =  \sum_{\ell  \geq 0}c_\ell^2 $.  
\end{lemma}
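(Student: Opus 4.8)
The plan is to establish the Littlewood--Paley characterization of the space $E^s_j$ by translating the two defining norms of Definition~\ref{515} into dyadic estimates and back, using only elementary properties of the partition of unity \eqref{Delta-ell} and the Plancherel identity on $\Tu$. First I would treat the $F^s_j$ part: for each fixed $z\in[-1,0]$ and each $0\leq j'\leq j$, the function $\partial_z^{j'}\Phi(z,\cdot)$ lies in $\Hds{s-j'}(\Tu)$ (this is the content of $\Phi\in F^s_j$), and by the standard dyadic characterization of Sobolev norms on the torus one has $\norm{\partial_z^{j'}\Phi(z,\cdot)}_{\Hds{s-j'}}^2 \simeq \sum_\ell 2^{2\ell(s-j')}\norm{\Delta_\ell\partial_z^{j'}\Phi(z,\cdot)}_{L^2(\Tu)}^2$; setting $c_\ell(z) := 2^{\ell(s-j')}\norm{\Delta_\ell\partial_z^{j'}\Phi(z,\cdot)}_{L^2(\Tu)}$ (summing over the finitely many $j'\leq j$, or taking the worst one) gives the first line of \eqref{5112} together with $\sum_\ell |c_\ell(z)|^2 \simeq \norm{\partial_z^{j'}\Phi(z,\cdot)}_{\Hds{s-j'}}^2$, and taking the supremum over $z$ recovers $\norm{\Phi}_{F^s_j}^2$. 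The converse direction is immediate: given the first bound in \eqref{5112} with $(c_\ell(z))_\ell\in\ell^2$ uniformly in $z$, reassemble the dyadic pieces to bound $\norm{\partial_z^{j'}\Phi(z,\cdot)}_{\Hds{s-j'}}$ and hence the $F^s_j$ norm.

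Next I would handle the $E^s_j$ part, i.e.\ the second line of \eqref{5112}. Here the extra condition in \eqref{5111} is that $\partial_z^{j'}\Phi\in L^2([-1,0],\Hds{s-j'+\frac12}(\Tu))$. By Plancherel in $x$ and Fubini in $(z,\xi)$, $\norm{\partial_z^{j'}\Phi}_{L^2([-1,0],\Hds{s-j'+\frac12})}^2 \simeq \sum_\ell 2^{2\ell(s-j'+\frac12)}\norm{\Delta_\ell\partial_z^{j'}\Phi}_{L^2([-1,0]\times\Tu)}^2$, so defining $c_\ell := 2^{\ell(s+\frac12-j')}\norm{\Delta_\ell\partial_z^{j'}\Phi}_{L^2([-1,0],L^2(\Tu))}$ (again taking the worst $j'$) yields the second line of \eqref{5112} with $(c_\ell)_\ell\in\ell^2$ and $\sum_\ell c_\ell^2 \simeq \norm{\partial_z^{j'}\Phi}_{L^2([-1,0],\Hds{s-j'+\frac12})}^2$. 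Adding this to the $F^s_j$ contribution produces the stated equivalence $\norm{\Phi}_{E^s_j}^2 \simeq \norm{\Phi}_{F^s_j}^2 + \sum_\ell c_\ell^2$, matching \eqref{512E}. The reverse implication is the same reassembly argument applied to both conditions simultaneously.

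Finally, for \eqref{5112bis}: this is a pointwise-in-$z$ consequence of the $L^2$-in-$z$ bound, obtained by a one-dimensional Sobolev-type interpolation in the $z$ variable, or more directly by simply reading off the local behaviour. Set $c_\ell'(z) := 2^{\ell(s-j'+\frac12)}\norm{\Delta_\ell\partial_z^{j'}\Phi(z,\cdot)}_{L^2(\Tu)}$; then by the second line of \eqref{5112}, $\norm{c_\ell'(\cdot)}_{L^2([-1,0],\R)} = 2^{\ell(s-j'+\frac12)}\norm{\Delta_\ell\partial_z^{j'}\Phi}_{L^2([-1,0],L^2(\Tu))} = c_\ell$, so $\sum_\ell \norm{c_\ell'(z)}_{L^2([-1,0],\R)}^2 = \sum_\ell c_\ell^2$ as claimed, and \eqref{5112bis} holds by construction.

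The argument is essentially bookkeeping, so there is no deep obstacle; the one point requiring a little care is keeping the dependence on $j'$ (ranging over $0,\dots,j$) organized cleanly so that a single sequence $(c_\ell(z))_\ell$, resp.\ $(c_\ell)_\ell$, controls all derivatives at once — this is handled by summing the squares over $j'$ or by replacing $c_\ell(z)$ with $\max_{0\leq j'\leq j} 2^{\ell(s-j')}\norm{\Delta_\ell\partial_z^{j'}\Phi(z,\cdot)}_{L^2(\Tu)}$, which changes the equivalence constants by a factor depending only on $j$. A second minor point is the uniformity in $z$ of the $\ell^2$ norm of $(c_\ell(z))_\ell$ needed for the $F^s_j$ equivalence, which follows directly from the definition \eqref{512} of $\norm{\cdot}_{F^s_j}$ as an $L^\infty$-in-$z$ norm.
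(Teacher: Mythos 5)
Your proposal is correct and follows essentially the same route as the paper: translate the Sobolev norms in the definitions \eqref{512} and \eqref{512E} into dyadic sums via the Littlewood--Paley characterization, define $c_\ell(z)$ and $c_\ell$ as the appropriately weighted dyadic $L^2$ norms (taking a maximum over $0\leq j'\leq j$), and read off \eqref{5112bis} directly by setting $c'_\ell(z)=c_\ell(z)2^{\ell/2}$. The only cosmetic difference is that the paper uses the maximum over $j'$ explicitly in the definitions of $c_\ell(z)$ and $c_\ell$, while you leave both the max and the sum as acceptable options — either works, as you note.
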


\begin{proof}
By the characterization of the Sobolev spaces with a 
Paley-Littlewood decomposition we have, for any $ 0 \leq j' \leq j $,
\begin{equation*}
\begin{split}
\norm{\partial_z^{j'}\Phi (z, \cdot )}_{\Hds{s  -j'}}^2 & \simeq \sum_\ell 
\Big(\norm{\Delta_\ell \partial_z^{j'}\Phi (z, \cdot )}_{L^2(\Tu)} 2^{\ell (s-j')}\Big)^2 \\
\norm{\partial_z^{j'}\Phi}_{ L^2([-1,0],\Hds{s  -j' + \frac12})}^2  & \simeq 
\sum_\ell \Big(\norm{\Delta_\ell \partial_z^{j'}\Phi }_{L^2([-1,0],L^2(\Tu))} 2^{\ell (s-j'+ \frac12)}\Big)^2
\end{split}
\end{equation*}
and \eqref{5112} follows with
\begin{equation*}
\begin{split}
c_\ell (z) & 
\stackrel{\textrm{def}}{=}  \max_{0 \leq j' \leq j} \norm{\Delta_\ell \partial_z^{j'}\Phi (z, \cdot )}_{L^2(\Tu)} 2^{\ell (s-j')} \, \\ 
c_\ell &  \stackrel{\textrm{def}}{=}  \max_{0 \leq j' \leq j} \norm{\Delta_\ell \partial_z^{j'}\Phi }_{L^2([-1,0],L^2(\Tu))} 2^{\ell (s-j'+ \frac12)}  \,.
\end{split}
\end{equation*}
Finally \eqref{5112bis} follows with $ c_\ell' (z) = c_\ell (z) 2^{\ell/2} $. 
\end{proof}

The para-Poisson operators associated to a symbol  of
$\sPi{m}{K,0,p}{N}$ gain 
one derivative in the $E_j^s$ scale (we shall use this property for instance in the proof  of lemma \ref{532}).
\begin{lemma}
  \label{516} 
  {\bf (Action of a para-Poisson operator from the interior to the interior)} 
Let $p,  K $ be in $\N$, $r>0$, $N\geq p$ and let $ a $ be a Poisson symbol  of 
$\sPi{m}{K,0,p}{N}$. There is $\sigma_0>0$ such that if $U$
is in $\Br{K}{0}$, for any $s$ in $\R$, any $j$ in $\N$, any $k\leq K$, the para-Poisson operator
\begin{equation}
  \label{eq:5113}
  V\to \int_{-1}^0\opbw(\partial_t^ka(U;z,z',\cdot))V(z',\cdot)\,dz'
\end{equation}
is bounded from $E^s_j$ to $E^{s+1-m}_j$ uniformly in $t\in I$, the bound depending only 
on $\nnorm{U(t,\cdot)}_{K,\sigma_0}$.
\end{lemma}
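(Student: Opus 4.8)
The plan is to reduce everything, via the Littlewood--Paley characterization of the spaces $E^s_j$ proved in Lemma~\ref{CarSpacE}, to dyadic-in-frequency estimates for the kernel $\opbw(a(U;z,z',\cdot))$, and then to integrate in $z'$, the gain of one derivative being produced by the $L^1_{z'}$-size of the frequency-localized kernel. First I would normalize the setup: decomposing the symbol $a$ as in \eqref{5110a-bis}, each homogeneous term $a_q(U,\dots,U;z,z',\cdot)$ defines, by the first remark following Definition~\ref{212bis} and the Definition~\ref{513} of $\Pti{m}{q}$, a symbol whose pieces $a_{q,\pm}$ on $\{\pm(z-z')>0\}$ satisfy the following: for every $L,j_1,j_2$, the symbol $(z-z')^L\partial_z^{j_1}\partial_{z'}^{j_2}a_{q,\pm}$ lies in $\Gra{m+j_1+j_2-L}{K,0,q}$ with seminorms controlled by $\nnorm{U(t,\cdot)}_{K,\sigma_0}$, uniformly in $z,z'$; the non-homogeneous term $a_N$ satisfies the analogous bounds by Definition~\ref{513}(ii).

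By Proposition~\ref{215}, applied for each fixed $(z,z')$, and using the spectral-localization property stated there (only frequencies $2^{\ell'}\sim 2^{\ell}$ interact), one obtains, for a Littlewood--Paley projector $\Delta'_\ell$ slightly fattened around $\Delta_\ell$ and every integer $L$, the pointwise-in-$(z,z')$ bound
\[
\bigl\|\Delta_\ell\,\opbw\bigl(\partial_z^{j'}a(U;z,z',\cdot)\bigr)\,\Delta'_\ell w\bigr\|_{L^2(\Tu)}\le C\,2^{\ell(m+j')}\min\bigl(1,(2^{\ell}|z-z'|)^{-L}\bigr)\,\|\Delta'_\ell w\|_{L^2(\Tu)},
\]
with $C$ depending only on $\nnorm{U(t,\cdot)}_{K,\sigma_0}$ (Proposition~\ref{215} makes the operator norm depend only on finitely many symbol seminorms). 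The $\partial_t^k$ case is identical, since $\partial_t^k$ distributes over the $p$-linear dependence on $U$ and only introduces time derivatives of $U$, again controlled by that norm.

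Next I would integrate in $z'$. Set $W(z,\cdot)=\int_{-1}^0\opbw(a(U;z,z',\cdot))V(z',\cdot)\,dz'$, fix $j'\le j$, and use Lemma~\ref{CarSpacE} for $V\in E^s_j$: the sequences $c_\ell$ defined there satisfy $\|\Delta'_\ell V\|_{L^2([-1,0]\times\Tu)}\le c_\ell 2^{-\ell(s+\frac12)}$ with $(c_\ell)\in\ell^2$ and $\sum_\ell c_\ell^2\le\|V\|_{E^s_j}^2$. Since $\int_{-1}^0\min(1,(2^\ell|z-z'|)^{-L})\,dz'\lesssim 2^{-\ell}$ and $\|\min(1,(2^\ell|z-z'|)^{-L})\|_{L^2_{z'}}\lesssim 2^{-\ell/2}$ for $L$ large, a Cauchy--Schwarz estimate in $z'$ bounds the $L^2(\Tu)$-norm of $\Delta_\ell\partial_z^{j'}W(z,\cdot)$ by $C\,c_\ell 2^{-\ell(s+1-m-j')}$ uniformly in $z$, while Young's convolution inequality in $z$ bounds $\|\Delta_\ell\partial_z^{j'}W\|_{L^2([-1,0]\times\Tu)}$ by $C\,c_\ell 2^{-\ell(s+1-m-j'+\frac12)}$. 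By Lemma~\ref{CarSpacE} these are exactly the estimates characterizing $W\in E^{s+1-m}_j$ with norm $\le C$, uniformly in $t\in I$.

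The step requiring care is the action of $\partial_z^{j'}$ on the indicators $\1_{\pm(z-z')>0}$ inside the kernel: each such differentiation produces a boundary term in which $\partial_z^{j''}[a_+-a_-]$ (with $j''<j'$) is evaluated on the diagonal $z=z'$. Applying the defining estimates of $\Pti{m}{q}$ (resp. $\Prri{m}{K,0,q}$) to the symbol $(z-z')\,\partial_z^{j''}[a_+-a_-]$ shows that $\partial_z^{j''}[a_+-a_-]$ vanishes on the diagonal modulo a symbol of order $m+j''-1$, so these diagonal contributions are paradifferential operators of strictly lower order and are absorbed by the same dyadic scheme. This is where the improvement $E^s_j\to E^{s+1-m}_j$ (rather than the mere $F^s_j\to F^{s-m}_j$) is genuinely used: the extra derivative comes from the $L^1_{z'}$-mass $\sim 2^{-\ell}$ of the frequency-localized kernel, while the additional half-derivative built into the $L^2_z$-component of the $E$-norm is precisely what lets the Cauchy--Schwarz step in $z'$ close. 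I expect this bookkeeping of the diagonal jumps — together with checking that no $z'$-integrability is lost when $\partial_z$ falls on the symbol rather than on the indicator — to be the only delicate point; the remainder is a routine application of Proposition~\ref{215} and Lemma~\ref{CarSpacE}.
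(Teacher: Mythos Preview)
Your overall approach is the same as the paper's: dyadic kernel bounds via Proposition~\ref{215}, then the Littlewood--Paley characterization of $E^s_j$ from Lemma~\ref{CarSpacE}, with the gain of one derivative coming from the $z'$-integration against a kernel of $L^1_{z'}$-mass $\sim 2^{-\ell}$. The integral part of your argument is correct and matches the paper closely.

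The gap is in your treatment of the diagonal terms. You claim that from the estimate on $(z-z')\partial_z^{j''}[a_+-a_-]$ one deduces that $\partial_z^{j''}[a_+-a_-]$ ``vanishes on the diagonal modulo a symbol of order $m+j''-1$''. This inference is wrong: the bound $|(z-z')b(z,z',x,\xi)|\lesssim\langle\xi\rangle^{m+j''-1}$ is vacuous at $z=z'$ and gives no information on $b|_{z=z'}$. The definition of $\sPi{m}{K,0,p}{N}$ imposes no continuity of $a$ across the diagonal, and in general the jump $(a_+-a_-)|_{z=z'}$ is a symbol of order $m$, not $m-1$. Fortunately you do not need that extra gain. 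When you compute $\partial_z^{j'}$ of the integral, the boundary terms that appear are of the form $\opbw\bigl(\partial_z^{j''}(a_+-a_-)(U;z,z,\cdot)\bigr)V(z,\cdot)$ (or the same applied to $\partial_z^{j'-1-j''}V$), with symbols of order $m+j''$, $j''\le j'-1$. Since $V\in E^s_j$ gives $\|\Delta_\ell V(z,\cdot)\|_{L^2}\le c_\ell(z)2^{-\ell s}$ uniformly in $z$ and $\|\Delta_\ell V\|_{L^2_{z,x}}\le c_\ell 2^{-\ell(s+\frac12)}$, a direct application of Proposition~\ref{215} at each fixed $z$ places these terms in $L^\infty_z\Hds{s-m-j''}\cap L^2_z\Hds{s-m-j''+\frac12}\subset L^\infty_z\Hds{(s+1-m)-j'}\cap L^2_z\Hds{(s+1-m)-j'+\frac12}$, exactly what membership in $E^{s+1-m}_j$ requires. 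This is how the paper handles it: the diagonal symbol is taken at face value as an order-$m$ object, and the one derivative ``saved'' comes simply from the fact that $j''<j'$, not from any cancellation of the jump.
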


\begin{proof}
  We may assume $k=0$ and we do not write time dependence. By Definition~\ref{513}, the right hand side of \eqref{5113} may be written as
  \begin{equation}
    \label{eq:5114}
    \int_{-1}^z\opbw(a_+(U;z,z',\cdot))V(z')\,dz' + \int_z^0 \opbw(a_-(U;z,z',\cdot))V(z')\,dz'.
  \end{equation}
Denote by $\Delta_\ell = \varphi(2^{-\ell}D_x)$ the elements of some Littlewood-Paley
decomposition as in \eqref{Delta-ell}. 
It follows from the definition of $\opbw$ that the action of $\opbw(a_\pm)$ does not enlarge much the support
of the Fourier transform of functions, see \eqref{2122}, \eqref{support}-\eqref{2124} and the first remark after Proposition \ref{215}.
We may thus find a compactly  supported function $\tilde{\varphi}$ of $ C^\infty_0 (\R^*) $,
equal to one in a large enough compact subset of $ \R^* $,  so that, setting $\tilde{\Delta}_\ell = \tilde{\varphi}(2^{-\ell}D_x)$, it results 
$$
\Delta_\ell\opbw(a_\pm) = \Delta_\ell\opbw(a_\pm)\tilde{\Delta}_\ell \, . 
$$ 
By the boundedness properties of paradifferential operators of Proposition~\ref{215}, if $\sigma_0$ is large enough, we have, for any $N$, 
\begin{multline}
  \label{eq:5115}
\abs{z-z'}^N\norm{\Delta_\ell\opbw(a_\pm(U;z,z',\cdot))V(z',\cdot)}_{L^2(\Tu)}\\
\leq C2^{\ell(m-N)}\norm{\tilde{\Delta}_\ell V(z',\cdot)}_{L^2(\Tu)}
\end{multline}
the constant $ C $ depending only on $\nnorm{U(t,\cdot)}_{0,\sigma_0}$.
Since the function $ V (z', \cdot) $ is in $ E^s_0 $ we get by \eqref{5112bis} that
$$
\| \tilde{\Delta}_\ell V(z',\cdot) \|_{L^2(\Tu)} \leq c'_{\ell}(z') 2^{- \ell (s+ \frac12)}
$$
for some sequence $(c'_{\ell}(z'))_{\ell}$ that satisfies  
\be\label{eq:c1z} 
\sum_\ell \norm{c'_{\ell}( z' )}_{L^2(dz')}^2  < C \| V \|_{E^s_0}^2 \, . 
\ee
Therefore \eqref{5115}  is bounded by  $C2^{\ell (m-N-s-\frac{1}{2})}c'_{\ell}(z')$. 
Using \eqref{5115} with $N= 0 $ and $ N = 2 $,  according if $ | z - z' | 2^\ell < 1 $ or  
$ | z - z' | 2^\ell \geq 1 $,
we may thus bound 
the $ L^2(dx) $ norm of the action of $\Delta_\ell$ on \eqref{5114}, obtaining 
\be\label{eq:f1}
\Big\| \Delta_\ell \int_{-1}^0\opbw( a(U;z,z',\cdot))V(z',\cdot)\,dz' \Big\|_{L^2(\Tu)} 
\leq C 2^{\ell (m-s-\frac{1}{2})}\tilde{c}'_\ell(z)
\ee
with
\[
\tilde{c}'_\ell(z) = \int_{-1}^0(1+2^\ell\abs{z-z'})^{-2} c'_\ell(z')\,dz' \, . 
\] 
By the Cauchy-Schwartz inequality 
\begin{multline*}
|\tilde{c}'_\ell(z)|^2 
\leq \Bigl(\int_{-1}^0 \frac{dz'}{(1+2^\ell\vert z-z'\vert)^2}\Bigr)  \Bigl(\int_{-1}^0 \frac{\vert c_\ell' (z')\vert^2}{(1+2^\ell\vert
  z-z'\vert)^2}dz'\Bigr) \\
\leq C 2^{-\ell}\int_{-1}^0 \frac{\vert c_\ell' (z')\vert^2}{(1+2^\ell\vert
  z-z'\vert)^2}dz'
\end{multline*}
and 
we deduce that 
\begin{multline}\label{eq:f2}
\sup_{z\in [-1,0]}
\sum_\ell 2^\ell \abs{\tilde{c}'_\ell(z)}^2 \,  + \,   
\int_{-1}^0 \sum_\ell 2^{2\ell} \abs{\tilde{c}'_\ell(z)}^2\,dz \\ 
\leq C  \sum_\ell \norm{c'_{\ell}( z' )}_{L^2( d z' )}^2 \leq C'  \| V \|_{E^s_0}^2 
\end{multline}
where the last inequality is given by \eqref{c1z}.
By \eqref{f1} and \eqref{f2} we deduce that \eqref{5113} (for $ k = 0 $) satisfies estimates \eqref{5112} with 
$ j' = 0 $, the Sobolev index $ s $ replaced by $ s + 1 - m $, 
 and  sequences $ c_\ell (z) = \tilde{c}'_\ell(z ) 2^{\ell/2} $,  $ c_\ell^2 = 2^{2\ell} \int_{-1}^0 |\tilde{c}'_\ell(z)|^2 dz  $, 
and hence
$$
 \int_{-1}^0\opbw( a(U;z,z',\cdot))V(z',\cdot)\,dz'  \in E^{s+1-m}_0 \, ,
$$ 
and its  $E^{s+1-m}_0$-norm is bounded by $ \| V \|_{E^{s}_0} $. 

We have next to study the
$z$-derivatives of \eqref{5114}. Notice that the first derivative is given by \eqref{5114} with $a_\pm$ replaced by
$\partial_z a_\pm $, and this term may be treated as above, plus the contribution 
\be\label{eq:plusder}
\opbw(a_+(U;z,z,\cdot))V(z,\cdot) - \opbw(a_-(U;z,z,\cdot))V(z,\cdot) \, .
\ee
Since the symbols $a_\pm(U;z,z,\cdot)$ are in $\sGa{m}{K,0,p}{N}$, uniformly in $z$, and 
$ V $ is in $ L^\infty([-1,0],\Hds{s}) \cap L^2([-1,0],\Hds{s+\frac{1}{2}})$, 
Proposition~\ref{215} implies that \eqref{plusder}
belongs to $L^\infty([-1,0],\Hds{s-m}) \cap L^2([-1,0],\Hds{s+\frac{1}{2}-m})$ with a  norm bounded by $ C \| V \|_{E^s_0}$. 
Higher order $\partial_z$ derivatives are
treated in the same way.
\end{proof}

Arguing in a similar way to Lemma \ref{516}  we have also the following 
lemma concerning 
the action of a para-Poisson operator from the boundary to the interior with values in  the scale $E^{s}_j $.
\begin{lemma}
  \label{para-BI}  
{\bf (Action of a para-Poisson operator from the boundary to the interior)} 
Let $p,  K $ be in $\N$, $r>0$, $N\geq p$ and let $ a $ be a Poisson symbol  of  $\sP{m,\pm}{K,0,p}{N}$.
There is $\sigma_0>0$ such that if $U$
is in $\Br{K}{0}$, for any $s$ in $\R$, any $j$ in $\N$, any $k\leq K$,  the para-Poisson operator
$$
  V\to \opbw(\partial_t^ka(U; z, \cdot)) V 
$$
is bounded from $ \Hds{s}(\Tu,\C)$  to $ E^{s-m}_j$, uniformly in $t\in I$ and for any $j$, the bound depending only 
on $\nnorm{U(t,\cdot)}_{K,\sigma_0}$.
\end{lemma}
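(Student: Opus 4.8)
\textbf{Plan of proof for Lemma~\ref{para-BI}.}
The statement is the boundary-to-interior analogue of Lemma~\ref{516}, and the plan is to mimic that proof, replacing the double integral $\int_{-1}^0\opbw(a(U;z,z',\cdot))V(z',\cdot)\,dz'$ by the single term $\opbw(a(U;z,\cdot))V$ acting on a function $V$ defined on $\Tu$ only. First I would reduce, as usual, to $k=0$ and suppress the time dependence, since the time derivatives of $a$ stay in the same Poisson class (by the remark after Definition~\ref{513} and the fact that $a\in\sP{m,\pm}{K,0,p}{N}$ means $\partial_t^k a\in\sP{m,\pm}{K,0,p}{N}$, the dependence on $t$ being through $U$). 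I would then treat the two cases $+$ and $-$ separately; say $a\in\sP{m,+}{K,0,p}{N}$, the other being identical with $z$ replaced by $1+z$.

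The heart of the argument is the decay estimate in $z$. By Definition~\ref{513}, for every $\ell\in\N$ the symbol $z^\ell\partial_z^j a(U;z,\cdot)$ lies in $\sGa{m+j-\ell}{K,0,p}{N}$ uniformly in $z$, so Proposition~\ref{215} gives, after inserting a Littlewood--Paley cut-off $\Delta_\ell=\varphi(2^{-\ell}D_x)$ and using that $\opbw(a)$ does not enlarge spectral supports much (so $\Delta_\ell\opbw(a) = \Delta_\ell\opbw(a)\tilde{\Delta}_\ell$ for a fattened cut-off $\tilde{\Delta}_\ell$), an estimate of the form
\[
|z|^N\,\norm{\Delta_\ell\opbw(a(U;z,\cdot))V}_{L^2(\Tu)} \leq C\,2^{\ell(m-N)}\norm{\tilde{\Delta}_\ell V}_{L^2(\Tu)}
\]
with $C$ depending only on $\nnorm{U(t,\cdot)}_{K,\sigma_0}$, for a suitable large $\sigma_0$. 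Writing $\norm{\tilde{\Delta}_\ell V}_{L^2(\Tu)}\leq c_\ell 2^{-\ell s}$ for an $\ell^2$ sequence $(c_\ell)$ controlled by $\norm{V}_{\Hds{s}}$, and using the estimate with $N=0$ when $|z|2^\ell<1$ and with $N=2$ when $|z|2^\ell\geq 1$, I would obtain
\[
\norm{\Delta_\ell\opbw(a(U;z,\cdot))V}_{L^2(\Tu)} \leq C\,2^{\ell(m-s)}(1+2^\ell|z|)^{-2}c_\ell .
\]
From this one reads off the two bounds of the Littlewood--Paley characterisation \eqref{5112}--\eqref{5112bis} of $E^{s-m}_j$ in the case $j'=0$: taking the supremum in $z$ gives the $F^{s-m}_0$-type bound with coefficients $2^{\ell(m-s)}\sup_z(1+2^\ell|z|)^{-2}c_\ell\leq 2^{\ell(m-s)}c_\ell$, while $\int_{-1}^0(1+2^\ell|z|)^{-4}\,dz\leq C2^{-\ell}$ gives $\norm{\Delta_\ell\opbw(a(U;z,\cdot))V}_{L^2([-1,0]\times\Tu)}\leq C2^{\ell(m-s-\frac12)}c_\ell$, which is exactly the second line of \eqref{5112} with Sobolev index $s-m$.

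To handle the $z$-derivatives entering the definition of $E^{s-m}_j$, I would note that $\partial_z^{j}\opbw(a(U;z,\cdot))V = \opbw(\partial_z^j a(U;z,\cdot))V$ and that $\partial_z^j a$ is again a Poisson symbol of the same type but of order $m+j$ (remark after Definition~\ref{513}); running the same argument with the gain $z^\ell\partial_z^j a\in\sGa{m+j-\ell}{K,0,p}{N}$ produces the bounds \eqref{5112} for all $0\leq j'\leq j$ with the correct shift in the Sobolev exponent. Summing over the (finitely many) homogeneous terms $a_q\in\Pt{m,+}{q}$, $q=p,\dots,N-1$, evaluated at $(U,\dots,U)$, together with the non-homogeneous term $a_N\in\Prr{m,+}{K,0,N}$ — for which one uses \eqref{516} in the non-homogeneous case, i.e.\ $\Gcals{\sigma,j'}{k,p-1}{U}\Gcals{\sigma,j'}{k,1}{U}$-type bounds — and invoking the triangle inequality in $E^{s-m}_j$, yields the claimed boundedness $\opbw(a(U;z,\cdot)):\Hds{s}(\Tu,\C)\to E^{s-m}_j$ with constant depending only on $\nnorm{U(t,\cdot)}_{K,\sigma_0}$. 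I expect the only mildly delicate point to be the careful bookkeeping of which cut-off $\tilde{\Delta}_\ell$ to use so that $\Delta_\ell\opbw(a)=\Delta_\ell\opbw(a)\tilde{\Delta}_\ell$ exactly (this is where the ``does not enlarge spectral support'' property \eqref{2122} and the first remark after Proposition~\ref{215} are used), but this is entirely routine and identical to what is done in the proof of Lemma~\ref{516}; no new obstacle arises.
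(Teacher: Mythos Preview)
Your proposal is correct and follows exactly the approach the paper intends: the paper's proof consists only of the sentence ``Arguing in a similar way to Lemma~\ref{516}'', and your sketch is precisely that adaptation, using the decay $z^\ell a\in\sGa{m-\ell}{K,0,p}{N}$ together with Littlewood--Paley localisation to get the $(1+2^\ell|z|)^{-2}$ weight, then reading off the two estimates \eqref{5112} for $E^{s-m}_j$. The only cosmetic issue is a notational overload (you use $\ell$ for both the Littlewood--Paley index and the power of $z$, and $N$ for both the power of $z$ and the homogeneity cut-off in the symbol class), but the mathematics is sound.
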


We shall define next the natural classes of smoothing operators 
that will give remainders in the symbolic calculus associated
to Poisson symbols of $\sP{m,\pm}{K,0,p}{N}$ and $\sPi{m}{K,0,p}{N} $. 
Let us introduce a variant of notation \eqref{513}, namely define
\begin{equation}
  \label{eq:5115a}
  \index{Gn@$\Gcalst{\sigma,j}{K,1}{U}$ (Norm on a strip)} \Gcalst{\sigma,j}{K,1}{U}  = \nnorm{U (t, \cdot)}^E_{K,\sigma,j} = \sum_{k=0}^K\norm{\partial^k_t U(t,\cdot)}_{E^{\sigma-\frac{3}{2}k}_j} \, .
\end{equation} 

\begin{definition}
  \label{514} {\bf (Smoothing operators from the boundary to  a strip and on a strip)}
Let $\rho$ be in $\R_+$, $p, K$ in $\N$, $p\leq N$.

(i)  We denote by \index{Rg@$\Rt{-\rho,\pm}{p}$ (Space of homogeneous smoothing operators from the boundary to  a strip)}
$\Rt{-\rho,+}{p}$ (resp. $\Rt{-\rho,-}{p}$) the space of $(p+1)$-linear operators defined on 
$\Hds{\infty}(\Tu,\C^2)^p\times\Hds{\infty}(\Tu,\C)$, depending on $ z \in [-1,0] $, 
\[
(U_1,\dots,U_{p+1})\to R(U_1,\dots,U_p;z)U_{p+1}
\]
symmetric in $(U_1,\dots,U_p)$, such that for any $j, \ell$ in $\N$ with 
$j-\ell\leq\rho$, $(z^\ell\partial_z^j R(U_1,\dots,U_p;z))_{z\in [-1,0]}$ 
(resp.\ $((1+z)^\ell\partial_z^j R(U_1,\dots,U_p;z))_{z\in [-1,0]}$) is a bounded family
of smoothing operators of $\Rt{-\rho+j-\ell}{p}$ defined in Definition~\ref{214}, in the sense that \eqref{2115} holds with $\rho$
replaced by $\rho+\ell-j$ and some $\mu$ independent of $\ell$, uniformly in $  z $.

We denote by  \index{Rh@$\Rti{-\rho}{p}$ (Space of homogeneous smoothing operators on a strip)}  
$\Rti{-\rho}{p}$
  the space of $(p+1)$-linear operators, defined 
  for $ (U_1, \ldots, U_p, V ) $ in $\Hds{\infty}(\Tu,\C^2)^p\times  E^\infty_\infty $,  
  of the form 
\be \label{eq:homo-RP}
V \to    \int_{-1}^0 R(U_1,\dots,U_p;z,z') V(z',\cdot)\,dz'
\ee 
symmetric in $(U_1,\dots,U_p) $, whose integral kernel 
depend on $z, z'\in [-1,0]$, and  may be written as
\[
R(U_1,\dots,U_p;z,z') = \sum_{+,-}R_\pm(U_1,\dots,U_p;z,z')\1_{\pm(z-z')>0}
\]
where, for any $j, j',\ell$ in $\N$ with $j+j'-\ell\leq \rho-1$, 
\begin{equation}
  \label{eq:5110b}
  \big( (z-z')^\ell\partial_z^j\partial_{z'}^{j'} R_\pm(U_1,\dots,U_p;z,z') \big)_{z,z'\in [-1,0]}
\end{equation}
is a bounded family of smoothing operators of , in the  sense 
that \eqref{2115} holds with $\rho$ 
replaced by $ -\rho+j+j'-\ell+1 $ and some $\mu$ independent of $\ell$, uniformly in $  z, z' $.

(ii) Let $r>0$. We denote by \index{Ri@$\Rr{-\rho,\pm}{K,0,N}$ (Space of non-homogeneous smoothing operators   from the 
boundary to a strip)}$\Rr{-\rho,\pm}{K,0,N}$ (resp.\  \index{Rj@$\Rri{-\rho}{K,0,N}$ (Space of smoothing operators on a strip)}    $\Rri{-\rho}{K,0,N}$)
the space of maps $(U,V)\to R(U)V$ that are defined on $\Br{K}{}\times\CKH{\sigma}{\C}$ (resp.\ on
$\Br{K}{}\times C_*^K(I,E^\sigma_j)$) for some $\sigma>0$ (resp.\ some $\sigma\geq j\geq 0$), with values in
$C_*^K(I,E^{\sigma+\rho}_j)$, that are linear in $V$, and such
that for any $s>\sigma$, there are $C>0$, $r(s)\in ]0,r[$ and for any $U\in \Brs{K}{}{s}\cap
\CKH{s}{\C^2}$, any $ V $ in $\CKH{s}{\C}$, any $0\leq k\leq K$, any $j$ in $\N$, any
$ t $ in $ I $, one has a bound
\begin{multline}
  \label{eq:5115b}
\norm{\partial_t^kR(U)V(t,\cdot)}_{E^{s+\rho-\frac{3}{2}k}_j} \leq
C\bigl(\sum_{k'+k''=k}\Gcals{\sigma}{k',N}{U}\Gcals{s}{k'',1}{V}\\
+ \Gcals{\sigma}{k',N-1}{U}\Gcals{\sigma}{k'',1}{V}\Gcals{s}{k',1}{U}\bigr)
\end{multline}
(resp.\ for any $0\leq k\leq K$, any $0\leq j'\leq j$, any $t\in I$, one has a bound for any $V$ in $C_*^K(I,E^s_j)$ 
\begin{multline}
  \label{eq:5115c}
\norm{\partial_t^kR(U)V(t,\cdot)}_{E^{s+\rho-\frac{3}{2}k}_{j'}} \leq
C\bigl(\sum_{k'+k''=k}\Gcals{\sigma}{k',N}{U}\Gcalst{s,j'}{k'',1}{V}\\
+ \Gcals{\sigma}{k',N-1}{U}\Gcalst{\sigma,j'}{k'',1}{V}\Gcals{s}{k',1}{U}\bigr)
\end{multline}
where we have used notation \eqref{5115a}). 

Moreover, the operator $ R(U) $ is autonomous in the sense that 
the time dependence enters only through $ U = U(t) $.

(iii) One denotes by \index{Rk@$\sR{-\rho,\pm}{K,0,p}{N}$ (Space of smoothing operators   from the boundary to a strip)}
$\sR{-\rho,\pm}{K,0,p}{N}$ (resp.\ \index{Rl@$\sRi{-\rho}{K,0,p}{N}$ (Space of smoothing operators on a strip)}
$\sRi{-\rho}{K,0,p}{N}$) the space of sums of operators
$$
V\to \sum_{q=p}^{N-1} R_q(U,\dots,U;z)V + R_N(U)V
$$ 
\big(resp.
\be\label{eq:Resto-int}
V\to \sum_{q=p}^{N-1} \int_{-1}^0R_q(U,\dots,U;z,z')V(z',\cdot)\,dz' + R_N(U)V \big)
\ee
with $ R_q $ in $ \Rt{-\rho,\pm}{q} $
(resp.\ $\Rti{-\rho}{q}$) $q = p,\dots,N-1$ and $R_N$ in $\Rr{-\rho,\pm}{K,0,N}$ (resp.\ $\Rri{-\rho}{K,0,N}$).
\end{definition} 
 
\textbf{Remarks}:
\noindent $\bullet$  If $ R $ is in $\sR{-\rho,\pm}{K,0,p}{N}$, then $ R_{\vert z = 0 } $ is in $\sRa{-\rho}{K,0,p}{N}$, according to 
 Definition \ref{214}.

 $\bullet$  If $ R $ is in $\sR{-\rho,\pm}{K,0,p}{N}$, then $ \pa_z R  $ is in $\sR{-\rho +1, \pm }{K,0,p}{N}$.

 $\bullet$ Consider a  homogeneous smoothing operator in $ \Rti{-\rho}{p} $ 
 with kernel $ R $, as in \eqref{homo-RP}. 
 Then the operator with kernel $ \pa_z R $ is in $  \Rti{-\rho+1}{p} $ and, 
for any $ \theta \in [0,1]$, the operator with kernel 
 $ | z - z' |^{\theta} R $ is in  $ \Rti{-\rho -  \theta }{p} $.

 $\bullet$ In the sequel we shall  identify a
 non-homogeneous $R(U)V$ in $\Rr{-\rho,\pm}{K,0,N}$ to the integral expression given in terms of its Schwartz kernel $R(U;z,z')$ i.e.\ write
 the action of this operator on $V$ as
 $$ 
R(U)V =  \int_{-1}^0  R_N (U; z, z') V(z', \cdot) d z'.
 $$
Notice that we characterize  
the homogeneous operators of $ \Rti{-\rho  }{p} $ by properties of the kernel
$ R(U_1, \ldots, U_p; z, z') $, and  
the non-homogeneous ones  in $\Rr{-\rho,\pm}{K,0,N}$ by their action 
between the spaces $ E^{s}_j $. 
 
 $\bullet$ If the homogeneous smoothing  operator $\tilde{R}$ is in $\Rt{-\rho,\pm}{p}$ (resp.\ in $\Rti{-\rho}{p}$) 
 then $ R(U)V = \tilde{R}(U,\dots,U;z)V$ (resp. 
\begin{equation}
  \label{eq:5115d}
R(U)V = \int_{-1}^0 \tilde{R}(U,\dots,U;z,z')V(z')\,dz' \, )
\end{equation}
defines a smoothing operator of 
$\Rr{-\rho,\pm}{K,0,p}$ (resp.\ $\Rri{-\rho}{K,0,p}$) for any $r>0$. Let us prove this claim 
in the case of interior operators. 
Let us show that the estimate \eqref{5115c} holds for the operator \eqref{5115d} when for instance $k=0,
j'=1$.  
We have to bound
$$
\| R(U) V  \|_{E^{s+\rho}_0} +   \| (\pa_z R(U))  V  \|_{E^{s+\rho-1}_0} +  \| R(U) [ \pa_z V ] \|_{E^{s+\rho-1}_0} 
$$
by the right hand side of  \eqref{5115c} with $ k=0, j'=1$ (and $ N $ replaced by $ p $). Below we prove in detail the estimate for 
$ \| (\pa_z R(U))  V  \|_{E^{s+\rho-1}_0} $.  
According to (i) of Definition~\ref{514} 
the operator  $\tilde{R}$ may be written  as
\[
\tilde{R}_+(U,\dots,U;z,z')\1_{z-z'>0} + \tilde{R}_-(U,\dots,U;z,z')\1_{z-z'<0} 
\]
so that 
\begin{multline}
  \label{eq:5115e}
\partial_z\tilde{R}(U,\dots,U;z,z') = \bigl(\tilde{R}_+(U,\dots,U;z,z) - \tilde{R}_-(U,\dots,U;z,z)\bigr)\delta(z-z')\\
+ \partial_z\tilde{R}_+(U,\dots,U;z,z')\1_{z-z'>0} +  \partial_z\tilde{R}_-(U,\dots,U;z,z')\1_{z-z'<0} \, . 
\end{multline}
Recalling  lemma \ref{CarSpacE},  we have to bound by the right hand side of 
\eqref{5115c} 
the product of $2^{\ell(s+\rho-1)}$ (resp.\ $2^{\ell(s+\rho-\frac{1}{2})}$) times the
$L^\infty_z(\ell^2_\ell L^2(\Tu))$ (resp.\ $L^2_z(\ell^2_\ell L^2(\Tu))$) norm of each of the expressions
\begin{equation}
  \label{eq:5115f}\begin{split}
  \Delta_\ell R^0(U,\dots,U;z)V(z,\cdot)\\
 \Delta_\ell \int_{-1}^0R^1(U,\dots,U;z,z')V(z',\cdot)\,dz' 
\end{split}\end{equation}
where $R^0 = (\tilde{R}_+-\tilde{R}_-)\vert_{z=z'}$ and $ R^1 $ denotes the operator in 
second line in \eqref{5115e}. 
According to Definition~\ref{514}  
the family of operators $ R^0  $  satisfies \eqref{2115} with $\rho$ replaced by
$\rho-1$, uniformly in $z$, and \eqref{2116}, and $(z-z')^qR^1$ satisfies \eqref{2115} with $\rho$ replaced by
$\rho+q-2$, uniformly in $(z,z')$ and \eqref{2116}. We estimate the $L^2 (\Tu) $ norm of the second expression in \eqref{5115f} by
\begin{equation}
  \label{eq:5115g}
  \sum_{n_0,\dots,n_{p+1}}\int_{-1}^0\norm{\Delta_\ell\Pin{0} R^1(\Pin{1}U,\dots,\Pin{p}U;z,z')\Pin{p+1}V(z',\cdot)}_{L^2}\,dz'.
\end{equation}
Since $ R^1 $ is symmetric in its $ p $-arguments 
we may limit the above sum to indices satisfying $n_1\leq\cdots\leq n_p$. Moreover, recalling 
\eqref{Delta-ell}, the index $ n_0 $ is of magnitude $ 2^\ell $, 
and by condition \eqref{2116} there is  $ c >  0 $ such that 
\be\label{eq:massimo}
\max ( n_p, n_{p+1}) \geq c 2^\ell \, . 
\ee
Consider first the case $n_{p+1}\geq n_p$. 
The fact that $ V $ is in $E_0^s$ implies
that
\begin{equation}  \label{eq:5115h}
  \norm{\Pin{p+1}V(z',\cdot)}_{L^2} \leq c_{n_{p+1}}(z') n_{p+1}^{-s} \Gcalstm{s,0}{0,1}{V}
\end{equation}
for some sequence $( c_{n_{p+1}}(z'))_{n_{p+1}}$ satisfying
\begin{equation}   \label{eq:5115i}
  \sup_{z'\in [-1,0]}\sum_{n_{p+1}}\abs{c_{n_{p+1}}(z')}^2 < +\infty \, , \ \ 
   \sum_{n_{p+1}} n_{p+1}\int_{-1}^0 \abs{c_{n_{p+1}}(z')}^2\,dz'<+\infty \, .
\end{equation}
Moreover $\norm{\Pin{j}U}_{L^2}\leq c_{n_j} n_j^{-\sigma}\Gcalsm{\sigma}{0,1}{U}$ 
for an $\ell^2$ sequence $ (c_{n_j})_{n_j}$. 
Consequently, applying \eqref{2115} to $R^1$ and to
$(z-z')^2R^1$ (with a gain of two units on $\rho$ in that case),
 and taking into account that $ 2^\ell \sim n_0 $ and $ n_0 \leq C n_{p+1} $, we get that 
\eqref{5115g} is bounded  by $ C \Gcalsm{\sigma}{0,p}{U}\Gcalstm{s,0}{0,1}{V}$ times
\begin{multline} \label{eq:5115j} 
 \sum_{n_0}\cdots\sum_{n_{p+1}}\int_{-1}^0 (1+2^\ell\abs{z-z'})^{-2} \prod_{j=1}^p n_j^{-\sigma} c_{n_j}
\,  n_{p+1}^{-s} c_{n_{p+1}}(z')\;dz'\\
\times\Bigl(\frac{n_p}{n_{p+1}}\Bigr)^{\rho-2} n_p^\mu \, \1_{\abs{n_0-n_{p+1}}\leq Cn_p, n_0\sim 2^\ell}
 \1_{n_1\leq\cdots\leq n_p\leq n_{p+1}}
\end{multline}
for some $ \mu $ independent of $ s, \sigma, \rho $, and noticing that the support condition 
$ \abs{n_0-n_{p+1}}\leq Cn_p $ comes from  \eqref{2116}. 
For  $ \sigma \geq \rho + \mu + 1 $, summing 
in $ n_1, \ldots , n_{p-1} $  and using \eqref{massimo}, 
we bound \eqref{5115j} by
\begin{multline} \label{eq:5115k}
C 2^{-\ell(s+\rho-2)}\sum_{n_0,n_p,n_{p+1}} n_p^{-3}\1_{\abs{n_0-n_{p+1}}\leq Cn_p, n_0\sim 2^\ell}\\
\times \int_{-1}^0(1+ 2^\ell\abs{z-z'})^{-2} c_{n_{p+1}}(z')\,dz' \, .
\end{multline}
The $L^2(dz)$ norm of \eqref{5115k} is bounded,
using the Cauchy-Scwhartz inequality and performing a change of variable in the integral,  by 
\begin{equation}\label{eq:5115l}
C 2^{-\ell(s+\rho-1)}\sum_{n_0,n_p,n_{p+1}} 
 n_p^{-3} \1_{\abs{n_0-n_{p+1}}\leq Cn_p, n_0\sim 2^\ell}\norm{c_{n_{p+1}}(z')}_{L^2(dz')} \, .
\end{equation}
We may write 
$\norm{c_{n_{p+1}}(z')}_{L^2(dz')} = 2^{-\ell/2}\delta_{n_{p+1}} $ where, 
by \eqref{5115i} and the fact that $ n_{p+1}\geq c2^\ell $ (see \eqref{massimo}) 
the sequence $(\delta_{n_{p+1}})_{n_{p+1}}$ is in $\ell^2 (\N) $.  
Then we write \eqref{5115l} as  $ C 2^{-\ell(s+\rho-\frac{1}{2})}  d_\ell$ where
$$
d_\ell =  \sum_{n_0,n_p,n_{p+1}} 
 n_p^{-3} \1_{\abs{n_0-n_{p+1}}\leq Cn_p, n_0\sim 2^\ell} \delta_{n_{p+1}} \, . 
$$
Since $(d_\ell)_\ell$ is an  $\ell^2$ sequence, we get a contribution to the
second expression \eqref{5115f} satisfying the second estimate \eqref{5112} of a function of $E_0^{s+\rho-1}$. The first
estimate is obtained taking the $L^\infty(dz)$ instead of $L^2(dz)$ norms of 
\eqref{5115k}, which leads to \eqref{5115l}
multiplied by $2^{\ell/2}$.

Consider now \eqref{5115g} 
in the case $n_{p+1}\leq n_p$. In this case, we use instead of
\eqref{5115h}
\[\norm{\Pin{p+1} V(z',\cdot)}_{L^2} \leq c_{n_{p+1}}(z') n_{p+1}^{-\sigma}\Gcalstm{\sigma,0}{0,1}{V},\]
the estimate $\norm{\Pin{p}U}_{L^2}\leq c_{n_p}n_p^{-s}\Gcalsm{s}{0,1}{U}$, that 
$n_p\geq c2^\ell$  (see \eqref{massimo}) and $\abs{n_0-n_{p}}\leq
C\max(n_{p+1},n_{p-1})$. We get then for \eqref{5115g} an estimate by the product 
of $\Gcalsm{\sigma}{0,p-1}{U}\Gcalstm{\sigma,0}{0,1}{V}\Gcalsm{s}{0,1}{U}$ and of an expression similar to \eqref{5115j}. By the
same computations as above, we obtain that the estimates \eqref{5112} of an element of $E^{s+\rho-1}_0$ are satisfied. The
second expression \eqref{5115f} is thus in $E^{s+\rho-1}_0$.

The first expression \eqref{5115f} is bounded by \eqref{5115j} where we remove the integral, take $z=z'$ and replace $\rho$
by $\rho+1$. Similar computations as above 
show that $ \| R^0(U,\dots,U;z)V(z,\cdot) \|_{E^{s+\rho-1}_0} $, 
as well as $ \| R(U)[\pa_z V] \|_{E^{s+\rho-1}_0} $, are bounded
by the right hand side of  \eqref{5115c} with $ k=0, j'=1$ and $ N = p $.

If one takes further $\partial_z$ derivatives of \eqref{5115e}, one gets similarly that $\int_{-1}^0\partial_z^j
\tilde{R}(U,\dots,U;z,z')V(z')\,dz'$ is in $E_0^{s+\rho-j}$ if $V$ is in $E_j^s$. This proves the remark for interior
operators. The corresponding statement for the classes $\Rr{-\rho,\pm}{K,0,p}$ is proved in a similar and easier way.

\smallskip

$\bullet$ 
By the third remark after Proposition~\ref{215},  if $a$ is in $\sG{-\rho}{K,0,p}{N}$,
then $\opbw(a)$ defines an element of $\sR{-\rho}{K,0,p}{N}$. As a consequence, 
if $ a $ is a Poisson symbol in $ \sPi{-\rho}{K,0,p}{N} $ as in  \eqref{5110a-bis}, then $ \opbw(a) $  
is the kernel of the smoothing operator in $ \sRi{-\rho}{K,0,p}{N}$, 
$$
V \to \int^0_{-1} \opbw (a (U; z, z') )V(z', \cdot ) d z' \, . 
$$
Identifying an operator with the  kernel, 
we shall also simply write
 that $ \opbw(a) $  is in $ \sRi{-\rho}{K,0,p}{N} $.
Similarly  if $ a $ is in $\sP{-\rho,\pm}{K,0,p}{N}$ then $\opbw(a)$ is a smoothing operator in 
$\sR{-\rho,\pm}{K,0,p}{N}$.

\medskip

Let us study now composition of  operators associated to the classes of Poisson symbols of Definition~\ref{513}.
\begin{proposition}
  \label{517} {\bf (Composition of para-Poisson operators)}
Let $m, m'$ be in $\R$, $p, p',  K, N$ in $\N$, with $N\geq p+p'$, $\rho\geq0$.

(i) Let $a(U;\cdot)$ be a symbol in $\sGa{m}{K,0,p}{N}$ and $e(U;z,x,\xi)$ be a Poisson symbol 
in $\sP{m',\pm}{K,0,p'}{N}$. Set
\be\label{eq:tilde-e} 
\tilde{e}(U;z,x,\xi) =   (a\#e)_{\rho,N}   \qquad (resp.\ (e\#a)_{\rho,N} ) \, ,  
\ee
defined in \eqref{231} where $ z \in [-1,0] $ is considered as a parameter. Then 
$\tilde{e}$ is a Poisson symbol  in $\sP{m+m',\pm}{K,0,p+p'}{N}$, and  
\[
\opbw(a(U;\cdot))\circ\opbw(e(U;z,\cdot)),\ \opbw(e(U;z,\cdot))\circ\opbw(a(U;\cdot))
\]
may be written as 
$$
\opbw(\tilde{e}(U;z,\cdot))+R(U;z)
$$ 
where  $R(U;\cdot)$ is a smoothing remainder in $\sR{-\rho+m+m',\pm}{K,0,p+p'}{N}$. 

(ii) Let $a$ be  a symbol of $\sGa{m}{K,0,p}{N}$  and let $c(U;z,z',\cdot)$ be a Poisson symbol in $\sPi{m'}{K,0,p'}{N}$. Set
$$
\tilde{c}(U;z,z',\cdot) = (a\#c)_{\rho,N} \qquad (resp.\ (c\#a)_{\rho,N}) \, , 
$$ 
where $(z,z')$ is considered as a parameter. Then $ \tilde{c} $ is a Poisson symbol in $ \sPi{m+m'}{K,0,p+p'}{N} $
and the operator $ R(U)$ defined by 
\begin{multline*}
  R(U)V = \int_{-1}^0\opbw(a(U;\cdot))\circ \opbw(c(U;z,z',\cdot))V(z')\,dz'\\
-\int_{-1}^0\opbw(\tilde{c}(U;z,z',\cdot))V(z')\,dz',
\end{multline*}
(resp. by
\begin{multline*}
  R(U)V = \int_{-1}^0\opbw(c(U;z,z',\cdot))\circ \opbw(a(U;\cdot)) V(z')\,dz'\\
-\int_{-1}^0\opbw(\tilde{c}(U;z,z',\cdot))V(z')\,dz' \big)
\end{multline*}
belongs to 
$\sRi{-\rho+m+m'-1}{K,0,p+p'}{N}$. Identifying $ R $ with 
its kernel we simply write that $ R =
\opbw(a(U;\cdot))\circ \opbw(c(U;z,z',\cdot)) - \opbw(\tilde{c}(U;z,z',\cdot))  $ is in $\sRi{-\rho+m+m'-1}{K,0,p+p'}{N}$.

(iii) Let $c_j(U;z,z',\cdot)$ be in $\sPi{m_j}{K,0,p_j}{N}$ for $j = 1, 2$, with $ m_j \in \R $, $p_j$ in $\N$ with 
$p_1+p_2\leq N$. Then 
\begin{equation}
  \label{eq:5116}
  \tilde{c}(U;z,z',\cdot) \stackrel{\textrm{def}}{=}  \int_{-1}^0\bigl(c_1(U;z,z'',\cdot)\# c_2(U;z'',z',\cdot)\bigr)_{\rho,N}\,dz'' 
\end{equation}
is a symbol  in $\sPi{m_1+m_2-1}{K,0,p_1+p_2}{N}$ and the operator $R(U)$ defined 
by
 \begin{multline}
   \label{eq:5117}
R(U)V = \int_{-1}^0\int_{-1}^0\opbw(c_1(U;z,z'',\cdot))\circ\opbw(c_2(U;z'',z',\cdot)) V(z')\,dz''dz'\\
-\int_{-1}^0\opbw(\tilde{c}(U;z,z',\cdot))V(z')\,dz'
 \end{multline}
is in $\sRi{-\rho+m_1+m_2-1}{K,0,p_1+p_2}{N}$.

(iv) Let $ c $ be in $\sPi{m}{K,0,p}{N}$ and $ e $ be in $\sP{m',\pm}{K,0,p'}{N}$. Define
\be\label{eq:tilde-c}
\tilde{c}(U;z,\cdot) = \int_{-1}^0\bigl(c(U;z,z',\cdot)\# e(U;z',\cdot)\bigr)_{\rho,N}\,dz' \, .
\ee
Then $\tilde{c}$ is in $ \sP{m+m'-1,\pm}{K,0,p+p'}{N} $ and the operator $R(U)$ defined by
\begin{multline*}
  R(U) = \int_{-1}^0\opbw(c(U;z,z',\cdot))\circ\opbw(e(U;z',\cdot))\,dz'\\
- \opbw(\tilde{c}(U;z,\cdot))
\end{multline*}
is in $\sR{m+m'-1-\rho,\pm}{K,0,p+p'}{N}$.
\end{proposition}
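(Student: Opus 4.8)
The plan is to treat all four parts by the same device: a Poisson symbol is, by Definition~\ref{513}, a family of symbols such that suitable \emph{weighted} $z$- (resp.\ $z,z'$-) derivatives land in the ordinary symbol classes $\Gra{m}{K,0,p}{N}$ of Chapter~\ref{cha:2}, so the statements about Poisson classes reduce to the composition results already proved there, with $z$ (resp.\ $z,z'$, resp.\ $z,z',z''$) carried along as mute parameters. Throughout, the frequency-localization conditions are automatically preserved, since they are preserved by $\#$ and by $\opbw$ (Propositions~\ref{222} and~\ref{231}), so I shall only comment on the symbol orders and the smoothing indices. I would start with~(i), which already contains the main mechanism.

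For~(i): since $a=a(U;\cdot)$ does not depend on $z$, the operations $z^\ell$ and $\partial_z^j$ commute with $b\mapsto (a\#b)_{\rho,N}$, so $z^\ell\partial_z^j\tilde e=(a\#(z^\ell\partial_z^j e))_{\rho,N}$; by Definition~\ref{513}, $z^\ell\partial_z^j e\in\Gra{m'+j-\ell}{K,0,p'}{N}$ uniformly in $z$, whence Proposition~\ref{231} gives $z^\ell\partial_z^j\tilde e\in\Gra{m+m'+j-\ell}{K,0,p+p'}{N}$ uniformly in $z$, i.e.\ $\tilde e\in\sP{m+m',\pm}{K,0,p+p'}{N}$. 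Applying Proposition~\ref{231} again, the remainder $R(U;z)=\opbw(a)\circ\opbw(e(U;z,\cdot))-\opbw(\tilde e(U;z,\cdot))$ satisfies $z^\ell\partial_z^j R(U;z)=\opbw(a)\circ\opbw(z^\ell\partial_z^j e)-\opbw\bigl((a\#(z^\ell\partial_z^j e))_{\rho,N}\bigr)$, which is a bounded-in-$z$ family of smoothing operators in $\sR{-\rho+m+m'+j-\ell}{K,0,p+p'}{N}$; this is exactly the defining property of $\sR{-\rho+m+m',\pm}{K,0,p+p'}{N}$ in Definition~\ref{514}. Part~(ii) is the same argument with the weighted derivatives $(z-z')^\ell\partial_z^j\partial_{z'}^{j'}$ replacing $z^\ell\partial_z^j$: one gets $\tilde c\in\sPi{m+m'}{K,0,p+p'}{N}$, and each $(z-z')^\ell\partial_z^j\partial_{z'}^{j'}R(U;z,z')$ is a bounded family in $\sR{-\rho+m+m'+j+j'-\ell}{K,0,p+p'}{N}$; by the characterization of $\Rti{\cdot}{p}$ in Definition~\ref{514}, this places the operator $V\mapsto\int_{-1}^0 R(U;z,z')V(z',\cdot)\,dz'$ in $\sRi{-\rho+m+m'-1}{K,0,p+p'}{N}$, the single unit of loss being the ``$+1$'' built into that definition, which encodes the extra regularity produced by the $z'$-integration (the mechanism of Lemma~\ref{516}).

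Parts~(iii) and~(iv) additionally require integration over an interior variable of a \emph{product} of two kernel-symbols. For~(iii), write $c_j=c_{j,+}\1_{z-z''>0}+c_{j,-}\1_{z-z''<0}$ (for $j=1$, similarly for $j=2$), expand $(c_1(U;z,z'',\cdot)\#c_2(U;z'',z',\cdot))_{\rho,N}$ by Proposition~\ref{231} into a finite sum of terms involving $\partial^{j_1}_{z''}c_{1,\pm}$, $\partial^{j_2}_{z''}c_{2,\pm}$, plus a remainder of order $\leq m_1+m_2-\rho$, and then integrate $dz''$, splitting $[-1,0]$ according to the signs of $z-z''$ and $z''-z'$. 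The point is to show that, after one extracts from the integrand factors $|z-z''|^{-\theta}|z''-z'|^{-\theta}$ with $0<\theta<1$ (using the weighted-derivative bounds of Definition~\ref{513}) and uses $\int_{-1}^0|z-z''|^{-\theta}|z''-z'|^{-\theta}\,dz''<\infty$, the result is a symbol in $\Gra{m_1+m_2-1+j+j'-\ell}{K,0,p_1+p_2}{N}$ with the correct $(z-z')^\ell\partial_z^j\partial_{z'}^{j'}$-behaviour; this is precisely the computation that produces the kernel $K_0(z,z',\xi)$ in \eqref{def:CSymbols} from $\Ccal$ and $\Scal$, now carried out in the multilinear paradifferential setting. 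The remainder $R(U)$ in \eqref{5117} is then estimated by composing the $E^s_j$-boundedness of para-Poisson operators (Lemma~\ref{516}) with the symbolic-calculus remainder bounds, giving membership in $\sRi{-\rho+m_1+m_2-1}{K,0,p_1+p_2}{N}$. Part~(iv) combines the boundary-side handling of~(i) with the $z'$-integration of~(iii): in $z^\ell\partial_z^j\tilde c$ the $\partial_z$-derivatives hitting $c(U;z,z',\cdot)$ produce interior pieces together with Dirac contributions $c_\pm(U;z,z,\cdot)\delta(z-z')$ whose symbols are ordinary symbols and which get evaluated at $z'=z$ by the $z'$-integral, while the remaining $z'$-integral carries the same one-order gain; thus $\tilde c\in\sP{m+m'-1,\pm}{K,0,p+p'}{N}$ and the remainder lies in $\sR{m+m'-1-\rho,\pm}{K,0,p+p'}{N}$.

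I expect the main obstacle to be part~(iii): carrying the bookkeeping of the $(z-z')^\ell$-weighted $z,z'$-derivatives through the $z''$-integration while isolating exactly one unit of smoothing gain, and then transferring the conclusion from the level of symbols to the level of operators on the scale $E^s_j$ — in effect redoing, in the present multilinear framework, the classical estimates for Boutet de Monvel's Poisson operators that were used to construct $K_0$. The delta-function terms arising in~(iv) from differentiating the indicator functions need analogous but milder care. Everything else is a routine application of Propositions~\ref{231}, \ref{222} and~\ref{232}, together with Proposition~\ref{233} for the inner substitutions, with the strip variables treated as parameters.
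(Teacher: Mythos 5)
Your reduction of the Poisson-class statements to the composition results of Chapter~\ref{cha:2}, treating $z$ (resp.\ $z,z'$) as mute parameters and tracking the weighted derivatives, is the right strategy and matches the paper for the \emph{homogeneous} contributions: the classes $\Rt{-\rho,\pm}{p}$ and $\Rti{-\rho}{p}$ are indeed defined in Definition~\ref{514}-(i) directly as bounded-in-$z$ (resp.\ $z,z'$) families of ordinary smoothing operators, so Proposition~\ref{222} concludes there.

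The gap is in your treatment of the \emph{non-homogeneous} contributions, i.e.\ the pieces vanishing to order $N$. You assert that ``a bounded-in-$z$ family of smoothing operators \dots\ is exactly the defining property of $\sR{-\rho+m+m',\pm}{K,0,p+p'}{N}$ in Definition~\ref{514}.'' That is true for the homogeneous classes, but the non-homogeneous classes $\Rr{-\rho,\pm}{K,0,N}$ and $\Rri{-\rho}{K,0,N}$ are \emph{not} characterized by their kernels as bounded-in-$z$ families; they are defined by the $E^s_j$-mapping estimates \eqref{5115b} and \eqref{5115c}. The paper emphasizes this asymmetry explicitly in the remark following Definition~\ref{514}. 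Passing from ``$z^iR(U;z)$ is a bounded-in-$z$ family in $\Rr{-\rho+m+m'-i}{K,0,N}$ for $i=0,1$'' to those $E^s_j$-bounds is a genuine extra argument, not a definitional observation. The paper carries it out by noting that the paradifferential remainder $\opbw(a)\circ\opbw(e)-\opbw((a\#e)_{\rho,N})$ is frequency localized (property \eqref{2122}, giving $\Delta_\ell R(U;z)=\Delta_\ell R(U;z)\tilde{\Delta}_\ell$ for a slightly enlarged cutoff), which, combined with the $z$-decay $(1+2^\ell\abs{z})^{-1}$ extracted from $i=0,1$ and the Littlewood-Paley characterization of the $E^s_j$-scale (Lemma~\ref{CarSpacE}), produces the required $L^\infty_z$ and $L^2_z$ Sobolev bounds. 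Without the frequency localization — which is specific to this remainder, not to an arbitrary bounded family — the conversion fails. The same step is needed for the non-homogeneous pieces of~(ii), (iii) and (iv). (Your bookkeeping in~(iii) via $|z-z''|^{-\theta}|z''-z'|^{-\theta}$ factors is a cosmetic variant of the paper's use of the weight $(1+\abs{z-z''}\absj{\xi})^{-2}$ together with the $dz''$-integration; either packages the one-unit gain correctly, and your account of the delta contributions from differentiating the indicator functions in~(iv) is accurate.)
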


\begin{proof}
  (i) We have to consider on the one hand the case when $e$ and $a$ are replaced by multilinear symbols in
  $\Pt{m',\pm}{q'}$ and $\Gt{m}{q}$ respectively. The composition result of Proposition~\ref{222} together with
  Definitions~\ref{513} and~\ref{514} bring the conclusion. 

On the other hand, we have to check that, if $a$ is in $\Gra{m}{K,0,q}$ and $e$ is in $\Prr{m',\pm}{K,0,q'}$, 
then the  operator 
\be\label{eq:RUz}
R (U; z ) = \opbw(a(U;\cdot))\circ\opbw(e(U;z,\cdot)) - \opbw(\tilde{e}(U;z,\cdot))  \, , 
\ee
where $ \tilde{e} = (a\#e)_{\rho,N} $, 
belongs to $\Rr{m+m'-\rho,\pm}{K,0,q+q'}$. In the case of sign $+$, for instance, for any integer $ i $, 
the symbol $ z^i e $ is in $\Gra{m'-i}{K,0,q'}$ and  Proposition~\ref{222} implies that 
$$
z^i R (U; z ) = \opbw( a )\circ\opbw( z^i e ) - \opbw( (a\# z^i e)_{\rho,N})  
$$
is a bounded family in $z$ of smoothing operators of $\Rr{-\rho+m+m'-i}{K,0,q+q'}$. 
We now prove that $ R $  is also a 
smoothing operator in  $\Rr{-\rho + m+m',+}{K,0,q+q'}$ according to 
Definition \ref{514}-(ii),  
i.e.\ $ R $ satisfies bounds of the form \eqref{5115b}. We first 
notice that, since
the remainder $ R (U; z ) $  in \eqref{RUz} is the difference of paradifferential operators, then,  
by \eqref{2122},  it does not enlarge much the support of the Fourier transform of functions, 
namely there is a function $ \tilde{\varphi}$ in $C^\infty_0(\R^*)$, equal to one on a large enough 
compact ring, such that,
setting  $\tilde{\Delta}_{\ell} = \tilde{\varphi}(2^{-\ell}D)$, for $\ell>0$,  we have
\be\label{eq:Delta-diag}
\Delta_\ell   R(U;z)  = \Delta_\ell R(U;z)\tilde{\Delta}_\ell \, , 
\ee
and a similar property for the zero frequency.  Using that the operators 
$ z^i R $,  $ i = 0, 1 $,  are in $\Rr{-\rho+m+m'-i}{K,0,q+q'}$, 
it follows from \eqref{Delta-diag} and estimates \eqref{2117} 
that
\begin{multline}
\label{eq:5117a}
  \norm{\Delta_\ell R(U; z)V}_{\Hds{s+\rho-m-m'}}\\ \leq C(1+2^\ell\abs{z})^{-1} 
  \Gcals{\sigma}{0,q+q'}{U}\norm{\tilde{\Delta}_\ell
    V}_{L^2} 2^{\ell s}\\
+C(1+2^\ell\abs{z})^{-1} \Gcals{\sigma}{0,q+q'-1}{U} \Gcals{s}{0,1}{U}\norm{\tilde{\Delta}_\ell
    V}_{L^2} 2^{\ell \sigma} \, .
\end{multline}
We deduce by  
$$  
\norm{\Delta_\ell R(U; z)V}_{L^2} \leq 2^{- \ell (s+\rho-m-m')} \norm{\Delta_\ell R(U; z)V}_{\Hds{s+\rho-m-m'}} \, , 
$$
\eqref{5117a} and  lemma \ref{CarSpacE}, 
the estimate
\begin{multline*}
  \norm{R(U;z)V}_{L^\infty([-1,0], \Hds{s+\rho-m-m'})} + \norm{R(U;z)V}_{L^2([-1,0],\Hds{s+\frac{1}{2}+\rho-m-m'})}\\
\leq C\bigl[\Gcals{\sigma}{0,q+q'}{U}\Gcals{s}{0,1}{V} + 
\Gcals{\sigma}{0,q+q'-1}{U}\Gcals{\sigma}{0,1}{V}\Gcals{s}{0,1}{U}\bigr]
\end{multline*}
i.e.\ a bound of the form \eqref{5115b} when $j=0$, $k=0$ and 
$ s  $ is replaced by $ s + \rho - m - m' $ and 
$ N $  by $ q+q' $. One obtains similarly the estimates involving $\partial_z$ or
$\partial_t$ derivatives.

(ii) We decompose $a$ (resp.\ $c$) as a sum of homogeneous symbols $a_q$ of $\Gt{m}{q}$ (resp.\ $c_{q'}(z;\cdot)$ of $\Pti{m'}{q'}$) and
of a non-homogeneous symbol of $\Gra{m}{K,0,N}$ (resp.\  of $\Prri{m'}{K,0,N}$). 
By the composition Proposition~\ref{222} we get in the expression of $R(U)$ on the one hand multilinear contributions 
\be\label{eq:contr-RHO}
\sum_{\pm}\int_{-1}^0 R_{q'',\pm}(U,\dots,U;z,z',\cdot)\1_{\pm(z-z')>0}V(z',\cdot)\,dz' \, , 
\ee
where $(z-z')^\ell\partial_z^j\partial_{z'}^{j'}R_{\pm,q''}$ is in
$\Rt{-\rho + m+m'-\ell+j+j'}{q''}$ uniformly in $z, z'$. 
Thus, according to (i) of Definition~\ref{514},  we get that  \eqref{contr-RHO}
is a smoothing operator of $\Rti{-\rho + m+m'-1}{q''}$. 
In the same way, the contributions with at least one nonhomogeneous component have the form 
\be\label{eq:contr-RN}
\sum_{\pm}\int_{-1}^0 R_{N,\pm}(U;z,z',\cdot)\1_{\pm(z-z')>0}V(z',\cdot)\,dz'
\ee
where $(z-z')^\ell\partial_z^j\partial_{z'}^{j'}R_{\pm,N}$ is in $\Rr{-\rho + m+m'-\ell +j+j'}{K,0,N}$ uniformly in $z, z'$. 
In order to  prove that \eqref{contr-RN} defines an operator in $ \Rri{-\rho + m+m' - 1 }{K,0,N} $
we have to show that it 
satisfies bounds of the form \eqref{5115c} with $ \rho $ replaced by $ \rho -m -m' +1 $. 
Let us consider for instance the case $ k = 0 $, $ j' = 0 $.
Using \eqref{Delta-diag} and \eqref{2117} we get
\begin{multline*}
\int_{-1}^0\norm{\Delta_\ell R_{N,\pm}(U;z,z')V(z',\cdot)}_{\Hds{s+\rho+1-m-m'}}\1_{\pm(z-z')>0}\,dz' \\
\leq C2^\ell\int_{-1}^0\norm{\Delta_\ell R_{N,\pm}(U;z,z') \tilde{\Delta}_\ell V(z',\cdot)}_{\Hds{s+\rho-m-m'}}\1_{\pm(z-z')>0}\,dz' 
\\ \leq C \int_{-1}^0(1+2^\ell\abs{z-z'})^{-2} 
  \Gcals{\sigma}{0,N}{U}\norm{\tilde{\Delta}_\ell
    V(z',\cdot)}_{L^2}\,dz' 2^{\ell (s +1)}\\
+C \int_{-1}^0(1+2^\ell\abs{z-z'})^{-2}  \Gcals{\sigma}{0,N-1}{U} \Gcals{s}{0,1}{U}\norm{\tilde{\Delta}_\ell
    V(z',\cdot)}_{L^2}\,dz' 2^{\ell (\sigma+1)} \, .
\end{multline*}
The $dz'$ integration makes gain one power of $2^{-\ell}$, that shows, using  lemma \ref{CarSpacE}, 
that \eqref{contr-RN} satisfies  \eqref{5115c} with $\rho $ replaced by $\rho + 1 - m - m' $ (in the case $ k = 0 $, $ j' = 0 $).

(iii) We decompose  the Poisson symbols $c_j = \sum_{+,-} c_{j,\pm}\1_{\pm(z-z')>0}$ 
according to in Definition~\ref{513}. Then  \eqref{5116} may
be written as the sum of the following contributions:
\begin{multline}\label{eq:5117aa}
  \1_{z-z'>0}\biggl[\int_{z'}^z\bigl(c_{1,+}(U;z,z'',\cdot)\# c_{2,+}(U;z'',z',\cdot)\bigr)_{\rho,N}\,dz''\\
+ \int_{-1}^{z'}\bigl(c_{1,+}(U;z,z'',\cdot)\# c_{2,-}(U;z'',z',\cdot)\bigr)_{\rho,N}\,dz''\\
+ \int_{z}^{0}\bigl(c_{1,-}(U;z,z'',\cdot)\# c_{2,+}(U;z'',z',\cdot)\bigr)_{\rho,N}\,dz''\biggr]\\
+  \1_{z-z'<0}\biggl[\int_{-1}^z\bigl(c_{1,+}(U;z,z'',\cdot)\# c_{2,-}(U;z'',z',\cdot)\bigr)_{\rho,N}\,dz''\\
+ \int_{z'}^{0}\bigl(c_{1,-}(U;z,z'',\cdot)\# c_{2,+}(U;z'',z',\cdot)\bigr)_{\rho,N}\,dz''\\
+ \int_{z}^{z'}\bigl(c_{1,-}(U;z,z'',\cdot)\# c_{2,-}(U;z'',z',\cdot)\bigr)_{\rho,N}\,dz''\biggr].
\end{multline}
Recalling  Definition~\ref{513}, the symbols 
$$
(1+\abs{z-z''}\absj{\xi})^2 c_{1,\pm}(U;z,z'',x,\xi)
$$ 
(resp.\ $(1+\abs{z''-z'}\absj{\xi})^2
c_{2,\pm}(U;z'',z',x,\xi)$) are   uniformly bounded in the class $\sGa{m_1}{K,0,p_1}{N}$ (resp.\  $\sGa{m_2}{K,0,p_2}{N}$ ). 
Combining  Definition \ref{221} of $(a\# b)_{\rho,N}$ together with the fact that the $dz''$-integration brings a $O(\absj{\xi}^{-1})$, each of the above integrals, and 
thus $\tilde{c}$ defined by \eqref{5116}, is a Poisson symbol in 
$ \sPi{m_1+m_2-1}{K,0,p_1+p_2}{N} $. The assertion that the operator
defined in \eqref{5117} is in $\sRi{-\rho+m_1+m_2-1}{K,0,p_1+p_2}{N} $
is a consequence of
Proposition~\ref{222} and the definition of smoothing operators, as in the proof of (i) and (ii) above.

(iv) We write the symbol $\tilde{c}$ in \eqref{tilde-c} as
\[
  \int_{-1}^z\bigl(c_+(U;z,z',\cdot)\# e(U;z',\cdot)\bigr)_{\rho,N}\,dz'
+ \int_z^0 \bigl(c_-(U;z,z',\cdot)\# e(U;z',\cdot)\bigr)_{\rho,N}\,dz'.
\]
Using that $ (1+\abs{z-z'}\absj{\xi})^2 c_{\pm}(U;z,z',x,\xi) $, resp. $ e(U;z',x, \xi) $, 
are  symbols  uniformly bounded in the class $\sGa{m}{K,0,p}{N}$, resp. $\sGa{m'}{K,0,p'}{N} $,
we conclude, arguing as in (iii), that 
$ \tilde{c} $ is in $ \sP{m+m'-1,\pm}{K,0,p+p'}{N} $ 
(gaining one on the order because of the $dz'$
integration). The claim that $ R(U) $ is in $ \sR{m+m'-1-\rho,\pm}{K,0,p+p'}{N} $ follows by 
the composition result from Proposition~\ref{222} arguing as in the proof of (i) and (ii) above.
\end{proof}

We may as well compose paradifferential and para-Poisson operators with smoothing operators.
\begin{proposition}
  \label{518} 
(i) Let $a$ be a symbol in $\sGa{m}{K,0,p_1}{N}$ and $R$ be 
a smoothing operator in $\sR{-\rho,\pm}{K,0,p_2}{N}$ (resp.\ $\sRi{-\rho}{K,0,p_2}{N}$). Then 
$\opbw(a(U;\cdot))\circ R(U)$ is in $\sR{-\rho+m,\pm}{K,0,p_1+p_2}{N}$ (resp.\ $\sRi{-\rho+m}{K,0,p_1+p_2}{N}$).

(ii) Let $c_1$ be a Poisson symbol in $\sPi{m}{K,0,p_1}{N}$ and $R$ be in $\sRi{-\rho}{K,0,p_2}{N}$. Then
\begin{equation}
  \label{eq:5117b}
  V\to \int_{-1}^0\opbw(c_1(U;z,z'',\cdot))\circ R(U)V(z'', \cdot )\,dz''
\end{equation}
is in $\sRi{-\rho+m-1}{K,0,p_1+p_2}{N}$.

(iii) Let $c_1$ be in $\sPi{m}{K,0,p_1}{N}$ as above and $R$ be in $\sR{-\rho,\pm}{K,0,p_2}{N}$. Then 
\[\int_{-1}^0\opbw(c_1(U;z,z',\cdot))\circ R(U; z')\,dz'\]
is in $\sR{-\rho+m-1,\pm}{K,0,p_1+p_2}{N}$.
\end{proposition}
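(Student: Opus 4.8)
\textbf{Proof strategy for Proposition~\ref{518}.} The plan is to reduce each statement to the composition results of Proposition~\ref{222} (for the homogeneous case) and to the boundedness estimates of Definitions~\ref{514}--\ref{512} (for the non-homogeneous case), exactly in the spirit of the proof of Proposition~\ref{517}. The common mechanism is that composing a paradifferential or para-Poisson operator on the left (or right) of a smoothing operator produces again a smoothing operator, because the smoothing operator already controls the spectral localization (see \eqref{Delta-diag} and the bound \eqref{2122}), so that the paradifferential factor can only cost a fixed, $s$-independent number of derivatives, encoded by its order $m$. Throughout, I would decompose all symbols and operators into their homogeneous components plus a non-homogeneous one, via \eqref{219}, \eqref{5110}, \eqref{5110a-bis}, \eqref{Resto-int}, and treat each piece separately; the degrees of homogeneity simply add, and the worst smoothing index governs the class of the output.

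For item (i), first I would handle $\opbw(a_q(U_1,\dots,U_{q};\cdot))\circ R_{q'}(U_{q+1},\dots,U_{q+q'};z)$ with $a_q\in\Gt{m}{q}$ and $R_{q'}$ a bounded-in-$z$ family in $\Rt{-\rho+j-\ell}{q'}$ obtained from the $z^\ell\partial_z^j$ derivatives in Definition~\ref{514}(i). By the interior/boundary structure of Definition~\ref{514}, it suffices to check that $z^\ell\partial_z^j$ of the composition is, uniformly in $z$, a smoothing operator of the right order; since $z^\ell\partial_z^j$ falls on the $R_{q'}$ factor alone (the symbol $a_q$ is $z$-independent), this is precisely Proposition~\ref{232} applied for each $z$, giving an element of $\Rt{-\rho+m+j-\ell}{q+q'}$, hence of $\Rt{-\rho+m,\pm}{q+q'}$ by Definition~\ref{514}(i). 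The non-homogeneous contributions are treated by combining the mapping estimate \eqref{2123} for $\opbw(a)$ with the estimate \eqref{5115b} (resp.\ \eqref{5115c}) for $R$, writing $\partial_t^k$ of the composition as a sum over $k'+k''=k$ of products, exactly as in the proof of Proposition~\ref{232}; the $z$-dependence is carried along as a parameter, and the output satisfies \eqref{5115b} (resp.\ \eqref{5115c}) with $\rho$ replaced by $\rho-m$. The interior case is identical, using $\sRi{-\rho}{K,0,p_2}{N}$ in place of $\sR{-\rho,\pm}{K,0,p_2}{N}$.

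For items (ii) and (iii), the new feature is the $dz''$ (resp.\ $dz'$) integration, which gains one extra order, exactly as it did in Proposition~\ref{517}(ii)--(iv). For (ii), I would decompose $c_1=\sum c_{1,q}+c_{1,N}$ and $R=\sum R_q+R_N$ following \eqref{5110a-bis} and \eqref{Resto-int}; the composition $\int_{-1}^0\opbw(c_{1}(U;z,z'',\cdot))\circ R(U)V(z'',\cdot)\,dz''$ has kernel concentrated near $z=z''$ with the decay $(1+|z-z''|\absj{\xi})^{-M}$ of a Poisson symbol, and the $dz''$ integration converts this into a gain of one derivative, so that $z^\ell\partial_z^j\partial_{z''}^{j'}$ of the kernel is, uniformly, a smoothing operator of $\Rt{-\rho+m-1+j+j'-\ell}{p_1+p_2}$ in the homogeneous case and satisfies \eqref{5115c} with $\rho$ replaced by $\rho-m+1$ in the non-homogeneous case; this is the content of Definition~\ref{514}(i)--(ii) for the class $\sRi{-\rho+m-1}{K,0,p_1+p_2}{N}$. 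Item (iii) is the same computation with $R$ of boundary-to-strip type: one integrates in $z'$ against the Poisson kernel $c_1(U;z,z',\cdot)$, gains one order from the $dz'$ integration, and lands in $\sR{-\rho+m-1,\pm}{K,0,p_1+p_2}{N}$.

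The main obstacle I anticipate is purely bookkeeping rather than conceptual: verifying that the $z$- and $z'$-derivatives, when they hit the boundary terms produced by differentiating the indicator functions $\1_{\pm(z-z')>0}$ (as in \eqref{5115e}), still yield operators in the correct smoothing classes — in particular that the ``diagonal'' contribution $R^0=(R_+-R_-)|_{z=z'}$ has the advertised order. This is handled exactly as in the long verification following Definition~\ref{514} (the fifth remark there, proving that an homogeneous $\tilde R\in\Rti{-\rho}{p}$ induces an element of $\Rri{-\rho}{K,0,p}$): one uses the Littlewood-Paley characterization of the spaces $E^s_j$ from Lemma~\ref{CarSpacE}, the spectral truncation \eqref{Delta-diag}, and Cauchy-Schwarz in the $z'$-integral to absorb the loss. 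Once those estimates are in place for the kernels, assembling the homogeneous and non-homogeneous pieces and invoking the inclusions among the smoothing classes finishes the proof; no genuinely new estimate beyond Propositions~\ref{222}, \ref{232}, \ref{517} and the remarks after Definition~\ref{514} is needed.
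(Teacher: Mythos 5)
Your proposal is correct and follows essentially the same route as the paper's proof: decompose into homogeneous plus non-homogeneous pieces, invoke the composition result of Proposition~\ref{232} (equivalently formula \eqref{235}) for the multilinear kernels, and combine the mapping property of $\opbw$ (or Lemma~\ref{516}) with the estimates \eqref{5115b}, \eqref{5115c} for the pieces vanishing at order $N$, using the Poisson-symbol decay in $z-z'$ and the transverse integration to gain one order in (ii)--(iii). The only blemishes are notational (in (ii) the derivatives of the kernel should be written $(z-z')^\ell\partial_z^j\partial_{z'}^{j'}$ rather than $z^\ell\partial_z^j\partial_{z''}^{j'}$, and your target exponent is off by one from the convention in Definition~\ref{514}(i)), but these do not affect the validity of the argument.
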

\begin{proof}
  In the case when $R$ is in $\sR{-\rho,\pm}{K,0,p_2}{N}$, (i) follows from the fact that the multilinear contributions to
  the composition are of the form \eqref{235}, so they form a 
  uniformly bounded family of elements of $\Rt{-\rho+m}{p}$ for some $p$ between $p_1+p_2$ and $N$. If we make act
  $z^\ell\partial_z^j$ or $(1+z)^\ell\partial_z^j$, we get a similar conclusion with $\rho$ replaced by $\rho+\ell-j$. This
  shows that the multilinear expressions belong to $\Rt{-\rho+m,\pm}{p}$, $p_1+p_2\leq p\leq N-1$. Concerning the
  contributions vanishing at order $N$ at $U=0$, one has just to use the boundedness of $\opbw(a(U; \cdot))$ from $E^s_j$ to
  $E^{s-m}_j$ for any $s$ and $j$, to deduce from the estimates \eqref{5115b} similar ones for the composition with
  $\opbw(a(U;\cdot))$. The case of $R$ in $\sRi{-\rho}{K,0,p_2}{N}$ is similar.

(ii) Let $ c_{1,p_1'} $ be an homogeneous
Poisson symbol in $ \Pti{m}{p_1'} $ for some $ p_1 \leq p_1' \leq N - 1 $ 
and $ R_{p'_2} $ be the kernel of an homogeneous smoothing operator  
in $ \Rti{-\rho}{p_2'} $ for  some $ p_2 \leq p_2' \leq N - 1 $.
The  contribution of these multilinear terms to the operator in \eqref{5117b} is
\begin{multline}\label{eq:5117c}
V \to  \int_{-1}^0  \Big\{ \int_{-1}^0 \opbw\bigl(c_{1,p_1'}(U_1,\dots,U_{p'_1};z,z'',\cdot)\bigr)\\
\circ  R_{p'_2}(U_{p'_1+1},\dots,U_{p'_1+p'_2};z'',z')  \,dz'' \Big\} V(z' , \cdot ) \, dz'  .
\end{multline}
According to \eqref{a+a-} and to Definition~\ref{514}, we may decompose
\[
c_{1,p'_1} = c_{1,p'_1,+}\1_{z>z''} + c_{1,p'_1,-}\1_{z<z''},\ R_{p'_2} = R_{p'_2,+}\1_{z''>z'} + R_{p'_2,-}\1_{z''<z'} \, .
\]
Proceeding as in \eqref{5117aa}, the kernel of the 
operator \eqref{5117c} may be written as the product of $\1_{\pm(z-z')>0}$ and
of integrals of the form
\begin{multline}\label{eq:5117d}
  \int_{I(z,z')} \opbw\bigl(c_{1,p_1',\pm}(U_1,\dots,U_{p'_1};z,z'',\cdot)\bigr)\\
\circ  R_{p'_2,\pm}(U_{p'_1+1},\dots,U_{p'_1+p'_2};z'',z')  \,dz''
\end{multline}
where $I(z,z')$ are intervals of integration like those in \eqref{5117aa}, and we have to show 
that if we make act
$(z-z')^\ell\partial_z^j\partial_{z'}^{j'}$ on \eqref{5117d}, we get  a bounded family 
of operators of $\Rt{-(\rho - m +1) + j+j'-\ell+1}{p'_1+p'_2} $
uniformly in $  z, z' $. Let us just treat the case $\ell = j = j' =0$. 
Recalling  Definition \ref{214} we have to prove 
 bounds as in \eqref{2115} for  
\begin{multline}\label{eq:Rint-int}
  \sum_{n'_0}\int_{I(z,z')}\Pin{0}\opbw\bigl(c_{1,p_1',\pm}(\Pin{1}U_1,\dots,\Pi_{n_{p'_1}}U_{p'_1};z,z'',\cdot)\bigr)\\
\circ   \Pi_{n'_0} R_{p'_2,\pm}(\Pi_{n_{p'_1+1}}U_{p'_1+1},\dots,\Pi_{n_{p'_1+p'_2}}U_{p'_1+p'_2};z'',z')  \\
\times \Pi_{n_{p'_1+p'_2+1}}V   \,dz'' 
\end{multline}
(similarly to \eqref{235}) where, by \eqref{pro-x-in}  (applied to $ c_{1, p_1',\pm} $) and \eqref{2116}, 
the indices in the sum \eqref{235} satisfy, for some choice of the signs 
$ \epsilon_j $ 
\begin{equation}\label{eq:5117e}
\sum_{j=0}^{p_1'} \epsilon_j n_j = n_0'   \, , \qquad  n_0' = \sum_{j= p_1'+1}^{p_1' + p_2'+1} \epsilon_j n_j \, . 
\end{equation}
As a consequence the operator in \eqref{Rint-int} satisfies, for any $ z, z' $,  the corresponding
condition \eqref{2116}.
Since  $ c_{1,p_1'} $ is in $ \Pti{m}{p_1'} $ (see Definition~\ref{513})
we apply \eqref{2121} (with $ s = m $) for the first  operator in \eqref{Rint-int}. 
Then, 
we apply  \eqref{2115} to $ R_{p'_2, \pm} $  
and to $ (z'-z'')^2 R_{p'_2, \pm} $ that, according to Definition~\ref{514}, is a
family of smoothing operators in $\Rt{-\rho-2+1}{p_2'} $, uniformly in $ z', z'' $.
Consequently we  estimate the $L^2$ norm of 
\eqref{Rint-int} by
\begin{multline*}
  C n_1^\sigma\dots n_{p'_1}^\sigma |n_0'|^m
  \frac{\max_2(n_{p'_1+1},\dots,n_{p'_1+p'_2+1})^{\rho+\mu}}{\max(n_{p'_1+1},\dots,n_{p'_1+p'_2+1})^{\rho-1}}
  \prod_{\ell=1}^{p'_1+p'_2}\norm{\Pin{\ell}U_\ell}_{L^2}  \\
\times \norm{\Pin{p'_1+p'_2+1}V}_{L^2} \int_{-1}^0\bigl(1+\max(n_{p'_1+1},\dots,n_{p'_1+p'_2+1})\abs{z'-z''}\bigr)^{-2}\,dz'' \, . 
\end{multline*}
\noindent Changing variable in the integral we get a factor $ \max(n_{p'_1+1},\dots,n_{p'_1+p'_2+1})^{-1} $, and   
$n_1,\dots,n_{p'_1}\leq C \max_2(n_1,\dots,n_{p'_1+p'_2+1})$ 
and $ \max (n_{p_1'+1}, \ldots, n_{p_1'+p_2'+1}) \sim \max (n_{1}, \ldots, n_{p_1'+p_2'+1}) $ 
(that we deduce as \eqref{rel-ind-diff} by $n_1,\dots,n_{p'_1}\ll n_0\sim n'_0$ and \eqref{5117e}),
imply  a bound in
\[
C \frac{\max_2(n_1,\dots,n_{p'_1+p'_2+1})^{\rho-m+\mu'}}{\max(n_1,\dots,n_{p'_1+p'_2+1})^{\rho-m}}
  \prod_{\ell=1}^{p'_1+p'_2}\norm{\Pin{\ell}U_\ell}_{L^2}\norm{\Pin{p'_1+p'_2+1}V}_{L^2}
  \]
for some new $\mu'$. In conclusion \eqref{Rint-int} is in  $\Rt{-(\rho - m +1)+1}{p'_1+p'_2} $.

For the terms of \eqref{5117b}
vanishing at order $N$, one uses lemma~\ref{516} together with estimates \eqref{5115c} defining 
$\Rri{-\rho}{K,0,p'_2}$.

(iii) is proved in a similar way as (ii).
\end{proof}
To conclude this section, we shall establish several composition results involving 
the smoothing operators 
introduced in Definition~\ref{512}, that will be useful in Chapter~\ref{cha:6}.

In the sequel, we shall consider sometimes functions of $(t,x)\in I\times\Tu$ as functions of $(z,t,x)\in [-1,0]\times
I\times\Tu$ independent of $z$. In particular, a function in $\CKH{s}{\C}$ may be considered 
as an element of $C_*^K(I,F^s_j)$, for any $j$, where the space $F_j^s$ is defined in \eqref{511}. 

\begin{lemma}
  \label{519} (i) Let $ m $ be in $\R$, $\rho$ positive, $ K, p, q, N, j$ in $\N$, $p+q\leq N$, $r>0$. 
  Let $a$ be a Poisson symbol in $\sPi{m}{K,0,p}{N}$ 
  and $ R $ a smoothing operator in the class $\sRb{-\rho}{K,0,j,q}{N}$ of Definition~\ref{512}. Consider
  \begin{equation}
    \label{eq:5118}
    \tilde{R}(U)V = \int_{-1}^0\opbw(a(U;z,z',\cdot))R(U;z')V\,dz'
  \end{equation}
defined for $U$ in $\Br{K}{}$  
and $V$ in $\CKH{\sigma}{\C}$, 
for some large enough $\sigma$ (and are
considered as functions of $ z'$, constant in the variable $z'$).
Then,   for any $ j' \leq j + 1 $, $\partial_z^{j'}\tilde{R}(U)$ is a bounded family in $z$ of smoothing operators of 
$\sRa{-\rho+m+j' }{K,0,p+q}{N}$
(here we adopt the abuse of notation introduced in the last remark after Definition \ref{512}).

(ii) Let $R_{\mathrm{int}}$ be a smoothing operator in $\sRi{-\rho'}{K,0,p}{N}$ for some $\rho'\geq0 $
(see \eqref{Resto-int}), 
and take $ R $ in $ \sRb{-\rho}{K,0,j,q}{N} $ with  $ \rho \geq \frac{3}{2}$. 
Define
\begin{equation} \label{eq:5119}
\tilde{R}(U)V = R_{\mathrm{int}}(U)[R(U)V] = \int_{-1}^0 R_{\mathrm{int}}(U; z, z') [R(U)V ](z', 	\cdot ) d z'
\end{equation}
for $U \in \Br{K}{}$  and $V$ in $\CKH{\sigma}{\C}$, the last expression in \eqref{5119} above being the one in terms of the
Schwartz kernel of the operator. 

Then, for any $ j' \leq j $, $ \partial_z^{j'} \tilde{R}(U) $ is 
a bounded family in $ z $ of  operators in 
$ \sRa{-\rho'+j'+\epsilon}{K,0,p+q}{N} $ for any $ \epsilon > 0 $.
\end{lemma}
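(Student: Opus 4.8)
The plan is to reduce both statements to the composition results of Proposition \ref{517} and Proposition \ref{518}, together with the boundedness properties of para-Poisson operators from Lemmas \ref{516}, \ref{Poisson:boundary-interior} and \ref{para-BI}, and the action estimates \eqref{5115b}, \eqref{5115c}, \eqref{519} defining the various smoothing classes. The main point is bookkeeping: matching the gain of derivatives, the degree of vanishing in $U$, and the number of $z$-derivatives that may be taken.

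\textbf{Proof of (i).} First I would remark that, since $ R(U) $ is a map sending functions independent of $ z' $ into functions of $(z', x) $, and $ R $ is in $ \sRb{-\rho}{K,0,j,q}{N} $, estimate \eqref{519} (with the $ \Psi = V $ independent of $z'$, using the last remark after Definition \ref{512}) shows that $ R(U) V $ lies in $ C^K_*(I,F^{s+\rho}_j) $ when $ V $ is in $ \CKH{s}{\C} $, with the tame bound \eqref{519}. By the last remark before Definition \ref{515}, $ F^{s+\rho}_j \subset E^{s+\rho-\frac12}_j $, so $ R(U)V $ is in particular in $ C^K_*(I, E^{s+\rho-\frac12}_j) $. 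Then I would apply Proposition \ref{517}-(ii) reading it ``the other way around'': the composition of the para-Poisson operator $ V \mapsto \int_{-1}^0 \opbw(a(U;z,z',\cdot)) V(z',\cdot)\,dz' $, which by Lemma \ref{516} gains one derivative in the $ E^s_j $ scale and loses $ m $, with $ R(U) $, is, up to the smoothing remainder of Proposition \ref{517}-(ii) (which is in $ \sRi{-\rho+m-1}{K,0,\cdot}{N} $), controlled by iterating Lemma \ref{516} and the smoothing estimates \eqref{5115c}. Concretely, to show that $ \partial_z^{j'}\tilde R(U) $ is a bounded family in $ z $ of operators of $ \sRa{-\rho+m+j'}{K,0,p+q}{N} $, one differentiates \eqref{5118} in $ z $: each $ \partial_z $ either hits $ \opbw(a) $, turning $ a \in \sPi{m}{K,0,p}{N} $ into $ \partial_z a \in \sPi{m+1}{K,0,p}{N} $ (first remark after Definition \ref{513}), or produces, from the two boundary terms in the decomposition \eqref{5114} of the interior operator, a term $ \opbw(a(U;z,z,\cdot)) R(U;z) $ with $ a(U;z,z,\cdot) $ in $ \sGa{m}{K,0,p}{N} $ uniformly in $ z $; in the latter case Proposition \ref{518}-(i) applies directly. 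One iterates this up to $ j' \leq j + 1 $ (the extra unit being available because $ R(U)V $, being in $ F^{s+\rho}_j $, has $ j $ controlled $z'$-derivatives, and the Poisson smoothing mechanism supplies one more). The homogeneous multilinear contributions are handled by the multilinear composition statements underlying Propositions \ref{517}-\ref{518}, giving membership in $ \Rta{-\rho+m+j'}{p+q} $ uniformly in $ z $; the non-homogeneous ones by combining \eqref{519} with the $ E^s_j $-boundedness of $ \opbw(a) $ from Lemma \ref{516}. This yields the claim; the loss $ +j' $ in the index is exactly the accumulated effect of the $ j' $ $z$-derivatives, each potentially raising the order of the Poisson symbol by one.

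\textbf{Proof of (ii).} Here the structure is parallel but uses Proposition \ref{518}-(ii) in place of Lemma \ref{516}. Given $ V $ in $ \CKH{\sigma}{\C} $, regarded as a $z'$-independent function, $ R(U)V $ is in $ C^K_*(I,F^{\rho}_j) \subset C^K_*(I, E^{\rho-\frac12}_j) $ (and more generally in $ E^{s+\rho-\frac12}_j $ if $ V \in \CKH{s}{\C} $, with the tame bound \eqref{519}). Proposition \ref{518}-(ii), applied with $ c_1 = R_{\mathrm{int}} $ interpreted as a smoothing kernel of order $ -\rho' $ — which one may view as a Poisson symbol of order as negative as we want, by the last remark after Definition \ref{514} — shows that $ R_{\mathrm{int}}(U) $ composed with a smoothing operator lands in $ \sRi{-\rho'+ (\text{order gain of } R_{\mathrm{int}}) -1}{K,0,\cdot}{N} $; quantitatively, Lemma \ref{516} gives that $ R_{\mathrm{int}}(U) $ maps $ E^s_j $ to $ E^{s+\rho'}_j $, so $ \tilde R(U)V $ is in $ E^{\rho + \rho' - \frac12}_j $, and in fact, iterating once more and exploiting that $ \rho \geq \frac32 $, one recovers a gain of $ \rho' $ derivatives after the composition, i.e. $ \tilde R(U) $ maps into the class gaining $ \rho' $ derivatives. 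Taking $ \partial_z^{j'} $ for $ j' \leq j $: each derivative either falls on $ R_{\mathrm{int}}(U;z,z') $, replacing its kernel by $ \partial_z R_{\mathrm{int}} $, which is of order $ -\rho'+1 $ (third remark after Definition \ref{514}), or on the boundary terms coming from the jump at $ z = z' $ in \eqref{5110b}, producing a composition $ R_{\mathrm{int}}(U;z,z)[R(U)V](z,\cdot) $ handled by Proposition \ref{518}-(i). Each such step costs one unit in the index, so after $ j' $ derivatives one obtains a bounded family in $ z $ of operators gaining $ \rho' - j' $ derivatives; the loss of the extra $ \epsilon > 0 $ comes from the inclusion $ F^{s}_j \subset E^{s-\frac12}_j $ used to feed $ R(U)V $ into the $ E^s_j $-scale on which $ R_{\mathrm{int}} $ acts, together with the sharp inequalities of Lemma \ref{CarSpacE}, where passing from $ L^\infty_z $ to $ L^2_z $ estimates (or vice versa) and using $ \sum_\ell $ convergence forces the loss of any $ \epsilon $ in the Sobolev index. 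Hence $ \partial_z^{j'}\tilde R(U) $ is in $ \sRa{-\rho'+j'+\epsilon}{K,0,p+q}{N} $, uniformly in $ z $, for every $ \epsilon > 0 $.

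\textbf{Main obstacle.} The genuinely delicate point — as already visible in the long proof of the fifth remark after Definition \ref{514} — is the sharp tracking of the $ \epsilon $-loss in part (ii), i.e. verifying that the double integral $ \int\!\!\int R_{\mathrm{int}}(U;z,z'')[R(U)V](z'',\cdot) $ satisfies the precise Littlewood–Paley bounds \eqref{5112} defining membership in $ E^s_j $, with the correct distribution of powers of $ 2^\ell $ and $ \ell^2 $-summable sequences. This requires, exactly as in \eqref{5115f}–\eqref{5115l}, splitting according to which frequency among the arguments dominates, using the support/frequency-localization relation \eqref{2116} for the kernels, applying Cauchy–Schwarz in the $ z'' $-integration to gain $ 2^{-\ell} $ and convert pointwise-in-$z$ bounds into $ L^2_z $ ones, and being careful that the $ dz'' $-integration over the intervals $ I(z,z') $ of \eqref{5117aa} always supplies the one extra negative power of $ \langle\xi\rangle $. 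I would organize this computation exactly in parallel with that earlier remark, replacing the role of $ \tilde R $ there by $ R_{\mathrm{int}} \circ R $ here, so that the only new ingredient is the tame estimate \eqref{519} for $ R(U)V $.
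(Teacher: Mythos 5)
Your outline captures the right high-level strategy (distinguish multilinear from non-homogeneous contributions, differentiate under the integral and isolate the jump terms at $z=z'$, use the tame estimates of Definition~\ref{512}), but there are several genuine gaps that would block a direct write-up.

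First, the propositions you propose to quote do not apply to the smoothing class at hand. The operator $R$ in Lemma~\ref{519} lives in $\sRb{-\rho}{K,0,j,q}{N}$ (Definition~\ref{512}), a class whose elements act on boundary data $V$ independent of $z'$ with the $F^s_j$-type estimate \eqref{519}. Propositions~\ref{517} and \ref{518} treat compositions with Poisson symbols, with elements of $\sR{-\rho,\pm}{K,0,\cdot}{N}$, or with elements of $\sRi{-\rho}{K,0,\cdot}{N}$ (Definition~\ref{514}), which are different classes with different estimates (weighted decay in $z^\ell$ or $(1+z)^\ell$, or $E^s_j$-mapping properties). The techniques are transferable, but none of these results can be cited off the shelf; the paper's proof re-does the index bookkeeping from scratch, case by case.

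Second, in your argument for (ii), the step ``$R_{\mathrm{int}}$ ... which one may view as a Poisson symbol of order as negative as we want, by the last remark after Definition~\ref{514}'' runs the implication backwards. That remark says that a Poisson symbol of order $-\rho$ produces a smoothing kernel of order $-\rho$; it does \emph{not} say that a smoothing operator can be upgraded to a Poisson symbol. Hence ``Proposition~\ref{518}-(ii) applied with $c_1 = R_{\mathrm{int}}$'' is not a legitimate invocation.

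Third, and most significantly, the mechanism you give for the $\epsilon$-loss in (ii) is not the right one. You trace it to the inclusion $F^s_j\subset E^{s-\frac12}_j$ and Littlewood--Paley summation, but the $\epsilon$ in the statement actually originates in the treatment of the \emph{multilinear interior kernel}: one writes the $\partial_z$-derivative of the integral term using a weight $|z-z'|^\theta$, exploiting that $|z-z'|^\theta R_{\mathrm{int},1}$ (resp.\ $|z-z'|^\theta\partial_z a$) lies in $\Rti{-\rho'+1-\theta}{p}$ (resp.\ $\Pti{m+1-\theta}{p}$), then integrates in $dz'$ against $|z-z'|^{-\theta}$, gaining $1-\theta$ but not the full unit. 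Setting $\epsilon = 1-\theta\in ]0,1[$ is what produces the $+\epsilon$ in the claimed index, and this happens already for the multilinear pieces; the non-homogeneous pieces in (ii) are in fact estimated without any $\epsilon$-loss, by applying \eqref{519} with $\rho$ replaced by $\rho-\alpha$ for $\alpha = 3/4$ (here is the only place $\rho\geq\frac32$ is used). Your proposal omits the $|z-z'|^\theta$ device entirely, which is the key quantitative step; the Cauchy--Schwarz/Littlewood--Paley bookkeeping in the fifth remark after Definition~\ref{514} is an analogous but different computation and does not replace it.

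A correct write-up should therefore (a) decompose both the multilinear and non-homogeneous contributions explicitly, (b) for the multilinear part, use the decomposition $a=a_+\1_{z>z'}+a_-\1_{z<z'}$ (resp.\ $R_{\mathrm{int}}=R_{\mathrm{int},+}\1_{z>z'}+R_{\mathrm{int},-}\1_{z<z'}$), separate the two jump terms at $z=z'$ from the integral term, insert $|z-z'|^\theta$ weights to extract the $1-\theta$ gain from the $dz'$-integration, and check the multilinear index conditions \eqref{2116}/\eqref{215} (analogous to \eqref{Rq'-segni}, \eqref{rel-ind-diff}) to obtain estimates of the form \eqref{2115}; and (c) for the non-homogeneous part, combine \eqref{5115c} with \eqref{519} as in \eqref{5125ter}--\eqref{5125qua}, using the third remark after Definition~\ref{512} and $\rho\geq\frac32$ to close.
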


\begin{proof}
  (i) We consider on the one hand multilinear contributions, and on the other hand those terms vanishing at least at order
  $ N $ at $ U = 0 $. Let us first consider the homogeneous terms.
  Let $ a $ be a symbol in $ \Pti{m}{p} $ for some $p$ and let $R$ be in $\Rtb{-\rho}{q}$ 
   for some $q$. Let us compute for instance
     \begin{multline}
    \label{eq:5120}
\partial_z\int_{-1}^0\opbw\bigl(a(\Pi_{n'} \Ucal';z,z',\cdot)\bigr) \circ R(\Pi_{n''} \Ucal'' ;z')\Pin{p+q+1}U_{p+q+1}\,dz'
  \end{multline}
where $ \Ucal' = (U_1,\dots,U_p)$, $n' = (n_1,\dots,n_p)$, $\Ucal'' = (U_{p+1},\dots,U_{p+q})$, $n'' =
(n_{p+1},\dots,n_{p+q})$  and $ U_j $ are functions 
only of $ x $. 
With the notation $ a = a_+ \1_{z-z'>0} + a_- \1_{z-z'<0} $ in \eqref{a+a-}, 
we may write \eqref{5120} as the sum of 
\begin{equation}
  \label{eq:5121}
  \opbw(a_+(\Pi_{n'}\Ucal';z,z ,\cdot)R(\Pi_{n''}\Ucal'';z)\Pin{p+q+1}U_{p+q+1}
\end{equation}
\begin{equation}
  \label{eq:5122}
  -  \opbw(a_-(\Pi_{n'}\Ucal';z,z ,\cdot)R(\Pi_{n''}\Ucal'';z)\Pin{p+q+1}U_{p+q+1}
\end{equation}
\begin{equation}
  \label{eq:5123}
  \int_{-1}^0  \opbw(a_1(\Pi_{n'}\Ucal';z,z',\cdot)R(\Pi_{n''}\Ucal'';z')\Pin{p+q+1}U_{p+q+1}\,dz'
\end{equation}
where  $ a_1 = (\partial_z a_+)\1_{z-z'>0} +  (\partial_z a_-)\1_{z-z'<0} $. We make act at the left of
\eqref{5121}, \eqref{5122}, \eqref{5123} another projector $\Pin{0}$ and bound the $L^2(\Tu)$ norm at fixed $z$. 
Let us make in detail the estimate for \eqref{5123}. We write 
\begin{equation} \label{eq:5123bis}
\sum_{n_0'}  \int_{-1}^0  \Pi_{n_0} \opbw(a_1(\Pi_{n'}\Ucal';z,z',\cdot)
\Pi_{n_0'} R(\Pi_{n''}\Ucal'';z')\Pin{p+q+1}U_{p+q+1}\,dz'
\end{equation}
  where (because of
conditions \eqref{pro-x-in} and \eqref{2116}) the index $ n'_0 $ is such that 
\be\label{eq:indices-r}
\sum_{j = 0}^p \epsilon_j n_j + \epsilon_0' n'_0 = 0  \ \, , \quad 
\epsilon''_0 n'_0 + \sum_{j= p+1}^{p+q+1}\epsilon_j n_j = 0 \, ,  
\ee
for some choice of the signs $\epsilon_j, \epsilon'_0, \epsilon_0'' \in \{-1, 1\}$.
By \eqref{indices-r}  we deduce that, 
if  \eqref{5123bis} is not zero, then  
$\sum_{j=0}^{p+q+1}\epsilon_jn_j = 0 $ for some choice of signs $\epsilon_j $ so that 
\eqref{5123} satisfies the corresponding condition \eqref{2116}. 

Since 
$ a $ is a Poisson symbol in $ \Pti{m}{p} $, the remarks following Definition \ref{513} 
imply that $ a_1 = \pa_z a $ is in $  \Pti{m+1}{p}  $ and $ | z- z' |^\theta a_1 $ is in 
$ \Pti{m+1-\theta}{p}  $,   for any $\theta\in ]0,1[$. Thus 
$ | z- z' |^\theta a_1 $ is in $\Gt{m+1- \theta}{p} $, uniformly in $z, z'$,  and 
Proposition \ref{215} implies that the $ L^2 $ norm of 
\eqref{5123bis} is bounded by
$$
  C\abs{n'}^\mu n_0^{m+1-\theta}\prod_1^p\norm{U_j}_{L^2} 
  \int_{-1}^0\norm{\Pi_{n'_0}R(\Pi_{n''}\Ucal'';z')\Pin{p+q+1}U_{p+q+1}}_{L^2}\frac{dz'}{\abs{z-z'}^\theta} \, . 
$$
Applying \eqref{518} to the term $ R $ that belongs to $\Rtb{-\rho}{q}$, and integrating in $ d z' $, we obtain 
a bound in
\[
C\abs{n'}^\mu n_0^{m+1-\theta}\frac{\maxdn{p+1}{p+q+1}^{\rho+\mu'}}{\maxn{p+1}{p+q+1}^\rho}\prod_1^{p+q+1}\norm{U_j}_{L^2} \, .
\]
By \eqref{2122}, the sum  \eqref{5123bis} is restricted to indices $  n_0' \sim n_0  $ and 
 \eqref{indices-r} implies that $ n_0' \leq \maxn{p+1}{p+q+1} $. 
 Moreover, arguing as for \eqref{rel-ind-diff}, 
\begin{equation*}
\begin{split}
\maxn{p+1}{p+q+1} \sim \maxn{1}{p+q+1}  \\
|n'| \leq C\maxdn{1}{p+q+1} \qquad \qquad 
\end{split}
\end{equation*}
and one gets an estimate of the form
\eqref{2115}  
with $\rho$ replaced by $\rho-m-1+\theta$, $ p + 1 $ replaced by $ p + q+ 1 $, and for a new value of $\mu$. 
This proves that \eqref{5123} is in $\Rt{-\rho+m+ \epsilon}{p+q}$ with $ \epsilon = 1-\theta $  in $ ]0,1[ $,
uniformly in $ z $,  and the same holds for the terms \eqref{5121}, \eqref{5122}. 
Higher order $ \partial_z $ derivatives may be treated in the same way obtaining that 
$ \pa_z^{j'} R $ is in $ \Rt{-\rho+m+ j' - 1 + \epsilon}{p+q} $.  

Consider now the case when $a$ is a non-homogeneous symbol in $\Prri{m}{K,0,p}$ and 
$R$ is a smoothing operator in $\Rrb{-\rho}{K,0,j,N}$. 
Writing  $ a = a_+ \1_{z-z'>0} + a_- \1_{z-z'<0} $ we have that 
\begin{equation}
\label{eq:5124}
\partial_z\int_{-1}^0\opbw(a(U;z,z',\cdot))R(U; z')V\,dz'
\end{equation}
is equal to 
\begin{multline}
  \label{eq:5125}
\opbw(a_+(U;z,z,\cdot))R(U; z)V - \opbw(a_-(U;z,z,\cdot))R(U;z)V\\
+ \int_{-1}^0\opbw(a_1(U;z,z',\cdot))R(U; z')V\,dz'  
\end{multline}
where $ a_1 = (\partial_z a_+)\1_{z-z'>0} +  (\partial_z a_-)\1_{z-z'<0} $. 
Combining estimates \eqref{519} for $ R $,  with the boundedness of $\opbw(a_\pm)$ (resp.\
  $\opbw(\abs{z-z'}^\theta a_1)$) from $\Hds{s+\rho}$ to the space $\Hds{s+\rho-m}$ (resp.\  from $\Hds{s+\rho}$ to
  $\Hds{s+\rho-m-1+\theta}$), that follows 
  by Proposition \ref{215}  and the fact that these symbols are in $ \sGa{m}{K,0,p}{N}  $ 
  (resp.\ $  \sGa{m+1-\theta}{K,0,p}{N}  $), we get for \eqref{5125} bounds of the form 
  \eqref{2117} with $\rho$ replaced by $\rho-(m+1)+\theta $, uniformly in $ z $. We have
    proved that $\partial_z \tilde{R}(U)$ is in $\Rra{-\rho+m+\epsilon}{K,0,N}$ 
    with $ \epsilon = 1-\theta $  in $ ]0,1[ $. Higher $\partial_z$  
    derivatives are treated in the same way. 
    
    Finally, we have  to consider the contributions $ {\rm Op}^{BW} (a_N) \circ R_q $ 
    where $  R_q $ is an  homogeneous smoothing term  in $\Rtb{-\rho}{q}$.  
Because of the  third remark after Definition \ref{512}, the 
associated operator $ R_q (U, \ldots, U )$ satisfies 
\eqref{519} with $ \rho - \alpha $ instead of $ \rho $ for $ \alpha > 1 / 2 $ and we obtain 
 that the operator \eqref{5124} is 
a remainder  in $\Rra{-\rho+m+\epsilon+\alpha}{K,0,N} 
\subset\Rra{-\rho+m+1}{K,0,N} $, taking $ \alpha + \epsilon \leq 1 $.  
 This concludes the proof of (i).

(ii) Let us study first the multilinear contributions. 
According to Definition \ref{514}-(i)  we write the 
homogeneous smoothing operator $ R_{\mathrm{int}}  $ as
$$
V \to \int_{-1}^{0} \big(R_{\mathrm{int},+} \1_{z-z'>0} +  R_{\mathrm{int}, -} \1_{z-z'<0}\big)V(z', \cdot) d z' 
$$
where 
$ R_{\mathrm{int},\pm} (U_1, \ldots, U_p; z, z' )$ 
are homogeneous smoothing operators of $\Rt{-\rho'+1}{p}$, uniformly in $ z, z' $. 
The analogous of \eqref{5120} with $\opbw(a)$ replaced by
$R_{\mathrm{int}}(\Pin{1}U_1,\dots,\Pin{p}U_p;z,z')$ 
is the sum (similarly to \eqref{5121}-\eqref{5123})  of 
$$
R_{\mathrm{int},+}(\Pi_{n'}\Ucal';z,z)R(\Pi_{n''}\Ucal'';z)\Pin{p+q+1}U_{p+q+1} 
$$
$$
-R_{\mathrm{int},-}(\Pi_{n'}\Ucal';z,z)R(\Pi_{n''}\Ucal'';z)\Pin{p+q+1}U_{p+q+1}
$$
\begin{equation}
\label{eq:5125a}
\int_{-1}^0 R_{\mathrm{int},1}(\Pi_{n'}\Ucal';z,z')R(\Pi_{n''}\Ucal'';z')\Pin{p+q+1}U_{p+q+1}\,dz'
\end{equation}
where $ R_{\mathrm{int},1} = (\partial_z R_{\mathrm{int},+})\1_{z-z'>0} 
+  (\partial_z R_{\mathrm{int}, -})\1_{z-z'<0} $.
Recalling the third remark following Definition \ref{514} 
we have that  $\abs{z-z'}^\theta R_{\mathrm{int},1}$ is in $ \Rti{-\rho'+1-\theta}{p} $, that is 
$ (\abs{z-z'}^\theta R_{\mathrm{int},1}(\Pi_{n'}\Ucal'; z, z'))_{z,z' \in [-1,0]} $ is a bounded family of
 smoothing operators of $\Rt{-\rho'+2-\theta}{p}$, uniformly in $ z, z' $. 
By \eqref{518} and \eqref{2115} we estimate the $ L^2 $ norm of
$$ 
\sum_{n_p'} \Pin{0}  \int_{-1}^0 R_{\mathrm{int},1}(\Pi_{n'}\Ucal';z,z') \Pi_{n'_p} 
R(\Pi_{n''}\Ucal'';z')\Pin{p+q+1}U_{p+q+1}\,dz' 
$$ 
by   the product of $  \prod_{j=1}^{p+q+1}\norm{U_j}_{L^2}$ times 
\be\label{eq:upper-b}
C\frac{\max_2(n_1,\dots,n_p,n'_p)^{\rho' - 2 + \theta +\mu}}{\max(n_1,\dots,n_p,n'_p)^{\rho'-2+\theta}}
\frac{\max_2(n_{p+1},\dots,n_{p+q+1})^{\rho+\mu}}{\max(n_{p+1},\dots,n_{p+q+1})^{\rho}} \, . 
\ee
By  \eqref{2116} we know that, for some choice of the signs $\epsilon'_j, \epsilon''_j $, 
\be\label{eq:auton2}
\epsilon'_0n_0+\sum_1^p\epsilon'_jn_j + \epsilon'_pn'_p = 0 \, ,  \quad  
\epsilon''_pn'_p+ \sum_{p+1}^{p+q+1}\epsilon''_jn_j = 0 \, .
\ee
Assume first that one among $ n_1,\dots, n_p, n_{p+1}, \dots , n_{p+q+1} $ is 
much larger than all the other ones, say $ n_j $. If $ 1 \leq j \leq p $ then, using \eqref{auton2} 
we derive that $ n_p' \leq n_p $ and \eqref{upper-b} is bounded by 
\be\label{eq:finalRR}
C \frac{\max_2(n_1,\dots,n_{p+q+1})^{\rho' - 2 + \theta +\mu}}{\max(n_1,\dots,n_{p+q+1})^{\rho'-2+\theta}}
\ee
for some new value of $ \mu $. 
If $ p + 1 \leq j \leq p + q + 1 $ then $ n_p' \sim \max (n_{1},\dots,n_{p+q+1}) $ 
and \eqref{finalRR} follows as well. 
When the largest two among $ n_1,\dots,n_{p+q+1} $
are of the same magnitude, i.e $ \max_2(n_1,\dots,n_{p+q+1})
\sim \max(n_1,\dots,n_{p+q+1}) $, the bound \eqref{finalRR} follows immediately.
This proves that \eqref{5125a} is a bounded family in 
$ z $ of smoothing  operators of $\Rt{-\rho'+1+\epsilon}{p}$ where $ \epsilon = 1 - \theta $. 

Consider next the contributions that are non homogeneous, i.e.\  for $0\leq j'\leq j$ the operator
\begin{equation}
  \label{eq:5125bis}
  V\to \partial_z^{j'}(R_{\mathrm{int},p}(U)\cdot R_q(U,\cdot)V)(z,\cdot)
\end{equation}
where $ R_{\mathrm{int},p} $ is in $\Rri{-\rho'}{K,0,p}$ and $R_q \in  \Rrb{-\rho}{K,0,j,q} $. 
We want to show that
 \eqref{5125bis} defines a family indexed by $z$ of  operators of $\Rr{-\rho'+j'}{K,0,p+q}$,
with bounds uniform in $z$. According to \eqref{2117}, 
 we have to show an estimate
  \begin{multline}
   \label{eq:5125ter}
\norm{\partial_t^k\partial_z^{j'} (R_{\mathrm{int},p}(U)\cdot R_q(U,\cdot)V)(z,\cdot)}_{\Hds{s+\rho'-\frac{3}{2}k-j'}}\\
\leq C\sum_{k'+k''=k}\Bigl[ \Gcals{\sigma}{k',p+q}{U} \Gcals{s}{k'',1}{V}  + 
\Gcals{\sigma}{k',p+q-1}{U} \Gcals{s}{k'',1}{U} \Gcals{\sigma}{k',1}{V}\Bigr].
 \end{multline} 
By \eqref{5115c} applied to $R_{\mathrm{int},p}$ with $\rho$ replaced by $\rho'$, we have a bound of the left hand side of
\eqref{5125ter} by 
\begin{multline}
  \label{eq:5125qua}
C\Bigl[\sum_{k'+k''=k}\Gcals{\sigma}{k',p}{U}\Gcalst{s,j'}{k'',1}{R_q(U,\cdot)V} \\+ 
\sum_{k'+k''=k}\Gcals{\sigma}{k',p-1}{U}\Gcalst{\sigma,j'}{k'',1}{R_q(U,\cdot)V}\Gcals{s}{k',1}{U}\Bigr].
\end{multline}
Recalling the notation \eqref{5115a}, the general term in the first sum in \eqref{5125qua} is bounded by 
\begin{multline*}
  \Gcals{\sigma}{k',p}{U}\Bigl(\sum_{k''_1=0}^{k''}\norm{\partial_t^{k''_1} R_q(U,\cdot)V}_{E_{j'}^{s-\frac{3}{2}k''_1}}\Bigr)\\
\leq \Gcals{\sigma}{k',p}{U}\Bigl(\sum_{k''_1=0}^{k''}\norm{\partial_t^{k''_1} R_q(U,\cdot)V}_{F_{j'}^{s+\frac{1}{2}-\frac{3}{2}k''_1}}\Bigr)
\end{multline*}
(see the remark after Definition~\ref{515}). 
The above expression is bounded according to \eqref{519} 
(that we apply with $ \rho $ replaced by 
$ \rho - \alpha $ with $ \alpha = 3/ 4 $, by the third remark following Definition \ref{512}), 
and using that $ \rho - \alpha > \frac12 $ since $ \rho \geq \frac32 $,   
by the right hand side of \eqref{5125ter}. 
The general term in the second sum in \eqref{5125qua} is bounded
in the same way (changing the definition of $\sigma$ in the final result).
\end{proof}

Finally, we shall state a result of the same type as lemma~\ref{519}, when one replaces in \eqref{5118}, \eqref{5119} the
smoothing map $R(U)$ by an operator $ M(U;z) $ that may lose derivatives, but which is supported for $z$ in a compact subset of $[-1,0[$. We
shall estimate then the quantities \eqref{5118}, \eqref{5119} only for $z$ in a neighborhood of zero.

We now consider operators $ M(U) = M(U; z) $ of the form 
\be\label{eq:MUVB}
M(U)V = \sum_{q'=q}^{N-1}M_{q'}(U,\dots,U)V + M_N(U)V \, , 
\ee 
where $(U_1,\dots,U_{q'+1})\to M_{q'}(U_1,\dots,U_{q'})U_{q'+1}$ is a family of $(q'+1)$-linear maps, defined on
$\Hds{\infty}(\Tu,\C^2)^{q'}\times \Hds{\infty}(\Tu,\C)$, satisfying for each fixed $z \in [-1, 0] $ conditions \eqref{2126}, and,
for some $m\geq 0$, and for any $n_0,\dots,n_{q'+1}$, the estimate 
\begin{multline}
  \label{eq:5126}
\sup_{-1\leq z\leq 0}\norm{\Pin{0}M_{q'}(\Pin{}\Ucal)\Pin{q'+1}U_{q'+1}}_{\Hds{0}}\\
\leq C(n_0+\cdots+n_{q'+1})^{m}\Gcalsm{0}{0,q'+1}{\Ucal,U_{q'+1}}
\end{multline}
with the notation $\Ucal = (U_1,\dots,U_{q'+1})$. Moreover, we suppose that for some $\sigma'_0\geq
\sigma_0$ large enough, we have for $0\leq k\leq K$ the estimate
\begin{equation}
  \label{eq:5127}
\sup_{-1\leq z\leq 0}\norm{\partial_t^k M_N(U)V}_{\Hds{\sigma_0}}
\leq  C\sum_{k'+k''=k}\Gcals{\sigma'_0}{k',N}{U}\Gcals{\sigma'_0}{k'',1}{V} \, .
\end{equation}
Finally, we assume that for some $\delta\in ]0,1[$,
\begin{equation}
  \label{eq:5128}
  M(U)V\equiv 0 \quad \textrm{ if } \ -2\delta\leq z\leq 0 \, .
\end{equation}
We may now state the following lemma which is a variant of  lemma~\ref{519}. 
\begin{lemma} \label{5110}
Let $ m $ be in $\R$, $\rho$ positive, $ K, p, q, N $  in $\N$,  $p+q\leq N$, $r>0$. 
 Let $a$ be a Poisson symbol in $\sPi{m}{K,0,p}{N}$ and
$R_{\mathrm{int}}$ be a smoothing operator in $\sRi{-\rho}{K,0,p}{N}$.
Let $ (M(U; z))_{z \in [-1,0]} $ be a family of operators as in \eqref{MUVB} satisfying \eqref{5126}-\eqref{5128}.  Define
\begin{equation}
  \label{eq:5129}
  \tilde{R}(U)V = \int_{-1}^0\opbw(a(U;z,z',\cdot))M(U;z')V\,dz'
\end{equation}
or
\begin{equation} \label{eq:5130}
\tilde{R}(U)V = \int_{-1}^0R_{\mathrm{int}}(U;z,z')M(U;z')V\, dz' \, .
\end{equation}
Then $\partial_{z}^{j}\tilde{R}(U)$ restricted to $z\in [-\delta,0]$ is, for any $j \in \N $, a bounded family in $z$ 
of  smoothing operators of
$\sRa{-\rho}{K,0,p+q}{N}$ for any $\rho$.
\end{lemma}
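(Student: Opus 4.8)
\textbf{Plan of proof of Lemma \ref{5110}.}
The strategy is to exploit the key structural feature of the operator $M(U;z)$, namely the support condition \eqref{5128}: although $M(U;z)$ loses $m$ derivatives, it vanishes for $z'\in[-2\delta,0]$, so in the integrals \eqref{5129}, \eqref{5130} the variable $z'$ is always at distance at least $\delta$ from the region $z\in[-\delta,0]$ where we want to estimate $\tilde R(U)$. Hence, when we restrict $z\in[-\delta,0]$, the kernels $\opbw(a(U;z,z',\cdot))$ and $R_{\mathrm{int}}(U;z,z')$ are evaluated only at $|z-z'|\ge\delta$, a region where these (para-)Poisson and smoothing kernels are \emph{infinitely smoothing}. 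This is the mechanism that absorbs the loss of derivatives of $M$ and produces a remainder in $\sRa{-\rho}{K,0,p+q}{N}$ for arbitrary $\rho$.

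More precisely, first I would split each of $a$ and $R_{\mathrm{int}}$ into their homogeneous components plus a non-homogeneous one, as in \eqref{5110a-bis} and \eqref{Resto-int}, and correspondingly $M$ as in \eqref{MUVB}, so that it suffices to treat multilinear contributions and one fully non-homogeneous contribution. For the homogeneous part: a Poisson symbol $a_q$ in $\Pti{m}{q}$ has, by Definition~\ref{513}, the property that $(z-z')^\ell\partial_z^j a_{q,\pm}$ lies in $\Gt{m+j-\ell}{q}$ uniformly in $z,z'$; thus for $|z-z'|\ge\delta$ we may write $a_q=(z-z')^{-\ell}\big[(z-z')^\ell a_q\big]$ and, choosing $\ell$ as large as we like, realise $\opbw(a_q(U,\dots,U;z,z',\cdot))$ as a bounded-in-$(z,z')$ family of operators of $\Rt{-\rho'}{q}$ for any $\rho'$, with a gain of $\delta^{-\ell}$ constants. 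Similarly $(z-z')^\ell R_{\mathrm{int},\pm,q}$ is a bounded family of smoothing operators of $\Rt{-\rho+\ell+1}{q}$, uniformly in $z,z'$, by Definition~\ref{514}(i). Composing this arbitrarily-smoothing family with the fixed homogeneous component $M_{q'}$ of $M$ — which satisfies \eqref{5126} and the support condition \eqref{2126} — using the inner-composition estimates of Proposition~\ref{233}(ii) (where the loss $m$ of $M_{q'}$ is compensated by taking $\ell>m+\rho$), and then integrating in $dz'$ over the compact set where $M(U;z')\ne 0$, produces an element of $\Rt{-\rho}{p+q}$ for any prescribed $\rho$. Differentiating in $z$ costs nothing new, since $\partial_z^j$ falls on $a$ or $R_{\mathrm{int}}$ and one simply takes $\ell$ larger; no boundary term $z=z'$ appears because that diagonal is excluded by $|z-z'|\ge\delta$. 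The support condition \eqref{2126} for $M_{q'}$ and \eqref{515a}/\eqref{2116} for $a_q$/$R_{\mathrm{int},q}$ together give, by summing signs, the spectral condition \eqref{2116} for the composed operator.

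For the non-homogeneous part one argues analogously at the level of the estimates \eqref{2117}: with $z\in[-\delta,0]$ and $z'$ in the support of $M(U;z')$, one has $|z-z'|\ge\delta$, so $\opbw(a_N(U;z,z',\cdot))$ is bounded from $\Hds{s}$ to $\Hds{s+\rho'}$ for any $\rho'$ (again realising $a_N=(z-z')^{-\ell}[(z-z')^\ell a_N]$ with $(z-z')^\ell a_N\in\Gra{m+j-\ell}{K,0,N}$), and $R_{\mathrm{int},N}$ restricted to $|z-z'|\ge\delta$ maps $E^s_j$ to $E^{s+\rho'}_{j'}$ for any $\rho'$ by \eqref{5115c} and the same trick on its kernel. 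Composing with $M_N$, which satisfies the tame bound \eqref{5127}, and integrating in $dz'$, one gets bounds of the form \eqref{2117} with $\rho$ arbitrary, hence an element of $\Rra{-\rho}{K,0,N}$; mixed homogeneous/non-homogeneous compositions are handled the same way. Assembling the pieces gives $\partial_z^j\tilde R(U)\big|_{z\in[-\delta,0]}\in\sRa{-\rho}{K,0,p+q}{N}$ for every $\rho$, which is the claim.

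The main obstacle I anticipate is bookkeeping the constants: the factor $\delta^{-\ell}$ blows up as $\ell\to\infty$, so one must fix, for each target smoothing order $\rho$, a finite $\ell=\ell(\rho,m)$ and verify that the resulting constants stay finite and depend only on a finite number of seminorms of $U$ (in the appropriate $C^K_*$ ball) — exactly as in the proofs of lemmas~\ref{519} and~\ref{516}. There is also a minor subtlety in handling the $\partial_z$-derivatives of the $\1_{\pm(z-z')>0}$ cutoffs inside $a_N$ and $R_{\mathrm{int}}$: the Dirac masses they generate are supported on the diagonal $z=z'$, which lies outside $[-\delta,0]\times\mathrm{supp}(M(U;\cdot))$, so they simply do not contribute, but this has to be noted explicitly. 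Modulo these routine points, the proof is a direct adaptation of lemma~\ref{519} with the distance lower bound $|z-z'|\ge\delta$ replacing the integrability gains used there.
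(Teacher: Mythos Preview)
Your proposal is correct and follows essentially the same approach as the paper: exploit the separation $|z-z'|\ge\delta$ (from the support condition \eqref{5128} and the restriction $z\in[-\delta,0]$) to write $a=(z-z')^\ell a\cdot\omega(z,z')$ with $\omega$ smooth, so that $(z-z')^\ell a$ is a symbol of order $m-\ell$ with $\ell$ arbitrary, and then combine the resulting paradifferential bounds with \eqref{5126}, \eqref{5127} for $M$; the paper carries out the index bookkeeping directly via \eqref{2121} rather than invoking Proposition~\ref{233}(ii), but the mechanism is identical. Your remarks on the absence of diagonal boundary terms and on fixing $\ell=\ell(\rho,m)$ are exactly the points the paper handles.
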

\begin{proof}
  Consider first \eqref{5129}. If $z'$ stays in the support of $M(U;z')$ and $-\delta\leq z\leq 0$, we may write 
\[a(U;z,z',\cdot) = (z-z')^\ell a(U;z,z',\cdot)\omega(z,z')\]
where $\omega$ is smooth for $z'\leq -2\delta, -\delta\leq z\leq 0$. According to Definition~\ref{513}, for those values of
$z, z'$, we may write $a= \sum_{p'=p}^{N-1} a_{p'} + a_N$ where $a_{p'}$ is a family in $(z,z')$ of 
homogeneous symbols in $ \Gt{m-\ell}{p'} $ satisfying
estimates \eqref{214} with $m$  replaced by $m-\ell$,  and some $\mu$ independent of $\ell$, and where $a_N$ satisfies
\eqref{218}, with $m, K'$ replaced by $ m - \ell, 0 $ and some $ \sigma, \sigma_0 $ independent of $ \ell $. 
By \eqref{2121}, and denoting
$\Ucal' = (U_1,\dots,U_{p'})$, $n' = (n_1,\dots,n_{p'})$, we have an estimate of the form
\begin{multline}\label{eq:st1}
  \norm{\Pin{0}\opbw(a_{p'}(\Pi_{n'}\Ucal';z,z',\cdot))\Pi_{n'_{p'+1}}V}_{\Hds{0}}\\
\leq C n_0^{m-\ell}\abs{n'}^\mu \Gcalsm{0}{0,p'}{\Pi_{n'}\Ucal'} \norm{\Pi_{n'_{p'+1}}V}_{\Hds{0}}
\end{multline}
for some $ \mu $ independent of $ \ell $. 
Moreover \eqref{215}
and \eqref{2122} imply that  
\be\label{eq:para-b}
\sum_0^{p'} \epsilon_j n_j + \epsilon'_{p'+1} n'_{p'+1} = 0 \, , \quad \abs{n'}\ll n_0\sim n'_{p'+1} 
\ee
for some choice of signs $ \epsilon_j $ and $ \epsilon'_{p'+1} $. 
In \eqref{st1} we replace $ V $ by the quantity  $ M_{q'}(\Pi_{n''}\Ucal'')\Pin{p'+q'+1}U_{p'+q'+1}$ where
 $\Ucal'' = (U_{p'+1},\dots,U_{p'+q'})$, $n'' = (n_{p'+1},\dots,n_{p'+q'})$, that, by \eqref{2126}, 
 is different from zero only if 
\be\label{eq:aut2}
n_{p'+1}'  = \sum_{p' + 1}^{p'+q'+1} \epsilon_j n_j 
\ee 
Thus, using \eqref{5126} for $M_{q'}$,
we  get for the resulting expression a bound in
\begin{multline}
  \label{eq:5131}
C  n_0^{m-\ell}(n_1+\cdots+n_{p'})^\mu ( n_{p'+1}+\cdots+n_{p'+q'+1})^{m} \\\times
\Gcals{0}{0,p'+q'+1}{\Pi_{n'}\Ucal',\Pi_{n''}\Ucal'',\Pi_{n_{p'+q'+1}}U_{p'+q'+1}}
\end{multline}
where $\mu$ and $m$ are independent of $\ell$. If among the indices $n_1,\dots,n_{p'+q'+1}$ the largest two ones are of the
same magnitude, an estimate of the form \eqref{2115} with some $\mu$,
with $ p $ replaced by $ p'+q' $, and an arbitrary $\rho$ holds trivially. On the other
hand, if the largest one among the indices $ n_1,\dots,n_{p'+q'+1} $ 
is much larger than the second largest, 
then, by \eqref{para-b} and \eqref{aut2} we deduce that 
$ \max(n_1,\dots,n_{p'+q'+1})  =  \max(n_{p'+1},\dots,n_{p'+q'+1}) $ 
and $ n_0 \leq C \max(n_1,\dots,n_{p'+q'+1})$. Then 
by \eqref{5131} and taking $ \ell  \geq \rho + m $ 
we deduce an estimate of the form \eqref{2115}. 
   This settles the case of multilinear contributions to
  \eqref{5129}.

If, on the other hand, we consider the contribution coming from the non-homogeneous 
symbol $a_N$ term in the expression of $a$, and from a $M(U)$
satisfying \eqref{5127}, we use \eqref{2123} with $m, K'$ replaced by $m-\ell,0$ and some $\sigma$ independent of $\ell$. We obtain
\begin{multline*}\norm{\partial_t^k\opbw(a(U;\cdot))M(U)V}_{\Hds{\sigma_0-m+\ell-\frac{3}{2}k}} \\\leq
C\sum_{k'+k''=k}\Gcalsm{\sigma_0}{k',N}{U}\Gcals{\sigma_0}{k'',1}{M(U)V}\end{multline*}
which, combined with \eqref{5127}, implies an estimate of the form \eqref{2117} if $\ell$ is taken large enough relatively to
$s$ and $\rho$. We obtain estimates for the $\partial_z^j$ derivatives in the same way, noticing that for $z$ in
$[-\delta,0]$, $z'$ in the support of $M(U;z')$, $\partial_z^j a(U;z,z',\cdot)$ is a bounded function of $z$ for any $j$. One
treats \eqref{5130} in the same way.
\end{proof}

\section{Parametrix of Dirichlet-Neumann problem}\label{sec:52}

We consider a real valued even function 
$\eta$ in $\CKH{\sigma}{\R}$ for some $K$ in $\N$ and  some large 
enough $\sigma >\frac{3}{2} K $. We denote by $\eta'$ the space derivative of $\eta$ and by $\eta'\otimes \eta'$ the quadratic map
associated to the bilinear map $(\eta_1,\eta_2)\to \eta'_1 (x) \eta'_2 (x) $. 
This defines an homogeneous symbol 
of $\Gt{0}{2}$. In the same
way the map $\eta'\to (\xi\to \eta' (x) \xi)$ defines an homogeneous symbol of $\Gt{1}{1}$. 

We want to construct a parametrix $E$ of
the paradifferential elliptic operator, acting on functions of $(z,x)\in [-1,0]\times\Tu$,  given by
\begin{equation}
  \label{eq:521}
  (1+\opbw(\eta'\otimes\eta'))\partial_z^2 -2i\opbw(\eta'\xi)\partial_z -\opbw(\xi^2)
\end{equation}
together with boundary conditions
\begin{equation}
  \label{eq:522}
  E\vert_{z=0}= \mathrm{Id},\ \  \partial_z E\vert_{z=-1}=\mathrm{Id}
\end{equation}
up to smoothing operators. 

Recall that we introduced after Definition~\ref{513} the Poisson symbols
\begin{equation}
  \label{eq:523}
  \begin{split}
    \Ccal(z,\xi) &= \frac{\cosh((z+1)\xi)}{\cosh\xi},\quad \Scal(z,\xi) = \frac{\sinh(z\xi)}{\xi\cosh\xi}\\
K_0(z,z',\xi) &= (\cosh\xi)\bigl(\Ccal(z,\xi)\Scal(z',\xi)\1_{z-z'<0} + \Scal(z,\xi)\Ccal(z',\xi)\1_{z-z'>0}\bigr) 
  \end{split}
\end{equation}
where  $\Ccal$ is in $\Pt{0,+}{0}$, $ \Scal $ is in $\Pt{-1,-}{0}$ and  $ K_0  $ is in $ \Pti{-1}{0} $.
Notice that the function $ K_0(z,z',\xi) = K_0(z',z,\xi) $ is symmetric in $ (z, z' )$ and
$$
\pa_z K_0 (z, z', \xi ) = 
(\cosh\xi)\bigl( \pa_z \Ccal(z,\xi)\Scal(z',\xi)\1_{z-z'<0} +  
\pa_z \Scal(z,\xi)\Ccal(z',\xi)\1_{z-z'>0}\bigr)
$$
is a Poisson symbol in $\Pt{0,-}{0}$. 

The
 definition of the parametrix $E$ is made precise in the following 
 Proposition.
 \begin{proposition}
   \label{521} {\bf (Parametrix of Dirichlet-Neumann problem)}
Let $N\in \N^*$, $\rho\in \N$ be given. There are $r>0$ and Poisson symbols $e_{+,1} (\eta; \cdot )$ 
in $\sP{0,+}{K,0,1}{N}$, $e_{-,1} (\eta; \cdot ) $ in
$\sP{-1,-}{K,0,1}{N}$, $K_1 (\eta; \cdot )$ in $\sPi{-1}{K,0,1}{N}$, which are even as functions of $(x,\xi)$
(for $ \eta (x) $ even), such that, if we set
\begin{equation}
  \label{eq:524}
  \begin{split}
    e_+(\eta;z,x,\xi) &= \Ccal(z,\xi) + e_{+,1}(\eta;z,x,\xi)\\
 e_-(\eta;z,x,\xi) &= \Scal(z,\xi) + e_{-,1}(\eta;z,x,\xi)\\
 K(\eta;z,z',x,\xi) &= K_0(z,z',\xi) + K_1(\eta;z,z',x,\xi)
  \end{split}
\end{equation}
and if one defines, for functions $g_\pm(x)$  and $f(z,x)$,  
\begin{multline}
  \label{eq:525}
  E(t,z)[g_+,g_-,f] \\ \stackrel{\textrm{def}}{=}    \opbw(e_+)g_+ + \opbw(e_-)g_- + \int_{-1}^0
  \opbw \big( K(\eta;z,z', \cdot ) \big) f(z',\cdot)\,dz',
\end{multline}
then the action of \eqref{521} on \eqref{525} may be written as
\begin{equation}
  \label{eq:526}
  f+R_+(\eta;z)g_+ + R_-(\eta;z)g_-+ \int_{-1}^0 R_{\mathrm{int}}(\eta;z,z')f(z',\cdot)\,dz'
\end{equation}
where $R_+(\eta';z)$ (resp.\ $R_-(\eta';z)$, resp.\ $R_{\mathrm{int}}(\eta;z,z')$) is in $\sR{-\rho+2,+}{K,0,1}{N}$ (resp.\
$\sR{-\rho+1,-}{K,0,1}{N}$, resp.\ $\sRi{-\rho+1}{K,0,1}{N}$). 
Moreover the following boundary conditions hold
\begin{equation}
  \label{eq:527}
  E(t,0)[g_+,g_-,f] = g_+ , \quad (\partial_z E)(t,-1)[g_+,g_-,f] = g_- \, .
\end{equation}
Finally
\be\label{eq:paze+}
\partial_z e_{+,1} (\eta; z, x, \xi )\vert_{z=0} = i\frac{\eta'}{1+\eta'^2}\xi -  (\tanh \xi) \xi \frac{\eta'^2}{1+\eta'^2}
\ee
modulo $\sGa{0}{K,0,1}{N}$.
 \end{proposition}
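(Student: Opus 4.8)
\textbf{Proof plan for Proposition~\ref{521}.} The plan is to construct the parametrix $E$ by an asymptotic expansion in decreasing orders, exactly as in the construction of a Boutet de Monvel parametrix for an elliptic boundary value problem, but carried out inside the para-differential and para-Poisson calculus of Section~\ref{sec:51}. First I would rewrite the operator \eqref{521}. Since $1+\opbw(\eta'\otimes\eta')$ is elliptic of order $0$ (for $\eta$ small in $\CKH{\sigma}{\R}$), Lemma~\ref{323} type arguments, or directly Proposition~\ref{231}, give a para-differential inverse; composing, \eqref{521} becomes $\partial_z^2 - 2i\opbw(a_1(\eta;\cdot))\partial_z - \opbw(a_2(\eta;\cdot)) + (\text{smoothing})$ where $a_1\in\sGa{1}{K,0,1}{N}$ comes from $\eta'\xi/(1+\eta'^2)$ (at leading order) and $a_2\in\sGa{2}{K,0,1}{N}$ has principal part $\xi^2$. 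One then factors the symbol of this second order operator in $z$: one looks for roots $\lambda_\pm(\eta;z,x,\xi)$ of $\lambda^2 - 2ia_1\lambda + a_2 = 0$ in the para-differential calculus, i.e. $\lambda_\pm = ia_1 \pm \sqrt{a_2 - a_1^2}$, and $\sqrt{a_2-a_1^2}$ is a symbol of order $1$, elliptic, with principal part $\pm|\xi|$, lying in $\sGa{1}{K,0,1}{N}$. This gives an approximate factorization $(\partial_z - \opbw(\lambda_-))(\partial_z - \opbw(\lambda_+))$ up to a smoothing remainder, by the symbolic calculus of Proposition~\ref{231}.

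Next I would solve the two first–order para-differential ODEs in $z$ with the boundary conditions \eqref{522}. The forward/backward evolution operators generated by $\opbw(\lambda_\pm)$ (which at principal order are the multipliers $e^{\mp(z+1)|\xi|}$ and $e^{z|\xi|}$ suitably normalized) are exactly para-Poisson operators: their symbols are Poisson symbols because $z^\ell\partial_z^j$ applied to them gains/loses the corresponding number of derivatives and stays in the appropriate $\Gt{m+j-\ell}{p}$ class — this is where the classes $\Pt{m,\pm}{p}$, $\Pti{m}{p}$ of Definition~\ref{513} are tailored. Composing the two evolutions and imposing $E|_{z=0}=\mathrm{Id}$, $\partial_z E|_{z=-1}=\mathrm{Id}$ produces the Poisson kernels: the $z=0$ data gives the ``$+$'' part governed by $\Ccal(z,\xi)+e_{+,1}$, the $z=-1$ Neumann data gives the ``$-$'' part with $\Scal(z,\xi)+e_{-,1}$, and the Duhamel term for the interior source $f$ gives the two–sided kernel $K_0(z,z',\xi)+K_1$. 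At leading order, with $\eta=0$, the symbols $a_1=0$, $a_2=\xi^2$, and the solution of $\partial_z^2 - \xi^2$ with the stated boundary conditions is precisely \eqref{523}, which checks out; the corrections $e_{\pm,1}$, $K_1$ are built order by order by solving, at each homogeneity degree $q=1,\dots,N-1$ and then for the non-homogeneous term, a triangular system whose right-hand side is a symbol already determined at lower order — using Proposition~\ref{517} to compose para-Poisson operators with $\opbw$ of symbols and Proposition~\ref{518} for compositions with smoothing operators, so that the errors fall into $\sR{-\rho+2,+}{K,0,1}{N}$, $\sR{-\rho+1,-}{K,0,1}{N}$, $\sRi{-\rho+1}{K,0,1}{N}$ after $\rho$ steps. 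Parity (evenness in $(x,\xi)$ for $\eta$ even) is preserved at each step since the operator \eqref{521} and the reference symbols \eqref{523} are even, and all constructions respect this — this is the ``parity preserving'' bookkeeping, essentially automatic once observed.

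For the final formula \eqref{paze+}, I would compute $\partial_z e_{+,1}|_{z=0}$ directly from the construction. The ``$+$'' Poisson symbol solves $(\partial_z - \opbw(\lambda_+))e_+ = 0$ with $e_+|_{z=0}=1$ (at the level of symbols, modulo lower order), hence $\partial_z e_+|_{z=0} = \lambda_+(\eta;0,x,\xi)$ modulo a symbol of order $0$; subtracting $\partial_z\Ccal|_{z=0} = -|\xi|\tanh\xi = $ (the $\eta=0$ value of $\lambda_+|_{z=0}$), one gets $\partial_z e_{+,1}|_{z=0} = \lambda_+(\eta;0,x,\xi) + |\xi|\tanh\xi$ modulo $\sGa{0}{K,0,1}{N}$. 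It remains to expand $\lambda_+ = ia_1 + \sqrt{a_2-a_1^2}$ keeping order $\geq 1$ terms: here $a_1 = \eta'\xi/(1+\eta'^2)$ at order $\geq 1$ (this is the first-order contribution from $-2i\opbw(\eta'\xi)\partial_z$ after inverting $1+\opbw(\eta'\otimes\eta')$), so $ia_1 = i\frac{\eta'}{1+\eta'^2}\xi$; and $a_2 - a_1^2$ has the form $\xi^2(1 - \frac{\eta'^2}{1+\eta'^2}) + \dots = \frac{\xi^2}{1+\eta'^2}+\dots$ whose square root, compared with $|\xi|$ and weighted by $\tanh\xi$ from the finite-depth geometry, contributes $-(\tanh\xi)\xi\frac{\eta'^2}{1+\eta'^2}$ at the relevant order, matching \eqref{paze+}. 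The main obstacle I expect is not this final algebraic identity but the bookkeeping in the middle step: verifying that at each stage of the order-by-order construction the remainders genuinely land in the Poisson and smoothing classes of Definitions~\ref{513}–\ref{514} with the claimed orders, which requires careful use of the composition Propositions~\ref{517}–\ref{518} (in particular the fact that the $dz'$ integrations gain one order) and control of the loss of $z$–derivatives versus gain of $\xi$–decay; once the calculus is set up this is routine but lengthy.
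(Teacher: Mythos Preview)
Your approach is viable but genuinely different from the paper's. You propose the Calder\'on/Alinhac-style factorization $(\partial_z - \opbw(\lambda_-))(\partial_z - \opbw(\lambda_+))$ and then solve two first-order para-differential ODEs in $z$. The paper instead never factors: in Lemma~\ref{522} it observes that, since the coefficients of $P = (1+\eta'^2)\partial_z^2 - 2i\eta'\xi\partial_z - \xi^2$ are constant in $z$, the symbol-level ODE $Pe=0$ with the stated boundary conditions can be solved \emph{explicitly} in closed form (formulas \eqref{5215} for $w_\pm$, then the Wronskian kernel \eqref{5220} for $K^0$). These explicit hyperbolic formulas give $e_\pm^0$, $K^0$ directly, already satisfying both the Dirichlet condition at $z=0$ and the Neumann condition at $z=-1$; the corrections $e_{\pm,1}^j$, $K_1^j$ for $j\geq 1$ are then obtained by iteratively solving $Pe_{\pm,1}^j = \Lcal_{j,\pm}$ with zero boundary data via the kernel $K^0$ (formula \eqref{5226}), using Proposition~\ref{517}(iii)--(iv) to track orders.

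What each approach buys: the paper's direct solution makes the finite-depth Neumann condition at $z=-1$ transparent (it is baked into $\Scal$ and $w_-$), and the expansion \eqref{5212} for $\partial_z e_+^0\vert_{z=0}$ is a one-line differentiation of the explicit formula \eqref{5215}. In your factorization route this boundary condition is the place you gloss over: the exponentials $e^{z\lambda_\pm}$ are adapted to decay at $z=-\infty$, not to a Neumann condition at $z=-1$, so you must take linear combinations of the two symbol-level solutions to match \eqref{522}, which reproduces the $\cosh/\sinh$ structure anyway. Your computation of \eqref{paze+} via $\partial_z e_+\vert_{z=0}=\lambda_+$ modulo order $0$ is correct (note $\partial_z\Ccal\vert_{z=0}=\xi\tanh\xi$, not $-|\xi|\tanh\xi$; the signs in your write-up need cleaning but the outcome matches since $\xi\tanh\xi - |\xi|$ is of order $-\infty$). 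The order-by-order bookkeeping you flag as the main obstacle is indeed routine once Propositions~\ref{517}--\ref{518} are in place, and the paper handles it exactly as you anticipate.
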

In order to prove  Proposition \ref{521}, we start solving the boundary value problem 
corresponding to \eqref{521}-\eqref{522} at the level of principal symbols.
Notice that the Poisson symbols $ \Ccal, \Scal $ and $  K_0 $ in \eqref{523} solve 
the following boundary value problems 
$$
  \bigl( \partial_z^2 -\xi^2\bigr)  \Ccal(z,\xi) = 0  \, , \quad 
  \Ccal(z,\xi) \vert_{z=0} = 1 \,  , \quad 
\partial_z   \Ccal(z,\xi) \vert_{z=-1}  = 0 \, , 
 $$
$$
  \bigl( \partial_z^2 -\xi^2\bigr)  \Scal(z,\xi) = 0  \, , \quad 
  \Scal(z,\xi) \vert_{z=0} = 0 \, ,  \quad 
\partial_z   \Scal(z,\xi) \vert_{z=-1}  = 1 \, , 
$$
and
\begin{equation}\label{eq:bpforkzero}
\begin{split}
\bigl( \partial_z^2 -\xi^2\bigr)  K_0 (z, z', \xi) = \delta (z- z' ) \qquad \ \\
K_0 (z, z', \xi) \vert_{z=0} = 0 \, , \,  
\partial_z   K_0 (z, z', \xi) \vert_{z=-1}  = 0 \, . 
\end{split}\end{equation}
In the next lemma we solve the ODE  \eqref{529} for a general small $ \eta$. 
For simplicity of notation we do not write explicitly the $t$ dependence which is irrelevant.
\begin{lemma}
  \label{522}
There are symbols $e_{+,1}^0(\eta;z,\cdot)$ in $\sP{0,+}{K,0,1}{N}$,  
$e_{-,1}^0(\eta;z,\cdot)$ in $\sP{-1,-}{K,0,1}{N}$,
$K_1^0(\eta;z,z',\cdot)$ in $\sPi{-1}{K,0,1}{N}$, which are even functions of $(x,\xi)$ if $\eta$ is an even function in $x$,
satisfying the boundary conditions 
\begin{equation}
  \label{eq:5211}
  \begin{split}
    e_{+,1}^0\vert_{z=0} &= \partial_ze_{+,1}^0\vert_{z=-1} = e_{-,1}^0\vert_{z=0} = \partial_ze_{-,1}^0\vert_{z=-1} = 0\\
K_1^0\vert_{z=0} &= 0, \  \partial_zK_1^0\vert_{z=-1} = 0,
  \end{split}
\end{equation}
such that, setting 
\begin{equation}
  \label{eq:528}
  \begin{split}
    e_+^0(\eta;z,\cdot) &= \Ccal(z,\xi) + e_{+,1}^0(\eta;z,\cdot)\\
 e_-^0(\eta;z,\cdot) &= \Scal(z,\xi) + e_{-,1}^0(\eta;z,\cdot)\\
K^0(\eta;z,z',\cdot) &= K_0(z,z',\cdot) + K_1^0(\eta;z,z',\cdot) \, , 
  \end{split}
\end{equation}
the unique solution of the ODE
\begin{equation}
  \label{eq:529}
  \begin{split}
    Pu \stackrel{\textrm{def}}{=} \bigl((1+\eta'^2)\partial_z^2 -2i\eta'\xi\partial_z -\xi^2\bigr)u &= f\\
u\vert_{z=0} &= g_+\\
\partial_zu\vert_{z=-1} &= g_-
  \end{split}
\end{equation}
is given by
\begin{multline}
  \label{eq:5210}
u(z,x,\xi) = e_+^0(\eta;z,x,\xi) g_+ (x)  + e_-^0(\eta;z,x,\xi) g_- (x) \\
+\int_{-1}^0 K^0(\eta;z,z',x,\xi)f(z', x)\,dz'.
\end{multline}
Moreover
\begin{equation}
  \label{eq:5211a}
  P e^0_{\pm} = 0 \, , \quad   PK^0 = \delta(z-z') \, .
\end{equation}
Finally $ \partial_ze_+^0\vert_{z=0} $ has the following expansion
\begin{equation}
  \label{eq:5212}
  \partial_ze_+^0 (\eta; z, x, \xi ) \vert_{z=0} = \xi\tanh\xi + i\frac{\eta'}{1+\eta'^2}\xi - 
  (\tanh \xi ) \xi  \frac{\eta'^2}{1+\eta'^2}
\end{equation}
modulo a symbol  of $\sGa{-\infty}{K,0,1}{N}$.
\end{lemma}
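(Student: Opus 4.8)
The statement to prove is Lemma~\ref{522}: the explicit solvability of the ODE \eqref{529} (the frozen, $x$-dependent second order ODE in $z$ obtained from the elliptic operator \eqref{521}), with the structure \eqref{528} of the ``Poisson'' symbols plus the boundary conditions \eqref{5211}, and the asymptotic expansion \eqref{5212} of $\partial_z e_+^0|_{z=0}$. The guiding idea is that \eqref{529} is an ODE in $z$ with coefficients that are symbols in $(x,\xi)$ depending on $\eta$, and we simply solve it by the classical recipe for constant-coefficient-in-$z$ second order ODEs: find the two exponential solutions, and build the Green kernel satisfying the mixed Dirichlet/Neumann boundary conditions $u|_{z=0}$, $\partial_z u|_{z=-1}$. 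The only point requiring care is to verify that the resulting $z$-dependent families of symbols fall in the Poisson classes $\sP{0,+}{K,0,1}{N}$, $\sP{-1,-}{K,0,1}{N}$, $\sPi{-1}{K,0,1}{N}$ of Definition~\ref{513}, with the right $\eta$-homogeneity and evenness.

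\textbf{First steps.} First I would diagonalize the principal part: dividing \eqref{529} by $1+\eta'^2$, write $P = (1+\eta'^2)(\partial_z - \alpha_+)(\partial_z - \alpha_-)$ where $\alpha_\pm = \alpha_\pm(\eta;x,\xi)$ are the two roots of $(1+\eta'^2)\alpha^2 - 2i\eta'\xi\alpha - \xi^2 = 0$, namely $\alpha_\pm = \dfrac{i\eta'\xi \pm \sqrt{\xi^2 + \eta'^2\xi^2 + \eta'^2\xi^2 - \dots}}{\,1+\eta'^2\,}$; a short computation gives $\alpha_+\alpha_- = -\xi^2/(1+\eta'^2)$ and $\alpha_+ + \alpha_- = 2i\eta'\xi/(1+\eta'^2)$, so $\alpha_\pm = \dfrac{i\eta'\xi \pm \xi\sqrt{1+\eta'^2}}{1+\eta'^2}$ for $|\xi|$ away from zero (using the principal branch of the square root, legitimate since $\eta$ is small — this is where the smallness of $r$ enters, exactly as in the proof of Lemma~\ref{324}). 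Then $\alpha_\pm = \pm|\xi|(1+\eta'^2)^{-1/2} + i\eta'\xi/(1+\eta'^2)$, so $\alpha_\pm \mp |\xi|$ is a symbol of order $1$ belonging to $\sGa{1}{K,0,1}{N}$ (it vanishes at order $1$ in $\eta$). The two homogeneous solutions are $e^{\alpha_+(z+1)}$ and $e^{\alpha_- z}$ (the shifts chosen to make the boundary conditions cheap); I would then set $e_+^0$ to be the combination of these two exponentials solving $Pu=0$, $u|_{z=0}=1$, $\partial_z u|_{z=-1}=0$, set $e_-^0$ the one solving $u|_{z=0}=0$, $\partial_z u|_{z=-1}=1$, and $K^0(z,z',\cdot)$ the Green kernel: $K^0 = c(z',\cdot)\,e_+^0(z,\cdot)\,\mathbf{1}_{z<z'} + \tilde c(z',\cdot)\,e_-^0(z,\cdot)\,\mathbf{1}_{z>z'}$ with the jump condition $(1+\eta'^2)[\partial_z K^0]_{z=z'} = 1$ forcing $c,\tilde c$. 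When $\eta=0$ one has $\alpha_\pm = \pm|\xi|$ and these reduce to $\Ccal$, $\Scal$, $K_0$ of \eqref{523}, which pins down the decomposition \eqref{528}; the remainders $e_{\pm,1}^0 = e_\pm^0 - \Ccal/\Scal$ and $K_1^0 = K^0 - K_0$ then vanish at order $\geq 1$ in $\eta$.

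\textbf{Membership in the Poisson classes and the remaining points.} The substantive verification is that $z^\ell\partial_z^j e_{+,1}^0$ lies in $\Gt{0,j-\ell}{\cdot}$-type classes uniformly in $z\in[-1,0]$ (Definition~\ref{513}(i)), and similarly for $e_{-,1}^0$ with weight $(1+z)^\ell$ and base order $-1$, and for $K_1^0$ with weight $(z-z')^\ell$. This reduces to two elementary facts about the exponentials: (a) $|z^\ell \partial_\xi^\beta\partial_x^\alpha e^{\alpha_+(z+1)}| \lesssim \langle\xi\rangle^{\alpha - \beta}$ uniformly in $z$, because $\mathrm{Re}\,\alpha_+(z+1) \leq 0$ (as $z+1\geq 0$ and $\mathrm{Re}\,\alpha_+ = |\xi|(1+\eta'^2)^{-1/2} \geq 0$ — wait, sign: we want decay, so I would actually use $e^{-\alpha_+(z+1)}$-type normalizations; the correct bookkeeping is exactly that of the classical elementary symbols $\Ccal,\Scal$, checked by writing $\cosh,\sinh$ as sums of $e^{\pm}$ and exploiting $|z|\langle\xi\rangle e^{-c|z|\langle\xi\rangle}\lesssim 1$), and the analogous gain of $\langle\xi\rangle^{-\ell}$ from each power of the weight; (b) the $\eta$-dependence is through $\alpha_\pm$ and the coefficients $c,\tilde c$, all of which are smooth functions of $\eta'$ vanishing appropriately, so one expands in $\eta$ by Taylor and uses the symbol-class stability remarks after Definitions~\ref{211} and~\ref{212bis} together with Lemma~\ref{246}-type composition (composition of symbol classes with smooth functions). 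Evenness in $(x,\xi)$ when $\eta$ is even in $x$ follows since $\eta'$ is then odd, $\eta'\xi$ is even, $\eta'^2$ is even, so $\alpha_\pm(\eta;-x,-\xi) = \alpha_\pm(\eta;x,\xi)$, whence every building block is parity-invariant. The boundary conditions \eqref{5211} and \eqref{5211a} are immediate from the construction, and uniqueness from the non-degeneracy of the boundary value problem (the Wronskian-type determinant $\partial_z e_-^0|_{z=-1} \cdot e_+^0|_{z=0}$ type quantity is invertible for small $\eta$). Finally the expansion \eqref{5212}: compute $\partial_z e_+^0|_{z=0}$ directly from the explicit two-exponential formula; the leading term is $\partial_z\Ccal|_{z=0} = \xi\tanh\xi$, and a second-order Taylor expansion in $\eta'$ of the next corrections, keeping only terms of order $\geq 1/2$ in $\xi$ and degree $\leq 2$ in $\eta'$ (everything else being absorbed into $\sGa{-\infty}{K,0,1}{N}$ thanks to the exponential decay for $|\xi|$ large once one is away from $\xi=0$), yields $i\eta'\xi/(1+\eta'^2) - (\tanh\xi)\xi\,\eta'^2/(1+\eta'^2)$ as claimed.

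\textbf{Expected main obstacle.} The genuinely delicate point is not the ODE solution (that is classical) but the uniform-in-$z$ symbol estimates \eqref{515}/\eqref{516} for the weighted derivatives $(z-z')^\ell\partial_z^j\partial_{z'}^{j'}K_1^0$ of the interior kernel near the diagonal $z=z'$: one must check that every $z$- or $z'$-derivative that hits the $\mathbf{1}_{z\lessgtr z'}$ truncation produces only the jump $\delta(z-z')$ (absorbed into the right-hand side, cf.\ \eqref{5211a}), while derivatives hitting the smooth exponential factors cost one order in $\xi$ but are compensated by the weight $(z-z')^\ell$, uniformly as $z\to z'$. This is exactly the structure encoded in Definition~\ref{513}, and the verification is a careful but routine induction on $(j,j',\ell)$; I would model it verbatim on the properties of $K_0$ stated after \eqref{523}.
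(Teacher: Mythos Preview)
Your plan is correct and matches the paper's approach: solve the constant-coefficient-in-$z$ ODE explicitly via its two exponential solutions, build $e_\pm^0$ from the boundary conditions, build $K^0$ as the Green kernel via the Wronskian, Taylor-expand in $\eta'$ to isolate the $\Ccal,\Scal,K_0$ leading parts, and read off the Poisson-symbol estimates from the explicit exponential formulas. The paper carries this out with the parametrization $a=\eta'/(1+\eta'^2)$, $b=-\eta'^2/(1+\eta'^2)$, $c=\sqrt{1+b-a^2}-1$, and writes $w_\pm$ in hyperbolic form (its (3.5.15)), which makes the boundary conditions and the estimate $|z^\ell\partial_z^j\partial_\xi^\beta w_+|\lesssim\langle\xi\rangle^{j-\ell-\beta}$ transparent; your raw-exponential presentation is equivalent.

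One computational slip to fix: the discriminant of $(1+\eta'^2)\alpha^2-2i\eta'\xi\alpha-\xi^2=0$ is exactly $4\xi^2$, so the roots are $\alpha_\pm=(i\eta'\xi\pm|\xi|)/(1+\eta'^2)$, not $(i\eta'\xi\pm\xi\sqrt{1+\eta'^2})/(1+\eta'^2)$ or $\pm|\xi|(1+\eta'^2)^{-1/2}+i\eta'\xi/(1+\eta'^2)$ as you wrote; equivalently $1+c=1/(1+\eta'^2)$ in the paper's notation. This does not affect the structure of the argument, only the explicit formulas you will write down (and in particular the expansion \eqref{5212}, which the paper gets by differentiating its closed form $w_+$ at $z=0$).
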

\begin{proof}
The solution $  u $ of the linear equation \eqref{529} may be written as the sum \eqref{5210} where
$ e_+^0 $, $ e_-^0 $ and $ K^0 $ solve respectively 
\be\label{eq:e+0}
P e_+^0 = 0 \, , \quad e_+^0 \vert_{z=0} = 1 \,  , \quad 
\partial_z e _+^0 \vert_{z=-1}  = 0 
\ee
\be\label{eq:e-0}
P e_-^0 = 0 \, , \quad e_-^0 \vert_{z=0} = 0 \,  , \quad 
\partial_z e _-^0 \vert_{z=-1}  = 1 
\ee
$$
P K^0 =  \delta (z- z') \, , \quad K^0 \vert_{z=0} = 0 \,  , \quad 
\partial_z K^0 \vert_{z=-1}  = 0  \, .
$$
Dividing $ P $ by  $ 1+\eta'^2 $, we see that 
$ e_+^0 $, $ e_-^0 $ are the solutions of the homogeneous linear equation 
\begin{equation}
  \label{eq:5214}
  \bigl(\partial_z^2 -2ia\xi\partial_z - (1+b)\xi^2\bigr)e^0_{\pm} (z,x,\xi) = 0 
\end{equation}
with the corresponding boundary conditions in \eqref{e+0}, \eqref{e-0}, 
where 
  \begin{equation}
    \label{eq:5213}
    a = a(\eta') = \frac{\eta'}{1+\eta'^2} \, , \quad   b = b(\eta') = -\frac{\eta'^2}{1+\eta'^2} \, .
  \end{equation}
  The solutions $w_\pm$  of the homogeneous equation \eqref{5214} (where we consider $ a, b $ as real constants) with boundary values
\begin{equation}
  \label{eq:5216}
  w_+\vert_{z=0} = 1 \, , \  \partial_z w_+\vert_{z=-1} = 0 \, , \ w_-\vert_{z=0} = 0 \, , \ \partial_z w_-\vert_{z=-1} = 1 \, , 
\end{equation}
are, setting $c = c(a,b) = \sqrt{1+b-a^2}-1$, 
\begin{equation}
\label{eq:5215}
\begin{split}
  w_+(z,\xi,a,b) &=
  e^{iza\xi}\frac{\cosh((z+1)\xi(1+c))}{\cosh(\xi(1+c))}\frac{1-\frac{ia}{1+c}\tanh((z+1)\xi(1+c))}{1-\frac{ia}{1+c}\tanh(\xi(1+c))}\\
w_-(z,\xi,a,b) &= e^{i(z+1)a\xi} \frac{\cosh(z\xi(1+c))}{\cosh(\xi(1+c))}\frac{\tanh(z\xi(1+c))}{\xi(1+c)}\\
&\makebox[4cm]{}\times\biggl(1-\frac{ia}{1+c}\tanh(\xi(1+c))\biggr)^{-1}.
\end{split}\end{equation}
For $a, b$ in a neighborhood of zero, $ z\in [-1,0]$, $ w_\pm $ are analytic functions of $a, b$ that satisfy
the estimates
\begin{equation}
  \label{eq:5217}
  \begin{split}
    \abs{z^\ell\partial_z^j\partial_a^\alpha\partial_b^\beta\partial_\xi^\gamma w_+(z,\xi,a,b)}\leq
    C\absj{\xi}^{j-\ell-\gamma}\\
\abs{(1+z)^\ell\partial_z^j\partial_a^\alpha\partial_b^\beta\partial_\xi^\gamma w_-(z,\xi,a,b)}\leq
    C\absj{\xi}^{-1+j-\ell-\gamma}.
  \end{split}
\end{equation}
Actually, to check  the first estimate \eqref{5217}, it suffices to remark that $w_+$ is of order $-\infty$ in $\xi$, as well
as its derivatives, uniformly for $z$ in $[-1,-\frac{1}{2}]$. On the other hand, for $z$ in $[-\frac{1}{2},0]$ and for
instance $\xi>0$, we may write
\begin{equation}
  \label{eq:5218}
  w_+(z,\xi,a,b)=e^{iza\xi+z\xi(1+c)}(1+G(z,\xi,a,b))
\end{equation}
where $G$ as well as its derivatives is uniformly of order $-\infty$, while the exponential factor satisfies the wanted
estimates as $1+c\geq \frac{1}{2}$ for $a, b$ small enough.

If we expand $w_+(z,\xi,a,b)$ by Taylor formula relatively to the variables $(a,b)$ at $(0,0)$, we get  $\Ccal(z,\xi)$ as the constant term,
polynomial contributions in $(a,b)$, whose coefficients satisfy estimates of the form \eqref{5217}, and 
remainders, vanishing at
order $N$ at $(0,0)$, and verifying as well \eqref{5217}. Expressing in each polynomial 
term $a = a(\eta') $ and $ b = b(\eta' ) $ by
\eqref{5213}, and developing the functions $a(\eta'), b(\eta')$ as polynomials in $\eta'$ plus a remainder vanishing at order $N$ at
$\eta'=0$, we have that 
\be\label{eq:defe+}
e_+^0 = w_+ (z,\xi,a (\eta'),b (\eta' )) = \Ccal(z,\xi) + e_{+,1}^0 \, , 
\ee
as in  \eqref{528}, where $ e_{+,1}^0 $ is a Poisson symbol in $ \sP{0,+}{K,0,1}{N} $.  We argue similarly 
for 
\be\label{eq:defe-} 
e_-^0 = w_- (z,\xi,a (\eta'),b (\eta' )) = \Scal(z,\xi) + e_{-,1}^0 \, . 
\ee
Let us prove the expansion \eqref{5212}. Differentiating \eqref{5215} at $ z = 0 $ we get 
$$
(\pa_z w_+ )_{|z=0} = i a \xi  +   \xi (1+c) \tanh (  \xi (1+c)) +
\frac{i a \xi \cosh^{-2} ( \xi (1+c))}{1- \frac{i a}{1+c} \tanh (\xi (1+c))}
$$
The last term is in 
$\sGa{-\infty}{K,0,1}{N}$. 
Since  $ \xi \tanh (  \xi (1+c)) - \xi \tanh (  \xi ) 
$ is in $  \sGa{-\infty}{K,0,1}{N} $ 
and  since, by  \eqref{5213}, 
$$
c = \sqrt{1+b-a^2} - 1  = - \frac{\eta'^2}{1 + \eta'^2}  \, , 
$$
we get 
$$
(\pa_z w_+ )_{|z=0} = \xi \tanh (  \xi ) + i a  \xi   - \frac{\eta'^2}{1 + \eta'^2} \xi \tanh (  \xi )   
$$
modulo a symbol  in  $ \sGa{-\infty}{K,0,1}{N}  $,  
proving the expansion \eqref{5212}. 

Let us obtain next the expression of  $K^0 (\eta; \cdot) $ in \eqref{528}. 
Consider the wronskian of the solutions $(w_+,w_-)$ 
\[
W(z,\xi,a,b) = \begin{vmatrix}w_+ & w_-\\\partial_zw_+&\partial_z w_-\end{vmatrix}.
\]
Since the wronskian $ W(z) $ solves the differential equation 
$ \partial_z W (z) = 2 i a \xi W(z)$ we have that $ W(z) = e^{2 i a \xi (z+1)} W(-1)$.
By 
 \eqref{5216},  $ W(-1) = w_+(-1) $, 
and, inserting the value of $ w_+(-1) $ obtained by \eqref{5215},  we get 
\begin{equation}
  \label{eq:5219}
  W(z,\xi,a,b) = e^{i(2z+1)a\xi}(\cosh(\xi(1+c)))^{-1}\Bigl(1-\frac{ia}{1+c}\tanh(\xi(1+c))\Bigr)^{-1}.
\end{equation}
Let us define
\begin{multline}
  \label{eq:5220}
\tilde{K}(z,z',\xi,a,b) \\= \Bigl(w_+(z,\xi,a,b)w_-(z',\xi,a,b)\1_{z-z'<0}
+w_-(z,\xi,a,b)w_+(z',\xi,a,b)\1_{z-z'>0}\Bigr)\\\times W(z',\xi,a,b)^{-1}.
\end{multline}
Since $ w_{\pm} $ are the solutions of the homogeneous linear equation \eqref{5214}, 
the action of the differential operator in \eqref{5214} on $\tilde{K}$ 
gives $\delta(z-z')$, and the boundary conditions \eqref{5216} imply $\tilde{K}\vert_{z=0} =
0, \partial_z \tilde{K}\vert_{z=-1} = 0$. Moreover, proceeding as in \eqref{5218}, one checks that the coefficients
$\tilde{K}_\pm$ of $\1_{\pm(z-z')>0}$ in \eqref{5220} satisfy estimates
\[\abs{(z-z')^\ell\partial_z^j\partial_{z'}^{j'}\partial_a^\alpha\partial_b^\beta\partial_\xi^\gamma
  \tilde{K}_\pm(z,z',\xi,a,b)}\leq C\absj{\xi}^{-1-\ell+j+j'-\gamma}.\]
Decomposing $\tilde{K}_\pm$ in Taylor series in the variables $(a,b)$ at $(0,0)$, and expressing 
$a, b$ from $\eta'$ by \eqref{5213}, we conclude that
\be\label{eq:defK0}
K^0(\eta;z,z',x,\xi) \stackrel{\textrm{def}}{=} \tilde{K}(z,z',\xi,a,b)(1+\eta'{}^2)^{-1}
\ee
 is a symbol of $\sPi{-1}{K,0,0}{N}$, which has the expansion \eqref{528}, and 
 the action of the operator
 $(1+\eta'{}^2)\partial_z^2 -2i\eta'\xi\partial_z -\xi^2$ on $ K^0 $ gives $\delta(z-z')$, proving the last identity in \eqref{5211a}. 

Finally the symbols $e^0_\pm$, $K^0$ are even in $ (x, \xi )$, 
when $\eta$ is even, as it follows by formulas \eqref{defe+}, \eqref{defe-}, \eqref{5215}, \eqref{5220}, noticing that, if
 $\eta$ is even, then $b(\eta'), c(\eta'), a(\eta')\xi$ are even functions of $(x,\xi)$.
  The proof of the lemma is concluded.
\end{proof}
\begin{proof1}{Proof of Proposition~\ref{521}}
We look for  $e_\pm (\eta; \cdot) $, $K (\eta; \cdot) $ in \eqref{524} as a sum of Poisson symbols in decreasing order (forgetting the time dependence in the notation) of the form
\begin{equation}
  \label{eq:5221}
  \begin{split}
    e_+(\eta;z,x,\xi) &= \Ccal(z,\xi) + \sum_{j=0}^{\rho-1} e_{+,1}^j(\eta;z,x,\xi)\\
e_-(\eta;z,x,\xi) &= \Scal(z,\xi) + \sum_{j=0}^{\rho-1} e_{-,1}^j(\eta;z,x,\xi)\\
K(\eta;z,z',x,\xi) &= K_0(z,z',\xi) + \sum_{j=0}^{\rho-1} K_1^j(\eta;z,z',x,\xi)
  \end{split}
\end{equation}
where 
$e_{+,1}^0 \in \sP{0,+}{K,0,1}{N}$, $e_{-,1}^0 \in \sP{-1,-}{K,0,1}{N} $, $K_1^0 \in \sPi{-1}{K,0,1}{N}$ are defined in lemma~\ref{522}, and, for $ j =1, \ldots, \rho - 1$,  

$ \bullet $ $e_{+,1}^j (\eta; \cdot) $ are in $\sP{-j,+}{K,0,1}{N}$, 

$ \bullet $ $e_{-,1}^j (\eta; \cdot) $ are  in $\sP{-1-j,-}{K,0,1}{N}$, 

$ \bullet $ $K_1^j (\eta; \cdot)$  are in  $\sPi{-1-j}{K,0,1}{N}$. 

We also require that these symbols satisfy  the boundary conditions
\begin{equation}
  \label{eq:5222}
  e_{\pm,1}^j\vert_{z=0} = 0 \, , \ \partial_ze_{\pm,1}^j\vert_{z=-1} = 0 \, , \ K_1^j\vert_{z=0} = 0 \, ,  \ \partial_zK_1^j\vert_{z=-1} = 0 \, . 
\end{equation}
As a consequence, if $E$ is defined by \eqref{525}, it follows by  \eqref{523}, \eqref{5211} and \eqref{5222} 
that the boundary conditions
\eqref{527} are satisfied. 

In order to prove the proposition we have thus to find symbols 
$e_{\pm,1}^j$, $K_1^j$, $ 1 \leq j \leq \rho - 1 $, as above so that the action of the operator
\eqref{521} on $E$ gives \eqref{526}. 

Using that $ \eta'\otimes\eta'  $ is  a symbol in  $\Gt{0}{2}$,  $  \eta'\xi $ is in $\Gt{1}{1}$, and
that the Poisson symbol $ \partial_z^2 e_+ $ is in $ \sP{2,+}{K,0,0}{N} $ and 
 $ \partial_z^2 e_- $ is in $ \sP{1,+}{K,0,0}{N} $, 
by the 
composition result of Proposition~\ref{517} we compute 
\begin{multline} \label{eq:5223}
\bigl((1+\opbw(\eta'\otimes\eta'))\partial_z^2-2i\opbw(\eta'\xi)\partial_z -\opbw(\xi^2)\bigr)\circ \opbw(e_\pm)\\
= \opbw(Pe_\pm) + \opbw\bigl(((\eta'\otimes\eta')\#\partial_z^2e_\pm)_{\rho,N} - \eta'{}^2\partial_z^2e_\pm\bigr)\\
-2i \opbw\bigl(((\eta'\xi)\#\partial_z e_\pm)_{\rho,N} - (\eta'\xi)\partial_ze_\pm\bigr)\\
-\opbw\bigl((\xi^2\# e_\pm)_{\rho,N} - \xi^2 e_\pm\bigr) 
+R_\pm(\eta)
\end{multline}
where  $ P $ is defined in \eqref{529} and where $R_+(\eta)$ (resp.\ $R_-(\eta)$) 
belongs to $\sR{-\rho+2,+}{K,0,1}{N}$ (resp.\
$\sR{-\rho+1,-}{K,0,1}{N}$). 
Recalling the definition  \eqref{528} of the symbols $ e_\pm^0  $, we have   
$ e_\pm = e_{\pm}^0 + \sum_{j=1}^{\rho-1} e_{\pm,1}^j $ 
and, since $Pe_{\pm}^0 = 0 $ by  \eqref{5211a}, we deduce that 
$Pe_\pm$ in \eqref{5223} is 
$$
Pe_\pm = \sum_{j=1}^{\rho-1}Pe_{\pm,1}^j \, . 
$$ 
The other symbols in the right hand side of \eqref{5223} may be written as
$$
\sum_{j=1}^{\rho-1}\Lcal_{j,\pm}(e_\pm^0,\dots,e_{\pm,1}^{j-1}) + \Lcal_{\rho,\pm}(e_\pm^0,\dots,e_{\pm,1}^{\rho-1})
$$ 
where $\Lcal_{j,+}$ (resp.\ $\Lcal_{j,-}$) belongs to $\sP{2-j,+}{K,0,1}{N}$
(resp.\ to $\sP{1-j,-}{K,0,1}{N}$), and depends only on $e_{\pm,1}^\ell$, with $\ell<j$, because of the
Definition~\ref{221} of $(\cdot\#\cdot)_\rho$ and of the fact that in the right hand side of \eqref{5223}, the first term of
the asymptotic expansion is removed from each expression. 
By the last remark following  Definition~\ref{514}, the paradifferential operators 
$ \opbw \big(\Lcal_{\rho,\pm}(e_\pm^0,\dots,e_{\pm,1}^{\rho-1})\big) $ may be incorporated to the smoothing remainder
$ R_\pm(\eta)  $, and  thus we may write the right hand side of \eqref{5223} as
\[
\sum_{j=1}^{\rho-1}\opbw\bigl(Pe_{\pm,1}^j - \Lcal_{j,\pm}(e^0_\pm,e_{\pm,1}^1,\dots,e_{\pm,1}^{j-1})\bigr) + R_\pm(\eta) \, .
\]
In order to prove the Proposition we need to find recursively Poisson symbols $ e_{\pm,1}^j $, $ j = 1, \dots, \rho - 1 $ 
(in the classes described above), solving
\begin{equation}\label{eq:5224}
\begin{split}
& Pe_{\pm,1}^j = \Lcal_{j,\pm}(e^0_\pm,e_{\pm,1}^1,\dots,e_{\pm,1}^{j-1})\\
& e_{\pm,1}^j\vert_{z=0} = 0\\
& \partial_ze_{\pm,1}^j\vert_{z=-1} = 0 \, .
\end{split}\end{equation}
By \eqref{5210}, the solution of \eqref{5224} is given by
\[ 
e_{\pm,1}^j = 
\int_{-1}^0K^0(\eta;z,z',x,\xi)\Lcal_{j,\pm}(e^0_\pm,e_{\pm,1}^1,\dots,e_{\pm,1}^{j-1})(\eta;z',x,\xi)\,dz'.
\]
By  Proposition~\ref{517}-(iv),  since 
$ K^0 $  is a Poisson symbol in $\sPi{-1}{K,0,0}{N}$ 
and  $\Lcal_{j,+}$ (resp.\ $\Lcal_{j,-}$) is in $\sP{2-j,+}{K,0,1}{N}$
(resp.\ to $\sP{1-j,-}{K,0,1}{N}$),
we  deduce that  $e_{+,1}^j $ is in $ \sP{-j,+}{K,0,1}{N}$ 
and $e_{-,1}^j $ is in $  \sP{-1-j,-}{K,0,1}{N}$.  

The formula \eqref{paze+} follows by \eqref{5212} and the fact that  
$ \sum_{j=1}^{\rho-1}e_{+,1}^j $ is a symbol  in $ \sP{-1,+}{K,0,1}{N}$.

Finally let us  construct iteratively in a similar way the 
Poisson symbols $ K^j_1  \in \sPi{-1-j}{K,0,1}{N} $, $ j = 1, \ldots, \rho -1 $,  
 in the last line of \eqref{5221}. 

First of all, recall that the Poisson symbol  $ K^0  $ 
in \eqref{528}  satisfies $ P K^0 = \delta(z-z')$ (see \eqref{5211a}) 
and that $ K^0 $ is in $\sPi{-1}{K,0,0}{N}$.
Moreover by \eqref{defK0} and \eqref{5220} we deduce that $ \partial_z K^0 (\eta; \cdot) $ is in
$\sPi{0}{K,0,0}{N}$ as the coefficient of $\delta(z-z')$ in the $\partial_z$ derivative vanishes. Finally,
by the equation  $ P K^0 = \delta(z-z')$ and recalling the expression of $ P $ in \eqref{529}, we deduce that 
$$
\partial_z^2K_0 - (1+\eta'{}^2)^{-1}\delta(z-z') \quad {\rm  is \ in } \ \sPi{1}{K,0,0}{N} \, . 
$$
Therefore applying the composition result of Proposition \ref{517}-(ii) we get
\be\label{eq:first-step}
\opbw ( P) \circ \opbw ( K^0 ) = \delta (z- z') + {\cal L}_1 + R 
\ee
where (recall the definition of $ P $  in \eqref{529}) 
\begin{equation}
\label{eq:5227}
\Lcal_1 \stackrel{\textrm{def}}{=} \bigl(((1+\eta'{}^2)\partial_z^2 -2i\eta'\xi\partial_z -\xi^2)\# K^0\bigr)_{\rho,N} - \delta(z-z')
\end{equation}
belongs to $\sPi{0}{K,0,1}{N}$ and 
the smoothing operator $ R $ is in $ \sRi{-\rho+1}{K,0,1}{N}$. 
Then we argue iteratively. Suppose that we have yet  defined   
Poisson symbols $\Lcal_1, \ldots, \Lcal_{j} $ with $ \Lcal_{j} \in \sPi{1-j}{K,0,1}{N} $,  and
$ K_1^1,\dots, K_1^{j-1}$ with $K_1^{j-1}(\eta; \cdot) \in \sPi{-j}{K,0,1}{N}$. 
Then  define, for $ j \geq 1 $,  the $ K_1^j $ as the solution of 
\begin{equation}
  \label{eq:5225}
  \begin{split}
    PK_1^j(\eta;z,z',x,\xi) &= -\Lcal_j(\eta;z,z',x,\xi)\\
K_1^j\vert_{z=0} &= 0\\
\partial_z K_1^j\vert_{z=-1} &= 0 
 \end{split}
\end{equation}
(where $P$ is the operator \eqref{529} acting relatively to the $z$ variable, $z'$ being a parameter)
and 
\begin{equation}
  \label{eq:5228}
  \Lcal_{j+1} \stackrel{\mathrm{def}}{=}  \bigl(((1+\eta'{}^2)\partial_z^2 -2i\eta'\xi\partial_z -\xi^2)\# K_1^{j}\bigr)_{\rho,N}  +\Lcal_{j} \, . 
\end{equation}
By \eqref{5210}, the solution $ K_1^j $ of \eqref{5225} is given by
\begin{equation}
  \label{eq:5226}
  K_1^j(\eta;z,z',x,\xi) = -\int_{-1}^0 K^0(\eta;z,z'',x,\xi) \Lcal_j(\eta;z'',z',x,\xi)\,dz'' 
\end{equation}
and, since 
$ K^0 $ is in $\sPi{-1}{K,0,0}{N}$ and $ \Lcal_j $ in $  \sPi{1-j}{K,0,1}{N} $, 
 Proposition~\ref{517}-(iii) implies that $ K_1^j $ is a Poisson symbol 
in $\sPi{-1-j}{K,0,1}{N}$. In the same way, since $\partial_z
K^0(\eta;\cdot)$ is in $\sPi{0}{K,0,0}{N}$, we deduce that 
$$
\partial_zK_1^j (\eta; \cdot) = - \int_{-1}^0 \partial_z K^0(\eta;z,z'',x,\xi) \Lcal_j(\eta;z'',z',x,\xi)\,dz''
$$ 
is a Poisson symbol in $\sPi{-j}{K,0,1}{N}$, and, finally, using the first
equation \eqref{5225} and recalling the definition of $P $  in \eqref{529}, 
we get that $\partial_z^2K_1^j (\eta; \cdot) $ is in $\sPi{1-j}{K,0,1}{N}$. 
We conclude that the symbol  $ \Lcal_{j+1} $ defined in \eqref{5228} is in $\sPi{-j}{K,0,1}{N} $, because,
by the first equation in \eqref{5225},  the first term in the expansion of  $ (P \# K_1^{j})_{\rho,N} $ cancels out. 

In conclusion, applying the composition  result of Proposition \ref{517}, we deduce that 
the Poisson symbol $ K (\eta; \cdot ) = K^0 + \sum_{j=1}^{\rho-1} K_1^j (\eta; \cdot) $ 
satisfies, by \eqref{first-step}, \eqref{5225}, \eqref{5228}, 
\begin{multline*}
  \opbw\Bigr((1+\eta'{}^2)\partial_z^2 -2i\eta'\xi\partial_z -\xi^2\Bigl) \circ \opbw\bigr(K(\eta;z,z',\cdot)\bigl)\\
= \delta(z-z') + \opbw(\Lcal_\rho) + R(\eta;z,z')
\end{multline*}
for some smoothing remainder  $R$ in $\sRi{-\rho+1}{K,0,1}{N}$. 
As $\opbw(\Lcal_\rho)$ may be incorporated to such a term (see the last remark after Definition~\ref{514}), we 
obtain that the action
of \eqref{521} on $\int_{-1}^0\opbw(K(\eta;z,z',\cdot))f(z')\,dz'$ gives the first and last terms in \eqref{526}. This
concludes the proof.
\end{proof1}

\section{Solving the Dirichlet-Neumann problem}\label{sec:53}

The goal of this section is to deduce from the parametrix constructed in section~\ref{sec:52} 
an expression for the solution of the Dirichlet-Neumann boundary value problem \eqref{531} below.
\begin{proposition}
  \label{531} {\bf (Dirichlet-Neumann problem)}
Let $\rho, N, K$ be given integers. There is $\sigma>0$ such that, for any $s\in \R, s'\in \N$ with $s-s'\geq\sigma$, there
is $r>0$ and for any function $\eta$ in $\CKH{\sigma}{\R}$ in the ball $\Br{K}{}$ defined in \eqref{216}, for any
function $f$ in the space $\bigcap_{0}^K C^k(I,E^{s-2-\frac{3}{2}k}_{s'})$ (with $E_{s'}^s$ defined in \eqref{5111}), any 
$$
g_+ \in C^K_*(I,\Hds{s}(\Tu,\R)) \, , \quad g_- \in C^K_*(I,\Hds{s-1}(\Tu,\R)) \, , 
$$
the boundary value problem 
\begin{equation}
  \label{eq:531}
\begin{split}
  \bigl[(1+\opbw(\eta'\otimes\eta'))\partial_z^2 - 2i\opbw(\eta'\xi)\partial_z -\opbw(\xi^2)\bigr]u &= f\\
u\vert_{z=0} &= g_+\\
\partial_z u\vert_{z=-1} &= g_-
\end{split}\end{equation}
has a unique solution $u$ in $\bigcap_{0}^K C^k(I,E^{s-\frac{3}{2}k}_{s'})$. Moreover, there are 

$\bullet$ Poisson symbols $e_+ (\eta; \cdot )$ in $\sP{0,+}{K,0,0}{N}$, $e_- (\eta; \cdot )$ in $\sP{-1,-}{K,0,0}{N}$, $K(\eta; \cdot )$ in  $\sPi{-1}{K,0,0}{N}$ (which are those defined in \eqref{524}), 

$\bullet$ smoothing operators $R_+$ in  $\sR{-\rho,+}{K,0,1}{N}$, $R_-$ in $\sR{-\rho-1,-}{K,0,1}{N}$, $R_{\textrm{int}}$ in $\sRi{-\rho-1}{K,0,1}{N} $,  

such that for any
$f, g_+, g_-$ as above, the solution $u$ to \eqref{531} may be written as
\begin{multline}
  \label{eq:532}
u = \opbw(e_+(\eta;z,\cdot))g_+ + \opbw(e_-(\eta;z,\cdot))g_-\\
+ \int_{-1}^0\opbw(K(\eta;z,z',\cdot))f(z',\cdot)\,dz'\\
+ R_+(\eta;z)g_+ + R_-(\eta;z)g_- + \int_{-1}^0 R_{\textrm{int}}(\eta;z,z')f(z',\cdot)\,dz'.
\end{multline}
Moreover
\begin{equation}
  \label{eq:534}
  \partial_z e_+ (\eta; z,x, \xi )\vert_{z=0} = (\tanh\xi)\xi + i\frac{\eta'}{1+\eta'{}^2}\xi -  (\tanh\xi)\xi  \frac{\eta'{}^2}{1+\eta'{}^2}
\end{equation}
modulo $\sGa{0}{K,0,1}{N}$.
\end{proposition}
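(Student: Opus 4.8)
\textbf{Plan of proof of Proposition~\ref{531}.}
The idea is to invert the elliptic operator in \eqref{531} using the parametrix $E$ constructed in Proposition~\ref{521}, and then to correct the error by a Neumann series. First I would set
\[
u_0 = E(t,z)[g_+,g_-,f] = \opbw(e_+)g_+ + \opbw(e_-)g_- + \int_{-1}^0\opbw(K(\eta;z,z',\cdot))f(z',\cdot)\,dz'
\]
with $e_\pm,K$ as in \eqref{524}--\eqref{525}. By \eqref{526}, applying the operator $L_\eta := (1+\opbw(\eta'\otimes\eta'))\partial_z^2 - 2i\opbw(\eta'\xi)\partial_z -\opbw(\xi^2)$ to $u_0$ produces $f$ plus a remainder
\[
\Rcal(\eta)[g_+,g_-,f] = R_+(\eta;z)g_+ + R_-(\eta;z)g_- + \int_{-1}^0 R_{\mathrm{int}}(\eta;z,z')f(z',\cdot)\,dz',
\]
where $R_+$ is in $\sR{-\rho+2,+}{K,0,1}{N}$, $R_-$ in $\sR{-\rho+1,-}{K,0,1}{N}$, $R_{\mathrm{int}}$ in $\sRi{-\rho+1}{K,0,1}{N}$, and the boundary conditions \eqref{527} are exactly satisfied. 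So $u_0$ solves the boundary value problem up to a smoothing source term; one then has to absorb it.

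Second, I would look for the true solution as $u = u_0 + v$ where $v$ solves $L_\eta v = -\Rcal(\eta)[g_+,g_-,f]$ with zero Dirichlet datum at $z=0$ and zero Neumann datum at $z=-1$. Applying the parametrix again (only the interior term $\int_{-1}^0\opbw(K)(\cdots)\,dz'$ now contributes, since both boundary data vanish), one is reduced to inverting $\mathrm{Id} + \mathcal{S}$ where $\mathcal{S}$ is the operator with kernel $\int_{-1}^0 \opbw(K(\eta;z,z'',\cdot))R_{\mathrm{int}}(\eta;z'',z')\,dz''$ composed analogously with $R_\pm$. By Proposition~\ref{518} (composition of a para-Poisson operator of order $-1$ with a smoothing operator) and Proposition~\ref{517}, $\mathcal{S}$ maps the source classes into themselves with an \emph{extra} gain coming from $\rho$, so that it is small in operator norm on the relevant $E^s_{s'}$ scale once $r$ (hence $\nnorm{\eta}$) is small; thus $\mathrm{Id}+\mathcal{S}$ is invertible by a convergent Neumann series, and $(\mathrm{Id}+\mathcal{S})^{-1} = \mathrm{Id} + \Tcal$ with $\Tcal$ still a smoothing operator of the same type (using again that compositions of smoothing operators stay smoothing, cf.\ the remarks after Definitions~\ref{512} and~\ref{514}). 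Tracking the gains carefully, the $\rho+2$, $\rho+1$ coming from Proposition~\ref{521} and the order $-1$ of $K$ give, after renaming $\rho$, the claimed classes $\sR{-\rho,+}{K,0,1}{N}$, $\sR{-\rho-1,-}{K,0,1}{N}$, $\sRi{-\rho-1}{K,0,1}{N}$ for the final remainders $R_+,R_-,R_{\mathrm{int}}$ in \eqref{532}. Uniqueness follows from an energy estimate for $L_\eta$: multiplying $L_\eta w=0$ with homogeneous data by $\bar w$ and integrating by parts over the strip, using that $\opbw(\eta'\otimes\eta')$ is a small perturbation of the identity and $\opbw(\eta'\xi)$ is of order one with the right self-adjointness up to lower order, forces $w\equiv 0$ in $\bigcap C^k(I,E^{s-\frac32 k}_{s'})$; the regularity of $u$ follows from the mapping properties of the para-Poisson operators in Lemmas~\ref{516}, \ref{para-BI}, \ref{Poisson:boundary-interior}, which place $u_0$ (and $v$) precisely in $\bigcap_0^K C^k(I,E^{s-\frac32 k}_{s'})$ given $f\in\bigcap_0^K C^k(I,E^{s-2-\frac32 k}_{s'})$, $g_+\in C^K_*(I,\Hds{s})$, $g_-\in C^K_*(I,\Hds{s-1})$.

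Third, for the expansion \eqref{534} I would simply note that the corrective term $v$ and the smoothing remainders $R_\pm, R_{\mathrm{int}}$ contribute to $\partial_z u|_{z=0}$ only through operators of order $\le 0$ in the class $\sGa{0}{K,0,1}{N}$ (and smoothing ones), since they arise from compositions involving the smoothing operators $\Rcal$ and the Poisson symbols of strictly negative order; hence $\partial_z u|_{z=0} = \partial_z e_+(\eta;z,\cdot)|_{z=0}\,g_+ + (\text{order }\le 0)$, and \eqref{534} is exactly \eqref{paze+} of Proposition~\ref{521} with $\xi\tanh\xi$ restored (the term $\xi\tanh\xi$ was absorbed in \eqref{paze+} because there it was written modulo $\sGa{0}{K,0,1}{N}$, whereas here the principal part $(\tanh\xi)\xi$ is displayed). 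The main obstacle is the bookkeeping of orders and of the indices $(-\rho,p,N)$ and the powers of $z$, $1+z$, $z-z'$ through the various compositions in Propositions~\ref{517}--\ref{518}: one must check that the Neumann series genuinely converges in the scale $E^s_{s'}$ uniformly for $\eta$ in a small ball, i.e.\ that each composition with $\mathcal{S}$ strictly improves the smoothing exponent, so that after finitely many iterations the residual term is as regularizing as desired and, for the homogeneous components, the multilinear degree increases so that the tail beyond degree $N-1$ is purely non-homogeneous. Everything else is a routine assembly of the symbolic and smoothing calculus already developed in Section~\ref{sec:51}.
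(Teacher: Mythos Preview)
Your overall strategy—use the parametrix $E$ of Proposition~\ref{521} for the leading term, then correct by a Neumann series—is correct, and the paper proceeds the same way. However, the paper inverts a \emph{different} operator than you do, and this difference matters for the bookkeeping you flag as the ``main obstacle''.

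You propose to re-apply the interior parametrix $\tilde E[\cdot]=\int_{-1}^0\opbw(K(\eta;\cdot))\,\cdot\,dz'$ and invert $\mathrm{Id}+R_{\mathrm{int}}$ (the remainder from Proposition~\ref{521}) on the source. The paper instead rewrites the equation for the correction $W$ as $(\partial_x^2+\partial_z^2)W=LW+G$, moving the $\eta$-dependent part $L=-\opbw(\eta'\otimes\eta')\partial_z^2+2i\opbw(\eta'\xi)\partial_z$ to the right, and then uses the \emph{flat} kernel $K_0$ to obtain $(\mathrm{Id}-M)W=H$ with $MW=\int K_0\,LW$. The key Lemma~\ref{532} shows directly, after an integration by parts turning $\partial_z^2,\partial_z$ into tangential derivatives hitting $K_0$, that $M$ is bounded on $E^s_{s'}$ with norm $O(\norm{\eta}_{\Hds{\sigma}})$ and that $M$ (and hence $\sum_{k\ge1}M^k$) preserves the structure $R_+g_++R_-g_-+\int R_{\mathrm{int}}f$ with the right smoothing indices. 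What this buys over your route is that the composition results needed are exactly Propositions~\ref{517}--\ref{518} (para-Poisson with smoothing), whereas your Neumann series for $(\mathrm{Id}+R_{\mathrm{int}})^{-1}$ requires iterating $R_{\mathrm{int}}\in\sRi{-\rho+1}{K,0,1}{N}$ with itself; a composition $\sRi{}\circ\sRi{}$ in these classes is not established anywhere in the chapter and would need a separate argument at the kernel level. Your approach is viable, but not ``routine assembly'' of what is already there.

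Two smaller points. First, your sentence about $\mathcal S$ having ``an extra gain coming from $\rho$, so that it is small'' conflates two different things: the smoothing index $\rho$ gives regularity, not smallness; smallness on $E^s_{s'}$ comes solely from the factor $\Gcals{\sigma}{0,1}{\eta}$ in the estimates (the homogeneity index $p\ge1$), exactly as in Lemma~\ref{532}(i). Second, \eqref{534} is a statement about the \emph{symbol} $e_+$, not about $\partial_z u|_{z=0}$; it follows immediately from $e_+=\Ccal+e_{+,1}$ with $\partial_z\Ccal|_{z=0}=\xi\tanh\xi$ and \eqref{paze+}, so your detour through the corrector $v$ is unnecessary there.
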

\begin{proof}
  In the proof, we shall ignore the time dependence i.e.\ argue like if $K=0$.
We look for the solution $u$ of \eqref{531} as 
$$
u = E(t,z)[g_+,g_-,f] + W
$$ 
where the first term is given by \eqref{525} (and the first two lines in \eqref{532}), 
and the remainder $W$ satisfies, according to \eqref{526}-\eqref{527},  
\begin{equation}
  \label{eq:535}
  \begin{split}
    \bigl[(1+&\opbw(\eta'\otimes\eta'))\partial_z^2 -2i\opbw(\eta'\xi)\partial_z -\opbw(\xi^2)\bigr]W\\
&= -R_+^2(\eta;z)g_+ - R_-^1(\eta;z)g_- -\int_{-1}^0 R'_\mathrm{int}(\eta;z,z')f(z')\,dz'\\
&W\vert_{z=0} = 0\\
&\partial_z W\vert_{z=-1} = 0
  \end{split}
\end{equation}
with $R^2_+$ (resp.\ $R^1_-$, resp.\ $R'_\mathrm{int}$) in $\sR{-\rho+2,+}{K,0,1}{N}$ (resp.\ $\sR{-\rho+1,-}{K,0,1}{N}$, resp.\
$\sRi{-\rho+1}{K,0,1}{N}$). We rewrite the equation \eqref{535} as
\begin{equation}
  \label{eq:536}
  \begin{split}
    (\partial_x^2 +\partial_z^2)W &= LW+G\\
W\vert_{z=0} &= 0\\
\partial_z W\vert_{z=-1} &= 0
  \end{split}
\end{equation}
with
\begin{equation}
  \label{eq:537}
\begin{split}
  LW &= -\opbw(\eta'\otimes\eta')\partial_z^2W + 2i\opbw(\eta'\xi)\partial_zW\\
G &= -R^2_+(\eta;z)g_+ - R^1_-(\eta;z)g_- - \int_{-1}^0 R'_\mathrm{int}(\eta;z,z')f(z')\,dz'.
\end{split}\end{equation}
By lemma~\ref{522}, applied with $\eta=0$, $g_+ = g_- = 0$, we may rewrite \eqref{536} as 
\begin{equation} \label{eq:538}
(\mathrm{Id}-M)W(z,x) = \int_{-1}^0K_0(z,z',D_x)G(z',\cdot)\,dz'
\end{equation}
where $ K_0 (z, z', \xi ) \in  \Pti{-1}{0}  $  is the  Poisson symbol  with constant coefficients (in $  x $) 
 defined in \eqref{523}, and $M = M(\eta)$ is the operator
\begin{equation}
  \label{eq:539}
  MW(z,x) \stackrel{\mathrm{def}}{=}   \int_{-1}^0 K_0(z,z',D_x)LW(z')\,dz'.
\end{equation}
In order to  prove that 
$ W $ has the form described in the last line in
\eqref{532},  we have  to invert the operator $ \mathrm{Id}-M $. 
This will be a consequence of the next lemma. 
 We denote  by $H$ the right hand side of \eqref{538}. 

\begin{lemma}
  \label{532}
(i) The operator $M$ is bounded from the subspace 
$$
\big\{ w \in E^s_{s'} \, : \, w \vert_{z=0}=0 \, ,  \partial_z w\vert_{z=-1} = 0 \big\}
$$ 
to itself, and its operator norm is $O_{s,s'}(\norm{\eta}_{\Hds{\sigma}})$ if $ \sigma>\frac{3}{2} $.

(ii) The right hand side $ H $ of \eqref{538} has the same structure as $ G $ 
in \eqref{537} except that the smoothing operators
$R_+^2, R_-^1, R'_{\mathrm{int}}$ are respectively in $\sR{-\rho,+}{K,0,1}{N}$, $\sR{-\rho-1,-}{K,0,1}{N}$,
$\sRi{-\rho-1}{K,0,1}{N}$. Moreover $ H \vert_{z=0}=0 $ and $  \partial_z H \vert_{z=-1} = 0 $. 

(iii) The operator $ M $ sends an expression of the form $ G $  in \eqref{537}, 
satisfying $ G \vert_{z=0} = 0 $ and $  \partial_z G \vert_{z=-1} = 0 $, 
to another expression of the same type. Moreover
$\sum_{k = 1}^{+\infty}M^k$ sends an expression of the form $G$ satisfying 
$ G \vert_{z=0}=0 $ and $  \partial_z G \vert_{z=-1} = 0 $, 
to a similar expression, with $R^2_+$ in 
$ \sR{-\rho,+}{K,0,1}{N}$,
$R^1_-$ in $\sR{-\rho-1,-}{K,0,1}{N}$,  $R'_{\mathrm{int}}$ in $\sRi{-\rho-1}{K,0,1}{N}$.
\end{lemma}
\begin{proof}
  (i) 
  By \eqref{539} and the definition \eqref{537} of $L$, 
 we may write
\begin{equation}
  \label{eq:5311}
  Mw(z,x) = \int_{-1}^0 K_0(z,z',D_x)[\partial_{z'}^2 w_1 +\partial_{z'} w_2](z',\cdot)\,dz'
\end{equation}
where 
\be\label{eq:defw1w2}
w_1 = -\opbw(\eta'\otimes\eta')w \, , \quad  w_2 = 2i\opbw(\eta'\xi)w \, . 
\ee
By Proposition~\ref{215}, 
if $w$ is a function of $E^s_{s'}$ 
then $ w_1 $ is in $E_{s'}^s$ and $ w_2 $ in $  E^{s-1}_{s'}$, with 
$  \norm{w_1}_{E_{s'}^s} + \norm{w_2}_{E_{s'}^{s-1}} \leq C\norm{\eta}_{\Hds{\sigma}}\norm{w}_{E^s_{s'}}$ (if $\sigma>\frac{3}{2}$). 
Performing integrations by parts in \eqref{5311},  
using the  boundary conditions $w\vert_{z=0} = \partial_zw\vert_{z=-1} =
0$, and that $ K_0 $ defined in \eqref{523} 
 satisfies 
$ K_0 (z, z', D_x)\vert_{z'=-1} = \Scal(z,D_x) $  and $ \partial_{z'} K_0 (z, z', D_x)\vert_{z'=0} = 0 $,  
we rewrite \eqref{5311} as
\begin{multline}
  \label{eq:5312}
w_1(z) - \Scal(z,D_x)w_2(-1) + \int_{-1}^0K_0(z,z',D_x)D_x^2 w_1(z')\,dz'\\
- \int_{-1}^0(\partial_{z'}K_0)(z,z',D_x) w_2(z')\,dz'.
\end{multline}
We apply lemma~\ref{516} to $K_0\in \Pti{-1}{0}$ and to $\partial_{z'}K_0\in \Pti{0}{0}$, 
and lemma \ref{Poisson:boundary-interior} to $\Scal \in \Pt{-1,-}{0}$. 
It follows from the previous bound 
on the functions $w_1, w_2$ defined 
in \eqref{defw1w2}, that \eqref{5312} belongs to $E^s_{s'}$, with a norm controlled from above by
$C\norm{\eta}_{\Hds{\sigma}}\norm{w}_{E^s_{s'}}$. Finally notice that  the function $ M w $ defined in \eqref{539} satisfies 
$ (Mw) (z, x) \vert_{z=0} = 0 $, 
$ \partial_z  (Mw)(z, x) \vert_{z=-1} = 0 $ 
because  $ K_0 (z, z', \xi) \vert_{z=0} = 0 $, 
$ \partial_z   K_0 (z, z', \xi) \vert_{z=-1}  = 0 $ . 
This proves (i) of the lemma.

(ii) follows from Proposition~\ref{518}-(ii)-(iii) as $K_0$ is in $\Pti{-1}{0}$.

(iii) Since $ G \vert_{z=0}=0 $ and $  \partial_z G \vert_{z=-1} = 0 $
the action of $M$ on $G$ is given by \eqref{5312}, where we replace $w_1$ (resp.\ $w_2$) by
$-\opbw(\eta'\otimes\eta')G$ (resp.\ $2i\opbw(\eta'\xi)G$). 
 If $ G $ has the form \eqref{537},  Proposition~\ref{518}-(i) implies that the function 
$ w_1 $ (resp.\ $w_2$) is of the
same form \eqref{537} as well, 
with $\rho$ replaced by $\rho-1$ in the case $w_2$ (and actually with a cubic degree of homogeneity). 
Using (ii) and (iii) of Proposition~\ref{518}, we
conclude that $ MG $ has an expression of the same form as $G$. 
Then we decompose 
$$
\sum_{k = 1}^{+\infty}M(\eta)^k G = \sum_{k = 1}^{N-2}M(\eta)^k G +  \sum_{k = N - 1}^\infty M(\eta)^k G 
$$
where, by what proved above, the first finite sum has  the form of $ G $ in \eqref{537}. 
To estimate  the series, use that 
the operator norm of $M(\eta)$ acting on $E^s_{s'} $ is
$O_s(\norm{\eta}_{\Hds{\sigma}})$ (for some $\sigma$ independent of $s$), 
as we proved in  item (i) of the present lemma. 
Consequently $\sum_{k = N-1}^{+\infty}M(\eta)^k$ sends an element of $\Rr{-\rho,\pm}{K,0,1}$, $\Rri{-\rho}{K,0,1}$ to an
element of $\Rr{-\rho,\pm}{K,0,N}$, $\Rri{-\rho}{K,0,N}$, using estimates \eqref{5115b}, \eqref{5115c}, the above continuity
property of $M(\eta)$ and the fact that we may choose the numbers $r(s)$ in (ii) of Definition~\ref{514} small enough.
\end{proof}

For $ \norm{\eta}_{\Hds{\sigma}} $ small 
enough, lemma \ref{532}-(i) implies that 
the operator $ \mathrm{Id}-M $ is invertible, 
and, denoting by $H$ the right hand side of \eqref{538}, 
we may write the solution $ W$ of \eqref{538} as the Neumann series
\begin{equation}
  \label{eq:5310}
  W(z,x) = H + \sum_{k=1}^{+\infty}M^kH \, .
\end{equation}

{\vspace{2ex}\noindent{\sl End of the proof of Proposition~\ref{531}:}}
By lemma~\ref{532}-(ii)-(iii) 
the 
function $ W(z,x) $ in \eqref{5310} may be written as 
$$
R_+(\eta;z)g_++R_-(\eta;z)g_- + \int_{-1}^0R_{\mathrm{int}}(\eta;z,z')f(z')\,dz'
$$ 
in \eqref{532}
for suitable smoothing operators $R_+$ in  $\sR{-\rho,+}{K,0,1}{N}$, $R_-$ in $\sR{-\rho-1,-}{K,0,1}{N}$, $R_{\textrm{int}}$ in $\sRi{-\rho-1}{K,0,1}{N} $.

Finally the expansion \eqref{534} for $ e_+ = \Ccal + e_{+,1} $ defined in \eqref{524}, 
follows by 
\eqref{523} and \eqref{paze+}. 
This concludes the proof of the proposition.
\end{proof}

We shall need in the next chapter a variant of Proposition~\ref{531}, when one solves, instead of \eqref{531}, a differential
boundary value problem. Let us introduce first a class of operators playing the same role, for boundary value problems, as
the operators of Definition~\ref{216}. In the sequel, we shall consider functions $\eta$ with values in $\R$ as functions
with values in $\C^2$, identifying $\eta$ to the vector $\vect{\eta}{\eta}$ of $\C^2$.
\begin{definition}
  \label{533}
Let $m$ be in $\R_+$, $K, p$ in $\N$ with $p\leq N$, $r>0$, $j\in \N$.

(i) We denote by \index{Na@$\Nt{m}{p}$ (Space of homogeneous maps)} $\Nt{m}{p}$ the space of $(p+1)$-linear 
maps of the form
$$
(\eta_1,\dots,\eta_{p},\Phi)\to M(\eta_1,\dots,\eta_{p})\Phi \, , 
$$
symmetric in $(\eta_1,\dots,\eta_p)$, defined for $\eta_1,\dots,\eta_p$ in
$\Hds{\infty}(\Tu,\C^2)$, with values in the space of linear maps from $E^\infty_\infty$ to itself, such that, for some
$\mu\geq 0$, for any $(\eta_1,\dots,\eta_p)$ in $\Hds{\infty}(\Tu,\C^2)^p$, any $\Phi\in
C_*^K(I,E^\infty_\infty)$, any $n_0, n_{p+1}$ in $\N^*$, any $n' = (n_1,\dots,n_p)$ in $(\N^*)^p$, any $j$ in $\N$,
\begin{multline}
  \label{eq:5313}
\norm{\Pin{0}M(\Pin{1}\eta_1,\dots,\Pin{p}\eta_p)\Pin{p+1}\Phi}_{E^0_j}\\
\leq C(n_0+\cdots+n_{p+1})^m\abs{n'}^\mu\Gcalsm{0}{0,p}{\eta_1,\dots\eta_p} \Gcalstm{0,j}{0,1}{\Phi}
\end{multline}
(where we used notations \eqref{213} and \eqref{5115a}). Moreover, we  assume that
condition \eqref{2126} holds.

(When $p=0$, the
above conditions just mean that $M$ is a linear map from $ E^\infty_\infty $ to itself, satisfying 
estimate \eqref{5313} and \eqref{2126}).

(ii) We denote by \index{Nb@$\Nr{m}{K,j,p}$ (Space of non-homogeneous maps)} $\Nr{m}{K,j,p}$ the space of maps $(\eta,\Phi)\to M(\eta)\Phi$, linear in $\Phi$ such that there are
$\sigma_0, \mu$ in $\R_+$ so that, for any $\sigma>\sigma_0$, there is $r(\sigma)\in ]0,r[$, and for any $\eta\in
\Brs{K}{0}{\sigma}\cap \CKH{\sigma+\mu}{\C^2}$, any $\Phi$ in $C_*^K(I,E^\sigma_j)$, one has for any $0 \leq k\leq K$, any $0\leq
j'\leq j$, bounds
\begin{multline}
  \label{eq:5314}
\norm{\partial_t^k(M(\eta)\Phi)}_{E^{\sigma-m-\frac{3}{2}k}_{j'}} \leq C_\sigma\sum_{k'+k''=k}\bigl(\Gcals{\sigma_0}{k',p-1}{\eta}
\Gcalst{\sigma_0,j'}{k'',1}{\Phi}  \Gcals{\sigma+\mu}{k',1}{\eta}\\
+ \Gcals{\sigma_0}{k',p}{\eta}\Gcalst{\sigma,j'}{k'',1}{\Phi}\bigr) \, .
\end{multline}

(iii) We denote by \index{Nc@$\sN{m}{K,j,p}{N}$ (Space of maps)} $\sN{m}{K,j,p}{N}$ the space of sums
\begin{equation}
  \label{eq:5315}
M(\eta) =  \sum_{q=p}^{N-1}M_q(\eta,\dots,\eta) + M_N(\eta)
\end{equation}
where $M_q$ is in $\Nt{m}{q}$ for $q=p,\dots,N-1$ and $M_N$ is in $\Nr{m}{K,j,N}$. 

We denote by $\sN{}{K,j,p}{N}$ the union over $m\geq 0$ of the preceding spaces.
\end{definition}
\textbf{Remarks}: $\bullet$ 
If $M$ is in $\Nt{m}{p}$ then $M(\eta,\dots,\eta)$ is in $\Nr{m}{K,j,p}$ for any $r$, any $j$.
Indeed, the fact that condition \eqref{2126} holds for elements of $\Nt{m}{p}$ implies that,  if
\eqref{5313} does not vanish identically, then $\abs{n_0-n_{p+1}}\leq C\abs{n'}$. Using this inequality, one checks that, 
if $ M $ is in $\Nt{m}{p}$, then $M(\eta,\dots,\eta)$ satisfies bounds \eqref{5314} for any $j$ if $\sigma_0$ is large
enough. Actually, if the largest frequency among $n_0,\dots,n_{p+1}$ is $n_0$ or $n_{p+1}$, and if it is much larger than
$n_1,\dots,n_{p}$, then both of them are of the same
magnitude, and one will get a contribution to the left hand side of \eqref{5314} bounded from above by the last term in the
right hand side. On the other hand, if one among $n_1,\dots,n_{p}$ is  larger than all other integers among
$n_0,\dots,n_{p+1}$, one will get a bound by the first term in the right hand side of \eqref{5314}, the last factor in that
expression $\Gcalsm{\sigma+\mu}{0,1}{\eta_1,\dots,\eta_p}$ (with a $\mu$  possibly different from the one in \eqref{5313}) coming from that
special index. Consequently $M(\eta,\dots,\eta)$ is in $\Nr{m}{K,j,p}$ for any $r$ and any $j$.

In particular, an operator $ M(\eta) $ of $\sN{m}{K,j,p}{N}$ sends a
couple $(\eta, \Phi) $ where 
$\eta $ is in $ \Brs{K}{0}{\sigma}\cap \CKH{\sigma+\mu}{\C^2}$ and  $\Phi$ in $C_*^K(I,E^\sigma_j)$ 
to a function $ M(\eta) \Phi  $ in $  C_*^{K}(I, E^{\sigma-m}_j  ) $.

$\bullet$  
If $M(\eta)$ belongs to $\Nr{0}{K,j,1}$ then  the series $\sum_{\ell\geq p}M(\eta)^\ell$ defines an operator 
of $\Nr{0}{K,j,p}$.
Indeed, given $M(\eta)$ of $\Nr{0}{K,j,1}$, iterating estimate \eqref{5314} with $p=1$, $m=0$, one gets by
induction, for $k\leq K$,
\begin{multline*}
  \norm{\partial_t^k[M(\eta)^\ell\Phi]}_{E^{\sigma-\frac{3}{2}k}_j}\leq
  (3A_KC_\sigma)^\ell\\\times\sum_{k'+k''=k}\bigl(\Gcals{\sigma'_0}{k',1}{\eta}^{\ell-1}\Gcals{\sigma+\mu}{k',1}{\eta}\Gcalst{\sigma_0,j}{k'',1}{\Phi}
\\+ \Gcals{\sigma'_0}{k',1}{\eta}^\ell \Gcalst{\sigma,j}{k'',1}{\Phi}\bigr)
\end{multline*}
for some constant $A_K$ depending only on $K, j$, and some $\sigma'_0$ depending on $\sigma_0, \mu$. If we assume that
$\Gcalsm{\sigma'_0}{K,1}{\eta} = \nnorm{\eta}_{K,\sigma'_0} < r'(\sigma)$ for a small enough $r'(\sigma)$, we 
conclude that the series $\sum_{\ell\geq p}M(\eta)^\ell$ satisfies estimates of the form \eqref{5314} (with $m=0$)
and thus defines an element of $\Nr{0}{K,j,p}$.

\medskip

Let us consider a variant of the above spaces, obtained replacing in \eqref{5313}, \eqref{5314} the function $\Phi$ by
$\frac{\cosh((z+1)D)}{\cosh D}\psi$, for some function $\psi$ depending only on $x$. 
Since $ \frac{\cosh((z+1)\xi)}{\cosh \xi} $ is in $ \Pt{0,+}{0} $ (see \eqref{def:CSymbols}), 
 lemma \ref{para-BI} implies that, for any $ j $, 
\be\label{eq:Psi-psi}
\Norm{\frac{\cosh((z+1)D)}{\cosh D}\psi}_{E^s_j} \leq C\norm{\psi}_{\Hds{s}} \, .
\ee
The following definition of families of operators sending functions of
$x$ to functions of $(z,x)$ is suggested by the estimates obtained plugging \eqref{Psi-psi} inside  \eqref{5313}, \eqref{5314}.
\begin{definition}
  \label{533a}
Let $m$ be in $\R_+$, $K, p$ in $\N$ with $p\leq N$, $r>0$, $j\in \N$.

(i) We denote by \index{Nd@$\Ntd{m}{p}$ (Space of maps)} $\Ntd{m}{p}$ the space of symmetric $p$-linear maps of 
the form $(\eta_1,\dots,\eta_{p}) \to M(\eta_1,\dots,\eta_{p})$, 
 defined  on  $\Hds{\infty}(\Tu,\C^2)^p$, with values in the space of linear maps from 
 $\Hds{\infty}(\Tu,\C)$ to $E^\infty_\infty$, such that 
there is $\mu>0$ and for any  $(\eta_1,\dots,\eta_p)$ in $\Hds{\infty}(\Tu,\C^2)$, any $\psi$ in  $\Hds{\infty}(\Tu,\C)$,
 any $n_0, n_{p+1}$ in $\N^*$, any $n' = (n_1,\dots,n_p)$ in $(\N^*)^p$, any $j$ in $\N$,
\begin{multline}
  \label{eq:5313a}
\norm{\Pin{0}M(\Pin{1}\eta_1,\dots,\Pin{p}\eta_p)\Pin{p+1}\psi}_{E^0_j}\\
\leq C(n_0+\abs{n'}+n_{p+1})^m\abs{n'}^\mu \Gcalsm{0}{0,p}{\eta_1,\dots,\eta_p}\Gcalsm{0}{0,1}{\psi}
\end{multline}
and such that condition \eqref{2126} holds.

(When $p=0$, the
above conditions just mean that $M$ is a linear map from  $\Hds{\infty}(\Tu,\C)$ to $E^\infty_\infty $, satisfying 
estimate \eqref{5313a} and \eqref{2126}).

(ii) We denote by \index{Ne@$\Nrd{m}{K,j,N}$  (Space of maps)} $\Nrd{m}{K,j,N}$ the space of maps $(\eta,\psi)\to
M(\eta)\psi$, linear in $\psi$, such that there are $\sigma_0$ and $\mu$ in $\R_+$ so that, for any $\sigma>\sigma_0$, there
is $r(\sigma)<r$ and, for any $\eta$ in $\Brs{K}{0}{\sigma}\cap\CKH{\sigma+\mu}{\C^2}$, any $\psi$ in $\CKH{\sigma}{\C}$, one
has for any $0 \leq k\leq K$, any $0 \leq j'\leq j$, the bounds
\begin{multline}
  \label{eq:5314a}
\norm{\partial_t^k(M(\eta)\psi)}_{E^{\sigma-m-\frac{3}{2}k}_{j'}}\leq C\sum_{k'+k''=k}\bigl(\Gcals{\sigma_0}{k',N-1}{\eta} \Gcals{\sigma+\mu}{k',1}{\eta}
\Gcals{\sigma_0}{k'',1}{\psi}\\
+ \Gcals{\sigma_0}{k',N}{\eta}  \Gcals{\sigma}{k'',1}{\psi}\bigr).
\end{multline}

(iii) One denotes by \index{Nf@$\sNd{m}{K,j,p}{N}$ (Space of maps)} $\sNd{m}{K,j,p}{N}$ the space of sums \eqref{5315} with
$M_q$ in $\Ntd{m}{q}$ for $q = p,\dots,N-1$, and $M_N$ in $\Nrd{m}{K,j,N}$.
\end{definition}
\textbf{Remarks}: $\bullet$ 
By \eqref{Psi-psi}, the operator $ \frac{\cosh((1+z)D)}{\cosh D} $ is in $\Ntd{0}{0}$, and, 
 if $ M(\eta) $ is in $\sN{m}{K,j,p}{N}$ then 
the operator 
$$
\psi\to M(\eta)\frac{\cosh((1+z)D)}{\cosh D}\psi
$$
is an element of $\sNd{m}{K,j,p}{N}$. 

$ \bullet $ Composition at the left of an operator of
$\sNd{m}{K,j,p}{N}$ by an operator of $\sN{0}{K,j,0}{N}$ gives an element of $\sNd{m}{K,j,p}{N}$.

$\bullet$ It follows from estimates \eqref{5313a}, \eqref{5314a} that if $M(\eta)$ is in  the space $\sNd{m}{K,j,p}{N}$, then,
for any $ 0 \leq j_0\leq j$, the operator $\partial_z^{j_0}M(\eta)$ is in $\sNd{m+j_0}{K,j-j_0,p}{N}$ .

$\bullet$ If one multiplies at the left an element of the space $\sN{m}{K,j,p}{N}$ (resp.\ $\sNd{m}{K,j,p}{N}$) by a smooth function of
$(\eta, \eta',\dots,\eta^{(\ell)})$, defined on a neighborhood of zero, that vanishes with order $ p' $ at $\eta=0$, 
one gets an element of the space $\sN{m}{K,j,p+p'}{N}$ (resp.\
$\sNd{m}{K,j,p+p'}{N}$). 

$\bullet$ Let $(\Phi,\Phi')\to a(\Phi,\Phi';z,x,\xi)$ be an element of $\sGb{m}{K,0,j,p}{N}$
defined in Definition~\ref{511},
linear in $\Phi'$ (see the remarks after Definition~\ref{511}), and $M(\eta)$ be an operator in 
$\sNd{m'}{K,j,1}{N}$ for some $m'$.  Then, 
for any function $\psi (x)$, the map 
\[(\Phi,\psi)\to \tilde{a}(\Phi,\psi;z,x,\xi) = a(\Phi,M(\eta)\psi;z,x,\xi)\]
defines an element of $\sGb{m}{K,0,j,p+q}{N}$, with linear dependence in $\psi$.

$ \bullet $
If $M$ is in $\Ntd{m}{p} $ then $M(\eta,\dots,\eta)$ is in $ \Nrd{m}{K,j,p} $ for any $r > 0 $, any $j$. In particular, 
an operator $ M(\eta) $ of $\sN{m}{K,j,p}{N}$ sends a
couple $(\eta, \psi) $ where 
$\eta $ is in $ \Brs{K}{0}{\sigma}\cap C^K_* (I, {\dot H}^{\sigma+ \mu} (\Tu, \C^2) )$ and  $\psi$ in $ \CKH{\sigma}{\C} $
to a function $ M(\eta) \psi  $ in $  C_*^{K}(I, E^{\sigma-m}_j  ) $.

$ \bullet $ 
If $ M $ is an operator in $ \sNd{m}{K,0,q}{N} $ then it has the form \eqref{MUVB} 
and \eqref{5126} holds (with $ m $ replaced by $ m + \mu $) as well as \eqref{5127} (with some $ \sigma_0' $ large
depending on $ K $ and $m$). 

$\bullet$ Consider a smoothing operator $R(\eta)$ in $\sRb{-\rho}{K,0,j,p}{N}$ (see Definition~\ref{512})
and let $M(\eta)$ be in
    $\sNd{m}{K,j,q}{N}$. Then the composed operator 
    $R(\eta)\circ M(\eta)$ defines a smoothing operator of $\sRb{-\rho+m'}{K,0,j,p+q}{N}$  for some $m'\geq    m$, acting on
    a function    $ \psi (x) $. The 
   argument is the same as in  Proposition \ref{233}-(ii)-(iii). 
    Here we adopt the abuse of notation introduced in the last remark after Definition~\ref{512} 
    to denote these classes with the same notations. 
    
$\bullet$ If  $R(\Phi,\Phi')$ is a smoothing operator of $\sRb{-\rho}{K,0,j,p}{N}$ which is linear in $\Phi'$, and
$M(\eta)$ in $\sNd{m}{K,j,q}{N}$, then $R(\Phi,M(\eta)\psi)$ is a smoothing 
operator of $\sRb{-\rho+m'}{K,0,j,p+q}{N}$ for some
$m'\geq  m$, and using again the
abuse of notation  introduced in the last remark after Definition~\ref{512}. 
The proof is the same  as for Proposition \ref{233}-(iv), 
composing as above \eqref{518} and \eqref{5313a}, or \eqref{519} and \eqref{5314a}.

$\bullet$ 
Let $\tilde{M}(\eta)$ be an operator in $\sNd{m}{K,0,1}{N-1}$, for some $m$. 
Then the para-product operator 
$ \opbw(\tilde{M}(\eta)\psi ) $ 
defines an element of $\sNd{\frac{1}{2}}{K,0,1}{N}$, 
 linear  in $ \psi $, i.e.  the nonhomogeneous part of $ \opbw(\tilde{M}(\eta)\psi ) \tilde \psi  $
  satisfies estimates of the form \eqref{5314a} (with $ m = 1 / 2 $ and $ j'=0 $), where the right hand side is replaced by
\begin{multline}\label{eq:5314aab}\sum_{k'+k''=k}
\bigl(\Gcals{\sigma_0}{k',N-2}{\eta}\Gcals{\sigma_0}{k',1}{\psi} \Gcals{\sigma+\mu}{k',1}{\eta}
\Gcals{\sigma_0}{k'',1}{\tilde{\psi}}\\+ 
\Gcals{\sigma_0}{k',N-1}{\eta} \Gcals{\sigma+\mu}{k',1}{\psi}
\Gcals{\sigma_0}{k'',1}{\tilde{\psi}}\\
+ \Gcals{\sigma_0}{k',N-1}{\eta} \Gcals{\sigma_0}{k',1}{\psi} \Gcals{\sigma}{k'',1}{\tilde{\psi}}\bigr) \, .
\end{multline}
We write the reasoning just for the nonhomogeneous
 term  $\tilde{M}$ in $\Nrd{m}{K,0,N-1}$. 
  At each fixed $z \in [-1,0]$,  $\opbw\bigl((\tilde{M}(\eta)\psi)(z;\cdot)\bigr)\tilde{\psi}$ is the para-product  of the function
$( \tilde{M}(\eta)\psi)(z;\cdot)$ by $\tilde{\psi}$. Consequently, 
for any $\sigma$,  
\begin{multline}\label{eq:para-pr1}
\norm{\partial_t^k\opbw\bigl(
(\tilde{M}(\eta)\psi)(z;\cdot)\bigr)\tilde{\psi}}_{\Hds{\sigma-\frac{3}{2}k}} \leq \\
C \sum_{k_1+k_2 =
    k}\norm{\partial_t^{k_1} (\tilde{M}(\eta)\psi)(z;\cdot)}_{\Hds{1}}\norm{\partial_t^{k_2}\tilde{\psi}}_{\Hds{\sigma-\frac{3}{2}k_2}} \, . 
    \end{multline}
  By \eqref{5314a} (with $ N- 1$ replaced by $ N - 2 $ and $ N $ by $ N - 1 $), we may find some
  $ \sigma'_0 $ large enough (depending on $ \sigma_0 $, $ m $ and $ K$) 
  such that,  for any $z$ in $[-1,0]$, any $ 0\leq k_1 \leq K$,
$$
\norm{\partial_t^{k_1} (\tilde{M}(\eta)\psi)(z;\cdot)}_{\Hds{1}}\leq C\sum_{k'+k'' =
    k_1}\Gcals{\sigma_0'}{k',N-1}{\eta}\Gcals{\sigma_0'}{k'',1}{\psi} \, .
$$
As a consequence, \eqref{para-pr1} is bounded by
$$
C \sum_{k_1+k_2 =
  k}\Gcals{\sigma_0'}{k_1,N-1}{\eta}\Gcals{\sigma_0'}{k_1,1}{\psi}\Gcals{\sigma}{k_2,1}{\tilde{\psi}} \, ,
  $$
uniformly in $ z $. Therefore
$ \| \partial_t^k \opbw (\tilde{M}(\eta)\psi ) \tilde{\psi} \|_{F_0^{\sigma-\frac{3}{2}k}} $  
is bounded by the right
hand side of \eqref{5314aab} (with $ \sigma_0'$ instead of $ \sigma_0 $). 
Recalling  the continuous embedding of $ F_0^{\sigma  -\frac{3}{2}k}$
into $E_0^{\sigma - \frac12 -\frac{3}{2}k}$ (see the first remark after Definition \ref{515})
we have proved that $ \opbw(\tilde{M}(\eta)\psi ) \tilde \psi  $ satisfies 
an estimate of the form \eqref{5314a} with $ m = 1 / 2 $ and $ j' = 0 $, as claimed.

\begin{proposition} \label{534}
Let $K$ be in $\N$, $N$ in $\N^*$, $m\geq 2$, $r>0$, $j$ in $\N$. Consider 
an operator $\tilde{M}(\eta)$ of $\sNd{m}{K,j,1}{N}$,
 and operators $G_\ell(\eta)$ of $\sN{2-\ell}{K,j,1}{N}$, $\ell = 0, 1, 2$. Assume also that we are given a function
  $\tilde{S}_N(\eta,\psi)$ of $(z, x) \in \Bcal $, supported for $z\in [-1,-\frac{1}{4}]$ and such that, for any large enough $\sigma_0$, for any $j$, there is some
  $\sigma'_0>0$ with, for all $k=0,\dots,K$,
  \begin{equation}
    \label{eq:5315a}
    \sum_{k'=0}^k\norm{\partial_t^{k'}\tilde{S}_N(\eta,\psi)}_{E^{j+\sigma_0-\frac{3}{2}k'}_j} \leq
    C\sum_{k'+k''=k}\Gcals{\sigma'_0}{k',N}{\eta}\Gcals{\sigma'_0}{k'',1}{\psi}.
  \end{equation}
Then, eventually increasing $ \sigma_0 $, for any $\sigma>\sigma_0$, there is $r(\sigma)<r$, such that for 
$\eta$ in $\Brs{K}{0}{\sigma}\cap \CKH{\sigma+\mu}{\C^2}$, for $\psi$ in $\CKH{\sigma}{\C}$, the elliptic problem 
\begin{equation}
  \label{eq:5316}
  \begin{split}
    (\partial_x^2+\partial_z^2)\tilde{\Phi} &= G_0(\eta)\Phit + \partial_z[G_1(\eta)\Phit] + \partial_z^2[G_2(\eta)\Phit] +
    \tilde{M}(\eta)\psi + \tilde{S}_N\\
\Phit\vert_{z=0} &= \psi\\
\partial_z\Phit\vert_{z=-1} &= 0
  \end{split}
\end{equation}
has a unique solution $\Phit$ in $C^K_*(I,E^{\sigma-m+2}_j)$. 
If $ \tilde{M} \equiv 0 $ and $ \tilde{S}_N \equiv 0$, then $\Phit$ is in $C^K_*(I,E^{\sigma}_j)$. 
Moreover $ \Phit $  may be written as
\begin{equation}
  \label{eq:5317}
  \Phit = \frac{\cosh((1+z)D)}{\cosh D}\psi + M(\eta)\psi + S_N(\eta,\psi)
\end{equation}
where $M(\eta)$ is some operator in $\sNd{m-2}{K,j,1}{N}$ and the function $S_N$ is supported for $-1\leq z\leq -\frac{1}{8}$ and
satisfies estimates \eqref{5315a} with $E_j^{j+\sigma_0-\frac{3}{2}k}$ replaced by $E^{j+2+\sigma_0-\frac{3}{2}k}_j$ in the left hand side. Moreover, if $\tilde{S}_N\equiv
0$ in \eqref{5316}, then $S_N\equiv 0$ in \eqref{5317} and if $\tilde{M} \equiv 0$, then  $M(\eta)$ is in $\sNd{0}{K,j,1}{N}$.
\end{proposition}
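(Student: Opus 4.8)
Throughout, set $\Delta_\flat=\partial_x^2+\partial_z^2=\partial_z^2-D^2$. The plan is to treat \eqref{5316} as the flat boundary value problem $\Delta_\flat\Phit=F$, $\Phit\vert_{z=0}=\psi$, $\partial_z\Phit\vert_{z=-1}=0$, whose Green's function is the explicit \emph{constant coefficient} kernel $K_0$ of \eqref{523}: recall $(\partial_z^2-\xi^2)K_0=\delta(z-z')$, $K_0\vert_{z=0}=0$, $\partial_zK_0\vert_{z=-1}=0$, and $K_0$ is symmetric in $(z,z')$, while $\Ccal(z,\xi)$ solves the homogeneous equation with $\Ccal\vert_{z=0}=1$, $\partial_z\Ccal\vert_{z=-1}=0$. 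Hence any solution may be written $\Phit=\Ccal(z,D)\psi+\mathcal{G}[F]$, where $\mathcal{G}[F]:=\int_{-1}^0K_0(z,z',D)F(z',\cdot)\,dz'$. I would substitute $F=G_0(\eta)\Phit+\partial_z[G_1(\eta)\Phit]+\partial_z^2[G_2(\eta)\Phit]+\tilde M(\eta)\psi+\tilde S_N$ and integrate by parts in $z'$ the two terms carrying $\partial_{z'}$, $\partial_{z'}^2$, using the identities $K_0(z,0,D)=0$, $\partial_{z'}K_0(z,-1,D)=0$, $K_0(z,-1,D)=\Scal(z,D)$, $\partial_{z'}K_0(z,0,D)=\Ccal(z,D)$, and $\partial_{z'}^2K_0=D^2K_0+\delta(z-z')$. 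This produces: the local term $(G_2(\eta)\Phit)(z)$ (from the jump), the terms $D^2\mathcal G[G_2(\eta)\Phit]$ and the $\partial_{z'}K_0$-Poisson operator applied to $G_1(\eta)\Phit$, and boundary contributions $\Ccal(z,D)$ and $\Scal(z,D)$ applied to the traces at $z=0,-1$ of $G_1(\eta)\Phit$, $G_2(\eta)\Phit$ (well defined by the trace theorem on $E^s_j$ for $j$ large, which we may assume). Altogether \eqref{5316} becomes the fixed point equation $(\mathrm{Id}-\mathcal T(\eta))\Phit=\Ccal(z,D)\psi+\mathcal G[\tilde M(\eta)\psi]+\mathcal G[\tilde S_N]$, where $\mathcal T(\eta)$ is assembled from $G_0(\eta),G_1(\eta),G_2(\eta)$ and their boundary traces, composed on the left with $\mathcal G$, with the Poisson operator of $\partial_{z'}K_0\in\Pti{0}{0}$, with $\Ccal(z,D),\Scal(z,D)$, with $D^2\mathcal G$, and with the identity.

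The next step is to invert $\mathrm{Id}-\mathcal T(\eta)$. By Lemma~\ref{516}, $\mathcal G$ maps $C^K_*(I,E^s_j)$ to $C^K_*(I,E^{s+2}_j)$, the $\partial_{z'}K_0$-Poisson operator gains one derivative, and $D^2\mathcal G$ gains zero; by Lemma~\ref{para-BI}, $\Ccal(z,D),\Scal(z,D)$ lose no derivative from the boundary to the interior. By Definition~\ref{533} and the remarks following it, $G_\ell(\eta)$ loses $2-\ell$ derivatives on the $E$ scale and vanishes at order $1$ in $\eta$, and the same holds (using the trace estimates) for its traces composed with $\Ccal(z,D),\Scal(z,D)$. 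Combining these, every building block of $\mathcal T(\eta)$, hence $\mathcal T(\eta)$ itself, is a net zero-order bounded operator on $C^K_*(I,E^s_j)$ in the relevant range of $s$, with operator norm $O_{s,j}(\norm{\eta}_{\Hds{\sigma}})$, together with the analogous bounds on the $\partial_t^k$ and $\partial_z^{j'}$ weighted norms. Choosing $r(\sigma)$ small, $\mathrm{Id}-\mathcal T(\eta)$ is invertible by Neumann series, so $\Phit=\sum_{k\geq0}\mathcal T(\eta)^k[\Ccal(z,D)\psi+\mathcal G[\tilde M(\eta)\psi]+\mathcal G\tilde S_N]$; since $\Ccal(z,D)\psi\in C^K_*(I,E^\sigma_j)$ by \eqref{Psi-psi}, $\mathcal G[\tilde M(\eta)\psi]\in C^K_*(I,E^{\sigma-m+2}_j)$, and $\mathcal G\tilde S_N$ is even smoother, this lies in $C^K_*(I,E^{\sigma-m+2}_j)$ — and in $C^K_*(I,E^\sigma_j)$ if $\tilde M\equiv0$, $\tilde S_N\equiv0$. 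Uniqueness is immediate from injectivity of $\mathrm{Id}-\mathcal T(\eta)$.

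To extract the structure \eqref{5317}, the leading term is $\Ccal(z,D)\psi=\frac{\cosh((1+z)D)}{\cosh D}\psi$. For the rest I would invoke the composition results of Section~\ref{sec:51} (Propositions~\ref{517}, \ref{518}, Lemmas~\ref{519}, \ref{5110}) together with the composition remarks following Definitions~\ref{533}, \ref{533a}: applying $\mathcal G$ (resp.\ the $\partial_{z'}K_0$-Poisson operator, resp.\ $D^2\mathcal G$, resp.\ $\Ccal(z,D)\circ(\cdot)\vert_{z=0}$, etc.) to an operator of $\sNd{m'}{K,j,1}{N}$ acting on $\psi$ yields an operator of $\sNd{m'-2}{K,j,1}{N}$ (resp.\ of order improved by the appropriate amount), and composing such an operator on the left with any building block of $\mathcal T(\eta)$ — which loses no net derivative and carries a factor vanishing at order $1$ in $\eta$ — keeps it in the same class while raising its vanishing order by one. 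Splitting the Neumann series at $N-1$: the terms with at most $N-1$ factors $\mathcal T(\eta)$ applied to $\Ccal(z,D)\psi$ (with at least one factor) or to $\mathcal G[\tilde M(\eta)\psi]$ are operators of $\sNd{m-2}{K,j,1}{N}$ applied to $\psi$, while the tail $\sum_{k\geq N-1}\mathcal T(\eta)^k$ is summed using the operator-norm bound above, exactly as in the remark after Definition~\ref{533} on $\sum_\ell M(\eta)^\ell$, and contributes to the non-homogeneous ($\eta$-order $N$) part of $M(\eta)$. This gives $M(\eta)\in\sNd{m-2}{K,j,1}{N}$; when $\tilde M\equiv0$ only $\Ccal(z,D)\psi$ feeds the series, no net derivative is lost, and $M(\eta)\in\sNd{0}{K,j,1}{N}$.

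Finally I would treat the $\tilde S_N$ contribution $\Phit^S:=\sum_{k\geq0}\mathcal T(\eta)^k[\mathcal G\tilde S_N]$. Since $\tilde S_N$ obeys \eqref{5315a} — high index $j+\sigma_0-\tfrac32k$ and vanishing at order $N$ in $\eta$ — while $\mathcal G$ gains two derivatives and each $\mathcal T(\eta)$ loses no net derivative and carries an extra $\eta$-vanishing, the series converges and $\Phit^S$ satisfies \eqref{5315a} with $E^{j+\sigma_0-\frac32k}_j$ replaced by $E^{j+2+\sigma_0-\frac32k}_j$. Fixing a cutoff $\chi\in C^\infty(\R)$ with $\chi\equiv1$ on $[-1,-\tfrac14]$ and $\mathrm{supp}\,\chi\subseteq[-1,-\tfrac18]$, I set $S_N(\eta,\psi):=\chi(z)\Phit^S$, which is supported in $[-1,-\tfrac18]$ and inherits the estimate; the complement $(1-\chi(z))\Phit^S$ satisfies the same bound, hence in particular the estimates \eqref{5314a} required of the non-homogeneous part of an element of $\sNd{m-2}{K,j,1}{N}$ (indeed of $\sNd{0}{K,j,1}{N}$, which by more loss allowed is contained in $\sNd{m-2}{K,j,1}{N}$ since $m\geq2$), and is absorbed into $M(\eta)$. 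If $\tilde S_N\equiv0$ then $\Phit^S\equiv0$, hence $S_N\equiv0$, and the above discussion shows $M(\eta)\in\sNd{0}{K,j,1}{N}$ when moreover $\tilde M\equiv0$. The main obstacle is precisely this third step (and the integrations by parts feeding it): one must track simultaneously, through the trace terms generated by $G_1,G_2$ and through the Neumann iteration, the loss of derivatives, the order of vanishing in $\eta$, the $z$-support, and the $\partial_z$/$\partial_t$ weights, so that the homogeneous contributions fall \emph{exactly} in $\sNd{m-2}{K,j,q}{N}$ (not merely $\sNd{m}{}$), the tails are controlled by the operator-norm estimate, and the explicit remainder $S_N$ is confined to $[-1,-\tfrac18]$ — which is exactly the kind of bookkeeping carried out in the proof of Proposition~\ref{531} and Lemma~\ref{532}, now with the smoothing classes there replaced by the classes $\sN{}{}$, $\sNd{}{}$ of Definitions~\ref{533}, \ref{533a}.
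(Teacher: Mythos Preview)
Your approach is essentially the paper's: reduce \eqref{5316} to a fixed-point equation via the Green operator $\mathcal G$ of the flat Laplacian, invert $\mathrm{Id}-\mathcal T(\eta)$ by a Neumann series, and read off the structure \eqref{5317}. The only cosmetic difference is that you write the fixed point for $\Phit$ itself rather than for $\Phit-\Ccal(z,D)\psi$; this forces you to carry a few extra boundary terms from the integration by parts, but these are harmless.

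There is, however, a genuine gap in your treatment of the $\tilde S_N$ piece. You assert that $(1-\chi(z))\Phit^S$ ``satisfies the same bound, hence in particular the estimates \eqref{5314a}''. This is a non-sequitur: the bound you have on $\Phit^S$ is of type \eqref{5315a} at one \emph{fixed} low regularity index $j+2+\sigma_0$, whereas membership in $\Nrd{m-2}{K,j,N}$ requires \eqref{5314a} for \emph{every} $\sigma>\sigma_0$, i.e.\ control in $E^{\sigma-m+2}_j$ for $\sigma$ arbitrarily large. The missing mechanism is the following: since $(1-\chi)(z)$ lives near $z=0$ while $\tilde S_N(z',\cdot)$ is supported for $z'\le -\tfrac14$, one has $|z-z'|\ge c>0$ on the relevant set, so a factor $(z-z')^\ell$ may be inserted for free. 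The paper's Lemma~\ref{535} then provides the key estimate \eqref{5321}: the iterated kernel $\tilde K_n(\eta;z,z')$ of the Neumann series \emph{gains $\ell$ derivatives} in the $E$-scale when weighted by $(z-z')^\ell$, at the cost of measuring $\eta$ at regularity $\sigma+\ell+\mu$. Choosing $\ell$ with $\sigma_0+\ell\sim\sigma-m+2$ upgrades the fixed-regularity control of $\tilde S_N$ to the tame estimate \eqref{5314a} at level $\sigma$; this is exactly how the paper feeds the near-$z=0$ part of $\Phit^S$ into the non-homogeneous component of $M(\eta)$. Without this argument (or an equivalent), you cannot absorb $(1-\chi)\Phit^S$ into $M(\eta)$. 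A minor related point: the separation-of-supports argument needs a strict gap, so your cutoff $\chi$ should be $\equiv1$ on a neighbourhood of $[-1,-\tfrac14]$, not merely on $[-1,-\tfrac14]$.
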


In order to prove the proposition we need the following lemma. 

\begin{lemma} \label{535}
Let  $G_\ell(\eta)$, $\ell = 0, 1, 2 $,  be operators in $\sN{2-\ell}{K,j,1}{N}$, 
and  $\eta$ a function as in the proposition.
Consider the operator 
\begin{equation} \label{eq:5318}
  M_0(\eta)\Phi = \int_{-1}^0 K_0(z,z',D_x)\bigl[\partial_{z'}^2[G_2(\eta)\Phi] + \partial_{z'}[G_1(\eta)\Phi] + G_0(\eta)\Phi\bigr]\,dz' 
\end{equation}
where $K_0(z,z',D_x)$ is  defined in \eqref{523}, acting on functions $\Phi$ satisfying 
\be\label{eq:PhiBC}
\Phi\vert_{z=0} = 0 \, , \quad \partial_z\Phi\vert_{z=-1}=0 \, . 
\ee
Then $M_0(\eta)$ is an operator of $\sN{0}{K,j,1}{N}$. 
Moreover we may express $M_0(\eta)$  
from a distributional kernel
$\tilde{K}_1(\eta;z,z')$ (with values in operators) by
\begin{equation}
  \label{eq:5319}
  M_0(\eta)\Phi =  \int_{-1}^0\tilde{K}_1(\eta;z,z')\Phi(z',\cdot)\,dz'
\end{equation}
and, more generally, for any $ n \in \N $
\be\label{eq:M0n}
M_0(\eta)^n\Phi = \int_{-1}^0\tilde{K}_n(\eta;z,z')\Phi(z',\cdot)\,dz' \, , 
\ee 
where 
$\tilde{K}_n(\eta;z,z')$, $ n \geq 2 $,  are defined iteratively by 
\begin{equation}
  \label{eq:5320}
  \tilde{K}_{n+1}(\eta;z,z') =  \int_{-1}^0 \tilde{K}_n(\eta;z,z'')\tilde{K}_1(\eta;z'',z') 
  \,dz'' 
\end{equation}
and the following estimates hold: 
There are $\sigma_0>0$, $\mu>0$, $\sigma_1>0$ with $\sigma_0\geq \sigma_1+\mu$, and for every $\ell$ in $\N$, any $\sigma\geq
\sigma_0$, a constant $C_{\ell,\sigma}>0$ such that for any $n$ in $\N^*$, for any $0\leq\ell'\leq\ell$, for $0\leq k\leq K$,
  \begin{multline}
    \label{eq:5321}
\sum_{k'=0}^k \Norm{\partial_t^{k'}\int_{-1}^0 \tilde{K}_n(\eta;z,z')(z-z')^\ell\Phi(z',\cdot)\,dz'}_{E^{\sigma+\ell'-\frac{3}{2}k'}_j}\\
\leq C_{\ell,\sigma}^n \Gcals{\sigma_0}{k,n-1}{\eta}\sum_{k'+k''=k}\bigl[\Gcals{\sigma+\ell'+\mu}{k',1}{\eta} \Gcalst{\sigma_1,j}{k'',1}{\Phi}\\
+  \Gcals{\sigma_0}{k',1}{\eta}\Gcalst{\sigma,j}{k'',1}{\Phi}\bigr].
  \end{multline}

\end{lemma}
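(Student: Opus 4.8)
\textbf{Plan of proof of Lemma~\ref{535}.}
The plan is to follow the same architecture as the parametrix construction of section~\ref{sec:52}: first verify that $M_0(\eta)$ belongs to $\sN{0}{K,j,1}{N}$, then identify its distributional kernel, and finally establish the iterated bounds \eqref{5321} by induction on $n$. For the first point, I would decompose each $G_\ell(\eta)$ according to \eqref{5315} into a sum of homogeneous operators $G_{\ell,q}$ in $\Nt{2-\ell}{q}$ plus a non-homogeneous remainder in $\Nr{2-\ell}{K,j,N}$, and treat the three terms in \eqref{5318} separately. For the term $\int_{-1}^0 K_0(z,z',D_x)G_0(\eta)\Phi\,dz'$, one uses that $K_0$ is a Poisson symbol in $\Pti{-1}{0}$ together with lemma~\ref{516} (action of a para-Poisson operator from the interior to the interior): composing the gain of one derivative in the $E^s_j$ scale with the fact that $G_0(\eta)$ is of order $2$ yields an operator of order $0$, i.e.\ an element of $\sN{0}{K,j,1}{N}$, and the analogue of the estimate \eqref{5313}/\eqref{5314} follows from the corresponding estimates for $G_0$ and the continuity of $K_0(z,z',D_x)$. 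For the terms carrying $\partial_{z'}$ and $\partial_{z'}^2$, I would integrate by parts in $z'$, using the boundary conditions \eqref{PhiBC} on $\Phi$ and the identities $K_0(z,z',\xi)\vert_{z'=-1}=\Scal(z,\xi)$, $\partial_{z'}K_0(z,z',\xi)\vert_{z'=0}=0$ already exploited in the proof of lemma~\ref{532}(i); this moves the $z'$-derivatives onto $K_0$, producing $\partial_{z'}K_0\in\Pti{0}{0}$ and $\partial_{z'}^2 K_0$, whose singular part $\delta(z-z')$ is cancelled against the order-$2$ operator $G_2(\eta)$ exactly as in \eqref{bpforkzero}, leaving a Poisson contribution. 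Boundary terms produce $\Scal(z,D_x)$ acting from the boundary, handled by lemma~\ref{Poisson:boundary-interior} (or lemma~\ref{para-BI}). Combining, $M_0(\eta)\in\sN{0}{K,j,1}{N}$.

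For the kernel representation, since $M_0(\eta)$ is built out of para-Poisson operators whose symbols are of the form $K_0(z,z',\xi)$ times symbols in the $G_\ell$ classes, the composition results of Proposition~\ref{517}(ii)--(iii) and the interpretation of non-homogeneous smoothing/interior operators through their Schwartz kernels (the remark after Definition~\ref{514}) give a distributional kernel $\tilde K_1(\eta;z,z')$ with values in operators acting on the $x$-variable, so that \eqref{5319} holds. The iterated formula \eqref{M0n} with \eqref{5320} is then just the definition of composition of integral operators in the $z'$-variable, obtained by a trivial induction.

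The main work, and the main obstacle, is the iterated estimate \eqref{5321}, which has to be uniform enough in $n$ to later sum the Neumann series $\sum_n M_0(\eta)^n$ in Proposition~\ref{534}. I would prove it by induction on $n$. The base case $n=1$ is the estimate for $M_0(\eta)$ itself, refined to keep track of the extra $(z-z')^\ell$ factor: multiplying the kernel by $(z-z')^\ell$ amounts, by the remarks following Definition~\ref{513} (and the third remark after Definition~\ref{514}), to lowering the order of $K_0$ and of the $\partial_z$-derivatives landing on it by $\ell$, hence gaining $\ell$ derivatives in the target space $E^{\sigma+\ell'}_j$ for any $\ell'\le\ell$; this is where the splitting of the $E^s_j$ norm into $L^\infty_z$ and $L^2_z$ pieces from lemma~\ref{CarSpacE} and the $dz'$-integration gain (as in the proof of lemma~\ref{519}) are used. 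For the inductive step one writes $\tilde K_{n+1}=\int \tilde K_n(\cdot;z,z'')\tilde K_1(\cdot;z'',z')\,dz''$, splits $(z-z')^\ell$ into a combination of $(z-z'')^{\ell_1}(z''-z')^{\ell_2}$ with $\ell_1+\ell_2=\ell$, and applies the induction hypothesis to the $\tilde K_n$ factor (absorbing the $(z-z'')^{\ell_1}$ weight, with target regularity raised by $\ell_1$) composed with the $n=1$ bound for $\tilde K_1$ (absorbing $(z''-z')^{\ell_2}$). The tame structure of the estimates — linear in the highest norm of $\eta$ and of $\Phi$, with a fixed loss $\mu$ and the lower Sobolev index $\sigma_1$ appearing in the $\Phi$-factor — must be preserved, and the constant is allowed to grow like $C_{\ell,\sigma}^n$; the interpolation inequality \eqref{218a} and careful bookkeeping of which index $\sigma$, $\sigma_0$, $\sigma_1$ carries the highest derivative are what make the induction close. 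Handling the $\partial_t^k$-derivatives adds, via Leibniz and lemma~\ref{217} applied to the equation satisfied by $\eta$, only finitely many time derivatives at each level, which is why the indices in \eqref{5321} involve $\partial_t^k$ uniformly; this is routine once the $t$-independent estimate is in place.
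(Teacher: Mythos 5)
Your plan is essentially the paper's proof: integrate by parts in $z'$ using the boundary conditions on $\Phi$ and the boundary values of $K_0$ (in particular $K_0|_{z'=0}=0$ and $K_0(z,-1,D_x)=\Scal(z,D_x)$) to obtain the explicit operator-valued kernel \eqref{kernels}; establish the $n=1$ estimate via lemma~\ref{516} for $(z-z')^\ell K_0\in\Pti{-1-\ell}{0}$ (together with lemma~\ref{para-BI} for the $\Scal$ boundary term); and then close the induction through the binomial expansion $(z-z')^\ell=\sum_{\ell_1}\binom{\ell}{\ell_1}(z-z'')^{\ell-\ell_1}(z''-z')^{\ell_1}$, applying the inductive hypothesis at the shifted index $\sigma+\ell'_1$ on the $\tilde K_n$ factor and the $n=1$ bound on $\tilde K_1$. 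One small correction to a side remark: lemma~\ref{217} plays no role here — $\eta$ is just a given function in the ball $\Brs{K}{0}{\sigma}$, not a solution of an evolution equation whose time derivative needs substitution — the $\partial_t^{k}$-derivatives are already carried by the $\Gcals{\sigma}{k,p}{\cdot}$ factors built into the definition of $\sN{m}{K,j,p}{N}$, and the induction closes by direct index bookkeeping rather than by the interpolation inequality \eqref{218a}.
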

  \begin{proof}
Since $ \Phi $ satisfies the boundary conditions \eqref{PhiBC}, 
 by \eqref{5311} and  \eqref{5312}  we may write
    \begin{multline}
      \label{eq:5322}
M_0(\eta)\Phi = G_2(\eta)\Phi(z,\cdot) - \Scal(z,D_x)[G_1(\eta)\Phi(-1,\cdot)]\\
+ \int_{-1}^0 K_0(z,z',D_x)[D_x^2 G_2(\eta)\Phi(z',\cdot) +G_0(\eta)\Phi(z',\cdot)]\,dz'\\
-  \int_{-1}^0 \partial_{z'}K_0(z,z',D_x)[G_1(\eta)\Phi(z',\cdot)]\,dz'.
    \end{multline}
Since $ K_0  $ is a Poisson symbol in $ \Pti{-1}{0} $, 
lemma \ref{516} implies  that the operator 
\be\label{eq:gain2}
U\to \int_{-1}^0 K_0(z,z',D_x)U(z')\,dz'  \quad {\rm maps} \quad E^s_j \to E^{s+2}_j \, , 
\ee
and, similarly, since $ \pa_{z'} K_0  $ is in $ \Pti{0}{0} $, 
 $U\to
\int_{-1}^0 \partial_{z'}K_0(z,z',D_x)U(z')\,dz'$  acts from $E^s_j$ to $E^{s+1}_j$. It follows that \eqref{5322} defines 
an operator of $\sN{0}{K,j,1}{N}$.

Remark that \eqref{5322} provides an integral expression of the form \eqref{5319}, with the operatorial valued kernel
\begin{multline}\label{eq:kernels}
\tilde{K}_1 (\eta; z, z' ) = \delta(z-z')G_2(\eta) + \delta(z'+1)\Scal(z,D_x) \circ G_1(\eta) \\
+ K_0(z,z',D_x) \circ [D_x^2 G_2(\eta) +G_0(\eta)] -  \partial_{z'}K_0(z,z',D_x) \circ G_1(\eta)  \, .
\end{multline}
We claim that $\tilde{K}_1 (\eta; z, z' ) $ 
satisfies for any 
$0\leq k\leq K$, $0\leq\ell'\leq \ell$, some $\mu>0$, some $\sigma_1>0$ and any $\sigma>\sigma_1$, the
 estimate
\begin{multline}
  \label{eq:5322a}
\sum_{k'=0}^k\Norm{\partial_t^{k'}\int_{-1}^0 \tilde{K}_1(\eta;z,z')(z-z')^\ell\Phi(z',\cdot)\,dz'}_{E^{\sigma+\ell'-\frac{3}{2}k'}_j}\\
\leq B_{\ell+\sigma}\sum_{k'+k''=k}[\Gcals{\sigma+\ell'+\mu}{k',1}{\eta} \Gcalst{\sigma_1,j}{k'',1}{\Phi} + \Gcals{\sigma_1}{k',1}{\eta} \Gcalst{\sigma,j}{k'',1}{\Phi}]
\end{multline}
for some increasing sequence of constants $B_{\sigma+\ell}$. 
Actually, since $ G_2 (\eta ) $ is in $\sN{0}{K,j,1}{N}$, 
the kernel $\delta(z-z')G_2(\eta)$
satisfies \eqref{5322a} when $\ell=0$, as follows by \eqref{5314} where we replace
$\sigma_0$ by $\sigma_1$. The  estimate \eqref{5322a} when $\ell>0 $ is  trivial. 
Also $\delta(z'+1)\Scal(z,D_x) G_1(\eta) $ satisfies \eqref{5322a}:
Actually since $ \Scal(z,\xi) $ is in $ \Pt{-1,-}{0} $,
the symbol $ (z+1)^\ell\Scal(z,\xi)$ is in $ \Pt{-1-\ell,-}{0} $ (see remarks after Definition \ref{513}), 
lemma \ref{para-BI} implies that the operator 
$(z+1)^\ell\Scal(z,D_x)$
sends $\Hds{\sigma-1}$ to $E^{\sigma+\ell}_\infty$. 
Using that  $G_1(\eta)$ satisfies \eqref{5314}  with $m=1$, we deduce that 
$\delta(z'+1)\Scal(z,D_x) G_1(\eta) $ satisfies  \eqref{5322a} (with $\ell'=0$). 

Moreover, since $ K_0 $ is a Poisson symbol in $ \Pti{-1}{0} $, 
then $ (z- z')^\ell K_0 $ is in $ \Pti{-1-\ell}{0} $ (see remarks after Definition \ref{513}), and lemma \ref{516} implies that 
the operator
$$
U\to \int_{-1}^0(z-z')^\ell K_0(z,z',D_x) U(z' , \cdot )\, dz' $$ 
acts from $E^\sigma_j$ to $E^{\sigma+\ell+2}_j$. Similarly, since $ \pa_{z'} K_0  $ is in $ \Pti{0}{0} $,  
the operator $U\to
\int_{-1}^0(z-z')^\ell\partial_{z'}K_0(z,z',D_x) U(z' , \cdot ) \,dz'$ acts  from $E^\sigma_j$ to $E^{\sigma+\ell+1}_j$.
Using that  $ G_2 (\eta ) $ is in $\sN{0}{K,j,1}{N}$ and  $ G_1 (\eta ) $ is in $\sN{1}{K,j,1}{N}$, 
we deduce from
\eqref{5314} that the two integral terms in \eqref{5322} may be written under the form \eqref{5319}, with a kernel
given by the last line in \eqref{kernels}, 
satisfying the estimate \eqref{5322a}.

The estimate \eqref{5321} for $ n = 1 $ follows by \eqref{5322a},
setting 
\be\label{eq:Csell}
C_{\ell, \sigma} = 3^{\ell+1}(K+1)B_{\sigma+\ell}
\ee 
and $\sigma_0 = \sigma_1+\mu$ where $\sigma_1$ is the index in \eqref{5322a}.

The expression \eqref{M0n} of $ M_0(\eta)^n\Phi $ for $ n \geq 2 $ follows 
by the formula \eqref{5319} of $M_0(\eta)$ and the iterative definition \eqref{5320} of
$\tilde{K}_n$. 

Let us prove by induction \eqref{5321} for any $n$. 
Write
\begin{multline*}
 A^\ell_{n+1} \stackrel{\mathrm{def}}{=} \int_{-1}^0\tilde{K}_{n+1}(\eta;z,z')(z-z')^\ell\Phi(z',\cdot)\,dz'\\
= \sum_{0 \leq \ell_1\leq\ell}\bin{\ell}{\ell_1} \int_{-1}^0 \tilde{K}_{n}(\eta;z,z'')(z-z'')^{\ell-\ell_1}\\
\times \Big( \int_{-1}^0 \tilde{K}_{1}(\eta;z'',z')(z''-z')^{\ell_1}\Phi(z',\cdot)\,dz' \Big) dz''.
\end{multline*}
We now 
apply the inductive estimate \eqref{5321} to the first integral kernel. For any $\ell'\leq\ell$,
we  decompose $\ell' = (\ell'-\ell'_1)+\ell'_1$ with $\ell'_1\leq\ell_1$, $\ell'-\ell'_1\leq\ell-\ell_1$ and
we have apply \eqref{5321} with $\sigma$ (resp.\ $\ell'$) replaced by $\sigma+\ell'_1$ (resp.\ $\ell'-\ell'_1$), 
yielding
\begin{multline}
  \label{eq:5322b}
\sum_{k'=0}^k \norm{\partial_t^{k'}A_{n+1}^{\ell}}_{E^{\sigma+\ell'-\frac{3}{2}k'}_j} \leq
\sum_{\ell_1\leq\ell}\bin{\ell}{\ell_1}\sum_{k'+k''=k}\Gcals{\sigma_0}{k',1}{\eta}^{n-1}\\\times
C^n_{\ell'-\ell'_1,\sigma+\ell'_1}  \Bigl[\Gcals{\sigma+\ell'+\mu}{k',1}{\eta}
\Gcalst{\sigma_1,j}{k'',1}{A_1^{\ell_1}} + \Gcals{\sigma_0}{k',1}{\eta} \Gcalst{\sigma + \ell'_1,j}{k'',1}{A_1^{\ell_1}}\Bigr] \, .
\end{multline}
Next, according to \eqref{5322a} with $ \sigma $ replaced by $ \sigma_1 $, 
$ \ell' = 0 $, $ \ell = \ell_1 $, and since $ \sigma_0 = \sigma_1+\mu $,  we bound 
\begin{multline*}\Gcalst{\sigma_1,j}{k'',1}{A_1^{\ell_1}} = \sum_{k'''=0}^{k''} \norm{\partial_t^{k'''}A^{\ell_{1}}_1}_{E^{\sigma_1-\frac{3}{2}k'''}_j} \\\leq
  2B_{\sigma_1+\ell_1}\sum_{k''_1+k''_2=k''}\Gcals{\sigma_0}{k''_1,1}{\eta} \Gcalst{\sigma_1,j}{k''_2,1}{\Phi}
 \end{multline*}
and according to \eqref{5322a} with $ \ell'  $ replaced by $ \ell'_1 $ and  $ \ell = \ell_1 $, we get 
\begin{multline*}
\Gcalst{\sigma + \ell'_1,j}{k'',1}{A_1^{\ell_1}} = \sum_{k'''=0}^{k''} \norm{\partial_t^{k'''}A^{\ell_{1}}_1}_{E^{\sigma+ \ell'_1-\frac{3}{2}k'''}_j}
\\\leq B_{\sigma+\ell_1}\sum_{k''_1+k''_2=k''}\Bigl[\Gcals{\sigma+\ell'_1+\mu}{k''_1,1}{\eta} \Gcalst{\sigma_1,j}{k''_2,1}{\Phi}\\
+\Gcals{\sigma_1}{k''_1,1}{\eta} \Gcalst{\sigma,j}{k''_2,1}{\Phi}\Bigr].
\end{multline*}
Plugging these estimates inside \eqref{5322b} and using the definition of $C_{\ell, \sigma}$ in \eqref{Csell}, we obtain 
that, for any $ 0 \leq k \leq K $, the left hand side of \eqref{5322b} is bounded by
\begin{multline*}
  2^\ell(3^{\ell+1}(K+1)B_{\sigma+\ell})^n (K+1)B_{\sigma+\ell}\sum_{k'+k''=k}\bigl[3 \Gcals{\sigma+\ell'+\mu}{k',1}{\eta}
  \Gcalst{\sigma_1,j}{k'',1}{\Phi}\\
+\Gcals{\sigma_0}{k',1}{\eta} \Gcalst{\sigma,j}{k'',1}{\Phi}\bigr]\Gcals{\sigma_0}{k,1}{\eta}^n
\end{multline*}
which implies \eqref{5321} at rank $n+1$. This concludes the proof of the lemma.
  \end{proof}
 
 \begin{proof1}{Proof of Proposition~\ref{534}}
 Since  $ \frac{\cosh((1+z)D)}{\cosh D}\psi  $ is the solution of 
$$
    (\partial_x^2+\partial_z^2) \Phi  = 0 \, , \quad 
\Phi \vert_{z=0} = \psi \, , \quad 
\partial_z \Phi \vert_{z=-1} = 0 \, , 
$$
the solution $ \tilde{\Phi} $ of the problem \eqref{5316} may be written as
\[ 
\tilde{\Phi} = \frac{\cosh((1+z)D)}{\cosh D}\psi + \phitu  
\]
where $\phitu$ satisfies
\begin{equation}
  \label{eq:5322bb}
  \begin{split}
    (\partial_x^2+\partial_z^2)\phitu &= G_0(\eta)\phitu + \partial_z[G_1(\eta)\phitu] + \partial_z^2[G_2(\eta)\phitu] +
    \tilde{\underline{M}}(\eta)\psi + \tilde{S}_N\\
\phitu\vert_{z=0} &= 0\\
\partial_z\phitu\vert_{z=-1} &= 0
  \end{split}
\end{equation}
where
\[
\begin{split}
  \tilde{\underline{M}}(\eta) \psi \stackrel{\mathrm{def}}{=}  G_0(\eta)\frac{\cosh((1+z)D)}{\cosh D}\psi
  + \partial_z\biggl[G_1(\eta)\frac{\cosh((1+z)D)}{\cosh D}\psi\biggr] \\
+ \partial^2_z\biggl[G_2(\eta)\frac{\cosh((1+z)D)}{\cosh D}\psi\biggr] + {\tilde M}(\eta)\psi.
\end{split}
\]
By assumption ${\tilde M}(\eta) $ is an operator in  $\sNd{m}{K,j,1}{N}$ for some $ m \geq 2 $. 
Since $G_\ell(\eta)$ are operators in  $\sN{2-\ell}{K,j,1}{N}$, $\ell = 0, 1, 2 $, 
the first and the third remarks following Definition~\ref{533a}, 
imply that $\tilde{\underline{M}}(\eta)$ is in $\sNd{m}{K,j-2,1}{N}$.
Notice that, if $ {\tilde M}(\eta) = 0  $, then $\tilde{\underline{M}}(\eta)$ is in $\sNd{2}{K,j-2,1}{N}$.

 Notice also that  since $ \frac{\cosh((1+z)\xi )}{\cosh \xi} $ is in $ \Pt{0,+}{0} $ (see \eqref{def:CSymbols}) and the function 
$ \psi \in  C^K_*(I, \Hds{\sigma}(\Tu,\C)) $, then,  
lemma \ref{para-BI} implies that $ \frac{\cosh((1+z)D)}{\cosh D}\psi $ is in $ C^K_*(I,E^{\sigma}_j) $,  for any $ j $.

By lemma~\ref{522} applied  with $ \eta = 0 $, 
$ g_+ =  g_- = 0 $, and 
$$ 
f = G_0(\eta)\phitu + \partial_z[G_1(\eta)\phitu] + \partial_z^2[G_2(\eta)\phitu] +
    \tilde{\underline{M}}(\eta)\psi + \tilde{S}_N \, ,  
 $$ 
the function $\phitu$ solves \eqref{5322bb} if and only if
\[
  (\mathrm{Id}-M_0(\eta))\phitu = F \stackrel{\mathrm{def}}{=} 
 \int_{-1}^0 K_0(z,z',D_x)[\tilde{\underline{M}}(\eta)\psi + \tilde{S}_N(\eta,\psi)]\,dz' 
\]
where $ M_0(\eta) \phitu $  
is defined in \eqref{5318} and $ K_0(z,z',D_x) $ in \eqref{523}. 
Notice that $F$ satisfies the boundary conditions \eqref{PhiBC}.
Write
\begin{equation}
\label{eq:5323}
F = \tilde{M}_1(\eta)\psi + \int_{-1}^0 K_0(z,z',D_x)\tilde{S}_N(\eta,\psi)(z')\,dz'
\end{equation}
where 
\be\label{eq:M1eta}
\tilde{M}_1(\eta)\psi = \int_{-1}^0 K_0(z,z',D_x)\tilde{\underline{M}}(\eta)\psi\,dz'.
\ee
We claim that, even if 
$\tilde{\underline{M}}(\eta)$ is in $\sNd{m}{K,j-2,1}{N}$, 
the operator $\tilde{M}_1(\eta)$ belongs to
$\sNd{m-2}{K,j,1}{N}$. Actually, this follows from the fact that, by a direct computation, recalling \eqref{523}, 
\[
  \partial_z\int_{-1}^0K_0(z,z',D_x)U(z')\,dz' =  \int_{-1}^0K'_0(z,z',D_x)D_xU(z')\,dz'
\]
with  a symbol $K'_0$ in $\Pti{-1}{0}$, and, according to \eqref{bpforkzero}, 
\[  
\partial^2_z\int_{-1}^0K_0(z,z',D_x)U(z')\,dz'=  \int_{-1}^0K_0(z,z',D_x)D^2_xU(z')\,dz' + U(z) \, .
\]
As a consequence,  to estimate 
$\partial_z^j $-derivatives of  
$\tilde{M}_1(\eta)\psi$ in \eqref{M1eta}, we need only 
$\partial_z^{j-2}$-derivatives of $\tilde{\underline{M}}(\eta)\psi$. 
Recalling \eqref{gain2} and Definition \ref{533a} we deduce that $\tilde{M}_1(\eta)$ belongs to 
$\sNd{m-2}{K,j,1}{N}$.
If $ {\tilde M}(\eta) = 0  $, then  $ \tilde{\underline{M}}(\eta) $ is in $ \sNd{2}{K,j-2,1}{N}$, and 
$ \tilde{M}_1(\eta) $ is in $\sNd{0}{K,j,1}{N} $.

We shall solve the equation $(\mathrm{Id}-M_0(\eta))\phitu = F$ by the 
Neumann series
\begin{equation}
  \label{eq:5324}
  \phitu = \sum_{n=0}^{\infty}M_0(\eta)^nF \, .
\end{equation}
Since the operator $ M_0(\eta) $ is in $ \sN{0}{K,j,1}{N} $ 
as stated in  lemma \ref{535}, 
the second remark after Definition~\ref{533} implies that the Neumann series $ \sum_{n=0}^{\infty}M_0(\eta)^n $ defines an operator
of  $\sN{0}{K,j,0}{N}$. By the second remark after Definition~\ref{533a}, it follows that the composition 
$$
\Big(\sum_{n=0}^{\infty}M_0(\eta)^n\Big) \circ \tilde{M}_1(\eta) 
$$
with the operator $ \tilde{M}_1(\eta) \in \sNd{m-2}{K,j,1}{N} $ 
in \eqref{5323}-\eqref{M1eta},  
gives an operator $M(\eta)$ in $\sNd{m-2}{K,j,1}{N}$
as in \eqref{5317}. By the sixth remark after Definition \ref{533a}
the function 
$ M(\eta) \psi  $  is in $  C_*^{K}(I, E^{\sigma-m+2}_j  ) $.
If $\tilde{M} \equiv 0$, then $ \tilde{M}_1(\eta) $ is in $\sNd{0}{K,j,1}{N} $, 
and the operator $M(\eta)$ is in $\sN{0}{K,j,1}{N}$, thus $ M(\eta) \psi  $  is in $  C_*^{K}(I, E^{\sigma}_j  ) $.
Summarizing,  if $\tilde{S}_N\equiv 0$, we have proved  \eqref{5317} with $S_N \equiv 0$. 

We are left with \eqref{5324} where we replace $F$ by the
integral term in \eqref{5323}, that is
\be\label{eq:term2}
\sum_{n=0}^{\infty}M_0(\eta)^n \Big( \int_{-1}^0 K_0(z,z',D_x)\tilde{S}_N(\eta,\psi)(z')\,dz' \Big) \, . 
\ee
 By \eqref{M0n}
we may write
\[
M_0(\eta)^n\Phi = \int_{-1}^0\tilde{K}_n(\eta;z,z')\Phi(z',\cdot)\,dz'
\]
so that, if we define $\tilde{K}(\eta;z,z') = \sum_{n \geq 0}\tilde{K}_n(\eta;z,z')$, we have
\begin{equation}
  \label{eq:5325}
  (\mathrm{Id}-M_0(\eta))^{-1}\Phi = \sum_{n =0}^{+\infty}M_0(\eta)^n\Phi = \int_{-1}^0\tilde{K}(\eta;z,z')\Phi(z',\cdot)\,dz',
\end{equation}
the convergence of $\sum_{n \geq 0}\tilde{K}_n$ following from \eqref{5321},  if we assume that $\eta$ is in
$\Brs{K}{0}{\sigma,\ell}$, for a small enough $r(\sigma,\ell)$. For such $\eta$, we  get therefore
\begin{multline}
  \label{eq:5326}
\sum_{k'=0}^k\Norm{\partial_t^{k'}\int_{-1}^0(z-z')^\ell\tilde{K}(\eta;z,z')\Phi(z')\,dz'}_{E^{\sigma+\ell-\frac{3}{2}k'}_j}\\
\leq C'_{\ell,\sigma}\sum_{k'+k''=k}\bigl[\Gcals{\sigma+\ell+\mu}{k',1}{\eta} \Gcalst{\sigma_0,j}{k'',1}{\Phi}\\
+ \Gcals{\sigma_0}{k',1}{\eta} \Gcalst{\sigma,j}{k'',1}{\Phi}\bigr].
\end{multline}
By 
 \eqref{5325}, we decompose the term  \eqref{term2} 
 as $I + II$ where
\begin{equation}
  \label{eq:5327}
  \begin{split}
    I &= \theta(z)\int_{-1}^0 \int_{-1}^0\tilde{K}(\eta;z,z'')K_0(z'',z',D) \tilde{S}_N(\eta,\psi)(z',\cdot)\,dz'dz''\\
 II &= (1-\theta(z))\int_{-1}^0 \int_{-1}^0\tilde{K}(\eta;z,z'')K_0(z'',z',D) \tilde{S}_N(\eta,\psi) (z',\cdot)\,dz'dz''
  \end{split}
\end{equation}
for $\theta \in C^\infty_0(\R)$, equal to one on $[-\frac{1}{8},0]$ and vanishing close to $[-1,-\frac{1}{4}]$. Expression
$II$
is supported for $-1 \leq z \leq -1/8$ and may be written as 
\[
(1-\theta)(z)  \int_{-1}^0\tilde{K}'(\eta;z,z')\tilde{S}_N(\eta,\psi) (z',\cdot)\,dz' \, , 
\] 
with the  new kernel
$$
\tilde{K}'(\eta;z,z') =  \int_{-1}^0\tilde{K}(\eta;z,z'')K_0(z'',z',D) dz''
$$
 that satisfies \eqref{5326} with $E_j^{\sigma+\ell-\frac{3}{2}k'}$ replaced by
$E_j^{\sigma+2+\ell-\frac{3}{2}k'}$ in the left hand side (as integration against $K_0$ gains two derivatives). 
Applying  \eqref{5326} with $\ell=0$ and 
$\sigma=j+\sigma_0$, we obtain that
\begin{multline*}
\sum_{k'=0}^k\norm{\partial_t^{k'}II(t,\cdot)}_{E^{j+\sigma_0+2-\frac{3}{2}k'}_j} \leq
C\sum_{k'+k''=k}\bigl(\Gcals{j+\sigma_0+\mu}{k',1}{\eta}\Gcalst{\sigma_0,j}{k'',1}{\tilde{S}_N}\\ +
\Gcals{\sigma_0}{k',1}{\eta}\Gcalst{j+\sigma_0,j}{k'',1}{\tilde{S}_N}\bigr).
\end{multline*}
If we bound $\Gcalst{\sigma_0,j}{k,1}{\tilde{S}_N}$, $\Gcalst{j+\sigma_0,j}{k,1}{\tilde{S}_N}$ by the right hand side of
\eqref{5315a} (recall \eqref{5115a}), we obtain that $ II $ satisfies a 
similar estimate for a convenient  
$\sigma'_0$ (depending on $j, \mu$). Thus $ II $ contributes to the term $ S_N $ in the right hand side of \eqref{5317}.

Finally we consider 
the term $ I $ in \eqref{5327} 
$$
I = \int_{-1}^0 \theta(z) \tilde{K}'(\eta;z,z')  \tilde{S}_N(\eta,\psi)(z',\cdot)\,dz' \, . 
$$
Using that the support of $ \theta $ and the support of $\tilde{S}_N(z';\cdot)$ stay at positive distance
(recall that $ \supp (\theta ) \subset ] -1/ 4,0 ]$ and $ \supp (\tilde{S}_N) \subset [ -1, -1/ 4 ]$), 
the integral $ I $ may be written 
as
\begin{equation}
  \label{eq:5328}
  \int_{-1}^0(z-z')^\ell\tilde{K}'(\eta;,z,z')\omega(z,z')\tilde{S}_N(z',\cdot)\,dz'
\end{equation}
for a suitable  function $ \omega (z, z' )$ in $C^\infty_0 ([-1,0] \times [-1,0]) $ 
that vanishes close to $z=z'$. 
By the  density of $C^\infty_0(X)\otimes C^\infty_0(Y)$ in $C^\infty_0(X\times Y)$
we write $ \omega(z,z')$ as a rapidly convergent series 
$\sum_{q}\omega_q^1(z)\omega_q^2(z')$, with
$\omega_q^j$ in $C^\infty_0(\R)$. 
This allows us to apply  
the estimates \eqref{5326} for $(z-z')^\ell\tilde{K}'$ (with $\sigma=\sigma_0$) to $(z-z')^\ell\tilde{K}'\omega(z,z')$ 
in  \eqref{5328}. 
In conclusion we obtain
\[\sum_{k' = 0}^k\norm{\partial_t^{k'}I(t,\cdot)}_{E^{\sigma_0+\ell-\frac{3}{2}k'}_j} \leq C\sum_{k'+k''=k}\Gcals{\sigma_0+\ell+\mu}{k',1}{\eta}
\Gcalst{\sigma_0,j}{k'',1}{\tilde{S}_N}.\]
Plugging \eqref{5315a} in the right hand side of the above inequality, and choosing $\ell$ so that
$\sigma-m+2\leq\sigma_0+\ell\leq \sigma-m+1$, we see that we obtain an 
estimate of the form \eqref{5314a}, with  $ \sigma_0 $ replaced
by  some convenient $\sigma'_0 $ and $ m-2 $ instead of $ m $. 
Consequently, we have shown that $I$ is in $ C^K_* (I, E_j^{\sigma-m+2})$ and it 
contributes to the term $ M(\eta)\psi $ in the right hand side of
\eqref{5317} (actually to its non-homogeneous term in $\Nrd{m-2}{K,j,N}$). This concludes the proof.
  \end{proof1}

\chapter[Dirichlet-Neumann and good unknown]{Dirichlet-Neumann operator and the good unknown}\label{cha:6}

\section{The good unknown}\label{sec:61}

We introduce here the good unknown $ \omega $ in terms of which we  shall express the water waves equations. We follow
mainly the approach of Alazard-Métivier~\cite{AM}, Alazard-Burq-Zuily~\cite{ABZ1, ABZ2}. 

Recall that,  	given an integer $ K $, a  real number $ \sigma $ large enough, a function $ \eta $ of 
$ C_*^K(I,\Hds{\sigma}(\Tu,\R))$  with
$\norm{\eta(t,\cdot)}_{L^\infty} < 1 $, we have defined in \eqref{111} the open set 
$$
    \Omega_t = \big\{ (x,y)\in \Tu\times\R \, ;  \ - 1 <y<\eta(t,x) \big\} 
$$
where we take the depth $h = 1 $.
In the sequel we identify $ C_*^K(I,\Hds{\sigma}(\Tu,\R))$  with
the subspace of $\CKHR{\sigma}{\C^2}$ of vector valued functions of the form $\vect{\eta}{\eta}$ with $\eta$ real.
We shall always assume below that 
$\eta$ belongs  to $\Brs{K}{0}{\sigma_0} $ for some given $ \sigma_0 $ large and $ r( \sigma_0) > 0 $ 
small.  

Let $\psi$ be in $C_*^K(I,\Hds{\sigma}(\Tu,\R))$ and consider the elliptic system (at any fixed time $t\in I$)
\begin{equation}
  \label{eq:611}
\begin{split}
  \Delta\phiu(x,y) &= 0 \  \textrm{ in }  \Omega_t\\
\phiu\vert_{y=\eta(t,x)} &= \psi(t,x)\\
\frac{\partial\phiu}{\partial y}\vert_{y=-1} &= 0 \, .
\end{split}
\end{equation}
Define new coordinates $(x,\zp)$ by $y = \eta(t,x) + \zp(1+\eta(t,x))$ so that, in the new coordinates system, the closure of
$\Omega_t$
becomes the strip 
$$
\Bcal = \big\{(x,\zp) \, ; \, x\in \Tu, -1\leq \zp\leq 0 \big\} \, . 
$$ 
We consider the function  $  \varphiu (x,\zp) $ defined for $ (x,\zp) \in \Bcal $ by 
\begin{equation}
  \label{eq:612}
  \varphiu(x,\zp) \stackrel{\textrm{def}} = \phiu(x,\eta(t,x) + \zp(1+\eta(t,x))) \, .
\end{equation}
A direct calculus shows that  $\varphiu$ 
solves the  elliptic problem
\begin{equation}
  \label{eq:613}
  \begin{split}
    (\partial_x^2+ \partial_{\zp}^2)\varphiu &= \partial_{\zp}^2[G_2 (\eta) \varphiu] + \partial_{\zp}[G_1 (\eta)  \varphiu]+ G_0 (\eta) \varphiu\\
\varphiu\vert_{\zp=0} &= \psi(t,x)\\
\partial_{\zp}\varphiu\vert_{\zp=-1} &= 0
  \end{split}
\end{equation}
with
\begin{equation}
  \label{eq:614}
  \begin{split}
    G_2 (\eta)  \varphiu&= -(1+\zp)^2\eta'{}^2\varphiu\\
G_1(\eta) \varphiu &= 2(1+\zp)\eta'(1+\eta)\partial_{x}\varphiu + (1+\zp)[2\eta'{}^2 + \eta''(1+\eta)]\varphiu\\
G_0 (\eta) \varphiu &= -(2\eta+\eta^2)\partial_x^2\varphiu -2\eta'(1+\eta)\partial_x\varphiu - \eta''(1+\eta)\varphiu \, .
  \end{split}
\end{equation}
According to Definition~\ref{533},  the operators $ G_\ell $, $\ell = 0, 1, 2$, are sums of 
elements of $\Nt{2-\ell}{1}$.  By the first remark after Definition~\ref{533} we may regard  $ G_\ell $
as operators of $ \Nr{2- \ell}{K,j,1} $ for any $  j $.
 Then we apply
Proposition~\ref{534}, with an index $ j $ that will be chosen later on 
large enough depending on $ N $ (see lemma \ref{611}) and with $\tilde{S}_N\equiv 0$,
$\tilde{M}=0$. We deduce that 
there exists $\sigma_0>0$ and, for $\sigma>\sigma_0$, some $r(\sigma)<r $, such that, if $\eta$ belongs 
to $\Brs{K}{0}{\sigma}\cap \CKH{\sigma}{\R}$ and  $\psi$ in $\CKH{\sigma}{\R}$, equation \eqref{613} has a
unique solution $  \varphiu $ in $C_*^K(I,E^{\sigma}_j)$,  that may be written, by \eqref{5317}, as 
\begin{equation}
  \label{eq:615}
  \varphiu = \frac{\cosh((\zp+1)D)}{\cosh D}\psi + M(\eta)\psi
\end{equation}
for some $M(\eta) $ in $\sNd{0}{K,j,1}{N}$.

\smallskip

\noindent 
{\bf Remark:} 
 The equation
\eqref{613} satisfied by the function $\varphiu$ is a linear ODE in $\partial_{\tilde{z}}$ with \emph{variable}
coefficients in $ \tilde z $. It is technically convenient, for the constructions to follow, to have instead a solution of  a linear
\emph{constant coefficients} ODE. 
For this reason we introduce in \eqref{619} below the function $\tilde{\Phi} $ that will satisfy the
system \eqref{6110}.

\smallskip

We consider first the function $\varphi(x,z)$ defined from $\phiu$ through a change of variables that flattens the free
boundary of $\Omega_t$, but that modifies the bottom, namely
\begin{equation}
  \label{eq:617}
  \varphi(x,z) \stackrel{\textrm{def}} = \phiu(x,z+\eta(t,x)) = \varphiu\bigl(x,\frac{z}{1+\eta(t,x)}\bigr)
\end{equation}
which is well defined for $-(1+\eta(t,x))\leq z\leq 0$. 
Notice that   $ \varphi(x,z) $ solves the elliptic equation
\begin{equation}\label{eq:equation-for-vphi}
  \biggl(\frac{\partial^2}{\partial x^2} + (1+\eta'^2)\frac{\partial^2}{\partial z^2}
  -2\eta'\frac{\partial^2}{\partial x\partial z} -\eta''\frac{\partial}{\partial z}\biggr) \varphi = 0  
\end{equation}
and satisfies the Dirichlet boundary condition at the top $ \varphi \vert_{z=0} = \psi(t,x) $. Then for $ - 1 \leq z \leq 0 $ we also consider the function 
\begin{equation}
  \label{eq:618}
  \varphi_0(x,z) = \frac{\cosh((z+1)D)}{\cosh D}\psi 
\end{equation}
which solves 
\begin{equation}
\label{eq:new-problem}
(\partial_x^2 + \partial_z^2)\varphi_0 = 0 \quad \textrm{ in } \ \Bcal 
\end{equation}
and  satisfies the Neumann boundary condition at the bottom $ \frac{\partial \varphi_0}{\partial z} \vert_{z=-1} = 0 $.  
Finally, taking a cut off function $ \chi $ in $C^\infty_0(\R)$, 
supported in $[-\frac{1}{2},\frac{1}{2}]$, equal to one on $[-\frac{1}{4},\frac{1}{4}]$,
with values in $[0,1]$, we define the function
\begin{equation} \label{eq:619}
 \tilde{\Phi}(x,z) \stackrel{\textrm{def}} = \chi(z)\varphi(x,z) + (1-\chi)(z)\varphi_0(x,z) \, .
\end{equation}
Notice that, if $\norm{\eta(t,\cdot)}_{L^\infty}<\frac{1}{2}$, the function $ \varphi(x,z) $ in \eqref{617}
is well defined for $ - 1/ 2 \leq z \leq 0 $ and therefore $  \tilde{\Phi}(x,z) $ 
is well defined for $z\in [-1,0]$. 

\begin{lemma} \label{611}
Let the index $ j $ chosen before \eqref{615} satisfy $ j \geq N + 5 $.
Then the function $\tilde{\Phi}$ satisfies the system
\begin{equation} \label{eq:6110}
\begin{split}
  \biggl(\frac{\partial^2}{\partial x^2} + (1+\eta'^2)\frac{\partial^2}{\partial z^2}
  -2\eta'\frac{\partial^2}{\partial x\partial z} -\eta''\frac{\partial}{\partial z}\biggr)\tilde{\Phi} &= \tilde{M}(\eta)\psi +
  \tilde{S}_N(\eta,\psi)\\
\tilde{\Phi}\vert_{z=0} &= \psi\\
\partial_z\tilde{\Phi}\vert_{z=-1} &= 0
\end{split}\end{equation}
where $\tilde{M}(\eta)$ is some operator of $\sNd{N}{K,4,1}{N}$ (for $N$ an arbitrary given integer) and where
$\tilde{S}_N(\eta,\psi)$ is linear in $\psi$ and, for any large
enough  $ \sigma_0$, there is $ \sigma'_0 $ such that, for any $ 0 \leq k \leq K $,  
\begin{equation}
  \label{eq:6111}
  \norm{\partial_t^k \tilde{S}_N(\eta,\psi)}_{E^{\sigma_0+2-\frac{3}{2}k}_4}  \leq C\sum_{k'+k''=k}\Gcals{\sigma'_0}{k',N}{\eta}\Gcals{\sigma'_0}{k'',1}{\psi} \, . 
\end{equation}
Moreover $\tilde{M}(\eta) \psi $ and $\tilde{S}_N(\eta,\psi)$ are supported for $-1\leq z\leq -\frac{1}{4}$.
\end{lemma}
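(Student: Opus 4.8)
The idea is to compute directly the operator
\[
\Lcal \stackrel{\textrm{def}}{=} \frac{\partial^2}{\partial x^2} + (1+\eta'^2)\frac{\partial^2}{\partial z^2}
-2\eta'\frac{\partial^2}{\partial x\partial z} -\eta''\frac{\partial}{\partial z}
\]
applied to the glued function $\tilde\Phi = \chi(z)\varphi + (1-\chi)(z)\varphi_0$, exploiting that $\varphi$ solves \eqref{equation-for-vphi} exactly on the support of $\chi$ and that $\varphi_0$ solves $(\partial_x^2+\partial_z^2)\varphi_0 = 0$ everywhere, while the only genuinely nontrivial term comes from $\Lcal\varphi_0$ on the region where $\chi\neq 1$. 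First I would record that $\tilde\Phi\vert_{z=0} = \chi(0)\varphi(x,0) + (1-\chi)(0)\varphi_0(x,0) = \psi$ since $\chi\equiv 1$ near $z=0$ and both $\varphi,\varphi_0$ have Dirichlet datum $\psi$; similarly $\partial_z\tilde\Phi\vert_{z=-1} = \partial_z\varphi_0\vert_{z=-1} = 0$ since $\chi\equiv 0$ near $z=-1$ and $\varphi_0$ satisfies the Neumann condition there. This gives the two boundary conditions in \eqref{6110} for free.

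Next I would expand $\Lcal\tilde\Phi$ using the Leibniz rule. Write $\Lcal\tilde\Phi = \chi\,\Lcal\varphi + (1-\chi)\Lcal\varphi_0 + \Ccal(\eta)$ where $\Ccal(\eta)$ collects all the commutator terms in which at least one $z$-derivative falls on $\chi$ or $1-\chi$; these are supported on $\mathrm{supp}(\chi')\subset [-\tfrac12,-\tfrac14]\cup[\tfrac14,\tfrac12]$, but intersected with $z\le 0$ we get support in $[-\tfrac12,-\tfrac14]$. Since $\chi\Lcal\varphi = 0$ by \eqref{equation-for-vphi}, we are left with
\[
\Lcal\tilde\Phi = (1-\chi)(z)\,\Lcal\varphi_0 + \Ccal(\eta),
\]
and $\Lcal\varphi_0 = (\partial_x^2+\partial_z^2)\varphi_0 + \eta'^2\partial_z^2\varphi_0 - 2\eta'\partial_x\partial_z\varphi_0 - \eta''\partial_z\varphi_0 = \eta'^2\partial_z^2\varphi_0 - 2\eta'\partial_x\partial_z\varphi_0 - \eta''\partial_z\varphi_0$ by \eqref{new-problem}. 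So
\[
\Lcal\tilde\Phi = (1-\chi)(z)\bigl[\eta'^2\partial_z^2\varphi_0 - 2\eta'\partial_x\partial_z\varphi_0 - \eta''\partial_z\varphi_0\bigr] + \Ccal(\eta),
\]
both terms of which are supported for $z\le -\tfrac14$. Note $(1-\chi)(z)$ is supported in $[-1,-\tfrac14]$ and $\Ccal(\eta)$ in $[-\tfrac12,-\tfrac14]$, so the whole right-hand side is supported in $[-1,-\tfrac14]$ as claimed.

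It remains to split this right-hand side into the "operator part" $\tilde M(\eta)\psi$ of order $N$ in the $\sNd{N}{K,4,1}{N}$ class and the "fully nonlinear remainder" $\tilde S_N(\eta,\psi)$ vanishing at order $N$ in $\eta$. Here I would Taylor-expand the coefficients $\eta'^2$ and $\eta''$ as homogeneous polynomials in $\eta$ up to degree $N-1$ plus a remainder vanishing at order $N$; multiplying by the explicit Fourier multiplier $\varphi_0 = \frac{\cosh((z+1)D)}{\cosh D}\psi$ and its $z,x$-derivatives (which are para-Poisson-type operators from the boundary bounded on the $E^s_j$ scale by Lemma~\ref{para-BI}), I get a finite sum of homogeneous contributions — these land in $\Ntd{N}{q}$-type classes by the arguments after Definition~\ref{533a} on multiplying by smooth functions of $\eta,\eta',\dots$ vanishing to the appropriate order and on composition with para-Poisson operators, and since each such term carries at least two factors of $\eta$ (from $\eta'^2$) or one factor of $\eta''$ with a gain, the total polynomial part has degree $\ge 1$ in $\eta$ and I can absorb the low-degree pieces and the $O(\eta^N)$ tail into $\tilde M(\eta)$ of class $\sNd{N}{K,4,1}{N}$. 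Wait — one must be careful: $\eta'^2\partial_z^2\varphi_0$ is only quadratic, not degree $N$, in $\eta$; but since this term already vanishes at order $1$ (in fact $2$) in $\eta$ and is smoothing in the $E^s_j$-scale with plenty of room (the multiplier $\cosh((z+1)D)/\cosh D$ and its derivatives are uniformly bounded on $[-1,-\tfrac14]$, hence $\sNd{-\infty}{K,\infty,1}{N}$-like), I would more precisely Taylor expand to separate a genuine $O(\eta^N)$ tail into $\tilde S_N$ and put the remaining finitely many multilinear terms, together with $\Ccal(\eta)$, into $\tilde M(\eta)\psi$; the class $\sNd{N}{K,4,1}{N}$ only requires the \emph{non-homogeneous} component to vanish at order $N$, so low-degree homogeneous terms are allowed. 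The commutator part $\Ccal(\eta)$ is linear in $\varphi_0$ — hence linear in $\psi$ — with coefficients smooth in $(\eta,\eta',\eta'')$ starting at order $0$, and $\varphi_0,\partial_z\varphi_0,\partial_z^2\varphi_0$ smoothing on $[-\tfrac12,-\tfrac14]$, so it contributes to $\tilde M(\eta)$ as well (its order-$0$-in-$\eta$ part being harmless since it is $-\infty$-smoothing there). The bound \eqref{6111} for $\tilde S_N$ follows from the $E^{\sigma_0+2-\frac32 k}_4$-estimates on $\partial_z^j\varphi_0 = \partial_z^j\frac{\cosh((z+1)D)}{\cosh D}\psi$ (controlled by $\|\psi\|_{\Hds{\sigma_0+2}}$ for $z$ bounded away from the bottom, using Lemma~\ref{para-BI}) times the Sobolev control of the Taylor remainder of $\eta'^2,\eta''$ as $N$-linear functions of $\eta$, which is why we need $j\ge N+5$: each of the $N$ factors of $\eta$ and the two $z$-derivatives eat regularity, and we want to land in $E^{\cdot}_4$. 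The main obstacle is the bookkeeping: checking carefully that every term produced — the $\chi'\varphi$-commutators, the $(1-\chi)\Lcal\varphi_0$ terms, and their Taylor expansions in $\eta$ — really does fit the multilinear/non-homogeneous structure of $\sNd{N}{K,4,1}{N}$ and of the estimate \eqref{6111}, keeping track of the loss of $z$-derivatives (we differentiate $\varphi_0$ twice, so we need to have started with enough $z$-regularity, which is why the statement fixes $j\ge N+5$ and outputs only $E_4$-regularity) and of the number of derivatives landing on $\eta$ versus $\psi$.
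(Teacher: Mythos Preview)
Your decomposition $\Lcal\tilde\Phi = (1-\chi)\Lcal\varphi_0 + \Ccal(\eta)$ is correct, and so is the treatment of the boundary conditions and of the term $(1-\chi)\Lcal\varphi_0$. But there is a genuine gap in your handling of the commutator term. Since $\chi + (1-\chi)=1$, the commutators with $\chi$ and $1-\chi$ combine to give
\[
\Ccal(\eta) = \bigl[(1+\eta'^2)\partial_z^2 - 2\eta'\partial_x\partial_z - \eta''\partial_z,\,\chi(z)\bigr](\varphi-\varphi_0),
\]
not an expression linear in $\varphi_0$ alone. The function $\varphi(x,z)=\varphiu\bigl(x,\tfrac{z}{1+\eta}\bigr)$ is the solution of the variable-coefficient elliptic problem and depends nonlinearly on $\eta$; you cannot simply declare it to be a smoothing multiplier applied to $\psi$. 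This is where the actual work of the proof lies, and your proposal does not address it.

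What the paper does at this point is precisely what you are missing. One Taylor-expands $\varphi(x,z)=\varphiu\bigl(x,z-\tfrac{z\eta}{1+\eta}\bigr)$ in powers of $-\tfrac{z\eta}{1+\eta}$ up to order $N$, producing terms $\tfrac{1}{p!}\partial_z^p\varphiu(x,z)\bigl(-\tfrac{z\eta}{1+\eta}\bigr)^p$ plus an integral remainder. Then one inserts the representation \eqref{615}, $\varphiu=\tfrac{\cosh((z+1)D)}{\cosh D}\psi + M(\eta)\psi$ with $M(\eta)\in\sNd{0}{K,j,1}{N}$, obtained from Proposition~\ref{534}. The $p=0$ term gives exactly $\varphi_0+M(\eta)\psi$, so $\varphi-\varphi_0$ becomes a sum of operators of $\sNd{N-1}{K,5,1}{N}$ acting on $\psi$ (requiring $j\ge N+4$), plus a remainder in $\sNd{N}{K,5,1}{N}$ (requiring $j\ge N+5$). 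Applying the first-order commutator then lands in $\sNd{N}{K,4,1}{N}$. This is the true reason for the hypothesis $j\ge N+5$: it is the $N$ $z$-derivatives of $\varphiu$ appearing in the Taylor expansion that consume $z$-regularity, not ``$N$ factors of $\eta$''.
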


\begin{proof}
By the definition of $ \tilde{\Phi} $ in \eqref{619} and 
\eqref{equation-for-vphi}, \eqref{new-problem}, it follows  that \eqref{6110} is equal to
  \begin{multline}
    \label{eq:6112} 
f = \big[ (1+\eta'{}^2)\partial_z^2 - 2\eta'\partial_x\partial_z - \eta''\partial_z \, , \, \chi(z) \big] (\varphi-\varphi_0)\\
+ (1-\chi(z))\bigl(\eta'^2\partial_z^2\varphi_0 -2\eta'\partial_x\partial_z\varphi_0 - \eta''\partial_z\varphi_0\bigr).
  \end{multline} 
Moreover $ f $ is supported for $-1\leq z\leq -\frac{1}{4}$ because, for $ -1/4\leq z\leq 0 $, the function
$ \tilde{\Phi} $ is equal to $ \varphi $ which is a solution of \eqref{equation-for-vphi}. 
Let us check that $  f $ may be written as the right hand side
of the first line in \eqref{6110}. Applying 
Taylor formula to the last expression for $\varphi$ given by \eqref{617}, we
have
\begin{multline}
  \label{eq:6113}
\varphi(x,z) = \sum_{p=0}^{N-1} \frac{1}{p!}\partial_z^p\varphiu(x,z)\bigl(-\frac{z\eta(x)}{1+\eta(x)}\bigr)^p\\
+\frac{1}{(N-1)!}\int_0^1(1-\lambda)^{N-1}(\partial_z^N\varphiu)\bigl(x,z-\frac{\lambda z\eta}{1+\eta}\bigr)\,d\lambda \bigl(-\frac{z\eta(x)}{1+\eta(x)}\bigr)^N.
\end{multline}
We express in the above formula $ \varphiu $ using \eqref{615}. 
The first term in \eqref{6113}, corresponding to $p=0$ in the sum, is equal to 
$$
\varphiu =  \frac{\cosh(( z+1)D)}{\cosh D}\psi + M(\eta)\psi = \varphi_0 + M(\eta)\psi 
$$
where $ M(\eta) $ is in $ \sNd{0}{K,j,1}{N} $.
By the first, third and fourth remarks after Definition~\ref{533a}, each term 
$$
\frac{1}{p!} \bigl(-\frac{z\eta(x)}{1+\eta(x)}\bigr)^p \partial_z^p \Big(\frac{\cosh((z+1)D)}{\cosh D} + M(\eta) \Big) \psi 
$$
of the sum in \eqref{6113}, for $ 1 \leq p \leq N - 1 $, may be written as 
the action of  an operator of $\sNd{N-1}{K,5,1}{N}$ on $ \psi $,  if the index $j  $ chosen before \eqref{615} is taken large enough relatively to $N$, e.g. $ j \geq N + 4 $. 

In the same way, in the Taylor remainder in \eqref{6113} 
the argument of the integral will be a path of operators in 
$\sNd{N}{K,5,1}{N} $ if $j\geq N+5$. Thus 
using \eqref{5314a} (with $ m $ replaced by $ N $ and $ \sigma $ large depending on $N$) we see that  the Taylor remainder
in \eqref{6113}, and its  $ \partial_z $ derivative,  
satisfy \eqref{6111}, for some $\sigma'_0$ depending on $N$ and $\sigma_0$. 

Collecting the previous arguments we may write 
$$
\varphi - \varphi_0 =  M(\eta)\psi +   \tilde{S}_N(\eta,\psi) 
$$
where the operator $ M(\eta) $ is in $\sNd{N-1}{K,5,1}{N} $ and the functions 
$ \tilde{S}_N(\eta,\psi) $, $ \pa_z \tilde{S}_N(\eta,\psi) $ satisfy \eqref{6111}. Clearly 
$ \tilde{S}_N(\eta,\psi) $ is linear in $ \psi $.
Consequently
the first term in the right hand side of \eqref{6112}
may be written as
$$ 
\big[ (1+\eta'{}^2)\partial_z^2 - 2\eta'\partial_x\partial_z - \eta''\partial_z \, , \, \chi(z) \big] 
(\varphi - \varphi_0) = \tilde{M}(\eta)\psi +   \tilde{S}_N(\eta,\psi) 
$$
where  $\tilde{M}(\eta)$ is in $\sNd{N}{K,4,1}{N}$  (use the third and fourth remarks after Definition~\ref{533a}
and that the commutator $[\pa_{zz}, \chi (z) ] $ is the first order operator $ (\pa_{zz} \chi) +  2 (\pa_z \chi) \pa_z  $) and 
$ \tilde{S}_N(\eta,\psi) $ satisfies \eqref{6111}, 
as claimed in the statement of the lemma.

 Finally, since the operator 
$ \frac{\cosh((z+1)D)}{\cosh D}$ is in $  \Ntd{0}{0} $ (recall Definition \ref{533a}-(i)) 
we conclude that 
the second term in the right hand side of \eqref{6112} may be written as 
$$
(1-\chi(z))\bigl(\eta'^2\partial_z^2  -2\eta'\partial_x\partial_z  - \eta''\partial_z \bigr) 
 \frac{\cosh(( z+1)D)}{\cosh D}\psi = {\tilde M}(\eta) \psi 
$$
where $ {\tilde M}(\eta) $  is the sum of homogeneous operators in $  \Ntd{2}{1} $. 
\end{proof}
The good unknown that allows to express the capillarity-gravity water waves equations without artificial loss of 
derivatives  is defined
as
\begin{equation}
  \label{eq:6114}
  \omega \stackrel{\textrm{def}} = \Phi\vert_{z=0} = \psi -\opbw(B)\eta
\end{equation}
where
\begin{equation}
  \label{eq:6115}
  \Phi \stackrel{\textrm{def}} = \tilde{\Phi} -\opbw(\partial_z\tilde{\Phi})\eta \, , \quad B = \partial_z\tilde{\Phi}\vert_{z=0} \, ,
\end{equation}
$\tilde{\Phi}$ being the function \eqref{619}. To obtain an expression for $\omega$ in terms of $(\eta,\psi)$ alone, we need
to deduce from \eqref{6110} a paradifferential equation satisfied by $ \Phi $ (see Lemma \ref{613}). 

We shall use the following paralinearization lemma for the product of $  p $ functions (in the case $ p = 2 $ it gives  the usual para-product decomposition formula). 
\begin{lemma}
  \label{612} {\bf (Para-product)}
Let $u_1,\dots,u_p$ be $p$ elements in $\Hsz{\sigma}(\Tu,\C)$, with $\sigma>\frac{1}{2}$. Then we may write
\begin{equation}
  \label{eq:6116}
  u_1\cdots u_p = \sum_{j=1}^p\bigl(\opbw(u_1\cdots\hat{u}_j\cdots u_p)u_j+ R_j(u_1\cdots\hat{u}_j\cdots u_p)u_j\bigr)
\end{equation}
where $R_j$ is in $\Rt{-\rho}{p-1}$ for any $\rho$, and where $\hat{u}_j$ means that the $j$-th term is excluded.
\end{lemma}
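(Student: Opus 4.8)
The plan is to prove \eqref{6116} first for $u_1,\dots,u_p$ in $\Hsz{\infty}(\Tu,\C)$, where it is a reorganization of Fourier series, and then let the degrees of homogeneity and the boundedness estimates handle the passage to $\sigma>\tfrac12$. Throughout set $a_j := u_1\cdots\hat u_j\cdots u_p$, understood as the $\xi$-independent symmetric $(p-1)$-linear symbol $(U_1,\dots,\hat U_j,\dots,U_p)\mapsto\prod_{i\neq j}U_i(x)$. Since a product of $p-1$ single Fourier modes is a finite sum (at most $2^{p-2}$ terms) of modes at frequencies $\sum_{i\neq j}\epsilon_i n_i$, this symbol belongs to the class $\Gt{0}{p-1}$ of Definition~\ref{211} (and to $\Ft{p-1}$ of Definition~\ref{241}), estimate \eqref{214} holding with $\mu=0$; moreover products of single modes lose no $L^2$ norm, so $\|\Pi_{n_0}(\Pi_{n_1}u_1\cdots\Pi_{n_p}u_p)\|_{L^2}\leq C_p\prod_{i=1}^p\|\Pi_{n_i}u_i\|_{L^2}$ and, by the proof of Proposition~\ref{215}, also $\|\Pi_{n_0}\opbw(a_j(\Pi_{n'}\cdot))\Pi_{n_j}u_j\|_{L^2}\leq C_p\prod_{i=1}^p\|\Pi_{n_i}u_i\|_{L^2}$. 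Because $a_j$ does not depend on $\xi$, formula \eqref{2110a} shows that $\opbw(a_j)u_j=\opw(a_{j,\chi_{p-1}})u_j=a_{j,\chi_{p-1}}\cdot u_j$ is multiplication by the frequency-truncated product $a_{j,\chi_{p-1}}$ of \eqref{2112}; by \eqref{2122} its action forces the output mode $n_0$ and the mode $n_j$ of $u_j$ to satisfy $n_0\sim n_j$, while the cutoff forces $|(n_i)_{i\neq j}|\lesssim\delta\langle n_j\rangle$.

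The core of the argument is a frequency partition. Expanding $u_1\cdots u_p=\sum_{n\in(\N^*)^p}\Pi_{n_1}u_1\cdots\Pi_{n_p}u_p$, fix a constant $C_0\geq 4(p-1)/\delta$, and for a tuple $n=(n_1,\dots,n_p)$ let $i_0(n)$ be the smallest index realising $\max_i n_i$. Put
\[
A_j=\bigl\{n\ :\ i_0(n)=j,\ n_j\geq C_0\max\nolimits_2(n)\bigr\},\qquad B=(\N^*)^p\setminus\bigcup_{j=1}^pA_j,
\]
so that on $B$ the two largest entries of $n$ are comparable, $\max(n)\leq C_0\max_2(n)$. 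I would first check that on $A_j$ the cutoff $\chi_{p-1}$ in \eqref{2112} equals one: if $\Pi_{n_0}(\Pi_{n_1}u_1\cdots\Pi_{n_p}u_p)\neq0$ then $|n_0-n_j|\leq\sum_{i\neq j}n_i\leq(p-1)\max_2(n)\leq\tfrac12 n_j$, hence $n_0\sim n_j$, and then $|(n_i)_{i\neq j}|\leq(p-1)\max_2(n)\leq\tfrac{\delta}{4}n_j\leq\tfrac{\delta}{2}\bigl\langle\tfrac{n_0+n_j}{2}\bigr\rangle$, which is the region where $\chi_{p-1}\equiv1$. Hence $\sum_{n\in A_j}\Pi_{n_0}(\Pi_{n_1}u_1\cdots\Pi_{n_p}u_p)$ is exactly the $A_j$-part of $\opbw(a_j)u_j$. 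Writing $S_j$ for the frequency support of $\opbw(a_j)u_j$ — where $|(n_i)_{i\neq j}|\lesssim\delta\langle n_j\rangle$, so that $n_j$ is the maximal entry and $n_j\geq c_\delta\max_2(n)$ — the leftover part $S_j\setminus A_j$ lies in $\{c_\delta\max_2(n)\leq n_j<C_0\max_2(n)\}$, where $n_j\sim\max(n)\sim\max_2(n)$ (and $n_0\sim n_j$).

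Combining these observations yields
\[
u_1\cdots u_p=\sum_{j=1}^p\opbw(a_j)u_j+D,
\]
where $D$ is the product summed over $B$ minus, for each $j$, the operator $\opbw(a_j)u_j$ summed over $S_j\setminus A_j$. Every summand of $D$ is $\Pi_{n_0}$ applied to a product of single modes, or to $\opw$ of such a product, over frequency tuples for which $\sum_{\ell=0}^p\epsilon_\ell n_\ell=0$ for some choice of signs — so condition \eqref{2116} holds — and for which $\max(n)\sim\max_2(n)$ over the frequencies $n_1,\dots,n_p$. Using the loss-free single-mode $L^2$ bounds recalled above, $D$ therefore satisfies estimate \eqref{2115} with $\mu=0$ and with \emph{any} $\rho$, since on this region $\max_2(n)^{\rho}/\max(n)^{\rho}\sim1$; changing the admissible cutoff $\chi_{p-1}$ alters $D$ only by an operator of the same type, as in the remark following the proof of Proposition~\ref{215}. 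Since $D=u_1\cdots u_p-\sum_j\opbw(a_j)u_j$ is a \emph{symmetric} $p$-linear function of $(u_1,\dots,u_p)$, the operator $R_j$ defined by $R_j(u_1,\dots,\hat u_j,\dots,u_p)u_j=\tfrac1p D(u_1,\dots,u_p)$ (one may equally take $R_p=D$ and $R_j=0$ for $j<p$) is a symmetric element of $\Rt{-\rho}{p-1}$ for every $\rho$, and $\sum_{j=1}^pR_j(a_j)u_j=D$. Finally, all the series involved converge in $\Hsz{\sigma}(\Tu,\C)$ for $\sigma>\tfrac12$ — the product by the Banach-algebra property of $H^\sigma(\Tu)$, the paraproducts $\opbw(a_j)u_j$ and the remainders by the standard boundedness of paradifferential and smoothing operators on Sobolev spaces (Proposition~\ref{215}, Definition~\ref{214}, with $\|a_j\|_{L^\infty}\lesssim\prod_{i\neq j}\|u_i\|_{H^\sigma}$) — so \eqref{6116} holds as an identity in $\Hsz{\sigma}$ for every $\sigma>\tfrac12$.

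The step I expect to be the main obstacle is the bookkeeping of the second paragraph: aligning the ``one dominant frequency'' regions $A_j$ with the supports of the Bony--Weyl cutoffs $\chi_{p-1}$, and verifying that everything which is \emph{not} cancelled — both the high--high region $B$ and the transition zones $S_j\setminus A_j$ — lies inside $\{\max(n)\sim\max_2(n)\}$, with the comparability constants ($\delta$, $C_0$, the combinatorial factor $p-1$) chosen consistently. Once this frequency analysis is in place, the estimates required for $R_j\in\Rt{-\rho}{p-1}$ are immediate, since products of single Fourier modes lose no $L^2$ norm and the comparability of the two largest frequencies makes the gain $\max(n)^{-\rho}$ free for every $\rho$.
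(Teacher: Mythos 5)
Your proof is correct, but it takes a genuinely different route from the paper's. The paper writes the ``$j$-dominant'' piece of the product as $\mathrm{Op}(a_{\chi_{p-1}})u_j$ in the \emph{standard} quantization (which evaluates the cutoff at $\xi=n_j$), and then invokes Lemma~\ref{248} to convert to Weyl quantization: the leading term of the expansion is $\opbw(a_j)u_j$, the $b_\alpha$'s for $\alpha\geq 1$ are supported where $|n'|\sim\langle\xi\rangle$, and $\tilde b$ gains $\langle\xi\rangle^{-A}$ — both smoothing. You sidestep the quantization-change lemma entirely: you carve out a \emph{smaller} region $A_j$ on which the Weyl cutoff $\chi_{p-1}\bigl(n',\frac{n_0+n_j}{2}\bigr)$ is already identically one, so that $\opbw(a_j)u_j$ coincides \emph{exactly} with the raw product there, and you verify that everything else — the high-high set $B$ and the transition zones $S_j\setminus A_j$ — lies in $\{\max(n)\leq C_0\max_2(n)\}$ and hence gives arbitrarily smoothing remainders. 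Both proofs are sound; yours is more elementary (no asymptotic expansion of the Weyl symbol is needed, only the support of the cutoff), at the cost of the explicit threshold bookkeeping ($C_0\geq 4(p-1)/\delta$, the consistency between $A_j$ and $S_j$), whereas the paper's is more systematic and reuses a lemma that is in any case needed elsewhere.

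One imprecision worth flagging: the parenthetical claim ``$\opbw(a_j)u_j=\opw(a_{j,\chi_{p-1}})u_j=a_{j,\chi_{p-1}}\cdot u_j$ is multiplication'' is not correct as stated — since $a_{j,\chi_{p-1}}(x,\xi)$ depends on $\xi$ through the cutoff, its Weyl quantization is not a pointwise multiplication, and this conflation of $\opw$ with $\mathrm{Op}$ is exactly the subtlety the paper's Lemma~\ref{248} addresses. Your argument does not actually rely on this sentence — it only uses the fact that on $A_j$ the cutoff equals one, so that $\opw(\chi_{p-1}a_j)$ and $\opw(a_j)=a_j\cdot$ agree there, which you verify correctly — but the sentence as written would mislead a reader about why the lemma is nontrivial.
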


\begin{proof}
  Let $\chi$ be an even function in $C^\infty_0(\R)$, $\chi\equiv 1$ close to zero, with small enough support. 
  We decompose the product of $  p $ functions as 
   \begin{multline}
    \label{eq:6117}
 u_1\cdots u_p = \sum_{j=1}^p \sum_{n_1,\dots,n_p\in \N^*}
 \prod_{i\neq  j}\chi\Bigl(\frac{n_i}{n_j}\Bigr)\prod_{\ell=1}^p\Pin{\ell}u_\ell\\
+ \sum_{n_1,\dots,n_p}\theta(n_1,\dots,n_p) \prod_{\ell=1}^p\Pin{\ell}u_\ell
  \end{multline}
where 
$$
\theta (n_1,\dots,n_p) = 1 - \sum_{j=1}^p \prod_{i\neq  j}\chi\Bigl(\frac{n_i}{n_j}\Bigr)
$$ 
is supported for $\maxn{1}{p}\sim\maxdn{1}{p}$. 
As a consequence, recalling Definition \ref{214}-(i), 
the last term in \eqref{6117} contributes to the smoothing operators 
in $ \Rt{-\rho}{p-1} $ in the
right hand side of \eqref{6116}. 

All  the terms in the sum \eqref{6117} are the same, up to permutation of the indices. 
The  one corresponding  to $ j = p $,  
$$
 \prod_{i\neq  p}\chi\Bigl(\frac{n_i}{n_p}\Bigr)\prod_{\ell=1}^p\Pin{\ell}u_\ell \, , 
$$
may be written,  using \eqref{usual-Q} and 
\eqref{quantiz-t} for $ \tau = 1 $, as
\begin{equation}
  \label{eq:6118}
  \frac{1}{2\pi}\int_{\R\times\R}e^{i(x-y)\xi} a_{\chi_{p-1}}(u_1, \ldots, u_{p-1}; x,\xi)u_p(y)\,dyd\xi
\end{equation}
where 
\be\label{eq:an'}
a_{\chi_{p-1}} (u_1, \ldots, u_{p-1}; x,\xi) = 
\sum_{n_1,\dots,n_{p-1}} \prod_{\ell=1}^{p-1}\chi\bigl(\frac{n_\ell}{\xi}\bigr)\Pin{\ell}u_\ell(x)  
\ee
has the form \eqref{2112}  (with $ p - 1 $ instead of $ p$), admissible cut-off function 
 $ \chi_{p-1}(n_1, \ldots, n_{p-1}; \xi) = \prod_{\ell=1}^{p-1}\chi\bigl(\frac{n_\ell}{\xi}\bigr)  $ 
 and $ a  = \prod_{\ell=1}^{p-1} u_\ell $. 
Notice that $ a_{\chi_{p-1}} $
is  an homogeneous symbol in $\Gt{0}{p-1}$  supported for $ | n' | \absj{\xi}^{-1}  $ small
where  we denote $ n'  = (n_1,\dots,n_{p-1}) $. 

We apply Lemma \ref{248} to the symbol $ a_{\chi_{p-1}} \in \Gt{0}{p-1} $. 
By \eqref{2437}, \eqref{2438}, \eqref{def-e}, we may write
\eqref{6118}, in terms of the Weyl quantization, as
\begin{equation}
  \label{eq:6119}
  \opw(b)u_p =   \sum_{\alpha=0}^{A-1}\opw(b_\alpha)u_p + \opw(\tilde{b})u_p
\end{equation}
where
\begin{equation}
  \label{eq:6120}
  b_\alpha (u_1, \ldots, u_{p-1}; x, \xi)  =  \frac{(-1)^\alpha}{\alpha! 2^\alpha} 
  \partial_x^\alpha D_\xi^\alpha a_{\chi_{p-1}}(u_1, \ldots, u_{p-1}; x,\xi)
\end{equation}
and where $\tilde{b}$ satisfies estimates of the form \eqref{2434} with $ m = 0 $ 
and the prefactor $ Q $  is 
$O\bigl(\bigl(\frac{\abs{n'}}{\absj{\xi}}\bigr)^A\bigr)$ when $\frac{\abs{n'}}{\absj{\xi}}$ is small. 

By \eqref{6120} and \eqref{an'}, and recalling the Definition \ref{Def:BW} 
of a Bony-Weyl
operator,  the first term in the sum \eqref{6119} is 
 $$
 \opw(b_0)u_p =  \opw(a_{\chi_{p-1}})u_p = \opbw(u_1\cdots u_{p-1})u_p
 $$ 
see \eqref{2113}.

 When $ 1 \leq \alpha \leq A - 1$, it follows from \eqref{6120} and the
  definition of   $ a_{\chi_{p-1}} $ in \eqref{an'} that $ b_\alpha $ is supported for $\abs{n'}\sim \absj{\xi} $. 
  As a consequence   each operator $ \opw(b_\alpha) $ 
  satisfies the condition \eqref{2115} that defines an homogeneous 
  smoothing operator of $  \Rt{-\rho}{p-1} $ (recall formula \eqref{restriz}). 
In conclusion each $ \opw(b_\alpha) u_p $, $ 1 \leq \alpha \leq A - 1$,  
  in \eqref{6119} has the form $R_p(u_1,\dots,u_{p-1})u_{p}$ 
  in the right hand side of \eqref{6116}. 

Finally we consider $ \opw(\tilde{b})u_p $. 
Since $ Q $  is 
$O\bigl(\bigl(\frac{\abs{n'}}{\absj{\xi}}\bigr)^A\bigr)$ when $\frac{\abs{n'}}{\absj{\xi}}$ is small, 
the function $\tilde{b}$ satisfies by \eqref{2434} bounds with an $O(\absj{\xi}^{-A})$ gain, as soon as we assume enough smoothness on   $u_1,\dots,u_{p-1}$ to 
  compensate the loss $\abs{n'}^A$. 
  Moreover 
    $ \tilde{b} $ is supported for $ | n' | \absj{\xi}^{-1} $ small as 
  the symbol $ a_{\chi_{p-1}} $ in \eqref{an'}. This follows from \eqref{clwe} for 
  $b$, and then  
  for $\tilde{b} = b - \sum_{\alpha=1}^{A-1} b_\alpha $, 
  since each $ b_\alpha $ is supported for $ c_1 \absj{\xi}\leq \abs{n'} \leq
    c_2\absj{\xi}$ for some constants $ 0 < c_1 < c_2 \ll 1$.
   Taking $A \sim \rho $, we deduce that 
  also $\opw(\tilde{b}) $ is a smoothing operator in  $ \Rt{-\rho}{p-1} $ and thus it contributes to the
last term in \eqref{6116} (with $j=p$). Actually, 
by the first remark after  Proposition~\ref{215} 
and \eqref{2434} with $ \alpha = \beta = 0 $, and $ Q =  O\bigl(\bigl(\frac{\abs{n'}}{\absj{\xi}}\bigr)^{\rho}\bigr)$, 
 we may apply  the statement of Proposition~\ref{215} with $ m = - \rho $
 (even tough the estimates \eqref{2434} do not imply that $ \tilde{b} $ is a symbol, since  
each  $\partial_\xi^\beta$ derivative of $\tilde{b}$  involves a loss in $\abs{n}^\beta$).
Then the last remark after Proposition~\ref{215} implies that $\opw(\tilde{b}) $ is in $ \Rt{-\rho}{p-1} $. 
This concludes the proof.
\end{proof}

\textbf{Remark}: When we shall apply below  lemma~\ref{612}, we shall replace some arguments by $\eta$ (or one of its derivatives), which belongs to a
space of the form $\Hsz{s}$. The
remaining arguments will be replaced by either a space derivative of $\psi\in \Hds{s}$, or by $\partial_z\Phit$,
$\partial^2_z\Phit$, $B$,  that may be  considered as 
elements of $\Hsz{s}$ (at fixed $z$), even though the function $\Phit$ is defined only modulo constants like $\psi$, since one makes always
act  at least one derivative on it. This will be used without further notice in the proof of lemma~\ref{613} below, as
well as in formulas \eqref{6145}, \eqref{627}, \eqref{628a}, \eqref{629a}.

\begin{lemma}
  \label{613}
Let $\Phit$ be a smooth enough solution of \eqref{6110}. Then the  function $\Phi$ defined by \eqref{6115} 
satisfies the paradifferential equation 
\begin{multline}
\label{eq:6121}
  \bigl[(1+\opbw(\eta'\otimes\eta'))\partial_z^2 -2i\opbw(\eta'\xi)\partial_z -\opbw(\xi^2)\bigr]\Phi\\
=-\opbw[\partial_z\tilde{M}(\eta)\psi + \partial_z\tilde{S}_N(\eta,\psi)]\eta\\
+\sum_{j=1}^2(R'_j(\eta)\partial_z^j\Phit + R''_j(\eta,\partial_z^j\Phit)\eta) + \tilde{M}(\eta)\psi + \tilde{S}_N(\eta,\psi)
\end{multline}
where $\tilde{M}(\eta)$ 
is in $\sNd{N}{K,4,1}{N}$, $\tilde{S}_N$ is linear in $\psi $,  and such that,  for any large
enough  $ \sigma_0 $, there is $ \sigma'_0 $ so that, for any $ 0\leq k\leq K $,   
\begin{equation} \label{eq:6111bis}
  \norm{\partial_t^k \tilde{S}_N(\eta,\psi)}_{E^{\sigma_0+2-\frac{3}{2}k}_4}  \leq C\sum_{k'+k''=k}\Gcals{\sigma'_0}{k',N}{\eta}\Gcals{\sigma'_0}{k'',1}{\psi}, 
\end{equation}
and where $R'_j(\eta)$ (resp.\
$R''_j(\eta,\partial_z^j\Phit)$) is a sum of smoothing operators of $\Rtb{-\rho}{p}$, $p=1, 2$, $\rho$ being an 
arbitrary positive number. Moreover $R''_j (\eta,\partial_z^j\Phit) $ is linear in $\partial_z^j\Phit$
and 
$R'_j(\eta)$, $R''_j(\eta,\partial_z^j\Phit)$ are 
local relatively to the $z$ variable. Finally, $\tilde{M}(\eta) \psi $ and $\tilde{S}_N(\eta,\psi)$ are supported for $-1\leq z\leq -\frac{1}{4}$.
\end{lemma}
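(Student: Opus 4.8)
The plan is to evaluate directly the left–hand side of \eqref{6121}. Write $P$ for the paradifferential operator occurring there, $P=(1+\opbw(\eta'\otimes\eta'))\partial_z^2-2i\opbw(\eta'\xi)\partial_z-\opbw(\xi^2)$; since $\opbw(\xi^2)=D_x^2=-\partial_x^2$, this operator $P$ is precisely the paralinearization of the elliptic operator $L=\partial_x^2+(1+\eta'^2)\partial_z^2-2\eta'\partial_x\partial_z-\eta''\partial_z$ of \eqref{6110}, obtained by replacing the coefficient products $\eta'^2\partial_z^2$, $-2\eta'\partial_x\partial_z$, $-\eta''\partial_z$ by, respectively, $\opbw(\eta'^2)\partial_z^2$, $-2i\opbw(\eta'\xi)\partial_z$, and $0$ (the last coefficient having no paradifferential counterpart in $P$). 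By the definition \eqref{6115} of the good unknown, $\Phi=\tilde\Phi-\opbw(\partial_z\tilde\Phi)\eta$, so I would split $P\Phi=P\tilde\Phi-P[\opbw(\partial_z\tilde\Phi)\eta]$ and treat the two pieces separately, using \eqref{6110} for the first.

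For $P\tilde\Phi$: apply the para\-product formula of Lemma~\ref{612} to each of the three products $\eta'^2\cdot\partial_z^2\tilde\Phi$, $\eta'\cdot\partial_x\partial_z\tilde\Phi$, $\eta''\cdot\partial_z\tilde\Phi$ making up $L\tilde\Phi$ (working at fixed $z$ and viewing $\partial_z^j\tilde\Phi$ as an element of a Sobolev space, as in the remark following Lemma~\ref{612}), and use Proposition~\ref{231} to rewrite $\opbw(\eta')\partial_x$ as $\opbw(\eta'\xi)$ modulo an order-zero paraproduct and a $\rho$-smoothing operator. This produces $P\tilde\Phi=L\tilde\Phi+\sum_{j=1}^2\bigl(R'_j(\eta)\partial_z^j\tilde\Phi+\opbw(a_j)\eta+R''_j(\eta,\partial_z^j\tilde\Phi)\eta\bigr)$, where $R'_j$, $R''_j$ are finite sums of smoothing operators of $\Rtb{-\rho}{p}$, $p=1,2$, local in $z$, with $R''_j$ linear in $\partial_z^j\tilde\Phi$, and where $a_j$ are symbols depending linearly on $\partial_z^j\tilde\Phi$ (these collect the contributions in which a derivative falls on $\eta$ rather than on $\tilde\Phi$, including the entire $-\eta''\partial_z\tilde\Phi$ term since $P$ has no such term). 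Substituting $L\tilde\Phi=\tilde M(\eta)\psi+\tilde S_N(\eta,\psi)$ from \eqref{6110} then reproduces the last line of \eqref{6121}, the announced smoothing remainders, and the spurious extra terms $\sum_j\opbw(a_j)\eta$.

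For $P[\opbw(\partial_z\tilde\Phi)\eta]$: since $\eta$ does not depend on $z$, every $\partial_z$ in $P$ acts only on $\partial_z\tilde\Phi$, so $\partial_z^2[\opbw(\partial_z\tilde\Phi)\eta]=\opbw(\partial_z^3\tilde\Phi)\eta$ and $\partial_z[\opbw(\partial_z\tilde\Phi)\eta]=\opbw(\partial_z^2\tilde\Phi)\eta$; then Proposition~\ref{231} lets me commute $\opbw(\eta'\otimes\eta')$, $\opbw(\eta'\xi)$ and $\opbw(\xi^2)$ across the paraproduct $\opbw(\partial_z\tilde\Phi)$. The key structural point—the defining property of the Alinhac good unknown—is that this computation reproduces \emph{exactly} the spurious terms $\sum_j\opbw(a_j)\eta+R''_j(\eta,\partial_z^j\tilde\Phi)\eta$ of the previous paragraph, together with $\opbw[L(\partial_z\tilde\Phi)]\eta$; hence in $P\Phi=P\tilde\Phi-P[\opbw(\partial_z\tilde\Phi)\eta]$ the spurious terms cancel, leaving $-\opbw[L(\partial_z\tilde\Phi)]\eta$. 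Since $L$ has $z$-independent coefficients it commutes with $\partial_z$, so $L(\partial_z\tilde\Phi)=\partial_z(L\tilde\Phi)=\partial_z\tilde M(\eta)\psi+\partial_z\tilde S_N(\eta,\psi)$ by \eqref{6110}; this is the first term on the right-hand side of \eqref{6121}. It then remains to verify the operator classes: $\partial_z$ maps $\sNd{N}{K,4,1}{N}$ into $\sNd{N+1}{K,3,1}{N}$ (sixth remark after Definition~\ref{533a}) and $\partial_z\tilde S_N$ still satisfies an estimate of the type \eqref{6111bis}, while $\tilde M(\eta)\psi$, $\tilde S_N(\eta,\psi)$ and their $z$-derivatives are supported in $z\le-\tfrac14$ by Lemma~\ref{611}; the remainders $R'_j(\eta)\partial_z^j\tilde\Phi$ and $R''_j(\eta,\partial_z^j\tilde\Phi)\eta$ are local in $z$ and lie in $\Rtb{-\rho}{p}$, $p=1,2$, for arbitrary $\rho$, because the para-product remainders of Lemma~\ref{612} and the symbolic-calculus remainders of Proposition~\ref{231} are $\rho$-smoothing for every $\rho$.

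The hard part is exactly that cancellation step. I expect the bulk of the work to be checking that the Poisson-bracket and higher-order terms in the expansions of Proposition~\ref{231} produced on the $P\tilde\Phi$ side—from replacing $\opbw(\eta')\partial_x$ by $\opbw(\eta'\xi)$ and from the absence of an $\eta''\partial_z$ term in $P$—match, symbol by symbol, those produced on the $P[\opbw(\partial_z\tilde\Phi)\eta]$ side when $\opbw(\eta'\xi)$, $\opbw(\eta'\otimes\eta')$ and $\opbw(\xi^2)$ are commuted across $\opbw(\partial_z\tilde\Phi)$, so that the non-smoothing contributions of the form $\opbw(\text{positive-order symbol built from }\partial_z^j\tilde\Phi)\,\eta$, as well as the order-$0$ contributions $\opbw(\eta'')\partial_z\tilde\Phi$, cancel identically, leaving only the explicit term $-\opbw[\partial_z\tilde M(\eta)\psi+\partial_z\tilde S_N(\eta,\psi)]\eta$ and the $\rho$-smoothing remainders. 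Alongside this one must keep careful track, at every stage, of the locality in $z$ of the paraproducts and smoothing operators and of the support properties inherited from Lemma~\ref{611}.
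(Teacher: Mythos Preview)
Your proposal is correct and follows essentially the same approach as the paper: split $P\Phi=P\tilde\Phi-P[\opbw(\partial_z\tilde\Phi)\eta]$, paralinearize the coefficient products in $L\tilde\Phi$ via Lemma~\ref{612} to obtain $P\tilde\Phi$ with explicit ``spurious'' paraproduct terms acting on $\eta'$ and $\eta''$, compute $P[\opbw(\partial_z\tilde\Phi)\eta]$ by symbolic calculus, and observe that the spurious terms cancel in the difference, leaving $-\opbw[L(\partial_z\tilde\Phi)]\eta$ which becomes the first right-hand-side term of \eqref{6121} after substituting \eqref{6110}. One small clarification: the smoothing remainders $R'_j$, $R''_j$ produced on the two sides do not cancel against each other---they are simply collected together into the final $R'_j$, $R''_j$ of the statement; what cancels are only the non-smoothing paraproduct contributions of the form $\opbw(\cdots\partial_z^j\tilde\Phi)\eta'$ and $\opbw(\partial_z\tilde\Phi)\eta''$.
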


\textbf{Remarks}: 
$ \bullet $
 We recall that a local operator is an operator that does not increase the support of functions. Here the expression 
  ``local relatively to $ z$'' means that the $z$-support of functions does not increase. 

$ \bullet $
As stated in lemma \ref{611}
 the terms $\tilde{M}(\eta)\psi$, $\tilde{S}_N(\eta,\psi)$, that come from the
cut-off in \eqref{619},
are supported for $ - 1 \leq z \leq -\frac{1}{4} $. 
Because of that, they will induce only smoothing contributions to the restriction of
$\Phi$ at $z=0$, see Proposition \ref{614}.

\begin{proof}
We  apply the paralinearization lemma~\ref{612} to each term
in the left hand side of equation \eqref{6110} at fixed $z$. We may write
  \begin{multline}
    \label{eq:6123}
\eta'{}^2\partial_z^2\Phit = \opbw(\eta'\otimes\eta')\partial_z^2\Phit + 2\opbw(\eta'\otimes\partial_z^2\Phit)\eta'\\
+ R(\eta',\eta')\partial_z^2\Phit + R(\eta',\partial_z^2\Phit)\eta'
  \end{multline}
and
  \begin{multline}
\label{eq:6124}
    2\eta'\partial_x\partial_z\Phit + \eta''\partial_z\Phit = \opbw(2\eta')\partial_x\partial_z\Phit +
    \opbw(\eta'')\partial_z\Phit\\
+2\opbw(\partial_x\partial_z\Phit)\eta' + \opbw(\partial_z\Phit)\eta''\\
+ R(\eta') \partial_x\partial_z\Phit + R(\partial_x\partial_z\Phit)\eta'\\
+R(\eta'') \partial_z\Phit + R(\partial_z\Phit)\eta''
  \end{multline}
where, using the second remark after Definition \ref{512},  $R$ stands for a generic smoothing operator of $\Rtb{-\rho}{p}$ as defined in (i) of Definition~\ref{512}, with $p=1$ when $R$
has only one argument, and $p=2$ if $R$ has two arguments.  By the composition formula \eqref{224} of
Proposition~\ref{222}, the sum of the first two terms in the right hand side of \eqref{6124} is
\be\label{eq:sum-first2}
\opbw(2\eta')\partial_x\partial_z\Phit +
    \opbw(\eta'')\partial_z\Phit = \opbw(2i\eta'\xi)\partial_z\Phit
\ee
up to a smoothing remainder $ R( \eta ) \pa_z \Phit $ with $ R(\eta) $ in $ \Rt{-\rho}{1} $, 
thus in $\Rtb{-\rho}{1}$  by the second remark after Definition \ref{512}.
By \eqref{6123}, \eqref{6124}, \eqref{sum-first2} we may then write \eqref{6110} as
\begin{multline}
  \label{eq:6125}
 \bigl[(1+\opbw(\eta'\otimes\eta'))\partial_z^2 -2i\opbw(\eta'\xi)\partial_z -\opbw(\xi^2)\bigr]\Phit\\
=-2\opbw(\eta'\otimes\partial_z^2\Phit)\eta' + 2\opbw(\partial_x\partial_z\Phit)\eta'\\
+\opbw(\partial_z\Phit)\eta'' + \sum_{j=1}^2R'_j(\eta)\partial_z^j\Phit + \sum_{j=1}^2R''_j(\eta,\partial_z^j\Phit)\eta\\
+\tilde{M}(\eta)\psi + \tilde{S}_N(\eta,\psi) 
\end{multline}
where $R'_j(\eta)$ (resp.\ $R''_j(\eta,\partial_z^j\Phit)$) is a sum of elements of $\Rtb{-\rho}{p}$, $p=1, 2$ (resp.\
which is moreover linear in $\partial_z^j\Phit$). These operators are local in $ z $ as we use only symbolic calculus
relatively to the $x$ variable, $z$ being just a parameter.
Moreover,  as stated in lemma \ref{611}, the operator 
$ \tilde{M}(\eta) $ 
is in $\sNd{N}{K,4,1}{N}$  and $ \tilde{S}_N(\eta,\psi)  $
satisfy \eqref{6111}, i.e. \eqref{6111bis}. 

On the other hand, we compute
\[\bigl[(1+\opbw(\eta'\otimes\eta'))\partial_z^2 -2i\opbw(\eta'\xi)\partial_z
-\opbw(\xi^2)\bigr]\opbw(\partial_z\Phit)\eta \, .
\]
Using again the composition formula \eqref{224}, we get
\begin{multline}
  \label{eq:6126}
\opbw((1+(\eta'\otimes\eta'))\partial_z^3\Phit)\eta\\
-2\opbw(\eta'\partial_z^2\Phit)\eta' -2\opbw(\eta'\partial_z^2\partial_x\Phit)\eta - \opbw(\eta''\partial_z^2\Phit)\eta\\
+\opbw(\partial_z\Phit)\eta'' + 2\opbw(\partial_z\partial_x\Phit)\eta' + \opbw(\partial_z\partial_x^2\Phit)\eta
\end{multline}
modulo again smoothing operators as in \eqref{6125} and \eqref{6126}. 

Making the difference between \eqref{6125} and  \eqref{6126}, and using the definition \eqref{6115} of $\Phi$, we get
\begin{multline*}
  \bigl[(1+\opbw(\eta'\otimes\eta'))\partial_z^2 -2i\opbw(\eta'\xi)\partial_z
-\opbw(\xi^2)\bigr]\Phi\\
= -\opbw\bigl[\bigl((1+(\eta'\otimes\eta'))\partial_z^2 -2\eta'\partial_z\partial_x + \partial_x^2
-\eta''\partial_z\bigr)\partial_z\Phit\bigr]\eta\\
+ \sum_{j=1}^2R'_j(\eta)\partial_z^j\Phit + \sum_{j=1}^2R''_j(\eta,\partial_z^j\Phit)\eta\\
+\tilde{M}(\eta)\psi + \tilde{S}_N(\eta,\psi)\\
= -\opbw\bigl[\partial_z \tilde{M}(\eta)\psi + \partial_z \tilde{S}_N(\eta,\psi)\bigr]\eta\\
+ \sum_{j=1}^2R'_j(\eta)\partial_z^j\Phit + \sum_{j=1}^2R''_j(\eta,\partial_z^j\Phit)\eta\\
+\tilde{M}(\eta)\psi + \tilde{S}_N(\eta,\psi)
\end{multline*}
where the last equality follows from substitution of \eqref{6110} in the argument of $\opbw$. This concludes
the proof of \eqref{6121}.
\end{proof}
We may deduce from the preceding results the following expression of the 
good unknown $\Phi\vert_{z=0} $  and its $\partial_z \Phi\vert_{z=0} $ derivative. 
\begin{proposition}
  \label{614} {\bf (Good unknown)} 
Let $N$ be in $\N^*$, $K\in \N$, $\rho>0$ be given. 
There is $ r > 0 $ and a function $a_0(\eta,\psi)$ in $\sFRa{K,0,1}{N}$, linear in
$\psi$, such that $B = \partial_z\tilde{\Phi}\vert_{z=0} $ defined in \eqref{6115} is actually given 
by 
\be\label{eq:Ba0}
B (\eta, \psi ) = a_0 (\eta,\psi) \, . 
\ee
Moreover there is a symbol $a_1$ in $\sGa{1}{K,0,1}{N}$ such that, setting 
\begin{equation}
  \label{eq:6127}
  \omega = \psi - \opbw(a_0(\eta,\psi))\eta \, , 
\end{equation}
one has
\begin{equation}
  \label{eq:6128}
  \partial_z\Phi\vert_{z=0} = (D\tanh D)\omega + \opbw(a_1(\eta;\cdot))\omega + R_1(\eta)\omega + R_2(\eta,\omega)\eta
\end{equation}
where $R_1(\eta), R_2(\eta,\omega)$ are in $\sRa{-\rho}{K,0,1}{N}$, $R_2$ being linear in $\omega$.
The symbol $a_1$ is given by 
\begin{equation}
  \label{eq:6130}
  a_1(\eta;x,\xi) = \frac{i\eta'}{1+\eta'{}^2}\xi - (\xi\tanh\xi)\frac{\eta'{}^2}{1+\eta'{}^2} 
\end{equation}
modulo a symbol in $\sGa{0}{K,0,1}{N}$. 

Finally, there is $ \sigma > 0 $ such that, 
for $ \nnorm{\eta}_{K,\sigma} $ small enough,  there is a function $ a(\eta,\omega)$   
in  $\sFRa{K,0,1}{N}$, linear in $\omega$, even in $ x $, such
that 
\begin{equation}
  \label{eq:6131}
  \psi = \omega + \opbw(a(\eta,\omega))\eta \, .
\end{equation}
In the new variables $ (\eta, \omega ) $ the involution $ S = \sm{1}{0}{0}{-1}  $ is unchanged.
\end{proposition}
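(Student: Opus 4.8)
\textbf{Plan of proof of Proposition~\ref{614}.}
The starting point is the explicit expression of $B = \partial_z\tilde{\Phi}\vert_{z=0}$. By \eqref{615}, one has $\varphiu = \frac{\cosh((\zp+1)D)}{\cosh D}\psi + M(\eta)\psi$ with $M(\eta)$ in $\sNd{0}{K,j,1}{N}$; translating back through \eqref{617}, \eqref{619} and using that $\chi\equiv 1$ near $z=0$, the function $\tilde{\Phi}$ agrees with $\varphi$ for $z$ near $0$, so $B = \partial_z\varphi\vert_{z=0}$. Differentiating the relation $\varphi(x,z) = \varphiu\big(x,\tfrac{z}{1+\eta}\big)$ in $z$ and setting $z=0$, and expanding $\partial_z\varphiu\vert_{\zp=0}$ using \eqref{615} and the eighth remark after Definition~\ref{533a} (para-product operators $\opbw$ of elements of $\sNd{m}{K,0,1}{N-1}$ fall into the right classes), one gets that $B$ is a function of $x$ depending on $(\eta,\psi)$, linear in $\psi$, with only limited smoothness controlled by $\eta$, i.e.\ an element of $\sFRa{K,0,1}{N}$: this is \eqref{Ba0}. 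I would write this as $B=a_0(\eta,\psi)$ and note it is real valued and even in $x$ when $\eta$ is even and $\psi$ even.

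Next, the identity \eqref{6128}. From the definition $\omega = \Phi\vert_{z=0} = \psi - \opbw(B)\eta$ one must compute $\partial_z\Phi\vert_{z=0}$. The function $\Phi$ satisfies the paradifferential equation \eqref{6121} of Lemma~\ref{613}; solving that elliptic boundary value problem via Proposition~\ref{531} (Dirichlet-Neumann problem) with boundary data $\Phi\vert_{z=0}=\omega$ and $\partial_z\Phi\vert_{z=-1}$ computed from the second line of \eqref{6115} (which, by the $z$-support property of $\tilde M$, $\tilde S_N$ and the fact that these terms vanish near $z=0$, contributes only smoothing operators), one obtains $\Phi$ in the form \eqref{532}. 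Then $\partial_z\Phi\vert_{z=0}=\partial_z e_+(\eta;z,\cdot)\vert_{z=0}\,\omega$ plus smoothing contributions and plus the integral terms against $K$, $R_{\mathrm{int}}$ of the source in \eqref{6121}; since that source is a sum of $\opbw$ of symbols vanishing at order $N$ or smoothing operators (inner-composition, Proposition~\ref{233}, and the composition Propositions~\ref{517}, \ref{518}, \ref{519}, \ref{5110} applied with the $z$-localized $\tilde M$, $\tilde S_N$), all of it contributes to $R_1(\eta)\omega + R_2(\eta,\omega)\eta$ in $\sRa{-\rho}{K,0,1}{N}$. Finally \eqref{534} gives $\partial_z e_+\vert_{z=0}=(\tanh\xi)\xi + i\tfrac{\eta'}{1+\eta'^2}\xi - (\tanh\xi)\xi\tfrac{\eta'^2}{1+\eta'^2}$ modulo $\sGa{0}{K,0,1}{N}$; the first term quantizes to $\opbw((\xi\tanh\xi)) = D\tanh D$ up to a smoothing operator, since $\xi\tanh\xi$ differs from a genuine order-$1$ Fourier multiplier only near $\xi=0$, which is harmless on spaces of functions modulo constants. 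This yields \eqref{6128} with $a_1$ as in \eqref{6130}, collecting the $\sGa{0}{K,0,1}{N}$ remainder of \eqref{534} into the $\sGa{0}{K,0,1}{N}$ part of $a_1$ (here one must also check that the contribution of $\opbw(B)\eta$ to $\partial_z\Phi\vert_{z=0}$ via the equation lands in the stated classes, using again $B\in\sFRa{K,0,1}{N}$ and Proposition~\ref{233}).

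The last point, the inversion \eqref{6131}, is a fixed-point argument. From \eqref{6127}, $\omega = \psi - \opbw(a_0(\eta,\psi))\eta$; since $a_0$ is linear in $\psi$, write $a_0(\eta,\psi) = \tilde a_0(\eta)\psi$ in the sense that $\psi\mapsto\opbw(a_0(\eta,\psi))\eta$ is a linear operator $T(\eta)$ acting on $\psi$, which is bounded on $\Hds{s}$ with operator norm $O(\nnorm{\eta}_{K,\sigma})$ by Proposition~\ref{215}. Hence $\mathrm{Id} - T(\eta)$ is invertible for $\nnorm{\eta}_{K,\sigma}$ small, and $\psi = (\mathrm{Id}-T(\eta))^{-1}\omega = \omega + \sum_{p\geq 1} T(\eta)^p\omega$. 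Each $T(\eta)^p\omega$ is $\opbw$ of a symbol, or a smoothing operator, obtained by the composition calculus, homogeneous of degree $\geq p$ in $\eta$ and linear in $\omega$; truncating the Neumann series at order $N-1$ and estimating the tail by the operator-norm bound (choosing the radii in the non-homogeneous classes small enough, as in the proof of Proposition~\ref{531}) shows that $\psi - \omega = \opbw(a(\eta,\omega))\eta$ with $a(\eta,\omega)$ in $\sFRa{K,0,1}{N}$, linear in $\omega$; evenness in $x$ is preserved throughout since all operators involved are parity preserving. That $S=\sm{1}{0}{0}{-1}$ is unchanged is immediate, since $\eta$ is untouched and $\omega$, like $\psi$, is a linear (hence odd) function of $\psi$, so the reversibility involution acts the same way on $(\eta,\omega)$ as on $(\eta,\psi)$.

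\textbf{Main obstacle.} The routine bookkeeping is heavy but the genuine difficulty is tracking, through the solution formula \eqref{532} of the Dirichlet-Neumann problem, that every contribution coming from the right-hand side of \eqref{6121} — in particular the terms $\tilde M(\eta)\psi$, $\tilde S_N(\eta,\psi)$ and $\opbw[\partial_z\tilde M(\eta)\psi + \partial_z\tilde S_N(\eta,\psi)]\eta$, which are only $z$-localized away from $z=0$ and lie in the $\sNd{}{}{}$ / $E^s_j$-type classes rather than the paradifferential ones — actually produces, after integration against the Poisson kernels $e_\pm$, $K$ and against the smoothing kernels $R_\pm$, $R_{\mathrm{int}}$, remainders in the clean class $\sRa{-\rho}{K,0,1}{N}$ when restricted to $z=0$. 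This is exactly where Lemmas~\ref{519} and~\ref{5110} (composition of para-Poisson and smoothing operators with the $z$-supported operators $M(U;z)$) must be invoked carefully, exploiting that the support of $\tilde M$, $\tilde S_N$ stays at positive distance from $z=0$ so that the kernels become smooth and one gains arbitrarily many derivatives.
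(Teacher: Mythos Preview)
Your plan is correct and follows essentially the same route as the paper: solve the paradifferential boundary value problem \eqref{6121} for $\Phi$ via Proposition~\ref{531}, read off $\partial_z\Phi\vert_{z=0}$ from \eqref{534}, and invoke Lemmas~\ref{519} and~\ref{5110} to show that the $z$-localized source terms produce only smoothing remainders at $z=0$; the inversion \eqref{6131} is the same Neumann-series contraction. Two small points where the paper differs: (i) for $B=a_0$, rather than going through $\varphiu$ and the chain rule as you do, the paper applies Proposition~\ref{534} directly to equation \eqref{6110} for $\tilde{\Phi}$, obtaining \eqref{6132}--\eqref{6133} and hence $B$ immediately as a function in $\sFRa{K,0,1}{N}$; (ii) the paper establishes \eqref{6131} \emph{before} concluding \eqref{6128}, because the solution formula first yields \eqref{6139a} with remainders $\tilde R_1(\eta)\psi + \tilde R_2(\eta,\psi)\eta$ acting on $\psi$, and one must substitute $\psi = \omega + \opbw(a(\eta,\omega))\eta$ (via Proposition~\ref{233}) to land in the form $R_1(\eta)\omega + R_2(\eta,\omega)\eta$ claimed in \eqref{6128}.
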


\begin{proof}
  We have introduced in \eqref{619} the function $\Phit$ and we have proved in Lemma \ref{611}
 that $\Phit$ satisfies equation \eqref{6110}. Then we defined in \eqref{6115}
  $\Phi = \Phit -\opbw(\partial_z\Phit)\eta$ and in \eqref{6114} the good unknown $\omega = \psi-\opbw(B)\eta$ with
  $B= \partial_z\Phit\vert_{z=0}$. Let us show first  that $B$ may be written as a function 
  $a_0(\eta,\psi)$ 
  of $\sFRa{K,0,1}{N}$ as in the statement. To do so, 
  we notice that equation \eqref{6110} solved by $\tilde{\Phi}$ is of the form
  \eqref{5316} with 
 $$
  G_2(\eta) = -\eta'{}^2 \, , \quad G_1(\eta) = 2\eta'\partial_x +\eta'' \, , \quad   G_0 (\eta) = 0 \, , 
  $$ 
  and $\tilde{M}(\eta), \tilde{S}_N $  given by lemma \ref{611}. Since 
 $ \tilde{M}(\eta)$  is in $\sNd{N}{K,4,1}{N}$ and $ \tilde{S}_N $ 
 satisfies \eqref{6111}, we apply Proposition~\ref{534} with $m=N$, $j=4$. By \eqref{5317}, we may write
  \begin{equation}
    \label{eq:6132}
    \Phit = \frac{\cosh((1+z)D)}{\cosh D}\psi + M(\eta)\psi + S_N(\eta,\psi)
  \end{equation}
where $M(\eta)$ is in $\sNd{N-2}{K,4,1}{N}$ and $S_N$ is supported for $ -1 \leq z \leq -\frac{1}{8} $ and 
satisfies, for $ 0 \leq k \leq K $ and some $\sigma'_0$, 
\begin{equation}
  \label{eq:6132a}
  \norm{\partial_t^kS_N(\eta,\psi)(t)}_{E^{\sigma_0+2-\frac{3}{2}k}_4} \leq C\sum_{k'+k''=k}\Gcals{\sigma'_0}{k',N}{\eta}\Gcals{\sigma'_0}{k'',1}{\psi} \, . 
\end{equation}
In particular, close to $z=0$, 
\begin{equation}
  \label{eq:6133}
  \partial_z\Phit = \frac{\sinh((1+z)D)}{\cosh D}(D\psi) + \partial_zM(\eta)\psi 
\end{equation}
and $\partial_zM(\eta)$ is in $\sNd{N-1}{K,3,1}{N}$, by
the third remark after Definition~\ref{533a}. Inserting 
\eqref{6133} in \eqref{6115} and using the fifth remark after
Definition~\ref{533a}, we see that, for $z$ close to zero, we may write
\[
\Phi(z,\cdot) = \Phit(z,\cdot) - \opbw(a(\eta,\psi;z, x))\eta
\]
for some symbol $a$ in the class $\sGb{0}{K,0,3,1}{N}$, 
independent of $\xi$. Restricting to $z=0$, we get,
by the definition of $\omega $ in  \eqref{6114},
\[
\omega = \psi - \opbw(a_0(\eta,\psi;x))\eta
\]
where $a_0$ is in $\sGa{0}{K,0,1}{N}$, and it is independent of $\xi$, i.e. it is  a function 
of $\sFa{K,0,1}{N}$.  Moreover, by construction, 
$a_0 (\eta, \psi ) $  is real valued when $\eta, \psi$
are real valued. We have thus
obtained the representation \eqref{6127} of $\omega$.

Let us now establish \eqref{6128}. 
  The strategy will be to prove first 
 that $ \Phi $ satisfies a paradifferential  equation of the form
\eqref{531}, see \eqref{6136a},  then use Proposition~\ref{531} to express $\Phi$ from the data, see \eqref{6137}, and finally compute
$\partial_z\Phi\vert_{z=0}$ from \eqref{532}, see \eqref{6139a}. 

We recall that $\Phi$ solves the equation \eqref{6121}. In its right hand
side we replace in the terms $R'_j(\eta)\partial_z^j\Phit$, $R''_j(\eta,\partial_z^j\Phit)\eta$ the function $\Phit$ by its
expression \eqref{6132}. Recalling that $ R''_j(\eta,\partial_z^j\Phit)  $ is linear in $ \partial_z^j\Phit $,
we get the following  terms, for $ j = 1, 2 $, 
\begin{equation} \label{eq:6134}
\begin{split}
R'_j(\eta)\partial_z^j\Bigl(\frac{\cosh((1+z)D)}{\cosh D}\psi + M(\eta)\psi\Bigr)\\
R''_j\Bigl(\eta,\partial_z^j\Bigl(\frac{\cosh((1+z)D)}{\cosh D}\psi + M(\eta)\psi\Bigr)\Bigr)\eta\\
R'_j(\eta)\partial_z^j(S_N(\eta,\psi))\\
R''_j(\eta,\partial_z^j(S_N(\eta,\psi)))\eta \, .
\end{split}
\end{equation}
By lemma \ref{613} the smoothing terms $ R'_j, R''_j$ 
belong to $\sRb{-\rho}{K,0,\ell,1}{N}$ for any $\rho$ and any $\ell$,
according to (iii) of Definition~\ref{512}. Moreover, 
it has been seen after \eqref{6132} that 
$ M(\eta) $ is in $\sNd{N-2}{K,4,1}{N} $, and, by the first remark after 
Definition~\ref{533a}, that $ \frac{\cosh((z+1)D)}{\cosh D}$ is in $  \Ntd{0}{0} $. Thus, 
the third remark following Definition~\ref{533a} implies that the operators 
\[
\partial^j_z\Bigl[\frac{\cosh((1+z)D)}{\cosh D} + M(\eta) \Bigr] \, , \quad j = 1, 2 \, , 
\]
are in $\sNd{N}{K,2,0}{N}$. It follows from the third-last and second  last
 remarks following
Definition~\ref{533a} that the first two lines in \eqref{6134} may be written as
\begin{equation}\label{eq:6135}
  R'(\eta)\psi + R''(\eta,\psi)\eta
\end{equation}
where $ R' $ and $ R'' $ are smoothing operators in $\sRb{-\rho}{K,0,2,1}{N}$, for any  
$ \rho $ (up to a renaming of $ \rho $, 
and using the abuse of notation that has been introduced in the last remark after  Definition~\ref{512}). 

The functions in the last two lines of \eqref{6134} are supported
for $-1\leq z\leq -\frac{1}{8}$ as this is true for $ S_N $, and 
the operators $ R'_j $, $ R''_j $ are local in $ z $. Moreover, 
by \eqref{519}, \eqref{6132a} and the continuous embeddings
$ E^s_j \subset F^s_j \subset E^{s- \frac12}_j $,  these terms may be written as 
a function $\tilde{S}^1_N(\eta,\psi)$ that  satisfies, for any large
enough $\sigma_0$, and a convenient $\sigma'_0>\sigma_0$, for 
any $ 0\leq k\leq K $, 
\begin{equation} \label{eq:6136}
\norm{\partial^k_t \tilde{S}^1_N(\eta,\psi)}_{E^{\sigma_0+2-\frac{3}{2}k}_2} \leq C\sum_{k'+k''=k}\Gcals{\sigma'_0}{k',N}{\eta}\Gcals{\sigma'_0}{k'',1}{\psi} \, . 
\end{equation}
We now consider 
the first term $ \opbw{[\partial_z\tilde{M}(\eta)\psi]} \eta $ in the right hand side of \eqref{6121}.  Since 
$\tilde{M}(\eta)$ is in $\sNd{N}{K,4,1}{N}$ then $ \pa_z \tilde{M}(\eta)$ is in $\sNd{N+1}{K,3,1}{N}$
by the third remark following Definition~\ref{533a}, and the last remark after Definition~\ref{533a} implies that 
 $ \opbw{(\partial_z\tilde{M}(\eta) \psi )} $  is in $\sNd{1/2}{K,0,1}{N}$.
In conclusion $ \opbw{[\partial_z\tilde{M}(\eta)\psi]} $
is an operator of the form $ M''(\eta,\psi) $  of $\sNd{1/2}{K,0,1}{N}$, 
linear in $\psi$, supported for $-1\leq z\leq -\frac{1}{8}$, 
as  $\partial_z \tilde M(\eta)\psi$
satisfies such a property.

By  \eqref{6111bis}, 
the  function 
$ \tilde{S}^{(2)}_N(\eta,\psi) = - \opbw[ \partial_z\tilde{S}_N(\eta,\psi)]\eta $ 
in the right hand side of \eqref{6121} satisfies estimates of the form
\begin{equation}
  \label{eq:6111ter}
  \norm{\partial_t^k \tilde{S}^{(2)}_N(\eta,\psi)}_{E^{\sigma_0+2-\frac{3}{2}k}_3}  \leq 
  C \sum_{k'+k''=k}\Gcals{\sigma'_0}{k',N}{\eta}\Gcals{\sigma'_0}{k'',1}{\psi}, 
\end{equation}
(for some $ \sigma_0 $ large depending on $ m $, $ K $ and $ N $) 
and it is supported  for $-1\leq z\leq -\frac{1}{8}$. 

In conclusion, we have written the right hand side of \eqref{6121} as
\begin{equation}\label{eq:6136a}
f = M'(\eta)\psi + M''(\eta,\psi)\eta + \tilde{S}_N(\eta,\psi) + R'(\eta)\psi + R''(\eta,\psi)\eta
\end{equation}
where $ M', M'' $ are in $\sNd{m}{K,0,1}{N}$ for some $ m $, $ M' (\eta ) \psi $ and $ M''(\eta,\psi) $
are supported in $ -1 \leq z \leq - \frac{1}{8}$, where $ \tilde{S}_N $  is  a new function, 
which is the sum of $ \tilde{S}_N $ in the right hand side of \eqref{6121} (that satisfies \eqref{6111bis}) and 
$ \tilde{S}^1_N, \tilde{S}^{(2)}_N $ in \eqref{6136}, \eqref{6111ter}. This  $ \tilde{S}_N $ satisfies \eqref{6136}
 and is supported for $ -1 \leq z \leq -
\frac{1}{8}$. Finally,
the smoothing operators $ R' $, $ R'' $ in $\sRb{-\rho}{K,0,2,1}{N}$ are 
given by \eqref{6135}. 

Consequently, by \eqref{6121} and \eqref{6136a}, the function $ \Phi $
solves an equation of the form \eqref{531}, with $f$ given by \eqref{6136a}, and 
$$
g_- = \opbw\bigl(\frac{D^2}{\cosh D}\psi\bigr)\eta \, , \quad 
g_+ = \Phi\vert_{z=0} = \omega \, . 
$$
By \eqref{532}, we may represent the solution  $ \Phi $ as
\begin{multline}
  \label{eq:6137}
\Phi = \opbw(e_+(\eta;z,\cdot))\omega +  \opbw(e_-(\eta;z,\cdot)) \opbw\bigl(\frac{D^2}{\cosh D}\psi\bigr)\eta  \\ 
+ \int_{-1}^0\bigl[\opbw(K(\eta;z,\zp,\cdot)) +
R_{\mathrm{int}}(\eta;z,\zp)\bigr]f(\zp)\,d\zp \\
+ R_+(\eta;z)\omega + R_-(\eta;z) \opbw\bigl(\frac{D^2}{\cosh D}\psi\bigr)\eta
\end{multline}
with Poisson symbols $e_\pm (\eta; \cdot )$ in $\sP{0,\pm}{K,0,0}{N}$,  $K (\eta; \cdot) $ in $\sPi{-1}{K,0,0}{N}$ and 
smoothing operators $R_{\mathrm{int}}$ in  $\sRi{-\rho-1}{K,0,1}{N}$ and $R_\pm$ in $\sR{-\rho,\pm}{K,0,1}{N}$.

We now substitute the expression of $ f $ in \eqref{6136a} into \eqref{6137}. 
We consider first the contribution of the last two terms $ R'(\eta)\psi + R''(\eta,\psi)\eta $
 to \eqref{6137}. By lemma~\ref{519}, these two
contributions may be written as
\begin{equation}
  \label{eq:6138}
  R'_1(\eta)\psi + R''_1(\eta,\psi)\eta
\end{equation}
where   $\partial^j_z R'_1 $, $\partial^j_z R_1'' $, $j=1,2$, 
are smoothing operators in $\sRa{-\rho}{K,0,1}{N} $ for any given $\rho$, uniformly in $ z \in [-1,0] $  
(thus in particular $\partial_z R'_1 $ and  $\partial_z R_1'' $
are continuous in $ z $ and 
their restriction at $ z = 0 $ is well defined). 
Moreover $R''_1$ is linear in $\psi$. 
Consider next
\begin{multline}
  \label{eq:6139}
\int_{-1}^0\bigl[\opbw(K(\eta;z,\zp,\cdot)) + R_{\mathrm{int}}(\eta;z,\zp)\bigr]\bigl[M'(\eta)\psi 
+ M''(\eta,\psi)\eta \bigr]\,d\zp \, .
\end{multline}
The operators $ M' (\eta), M'' (\eta, \psi) \in \sNd{m}{K,0,1}{N}$ 
satisfy the assumptions of lemma~\ref{5110}, in particular the support condition \eqref{5128}. 
Consequently,  \eqref{6139} can be written as $ R'_1(\eta)\psi + R''_1(\eta,\psi)\eta $
where, for $ z $ close to zero, the derivatives $\partial_z^j R_1' $, $\partial_z^j R_1'' $  
are smoothing operators
of $\sRa{-\rho}{K,0,1}{N}$, uniformly depending on $z$ close enough to zero. 
Finally
\begin{equation}\label{eq:6139aa}
\int_{-1}^0\bigl[\opbw(K(\eta;z,\zp,\cdot)) + R_{\mathrm{int}}(\eta;z,\zp)\bigr] \tilde{S}_N(\eta,\psi) \,d\zp 
\end{equation}
where $ \tilde{S}_N(\eta,\psi)$ is supported for $ - 1 \leq z \leq - 1/ 8  $. 
This property, together with  estimate \eqref{6136} (with $ \sigma_0 $ large depending on $ K $ and $ m $),
shows that the assumptions \eqref{5127}, \eqref{5128} of 
lemma~\ref{5110} hold for  
$ \tilde{S}_N (\eta, \psi ) $ (notice that $ \tilde{S}_N  $ is linear in $ \psi $ and vanishes of order $ N $ in $ \eta $), so that,
applying again lemma \ref{5110}, we conclude that \eqref{6139aa} has the same structure as \eqref{6139}
(actually just $ R'_1(\eta)\psi $).

Taking a $\partial_z$ derivative of \eqref{6137},
and restricting to $z=0$, we conclude, using \eqref{534}, that  $ \partial_z\Phi\vert_{z=0} $ is the sum of 
\begin{multline}
  \label{eq:6139a}
\partial_z\Phi\vert_{z=0} = (D\tanh D)\omega + \opbw(a_1(\eta;\cdot))\omega\\
+ \tilde{R}_1(\eta)\psi + \tilde{R}_2(\eta,\psi)\eta
\end{multline}
where $a_1$ is given by \eqref{6130}, modulo a symbol in $\sGa{0}{K,0,1}{N}$,  and $ \tilde{R}_1, \tilde{R}_2$ are smoothing operators in 
$\sRa{-\rho}{K,0,1}{N}$, $\tilde{R}_2$ being linear in $\psi$, 
plus
\begin{multline}\label{eq:altri-resti}
\opbw( \partial_z e_-(\eta;z,\cdot)_{\vert {z = 0}} ) \opbw\bigl(\frac{D^2}{\cosh D}\psi\bigr) \eta +
\big( \pa_z R_+(\eta;z) \big)_{\vert {z = 0}} \omega  \\
+ \big( \pa_z R_-(\eta;z) \big)_{\vert {z = 0}} \opbw\bigl(\frac{D^2}{\cosh D}\psi\bigr) \eta 
\, . 
\end{multline}
Since $R_+ $ is in $\sR{-\rho,+}{K,0,1}{N}$, then 
$ (\pa_z R_+)_{\vert z = 0 } $ is in $\sR{-\rho+1}{K,0,1}{N}$  
 for any $ \rho $ (see the first two remarks after Definition \ref{514})  
and so it contributes to the smoothing operator $ R_1 (\eta) $ in \eqref{6128}. 
Moreover, since $ e_- $  is  a Poisson symbol in $ \sP{0,- }{K,0,0}{N}$, the derivative
$ \partial_z e_-(\eta;z,\cdot)_{\vert {z = 0}} $ is in $ \sGa{-\rho}{K,0,0}{N} $ for any $ \rho $, and 
the third remark after Proposition \ref{215} implies  
that the operator  $ \opbw{(  (\partial_z e_-)_{\vert z = 0 })}$ is in $\sRa{-\rho}{K,0,0}{N} $. 
Therefore by Proposition \ref{232}, 
the first term in \eqref{altri-resti} contributes to $  \tilde{R}_2(\eta,\psi)\eta $
in  \eqref{6139a}. The same conclusion holds for the last  term in \eqref{altri-resti}.  

To finish the proof of \eqref{6128}, it is enough to
prove \eqref{6131}. Actually  \eqref{6128} is a consequence of  \eqref{6139a}, \eqref{6131}  and 
the composition results of (ii), (iii) and (iv) of Proposition~\ref{233}. 

In order to invert \eqref{6127}, we  consider 
the affine map  
\be\label{eq:op-contr}
\psi\to \omega + \opbw(a_0(\eta,\psi))\eta
\ee
(recall that $ a_0(\eta,\psi) $ is linear in $\psi$). 
Since $ a_0(\eta,\psi) $ is in $ \sFRa{K,0,1}{N} $, we deduce, by Proposition \ref{215}, that  there is $ \sigma $ such that
\[ 
\nnorm{\omega+\opbw(a_0(\eta,\psi))\eta}_{K, \sigma} \leq \nnorm{\omega}_{K, \sigma} +
C \nnorm{\psi}_{K, \sigma} \nnorm{\eta}_{K, s} \, . 
\]
Therefore, for $ \nnorm{\eta}_{K, \sigma} < \frac{1}{2C}$, the map in \eqref{op-contr} is a contraction 
of the ball of $ \CKH{\sigma}{\C} $ 
with radius $ R > 2 \nnorm{\omega}_{K, \sigma} $. 
Let  $ \psi = \Theta(\eta,\omega)$ denote the unique  fixed point in such a ball.
The map $\Theta$ is linear in $\omega$, and,  if $\eta, \omega $ are in $  \CKH{s}{\C} $ with $s\geq \sigma$, then
$\Theta(\eta,\omega)$ is in $  \CKH{s}{\C} $, with an estimate
\begin{equation}
  \label{eq:6140}
  \nnorm{\Theta(\eta,\omega)}_{K,s} \leq \nnorm{\omega}_{K,s} +
C_s \nnorm{\omega}_{K, \sigma} \nnorm{\eta}_{K, s} \, .
\end{equation}
If we iterate the formula \eqref{6127} giving $\psi$ from $\omega$, we get
\[
\psi = \omega + \opbw(a_0(\eta,\omega))\eta + \opbw(\tilde{a}_0(\eta,\psi))\eta
\]
where $\tilde{a}_0$ is obtained replacing in $a_0(\eta,\psi)$, 
$\psi$ by $\opbw(a_0(\eta,\psi))\eta$. 
Iterating the process and using (i) of Proposition~\ref{233} and the fact that 
$\opbw(a_0(\eta,\psi))$ is in $\sMa{0}{K,0,1}{N}$ 
by the third remark following Definition~\ref{216},   we may 
write
\begin{equation}
  \label{eq:6141}
  \psi = \omega + \sum_{p=1}^{N-1}\opbw(a_{0,p}(\eta,\dots,\eta,\omega))\eta + \opbw(a_{0,N}(\eta,\psi))\eta,
\end{equation}
where $a_{0,p}$ are functions in  $\Ft{p} $ and $a_{0,N}$ is a function in $\Fra{K,0,N}$, linear in $\psi$. If we replace in 
the symbol $a_{0,N}(\eta, \psi)$ the variable $\psi$ by  $\Theta(\eta,\omega)$, we deduce from \eqref{6140} 
that estimates of the form \eqref{218} hold for the composition, so that the last term in
\eqref{6141} may be written as $\opbw(a'_{0,N}(\eta,\omega))\eta$ for some function $a'_{0,N}$ in 
$ \Fra{K,0,N}$.

Finally, by \eqref{6127} and since $ a_0 (\eta, \psi ) $ 
is linear in $ \psi $, 
the involution $ S = \sm{1}{0}{0}{-1}  $ translates 
in the variables $(\eta, \omega )$ as the same map $(\eta, \omega )\to (\eta, -\omega)$.
This concludes the proof.
\end{proof}

We may now provide a paradifferential expression of  the Dirichlet-Neumann operator.

\begin{proposition}
  \label{615} {\bf (Dirichlet-Neumann)}
  Let us define
\begin{equation}
 \label{eq:6141a}
  V(\eta,\psi) = \partial_x\tilde{\Phi}\vert_{z=0} -\eta'\partial_z\Phit\vert_{z=0} \, .
\end{equation}
If $\eta, \psi$ are even real valued functions, then $V(\eta,\psi)$ is a function of the 
space  $\sFRa{K,0,1}{N}$ of
Definition~\ref{241} (i.e.\ a symbol of $\sGa{0}{K,0,1}{N}$ independent of $\xi$), for some $ r > 0 $ 
and any $ N $ in  $ \N $, which is
linear in $\psi$, real valued, and odd as a function of $ x $. 
Moreover, there are 

$ \bullet $ symbols  $b^0(\eta;\cdot)$ in
$\sGa{-1}{K,0,1}{N}$, $c^0(\eta,\omega;\cdot)$ in $\sGa{0}{K,0,1}{N}$, $c^0 (\eta,\omega;\cdot) $ being 
linear in $\omega$, even in $(x,\xi)$,  satisfying
\begin{equation}
  \label{eq:6142}
  (\bar{b}^0)^\vee = b^0,\quad (\bar{c}^0)^\vee = c^0 \, , 
\end{equation}

$ \bullet $ 
smoothing operators $R_1(\eta)$, $R_2(\eta,\omega)$ in $\sRa{-\rho}{K,0,1}{N}$, for $\rho$
an arbitrary given number,  $ R_2(\eta,\omega) $ being linear in $\omega$, 
satisfying 
\be\label{eq:318319}
{\overline {R_j V}} = R_j {\overline V} \, , \qquad  R_j \circ \tau  = \tau \circ R_j \, , \quad j = 1,2 \, , 
\ee
where $ \tau $ is defined  above \eqref{318}, 
i.e.\ these operators send real valued
(resp. even) functions
to real valued (resp. even) functions,

 such that
\begin{multline}
  \label{eq:6143}
G(\eta)\psi = (D\tanh D)\omega -i\opbw\bigl(V(\eta,\psi)\xi\bigr)\eta\\
+\opbw(b^0(\eta;\cdot))\omega + \opbw(c^0(\eta,\omega;\cdot))\eta\\
+ R_1(\eta)\omega + R_2(\eta,\omega)\eta \, .
\end{multline}
\end{proposition}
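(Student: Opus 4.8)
The goal is to paralinearize the Dirichlet-Neumann operator $G(\eta)\psi$ and recognize all the terms in the resulting expression as belonging to the classes built in Chapters~2 and~5. The starting point is the classical identity $G(\eta)\psi = \partial_y\Phi\vert_{y=\eta} - \eta'\partial_x\Phi\vert_{y=\eta}$ (see \eqref{112a}), which, after the change of variables \eqref{617} and then \eqref{619}, must be re-expressed in terms of $\tilde\Phi$ and its derivatives at $z=0$. Using the chain rule relating $\Phi(x,y)$ to $\varphi(x,z)$ to $\tilde\Phi(x,z)$ near $z=0$ (where $\chi\equiv 1$), one computes that $G(\eta)\psi = (1+\eta'^2)\partial_z\tilde\Phi\vert_{z=0} - \eta'\partial_x\tilde\Phi\vert_{z=0}$, up to elementary algebraic manipulations. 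The first step is therefore to write this identity carefully, and to reorganize the right-hand side so as to isolate the leading term $\partial_z\tilde\Phi\vert_{z=0}$ (whose expression in terms of the good unknown $\omega$ is given by Proposition~\ref{614}, formula \eqref{6128}) and the correction $-\eta'(\eta'\partial_z\tilde\Phi\vert_{z=0} - \partial_x\tilde\Phi\vert_{z=0})/(1) = \eta' V(\eta,\psi)$ up to sign, recognizing $V(\eta,\psi) = \partial_x\tilde\Phi\vert_{z=0} - \eta'\partial_z\tilde\Phi\vert_{z=0}$ as the quantity defined in \eqref{6141a}.

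Next I would establish the structural claims on $V(\eta,\psi)$: that it lies in $\sFRa{K,0,1}{N}$, is linear in $\psi$, real valued, and odd in $x$. Linearity in $\psi$ and real-valuedness are immediate from \eqref{6132} and \eqref{6133}, which express $\partial_z\tilde\Phi\vert_{z=0}$ (and hence $\partial_x\tilde\Phi\vert_{z=0}$) as the action of operators of $\sNd{0}{K,j,1}{N}$, together with the $\tanh$-type Fourier multiplier, applied to $\psi$; the fact that these operators preserve real-valuedness and evenness, combined with $\eta'$ being odd, yields the parity statement (recall that $\tilde\Phi$ is even in $x$ when $\eta,\psi$ are, since the whole elliptic problem \eqref{6110} is parity-preserving). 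That $V$ is independent of $\xi$, i.e.\ a function and not a genuine symbol, follows because both $\partial_x\tilde\Phi\vert_{z=0}$ and $\partial_z\tilde\Phi\vert_{z=0}$ are honest functions of $(t,x)$; the point is merely to check the quantitative bounds defining $\sGa{0}{K,0,1}{N}$, which come from Proposition~\ref{534} and the remarks after Definition~\ref{533a}.

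The central computation is then to insert \eqref{6128} into the reorganized identity for $G(\eta)\psi$ and to paralinearize the products $\eta'\cdot(\text{stuff})$, $(1+\eta'^2)\cdot(\text{stuff})$ using the para-product lemma~\ref{612}. Writing $(1+\eta'^2)\partial_z\tilde\Phi\vert_{z=0} = \partial_z\tilde\Phi\vert_{z=0} + \eta'^2\partial_z\tilde\Phi\vert_{z=0}$, the first piece produces $(D\tanh D)\omega + \opbw(a_1(\eta;\cdot))\omega$ plus smoothing terms by \eqref{6128}; the correction terms $\eta'^2\partial_z\tilde\Phi\vert_{z=0}$, $-\eta'V(\eta,\psi)$ and similar products are paralinearized, each product $uv$ being split into $\opbw(u)v + \opbw(v)u + R(u)v + R(v)u$, distributing factors either onto $\omega$ or onto $\eta$. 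Here one must be attentive: the term $-i\opbw(V(\eta,\psi)\xi)\eta$ is exactly the contribution of the piece of $a_1$ of the form $i\eta'\xi/(1+\eta'^2)$ combined with the $-\eta'V$ correction — one has to verify, using the explicit formula \eqref{6130} modulo $\sGa{0}{K,0,1}{N}$ and the definition \eqref{6141a} of $V$, that these order-one contributions to the coefficient of $\eta$ assemble precisely into $-iV(\eta,\psi)\xi$, with everything else of order $\leq 0$ absorbed into $\opbw(b^0)\omega + \opbw(c^0(\eta,\omega)\cdot)\eta$. The symbols $b^0\in\sGa{-1}{K,0,1}{N}$ and $c^0\in\sGa{0}{K,0,1}{N}$ are then read off from what remains, with $c^0$ linear in $\omega$ because the only way $\omega$ enters the coefficient of $\eta$ is through the remaining pieces of \eqref{6128}; the reality conditions \eqref{6142} follow from $G(\eta)\psi$ being real and $(D\tanh D)$, $\opbw(\cdot\,\xi)$ having the obvious symmetry, and the parity properties from evenness of $\tilde\Phi$. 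The smoothing operators $R_1,R_2\in\sRa{-\rho}{K,0,1}{N}$ collect all the remainders from \eqref{6128} and from lemma~\ref{612}, and they inherit \eqref{318319} from the corresponding properties of the constituents.

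\textbf{Main obstacle.} The delicate point I anticipate is not any single estimate but the bookkeeping of the order-one and order-zero contributions to the coefficient of $\eta$: one must show that the $\xi$-homogeneous-degree-one part of the coefficient of $\eta$ — which receives contributions from $\opbw(a_1(\eta;\cdot))\omega$ re-expanded via the good-unknown relation $\psi = \omega + \opbw(a(\eta,\omega))\eta$, from the products $\eta'^2\partial_z\tilde\Phi$, and from $\eta' V$ — collapses exactly to $-iV(\eta,\psi)\xi$, leaving a genuine symbol of order $\leq 0$ for the rest. This requires tracking the explicit principal symbols carefully (using \eqref{6130}, \eqref{6141a}, and the expansion \eqref{paze+}/\eqref{534}) and invoking the composition calculus of Proposition~\ref{231} to control the lower-order corrections; it is the step where the ``good unknown'' structure is really used, since it is precisely the choice $\omega = \psi - \opbw(B)\eta$ with $B = \partial_z\tilde\Phi\vert_{z=0}$ that makes this cancellation occur and prevents a genuine order-$1/2$-loss term from surviving in the coefficient of $\eta$.
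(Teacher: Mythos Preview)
Your plan is essentially the paper's own argument: start from $G(\eta)\psi = (1+\eta'^2)\partial_z\tilde\Phi\vert_{z=0} - \eta'\partial_x\tilde\Phi\vert_{z=0}$, paralinearize the products via Lemma~\ref{612}, substitute the expression \eqref{6128} for $\partial_z\Phi\vert_{z=0}$, and track the order-one cancellation that leaves exactly $-i\opbw(V\xi)\eta$ as the leading coefficient of $\eta$. Two points deserve sharpening.

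First, when you write ``the first piece $\partial_z\tilde\Phi\vert_{z=0}$ produces $(D\tanh D)\omega + \opbw(a_1)\omega$ by \eqref{6128}'', you are conflating $\partial_z\tilde\Phi$ with $\partial_z\Phi$. Equation \eqref{6128} is for $\partial_z\Phi\vert_{z=0}$; the relation is $\partial_z\tilde\Phi = \partial_z\Phi + \opbw(\partial_z^2\tilde\Phi)\eta$ (differentiate \eqref{6115}). The correction $\opbw(\partial_z^2\tilde\Phi\vert_{z=0})\eta$ is an order-zero paradifferential operator acting on $\eta$ and gets absorbed into $\opbw(c^0)\eta$, so your argument survives, but the step should be made explicit.

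Second, and more seriously, you assert $b^0\in\sGa{-1}{K,0,1}{N}$ as something to be ``read off from what remains''. This is not how the paper proceeds, and it is not immediate. The direct bookkeeping you describe only yields $b^0\in\sGa{0}{K,0,1}{N}$: after the order-one cancellation, the coefficient of $\omega$ is a priori of order zero, coming from subprincipal terms in the composition $\opbw(1+\eta'^2)\circ\opbw(\partial_z e_+\vert_{z=0})$ and from the order-zero part of $a_1$. Showing that the order-zero symbol actually vanishes requires computing $\partial_z e_{+,1}^1\vert_{z=0}$ explicitly (the second term in the expansion \eqref{721}) and checking a nontrivial cancellation among principal symbols---this is precisely the content of section~\ref{sec:72} (Proposition~\ref{721} and Lemmas~\ref{722}, \ref{723}). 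The paper itself explicitly defers this to that later section. Your ``main obstacle'' paragraph correctly identifies the order-one cancellation as the heart of the matter, but the order-zero cancellation for $b^0$ is a separate, equally delicate computation that your plan does not address.
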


\begin{proof}
  We shall prove here that \eqref{6143} holds with $b^0$ in $\sGa{0}{K,0,1}{N}$. The fact that $b^0$ is actually of order $-1$
  will be shown in section~\ref{sec:72}, computing explicitly its principal symbol. 
  
  According to \eqref{112a}, the   Dirichlet-Neumann operator is given by 
  $$ 
  G(\eta) \psi = (\partial_y\phiu -\eta'\partial_x\phiu)\vert_{y=\eta(t,x)} 
  $$
  where   $\phiu$ is the  velocity potential defined as the solution of \eqref{611}. Using \eqref{617} and  \eqref{619}  ($ \Phit = \varphi $ for $ z $ close to $ 0 $), 
  we may then express
  \begin{equation}
    \label{eq:6144}
    G(\eta)\psi = \bigl((1+\eta'{}^2)\partial_z\Phit -\eta'\partial_x\Phit\bigr)\vert_{z=0} \, .
  \end{equation}
  We now provide a paradifferential expression of $  G(\eta)\psi  $ in \eqref{6144}.
By lemma~\ref{612} we write
\begin{multline}
  \label{eq:6145}
G(\eta)\psi = \partial_z\Phit \vert_{z=0} + \opbw(\eta'\otimes\eta') \partial_z\Phit \vert_{z=0} +
2\opbw(\eta'\otimes \partial_z\Phit \vert_{z=0})\eta'\\
-\opbw(\eta')\partial_x\Phit \vert_{z=0}  - \opbw(\partial_x\Phit \vert_{z=0})\eta'\\
+R'(\eta')(\partial \Phit \vert_{z=0}) + R''(\eta',\partial \Phit \vert_{z=0})\eta'\\
= I+\cdots+VII
\end{multline}
where $R'(\eta')$, $R''(\eta', \cdot )$ are smoothing operators in 
$\sRa{-\rho}{K,0,1}{N} $ (as in item (iv) of Proposition \ref{233}) for an arbitrary $\rho$,
$R''$ being linear in the argument 
$\partial \Phit \vert_{z=0}$, and $\partial \Phit$ standing for either a $\partial_x$ or $\partial_z$
derivative of $\Phit$.

Let us consider $ \partial_z\Phit \vert_{z=0} $.
By differentiating \eqref{6115} with respect to $ z $ we get 
\begin{equation}
  \label{eq:6146}
  \partial_z\Phit = \partial_z\Phi + \opbw(\partial_z^2\Phit)\eta \, .
\end{equation}
On the other hand, \eqref{6133} shows that $\partial_z\Phit\vert_{z=0}$ and $\partial^2_z\Phit\vert_{z=0}$ may be written as
$M(\eta)\psi$ for some $M$ in $\sMa{m}{K,0,0}{N}$ and some $m$ (recalling the definition of the class $\sNd{m}{\cdot}{N}$ of
Definition~\ref{533a}). Then, expressing $ \psi $  as a function of $ (\eta, \omega) $ as in \eqref{6131}, we may write 
\begin{equation}\label{eq:6146a}
\partial_z^2\Phit\vert_{z=0} = M'(\eta)\omega + M''(\eta,\omega)\eta
\end{equation}
for operators $M', M''$ in $\sMa{m}{K,0,0}{N}$, sending real valued functions to real valued functions, with $M''$
linear in $\omega$. Estimates \eqref{2125}, \eqref{2127} imply that such functions define symbols of order zero, independent of
$\xi$, i.e.\ that bounds \eqref{214}, \eqref{218} with $m=0$ are satisfied. Consequently, the last term in \eqref{6146}
restricted at $ z = 0 $ 
may be written as 
\be\label{eq:Der2}
\opbw(\partial_z^2\Phit_{| z = 0})\eta =  \opbw(c^0 (\eta, \omega))\eta 
\ee
where $ c^0 (\eta, \omega) $ is in  
$  \sFa{K,0,1}{N}  $, linear in $\omega$, and so it 
gives a contribution to $ \opbw(c^0)\eta$ in \eqref{6143}. (We shall verify condition \eqref{6142} below). 
Moreover using  \eqref{6146},
the expression of  $\partial_z\Phi\vert_{z=0}$  given by \eqref{6128},  \eqref{Der2}, 
we may write, using Propositions \ref{231} and \ref{232}, 
\[
I+II = \bigl(\mathrm{Id}+\opbw(\eta'\otimes\eta')\bigr)\bigl[(D\tanh D)\omega + \opbw(a_1(\eta,\cdot))\omega\bigr]
\]
modulo contributions of the form $\opbw(c^0)\eta$ (for a different $ c^0 $) and of the form of the smoothing terms in \eqref{6143}. By symbolic
calculus, we may rewrite this as
\be\label{eq:pass2}
\opbw\bigl((1+\eta'{}^2)(\xi\tanh\xi) + (1+\eta'{}^2)a_1(\eta; x, \xi)\bigr)\omega
\ee
modulo a contribution to the $\opbw(b^0(\eta,\cdot))\omega$ term in \eqref{6143} 
(with $b^0$ in $\sGa{0}{K,0,1}{N}$). Taking  into account the expression of the symbol $ a_1 $ in \eqref{6130}, we may
finally write \eqref{pass2} as 
\begin{equation}
  \label{eq:6147}
  (D\tanh D)\omega + i\opbw(\eta'\xi)\omega.
\end{equation}
Consider next $III + IV$. Since $ \Phit (x, 0) = \psi (x ) $ by 
\eqref{619}, \eqref{617}, \eqref{611}, we get, 
according to \eqref{6131}, 
\begin{equation}
  \label{eq:6148}
  \partial_x\Phit\vert_{z=0} = \partial_x\psi = \partial_x \big[ \omega + \opbw(a(\eta,\omega;\cdot))\eta \big]
\end{equation}
so that, again by symbolic calculus
\[III + IV = 2i\opbw(\eta'\partial_z\Phit\vert_{z=0}\xi)\eta - i\opbw(\eta'\xi)[\omega+\opbw(a(\eta,\omega;\cdot))\eta]\]
modulo again contributions to $\opbw(b^0)\omega+\opbw(c^0)\eta$ and to the smoothing terms in \eqref{6143}. Since the symbol $a(\eta, \omega) $ in \eqref{6131} is nothing but 
$ a_0 (\eta, \psi) = B = \partial_z\Phit\vert_{z=0} $ (see \eqref{Ba0}), we obtain
\[
III+ IV = i\opbw(\eta'\partial_z\Phit\vert_{z=0}\xi)\eta - i\opbw(\eta'\xi)\omega
\]
still modulo the same contributions as above. Using again symbolic calculus, we get, up to such contributions,
\be\label{eq:tre45}
III + IV + V = i\opbw\bigl((\eta'\partial_z\Phit-\partial_x\Phit)\vert_{z=0}\xi\bigr)\eta - i\opbw(\eta'\xi)\omega \, .
\ee
Finally we consider the terms $VI+VII$ of \eqref{6145}.
By \eqref{6148} and  \eqref{6133} where we express $\psi$ by \eqref{6131},  
we may express 
$\partial_x\Phit\vert_{z=0}$  and  $\partial_z\Phit\vert_{z=0}$ 
as the right hand side of \eqref{6146a}, so
that, using (ii) and 
(iv) of Proposition~\ref{233} (actually  just for the homogeneous components of the smoothing operator),
we deduce that 
\be\label{eq:VI7}
VI + VII =  R_1(\eta)\omega + R_2(\eta,\omega)\eta 
\ee
contribute to the last two terms in \eqref{6143}.

In conclusion,  by \eqref{6145}, \eqref{6147}, \eqref{tre45}, \eqref{VI7}, \eqref{6141a} we  get 
\[
G(\eta)\psi = (D\tanh D)\omega + i \opbw\bigl(V(\eta, \psi) \xi\bigr)\eta
\]
modulo contributions to the last four terms in \eqref{6143}.

The fact that $V$ is real valued follows from its expression \eqref{6141a}. Moreover, if $\eta$ and $\psi$ are even, the
solution $\phiu$ to \eqref{611} is even in $x$, so that $\varphi$ and $\varphi_0$ given by \eqref{617} and \eqref{618} are
even in $x$, as well as $\Phit$ given by \eqref{619}. Then $\partial_z\Phit$ is even in $x$, $\partial_x\Phit$ is odd in $x$,
so that \eqref{6141a} shows that $V(\eta,\psi)$ is odd in $x$ as claimed. 

Moreover, $G(\eta)\psi$ is even and real valued by \eqref{6144} as well as 
the function $ (D\tanh D)\omega -i\opbw\bigl(V(\eta,\psi)\xi\bigr)\eta $. 
It follows that  the sum of the last four
terms in  \eqref{6143}  is even and real valued. Consequently, we may assume that $b^0, c^0$ satisfy \eqref{6142}
and are even functions of $(x,\xi)$, and that the smoothing operators send real (resp.\ even) functions to real (resp.\ even)
functions.
As a consequence, we may always replace for instance $R_1(\eta)$ by $\frac{1}{2}[\tau\circ
R_1(\eta)+R_1(\eta)\circ\tau]$, that ensures 
that the second condition in \eqref{318319}  is satisfied  
by $R_1(\eta)$, and argue similarly for the other
smoothing terms,  and for the first property in \eqref{318319}. 
This concludes the proof.

As mentioned at the beginning of the proof, the fact that $b^0$ is actually of order $-1$ will be a consequence of some
explicit computations that we postpone to section~\ref{sec:72}.
\end{proof}

\section[Paralinearization of the system]{Paralinearization of the water waves system}\label{sec:62}

This section is devoted to write  system \eqref{113} (with $g=1$) as a paradifferential system in the variables
$(\eta, \omega)$. This has been done already for the first equation $\partial_t\eta = G(\eta)\psi$ in Proposition~\ref{615}, 
see \eqref{6143}.
In this section we deduce from \eqref{113} a paradifferential equation for $\partial_t\omega$. 

We first recall  that by \eqref{6115}, \eqref{Ba0}
\begin{equation} \label{eq:621}
 B(\eta,\psi) = \partial_z\Phit\vert_{z=0} = a_0 (\eta, \psi)\, ,
\end{equation}
and, by \eqref{6141a} and  $ {\tilde \Phi} (x,0) = \psi (x)$ (see \eqref{619}, \eqref{617}, \eqref{613}) 
\be\label{eq:expV}
V(\eta,\psi) =  \partial_x\tilde{\Phi}\vert_{z=0} -\eta'\partial_z\Phit\vert_{z=0} = \partial_x \psi - \eta'  B \, . 
\ee 
We also recall the linear involution  
$$
S : \R^2 \to \R^2 \, , \quad {\rm with \ matrix \ }  
 \bigl[\begin{smallmatrix}1&0\\0&-1\end{smallmatrix}\bigr] \, , 
$$
and we consider  the $\Z/2\Z$ action of the group $ \{ {\rm Id}, S \} $ on functions 
$ \vect{\eta (t, \cdot)}{ \psi (t, \cdot )} $ defined by
\begin{equation}\label{eq:310-real} 
  \vect{\eta}{\psi}_S (t) \stackrel{\mathrm{def}}{=}  S  \vect{\eta}{\psi} (-t) =  \vect{\eta (-t)}{-\psi(-t)} 
\end{equation}
(which is is the analogue in the real variables of the action introduced in \eqref{action}-\eqref{310}). 

The main result of this section is
\begin{proposition}
  \label{621} {\bf (Equation for $ \pa_t \omega $)}
Let $ N $ in $ \N^* $. 
Let  $ (\eta, \psi ) $ be a solution of  \eqref{113}.  
For $ r > 0 $ small enough, there exist \\
$\bullet$ Symbols $a_2 (\eta, \cdot )$ in $\sGa{2}{K,0,1}{N}$, $a_1 (\eta, \omega; \cdot )$ 
in $\sGa{1}{K,0,1}{N}$, and $ a_0 (\eta, \omega; t, \cdot )$ in $\sG{0}{K,1,2}{N}$ 
satisfying
\begin{equation}
  \label{eq:622}
  \Im a_2 \equiv 0,\quad \Re a_1 \in \sGa{0}{K,0,1}{N},\quad \Im a_1 = -V(\eta,\psi)\xi
\end{equation}
such that $a_1$ is linear in $\omega $ (or in $\psi$ according if we express 
$\psi$ from $\omega$ by \eqref{6131}),  
\be\label{eq:a0:rev} 
a_0 (\eta, \omega; - t) = a_0 ( [\eta, \omega]_S ; t) \, , 
\ee 
the homogeneous terms of 
$ a_0 $ are quadratic in $ \omega $, and
\begin{equation}
  \label{eq:623}
  \bar{a}_j^\vee = a_j,\quad a_j(\cdot;x,\xi) = a_j(\cdot;-x,-\xi), \ \   j = 0, 1, 2 \, . 
\end{equation}
Moreover
\begin{equation}
  \label{eq:624a}
  a_2(\eta;x,\xi) = -\kappa\xi^2\bigl[(1+\eta'(t,x)^2)^{-\frac{3}{2}} -1\bigr] 
\end{equation}
modulo a symbol
in $\sGa{0}{K,0,1}{N}$, 

$\bullet$ A smoothing operator $R_1(\eta,\omega)$, resp.\ $R_2(\eta,\omega,\omega)$, resp.\ $R_0(\eta)$, which is linear in
$ \omega $ and belongs to $ \sRa{-\rho}{K,0,1}{N} $, resp.\ which is quadratic in $ \omega $ 
and belongs to $\sRa{-\rho}{K,0,2}{N}$,
resp.\ which belongs to $\sRa{-\rho}{K,0,1}{N}$, satisfying condition \eqref{318319},

such that
\begin{multline}
  \label{eq:624}
\partial_t\omega = -(1+\kappa D_x^2)\eta + \opbw(a_2(\eta;\cdot))\eta + \opbw(a_1(\eta,\omega;\cdot))\omega\\
+ \opbw(a_0(\eta,\omega; t, \cdot))\eta + R_1(\eta,\omega)\omega + R_2(\eta,\omega,\omega)\eta + R_0(\eta)\eta \, .
\end{multline}
\end{proposition}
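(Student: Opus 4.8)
The starting point is the second equation of the water waves system \eqref{113}, namely
\[
\partial_t\psi = -\eta + \kappa H(\eta) - \tfrac12(\partial_x\psi)^2 + \tfrac12\frac{(\eta'\partial_x\psi+G(\eta)\psi)^2}{1+\eta'^2},
\]
together with the definition of the good unknown $\omega = \psi - \opbw(a_0(\eta,\psi))\eta$ of Proposition~\ref{614}, where $a_0(\eta,\psi) = B = \partial_z\tilde\Phi|_{z=0}$. First I would differentiate this relation in time: $\partial_t\omega = \partial_t\psi - \opbw(\partial_t a_0(\eta,\psi))\eta - \opbw(a_0(\eta,\psi))\partial_t\eta$, and then substitute $\partial_t\psi$ from the equation above and $\partial_t\eta = G(\eta)\psi$ from the first equation. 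This turns everything into expressions involving only $\eta$, $\psi$ (hence $\omega$ through \eqref{6131}), $G(\eta)\psi$, and $B$, for which paradifferential representations are already available: \eqref{6143} for $G(\eta)\psi$, \eqref{621} for $B$, and the paralinearization of the mean curvature $H(\eta)$. So the bulk of the work is a careful bookkeeping of paralinearizations.

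The key computational steps, in order, are: (i) paralinearize $\kappa H(\eta) = \kappa\partial_x[\eta'(1+\eta'^2)^{-1/2}]$ using Lemma~\ref{235} (Bony paralinearization of the composition operator) applied to $f(\eta') = \eta'(1+\eta'^2)^{-1/2}$, then compose with $\partial_x^2$; the principal symbol is $-\kappa\xi^2(1+\eta'^2)^{-3/2}$ acting on $\eta$, so this produces the term $-\kappa D_x^2\eta$ together with $\opbw(a_2(\eta;\cdot))\eta$ where $a_2$ satisfies \eqref{624a}, plus a smoothing remainder $R_0(\eta)\eta$; note $a_2$ is real-valued since $f$ is real-valued, giving $\Im a_2\equiv 0$. (ii) Paralinearize the quadratic terms $-\frac12(\partial_x\psi)^2$ and $\frac12(\eta'\partial_x\psi+G(\eta)\psi)^2/(1+\eta'^2)$ using Lemma~\ref{612}; after substituting the expressions \eqref{6143} for $G(\eta)\psi$ and \eqref{expV} $V = \partial_x\psi - \eta'B$, one collects the contributions linear in $\omega$ into $\opbw(a_1(\eta,\omega;\cdot))\omega$ with $a_1$ of order $1$, and here is where the crucial cancellation producing $\Im a_1 = -V(\eta,\psi)\xi$ must be identified — this is precisely the Alinhac good-unknown mechanism, and the real part $\Re a_1$ coming out of order $0$. (iii) Handle the terms $-\opbw(\partial_t a_0)\eta$ and $-\opbw(a_0)G(\eta)\psi$: since $a_0\in\sFRa{K,0,1}{N}$ is linear in $\psi$, Lemma~\ref{217} (with $U$ solving the water waves system written as \eqref{2129}, or rather the $(\eta,\omega)$ version) shows $\partial_t a_0 \in \sGa{0}{K,1,1}{N}$, and the quadratic-in-$\omega$ pieces assemble into $\opbw(a_0(\eta,\omega;t,\cdot))\eta$ of order $0$, with the homogeneous part quadratic in $\omega$ as claimed; the remaining genuinely smoothing pieces go into $R_1, R_2$. (iv) Verify the algebraic properties: \eqref{623} (reality and parity-preserving of the symbols) follows because $\eta,\psi$ are even real-valued functions so all symbols produced are even in $(x,\xi)$ and satisfy the $\vee$-reality condition, exactly as in the proof of Proposition~\ref{615}; the reversibility condition \eqref{a0:rev} for $a_0$ follows from the reversibility \eqref{125} of \eqref{113} combined with the fact that $\omega$ is an odd (linear) function of $\psi$, so that under $t\mapsto -t$, $\psi\mapsto-\psi$ the term $a_0(\eta,\omega)$ (quadratic in $\omega$) is preserved while $\partial_t$ flips sign appropriately; \eqref{318319} for the smoothing operators is obtained, as in Proposition~\ref{615}, by symmetrizing $R_j$ with respect to $\tau$ and complex conjugation.

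\textbf{Main obstacle.} The delicate point is step (ii): extracting from the paralinearization of the two quadratic nonlinear terms the exact identity $\Im a_1 = -V(\eta,\psi)\xi$ and checking that no other term of order $1$ survives — all remaining order-$1$ contributions must either cancel or be moved onto $\eta$ (into $a_2$ or $a_0$) or into smoothing remainders. This requires tracking the principal symbols of $\opbw(\eta'\partial_x\psi)$, $\opbw(G(\eta)\psi)$ (whose principal part is $\opbw(|\xi|\tanh\xi \,\omega)$ plus $-i\opbw(V\xi)\eta$), and the symbol of $\partial_z\Phi|_{z=0}$, and seeing how the combination $(\eta'\partial_x\psi+G(\eta)\psi)$ conspires — together with the $-\frac12(\partial_x\psi)^2$ term and the time derivative hitting $a_0$ — to produce exactly $-V\xi$ as the imaginary part at order $1$ acting on $\omega$. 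This is the heart of why the good unknown works, and it is the one place where a genuinely structural computation (rather than routine symbol bookkeeping) is needed; the estimates on orders and homogeneity degrees of all the pieces are then routine applications of the symbolic calculus of Chapter~\ref{cha:2} and the composition results of Section~\ref{sec:23}.
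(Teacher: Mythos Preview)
Your outline is correct and matches the paper's approach. One simplification you are missing, however, makes step~(ii) much cleaner: rather than substituting the paradifferential expansion \eqref{6143} for $G(\eta)\psi$ into the quadratic term, the paper first uses the \emph{algebraic} identity $\eta'\partial_x\psi + G(\eta)\psi = (1+\eta'^2)B$ (which is just \eqref{Geta} rearranged) to rewrite
\[
\tfrac12\frac{(\eta'\partial_x\psi+G(\eta)\psi)^2}{1+\eta'^2} = \tfrac12(1+\eta'^2)B^2,
\]
and only then paralinearizes. With this in hand, the formula for $\partial_t\omega$ becomes (see \eqref{626})
\[
\partial_t\omega = -\eta + \kappa H(\eta) - \tfrac12(\partial_x\psi)^2 + \tfrac12(1+\eta'^2)B^2 - \opbw(B)\bigl[(1+\eta'^2)B - \eta'\partial_x\psi\bigr] - \opbw(\partial_t B)\eta,
\]
and the crucial cancellation producing $\Im a_1 = -V\xi$ falls out in a few lines: after paralinearizing each term and combining, the order-one contributions acting on $\psi$ reduce to $-i\opbw(V\xi)\psi + i\opbw(BV\xi)\eta$, and substituting $\psi = \omega + \opbw(B)\eta$ gives $-i\opbw(V\xi)\omega$ plus order-zero terms in $\eta$ (see \eqref{6214}). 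Without this algebraic reduction your step~(ii) is still doable but considerably messier, since you would be tracking the paradifferential structure of $G(\eta)\psi$ through a quotient by $1+\eta'^2$.
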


\noindent
{\bf Remark:} The symbol $ a_0 $ is the only  non-autonomous 
time dependent term  in the right hand side of \eqref{624}. All the other terms are autonomous, namely the time 
dependence enters only through $ \eta (t) $ and $ \omega ( t)  $. 

\medskip

\begin{proof}
  As a consequence of \eqref{6143} and \eqref{6127}, and since $a_0$ is linear in $\psi$ and $c_0$ is linear in $\omega$, we may write 
\be\label{eq:deta}
\partial_t \eta = G(\eta)\psi = (D\tanh D)\psi + M(\eta)\psi
\ee
for some operator $M $ in $\sMa{1}{K,0,1}{N}$. We substitute this expression inside the right hand side of the second equation
\eqref{113}. Using the last remark after Definition~\ref{216}, we may write 
\be\label{eq:dpsi}
\partial_t\psi = M(\eta)\eta + M'(\eta,\psi)\psi + M''(\eta,\psi,\psi)\eta
\ee
for some operators
$$
M \in \sMa{}{K,0,0}{N} \, , \quad 
M', M'' \in \sMa{}{K,0,1}{N}
$$ 
with $M'$ (resp.\ $M''$) linear (resp.\ quadratic) in
$\psi$. By \eqref{621} and \eqref{6133}, we may write also
\begin{equation}
  \label{eq:625}
  B(\eta,\psi) = \partial_z\Phit\vert_{z=0} = (D\tanh D)\psi + M'''(\eta)\psi
\end{equation}
for some $M'''$ in $\sMa{}{K,0,1}{N}$. Differentiating with respect to time we get 
$$
\partial_t[B(\eta,\psi)] = [(D\tanh D) + M'''(\eta)] \partial_t\psi  + \partial_t[M'''(\eta)] \, \psi  
$$
and using the  expression of $\partial_t\psi$ in \eqref{dpsi}, this implies that
\begin{equation}\label{eq:626a1}
\partial_t[B(\eta,\psi)] = M(\eta)\eta + M'(\eta,\psi)\psi + M''(\eta,\psi,\psi)\eta + \partial_t[M'''(\eta)] \, \psi 
\end{equation}
for new $M$ in $\sMa{}{K,0,0}{N}$ and $M', M''$ in $\sMa{}{K,0,1}{N}$ with $M'$ (resp.\ $M''$) linear (resp.\ quadratic) in $\psi $. Consider then the term $ \partial_t[M'''(\eta)] \, \psi $
where we expand the operator $ M''' (\eta)  $  in $ \sMa{}{K,0,1}{N} $ as 
$$ 
M''' (\eta) = M_1''' (\eta) + \ldots + M_{N-1}''' (\eta) + M_N''' (\eta)  \, . 
$$ 
 The homogeneous terms  $ M_j''' (\eta) $, $ j = 1, \ldots, N- 1 $,  
are differentiable with respect to $ \eta $. But, since we have not assumed differentiability 
with respect to $ \eta $ of the non-homogeneous autonomous term (just the estimates \eqref{2127}), we 
deal $ M_N''' (\eta) $  as a time dependent operator. 
We use the same argument of lemma \ref{217} and the fact that $ \partial_t \eta $ solves \eqref{deta}
to conclude that
 $  \partial_t[M'''(\eta)] $ is in $  \sM{}{K,1,1}{N} $. We may insert the terms coming from the
 homogeneous contributions into the autonomous expressions \eqref{626a1} 
 and 
 we get 
 \be\label{eq:626a}
 \partial_t[B(\eta,\psi)] = M(\eta)\eta + M'(\eta,\psi)\psi + M''(\eta,\psi,\psi)\eta + [\partial_t M_N'''(\eta)] \, \psi \, .
 \ee
We remark that the only term that we deal as a time dependent one is the 
function $ [\partial_t M_N'''(\eta)] \, \psi $. All the other terms are autonomous.  
The condition of reversibility of this non-homogeneous time dependent term is that 
the function $ [\partial_t M_N'''(\eta)] \, \psi $ satisfies \eqref{a0:rev} (with $ \psi $ instead of $ \omega $)
as it follows recalling \eqref{310-real}
(remark that we do not say that $ [\partial_t M_N'''(\eta)] \, \psi $  is  quadratic in $ \psi $ because, in 
this time dependent  term we do not substitute the equation for $ \partial_t \eta $). 
 
By \eqref{6144}, \eqref{621}, and since $ {\tilde \Phi} (x,0) = \psi (x)$ (see \eqref{619}, \eqref{617}, \eqref{613}) 
we have 
\be\label{eq:Geta}
G(\eta)\psi = (1+\eta'{}^2)B(\eta,\psi) -\eta'\partial_x\psi \, .
\ee
Thus differentiating the good unknown $ \omega $ defined in \eqref{6114}, since $ \eta, \psi $ solve 
system \eqref{113} with $ g = 1 $, and using \eqref{Geta}, we get 
\begin{multline}
  \label{eq:626}
\partial_t\omega = \partial_t\psi -\opbw(B)\partial_t\eta -\opbw(\partial_t B)\eta\\
= -\eta + \kappa\biggl(\frac{\eta'}{\sqrt{1+\eta'{}^2}}\biggr)' -\frac{1}{2}(\partial_x\psi)^2 +
\frac{1}{2}(1+\eta'{}^2)B(\eta,\psi)^2\\
-\opbw(B)[(1+\eta'{}^2)B(\eta,\psi) -\eta'\partial_x\psi] - \opbw(\partial_tB)\eta \, .
\end{multline}
We now paralinearize the different terms in the right hand side of \eqref{626}. 
Applying lemma~\ref{612}, and using the symbolic calculus formulas \eqref{222}, \eqref{223} 
(which are exact and not asymptotic ones for the symbols considered here), 
we get
\begin{multline}
\label{eq:627}
\eta'{}^2B = \opbw(\eta'{}^2)B + 2\opbw(\eta'B)\eta' + R'_1(\eta)B + R''_1(\eta,B)\eta\\
= \opbw(\eta'{}^2)B + 2i\opbw(\eta'B\xi)\eta -\opbw(\partial_x(\eta'B))\eta\\
 + R'(\eta)\omega + R''(\eta,\omega)\eta
\end{multline}
where 
 $R'$, $R''$ (resp.\ $R'_1$, $R''_1$) are elements of $\sRa{-\rho}{K,0,2}{N}$, with $R''$ (resp.\
$R''_1$) linear in $\omega$ (resp. $B$) that are obtained from the smoothing operators of lemma~\ref{612} expressing $B$ in
terms of $\omega$ by \eqref{625} and \eqref{6131}, and using that by Proposition~\ref{233} the composition 
$R(M(U)U)$ or
$R(U)M(U)$ of  a smoothing operator $R$ and of an element $M$ of a class $\sMa{}{K,0,p}{N}$, gives again a smoothing operator
(up to a change of the smoothing index $\rho$, which is in any case as large as we want). Making act $\opbw(B)$ on
\eqref{627} and using symbolic calculus, we get
\begin{multline}
  \label{eq:628}
\opbw(B)[(1+\eta'{}^2)B] = \opbw(B(1+\eta'{}^2))B + 2i\opbw(\eta' B^2\xi)\eta\\
-\opbw(\partial_x(B^2\eta'))\eta + R'(\eta,\omega)\omega + R''(\eta,\omega,\omega)\eta
\end{multline}
where $R'$ (resp.\ $R''$) is in $\sRa{-\rho}{K,0,3}{N}$ and it is linear in $\omega$ (resp.\ in $\sRa{-\rho}{K,0,3}{N}$ and quadratic in
$\omega$).

In the same way, lemma~\ref{612} allows us to write
\begin{equation}\label{eq:628a}
\eta'\partial_x\psi = \opbw(\eta')\partial_x\psi + \opbw(\partial_x\psi)\eta' + R'(\eta')\partial_x\psi +
R''(\partial_x\psi)\eta'
\end{equation}
with $R', R''$ in $\Rt{-\rho}{1}$. Applying $\opbw(B)$ on \eqref{628a}, using symbolic calculus and expressing in the smoothing operators
$\psi$ from $ (\eta, \omega) $ by \eqref{6131}, we obtain
\begin{multline}
  \label{eq:629}
\opbw(B)(\eta'\partial_x\psi) = i\opbw(\eta'B\xi)\psi + i\opbw(B(\partial_x\psi)\xi)\eta\\
-\frac{1}{2}\opbw(\partial_x(B\eta'))\psi -\frac{1}{2}\opbw\bigl(\partial_x[B(\partial_x\psi)]\bigr)\eta\\
+ R'(\eta,\omega)\omega + R''(\eta,\omega,\omega)\eta
\end{multline}
with $R', R''$ in $\sRa{-\rho}{K,0,2}{N}$, respectively linear and quadratic in $\omega$. 

Applying again lemma~\ref{612}, we get also
\begin{multline}\label{eq:629a}
  \frac{1}{2}(1+\eta'{}^2)B^2 = \opbw((1+\eta'{}^2)B)B +\opbw(B^2\eta')\eta' \\
+ R'_1(\eta,B)B +R''_1(\eta,B,B)\eta
\end{multline}
with $R'_1$ in $\sRa{-\rho}{K,0,1}{N}$ linear in $B$, $R''_1$ in $\sRa{-\rho}{K,0,3}{N}$ quadratic in $B$. Using symbolic
calculus, and expressing $B$ in the smoothing operators in terms of $\eta $, $\omega$,  by \eqref{625} and \eqref{6131}, 
we get
\begin{multline}
\label{eq:6210}
  \frac{1}{2}(1+\eta'{}^2)B^2 = \opbw((1+\eta'{}^2)B)B +i\opbw(B^2\eta'\xi)\eta \\ -\frac{1}{2}\opbw(\partial_x(B^2\eta'))\eta 
  + R'(\eta,\omega)\omega
+R''(\eta,\omega,\omega)\eta
\end{multline}
with $R', R''$ in $\sRa{-\rho}{K,0,1}{N}$  and $\sRa{-\rho}{K,0,2}{N}$ respectively, with $R'$ (resp.\ $R''$) linear (resp.\
quadratic) in $\omega$. In the same way, we write
\begin{multline}
  \label{eq:6211}
\frac{1}{2}(\partial_x\psi)^2 = i\opbw((\partial_x\psi)\xi)\psi - \frac{1}{2}\opbw(\partial_x^2\psi)\psi\\
+ R'(\eta,\omega)\omega +R''(\eta,\omega,\omega)\eta \, .
\end{multline}
Moreover, by \eqref{2313} applied to the remainder of Taylor formula for the function 
$\eta'\to \frac{\eta'}{\sqrt{1+\eta'{}^2}}$ (expanded at the order $ N $), and
using lemma~\ref{612} for the polynomial terms of the expansion, we may write
\[\frac{\eta'}{\sqrt{1+\eta'{}^2}} = \opbw(a(\eta'))\eta' + R(\eta)\eta\]
where $ R(\eta) $ is in $\sRa{-\rho-1}{K,0,2}{N}$ and 
\be\label{eq:aeta}
a(\eta') = (1+\eta'{}^2)^{-3/2} \, . 
\ee 
Consequently, 
using again symbolic calculus, 
\begin{equation}
  \label{eq:6212}
  \partial_x\Bigl(\frac{\eta'}{\sqrt{1+\eta'{}^2}}\Bigr) = -\opbw\bigl(\xi^2 a(\eta')+
  \frac{1}{4}\partial_x^2(a(\eta'))\bigr)\eta + R(\eta)\eta 
\end{equation}
for some other smoothing operator  $ R(\eta) $  in $\sRa{-\rho}{K,0,2}{N} $.  

We now compute $\partial_t\omega$ from \eqref{626}. Plugging in the right hand side of \eqref{626} the formulas
\eqref{628}, \eqref{629}, \eqref{6210}, \eqref{6211}, \eqref{aeta}, \eqref{6212} 
and using \eqref{Ba0}, \eqref{6131}, we get, 
after simplification,  
\begin{multline}
\label{eq:6213}
\partial_t\omega = -\eta -\kappa\opbw\bigl((1+\eta'{}^2)^{-3/2}\xi^2\bigl)\eta
-i\opbw\bigl((\partial_x\psi-\eta'B)\xi\bigr)\psi\\
-i\opbw\bigl(B(\eta' B-(\partial_x\psi))\xi\bigr)\eta + \opbw(a_0^0(\eta))\eta\\
+\opbw(a_0^1(\eta,\omega))\psi + \opbw(a_0^2(\eta,\omega,\omega))\eta - \opbw(\partial_t B)\eta\\
+R(\eta)\eta + R'(\eta,\omega)\omega + R''(\eta,\omega,\omega)\eta
\end{multline}
where $R$ and $R''$ are in $\sRa{-\rho}{K,0,2}{N}$, $R''$ being quadratic in $\omega$, 
$R'$ is in $\sRa{-\rho}{K,0,1}{N}$,
linear in $\omega$, the functions $a_0^0, a_0^1, a_0^2$ belong  respectively to 
$\sFa{K,0,2}{N}$, $\sFa{K,0,1}{N}$, $\sFa{K,0,2}{N}$, $a_0^1$
(resp.\ $a_0^2$) being linear (resp.\ quadratic) in $\omega$, these functions being real valued and even in $x$ (as follows
from the evenness of $\eta$ and $\psi$). Moreover 
$$
a_0^0 (\eta ) = -\frac{\kappa}{4}\partial_x^2\bigl[(1+\eta'{}^2)^{-3/2}\bigr] \, .
$$ 
Notice also that by \eqref{625}, $\partial_tB$ is  real
valued and even in $x$, as $\Phit$ enjoys the same properties. 

The sum of the third and fourth terms in the right hand side
of \eqref{6213} may be written, using that $ V = \partial_x\psi-\eta'B $ by \eqref{expV}, as 
\[
-i \opbw(V\xi)\psi + i \opbw(BV\xi)\eta \, .
\]
Replacing $ \psi $ by its value coming from \eqref{6127} and \eqref{Ba0}, 
namely $ \psi = \omega +\opbw(B) \eta $, and using symbolic calculus, we get
\begin{equation}
  \label{eq:6214}
  - i \opbw(V\xi)\omega - \frac{1}{2} \opbw(V\partial_xB) \eta
\end{equation}
modulo smoothing terms as in \eqref{6213}. By \eqref{6213}, \eqref{6214}, 
expressing in $V(\eta,\psi)$, $B(\eta,\psi)$ the variable $\psi$ in terms of $\eta, \omega $ by  \eqref{6131}, 
and substituting the expression of $ \pa_t B $ in \eqref{626a}, we finally get 
\begin{multline}
  \label{eq:6215}
\partial_t\omega = -(1+\kappa D^2)\eta + \opbw(a_2(\eta;\cdot))\eta + \opbw(a_1(\eta,\omega;\cdot))\omega\\
+ \opbw(a_0(\eta,\omega; t, \cdot))\eta + R(\eta)\eta + R'(\eta,\omega)\omega + R''(\eta,\omega,\omega)\eta
\end{multline}
where\\
$\bullet$ $a_2$ is a symbol in $\sGa{2}{K,0,1}{N}$,  real valued, even in $(x,\xi)$ and satisfies $\bar{a}_2^\vee = a_2$.\\
$\bullet$ $a_1$ is a symbol in $\sGa{1}{K,0,1}{N} $, 
even in $(x,\xi)$ and satisfies $\bar{a}_1^\vee = a_1$. 
Moreover $a_1$ is linear in
$\omega$, $\Re a_1 \in \sG{0}{K,0,1}{N} $ 
and $\Im a_1 = -V(\eta,\psi)\xi$.
\\
$\bullet$ $a_0$ is in $\sG{0}{K,1,2}{N}$,  even in $(x,\xi)$,  satisfies $\bar{a}_0^\vee = a_0 $. 
Moreover $ a_0 $ is even in $ t $ and its homogeneous part is 
autonomous and quadratic in $ \omega $.\\
$\bullet$ $R(\eta)$ (resp.\ $R'(\eta,\omega)$, resp.\ $R''(\eta,\omega,\omega)$) belongs to  $\sRa{-\rho}{K,0,1}{N}$ (resp.\
$\sRa{-\rho}{K,0,1}{N}$, resp.\ $\sRa{-\rho}{K,0,2}{N}$), 
the operator $R'$ (resp.\ $R''$) being linear (resp.\ quadratic) in $\omega$.

Actually, $ a_2 $ is given by $\kappa[1-(1+\eta'{}^2)^{-3/2}]\xi^2 + a_0^0(\eta)$ plus 
the function $M(\eta)\eta$ that comes from the
first term in the expression of $\partial_tB$ given by \eqref{626a}. It satisfies the above requirements since
$\eta$ is even. Moreover, it is given by \eqref{624a} modulo a symbol of order zero.  

The symbol $a_1$ is formed by an order one contribution given by $-iV\xi$ in \eqref{6214}, 
plus the symbol  $ a^1_0(\eta,\omega) $ that arises by the term
$\opbw(a^1_0(\eta,\omega))\psi$ in \eqref{6213}, where we express $\psi$ by \eqref{6131}, and keep only 
$\opbw(a^1_0(\eta,\omega)) \omega $. 
Since $ V $ and $ a_0^1 $ are real valued, and since $ V $ is an odd function of $x$ and
$a_0^1$ an even one, we get that all conditions on $a_1$ are satisfied.

The symbol $a_0$ is computed  from the contribution $-\frac{1}{2}\opbw(V\partial_xB)\eta$ in \eqref{6214}, from
$\opbw(a_0^1(\eta,\omega))\opbw(a(\eta,\omega;\cdot))\eta$ that comes from the sixth term in the right hand side of
\eqref{6213}, where we replace $\psi$ by 
\eqref{6131}, 
from $\opbw(a_0^2(\eta,\omega,\omega))\eta$ in
\eqref{6213}, and from the contribution to $ \opbw(\partial_tB)\eta$ coming from 
\eqref{626a}, that is $ \opbw([\partial_t M_N''' (\eta )] \psi ) \eta $. 
Notice that this non-homogeneous term is the only non-autonomous one. As 
we proved that  
$ [\partial_t M_N''' (\eta )] \psi $ satisfies \eqref{a0:rev} (with $ \psi $ instead of $ \omega $), we get that
the function $ [\partial_t M_N''' (\eta )] (\omega + \opbw(a(\eta,\omega))\eta) $ obtained substituting 
$ \psi $ by \eqref{6131}, satisfies 
\eqref{a0:rev} as well.
 Moreover all the functions appearing as symbols in the above operators are real 
valued and even in $x$, so that the requirements of the statement are satisfied. 

The fact that we may assume that $R(\eta)$,
$R'(\eta,\omega)$, $R''(\eta,\omega,\omega)$ satisfy \eqref{318319} 
follows from the fact that $\partial_t\omega$ in
the left hand side of \eqref{6215} as well as the first four terms in the right hand side of \eqref{6215} are even in $x$ and
real valued. This follows from the fact that $\eta, \omega$ are real valued even functions and 
$\opbw(a_\ell)$ preserves these properties because
 $a_\ell$ is even in $(x,\xi)$ and satisfies
$\bar{a}_\ell^\vee = a_\ell$. As a consequence, we may always replace for instance $R(\eta)$ by $\frac{1}{2}[\tau\circ
R(\eta)+R(\eta)\circ\tau]$, that ensures 
that the second condition in \eqref{318319}  is satisfied  
by $R(\eta)$, and argue similarly for the other
smoothing terms and for the first property in \eqref{318319}. This concludes the proof.
\end{proof}

We summarize in a corollary  the new form of the water waves system that we have obtained
by Propositions \ref{615} and \ref{621}. 

\begin{corollary}
  \label{622} {\bf (Water-waves equations in $(\eta, \omega)$ variables)}
Let $\rho$ in $\R$, $N$ in $\N^*$, $K$ in $\N$ be given (large) numbers. There is $ r > 0 $ 
and there are \\

$ \bullet $
symbols $ a_0, a_1, a_2$
satisfying the properties of Proposition~\ref{621}, 

$ \bullet $
 symbols  $b_1(\eta,\omega;\cdot)$ in $\sGa{1}{K,0,1}{N}$, linear in 
 $\omega$, $b_0(\eta;\cdot)$ belonging to $\sGa{-1}{K,0,1}{N}$, even in $(x,\xi)$, satisfying  
\begin{equation}
  \label{eq:6216}
  \Im b_1 = -V(\eta,\psi) \xi \, , \ 
  \Re b_1\in \sG{0}{K,0,1}{N},\quad \bar{b}^\vee_0 = b_0, \quad \bar{b}^\vee_1 = b_1 \, , 
\end{equation}

$ \bullet $
smoothing operators $R_1(\eta)$, $R'_1(\eta,\omega)$, $R_2(\eta)$ and  $R'_2(\eta,\omega)$,
$R''_2(\eta,\omega,\omega)$ in $\sRa{-\rho}{K,0,1}{N}$, with $R'_1, R'_2$ linear in $\omega$,  $R''_2$ quadratic in
$\omega$, satisfying \eqref{318319}, 
  
 such that, for any large enough $s$, if a 
  small enough function $(\eta,\psi)$
of the space $C_*^K(I,\Hsze{s+\frac{1}{4}}\times \Hdse{s-\frac{1}{4}})$ solves 
system \eqref{113}, 
then $(\eta,\omega)$, with $ \omega $ given
by \eqref{6127}, solves on the same time interval $I$ the system
\begin{multline}
  \label{eq:6217}
\partial_t\eta = (D\tanh D)\omega + \opbw(b_1(\eta,\omega;\cdot))\eta + \opbw(b_0(\eta;\cdot))\omega\\
+ R_1(\eta)\omega + R'_1(\eta,\omega)\eta\\
\partial_t\omega = -(1+\kappa D^2)\eta + \opbw(a_2(\eta;\cdot))\eta + \opbw(a_1(\eta,\omega;\cdot))\omega\\
+  \opbw(a_0(\eta,\omega; t, \cdot))\eta 
 + R_2(\eta)\eta + R'_2(\eta,\omega)\omega + R''_2(\eta,\omega,\omega)\eta \, . 
\end{multline}
System \eqref{6217} 
is reversible with respect to the involution $ S = \sm{1}{0}{0}{-1} $, 
namely writing  
\eqref{6217} as
\be\label{eq:WWrev}
\partial_t \vect{\eta}{\omega} = M(\eta, \omega; t) \vect{\eta}{\omega}
\ee
then $ M(\eta, \omega; - t) S = - SM( [\eta, \omega]_S; t)  $ where $ [\eta, \omega]_S $  is defined in \eqref{310-real} 
(this condition is like \eqref{319}). 
For the autonomous terms of the vector field this condition amounts to  
the reversibility property \eqref{Frev}. 
As a consequence any solution of \eqref{6217} satisfies \eqref{125}. 
Moreover the operator $ M(\eta, \omega; t) $ sends real valued functions into real valued functions and 
$ M(\eta, \omega; t) \circ \tau = \tau \circ M(\eta, \omega; t) $ commutes with the map 
$ \tau $ defined in \eqref{def-tau}. 
\end{corollary}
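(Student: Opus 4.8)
\textbf{Plan of proof for Corollary \ref{622}.}
The strategy is to collect the two paradifferential identities already established --- equation \eqref{6143} of Proposition \ref{615} for $\partial_t\eta = G(\eta)\psi$ and equation \eqref{624} of Proposition \ref{621} for $\partial_t\omega$ --- and merely reorganize them into the coupled system \eqref{6217}, then verify the reversibility and reality/parity algebraic properties. First I would set $b_1(\eta,\omega;\cdot)$ to be the symbol $V(\eta,\psi)\xi$ appearing (with the correct sign and quantization) in front of $\eta$ in \eqref{6143}; since $V(\eta,\psi)$ is, by Proposition \ref{615}, an element of $\sFRa{K,0,1}{N}$ that is linear in $\psi$, real valued and odd in $x$, expressing $\psi$ in terms of $(\eta,\omega)$ via \eqref{6131} of Proposition \ref{614} turns $V$ into a function linear in $\omega$ lying in $\sFRa{K,0,1}{N}$; hence $b_1 = -iV\xi$ is a symbol of $\sGa{1}{K,0,1}{N}$, linear in $\omega$, with $\Im b_1 = -V(\eta,\psi)\xi$ and $\Re b_1$ of order zero (in fact vanishing), and $\bar b_1^\vee = b_1$ by oddness of $V$ in $x$ together with the reality of $V$. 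Likewise $b_0 := b^0(\eta;\cdot)$ from \eqref{6143} is the desired symbol of $\sGa{-1}{K,0,1}{N}$ (the order $-1$ claim being exactly the content deferred to section \ref{sec:72}), even in $(x,\xi)$, with $\bar b_0^\vee = b_0$ by \eqref{6142}; and $\opbw(c^0(\eta,\omega;\cdot))\eta$ of \eqref{6143} is absorbed into $R'_1(\eta,\omega)\eta$ or kept as a bounded term, but more precisely since $c^0$ is of order $0$ it can be incorporated into $\opbw(b_1)\eta$ up to a change of $b_1$ of order zero --- that is consistent with $\Re b_1$ being merely of order zero. The smoothing operators $R_1(\eta)\omega + R_2(\eta,\omega)\eta$ of \eqref{6143} become $R_1(\eta)\omega + R'_1(\eta,\omega)\eta$, and they satisfy \eqref{318319} by the corresponding statement in Proposition \ref{615}. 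For the second line of \eqref{6217} I would just transcribe \eqref{624}, noting $R_0(\eta)\eta$ of Proposition \ref{621} is renamed $R_2(\eta)\eta$, $R_1(\eta,\omega)\omega$ becomes $R'_2(\eta,\omega)\omega$, $R_2(\eta,\omega,\omega)\eta$ becomes $R''_2(\eta,\omega,\omega)\eta$, and $a_0,a_1,a_2$ are exactly those of Proposition \ref{621}, satisfying \eqref{622}--\eqref{624a}.

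Next I would address the equivalence of the two formulations: by Proposition \ref{614}, the change of unknown $(\eta,\psi)\mapsto(\eta,\omega)$ given by \eqref{6127} is, for $\nnorm{\eta}_{K,\sigma}$ small, a well-defined bijection with inverse \eqref{6131}, both maps being of the paradifferential type appearing in the statement and preserving evenness in $x$ and real-valuedness. Hence if $(\eta,\psi)$ is a small solution of \eqref{113} in $C_*^K(I,\Hsze{s+\frac14}\times\Hdse{s-\frac14})$ then $\omega$ is well defined in $C_*^K(I,\Hdse{s-\frac14})$ and, differentiating \eqref{6127} in time and using \eqref{6143}, \eqref{624}, one obtains precisely \eqref{6217}; conversely a solution of \eqref{6217} yields via \eqref{6131} a solution of \eqref{113}. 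This is essentially a bookkeeping step once the two propositions are in hand.

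The substantive point left is the reversibility/reality/parity structure, i.e. writing \eqref{6217} as \eqref{WWrev} and checking $M(\eta,\omega;-t)S = -SM([\eta,\omega]_S;t)$, that $M$ preserves real-valuedness, and that $M$ commutes with $\tau$. For the parity ($\tau$) statement and the reality statement, each building block is handled: $D\tanh D$, $1+\kappa D^2$ are even real Fourier multipliers; $\opbw(a_j)$, $\opbw(b_j)$ preserve evenness and real-valuedness because the symbols are even in $(x,\xi)$ and satisfy $\bar a^\vee = a$ (this is exactly the content of the remarks after Definition \ref{Def:RPR} applied with the scalar involution, together with the evenness of the cut-offs $\chi,\chi_p$); and the smoothing operators satisfy \eqref{318319}. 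The reversibility is the step I expect to be the main obstacle, because it requires tracking the time-parity of every coefficient: the autonomous symbols $a_2(\eta;\cdot)$, $\opbw(b_0)$, etc., must be checked to give, when inserted into $M$ and composed with $S$ on the right, the opposite sign of $M$ evaluated at $[\eta,\omega]_S$ --- using that under $\psi\mapsto-\psi$ one has $V(\eta,-\psi)=-V(\eta,\psi)$, $B(\eta,-\psi)=-B(\eta,\psi)$, $\omega\mapsto-\omega$ (by the last sentence of Proposition \ref{614}), and $a_1$ linear in $\omega$ so it is odd, while $a_2$ depends only on $\eta$ so it is even, etc.; and the genuinely non-autonomous term $\opbw(a_0(\eta,\omega;t,\cdot))\eta$ is handled by the explicit time-parity property \eqref{a0:rev} proved in Proposition \ref{621}. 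Putting these parities together in the matrix form, one checks that the off-diagonal block structure of $\partial_t\begin{bmatrix}\eta\\\omega\end{bmatrix}$ against $S=\begin{bmatrix}1&0\\0&-1\end{bmatrix}$ produces exactly the sign flip $M(\eta,\omega;-t)S=-SM([\eta,\omega]_S;t)$, which for the autonomous part collapses to $F\circ S=-S\circ F$, i.e. \eqref{Frev}; finally, by the uniqueness of solutions of \eqref{6217} (equivalently of \eqref{113}), this reversibility yields \eqref{125}. The only real care needed is being consistent about whether "reversibility'' is applied to the homogeneous multilinear components (via \eqref{314p}/\eqref{319p}, using Lemmas \ref{hom-nonhom} and \ref{non-hombis}) or to the full non-homogeneous operator; I would phrase the verification at the level of homogeneous components plus the single non-homogeneous $a_0$-term, which is precisely the decomposition already organized in Proposition \ref{621}.
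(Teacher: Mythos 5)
Your proposal is correct and follows essentially the same route as the paper: assemble the system from \eqref{6143} (Proposition \ref{615}) and \eqref{624} (Proposition \ref{621}) with $b_1 = c^0 - iV\xi$, $b_0 = b^0$, pass between $(\eta,\psi)$ and $(\eta,\omega)$ via \eqref{6127} and \eqref{6131} of Proposition \ref{614}, and read off reversibility, reality, and parity from the $\omega$-degree of each coefficient, the property \eqref{a0:rev} for the non-autonomous term, and the symbol/smoothing-operator conditions \eqref{6142}, \eqref{623}, \eqref{318319}. The only slight overstatement is the parenthetical that $\Re b_1$ ``in fact vanishes''; it equals $c^0$ and is merely of order zero, as you yourself correct a few lines later.
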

\begin{proof}
  If $(\eta,\psi)$ is in a small enough ball of center zero in $C_*^K(I,\Hsze{s+\frac{1}{4}}\times \Hdse{s-\frac{1}{4}})$ and
  $\omega$ is defined by \eqref{6127}, we may apply Proposition~\ref{614}, that allows us to recover $\psi$ from 
  $\eta, \omega $ by \eqref{6131}. By \eqref{6143}, the first equation $\partial_t\eta = G(\eta)\psi$ of the water waves system may be
  written as the first equation in \eqref{6217} with 
  $$
  b_1(\eta,\omega;\cdot) = c^0(\eta,\omega;\cdot) -iV(\eta,\psi)\xi \, \quad 
  b_0 = b^0 \, .
  $$ 
   Then \eqref{6142} and the fact that $V$ is real valued and odd 
  imply that \eqref{6216} holds. Moreover  $ b_1 $ is even in $(x, \xi )$ because 
  $ c^0 $ and $ V(\eta,\psi)\xi $   are  even in $(x, \xi )$. 
  The second equation in \eqref{6217} is \eqref{624}.
  
 Finally,  writing \eqref{6217} as the system \eqref{WWrev}, 
 since 
$ b_1 (\eta,\omega)$,  $R'_1 (\eta,\omega) $,  $ a_1(\eta,\omega) $, $ R'_2 (\eta,\omega) $ are linear in $ \omega $,
$ R''_2(\eta,\omega,\omega) $ is quadratic in $ \omega $, 
and the symbol $ a_0(\eta,\omega; t, \cdot ) $ satisfies \eqref{a0:rev},  
  we deduce that the operator $ M(\eta, \omega; t) $ satisfies 
  $ M(\eta, \omega; - t) S = - SM( [\eta, \omega]_S; t)  $.
  Moreover the operator  $ M(\eta, \omega; t) $ sends real valued functions into real valued functions, and 
  commutes with $ \tau $,  because
   the symbols $ a_0, a_1, a_2 $ and  $ b_0, b_1 $ satisfy $ \bar{a}^\vee_j = a_j $, $ \bar{b}^\vee_j = b_j $, 
   are even in $ (x, \xi) $, and 
 the smoothing operators  $R_1 $, $R'_1 $, $R_2 $, $R'_2 $,
$R''_2 $  satisfy \eqref{318319}.   This concludes the proof.
\end{proof}

\section[Equation in complex coordinates]{The capillarity-gravity water waves equations in complex coordinates}\label{sec:63}

We denote by $\lk$ the Fourier multiplier with symbol
\begin{equation}
  \label{eq:631}
  \Lambda_\kappa(\xi) = \Bigl(\frac{\xi\tanh \xi}{1+\kappa \xi^2}\Bigr)^{1/4}(1-\chi(\xi))
\end{equation}
where $\chi$ is an even smooth function, equal to one close to zero and supported for $\abs{\xi}< \frac{1}{2}$.
Notice that, 
on the space of periodic functions with zero mean, or on periodic
functions modulo constants, the operator $\lk$ is just given by
\be\label{eq:Lambda-p}
\lk = \Bigl(\frac{D\tanh D}{1+\kappa D^2}\Bigr)^{1/4} \, .
\ee
We denote by $\lk^{-1}$ the inverse of $\lk$ acting from the space of zero mean functions to
itself. For $\eta$ in $\Hsze{s+\frac{1}{4}}(\Tu,\R)$ and  $\omega$ in $\Hdse{s-\frac{1}{4}}(\Tu,\R)$ we consider the complex function
\begin{equation}
  \label{eq:632}
  u = \lk\omega + i\lk^{-1}\eta 
\end{equation}
which belongs to the space 
\begin{equation}
  \label{eq:633}
  \Hdsz{s}(\Tu,\C) \stackrel{\mathrm{def}}{=} \Big\{ u\in H_{\mathrm{ev}}^s(\Tu,\C); \int_\Tu\Im u\,dx = 0 \Big\} \Big/\R
\end{equation}
where $H_{\mathrm{ev}}^s(\Tu,\C)$ denotes the subspace of even functions in $H^s(\Tu,\C)$.
This space is endowed with the norm $\norm{u}_{\Hds{s}} = \bigl(\sum_1^{+\infty}
n^{2s}\norm{\Pin{}u}_{L^2}^2\bigr)^{\frac{1}{2}}$. Inverting \eqref{632} we may express
\begin{equation}
  \label{eq:634}
  \eta  = \lk\Bigl[\frac{u-\bar{u}}{2i}\Bigr] \, , \quad 
  \omega = \lk^{-1}\Bigl[\frac{u+\bar{u}}{2}\Bigr], 
\end{equation}
where we make here a small abuse of notation, as, in contrast with \eqref{632}, we consider here $\lk^{-1}$ as an operator
sending a space of functions modulo constants to itself. 

Notice  that, since the symbol $   \Lambda_\kappa(\xi) $ in \eqref{631} is real and even, 
the corresponding Fourier multipliers $ \lk,  \lk^{-1} $ map real valued, resp. even, functions into real valued, resp. even, functions.

We now prove that  
system \eqref{6217} in  the real variables $ (\eta, \omega) $, may be written,  
in the complex variable  $U = \vect{u}{\bar{u}}$, as system \eqref{3112}.
Notice first that in these complex coordinates 
the real involution 
$ \vect{\eta}{ \omega} \mapsto \vect{\eta}{ - \omega } $ considered 
for system \eqref{6217} (or \eqref{113})  translates into the complex involution 
$$
S : \vect{u}{\bar u} \mapsto - \vect{\bar u}{u} \, , \quad {\rm with \ matrix \ } \quad  S = - \sm{0}{1}{1}{0} 
$$ 
(that for simplicity of notation we denote with the same letter $ S $).  The real 
system \eqref{WWrev} may be written  in the new complex coordinates as
\be\label{eq:WW-complex}
D_t U = X(U; t ) U 
\ee
where $ D_t = \frac{1}{i} \partial_t $,   the operator 
$  X(U; t ) $ satisfies the  reality condition  \eqref{316}, 
it is parity preserving (according to definition \eqref{318}) and   
 reversible (according to \eqref{319}), namely 
 $ X(U; - t ) = - S X(U_S; t ) S $
 with the new involution $ S = - \sm{0}{1}{1}{0} $. 
 
 We provide 
 below the para-differential structure  of system \eqref{WW-complex} which is suitable for proving energy estimates. 

\begin{proposition} \label{631}
{\bf (Water waves equations in complex coordinates)}
  Let $(\eta,\omega)$ be a 
  solution of system \eqref{6217} belonging to the space 
\[ C_*^K(I,\Hsze{s+\frac{1}{4}}(\Tu,\R))\times C_*^K(I,\Hdse{s-\frac{1}{4}}(\Tu,\R))\]
for some large enough $s, K$, some time interval $ I $, symmetric with respect to $ t = 0 $, $(\eta,\omega)$ being small enough in that space. Define
\begin{equation}
  \label{eq:635}
\begin{split}
  \zeta(U;t,x) &= \frac{1}{2}\bigl[(1+\eta'(t,x)^2)^{-3/2} -1\bigr]\\
&= \frac{1}{2}\Bigl[\bigl(1+\bigl(\partial_x\lk\bigl(\frac{u-\bar{u}}{2i}\bigr)\bigr)^2\bigr)^{-3/2}-1\Bigr].
\end{split}\end{equation}
Recall that we introduced the matrices 
\begin{equation}
  \label{eq:636}
  \Ical_2 = \bigl[\begin{smallmatrix}1&0\\0&1\end{smallmatrix}\bigr], \quad \Kcal =
  \bigl[\begin{smallmatrix}1&0\\0&-1\end{smallmatrix}\bigr], \quad \Jcal =
  \bigl[\begin{smallmatrix}0&-1\\1&0\end{smallmatrix}\bigr], \quad \Lcal = \bigl[\begin{smallmatrix}0&1\\1&0\end{smallmatrix}\bigr]
\end{equation}
and set 
\begin{equation}
  \label{eq:637}
  \mk(\xi) = (\xi\tanh\xi)^{1/2}(1+\kappa\xi^2)^{1/2}(1-\chi(\xi))
\end{equation}
where $\chi$ is as in \eqref{631}. 
Let $N, \rho$ be given positive integers. Then, for $r>0$ small enough, there are symbols $\lambda_j$ in $\sG{j}{K,1,1}{N}$,
$j= -\frac{1}{2}, 0, \frac{1}{2}, 1$,  satisfying
\begin{equation}
  \label{eq:638}
 \Im\lambda_1 \in \sG{0}{K,1,1}{N},\quad \Im\lambda_{\frac{1}{2}}\in \sG{-\frac{1}{2}}{K,1,1}{N}
\end{equation}
and
\begin{equation}
  \label{eq:639}
  \begin{split}
    (\bar{\lambda}_j)^\vee &= \lambda_j,\ j = -\frac{1}{2}, \frac{1}{2},\  (\bar{\lambda}_j)^\vee = -\lambda_j,\ j = 0, 1\\
\lambda_j(U;t,x,\xi) &= \lambda_j(U;t,-x,-\xi),\ j= -\frac{1}{2}, 0, \frac{1}{2}, 1\\
\lambda_j(U;-t,x,\xi) &= \lambda_j(U_S;t,x,\xi),\ j = -\frac{1}{2}, \frac{1}{2}\\
\lambda_j(U;-t,x,\xi) &= -\lambda_j(U_S;t,x,\xi),\ j = 0, 1
  \end{split}
\end{equation}
such that $U = \vect{u}{\bar{u}}$ with $u$ given by \eqref{632}  solves the system
\begin{equation}
  \label{eq:6310}
  D_t U = \opbw(A(U;t,x,\xi))U + R(U;t)U
\end{equation}
where $A(U;t,x,\xi)$ is the matrix  of symbols
\begin{multline}
  \label{eq:6311}
A(U;t,x,\xi) = \bigl(\mk(\xi)(1+\zeta(U;t,x))+ \lambda_{\frac{1}{2}}\bigr)\Kcal\\
+ \bigl(\mk(\xi) \zeta(U;t,x) + \lambda_{-\frac{1}{2}}\bigr)\Jcal + \lambda_1\Ical_2 + \lambda_0\Lcal \, , 
\end{multline}
that satisfies the  reality, parity preserving, reversibility properties
\begin{equation}
  \label{eq:6312}
  \begin{split}
    \bar{A}^\vee(U;t,x,\xi) &= -SA(U;t,x,\xi)S\\
A(U;t,-x,-\xi) &= A(U;t,x,\xi)\\
A(U;-t,x,\xi) &= -SA(U_S;t,x,\xi)S
  \end{split}
\end{equation}
where $ S $ is the involution defined in \eqref{def-S}, 
and  $R(U;t)$ is a smoothing operator in $\sRM{-\rho}{K,1,1}{N}$ satisfying as well the
 reality, parity preserving, reversibility properties
\begin{equation}
  \label{eq:6313}
  \begin{split}
    \overline{R(U;t)V} &= -SR(U;t)S\overline{V}\\
R(U;t)\circ\tau &= \tau\circ R(U;t)\\
R(U;-t) &= -SR(U_S;t)S\, .
  \end{split}
\end{equation}
\end{proposition}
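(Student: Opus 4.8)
The plan is to obtain \eqref{6310} by conjugating the real system \eqref{6217} under the complex change of variables \eqref{632}, tracking carefully the para-differential structure and the three algebraic properties \eqref{6312}, \eqref{6313}. First I would rewrite \eqref{6217} as the real system \eqref{WWrev}, and then apply the linear isomorphism $(\eta,\omega)\mapsto u = \lk\omega + i\lk^{-1}\eta$, whose inverse is \eqref{634}. Since $\lk$ is the Fourier multiplier with real even symbol $\Lambda_\kappa(\xi)$ in \eqref{631}, these operations commute with $D_x$-truncations and preserve the classes of symbols and smoothing operators; more importantly, they map the real involution $(\eta,\omega)\mapsto(\eta,-\omega)$ to the complex involution $S = -\bigl[\begin{smallmatrix}0&1\\1&0\end{smallmatrix}\bigr]$ of \eqref{def-S}. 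So the reversibility, reality, parity properties for $M(\eta,\omega;t)$ recorded in Corollary~\ref{622} translate automatically into \eqref{316}, \eqref{318}, \eqref{319}-type properties for the operator $X(U;t)$ in \eqref{WW-complex}; the point is then to extract from $X(U;t)$ its principal para-differential part and to identify the coefficients of the matrices $\Ical_2,\Kcal,\Jcal,\Lcal$.

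Next I would carry out the conjugation at the symbol level. The pair $(\eta,\omega)$ enters \eqref{6217} through the symbols $a_2,a_1,a_0,b_1,b_0$ (of orders $2,1,0,1,-1$) and smoothing remainders. Using \eqref{634} to substitute $\eta = \lk[(u-\bar u)/(2i)]$, $\omega = \lk^{-1}[(u+\bar u)/2]$ and applying the symbolic calculus of Proposition~\ref{231} together with Proposition~\ref{232}, the highest order contribution comes from the ``free'' part: the pair $\partial_t\eta = (D\tanh D)\omega$, $\partial_t\omega = -(1+\kappa D^2)\eta$ becomes, after conjugation by $\mathrm{diag}(\lk,\lk^{-1})$ and passing to $u,\bar u$, the diagonal operator $\mk(D)\Kcal$ with $\mk(\xi) = (\xi\tanh\xi)^{1/2}(1+\kappa\xi^2)^{1/2}(1-\chi(\xi))$ as in \eqref{637} (the cutoff $1-\chi$ absorbs the discrepancy between $\lk^{-1}\lk$ and the identity on the zero mode, which is harmless since $\eta$ has zero average and $\omega$ is defined modulo constants). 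The quasilinear term $\opbw(a_2(\eta;\cdot))\eta$ with $a_2 = -\kappa\xi^2[(1+\eta'^2)^{-3/2}-1]$ modulo lower order (see \eqref{624a}) produces, after conjugation, the term $\mk(\xi)\zeta(U;t,x)$ with $\zeta = \tfrac12[(1+\eta'^2)^{-3/2}-1]$ as in \eqref{635}, distributed over $\Kcal$ (the $(1+\zeta)\mk$ piece) and $\Jcal$ (the $\zeta\mk$ piece); here one uses that conjugating $\opbw(-\kappa\xi^2 c(x))$ by $\mathrm{diag}(\lk,\lk^{-1})$ and rewriting in the $u,\bar u$ basis generates precisely a multiple of $\Kcal + \Jcal$ at leading order, since $\lk^{-1}\kappa\xi^2\lk^{-1}\sim \mk(\xi)$ modulo order $1$. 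The subleading terms of $a_2$, together with $a_1,a_0,b_1,b_0$ and the effect of $[\opbw(\zeta),\mk(D)]$-type commutators, all fall into $\lambda_{1/2},\lambda_{-1/2},\lambda_1,\lambda_0$ of the stated orders, with $\Im\lambda_1,\Im\lambda_{1/2}$ one order lower because $\Im a_1 = -V(\eta,\psi)\xi$ and $\Re a_1$ is of order $0$ (see \eqref{622}, \eqref{6216}); finally the smoothing operators $R_1,R_1',R_2,R_2',R_2''$ in \eqref{6217}, conjugated and composed with para-differential operators via Proposition~\ref{232}, give the $\rho$-smoothing remainder $R(U;t)$ in $\sRM{-\rho}{K,1,1}{N}$.

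The verification of \eqref{6312}, \eqref{6313} is then a bookkeeping exercise: the reality, parity preserving and reversibility conditions are preserved by the conjugation because $\mathrm{diag}(\lk,\lk^{-1})$ has real even symbol and intertwines the two involutions, and because the individual symbols $a_j,b_j$ satisfy $\bar a_j^\vee = \pm a_j$, are even in $(x,\xi)$, and obey the time-reversal rules of Proposition~\ref{621} and Corollary~\ref{622}; one just translates each condition through the linear map $(\eta,\omega)\mapsto(u,\bar u)$, using \eqref{310}-\eqref{310-real} to see that $[\eta,\omega]_S$ corresponds to $U_S = SU(-t)$. The signs in \eqref{639} ($(\bar\lambda_j)^\vee = \lambda_j$ for $j = \pm\tfrac12$ and $= -\lambda_j$ for $j = 0,1$, etc.) are dictated by whether the corresponding matrix ($\Kcal,\Jcal$ vs. $\Ical_2,\Lcal$) anticommutes or commutes with $S$, combined with the parities of the underlying real symbols. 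The main obstacle I anticipate is not any single hard estimate but rather the careful algebraic tracking through the conjugation: one must verify that every term of order $> 1/2$ off the diagonal cancels except the $\mk\zeta$ contribution to $\Jcal$, that the imaginary parts land in the right (lower) symbol classes, and that the three algebraic properties survive each step — in particular that the passage to the good unknown and then to $u$ does not destroy reversibility, which is exactly the structural point emphasized repeatedly in the introduction. Keeping the non-autonomous symbol $a_0(\eta,\omega;t,\cdot)$ separate (the only explicitly $t$-dependent term) and checking its reversibility rule \eqref{a0:rev} is the delicate bit.
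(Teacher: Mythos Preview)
Your proposal is correct and follows essentially the same approach as the paper: compute $\partial_t u = \lk\partial_t\omega + i\lk^{-1}\partial_t\eta$ by substituting \eqref{6217}, use symbolic calculus (Propositions~\ref{231},~\ref{232}) to extract the principal symbols term by term (linear part gives $\mk\Kcal$, the $a_2$-term gives $\mk\zeta(\Kcal+\Jcal)$ plus order $-1/2$, the $a_1,b_1$-terms give $\lambda_1\Ical_2+\lambda_0\Lcal$, the $a_0,b_0$-terms give the $\lambda_{\pm 1/2}$ corrections), then conjugate to get the $\bar u$-equation and assemble the matrix system. One small slip: the order-$3/2$ computation is $\lk\opbw(a_2)\lk$ (not $\lk^{-1}$ on both sides), since $\partial_t\omega$ contains $\opbw(a_2)\eta$ and $\eta = \lk[(u-\bar u)/2i]$, giving $\Lambda_\kappa^2 a_2 \sim \mk\zeta$ at leading order; otherwise your outline matches the paper's proof, including the verification of \eqref{6312}, \eqref{6313} from the properties recorded in Corollary~\ref{622} and the $S\Kcal S=-\Kcal$, $S\Lcal S=\Lcal$ relations.
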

\noindent\textbf{Remark}:  Some of the above symbols or smoothing operators 
are indeed autonomous and belong to the corresponding smaller class with the index $ K' = 0 $ instead of $ K' = 1 $.  
But since this fact will not be used later we shall systematically use this weaker statement. 
The paradifferential system \eqref{6310} is the one presented 
in section \ref{sec:31}  to start the proof of Theorem \ref{311}. 

\begin{proof}
Recalling \eqref{632}, to compute the equation satisfied by $ u $, we insert 
\eqref{6217} in
\be\label{eq:patu}
\partial_t u = \lk\partial_t\omega + i\lk^{-1}\partial_t\eta \, .
\ee
The linear contribution is
\begin{equation}
  \label{eq:6314}
  i(D\tanh D)^{\frac{1}{2}}(1+\kappa D^2)^{\frac{1}{2}}u = i\opbw(\mk(\xi))u
\end{equation}
as follows from \eqref{Lambda-p} and 
the fact that $ \opbw(\mk(\xi)) $,  where $ \mk (\xi)  $ is the symbol defined in \eqref{637}, 
coincides with the operator $(D\tanh D)^{\frac{1}{2}}(1+\kappa D^2)^{\frac{1}{2}}$ when acting on
periodic functions with zero mean or on periodic functions modulo constants. 

The only other term of order $\frac{3}{2}$ comes from  $\opbw(a_2(\eta;\cdot))\eta$
in the formula of
$\partial_t\omega$ in \eqref{6217}. 
Expressing $ \eta $ as a function of $ (u, \bar u) $  as in \eqref{634}, it is given by
\begin{equation}
  \label{eq:6315}
  \lk\opbw(a_2(\eta;\cdot))\lk\Bigl[\frac{u-\bar{u}}{2i}\Bigr] \, .
\end{equation}
We now apply the composition results in Propositions \ref{231} and \ref{232}. 
By the properties of $\#$ in Definition \ref{221}, 
the symbol $ (\Lambda_\kappa(\xi)\# a_2\#\Lambda_\kappa(\xi))_\rho $ is equal to
$ a_2(\eta;x,\xi)\Lambda_\kappa(\xi)^2 $ modulo a symbol of order $ -\frac{1}{2}$ (see in particular \eqref{trilinear}
applied with $ a = c $),  
and  \eqref{6315} may be written as
\begin{multline}
  \label{eq:6316}
i\opbw(\tilde{a}_{\frac{3}{2}}(U;\cdot))(u-\bar{u}) + i\opbw(\tilde{a}_{-\frac{1}{2}}(U;\cdot))(u-\bar{u})\\ 
+R(U)(u-\bar{u})
\end{multline}
where 
$ \tilde{a}_{\frac{3}{2}} $ is the symbol in  $ \sGa{\frac{3}{2}}{K,0,1}{N} \subset \sG{\frac{3}{2}}{K,1,1}{N}$ given by
\be\label{eq:a32}
\tilde{a}_{\frac{3}{2}} = -\frac{1}{2} a_{2}(\eta;x,\xi)\Lambda_\kappa(\xi)^2 \, , 
\ee
$ \tilde{a}_{-\frac{1}{2}}$ is a symbol in $\sGa{-\frac{1}{2}}{K,0,1}{N} \subset \sG{-\frac{1}{2}}{K,1,1}{N}$ and
$R(U)$ is a smoothing operator
in $ \sGa{-\rho}{K,0,1}{N}  \subset \sG{-\rho}{K,1,1}{N} $.
The symbol $ a_2 $ is given by the right hand side of 
\eqref{624a} plus a symbol $ {\tilde a}_2^{(0)} $ of order zero. Since $\Lambda_\kappa(\xi) $ 
defined in \eqref{631} has order $ - 1 / 4 $, the symbol 
$  {\tilde a}_2^{(0)} (\eta;x,\xi)\Lambda_\kappa(\xi)^2 $  
may be added to $\tilde{a}_{-\frac{1}{2}}$ in \eqref{6316}.
By \eqref{624a} the symbol  $ \tilde{a}_{\frac{3}{2}} $ in \eqref{a32} may be written as 
\[
  \frac{\kappa}{2}(1-\chi(\xi))^2\xi^2(\xi\tanh\xi)^{\frac{1}{2}}(1+\kappa\xi^2)^{-\frac{1}{2}}[(1+\eta'{}^2)^{-\frac{3}{2}}-1]
= \mk(\xi)\zeta(U;t,x) + \tilde{r}_{-1/2}
\]
where $\zeta$ is defined in \eqref{635}, $ \mk(\xi) $ in \eqref{637}, 
and $\tilde{r}_{-1/2}$ is a symbol of order $-\frac{1}{2}$ that may be
incorporated to the $\tilde{a}_{-1/2}$ term in \eqref{6316}, up to a change of notation. 
In conclusion we have 
 written \eqref{6316}, thus \eqref{6315}, as
\begin{multline}
  \label{eq:6317}
i\opbw\bigl(\mk(\xi)\zeta(U;t,x)\bigr)(u-\bar{u}) + i\opbw\bigl(a_{-1/2}(U; t, \cdot)\bigr)(u-\bar{u})\\
+R(U;t)(u-\bar{u})
\end{multline}
with $a_{-1/2}$ in $ \sG{-\frac{1}{2}}{K,1,1}{N}$.

The symbol $a_2$ satisfies \eqref{623} as well as the even real symbol 
$   \Lambda_\kappa (\xi) $ defined in \eqref{631}. 
Since these properties are preserved by the composition operator $\#$, we conclude
that also the symbols $\tilde{a}_{\frac{3}{2}}, \tilde{a}_{-\frac{1}{2}}$ in \eqref{6316}  satisfy them.
In addition, by \eqref{635} and \eqref{637}, also $\mk(\xi)\zeta(U;t,x)$ satisfies \eqref{623}.
It follows that also the symbol 
$a_{-\frac{1}{2}}$ in \eqref{6317} satisfy  these conditions. 
Moreover, these symbols, in particular $a_{-\frac{1}{2}}$, 
as functions of $(\eta,\psi)$ do not depend on $ \psi $ (as $ a_2 $). 

The contributions of order one to $\partial_tu $ in \eqref{patu} come from the term 
$\opbw(a_1)\omega$ in the right hand side of the second
equation \eqref{6217} and from the term $\opbw(b_1)\eta$  of the first equation \eqref{6217}, and are given by
\be\label{eq:contr1}
\lk \opbw (a_1) \omega + i \lk^{-1} \opbw (b_1) \eta \, .
\ee
By \eqref{622} and Corollary~\ref{622} we have that 
\be\label{eq:a1b1}
a_1 = -iV(\eta,\psi)\xi \, , \quad b_1 = - i V(\eta,\psi)\xi 
\ee
modulo a symbol of order zero. Moreover 
$ a_1, b_1 $ are linear in  $\psi $,  even in $ (x, \xi) $, and 
satisfy $ \bar{a}_1^\vee = a_1 $, $ \bar{b}_1^\vee = b_1 $, 
as stated  in Proposition
\ref{621} and Corollary \ref{622}. 
Substituting  \eqref{a1b1} and the expressions \eqref{634} of 
$ \eta $ and $ \omega $, \eqref{contr1} may be written as  
\begin{multline*}
  \lk\opbw(-iV(\eta,\psi)\xi)\lk^{-1}\Bigl(\frac{u+\bar{u}}{2}\Bigr)\\
+i\lk^{-1}\opbw(-iV(\eta,\psi)\xi)\lk \Bigl(\frac{u-\bar{u}}{2i}\Bigr)\\
+\opbw(e_{0,+})u + \opbw(e_{0,-}) {\bar u}
\end{multline*}
where $e_{0,\pm}$ are symbols of order zero, even in $(x,\xi)$, satisfy $\bar{e}_{0,\pm}^\vee = e_{0,\pm}$, 
and they are linear in $ \psi $.
By symbolic calculus (i.e. Propositions \ref{231} and \ref{232}), we may write this expression as
\begin{equation}
  \label{eq:6318}
  i\bigl[\opbw(\lambda_1(\eta,\psi))u+ \opbw(\lambda_0(\eta,\psi))\bar{u}\bigr]
\end{equation}
up to contributions 
$ R_1(\eta,\psi)u+ R_2(\eta,\psi)\bar{u}$ with smoothing operators $R_1, R_2$  in 
$ \sRa{-\rho}{K,0,1}{N} \subset \sR{-\rho}{K,1,1}{N} $, 
where  

\noindent
$ \bullet $ $\lambda_1$ is in $ \sGa{1}{K,0,1}{N} \subset \sG{1}{K,1,1}{N}$, with 
$\Im\lambda_1$ in $\sG{0}{K,1,1}{N}$, \\
$ \bullet $ $\lambda_0$ 
is in  $\sGa{0}{K,0,1}{N} \subset \sG{0}{K,1,1}{N}$.

Moreover the symbols $  \lambda_0, \lambda_1 $  satisfy
\be\label{eq:l0l1}
\bar{\lambda}_\ell^\vee = -\lambda_\ell,\quad \lambda_\ell(\eta,\psi;t,-x,-\xi) = \lambda_\ell(\eta,\psi;t,x,\xi),
\ee
and $ \lambda_\ell(\eta,\psi;t,x,\xi) $, 
$ \ell=0,1$, are linear in $ \psi $.

The remaining terms involving paradifferential operators in \eqref{6217} 
bring to $\partial_tu$ in
\eqref{patu} the following contributions
\begin{multline*}
  \lk\opbw(a_0(\eta,\omega; t, \cdot))\lk\Bigl[\frac{u-\bar{u}}{2i}\Bigr]\\
+ i \lk^{-1}\opbw(b_0(\eta;\cdot))\lk^{-1}\Bigl[\frac{u+\bar{u}}{2}\Bigr].
\end{multline*}
Since $ a_0 $ is a symbol of order zero, 
$ b_0 $ of order $ - 1 $, 
and $ \Lambda_\kappa (\xi)  $ of order $ - 1 / 4 $, 
by symbolic calculus we may write it as
\begin{equation}
  \label{eq:6320}
  i\opbw(\mu_{-1/2})u-i\opbw(\lambda_{-1/2})\bar{u} +R_1(\eta,\omega; t)u+R_2(\eta,\omega; t )\bar{u}
\end{equation}
where $ \mu_{-1/2} $ and $\lambda_{-1/2}$ are in $\sG{-\frac{1}{2}}{K,1,1}{N}$. 
In addition, according to the properties of $a_0, b_0$ listed in Proposition~\ref{621} and Corollary~\ref{622},
\begin{equation*} 
\begin{split}
 (\bar{\mu}_{- \frac12})^\vee = \mu_{- \frac12},\quad 
  \mu_{- \frac12}(\eta,\psi;t,-x,-\xi) = \mu_{- \frac12}(\eta,\psi;t,x,\xi), \, \\
  (\bar{\lambda}_{- \frac12})^\vee = \lambda_{- \frac12},\quad 
  \lambda_{- \frac12}(\eta,\psi;t,-x,-\xi) = \lambda_{- \frac12}(\eta,\psi;t,x,\xi) \, ,
  \end{split}
\end{equation*}
and, since 
$ a_0 $ satisfies \eqref{a0:rev},  as well as $ b_0  $ which depends only on $ \eta $, 
$$
\mu_{- \frac12}(\eta,\psi;-t) = \mu_{- \frac12}([\eta,\psi]_S;t) \, , \quad 
\lambda_{- \frac12}(\eta,\psi;-t) = \lambda_{- \frac12}([\eta,\psi]_S;t)  \, , 
$$ 
where 
$ [ \eta, \psi ]_S $ is defined in \eqref{310-real}.

In conclusion \eqref{patu}, \eqref{6314}, \eqref{6317}, \eqref{6318}, \eqref{6320} imply that
\begin{multline}
  \label{eq:6322}
D_t u = \opbw[\mk(\xi)(1+\zeta)+\lambda_{1/2}]u - \opbw[\mk(\xi)\zeta + \lambda_{-1/2}]\bar{u}\\
+\opbw(\lambda_1)u + \opbw(\lambda_0)\bar{u} + R_1(U;t)u + R_2(U;t)\bar{u}
\end{multline}
where $\lambda_j$ are new symbols in $\sG{j}{K,1,1}{N}$ and satisfy
\begin{equation}
  \label{eq:6323}
  \begin{split}
    \Im \lambda_{1/2}\textrm{ is of order } -\frac{1}{2} \, , \ \Im\lambda_1 \textrm{ is of order }  0\\
(\bar{\lambda}_{1/2})^\vee = \lambda_{1/2} \, ,\ (\bar{\lambda}_{-1/2})^\vee = \lambda_{-1/2}\\
(\bar{\lambda}_{1})^\vee = -\lambda_{1} \, ,\ (\bar{\lambda}_{0})^\vee = -\lambda_{0}\\
\lambda_j \textrm{ are even functions of } (x,\xi).
  \end{split}
\end{equation}
Notice that we have actually proved that the symbol $\lambda_{1/2}$ is of order $-\frac{1}{2}$, 
and not just its imaginary part, but we
conserve only those properties of the symbols that are useful for us, and that are preserved under the reductions of
Chapter~\ref{cha:3} and Chapter~\ref{cha:4}. 
Moreover, as we have seen above 
\begin{equation}
  \label{eq:6325}
\begin{split}
  &\lambda_j(U;-t,x,\xi) = -\lambda_j(U_S;t,x,\xi),\quad j=0, 1 \, , \\
&\lambda_j(U;-t,x,\xi) = \lambda_j(U_S;t,x,\xi),\quad j= \frac{1}{2}, -\frac{1}{2} 
\end{split}\end{equation}
(where $ S $ is now the complex involution defined in \eqref{def-S}).

Taking the complex conjugate of  \eqref{6322}, we get, because of
\eqref{opbavv}, the fact that $ \mk(\xi) $ is even in $ \xi $, and  \eqref{6323}, 
\begin{multline}\label{eq:6322-bar}
  D_t\bar{u} = -\opbw[\mk(\xi)(1+\zeta)+\lambda_{1/2}]\bar{u} + \opbw[\mk(\xi)\zeta + \lambda_{-1/2}]u\\
+\opbw(\lambda_1)\bar{u} +\opbw(\lambda_0)u -\overline{R_1(U;t)u} -\overline{R_2(U;t)\bar{u}} \, .
\end{multline}
Finally, we write the equations \eqref{6322} and \eqref{6322-bar} as a  system in the variable
 $U = \vect{u}{\bar{u}}$. We get, using notation \eqref{636},
  \begin{multline}
    \label{eq:6326}
D_tU = \opbw\bigl([\mk(\xi)(1+\zeta)+\lambda_{1/2}]\Kcal + [\mk(\xi)\zeta + \lambda_{-1/2}]\Jcal\\
+\lambda_1\Ical_2 +\lambda_0\Lcal\bigr)U + R(U;t)U
  \end{multline}
for some matrix $R$ of smoothing operators in $\sR{-\rho}{K,1,1}{N}$.
We have finally proved the paradifferential form \eqref{6310}-\eqref{6311} 
of system \eqref{WW-complex}. 

Since $S\Kcal S = -\Kcal, S\Jcal S = -\Jcal, S\Lcal S = \Lcal, S\Ical_2 S = \Ical_2$, it follows from \eqref{6323} that the
matrix $A(U; t, \cdot)$ giving the symbol in the right hand side 
of \eqref{6326} satisfies the 
reality and parity preserving conditions \eqref{311}, \eqref{313}.
Moreover conditions \eqref{6325} are 
equivalent to the reversibility condition \eqref{314}.
We 
have thus checked that all properties \eqref{638} to \eqref{6312} hold.

Finally, by comparing \eqref{WW-complex} and  \eqref{6310} we have that 
$$
R(U; t) = X(U; t ) - \opbw (A(U; t, \cdot )) \, . 
$$
As stated after \eqref{WW-complex} the operator $ X(U; t) $ satisfies the reality, parity preserving, 
 reversibility properties \eqref{316}, \eqref{318}, \eqref{319}.
Since  the operator $ \opbw (A(U; t, \cdot ))  $ satisfies   
 as well  \eqref{316}, \eqref{318}, \eqref{319}  (by \eqref{6312}) 
 we deduce that  $ R(U; t) $
 satisfies \eqref{6313}. This concludes
the proof.
\end{proof}
\textbf{Remark}: In the above system \eqref{6310}, \eqref{6311}, the symbols $\lambda_j$ are in the class $\sG{j}{K,1,1}{N}$ and the
smoothing operator $R(U; t)$ belongs to the space $\sRM{-\rho}{K,1,1}{N}$. This implies, 
recalling \eqref{2123}
and \eqref{2117} for $ k = 0 $ and $ K ' = 1 $, 
that the Sobolev norms of the non-homogeneous terms in the 
right hand side of \eqref{6310} may be
estimated from the Sobolev norms of $U$ and of $\partial_t U$. This dependence of the bounds on first order time derivatives
is irrelevant to derive energy inequalities for the water waves system \eqref{6310}. On the other hand, we
shall also need  to estimate time derivatives of $ U $ by  the space
derivatives, as claimed in \eqref{6330} of the next proposition. For that we use the water waves equation in its
initial formulation \eqref{113}.
\begin{proposition}
  \label{632}
Let $(\eta,\psi)$ be a solution of system \eqref{113} defined on some interval $I$ and belonging to the space
\[C^0(I,\Hsze{s+\frac{1}{4}}(\Tu,\R)\times \Hdse{s-\frac{1}{4}}(\Tu,\R)) \, . \]
Let $ U  = \vect{u}{\bar{u}} $ where $ u $ is defined in \eqref{632}. 
For any $ 0 \leq k\leq K$ there is a constant $ C_k $ such that, as long as 
$ U(t,\cdot)$ stays
in the unit ball of $\Hds{s}(\Tu,\C^2)$ with $ s \gg K $, one has the estimate
\begin{equation}
  \label{eq:6330}
  \norm{\partial_t^kU(t,\cdot)}_{\Hds{s-\frac{3}{2}k}}\leq C_k\norm{U(t,\cdot)}_{\Hds{s}}.
\end{equation}
\end{proposition}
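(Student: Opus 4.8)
The plan is to reduce the statement to a tame estimate for the time derivatives of $(\eta,\psi)$ in the original Zakharov variables, and then to establish that estimate by induction on $k$ directly from \eqref{113}. The key point is that the right hand side of \eqref{113} depends on $(\eta,\psi)$ only through $x$-derivatives and through the Dirichlet-Neumann operator, so no time derivatives are hidden in the coefficients, which is what makes the induction close.

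First I would reduce to $(\eta,\psi)$. By \eqref{632} the function $u$ is obtained from $\omega$ and $\eta$ by the Fourier multipliers $\lk$, $\lk^{-1}$ of orders $-\frac14$ and $+\frac14$, and by \eqref{6127} together with Proposition~\ref{614} the good unknown is $\omega=\psi-\opbw(a_0(\eta,\psi))\eta$ with $a_0(\eta,\psi)$ a real, $\xi$-independent symbol depending tamely on $(\eta,\psi)$ (by \eqref{625}, $a_0(\eta,\psi)=B(\eta,\psi)$ is of order one in $\psi$). Since $\lk^{\pm1}$ commute with $\partial_t$, and since by the Leibniz rule $\partial_t^k(\opbw(a_0(\eta,\psi))\eta)=\sum_{a+b=k}\binom{k}{a}\opbw(\partial_t^a a_0(\eta,\psi))\partial_t^b\eta$, where each $\partial_t^a a_0$ is a bounded function once $\norm{\partial_t^j\eta}$, $\norm{\partial_t^j\psi}$ are controlled for $j\le a$ and $s\gg K$, Proposition~\ref{215} shows that \eqref{6330} is equivalent to
\begin{equation*}
\norm{\partial_t^k\eta(t,\cdot)}_{\Hds{s+\frac14-\frac32 k}}+\norm{\partial_t^k\psi(t,\cdot)}_{\Hds{s-\frac14-\frac32 k}}\leq C_k\norm{U(t,\cdot)}_{\Hds{s}},\qquad 0\leq k\leq K,
\end{equation*}
the two formulations being related by elementary paraproduct and Moser product estimates, provided $s$ is large enough relative to $K$. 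The base case $k=0$ is the equivalence of norms between $(\eta,\psi)$ and $U=\vect{u}{\bar{u}}$ near the origin, again from \eqref{632}, \eqref{6127}, Proposition~\ref{614}.

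For the inductive step I would use \eqref{113} to write $\partial_t^k\eta=\partial_t^{k-1}(G(\eta)\psi)$ and $\partial_t^k\psi=\partial_t^{k-1}F_2(\eta,\psi)$, where $F_2$ is the analytic nonlinearity of the second equation, depending on $\eta,\eta',\partial_x\psi$, on the capillarity term $\kappa H(\eta)$ (which carries two $x$-derivatives on $\eta$), and on $G(\eta)\psi$. Expanding by the Leibniz and chain rules, every resulting term is a product of factors of the form $\partial_t^{j_\ell}$ of $\eta$, $\partial_x\eta$, $\partial_x^2\eta$, $\partial_x\psi$ or $G(\eta)\psi$, with $\sum_\ell j_\ell\leq k-1$. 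I would handle the factor $\partial_t^{j_\ell}(G(\eta)\psi)$ by observing that it equals $\partial_t^{j_\ell+1}\eta$ by the first equation, so that — establishing the $\partial_t^k\eta$ bound first and the $\partial_t^k\psi$ bound afterwards within step $k$ — all such factors are controlled by the inductive hypothesis at orders $\leq k$, using the tame boundedness of $G(\eta)$ as an operator of order one (which follows from the paralinearization \eqref{6143} of Proposition~\ref{615}). The remaining content is a bookkeeping of orders: assigning to $\eta$ the weight $s+\frac14$, to $\psi$ the weight $s-\frac14$, to $G(\eta)\psi$ the weight $s-\frac54$, each $\partial_t$ costs exactly $\frac32$ derivatives, so the worst term in $\partial_t^{k-1}(G(\eta)\psi)$ is $G(\eta)\partial_t^{k-1}\psi\in\Hds{(s-\frac14-\frac32(k-1))-1}=\Hds{s+\frac14-\frac32 k}$, while the worst term in $\partial_t^{k-1}F_2$ is $\kappa\,\partial_t^{k-1}\partial_x^2\eta\in\Hds{s+\frac14-2-\frac32(k-1)}=\Hds{s-\frac14-\frac32 k}$; all other terms, having more factors or fewer $x$-derivatives, are controlled in the same or a better space by Moser (tame) product estimates, keeping one factor in the top Sobolev norm and the remaining ones bounded in $L^\infty$, which is legitimate since $s\gg K$.

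The main obstacle is precisely this last step: checking that neither the time differentiation of the Dirichlet-Neumann operator nor the expansion of the analytic nonlinearities ever produces a term exceeding the $\frac32$-derivative-per-time-derivative budget. Differentiating $G(\eta)$ in time genuinely requires its shape-derivative structure — most conveniently through the identity $\partial_t^{j}(G(\eta)\psi)=\partial_t^{j+1}\eta$ combined with the order-one tame estimates for $G(\eta)$ coming from \eqref{6143} — since the more symbolic route via Lemma~\ref{217} (which would place $\partial_t$ of the symbols of \eqref{6310} into classes of the form $\Sigma\Gamma^m_{K,K'+1,p}$) is circular here, the symbol seminorms of those classes being controlled precisely by the quantities $\nnorm{\partial_t^jU}_{K,\sigma}$ that we are trying to bound. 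One should also verify that the coefficients $B(\eta,\psi)$ and $V(\eta,\psi)=\partial_x\psi-\eta'B$ arising through the shape derivatives are themselves tame functions of $(\eta,\psi)$ of the expected order, which again follows from \eqref{625} and the inductive hypothesis; this is routine once the order bookkeeping above is in place.
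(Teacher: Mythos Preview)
Your approach is correct and is essentially the same as the paper's proof: both argue by induction on $k$, reduce from $U$ to $(\eta,\omega)$ and then to $(\eta,\psi)$ via \eqref{6127}, write the system as $\partial_t\eta=G(\eta)\psi$, $\partial_t\psi=\mathcal F(\eta,\eta',\eta'',\partial_x\psi,G(\eta)\psi)$ with $\mathcal F$ analytic, and then close the induction using the algebra property of $H^{s-\frac32 K}$ together with the paralinearization \eqref{6143} (which sits in autonomous classes with $K'=0$) to control $\partial_t^{k}(G(\eta)\psi)$. Your observation that the symbolic route through Lemma~\ref{217} applied to \eqref{6310} would be circular, because the $K'=1$ classes there already presuppose control of $\partial_tU$, is exactly the reason the paper reverts to the original formulation \eqref{113}; your shortcut $\partial_t^{j}(G(\eta)\psi)=\partial_t^{j+1}\eta$ for the factors appearing inside $\mathcal F$ is a small but legitimate simplification.
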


\begin{proof}
We proceed by induction. 
  Assume that \eqref{6330} has been proved for $k=0,\dots,k'\leq K-1$. 
 The assumption $ \norm{U(t,\cdot)}_{\Hds{s}} \leq 1 $ implies,  in particular, 
 that
  \begin{equation}
    \label{eq:6331}
    \sum_{k=0}^{k'}\norm{\partial_t^{k}U(t,\cdot) }_{\Hds{s-\frac{3}{2}k'}}\leq \tilde{C}_{k'}
  \end{equation}
for some constant $\tilde{C}_{k'}$, uniformly for $t$ in $I$. To prove \eqref{6330} with $k=k'+1$, it is enough, 
according to \eqref{632} and \eqref{Lambda-p}, to show that
\[\norm{\partial_t^{k'+1}\omega}_{\Hds{s-\frac{1}{4}-\frac{3}{2}(k'+1)}} +
\norm{\partial_t^{k'+1}\eta}_{\Hsz{s+\frac{1}{4}-\frac{3}{2}(k'+1)}} \leq C\norm{U(t,\cdot)}_{\Hds{s}}.\]
By \eqref{6127}, the fact that $ a_0 (\eta, \psi)  $ is in $ \sF{K,0,1}{N} $
and the bounds for paradifferential operators of Proposition~\ref{215}, it is enough to show that
\begin{equation}
  \label{eq:6332}
  \norm{\partial_t^{k'+1}\psi}_{\Hds{s-\frac{1}{4}-\frac{3}{2}(k'+1)}} +
\norm{\partial_t^{k'+1}\eta}_{\Hsz{s+\frac{1}{4}-\frac{3}{2}(k'+1)}} \leq C\norm{U(t,\cdot)}_{\Hds{s}}
\end{equation}
or, equivalently, that
\begin{equation}
  \label{eq:6333}
\norm{\partial_t^{k'+1}\psi}_{\Hds{s-\frac{1}{4}-\frac{3}{2}(k'+1)}} +
\norm{\partial_t^{k'+1}\eta}_{\Hsz{s+\frac{1}{4}-\frac{3}{2}(k'+1)}} \leq C
\big( \norm{\psi}_{\Hds{s-\frac{1}{4}}} + \norm{\eta}_{\Hsz{s+\frac{1}{4}}}\big) 
\end{equation}
since $\norm{U}_{\Hds{s}}$ is equivalent to 
$ \norm{\psi}_{\Hds{s-\frac{1}{4}}} + \norm{\eta}_{\Hsz{s+\frac{1}{4}}} $
 by \eqref{632}, \eqref{Lambda-p} and \eqref{6127}, \eqref{6131}. One may rewrite
\eqref{113} as
\[
\begin{split}
  \partial_t\eta &= G(\eta)\psi\\
\partial_t\psi &= \Fcal(\eta,\eta',\eta'',\partial_x\psi,G(\eta)\psi)
\end{split}\]
where $\Fcal$ is some analytic function vanishing at the origin. 
Thus we write
$$
\partial_t^{k'+1}\psi = \partial_t^{k'}  \big( \Fcal(\eta,\eta',\eta'',\partial_x\psi,G(\eta)\psi) \big) \, , 
\quad 
\partial_t^{k'+1} \eta = \partial_t^{k'} (G(\eta)\psi) \, , 
$$
and, since for $s\gg K$, $H^{s-\frac{3}{2}K}$ is an algebra, we
may estimate the left hand side of \eqref{6332} in terms of 
\[ 
\norm{\partial_t^k (G(\eta)\psi)}_{\Hsz{s-\frac{3}{2}+\frac{1}{4}-\frac{3}{2}k}}, \quad 
\norm{\partial_t^k\eta}_{\Hsz{s+\frac{1}{4}-\frac{3}{2}k}}, \quad 
 \norm{\partial_t^k\psi}_{\Hds{s+\frac{3}{4}-\frac{3}{2}(k+1)}}
\]
for $ k \leq k' $. 
Let us first prove that, for  any $k\leq k'$, 
\be\label{eq:ult1}
\norm{\partial_t^k
  (G(\eta)\psi)}_{\Hsz{s-\frac{3}{2}+\frac{1}{4}-\frac{3}{2}k}} \leq C \norm{U(t,\cdot)}_{\Hds{s}} \, . 
\ee
We use that
$ G(\eta)\psi$ is expressed from \eqref{6143} where the function 
$ V (\eta, \psi )  $ is in $ \sF {K,0,1}{N} $, the symbol $ b^0 $ in
$\sG{-1}{K,0,1}{N}$, $ c^0 $ in $\sG{0}{K,0,1}{N}$,
and the smoothing remainders
$ R_1, R_2 $  in  $\sR{-\rho}{K,0,1}{N}$. 
According to Proposition~\ref{215} applied
to symbols of order one,  \eqref{2117} with $ k = 0 $, $ K' = 0 $, and  the first remark after
Definition \ref{214},  we get for any $k\leq k'$
\[
  \norm{\partial_t^k   (G(\eta)\psi)}_{\Hsz{s-\frac{3}{2}+\frac{1}{4}-\frac{3}{2}k}} \leq C\sum_{k''\leq
  k}\bigl(\norm{\partial_t^{k''}\psi}_{\Hds{s-\frac{1}{4}-\frac{3}{2}k''}} + \norm{\partial_t^{k''}\eta}_{\Hsz{s-\frac{1}{4}-\frac{3}{2}k''}}\bigr)
\]
as long as \eqref{6331} holds. Thus \eqref{6131}, the fact that 
$ a (\eta, \psi) $  is in $ \sF {K,0,1}{N} $,  \eqref{634} 
and the inductive assumption \eqref{6330} for $k''\leq k\leq k'$, imply \eqref{ult1}.
Similarly 
we deduce that, for any $ 0 \leq k \leq k ' $, 
$$
\norm{\partial_t^k\eta}_{\Hsz{s+\frac{1}{4}-\frac{3}{2}k}} +  
\norm{\partial_t^k\psi}_{\Hds{s+\frac{3}{4}-\frac{3}{2}(k+1)}} \leq C \norm{U(t,\cdot)}_{\Hds{s}} \, ,
$$
and \eqref{6332} is proved.
\end{proof}

\chapter{Proof of some auxiliary results}\label{cha:7}

\section{Non resonance condition}\label{sec:71}

Recall that we defined in \eqref{3115}, for $\kappa$ a positive number, and $n$ a positive integer
\begin{equation}
  \label{eq:711}
  \mk(n) = (n\tanh n)^{\frac{1}{2}}(1+\kappa n^2)^{\frac{1}{2}}.
\end{equation}
Moreover, if $p$ is in $\N$, $-1\leq \ell\leq p+1$, $n_0,\dots,n_{p+1}$ are in $\N^*$, we have defined 
the  ``small divisors''
\begin{equation}
  \label{eq:712}
  \Dcal_\ell(n_0,\dots,n_{p+1}) = \sum_{j=0}^\ell \mk(n_j) - \sum_{j=\ell+1}^{p+1}\mk(n_j).
\end{equation}
The goal of this section is to prove the following:
\begin{proposition}
  \label{711} {\bf (Non resonance condition)}
There is a subset $\Ncal$ of $]0,+\infty[$ with zero measure  such that, for any compact interval $[a,b]\subset
]0,+\infty[$, there is an integer $N_0\in \N$, and for any $ \kappa $ in $[a,b]-\Ncal$, there is a positive constant $c$ such that
the inequality  
\begin{equation}
\label{eq:713}
\abs{\Dcal_\ell(n_0,\dots,n_{p+1})}\geq c\max(n_0,\dots,n_{p+1})^{-N_0}
\end{equation}
holds for any $(n_0,\dots,n_{p+1})$ in $(\N^*)^{p+2}$ if $p$ is odd or $p$ is even and $\ell\neq\frac{p}{2}$, and for any
$(n_0,\dots,n_{p+1})$ in $(\N^*)^{p+2}$ such that  
\begin{equation}
  \label{eq:714}
  \{n_0,\dots,n_\ell\} \neq \{n_{\ell+1},\dots,n_{p+1}\}
\end{equation}
when $p$ is even and $\ell = \frac{p}{2}$.
\end{proposition}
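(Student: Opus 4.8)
The plan is to view $\kappa\mapsto\Dcal_\ell(n_0,\dots,n_{p+1})$ as a member of a family of sub-analytic functions of the single parameter $\kappa$ and to invoke the measure estimates for sublevel sets of such functions established by Delort and Szeftel in \cite{DSz}. Two preliminary reductions come first. Since by \eqref{711} the function $m_\kappa(n)=(n\tanh n)^{1/2}(1+\kappa n^2)^{1/2}$ is analytic in $\kappa$ on a neighbourhood of $[0,+\infty[$, so is each $\Dcal_\ell(n_0,\dots,n_{p+1})$. In the excluded case ($p=2\ell$ and $\{n_0,\dots,n_\ell\}=\{n_{\ell+1},\dots,n_{p+1}\}$ as multisets) the two sums in \eqref{712} cancel term by term and $\Dcal_\ell\equiv 0$, so there is nothing to prove. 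The crucial non-degeneracy claim is the converse: if $(n_0,\dots,n_{p+1})$ is \emph{not} of that form, then $\kappa\mapsto\Dcal_\ell(n_0,\dots,n_{p+1})$ does not vanish identically on $]0,+\infty[$.

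To establish the non-degeneracy claim I would expand at $\kappa=0$. Writing $w_j=(n_j\tanh n_j)^{1/2}>0$ and using $(1+\kappa n_j^2)^{1/2}=\sum_{k\geq 0}\binom{1/2}{k}\kappa^k n_j^{2k}$, valid for $\abs{\kappa}<\min_j n_j^{-2}$, one gets
$$\Dcal_\ell(n_0,\dots,n_{p+1})=\sum_{k\geq 0}\binom{1/2}{k}\Bigl(\sum_{j=0}^\ell w_j n_j^{2k}-\sum_{j=\ell+1}^{p+1}w_j n_j^{2k}\Bigr)\kappa^k.$$
If this vanished identically near $0$ — equivalently, by analyticity, on all of $]0,+\infty[$ — then $\sum_{j=0}^\ell w_j n_j^{2k}=\sum_{j=\ell+1}^{p+1}w_j n_j^{2k}$ for every $k\in\N$. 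Collecting equal values of $n_j$ and using that $w_j$ depends only on $n_j$, the difference of the two sides equals $\sum_n c_n\,n^{2k}$, where $c_n$ is $w_n$ times the number of occurrences of $n$ in $\{n_0,\dots,n_\ell\}$ minus its number of occurrences in $\{n_{\ell+1},\dots,n_{p+1}\}$; a Vandermonde argument in the finitely many distinct values $n^2$ forces each $c_n=0$, i.e. each $n$ occurs equally often on the two sides. Hence the two multisets coincide, so $\ell+1=(p+1)-\ell$, i.e. $p=2\ell$, and we are in the excluded case — a contradiction. This proves the claim.

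With non-degeneracy in hand, the remaining content is the measure estimate, which is exactly the situation covered by the sub-analytic sublevel estimates of \cite{DSz}: $\Dcal_\ell$ is a finite $\pm$ combination of values of the sub-analytic function $(\kappa,\xi)\mapsto(\xi\tanh\xi(1+\kappa\xi^2))^{1/2}$, it is not identically zero in $\kappa$, and one wants, for $\kappa$ outside a null set, a bound $\abs{\Dcal_\ell}\geq c(\kappa)\max(n_0,\dots,n_{p+1})^{-N_0}$ with $N_0$ independent of all the integers and of their number. The point that needs care, and which I expect to be the main obstacle, is precisely this uniformity with respect to the number $p$ of indices, since a priori the complexity of $\Dcal_\ell$ grows with $p$. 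I would dispose of it by a pairing argument based on the asymptotics $m_\kappa(n)=\sqrt{\kappa}\,n^{3/2}+O_\kappa(n^{-1/2})$ as $n\to+\infty$: after ordering $n_0\geq\cdots\geq n_{p+1}$ and grouping by the sign with which each index enters \eqref{712}, if the largest frequency is not balanced, up to a bounded factor, by a frequency entering with the opposite sign, then already $\abs{\Dcal_\ell}$ is bounded below by a fixed positive power of $\max_j n_j$; iterating, one either falls into the excluded case or reduces to a sub-family of sub-analytic functions of $\kappa$ whose complexity (number of ``unpaired'' essential frequencies) is bounded independently of $p$, to which \cite{DSz} applies with a fixed complexity input.

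Finally I would assemble the statement. Applying the above to the compact interval $[1/m,m]$ yields a closed null set $\Ncal_m\subset\,]0,+\infty[$, an integer $N_0(m)$, and for each $\kappa\in[1/m,m]\setminus\Ncal_m$ a constant $c(\kappa)>0$ for which \eqref{713} holds; set $\Ncal=\bigcup_{m\geq 1}\Ncal_m$, still of zero measure. Given an arbitrary compact $[a,b]\subset\,]0,+\infty[$, choose $m$ with $[a,b]\subset[1/m,m]$ and take $N_0=N_0(m)$; then for every $\kappa\in[a,b]\setminus\Ncal\subset[1/m,m]\setminus\Ncal_m$ the bound \eqref{713} holds with the corresponding $c(\kappa)$, which is the assertion of the proposition.
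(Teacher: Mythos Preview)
Your non-degeneracy argument via Taylor expansion at $\kappa=0$ and a Vandermonde determinant is correct and is essentially the content of the paper's Lemma~\ref{712}. The gap lies in the next step.

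The obstacle you single out --- uniformity in the number $p$ of indices --- is not the real issue: in all applications (the normal-form reductions of sections~\ref{sec:43}--\ref{sec:44}) $p$ is bounded by the fixed integer $N$, so only finitely many pairs $(p,\ell)$ occur, and the corresponding null sets and exponents $N_0$ can simply be united and maximized. What your sketch does not address is uniformity over the \emph{infinitely many} integer tuples $(n_0,\dots,n_{p+1})$ for \emph{fixed} $p$. Non-degeneracy of $\kappa\mapsto\Dcal_\ell$ for each individual tuple, combined with an off-the-shelf invocation of \cite{DSz}, only gives a null bad set \emph{per tuple}; to obtain a single null set one needs a summable quantitative bound of the form $\mathrm{meas}\,\{\kappa\in[a,b]:\abs{\Dcal_\ell}\leq\gamma\}\leq C\gamma^\delta(\max_j n_j)^{-M}$. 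Your pairing argument does not deliver this: when the two largest frequencies are comparable and carry opposite signs, $m_\kappa(n_1)-m_\kappa(n_2)$ can be small relative to any power of $\max_j n_j$, and you do not explain how the iteration terminates in a family to which \cite{DSz} applies with \emph{uniform} constants.

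The paper's device is a compactification. After grouping equal frequencies one is left with $q\leq p+2$ distinct integers $j_1<\cdots<j_q$ carrying nonzero integer weights $c_i$; rescaling by $x_0=\bigl(\sum_i(j_i+\tau(j_i))\bigr)^{-1}$, with $\tau(\xi)=(\xi\tanh\xi)^{1/2}$, sends every such tuple into the fixed cube $X=[0,1]^{2q+1}$, and the small divisor becomes $x_0^{-2}f(x,\kappa)$ for the \emph{single} sub-analytic function $f(x,\kappa)=\sum_i c_i\zeta_i\sqrt{x_0^2+\kappa x_i^2}$. Theorem~5.1 of \cite{DSz}, applied to $f$ together with the weight $\rho(x)=x_0\prod_{i_1<i_2}(x_{i_1}-x_{i_2})\sum_i\zeta_i$ (which vanishes exactly where $f$ may degenerate), yields $\mathrm{meas}\,\{\kappa:\abs{f(x,\kappa)}\leq\gamma\abs{\rho(x)}^N\}\leq C\gamma^\delta\abs{\rho(x)}^{N\delta}$. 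Since one checks $c(\sum_i j_i)^{-N_2}\leq\abs{\rho(x)}\leq C(\sum_i j_i)^{-1}$, the right-hand side sums over all integer tuples once $N\delta>q$, producing both the null set and \eqref{713} with $N_0=NN_2-2$. This rescaling-to-a-compact-family step is the content your proposal is missing.
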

To prove the proposition, we may fix a compact interval $[a,b]$.
The assumption made in the
statement about $n_0,\dots,n_{p+1}$ means that we have to show that, given $q$ in $\N$, $q\geq 2$, and $(c_1,\dots,c_q)$
in $(\Z^*)^q$, there exist $N_0$ in $\N$, $\delta>0$, $c>0$ such that for all $\gamma\in]0,1[$, there is a subset $\Ncal_\gamma$
of $[a,b]$ of measure $O(\gamma^\delta)$ such that, for any $\kappa$ in $[a,b]-\Ncal_\gamma$, any integers $1\leq
j_1<\cdots<j_q$, we have
\begin{equation}
  \label{eq:715}
  \Abs{\sum_{i=1}^q c_i\mk(j_i)}\geq c\gamma \Big(\sum_{i=1}^q\abs{j_i} \Big)^{-N_0}.
\end{equation}
We define $\tau(\xi) = (\xi\tanh\xi)^{1/2}$ and associate to $(j_1,\dots,j_q)$ the points of the interval $[0,1]$
defined by 
\begin{equation}
  \label{eq:716}
  x_0 = \Bigl(\sum_{\ell=1}^q(\abs{j_\ell}+\tau(j_\ell))\Bigr)^{-1},\ x_i = x_0j_i,\ x_{i+q} = \zeta_i = x_0\tau(j_i),
\end{equation}
$i=1,\dots,q$. We denote $X = [0,1]^{2q+1}$ so that $(x_0,\dots,x_q,\zeta_1,\dots,\zeta_q) =
(x_0,\dots,x_q,x_{1+q},\dots,x_{2q}) \in X$. We shall deduce \eqref{715}
from Theorem~5.1 in \cite{DSz}, applied with the preceding space $X$ and $Y = [a,b]$ to the function 
\begin{equation}
  \label{eq:717}
  f(x,\kappa) = \sum_{i=1}^q c_i\zeta_i\sqrt{x_0^2+\kappa x_i^2}
\end{equation}
and taking
\begin{equation}
  \label{eq:718}
  \rho(x) = x_0\prod_{1\leq i_1<i_2\leq q}(x_{i_1} - x_{i_2})\Bigl(\sum_{i=1}^q \zeta_i\Bigr).
\end{equation}
Let us check that the assumptions of Theorem~5.1 of \cite{DSz} hold. The function $f$ is continuous and subanalytic. We have
to check moreover that $f$ is analytic on the set $\{x\in X; \rho(x)\neq 0\}\times Y$. Since
\begin{equation}
  \label{eq:719}
  \{x\in X; \rho(x)\neq 0\} = \big\{ x \in X; x_0\neq 0,x_{i_1}\neq x_{i_2}, 1\leq i_1<i_2\leq q, \sum_1^q\zeta_i\neq 0 \big\}
\end{equation}
is contained in $\{x_0\neq 0\}$, this is clear. Moreover, we have to verify that for all $x$ in $X$ such that $\rho(x)\neq
0$, the function $\kappa\to f(x,\kappa)$ has only finitely many zeros in $[a,b]$. This is a consequence of the following
lemma.
\begin{lemma}
  \label{712}
Let $\rho(x)\neq 0$. Then the analytic function $\kappa\to f(x,\kappa)$ is not identically zero on $[a,b]$.
\end{lemma}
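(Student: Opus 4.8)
The plan is to argue by contradiction, using that each summand of $f$ is an algebraic function of $\kappa$ with an explicit branch point, and that the hypothesis $\rho(x)\neq 0$ forces these branch points to be pairwise distinct.

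First I would unpack what $\rho(x)\neq 0$ gives, via \eqref{718}--\eqref{719}: one has $x_0\neq 0$, the coordinates $x_1,\dots,x_q$ are pairwise distinct (so at most one of them vanishes), and $\sum_{i=1}^q\zeta_i\neq 0$. For each index $i$ with $x_i\neq 0$ the affine function $\kappa\mapsto x_0^2+\kappa x_i^2$ has the single root $\kappa_i=-x_0^2/x_i^2<0$, and, since $x_0\neq 0$ and the $|x_i|$ are pairwise distinct, the numbers $\kappa_i$ (for $x_i\neq 0$) are pairwise distinct. Because $x_0\neq 0$ one also has $x_0^2+\kappa x_i^2>0$ for $\kappa\in[a,b]\subset\,]0,+\infty[$, so $\kappa\mapsto\sqrt{x_0^2+\kappa x_i^2}$ is real analytic near $[a,b]$ (hence $f(x,\cdot)$ is genuinely analytic there, as the statement says) and extends holomorphically to $\C\setminus\,]-\infty,\kappa_i]$, with a genuine square-root type branch point at $\kappa_i$; for $x_i=0$ the corresponding summand is the constant $c_i\zeta_i|x_0|$.

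Then I would suppose $f(x,\cdot)\equiv 0$ on $[a,b]$ and split into two cases. If there exists an index with $x_i\neq 0$ and $\zeta_i\neq 0$, pick such an $i_1$ with $\kappa_{i_1}$ maximal (closest to $0$) among all $i$ with $x_i\neq 0$ and $\zeta_i\neq 0$. Discarding the summands with $\zeta_i=0$, one sees that $f(x,\cdot)$ extends holomorphically to the connected open set $\C\setminus\,]-\infty,\kappa_{i_1}]$, which contains $]a,b[$ together with a punctured disc around $\kappa_{i_1}$; by the identity theorem $f(x,\cdot)\equiv 0$ on this set. However, near $\kappa_{i_1}$ every summand except the $i_1$-th is holomorphic (its branch point, if any, is some $\kappa_i<\kappa_{i_1}$, or it is constant, or it vanishes), while the $i_1$-th summand $c_{i_1}\zeta_{i_1}\sqrt{x_0^2+\kappa x_{i_1}^2}=c_{i_1}\zeta_{i_1}|x_{i_1}|\sqrt{\kappa-\kappa_{i_1}}$ has a nontrivial branch point at $\kappa_{i_1}$ (recall $c_{i_1}\in\Z^*$, $x_{i_1}\neq 0$, $\zeta_{i_1}\neq 0$); so $f(x,\cdot)$ cannot vanish identically in a punctured neighbourhood of $\kappa_{i_1}$, a contradiction. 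In the remaining case, for every $i$ one has $x_i=0$ or $\zeta_i=0$; then $f(x,\cdot)$ reduces to the constant $c_{i_0}\zeta_{i_0}|x_0|$ (with $i_0$ the unique index, if it exists, for which $x_{i_0}=0$), or to $0$, and $f(x,\cdot)\equiv 0$ together with $x_0\neq 0$, $c_{i_0}\neq 0$ forces $\zeta_{i_0}=0$; hence all the $\zeta_i$ vanish, contradicting $\sum_{i=1}^q\zeta_i\neq 0$.

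The only point needing a little care is the bookkeeping of branch cuts: after discarding the summands with $\zeta_i=0$, the remaining branch points other than $\kappa_{i_1}$ must all lie strictly to its left, which is exactly the maximality of $\kappa_{i_1}$ together with distinctness of the $\kappa_i$. Everything else is routine one-variable complex analysis, and the lemma follows; this is precisely the nonvanishing input needed to apply Theorem~5.1 of \cite{DSz} to $f$ and $\rho$.
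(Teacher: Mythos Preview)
Your proof is correct and takes a genuinely different route from the paper's. The paper normalizes $x_0^2=1$, Taylor expands each $\sqrt{1+\kappa x_{i_\ell}^2}$ around $\kappa=0$, and identifies coefficients: the vanishing of the $n$-th coefficient for all $n\geq 1$ gives a linear system whose matrix is a Van der Monde in the $x_{i_\ell}^2$, forcing a contradiction since the $x_i$ (hence the $x_i^2$, as all $x_i\in[0,1]$) are pairwise distinct; a separate case handles the possibility that one $x_{i_\ell}=0$. Your argument instead exploits the complex-analytic structure: each nonconstant summand has a square-root branch point at $\kappa_i=-x_0^2/x_i^2$, these are pairwise distinct (again because the $x_i\in[0,1]$ are distinct), and isolating the rightmost branch point shows that $f$ cannot continue holomorphically across it while all the other summands do.

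Both arguments ultimately rest on the same fact---that distinct nonnegative $x_i$ give distinct $x_i^2$---but package it differently. The paper's approach is purely algebraic once the Taylor coefficients are written down; yours is perhaps more conceptual and handles the ``one $x_i=0$'' situation uniformly rather than as a separate case. One small point you leave implicit is that $x\in X=[0,1]^{2q+1}$ forces $x_i\geq 0$, which is what turns ``$x_i$ pairwise distinct'' into ``$|x_i|$ pairwise distinct''; it would not hurt to say this explicitly.
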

\begin{proof}
  By \eqref{719}, if $\rho(x)\neq 0$, we have that $x_0\neq 0$, $x_i\neq x_j$ if $1\leq i<j\leq q$ and $\sum_1^q\zeta_i\neq
  0$. In particular, not all $\zeta_i$ vanish and we may assume $\zeta_{i_1}>0$,\dots,$\zeta_{i_m}>0$ for some $1\leq m\leq
  q$. We have to prove that the function
  \begin{equation}
    \label{eq:7110}
    \kappa\to f(x,\kappa) = \sum_{\ell=1}^m c_{i_\ell}\zeta_{i_\ell}\sqrt{x_0^2+\kappa x_{i_\ell}^2}
  \end{equation}
is not identically zero on $[a,b]$ or equivalently on $[0,+\infty[$. Since $x_0\neq 0$, up to a change of notation, we may
assume that $x_0^2=1$ in \eqref{7110}. Argue by contradiction assuming that \eqref{7110} vanishes identically on
$[0,+\infty[$.

\textbf{Case 1}: $x_{i_\ell}\neq 0$ for $\ell = 1,\dots,m$. We expand in Taylor series on a neighborhood of $\kappa=0$
\begin{equation}
  \label{eq:7111}
  \sqrt{1+\kappa x_{i_\ell}^2} = \sum_{n=0}^{+\infty}a_n\kappa^n x_{i_\ell}^{2n}
\end{equation}
with Taylor coefficients $a_n$ that are all non zero. If we assume that \eqref{7110} with $x_0^2=1$ vanishes identically for
$\kappa\in [0,+\infty[$, plugging \eqref{7111} in \eqref{7110} and writing that the coefficients of $\kappa^n$ in the
resulting expression all vanish, we get
\begin{equation}
  \label{eq:7112}
  \sum_{\ell=1}^m c_{i_\ell}\zeta_{i_\ell}x_{i_\ell}^{2n} = 0
\end{equation}
for any $n\in \N^*$. As all $c_{i_\ell}$ are non zero, as well as the $\zeta_{i_\ell}$, this implies that the Van der Monde
determinant
\begin{equation}
  \label{eq:7113}
  \begin{vmatrix}
    x_{i_1}^2&\dots&x_{i_m}^2\\
 x_{i_1}^4&\dots&x_{i_m}^4\\
\vdots&\vdots&\vdots\\
 x_{i_1}^{2m}&\dots&x_{i_m}^{2m}
  \end{vmatrix}
\end{equation}
has to vanish, which is a contradiction as we assumed $x_{i_\ell}\neq 0$, for any $\ell$ and $x_i\neq x_j$ if $i\neq j$.

\textbf{Case 2}: One of the $x_{i_\ell}$ vanishes, for instance $x_{i_1}=0$. Since the $x_i$ are two by two distincts, the
other $x_{i_\ell}$ are nonzero and \eqref{7110} with $x_0^2=1$ may be written as
\[c_{i_1}\zeta_{i_1}+\sum_{\ell=2}^m c_{i_\ell}\zeta_{i_\ell}\sqrt{1+\kappa x_{i_\ell}^2}.\]
If this quantity vanishes, expanding as above in Taylor series at $\kappa =0$, we conclude that again a Van der Monde
determinant in the variables $(x_{i_2},\dots,x_{i_m})$ has to vanish, that contradicts the fact that $x_i\neq x_j$ for $i\neq
j$. This concludes the proof.
\end{proof}
\begin{proof1}{End of proof of Proposition~\ref{711}}
As we have already seen, we have to construct $\Ncal_\gamma$ such that \eqref{715} holds if $\kappa$ is in
$[a,b]-\Ncal_\gamma$. We apply Theorem~5.1 of \cite{DSz}, whose assumptions are satisfied because of lemma~\ref{712}. There
exists $N_1$ in $\N$, $\delta>0$, $C>0$ such that, for all $N\geq N_1$, all $x\in X = [0,1]^{2q+1}$ such that $\rho(x)\neq
0$, 
\begin{equation}
  \label{eq:7114}
  \mathrm{meas}\, \big\{\kappa\in [a,b]; \abs{f(x,\kappa)}\leq \gamma\abs{\rho(x)}^N \big\}\leq C\gamma^\delta\abs{\rho(x)}^{N\delta}.
\end{equation}
Define $\Ncal_\gamma$ to be the set of $\kappa$ in $[a,b]$ such that $\abs{f(x,\kappa)}\leq \gamma\abs{\rho(x)}^N$ for some
$x$ defined by \eqref{716} from some $1\leq j_1<\cdots<j_q$. Notice that
\begin{multline*}
  \abs{\rho(x)} = \abs{x_0\prod_{1\leq i_1<i_2\leq q}(x_{i_1}-x_{i_2})\sum_1^q\zeta_i}\\
\leq \bigl(\sum(\abs{j_i}+\tau(j_i))\bigr)^{-1}\prod_{1\leq i_1<i_2\leq
  q}\frac{\abs{j_{i_1}-j_{i_2}}}{\sum(\abs{j_i}+\tau(j_i))} \frac{\sum\tau(j_i)}{\sum(\abs{j_i}+\tau(j_i))}
\end{multline*}
satisfies
\begin{equation}
  \label{eq:7115}
  C^{-1}\Bigl(\sum_{i=1}^q\abs{j_i}\Bigr)^{-N_2}\leq \abs{\rho(x)}\leq C \Bigl(\sum_{i=1}^q\abs{j_i}\Bigr)^{-1}
\end{equation}
with $N_2 = \frac{q^2-q+4}{2}$ since $\abs{j_{i_1}-j_{i_2}}\geq 1$ and $\tau(j_i)\geq (\tanh 1)^{1/2}$. Consequently, the
measure of the set $\Ncal_\gamma$ is bounded from above using  \eqref{7114} and \eqref{7115} by
\[
\sum_{1\leq j_1<\cdots<j_q}C\gamma^\delta\Bigl(\sum_{i=1}^q\abs{j_i}\Bigr)^{-N\delta} = O(\gamma^\delta)
\]
if $N$ is chosen large enough so that $N\delta>q$. Moreover, when $\kappa$ is outside $\Ncal_\gamma$, we have
\[
\abs{f(x,\kappa)}\geq \gamma\abs{\rho(x)}^N \geq c\gamma\Bigl(\sum_{i=1}^q\abs{j_i}\Bigr)^{-NN_2}
\]
i.e.\ taking \eqref{717}, \eqref{716} into account
\[
\Abs{\sum_{i=1}^qc_i\mk(j_i)} \Bigl(\sum_{\ell=1}^q(\abs{j_\ell}+\tau(j_\ell))\Bigr)^{-2}
\geq c\gamma\Bigl(\sum_{i=1}^q\abs{j_i}\Bigr)^{-NN_2}
\]
which implies \eqref{715} with $N_0 = NN_2-2$. In conclusion the set 
$\Ncal = \bigcap_{0<\gamma<1} \Ncal_\gamma$
has zero measure, and for all $\kappa \in [a,b] \setminus \Ncal$ the bound \eqref{713} holds for some $c > 0$.
\end{proof1}

\section[Structure of Dirichlet-Neumann operator]{Precise structure of the Dirichlet-Neumann operator}\label{sec:72}

The Dirichlet-Neumann operator, in one dimension, is well known to be expressed as
 $G(\eta)\psi$ in \eqref{6143} with a symbol $b^0$
of order as negative as desired, if $\eta$ is smooth enough. In our framework, it is sufficient to know that $b^0$ is of
order $-1$, and this is the assertion that is made in the statement of Proposition~\ref{615}. 
We prove such property in this section. 

\begin{proposition}
  \label{721}
With the notation of Proposition~\ref{615}, the symbol $b^0 (\eta; \cdot) $ belongs to   $\sGa{-1}{K,0,1}{N}$.
\end{proposition}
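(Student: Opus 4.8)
The plan is to revisit the explicit computations behind Propositions~\ref{521}, \ref{531}, \ref{614} and \ref{615}, carrying every symbolic expansion one order further, i.e.\ modulo symbols of $\sGa{-1}{K,0,1}{N}$ instead of $\sGa{0}{K,0,1}{N}$. Recall that in the proof of Proposition~\ref{615} the term $\opbw(b^0(\eta;\cdot))\omega$ collects precisely those contributions to $G(\eta)\psi$ that are paradifferential of order $\le 0$ and act on $\omega$ (as opposed to on $\eta$); so the assertion to prove is that the order-$0$ component of $b^0$ vanishes modulo symbols of order $-1$. I would isolate the sources of such order-$0$ terms and verify they cancel.

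The first step is to refine \eqref{534}. Lemma~\ref{522} already gives $\partial_z e_+^0\vert_{z=0}$ modulo $\sGa{-\infty}{K,0,1}{N}$ via \eqref{5212}, so the only order-$0$ term missing from \eqref{534} is the contribution of the first parametrix correction $e_{+,1}^1$ of Proposition~\ref{521}: it lies in $\sP{-1,+}{K,0,1}{N}$, hence $\partial_z e_{+,1}^1\vert_{z=0}$ is of order $0$ and must be made explicit. Using the solution formula $e_{+,1}^1 = \int_{-1}^0 K^0(\eta;z,z',\cdot)\,\Lcal_{1,+}(e_+^0)(z',\cdot)\,dz'$ from \eqref{5224}, together with the explicit kernel $K^0 = K_0 + K_1^0$ of Lemma~\ref{522} (in particular $\partial_z K_0(z,z',\xi)\vert_{z=0} = \Ccal(z',\xi)$, since $\Scal(0,\xi)=0$), and the identification of $\Lcal_{1,+}$ with the rank-$\ge 1$ part of the expansion \eqref{5223} (whose rank-$1$ term is a sum of Poisson brackets of $e_+^0$ and its $z$-derivatives against $\eta'\otimes\eta'$, $\eta'\xi$ and $\xi^2$), one obtains a closed expression for $\partial_z e_+\vert_{z=0}$ modulo $\sGa{-1}{K,0,1}{N}$; the corrections $e_{+,1}^j$ with $j\ge 2$ have $\partial_z(\cdot)\vert_{z=0}$ of order $\le -1$ and may be ignored. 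Propagating this through the good-unknown construction of Proposition~\ref{614} as in \eqref{6137}--\eqref{6139a} upgrades \eqref{6130} to $a_1 = \frac{i\eta'}{1+\eta'^2}\xi - (\xi\tanh\xi)\frac{\eta'^2}{1+\eta'^2} + a_1^{(0)}$ with $a_1^{(0)}$ an explicit symbol of $\sGa{0}{K,0,1}{N}$.

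The final step returns to \eqref{6144}--\eqref{6145}, $G(\eta)\psi = (1+\eta'^2)\partial_z\tilde\Phi\vert_{z=0} - \eta'\partial_x\tilde\Phi\vert_{z=0}$, and substitutes $\partial_z\tilde\Phi\vert_{z=0} = \partial_z\Phi\vert_{z=0} + \opbw(\partial_z^2\tilde\Phi\vert_{z=0})\eta$ (the restriction at $z=0$ of \eqref{6146}) and $\partial_x\tilde\Phi\vert_{z=0} = \partial_x\psi = \partial_x\omega + \partial_x\opbw(a(\eta,\omega;\cdot))\eta$ (using \eqref{6131}). Since $\opbw(\partial_z^2\tilde\Phi\vert_{z=0})\eta$ is a paraproduct acting on $\eta$ it feeds only $c^0$, so the order-$0$ part of $b^0$ is the sum of: the symbol $(1+\eta'^2)\bigl(\xi\tanh\xi + a_1\bigr) - \xi\tanh\xi - i\eta'\xi = (1+\eta'^2)a_1^{(0)}$; the first-order symbolic-calculus corrections in the compositions $\opbw(1+\eta'^2)\circ\opbw(\xi\tanh\xi+a_1)$ and $\opbw(\eta')\circ\partial_x$, namely $\tfrac{1}{2i}\{1+\eta'^2,\xi\tanh\xi+a_1\}$ and $-\tfrac{1}{2i}\{\eta',i\xi\} = \tfrac{1}{2}\eta''$; and the order-$0$ remnants of the paralinearization of $\eta'^2\partial_z\tilde\Phi\vert_{z=0}$. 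One then checks that these pieces add up to $0$ modulo $\sGa{-1}{K,0,1}{N}$, which gives $b^0\in\sGa{-1}{K,0,1}{N}$.

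The main obstacle is exactly this cancellation: there are several competing order-$0$ contributions (the explicit $a_1^{(0)}$, two Poisson brackets, paraproduct remainders), and the identity rests on $a_1^{(0)}$ having precisely the right form; producing $a_1^{(0)}$ from the parametrix and organizing the cancellation is the bulk of the work, made delicate by the fact that $\psi = \omega + \opbw(a(\eta,\omega;\cdot))\eta$ entangles the ``$\omega$-part'' and the ``$\eta$-part''. Two independent checks temper this: first, $G(\eta)$ is self-adjoint on $L^2$ for smooth $\eta$, so its full symbol is real modulo smoothing, which constrains the admissible form of $b^0$; second, the shape-derivative identity $dG(\eta)\psi\cdot h = -G(\eta)\bigl(h B(\eta,\psi)\bigr) - \partial_x\bigl(h V(\eta,\psi)\bigr)$ gives, at $\eta=0$, $G'(0)[\eta]\psi = -(D\tanh D)\bigl(\eta\,(D\tanh D)\psi\bigr) - \partial_x(\eta\partial_x\psi)$, whose paralinearization shows, using that $\xi^2 - (\xi\tanh\xi)^2$ is a smoothing symbol, that the part of $G(\eta)\psi$ which is paradifferential in $\omega$ with an $\eta$-dependent coefficient is already smoothing at first order in $\eta$; this confirms the result at homogeneity $1$ and guides the computation of the higher homogeneities.
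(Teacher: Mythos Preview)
Your proposal is correct and follows essentially the same approach as the paper: compute $\partial_z e_{+,1}^1\vert_{z=0}$ explicitly from the parametrix formula \eqref{5224} (this is the paper's Lemma~\ref{723}, which gives $(1+\eta'^2)\partial_z e_{+,1}^1\vert_{z=0} = -\tfrac{1}{2}\eta''(\sign\xi+i\eta')^2/(1+\eta'^2)$ modulo order $-\infty$), then verify the order-$0$ cancellation in the symbolic expansion of $G(\eta)\psi$. The only organizational difference is that the paper bypasses your intermediate step of upgrading $a_1$ with an explicit $a_1^{(0)}$: instead of propagating through \eqref{6139a}, it replaces $\Phi$ directly by $\opbw(e_+)\omega$ in \eqref{7213} and computes the symbol of $\opbw(1+\eta'^2)\opbw(\partial_z e_+\vert_{z=0}) - i\opbw(\eta'\xi)\opbw(e_+\vert_{z=0}) + \tfrac{1}{2}\opbw(\eta'')\opbw(e_+\vert_{z=0})$ in one shot (your $a_1^{(0)}$ is implicitly absorbed into this composite symbol). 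This is slightly more economical but the content is identical; your sanity checks via self-adjointness and the shape-derivative formula are nice additions not in the paper.
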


We shall use the following lemma.
Recall that we have defined in \eqref{528} the symbol 
\[
e_+^0(\eta;z,x,\xi) = \Ccal(z,\xi) + e_{+,1}^0(\eta;z,x,\xi),
\] 
whose explicit value is given by $w_+$ in \eqref{5215}, with $ \Ccal(z,\xi) $  defined in \eqref{523} and 
$ e_{+,1}^0 $   in $\sP{0,+}{K,0,1}{N} $. Thus the symbol $e_+$ in \eqref{5221} may be written as
\begin{equation}
  \label{eq:721}
  e_+(\eta;z,x,\xi) = e^0_+(\eta;z,x,\xi) + e^1_{+,1}(\eta;z,x,\xi) +\cdots + e^{\rho-1}_{+,1}(\eta;z,x,\xi) 
\end{equation}
where $e_{+,1}^j$ is in $\sP{-j,+}{K,0,1}{N} $. 

\begin{lemma}
  \label{722}
The component $e_{+,1}^1$ of the symbol $ e_+ $ in \eqref{721} solves for say $\abs{\xi}\geq 1$ the equation
\begin{equation}
  \label{eq:722}
  \begin{split}
    &Pe_{+,1}^1 = -\abs{\xi}(\sign\xi -i\eta')^{-2}\eta''(x)e_+^0(\eta;z,x,\xi)\\
&e_{+,1}^1\vert_{z=0} = 0\\
&\partial_z e_{+,1}^1\vert_{z=-1} = 0
  \end{split}
\end{equation}
where $ P = (1+\eta'{}^2)\partial_z^2 -2i\eta'\xi\partial_z -\xi^2$  is the operator introduced in \eqref{529}.
\end{lemma}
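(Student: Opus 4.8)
The boundary conditions asserted in \eqref{722} are nothing new: they are exactly \eqref{5222} with $j=1$, which is built into the construction of $e_{+,1}^1$. So the only content of the lemma is the identification of the source term, and by \eqref{5224} with $j=1$ this reduces to showing that the order-one symbol $\Lcal_{1,+}(e_+^0)$ occurring in \eqref{5223} equals $-|\xi|(\sign\xi-i\eta')^{-2}\eta''\,e_+^0$ for $|\xi|\geq 1$. The plan has two steps: first extract $\Lcal_{1,+}(e_+^0)$ from the symbolic calculus in \eqref{5223}, then simplify it using the explicit structure of $e_+^0$.

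For the first step I would read off $\Lcal_{1,+}$ directly from \eqref{5223}. Writing $e=e_+^0$, the symbolic remainder in \eqref{5223} is $\bigl((\eta'\otimes\eta')\#\partial_z^2 e\bigr)_\rho-\eta'^2\partial_z^2 e$, minus $2i\bigl[((\eta'\xi)\#\partial_z e)_\rho-(\eta'\xi)\partial_z e\bigr]$, minus $\bigl[(\xi^2\# e)_\rho-\xi^2 e\bigr]$. Its leading (order-one) piece is the sum of the first Poisson-bracket terms $\tfrac1{2i}\{\cdot,\cdot\}$; since $\partial_\xi$ annihilates the $x$-only factors $\eta'^2$ and $\eta'$, these brackets collapse, and, up to the global sign fixed in \eqref{5224}, one gets
\[
\Lcal_{1,+}(e_+^0)=-\bigl[\,i\eta'\eta''\,\partial_\xi\partial_z^2 e-\eta'\,\partial_x\partial_z e+\eta''\xi\,\partial_\xi\partial_z e+i\xi\,\partial_x e\,\bigr].
\]
The higher components $e_{+,1}^j$ ($j\geq1$) of $e_+$ in \eqref{721} have order $-j$, hence after being hit by $P$ or by the bracket remainders contribute only at order $\leq 0$, i.e.\ only to $\Lcal_{j',+}$ with $j'\geq2$; thus $\Lcal_{1,+}$ depends on $e_+^0$ alone.

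For the second step I would use that $e_+^0(\eta;z,x,\xi)=w_+(z,\xi,a(\eta'),b(\eta'))$ depends on $x$ only through $\eta'(x)$, so $\partial_x e=(\partial_{\eta'}e)\,\eta''$ and $\partial_x\partial_z e=(\partial_z\partial_{\eta'}e)\,\eta''$, which lets one factor $\eta''$ out of the displayed expression. I would then eliminate the $z$-second-derivatives by the ODE \eqref{5214}, $\partial_z^2 e=2ia\xi\,\partial_z e+(1+b)\xi^2 e$ with $a=\eta'/(1+\eta'^2)$, $1+b=1/(1+\eta'^2)$ (equivalently $(1+\eta'^2)\partial_z^2 e=2i\eta'\xi\,\partial_z e+\xi^2 e$, cf.\ \eqref{5211a}), and evaluate the remaining $\xi$- and $\eta'$-derivatives through the explicit formula \eqref{5215}, most conveniently in the form \eqref{5218}: for $|\xi|\geq1$ and $z$ near $0$, $w_+=e^{z\xi/(\sign\xi-i\eta')}(1+G)$ with $G$ of order $-\infty$. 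Setting $\theta=(\sign\xi-i\eta')^{-1}$, so that modulo order-$-\infty$ errors $\partial_z e=\xi\theta e$, $\partial_\xi e=z\theta e$, $\partial_{\eta'}e=iz\xi\theta^2 e$, a direct substitution makes the four terms of the bracket $[\,i\eta'\partial_\xi\partial_z^2 e-\eta'\partial_z\partial_{\eta'}e+\xi\partial_\xi\partial_z e+i\xi\partial_{\eta'}e\,]$ collapse to $\xi\theta(1+i\eta'\theta)e$; since $1+i\eta'\theta=(\sign\xi-i\eta')^{-1}\sign\xi=\sign\xi\cdot\theta$, this is $|\xi|\theta^2 e$, whence $\Lcal_{1,+}(e_+^0)=-|\xi|(\sign\xi-i\eta')^{-2}\eta''\,e_+^0$. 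Together with $Pe_{+,1}^1=\Lcal_{1,+}(e_+^0)$ and the boundary conditions noted above, this is exactly \eqref{722}.

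The main obstacle is the algebraic bookkeeping in the second step: tracking the cancellations among the four terms once the ODE \eqref{5214} has been invoked, and checking that the $O(\xi^{-\infty})$ corrections coming from the factor $1+G$ in \eqref{5218} (equivalently, from $\tanh(\xi(1+c))\sim\sign\xi$ for $|\xi|\geq1$) do not perturb the order-one part, so that the identity holds exactly in the relevant Poisson-symbol sense on $\{|\xi|\geq1\}$. A lesser nuisance is keeping the overall sign consistent with the convention for the $\Lcal_{j,\pm}$ fixed in the passage around \eqref{5223}--\eqref{5224}.
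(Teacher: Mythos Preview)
Your proposal is correct and follows essentially the same route as the paper's proof. Both arguments identify $\Lcal_{1,+}(e_+^0)$ as minus the order-one Poisson-bracket remainder in \eqref{5223} (the paper's \eqref{723}), then exploit the exponential asymptotics $e_+^0 \sim e^{z\xi(\sign\xi-i\eta')^{-1}}$ for $|\xi|\geq 1$ to collapse the expression. The only cosmetic difference is that the paper keeps the computation in Poisson-bracket form (its \eqref{727}--\eqref{728}) while you expand the brackets into partial derivatives and factor $\eta''$ via $\partial_x=\eta''\partial_{\eta'}$ earlier; the algebra is the same.
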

\begin{proof}
  Recall that by definition $e_{+,1}^1$ solves equation \eqref{5224} with $j=1$, whose right hand side $\Lcal_{1,+}(e^0_+)$
  is given by the opposite of the symbols of order one in those terms in the right hand side of \eqref{5223} that follow the
  $\opbw(Pe_+)$ contribution. In other words, we obtain  $\Lcal_{1,+}(e^0_+)$ taking the opposite of the principal term in
\begin{multline*}
\bigl((\eta'{}^2\#\partial_z^2e_+^0)_{\rho,N} -\eta'{}^2\partial_z^2 e_+^0\bigr) -2i\bigl(((\eta'\xi)\#\partial_z e_+^0) _{\rho,N} -
(\eta'\xi)\partial_z e_+^0\bigr) \\- \bigl((\xi^2\# e_+^0) _{\rho,N} - \xi^2e_+^0\bigr)
\end{multline*}
so that
\begin{equation}
  \label{eq:723}
  Pe_{+,1}^1 = -\biggl[\frac{1}{2i}\absp{\eta'{}^2,\partial_z^2 e_+^0} - \absp{\eta'\xi,\partial_z e_+^0}  - \frac{1}{2i}\absp{\xi^2,e_+^0}\biggr]
\end{equation}
for $\abs{\xi}\geq 1$. The condition $Pe_+^0 = 0$ brings an expression for $\partial_z^2 e_+^0$ as
\begin{equation}
  \label{eq:724}
  \partial_z^2 e_+^0 = 2i\frac{\eta'}{1+\eta'{}^2}\xi\partial_z e_+^0 + \frac{\xi^2}{1+\eta'{}^2}e_+^0.
\end{equation}
Since  
$ e_+^0$ is given by $w_+$ in \eqref{5215}, taking into account \eqref{5213}, we may
write
\begin{equation}
  \label{eq:725}
  \partial_z e_+^0 = \frac{\xi}{1+\eta'{}^2}\biggl[i\eta' + \tanh\Bigl[\frac{(z+1)\xi}{1+\eta'{}^2}\Bigr]\biggr]e_+^0(\eta;z,x,\xi)
\end{equation}
modulo a symbol in $\sP{-\infty,+}{K,0,1}{N}$ that may be discarded as it 
will play no role in the computation of the symbol $e_{+,1}^1$ of
order $-1$. 
  Plugging in \eqref{724}, we get
\begin{equation}
  \label{eq:726}
  \partial^2_z e_+^0 = \frac{\xi^2}{(1+\eta'{}^2)^2}\Bigl[(1-\eta'{}^2) + 2i\xi\eta'\tanh\bigl[\frac{(z+1)\xi}{1+\eta'{}^2}\bigr]\Bigr]e_+^0(\eta;z,x,\xi)
\end{equation}
modulo a symbol in of order $-\infty$. Note that if $z$ stays in $[-\frac{1}{2},0]$ and $\abs{\xi}\geq 1$,
$\tanh\bigl[\frac{(z+1)\xi}{1+\eta'{}^2}\bigr] -\sign\xi$ is of order $-\infty$. Moreover, 
for $z$ in $[-1,-\frac{1}{2}]$, the symbol $e_+^0$ is of order $-\infty$ by construction. We may thus rewrite \eqref{725}, \eqref{726} as
\begin{equation}
  \label{eq:727}
  \begin{split}
    \partial_ze_+^0 &= \xi(\sign\xi-i\eta')^{-1}e_+^0\\
\partial^2_ze_+^0 &= \xi^2(\sign\xi-i\eta')^{-2}e_+^0
  \end{split}
\end{equation}
modulo again symbols of $\sP{-\infty,+}{K,0,0}{N}$. Moreover, from \eqref{5215}, we deduce that $e_+^0$ may be written in the
region $\abs{\xi}>1$ as $e^{iza\xi+z\abs{\xi}(1+c)}$ modulo symbols of order $-\infty$, so that
\begin{equation}
  \label{eq:728}
  \begin{split}
    \absp{\xi,e_+^0} &= iz\xi\absp{\xi,\eta'}(\sign\xi-i\eta')^{-2}e_+^0\\
\absp{\eta',e_+^0} &= z\absp{\eta',\xi}(\sign\xi-i\eta')^{-1}e_+^0
  \end{split}
\end{equation}
for $\abs{\xi}\geq 1$, modulo symbols of $\sP{-\infty,+}{K,0,1}{N}$. If one plugs \eqref{727} inside \eqref{723} and uses
\eqref{728}, we get by a direct computation that \eqref{723} is given by
\[-\abs{\xi}(\sign\xi-i\eta')^{-2}\absp{\xi,\eta'}e_+^0\]
for $\abs{\xi}\geq 1$. This gives \eqref{722}.
\end{proof}
We compute now the $\partial_z$ derivative of $e_{+,1}^1$.
\begin{lemma}
  \label{723}
The solution $e_{+,1}^1$ of \eqref{722} satisfies for $\abs{\xi}\geq 1$
\begin{equation}
  \label{eq:729}
  (1+\eta'{}^2)\partial_ze_{+,1}^1\vert_{z=0} = -\frac{1}{2}\eta''\frac{(\sign\xi+i\eta')^2}{1+\eta'{}^2}
\end{equation}
modulo a symbol in $\sGa{-\infty}{K,0,1}{N}$.
\end{lemma}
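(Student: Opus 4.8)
\textbf{Plan for the proof of Lemma~\ref{723}.}
The plan is to solve explicitly the boundary value problem \eqref{722} for $e_{+,1}^1$ using the machinery of Lemma~\ref{522}, and then to differentiate the resulting integral representation in $z$ and restrict at $z=0$. First I would recall that, by \eqref{5210}, the solution of \eqref{722} may be written as
\[
e_{+,1}^1(\eta;z,x,\xi) = \int_{-1}^0 K^0(\eta;z,z',x,\xi)\, F(\eta;z',x,\xi)\,dz',
\]
where $F(\eta;z,x,\xi) = -|\xi|(\sign\xi - i\eta')^{-2}\eta''(x) e_+^0(\eta;z,x,\xi)$ is the right hand side of \eqref{722}, and $K^0$ is the Green kernel constructed in \eqref{528}, \eqref{defK0}, \eqref{5220}. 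Since we only care about the symbol of order $-1$ and may discard symbols of order $-\infty$, I would use that, for $|\xi|\geq 1$ and $z,z'$ in $[-1/2,0]$, $e_+^0$ equals $e^{iza\xi + z|\xi|(1+c)}$ modulo $\sP{-\infty,+}{K,0,0}{N}$ (cf.\ the computation leading to \eqref{727}), with $a = a(\eta')$, $c = c(\eta')$ given by \eqref{5213}; for $z$ or $z'$ in $[-1,-1/2]$ the symbols are of order $-\infty$, hence negligible. The wronskian \eqref{5219} and the explicit forms \eqref{5215} of $w_\pm$ then give a closed expression for $K^0$ in that region.

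Next I would differentiate in $z$ and set $z=0$. By the properties of $K^0$ recalled after \eqref{523}, namely $\partial_z K^0 \in \Pti{0}{\cdot}$ and $\partial_z K^0|_{z=-1}=0$, $K^0|_{z=0}=0$, differentiation under the integral sign produces no boundary contribution at $z=0$, so
\[
\partial_z e_{+,1}^1|_{z=0} = \int_{-1}^0 \big(\partial_z K^0\big)(\eta;0,z',x,\xi)\, F(\eta;z',x,\xi)\,dz'.
\]
Using the explicit expression of $K^0$ built from $w_+, w_-$ and \eqref{5219}, together with the fact that $w_+(z',\xi)$ decays like $e^{z'|\xi|}$ for $|\xi|$ large and $z'\in[-1/2,0]$ (so the $z'$-integral is dominated by the region $z'$ near $0$), I would compute the leading behaviour of this integral: each factor of $\partial_z w_\pm$ contributes a factor $|\xi|$ or $\xi$ times $(\sign\xi \mp i\eta')^{-1}$ as in \eqref{725}, \eqref{727}, the $\int_{-1}^0 e^{2z'|\xi|}\,dz'$-type integral contributes $\tfrac{1}{2|\xi|}$ modulo $O(|\xi|^{-\infty})$, and $F$ itself carries the factor $-|\xi|(\sign\xi - i\eta')^{-2}\eta''$. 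Collecting these, multiplying by $(1+\eta'{}^2)$, and simplifying the algebraic factor $(\sign\xi - i\eta')^{-2}(\sign\xi+i\eta')^{\pm\cdots}$ using $(\sign\xi)^2 = 1$ and $(\sign\xi - i\eta')(\sign\xi + i\eta') = 1+\eta'{}^2$, I expect to arrive exactly at $-\tfrac12 \eta'' (\sign\xi + i\eta')^2/(1+\eta'{}^2)$, which is \eqref{729}; the discrepancy being a symbol in $\sGa{-\infty}{K,0,1}{N}$ coming from all the $O(|\xi|^{-\infty})$ truncations and from the cut-off $\chi_1$ in \eqref{3218}. That the result is genuinely in the announced class, with the stated $U$-dependence structure, follows from Lemma~\ref{246} and the fact that $\eta''$, $(\sign\xi \pm i\eta')^{\pm 1}$, $(1+\eta'{}^2)^{-1}$ all give rise to symbols in the autonomous classes of Definition~\ref{212bis}, together with the Taylor expansion of the algebraic nonlinearities in $\eta'$ up to order $N$ plus a remainder vanishing at order $N$.

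The main obstacle I anticipate is the bookkeeping in the $z'$-integral: one must be careful that $w_+(z',\xi)$ and $w_-(z',\xi)$ have different exponential behaviours (decaying resp.\ growing as $z'\to -1$), so the product $K^0(\eta;0,z',x,\xi)$, which involves $w_-(0)w_+(z')$ on $z'<0$, decays like $e^{z'|\xi|}$; one then needs the precise constant from $\int_{-1}^0 e^{2z'|\xi|}(\text{slowly varying})\,dz' = \tfrac{1}{2|\xi|}(1 + O(|\xi|^{-\infty}))$ and must track how $\partial_z$ acting on $K^0$ at $z=0$ interacts with this. A secondary delicate point is making sure that the "modulo $\sP{-\infty,+}{K,0,0}{N}$" simplifications of $e_+^0$ used in Lemma~\ref{722} propagate correctly through the kernel integration, i.e.\ that replacing $e_+^0$, $\partial_z e_+^0$ by their leading expressions \eqref{727} only changes $\partial_z e_{+,1}^1|_{z=0}$ by a symbol of order $-\infty$ — this uses that $K^0 \in \Pti{-1}{\cdot}$ and the gain of one derivative from the $dz'$-integration, exactly as in Proposition~\ref{517}. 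Once these exponential-integral estimates are pinned down, the final algebraic simplification to the form \eqref{729} is routine.
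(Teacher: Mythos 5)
Your plan matches the paper's proof: both use the Green-kernel representation of Lemma~\ref{522} to write $e_{+,1}^1 = -\abs{\xi}(\sign\xi-i\eta')^{-2}\eta''\int_{-1}^0 K^0 e_+^0\,dz'$, differentiate at $z=0$ where only the $\partial_z w_-(0)\,w_+(z')W(z')^{-1}$ piece of the kernel survives, reduce the $z'$-integral to $\int_{-1}^0 e^{2z'\abs{\xi}(1+c)}\,dz' = \tfrac{1+\eta'{}^2}{2\abs{\xi}}$ modulo $O(\abs{\xi}^{-\infty})$, and simplify the algebraic prefactor via $(\sign\xi-i\eta')(\sign\xi+i\eta') = 1+\eta'{}^2$. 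The bookkeeping issues you flag are precisely the ones the paper handles, and your plan resolves them the same way.
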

\begin{proof}
  According to lemma~\ref{522}, the solution $e_{+,1}^1$ of \eqref{722} is given by
  \begin{equation}
    \label{eq:7210}
    e_{+,1}^1(\eta;,z,x,\xi) = -\abs{\xi}(\sign\xi-i\eta')^{-2}\eta''(x)\int_{-1}^0 K^0(\eta;z,z',x,\xi)e_+^0(z';x,\xi)\,dz'
  \end{equation}
where $K^0$ is the kernel in \eqref{528}. This kernel has been defined in the proof of lemma~\ref{522} as $K^0 =
(1+\eta'{}^2)^{-1}\tilde{K}$, with $\tilde{K}$ given by \eqref{5220}. It follows that \eqref{729} is given by the product of
\begin{equation}
  \label{eq:7211}
  -\abs{\xi}(\sign\xi-i\eta')^{-2}\eta''(x)
\end{equation}
and of
\begin{equation}
  \label{eq:7212}
  \int_{-1}^0 \partial_zw_-(0,\xi,a,b)w_+(z',\xi,a,b)W(z',\xi,a,b)^{-1}e_+^0(z';x,\xi)\,dz'.
\end{equation}
According to \eqref{5215} and \eqref{5219}
\begin{multline*}
w_+(z')W(z')^{-1} = e^{-i(z'+1)a\xi}\cosh((z'+1)\xi(1+c))\\\times\Bigl(1-i\frac{a}{1+c}\tanh(\xi(z'+1)(1+c))\Bigr)
\end{multline*}
and
\[\partial_zw_-(0) = e^{ia\xi}\cosh(\xi(1+c))^{-1}\Bigl(1-i\frac{a}{1+c}\tanh(\xi(1+c))\Bigr)^{-1}\]
so that the integral \eqref{7212} is given by
\[\int_{-1}^0 e^{-iaz'\xi}\frac{\cosh((z'+1)\xi(1+c))}{\cosh(\xi(1+c))}e_+^0(z',\xi)\,dz'\]
modulo a symbol of order $-\infty$. Replacing $e_+^0$ by its value $w_+$ given by \eqref{5215}, we reduce the above integral
to
\[\int_{-1}^0 e^{2z'\abs{\xi}(1+c)}\,dz' = \frac{1+\eta'{}^2}{2\abs{\xi}}\]
modulo a symbol of order $-\infty$. Multiplying this value of \eqref{7212} by \eqref{7211}, we obtain for \eqref{729} an
expression that may be written as the right hand side of this equality.
\end{proof}
\begin{proof1}{Proof of Proposition~\ref{721}}
We want to show that, in the expression of $G(\eta)\psi$ given by \eqref{6143}, the symbol 
$b^0$ is of order $-1$ (and not just of order
zero). We have thus to examine those contributions in \eqref{6145} given by paradifferential operators acting on 
$\psi$ (or on $\omega$). They
come from 
$$
I+II+IV =  \partial_z\Phit\vert_{z=0} + \opbw(\eta'\otimes\eta')\partial_z\Phit\vert_{z=0} -\opbw(\eta')\partial_x\Phit\vert_{z=0} \, .
$$ 
The smoothing term $VI$ gives the last but one term in \eqref{6143}. Taking \eqref{6146} into
account, the paradifferential operators acting on 
$\psi$ (or on $\omega$) come from 
\begin{equation}
  \label{eq:7213}
  \partial_z\Phi\vert_{z=0} + \opbw(\eta'\otimes\eta')\partial_z\Phi\vert_{z=0} -\opbw(\eta')\partial_x\Phi\vert_{z=0} \, .
\end{equation}
Recall that $\Phi$ is given by \eqref{6137} where 
the symbol  $e_+$ in $\sP{0,+}{K,0,0}{N}$ and the integral expression and the last term in the right hand side provide
smoothing contributions. 
Note  in particular  that the integral term 
$ \int_{-1}^0\bigl[\opbw(K(\eta;z,\zp,\cdot))_{|z=0} f(\zp)\,d\zp $ 
 in \eqref{6137} is smoothing because the function $ f(\zp) $ given by \eqref{6136a}
 is supported for $\tilde{z}\leq -1/8 $  and then we use lemma \ref{5110} (condition \eqref{5127} holds by \eqref{6136}, changing the value of $\sigma_0$).
Modulo such smoothing terms, we may replace in \eqref{7213} $\Phi$ by
$\opbw(e_+(\eta;\cdot))\omega $. Using symbolic calculus we are thus left with showing that
the operator
\begin{multline}
  \label{eq:7214}
\opbw(1+\eta'{}^2)\opbw\bigl(\partial_ze_+ \vert_{z=0}\bigr) -i\opbw(\eta'\xi)\opbw(e_+\vert_{z=0})\\
+ \frac{1}{2}\opbw(\eta'')\opbw(e_+\vert_{z=0})
\end{multline}
may be written as
\begin{equation}
  \label{eq:7215}
  \opbw\bigl(\xi\tanh\xi + b_{-1}(\eta';\cdot)\bigr)
\end{equation}
where $b_{-1}$ is in $\sGa{-1}{K,0,1}{N}$. 

Recall that the symbol $e_+$ in \eqref{7214} has the 
 form \eqref{721},  namely may be written as
\[
  \begin{split}
    e_+ &= e_+^0 + e_{+,1}^1 + r\\
e_+^0 &= \Ccal(z,\xi) + e_{+,1}^0(\eta;z,x,\xi)
  \end{split}
\]
where $e_{+,1}^1 \in \sP{- 1,+}{K,0,1}{N} $ is the solution of \eqref{722}, 
$ r = e_{+,1}^2+\cdots +e_{+,1}^{\rho-1} $ is in $ \sP{-2,+}{K,0,1}{N} $
and $e_+^0  \in \sP{0,+}{K,0,0}{N} $ is given by \eqref{528}, actually is equal to $w_+$ in \eqref{5215}. The contribution coming from $r$ to
\eqref{7214} may be written as $\opbw(b_{-1}(\eta;\cdot))\omega$ for a symbol of order $-1$, since $r$ is of order $-2$. If
we consider now the contributions of $e_+^0+ e_{+,1}^1$ to \eqref{7214}, we see using symbolic calculus that, up to
expressions of the form $\opbw(b_{-1})$ with $b_{-1}$ in $\sGa{-1}{K,0,1}{N}$, the operator \eqref{7214} 
is given by a paradifferential operator with symbol
\begin{multline}
  \label{eq:7217}
(1+\eta'{}^2)\partial_z(e_+^0 + e_{+,1}^1)\vert_{z=0} + \frac{1}{2i}\Absp{\eta'{}^2,\partial_z e_+^0\vert_{z=0}}\\
-i\eta'\xi(e_+^0 + e_{+,1}^1)\vert_{z=0} -\frac{1}{2}\Absp{\eta'\xi,e_+^0\vert_{z=0}} + \frac{1}{2}\eta'' e_+^0\vert_{z=0} \, .
\end{multline}
By \eqref{528},  \eqref{523}, \eqref{5211},  \eqref{722}  we have  $e_+^0\vert_{z=0} = 1, e^{1}_{+,1}\vert_{z=0} = 0$  and by \eqref{725}, 
$$
\partial_z e_+^0\vert_{z=0} = \frac{\xi\tanh\xi +i\eta'\xi}{1+\eta'{}^2}
$$
modulo symbols in $\sGa{-1}{K,0,1}{N}$. Inserting in \eqref{7217}  these formulas 
and \eqref{729} we  get
\begin{multline*}
  \xi\tanh\xi + i\eta'\xi -\frac{1}{2}\eta''\frac{(\sign\xi +i\eta')^2}{1+\eta'{}^2} +
  \frac{1}{2i}\frac{1}{1+\eta'{}^2}\absp{\eta'{}^2,\abs{\xi}+i\eta'\xi}\\
-i\eta'\xi + \frac{1}{2}\eta'' = \xi\tanh\xi
\end{multline*}
  modulo symbols in $\sGa{-1}{K,0,1}{N}$ (that depend on $ \eta' $). 
  We have thus obtained that \eqref{7214} has  the form \eqref{7215},
  which concludes the proof.
\end{proof1}

\backmatter

\addcontentsline{toc}{chapter}{Bibliography}

\bibliographystyle{abbrv}
\bibliography{biblio}

\def\cprime{$'$} \def\cprime{$'$}
\begin{thebibliography}{10}

\bibitem{AB}
T.~Alazard and P.~Baldi.
\newblock Gravity capillary standing water waves.
\newblock {\em Arch. Ration. Mech. Anal.}, 217(3):741--830, 2015.

\bibitem{ABZ1}
T.~Alazard, N.~Burq, and C.~Zuily.
\newblock On the water-wave equations with surface tension.
\newblock {\em Duke Math. J.}, 158(3):413--499, 2011.

\bibitem{ABZ2}
T.~Alazard, N.~Burq, and C.~Zuily.
\newblock On the {C}auchy problem for gravity water waves.
\newblock {\em Invent. Math.}, 198(1):71--163, 2014.

\bibitem{ABZ3}
T.~Alazard, N.~Burq, and C.~Zuily.
\newblock Cauchy theory for the gravity water waves system with non-localized
  initial data.
\newblock {\em Ann. Inst. H. Poincar\'e Anal. Non Lin\'eaire}, 33(2):337--395,
  2016.

\bibitem{AD1}
T.~Alazard and J.-M. Delort.
\newblock Global solutions and asymptotic behavior for two dimensional gravity
  water waves.
\newblock {\em Ann. Sci. \'Ec. Norm. Sup\'er. (4)}, 48(5):1149--1238, 2015.

\bibitem{AD2}
T.~Alazard and J.-M. Delort.
\newblock Sobolev estimates for two dimensional gravity water waves.
\newblock {\em Ast\'erisque}, (374):viii+241, 2015.

\bibitem{AM}
T.~Alazard and G.~M{\'e}tivier.
\newblock Paralinearization of the {D}irichlet to {N}eumann operator, and
  regularity of three-dimensional water waves.
\newblock {\em Comm. Partial Differential Equations}, 34(10-12):1632--1704,
  2009.

\bibitem{Al}
S.~Alinhac.
\newblock Paracomposition et op\'erateurs paradiff\'erentiels.
\newblock {\em Comm. Partial Differential Equations}, 11(1):87--121, 1986.

\bibitem{Al1}
S.~Alinhac.
\newblock Existence d'ondes de rar\'efaction pour des syst\`emes
  quasi-lin\'eaires hyperboliques multidimensionnels.
\newblock {\em Comm. Partial Differential Equations}, 14(2):173--230, 1989.

\bibitem{Amb}
D.~M. Ambrose.
\newblock Well-posedness of vortex sheets with surface tension.
\newblock {\em SIAM J. Math. Anal.}, 35(1):211--244, 2003.

\bibitem{AMS}
D.~M. Ambrose and N.~Masmoudi.
\newblock The zero surface tension limit of two-dimensional water waves.
\newblock {\em Comm. Pure Appl. Math.}, 58(10):1287--1315, 2005.

\bibitem{Ar}
V.~I. Arnol{\cprime}d.
\newblock Reversible systems.
\newblock In {\em Nonlinear and turbulent processes in physics, {V}ol. 3
  ({K}iev, 1983)}, pages 1161--1174. Harwood Academic Publ., Chur, 1984.

\bibitem{BDGS}
D.~Bambusi, J.-M. Delort, B.~Gr{\'e}bert, and J.~Szeftel.
\newblock Almost global existence for {H}amiltonian semilinear {K}lein-{G}ordon
  equations with small {C}auchy data on {Z}oll manifolds.
\newblock {\em Comm. Pure Appl. Math.}, 60(11):1665--1690, 2007.

\bibitem{BaG}
D.~Bambusi and B.~Gr{\'e}bert.
\newblock Birkhoff normal form for partial differential equations with tame
  modulus.
\newblock {\em Duke Math. J.}, 135(3):507--567, 2006.

\bibitem{BBP}
M.~Berti, L.~Biasco, and M.~Procesi.
\newblock K{AM} for reversible derivative wave equations.
\newblock {\em Arch. Ration. Mech. Anal.}, 212(3):905--955, 2014.

\bibitem{BM}
M.~Berti and R.~Montalto.
\newblock Quasi-periodic standing wave solutions of gravity-capillary water
  waves.
\newblock Preprint, arXiv:1602.02411, 2016.

\bibitem{BG}
K.~Beyer and M.~G{\"u}nther.
\newblock On the {C}auchy problem for a capillary drop. {I}. {I}rrotational
  motion.
\newblock {\em Math. Methods Appl. Sci.}, 21(12):1149--1183, 1998.

\bibitem{BdM1}
L.~Boutet~de Monvel.
\newblock Comportement d'un op\'erateur pseudo-diff\'erentiel sur une
  vari\'et\'e \`a bord. {I}. {L}a propri\'et\'e de transmission.
\newblock {\em J. Analyse Math.}, 17:241--253, 1966.

\bibitem{BdM2}
L.~Boutet~de Monvel.
\newblock Comportement d'un op\'erateur pseudo-diff\'erentiel sur une
  vari\'et\'e \`a bord. {II}. {P}seudo-noyaux de {P}oisson.
\newblock {\em J. Analyse Math.}, 17:255--304, 1966.

\bibitem{BdM3}
L.~Boutet~de Monvel.
\newblock Boundary problems for pseudo-differential operators.
\newblock {\em Acta Math.}, 126(1-2):11--51, 1971.

\bibitem{BHS}
H.~W. Broer, G.~B. Huitema, and M.~B. Sevryuk.
\newblock {\em Quasi-periodic motions in families of dynamical systems}, volume
  1645 of {\em Lecture Notes in Mathematics}.
\newblock Springer-Verlag, Berlin, 1996.
\newblock Order amidst chaos.

\bibitem{CoS}
D.~Coutand and S.~Shkoller.
\newblock Well-posedness of the free-surface incompressible {E}uler equations
  with or without surface tension.
\newblock {\em J. Amer. Math. Soc.}, 20(3):829--930, 2007.

\bibitem{Craig}
W.~Craig.
\newblock An existence theory for water waves and the {B}oussinesq and
  {K}orteweg-de {V}ries scaling limits.
\newblock {\em Comm. Partial Differential Equations}, 10(8):787--1003, 1985.

\bibitem{CN}
W.~Craig and D.~P. Nicholls.
\newblock Travelling two and three dimensional capillary gravity water waves.
\newblock {\em SIAM J. Math. Anal.}, 32(2):323--359, 2000.

\bibitem{CS}
W.~Craig and C.~Sulem.
\newblock Numerical simulation of gravity waves.
\newblock {\em J. Comput. Phys.}, 108(1):73--83, 1993.

\bibitem{CS1}
W.~Craig and C.~Sulem.
\newblock Normal form transformations for capillary-gravity water waves.
\newblock In {\em Hamiltonian partial differential equations and applications},
  volume~75 of {\em Fields Inst. Commun.}, pages 73--110. Fields Inst. Res.
  Math. Sci., Toronto, ON, 2015.

\bibitem{CW}
W.~Craig and P.~A. Worfolk.
\newblock An integrable normal form for water waves in infinite depth.
\newblock {\em Phys. D}, 84(3-4):513--531, 1995.

\bibitem{D1}
J.-M. Delort.
\newblock A quasi-linear {B}irkhoff normal forms method. {A}pplication to the
  quasi-linear {K}lein-{G}ordon equation on {$\Bbb S^1$}.
\newblock {\em Ast\'erisque}, (341):vi+113, 2012.

\bibitem{D2}
J.-M. Delort.
\newblock Quasi-linear perturbations of {H}amiltonian {K}lein-{G}ordon
  equations on spheres.
\newblock {\em Mem. Amer. Math. Soc.}, 234(1103):vi+80, 2015.

\bibitem{DSz}
J.-M. Delort and J.~Szeftel.
\newblock Long-time existence for small data nonlinear {K}lein-{G}ordon
  equations on tori and spheres.
\newblock {\em Int. Math. Res. Not.}, (37):1897--1966, 2004.

\bibitem{DIPP}
Y.~Deng, A.~Ionescu, B.~Pausader, and F.~Pusateri.
\newblock Two dimensional gravity water waves with constant vorticity: I. cubic
  lifespan.
\newblock Preprint, arXiv:1601.05685, 2016.

\bibitem{DSj}
M.~Dimassi and J.~Sj{\"o}strand.
\newblock {\em Spectral asymptotics in the semi-classical limit}, volume 268 of
  {\em London Mathematical Society Lecture Note Series}.
\newblock Cambridge University Press, Cambridge, 1999.

\bibitem{DZ}
A.~I. D{\cprime}yachenko and V.~E. Zakharov.
\newblock Is free-surface hydrodynamics an integrable system?
\newblock {\em Phys. Lett. A}, 190(2):144--148, 1994.

\bibitem{FHZ}
D.~Fang, Z.~Han, and Q.~Zhang.
\newblock Almost global existence for the nonlinear {K}lein-{G}ordon equation
  on the circle.
\newblock Preprint, 2016.

\bibitem{FG}
E.~Faou and B.~Gr{\'e}bert.
\newblock Quasi-invariant modified {S}obolev norms for semi linear reversible
  {PDE}s.
\newblock {\em Nonlinearity}, 23(2):429--443, 2010.

\bibitem{GMS1}
P.~Germain, N.~Masmoudi, and J.~Shatah.
\newblock Global solutions for the gravity water waves equation in dimension 3.
\newblock {\em Ann. of Math. (2)}, 175(2):691--754, 2012.

\bibitem{GMS2}
P.~Germain, N.~Masmoudi, and J.~Shatah.
\newblock Global existence for capillary water waves.
\newblock {\em Comm. Pure Appl. Math.}, 68(4):625--687, 2015.

\bibitem{Gio}
A.~Giorgilli.
\newblock Rigorous results on the power expansions for the integrals of a
  {H}amiltonian system near an elliptic equilibrium point.
\newblock {\em Ann. Inst. H. Poincar\'e Phys. Th\'eor.}, 48(4):423--439, 1988.

\bibitem{GP}
A.~Giorgilli and A.~Posilicano.
\newblock Estimates for normal forms of differential equations near an
  equilibrium point.
\newblock {\em Z. Angew. Math. Phys.}, 39(5):713--732, 1988.

\bibitem{IT4}
B.~Harrop-Griffiths, M.~Ifrim, and D.~Tataru.
\newblock Finite depth gravity water waves in holomorphic coordinates.
\newblock Preprint, arXiv:1607.02409, 2016.

\bibitem{HIT}
J.~Hunter, M.~Ifrim, and D.~Tataru.
\newblock Two dimensional water waves in holomorphic coordinates.
\newblock Preprint, arXiv:1401.1252, 2014.

\bibitem{IT2}
M.~Ifrim and D.~Tataru.
\newblock The lifespan of small data solutions in two dimensional capillary
  water waves.
\newblock Preprint, arXiv:1406.5471, 2014.

\bibitem{IT3}
M.~Ifrim and D.~Tataru.
\newblock Two dimensional gravity water waves with constant vorticity: I.
  {C}ubic lifespan.
\newblock Preprint, arXiv:1510.07732, 2014.

\bibitem{IT1}
M.~Ifrim and D.~Tataru.
\newblock Two dimensional water waves in holomorphic coordinates ii: global
  solutions.
\newblock Preprint, arXiv:1404.7583, 2014.

\bibitem{IP1}
A.~D. Ionescu and F.~Pusateri.
\newblock Global solutions for the gravity water waves system in 2d.
\newblock {\em Invent. Math.}, 199(3):653--804, 2015.

\bibitem{IP2}
A.~D. Ionescu and F.~Pusateri.
\newblock Global regularity for 2d water waves with surface tension.
\newblock Preprint, arXiv:1408.4428v2, 2016.

\bibitem{IP-SW2}
G.~Iooss and P.~Plotnikov.
\newblock Existence of multimodal standing gravity waves.
\newblock {\em J. Math. Fluid Mech.}, 7(suppl. 3):S349--S364, 2005.

\bibitem{IP-SW1}
G.~Iooss and P.~Plotnikov.
\newblock Multimodal standing gravity waves: a completely resonant system.
\newblock {\em J. Math. Fluid Mech.}, 7(suppl. 1):S110--S126, 2005.

\bibitem{IoP2}
G.~Iooss and P.~Plotnikov.
\newblock Asymmetrical three-dimensional travelling gravity waves.
\newblock {\em Arch. Ration. Mech. Anal.}, 200(3):789--880, 2011.

\bibitem{IP-Mem-2009}
G.~Iooss and P.~I. Plotnikov.
\newblock Small divisor problem in the theory of three-dimensional water
  gravity waves.
\newblock {\em Mem. Amer. Math. Soc.}, 200(940):viii+128, 2009.

\bibitem{IPT}
G.~Iooss, P.~I. Plotnikov, and J.~F. Toland.
\newblock Standing waves on an infinitely deep perfect fluid under gravity.
\newblock {\em Arch. Ration. Mech. Anal.}, 177(3):367--478, 2005.

\bibitem{La1}
D.~Lannes.
\newblock Well-posedness of the water-waves equations.
\newblock {\em J. Amer. Math. Soc.}, 18(3):605--654 (electronic), 2005.

\bibitem{L}
D.~Lannes.
\newblock {\em The water waves problem}, volume 188 of {\em Mathematical
  Surveys and Monographs}.
\newblock American Mathematical Society, Providence, RI, 2013.
\newblock Mathematical analysis and asymptotics.

\bibitem{LC}
T.~Levi-Civita.
\newblock D\'etermination rigoureuse des ondes permanentes d'ampleur finie.
\newblock {\em Math. Ann.}, 93(1):264--314, 1925.

\bibitem{Lin}
H.~Lindblad.
\newblock Well-posedness for the motion of an incompressible liquid with free
  surface boundary.
\newblock {\em Ann. of Math. (2)}, 162(1):109--194, 2005.

\bibitem{MZ}
M.~Ming and Z.~Zhang.
\newblock Well-posedness of the water-wave problem with surface tension.
\newblock {\em J. Math. Pures Appl. (9)}, 92(5):429--455, 2009.

\bibitem{Mos1}
J.~Moser.
\newblock Convergent series expansions for quasi-periodic motions.
\newblock {\em Math. Ann.}, 169:136--176, 1967.

\bibitem{Na}
V.~I. Nalimov.
\newblock The {C}auchy-{P}oisson problem.
\newblock {\em Dinamika Splo\v sn. Sredy}, (Vyp. 18 Dinamika Zidkost. so
  Svobod. Granicami):104--210, 254, 1974.

\bibitem{PlTo}
P.~I. Plotnikov and J.~F. Toland.
\newblock Nash-{M}oser theory for standing water waves.
\newblock {\em Arch. Ration. Mech. Anal.}, 159(1):1--83, 2001.

\bibitem{Schw}
B.~Schweizer.
\newblock On the three-dimensional {E}uler equations with a free boundary
  subject to surface tension.
\newblock {\em Ann. Inst. H. Poincar\'e Anal. Non Lin\'eaire}, 22(6):753--781,
  2005.

\bibitem{Wu1}
S.~Wu.
\newblock Well-posedness in {S}obolev spaces of the full water wave problem in
  {$2$}-{D}.
\newblock {\em Invent. Math.}, 130(1):39--72, 1997.

\bibitem{Wu2}
S.~Wu.
\newblock Well-posedness in {S}obolev spaces of the full water wave problem in
  3-{D}.
\newblock {\em J. Amer. Math. Soc.}, 12(2):445--495, 1999.

\bibitem{Wu3}
S.~Wu.
\newblock Almost global wellposedness of the 2-{D} full water wave problem.
\newblock {\em Invent. Math.}, 177(1):45--135, 2009.

\bibitem{Wu4}
S.~Wu.
\newblock Global wellposedness of the 3-{D} full water wave problem.
\newblock {\em Invent. Math.}, 184(1):125--220, 2011.

\bibitem{Yo}
H.~Yosihara.
\newblock Gravity waves on the free surface of an incompressible perfect fluid
  of finite depth.
\newblock {\em Publ. Res. Inst. Math. Sci.}, 18(1):49--96, 1982.

\bibitem{Zakh}
V.~E. Zakharov.
\newblock Stability of periodic waves of finite amplitude on the surface of a
  deep fluid.
\newblock {\em Journal of Applied Mechanics and Technical Physics},
  9(2):190--194, 1968.

\bibitem{GYZ}
J.~Zhang, M.~Gao, and X.~Yuan.
\newblock K{AM} tori for reversible partial differential equations.
\newblock {\em Nonlinearity}, 24(4):1189--1228, 2011.

\end{thebibliography}

\printindex

\end{document}